\documentclass[a4paper,10pt]{article}
\usepackage{amsmath, amsthm, amssymb}
\usepackage{url}

\usepackage{physics}
\usepackage{graphicx,lipsum}
\graphicspath{ {./downloads/} }

\usepackage[colorlinks,citecolor=red,urlcolor=blue,bookmarks=true,hypertexnames=true]{hyperref}
\usepackage{tikz}
\usetikzlibrary{calc}
\usetikzlibrary{shapes}
\usepackage[autostyle]{csquotes}
\makeatletter

\usepackage[colorlinks]{hyperref}
\usepackage[nameinlink,capitalize]{cleveref}
\newtheorem{theorem}{Theorem}[section]
\newtheorem{corollary}{Corollary}[theorem]
\newtheorem{lemma}[theorem]{Lemma}
\newtheorem{proposition}{Proposition}[section]
\newtheorem{conjecture}{Conjecture}[section]
\newtheorem{claim}{Claim}[section]
\newtheoremstyle{named}{}{}{\itshape}{}{\bfseries}{.}{.5em}{\thmnote{#3's }#1}
\theoremstyle{named}

\newtheorem{remark}{Remark}
\theoremstyle{definition}
\newtheorem{definition}{Definition}[section]
\newtheorem{example}{Example}

\theoremstyle{remark}

\theoremstyle{conclusion}

\theoremstyle{observation}

\newtheorem{principle}[theorem]{Principle}

\usepackage{xspace}
\usepackage[margin=1.0in]{geometry}

\newcommand{\Spec}{\operatorname{Spec}}

\usepackage{titlesec}

\usepackage{mathtools}

\DeclarePairedDelimiterX{\inp}[2]{\langle}{\rangle}{#1, #2}
\titleformat{\chapter}
  {\Large\bfseries}
  {}
  {0pt}
  {\huge}

\usepackage{makeidx}  
\usepackage{dsfont}
\usepackage{amsmath,amssymb,amsfonts,amscd}
\usepackage{hyperref}
\usepackage{booktabs} 
\usepackage{appendix}
\usepackage[utf8]{inputenc}

\usepackage{tikz,tikz-cd}
\tikzset{
    invisible/.style={opacity=0},
    visible on/.style={alt={#1{}{invisible}}},
    alt/.code args={<#1>#2#3}{%
      \alt<#1>{\pgfkeysalso{#2}}{\pgfkeysalso{#3}}%
  }
}
\tikzset{
  symbol/.style={
    draw=none,
    every to/.append style={
      edge node={node [sloped, allow upside down, auto=false]{$#1$}}}
  }
}

\usepackage{xcolor}

\begin{document}

\begin{center}
\fontsize{13pt}{10pt}\selectfont
    \textsc{\textbf{NONCOMMUTATIVE HOURGLASSES II 1/2: DERIVED CATEGORY OF BIRATIONAL LINKAGE WITH WELL-FORMED SINGULARITIES}}
    \end{center}
\vspace{0.1cm}
\begin{center}
   \fontsize{10pt}{8pt}\selectfont
    \textsc{HAO XINGBANG\footnotemark}
\end{center}
\vspace{0.2cm}
\begin{center}
   \fontsize{12pt}{10pt}\selectfont
    \textsc{Abstract}
\end{center}
\footnotetext{\emph{In memory of my maternal grandparents.}}

We consider varieties whose singularities are complete intersections within toric singularities, which we call well-formed varieties. Using the construction of equivariant standard resolutions, we can extend several classical formulas in the derived category of bounded coherent sheaves to this singular setting. Finally, we also discuss the Bondal--Orlov localization for certain morphisms between terminal complex threefolds and their stacks.
\tableofcontents

%

\section{Introduction}
\label{sec:introduction}

Derived categories were introduced as a systematic framework for studying complexes in abelian categories, originating from Verdier's abstraction and organization of homological algebra. Their primary applications arise from considering coherent sheaves or constructible constant sheaves on various sites of algebraic varieties, in order to further investigate the intrinsic geometry of the varieties themselves, as well as their relative and birational properties.

For $\mathrm{D}^{b}(\mathrm{Coh}\,X)$, Beilinson and Kapranov studied the case where $X$ is a projective space or a Grassmannian, showing that these categories admit full exceptional collections and are equivalent to explicitly computable derived categories $\mathrm{D}^{b}(R)$. Orlov then established analogous semiorthogonal decompositions for projective bundles and blow-ups along regular embeddings \cite{Orlov1993}; for a complete treatment we refer to \cite{Huybrechts}. Subsequently, Bondal--Orlov studied semiorthogonal decompositions associated with standard flips \cite{BO}; for full details see \cite{BFR}. In the case where $X$ is a complete intersection in a projective space or a Grassmannian, work of Orlov and Kuznetsov \cite{Orlov2009,Kuz2006} shows that their derived categories admit similar descriptions via semiorthogonal decompositions.

On the other hand, the three-dimensional minimal model program (MMP) was developed in parallel during the same period \cite{KM}. Starting from resolution of singularities, one introduces increasingly mild classes of singularities such as terminal and klt singularities, which allow the minimal model program to proceed in a controlled way within this category. In dimension three, one obtains a sequence of birational maps $X \dashrightarrow X^{+}$, including extremal divisorial contractions and flips, eventually producing a model $X^{\mathrm{nef}}$ with sufficiently nef canonical divisor. After a series of works by Kawamata \cite{KaEq,Kalog,KaTor}, it was proposed to ask whether a $\mathrm{K}$-inequality should imply a corresponding $\mathrm{D}$-embedding phenomenon \cite{KawamataDK}. If such a principle holds, one expects a noncommutative version of the minimal model program, where different MMP paths starting from a sufficiently large $\mathrm{D}^{b}(X)$ produce different semiorthogonal decompositions, together with a noncommutative nef model. This perspective has since been developed in various directions, e.g.\ \cite{Bridgeland2002,ChenJC,Halpern-Leistner2015,Ballard2012,KPS}.

However, for birational maps between singular algebraic varieties, the behavior of derived categories $\mathrm{D}^{b}(X)$ remains only partially understood. In this direction, Bondal and Orlov proposed the following folklore conjecture in \cite[Section~5]{BO2002}:
\begin{conjecture}[Bondal--Orlov]
Let $f:Y\to X$ be a proper birational morphism, where $X$ has rational singularities. Then the functor
\[
f_{*}:\mathrm{D}^{b}(Y)\to \mathrm{D}^{b}(X)
\]
identifies $\mathrm{D}^{b}(X)$ with the Verdier localization $\mathrm{D}^{b}(Y)/\ker(f_{*})$.
\end{conjecture}
More generally, they expect to consider a categorical resolution of a triangulated category $\mathcal{D}$, defined as a pair $(\mathcal{C},\mathcal{K})$, where $\mathcal{C}$ is an abelian category of finite homological dimension and $\mathcal{K}\subset \mathrm{D}^{b}(\mathcal{C})$ is a thick subcategory such that $\mathcal{D}\simeq \mathrm{D}^{b}(\mathcal{C})/\mathcal{K}$. The existence of such noncommutative resolutions has been extensively studied in various contexts, e.g.\ \cite{VDB,KuznetsovLunts2015,Abuaf2016}. In particular, Efimov proved the Bondal--Orlov conjecture for proper birational morphisms with nonrational centers \cite{Efimov2020}, with applications covering most Fano blow-ups \cite{KS}. For certain resolutions, Mauri and Shinder \cite{MS} showed that the image of the functor $f_*$ generates $\mathrm{D}^{b}(X)$ as a triangulated category. The conjecture appears trivial, and perhaps it is. Even so, the author finds it instructive to work through a few simple cases.

In this setting, we consider singularities $X$ which are formally locally complete intersections. When the dimension is sufficiently large, such spaces satisfy various purity properties \cite{SGA2}, e.g.\ for the punctured \'etale fundamental group or locally constant cohomology. This allows us to introduce additional cyclic group actions and interpret the corresponding quotient as a canonical stack. On the other hand, as developed in our previous work \cite{Hao2025b}, using Bourbaki-type sequences together with equivariant standard resolutions, for any torsion-free module $M$ on $X$ and a big class divisorial valuation $val$ on $X$, we obtain a sequence of submodules $M_i \subset M$ induced by the standard resolution construction.

Assume we have a divisorial contraction associated to a valuation $val$,
\[
\begin{tikzcd}
E \ar[d, "f_E"'] \ar[r, "\iota"] & Y \ar[d, "f"] \\
C \ar[r, "\kappa"] & X
\end{tikzcd}
\]
Using the standard resolution constructed above, one obtains an admissible and fully faithful functor $\mathcal{R}f^{*}:\mathrm{D}^{b}(X)\to \mathrm{D}^{b}(Y)/\mathcal{K}$ for a certain obstruction subcategory $\mathcal{K}$. We actually use the following principle in different proofs:
\begin{principle}\label{P1}
If for any $F\in \mathrm{D}^{b}(X)$ we can construct an inverse system with objects
$\mathcal{R}^{\mathbf{s}}f^*F\in \mathrm{D}^{b}(Y)$, filtered by $\mathbf{s}\in \mathbb{N}^{k}$,
satisfying the following conditions:
\begin{enumerate}
\item $\mathcal{R}^{\mathbf{s}}f^* F$ forms a filtered system and
$\varprojlim_{\mathbf{s}} \mathcal{R}^{\mathbf{s}}f^*F \simeq f^{*}F$.
\item For any morphism $\mathcal{R}^{\mathbf{t}}f^*F \to \mathcal{R}^{\mathbf{s}}f^*F$ in this filtered
system with $\mathbf{t}>\mathbf{s}$ (under a zigzag order in $\mathbb{N}^{k}$),
the cone $\tau$ lies in $\langle\ker f_*\rangle$.
\end{enumerate}
Then
\[
f_*:\mathrm{D}^{b}(Y)/\langle\ker f_*\rangle \simeq \mathrm{D}^{b}(X),
\]
with inverse functor $\mathcal{R}f^*$, induced by $F \mapsto \mathcal{R}^{\mathbf{s}}f^{*}F$, where a morphism
$\phi \colon F \to G$ maps to
\[\mathcal{R}^{\mathbf{t}}f^{*}\phi \colon \mathcal{R}^{\mathbf{t}}f^{*}F \to \mathcal{R}^{\mathbf{s}}f^{*}G\]
for $\mathbf{t} \gg \mathbf{s}$.
\end{principle}

If we expect to obtain more information about $\langle \ker f_{*}\rangle$, we need to perform further computations.

\begin{principle}\label{P2}
For $\kappa_{*}\mathcal{O}_{C}\in \mathrm{D}^{b}(X)$, we can construct objects
$\mathcal{R}^{\mathbf{s}}f^{*}(\kappa_{*}\mathcal{O}_{C})\in \mathrm{D}^{b}(Y)$, filtered by
$\mathbf{s}\in \mathbb{N}^{k}$, satisfying the following conditions:
\begin{enumerate}
\item $\mathcal{R}^{\mathbf{s}}f^{*}(\kappa_{*}\mathcal{O}_{C})$ forms a filtered system and
$\varprojlim_{\mathbf{s}} \mathcal{R}^{\mathbf{s}}f^{*}(\kappa_{*}\mathcal{O}_{C}) \simeq f^{*}(\kappa_{*}\mathcal{O}_{C})$.
\item For any morphism $\mathcal{R}^{\mathbf{t}}f^{*}(\kappa_{*}\mathcal{O}_{C}) \to \mathcal{R}^{\mathbf{s}}f^{*}(\kappa_{*}\mathcal{O}_{C})$
in this filtered system, with $\mathbf{t}>\mathbf{s}>0$ under a zigzag order in $\mathbb{N}^{k}$,
the cone $\tau_{\mathbf{t},\mathbf{s}}$ lies in $\langle\ker f_*\rangle$.
\end{enumerate}
Then by analyzing the objects $\tau_{\mathbf{t},\mathbf{s}}$, one obtains a set of generators
for $\langle\ker f_*\rangle$.
\end{principle}

It is worth mentioning that, in classical approaches, there are only a few ways to construct a cone $\tau$ satisfying $\tau \in \langle\ker f_*\rangle$. For a general morphism $f$, these include spectral sequence methods and tilting with respect to stability conditions. The method used here is based on a kind of monomial stability filtration, inspired by the proof of Theorem~2.14 in \cite{BondalKapranovSchechtman2018}, which employs a similar argument but replaces Principle~\ref{P2} with the tilting heart of Bridgeland--Van den Bergh. In previous work \cite{Hao2025a}, the author obtained analogous results via a more involved $E_{2}$-spectral sequence argument. Clearly, these approaches share the same underlying core idea.

Using these two principles, we extend the classical Orlov blow-up formula to a broader class of varieties with well-formed singularities or their canonical stacks. We obtain the following theorem:
\begin{theorem}[Cor.\ \ref{corollaryin1} and \ref{corollaryin2}]
For a broad class of three-dimensional terminal divisorial contractions to a point or a smooth curve which can be realized as weighted blow-ups or Kawamata blow-ups, the Bondal--Orlov localization formula holds. Moreover, $\langle \ker f_{*}\rangle$ is generated by $\iota_*\ker (f_{E*})$.
\end{theorem}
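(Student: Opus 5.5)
The plan is to verify Principle~\ref{P1} and Principle~\ref{P2} for each contraction in the stated class. Each such contraction is realized as a weighted blow-up (or a Kawamata blow-up) of a well-formed singularity. Since both $f_*$ and the localization statement are Zariski (indeed étale) local on $X$, we first reduce to a local model. Here $X$ is a complete intersection $\{g_1=\dots=g_r=0\}$ inside a toric (cyclic quotient) singularity $\mathbb{A}^n/\mu_m$. The map $f$ is the restriction of a toric weighted blow-up $\tilde f:\tilde Y\to \mathbb{A}^n/\mu_m$ with weights $w=(w_1,\dots,w_n)$, and the case of a smooth curve centre is the product of a point case with $\mathbb{A}^1$ after localizing along $C$. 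Descent from local pieces to the global statement should be formal, because the thick subcategory $\langle\ker f_*\rangle$ is compatible with restriction to open subsets of $X$.

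\textbf{Step 1: the filtered system for Principle~\ref{P1}.} For $F\in\mathrm{D}^b(X)$ we replace $F$ by a bounded-above locally free (equivariant standard) resolution coming from the toric ambient space. This is the construction of equivariant standard resolutions and of the submodules $M_i\subset M$ induced by the divisorial valuation $val$, recalled from \cite{Hao2025b}. We define $\mathcal{R}^{\mathbf{s}}f^*F$ by twisting the terms of the resolution by $\mathcal{O}_Y(-s_iE)$ according to their valuation weights and truncating at level $\mathbf{s}$. The system has $\varprojlim_{\mathbf{s}}\mathcal{R}^{\mathbf{s}}f^*F\simeq f^*F$ because, for $\mathbf{s}\gg0$, the truncation stabilises on any fixed bounded piece. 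The essential computation is that each transition cone $\tau$ is an iterated extension of sheaves $\iota_*(\mathcal{O}_E(k))\otimes f_E^*(\text{stuff})$ with $k$ in a range where $R\Gamma$ along the fibres of $f_E$ vanishes. Here $E$ is a (complete intersection in a) weighted projective space $\mathbb{P}(w)$ over $C$. By weighted Bott vanishing on $\mathbb{P}(w)$, $Rf_{E*}\mathcal{O}_E(-k)=0$ for $0<k<\sum w_i-\sum \deg g_j$, i.e.\ for $k$ strictly between $0$ and the discrepancy-type index of $E$ in $\mathbb{P}(w)$. This places $\tau$ in $\iota_*\ker(f_{E*})\subset\ker f_*$, and Principle~\ref{P1} then yields $\mathrm{D}^b(Y)/\langle\ker f_*\rangle\simeq\mathrm{D}^b(X)$.

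\textbf{Step 2: generation of the kernel via Principle~\ref{P2}.} We apply the same construction to $F=\kappa_*\mathcal{O}_C$, whose resolution is the Koszul-type complex of the weighted generators. The cones $\tau_{\mathbf{t},\mathbf{s}}$ then run through all $\iota_*(\mathcal{O}_E(-k)\otimes f_E^*G)$ with $k$ in the acyclic window. Any $K\in\ker f_*$ can be written as a cone of $K\otimes$ (the system for $\mathcal{O}_X$) compared against $f^*f_*K=0$, so $K$ lies in the thick subcategory generated by these $\tau$'s. Hence $\langle\ker f_*\rangle=\langle\iota_*\ker f_{E*}\rangle$. In the terminal case one can check directly that $\ker f_{E*}$ is generated by the window $\mathcal{O}_E(-1),\dots,\mathcal{O}_E(-a+1)$, which gives an Orlov-type decomposition. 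For the Kawamata blow-ups we run the identical argument on the canonical stack, where $\mathbb{A}^n/\mu_m$ becomes $[\mathbb{A}^n/\mu_m]$ and the twists become characters.

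\textbf{Main obstacle.} I expect the hardest step to be the vanishing in Step~1 when the exceptional divisor $E\subset\mathbb{P}(w)$ is not well-formed, or when the equations $g_j$ have weighted initial forms defining a singular $E$. In that case $\mathcal{O}_E(k)$ need not be invertible and Bott vanishing must be replaced by vanishing for reflexive rank-one sheaves on a quasi-smooth complete intersection. I would first try to enlarge the weights so that the initial forms $\mathrm{in}_w(g_j)$ form a regular sequence; this is exactly what makes the contraction a weighted blow-up in the terminal classification. I would then use the Koszul resolution of $E$ in $\mathbb{P}(w)$ to transfer vanishing from $\mathbb{P}(w)$ to $E$. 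Where this fails, one should pass to the canonical stack, where everything becomes a genuine line bundle.
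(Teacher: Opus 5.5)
Your framework (Principles~\ref{P1} and~\ref{P2} applied to Rees-type standard resolutions) is the paper's, but several load-bearing steps are missing or wrong.

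\textbf{Step~2 and the kernel.} Step~2 is not an argument. The Rees system of $\mathcal{O}_X$ is the constant system $\mathcal{O}_Y$, and $f_*K=0$ only gives the vacuous $f^*f_*K=0$. Nothing therefore places $K$ in the thick hull of the cones $\tau_{\mathbf{t},\mathbf{s}}$. The paper proves $\ker f_*\subset\langle\iota_*\ker f_{E*}\rangle$ in two moves:
\begin{itemize}
\item a local adjunction in the quotient, $\operatorname{Ext}^*_{\mathrm{D}^b(Y)/\langle\iota_*\ker f_{E*}\rangle}(\iota_*\mathcal{O}_E,F)\simeq\operatorname{Ext}^*_{\mathrm{D}^b(X)}(\kappa_*\mathcal{O}_C,f_*F)$, where the Rees object of $\kappa_*\mathcal{O}_C$ becomes $\iota_*\mathcal{O}_E$;
\item the observation that any $F$ with $f_*F=0$ is supported on $E$, so in the quotient it is built from $\iota_*\mathcal{O}_E\otimes f_E^*(-)$.
\end{itemize}
Together these force $\operatorname{Ext}^*(F,F)=0$ in the quotient (Lemma~\ref{kernalElemma} and the coarse-moduli lemma of Section~\ref{seckblcm}).

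Your accompanying claim that $\ker f_{E*}$ is generated by the window $\mathcal{O}_E(-1),\dots,\mathcal{O}_E(-e+1)$ is false. Take the blow-up of an ordinary double point: $E\cong\mathbb{P}^1\times\mathbb{P}^1$ and $e=2$. Then $\mathcal{O}_E(-1,0)\in\ker f_{E*}$, but it is not in the thick hull of the exceptional object $\mathcal{O}_E(-1,-1)$. This residual $\mathcal{A}_{-1}$ is why the paper gets Orlov-type decompositions only modulo $\mathcal{K}_{-1}$.

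\textbf{Step~1.} Your conclusion that the cones lie in $\iota_*\ker f_{E*}$ is correct, but the reason is not. Over singular $X$ the resolution is infinite with growing degrees, so the comparison produces pieces $\iota_*\mathcal{O}_E(-k)$ with $k$ arbitrarily large. Some of these are not acyclic, e.g.\ $\mathcal{O}_E(-2,-2)$ in the example above. The real input is that the pieces assemble into the mutated objects $\iota_*\mathcal{D}_s[1]$, which are $f_{E*}$-acyclic as wholes. For $1\le s<e$ this is Lemma~\ref{orddualcomp}; for $s\ge e$ it holds because $\mathcal{D}_s\in\mathcal{A}_{-1}$.

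\textbf{Stacks versus varieties.} For Kawamata blow-ups and the non-Gorenstein points of Hayakawa and Kawakita, the Rees filtration by $val\ge k$ lives on the cover with integral weights $a_i$. Its stacky Proj modulo $\mu_r$ is not $\widetilde Y$ but the root stack $\sqrt[r]{E/\widetilde Y}$ (Lemma~\ref{rootsblowup}), since $\mathcal{O}(-E)|_E\simeq\mathcal{O}_E(r)$ on $\widetilde Y$. After $\varpi_*$ only the cones $\iota_*\mathcal{D}_{sr+i}$ survive, so full faithfulness of $\varpi_*\mathcal{R}\pi^*$ has to be re-proved. The paper does this using $e\ge r$ and the adjoint triple $\pi_!\varpi^*\dashv\varpi_*\mathcal{R}\pi^*\dashv\pi_*\varpi^!$.

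More seriously, the theorem concerns $\mathrm{D}^b(Y)$ and $\mathrm{D}^b(X)$, and your plan never comes back down from the stacks. Objects of $\mathrm{D}^b(X)$ must first be lifted through the Cohen--Macaulay approximation equivalence $\mathrm{D}^b(X)\simeq\mathrm{D}^b(\widetilde X)/\mathcal{K}_-$ (Proposition~\ref{CMappro}). On the other side you need the coarse-moduli lemma for $o\colon Y\to X$, proved with a bifiltration. That lemma is exactly where $\iota_*\ker f_{E*}$ on the coarse divisor, the category $\mathcal{B}_{-1}$, appears.

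\textbf{Curve centres.} A curve centre is not locally a product with $\mathbb{A}^1$. Terminal threefold singularities are isolated, so at a singular point $p\in C$ the germ cannot be $S\times\mathbb{A}^1$, and that point is the whole difficulty. The paper instead:
\begin{itemize}
\item uses the Chen--Chen--Chen tilted embedding and assumes $E\to C$ is flat;
\item restricts the decomposition to the special fibre by flat base change (Lemma~\ref{flatbasechange});
\item proves kernel generation by $\langle\iota_{E*}\mathcal{B}_{-1}\rangle$ (Lemma~\ref{kernalElemma}).
\end{itemize}
Away from $p$, Cutkosky's theorem makes the contraction the ordinary blow-up of a smooth curve.
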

Based on the existence of controlled resolutions of singularities, we obtain the following result:
\begin{theorem}[Cor.\ \ref{corollaryin3} and \ref{corollaryin5}]
Let \(f:Y\to X\) be a proper birational morphism between terminal threefolds. Then the Bondal--Orlov localization formula holds on the level of varieties. If \(Y\) is \(\mathds Q\)-factorial, the Bondal--Orlov localization formula also holds on the level of root stacks.
\end{theorem}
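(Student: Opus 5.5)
The plan is to reduce the general proper birational morphism $f:Y\to X$ between terminal threefolds to a chain of elementary steps, each of which is handled by the preceding theorem (Cor.~\ref{corollaryin1} and \ref{corollaryin2}), and then to show that Bondal--Orlov localization is stable under composition.

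\textbf{Reduction to divisorial contractions.} First I would use the existence of controlled resolutions in dimension three. Apply the Chen--Hacon factorization, or more precisely run a relative MMP over $X$ starting from a $\mathbb{Q}$-factorialization of $Y$. This factors $f$ into a sequence of divisorial contractions to points or curves, flips and flops, all between terminal threefolds. Each divisorial contraction can be chosen as a weighted blow-up or a Kawamata blow-up in suitable local analytic coordinates, because terminal singularities are hyperquotient, hence well-formed. The $\mathbb{Q}$-factorialization step $Y'\to Y$ is small, and flips and flops are isomorphisms in codimension one. For these I would use the fact that a small morphism $g$ between rational singularities has $g_*\mathcal{O}=\mathcal{O}$, so the needed localization follows from Principle~\ref{P1} with the constant filtered system $\mathcal{R}^{\mathbf{s}}g^*=Lg^*$ truncated to $\mathrm{D}^b$ via a local finite resolution on the canonical stack.

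\textbf{Composition.} Next I would prove a two-out-of-three statement. If $g:Z\to Y$ and $f:Y\to X$ both satisfy $\mathrm{D}^b(Y)\simeq \mathrm{D}^b(Z)/\ker g_*$ and $\mathrm{D}^b(X)\simeq\mathrm{D}^b(Y)/\ker f_*$, then $(fg)_*$ is a Verdier localization. Composites of localizations are localizations. The kernel of $(fg)_*$ is the preimage of $\ker f_*$, and it is thick because $\ker g_*$ is thick. The only subtlety is that the theorem is stated for $\langle\ker f_*\rangle$, so I would check that this preimage is exactly the thick closure. For this it suffices that the essential image of $\ker (fg)_*$ in $\mathrm{D}^b(Y)$ under $g_*$ is $\ker f_*$. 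That follows from essential surjectivity of $g_*$ up to the quotient, which is itself part of the localization statement.

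\textbf{Stacks.} When $Y$ is $\mathbb{Q}$-factorial, I would replace $Y$ and $X$ by their canonical (root) stacks, which are smooth Deligne--Mumford stacks. The Kawamata blow-up statements of Cor.~\ref{corollaryin2} are formulated on the stack level, and the composition argument above is formal. $\mathbb{Q}$-factoriality is needed so that each Weil divisor used to define the root stack is $\mathbb{Q}$-Cartier. This makes the stacky version of each elementary divisorial contraction a genuine representable morphism of root stacks.

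The main obstacle I expect is the flip and flop steps. Unlike divisorial contractions, they are not morphisms over each other, so the factorization really gives a roof $Y_i\leftarrow W\to Y_{i+1}$ rather than a chain of morphisms. My first attempt would be to avoid flips entirely. Any proper birational morphism between terminal threefolds can be dominated by a common terminal model obtained purely by divisorial extractions of $X$ (a weighted-blow-up resolution of the singularities and the exceptional locus). So I would compare both $Y\to X$ and this model $W\to X$ through $W\to Y$, which is again a composite of divisorial contractions. Then I would apply the two-out-of-three argument in the direction "$W\to X$ and $W\to Y$ are localizations, hence $Y\to X$ is". Establishing that direction requires showing that $\ker(W\to Y)_*\subset\ker(W\to X)_*$ is exactly the kernel of the induced functor. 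That is the step I would check most carefully.
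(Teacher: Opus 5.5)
Your reduction to elementary steps does not go through. A relative MMP over $X$ dictates which divisorial contractions occur. Cor.~\ref{corollaryin1} and \ref{corollaryin2}, however, only treat \emph{generic} contractions, i.e.\ those satisfying the measured condition and flatness of $E\to C$. Well-formedness of terminal singularities does not turn an arbitrary MMP step into a Kawamata or weighted blow-up with these properties, so the building blocks you invoke are not available.

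The flips are roofs, and your fallback rests on the unsupported claim that $W\to Y$ is again a composite of covered divisorial contractions. A model built by extractions over $X$ has no reason to factor over $Y$ without flips. Without that claim, knowing that $(W\to Y)_*$ is a localization is the theorem itself for a smooth source.

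The step you single out as delicate is in fact formal. If $g_*$ and $(fg)_*$ are Verdier localizations, then $\ker g_*\subset\ker(fg)_*$. Essential surjectivity of $g_*$ shows that $g_*$ maps $\ker(fg)_*$ onto $\ker f_*$, and transitivity of Verdier quotients gives $\mathrm{D}^b(Y)/\ker f_*\simeq\mathrm{D}^b(X)$.

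Your treatment of small maps is also misjustified. Canonical stacks of terminal threefolds are not smooth: their \'etale charts are $[U/\mu_r]$ with $U$ the index-one cover, an isolated cDV hypersurface singularity that is smooth only in the cyclic quotient case. So there is no finite resolution to truncate against. Truncating $Lg^*F$ can be rescued for maps whose fibres have dimension $\le 1$ by a two-column spectral sequence argument, but that does nothing for flips or for contractions to points.

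The idea you are missing is that neither $f$ nor $W\to Y$ has to be factored. The paper takes Chen's feasible resolution $\pi_X:X^+\to X$. Its steps are \emph{chosen} to be Kawamata blow-ups or weighted blow-ups of hypersurfaces at points, exactly the cases where the filtered system $\mathcal{R}^{\mathbf{s}}\pi_X^*F$ with $\varprojlim_{\mathbf{s}}\mathcal{R}^{\mathbf{s}}\pi_X^*F\simeq\pi_X^*F$ has been constructed. It then takes a common resolution $\pi_Y:W\to Y$, $\pi_{X^+}:W\to X^+$ and sets $\mathcal{R}^{\mathbf{s}}f^*F:=\pi_{Y*}\pi_{X^+}^*\mathcal{R}^{\mathbf{s}}\pi_X^*F$.
\begin{itemize}
\item Since $f_*\pi_{Y*}=\pi_{X*}\pi_{X^+*}$ and $\pi_{X^+*}\pi_{X^+}^*\simeq\mathrm{id}$ on the smooth $X^+$, all transition cones lie in $\ker f_*$ and $f_*\mathcal{R}f^*F\simeq F$.
\item Since $Y$ has rational singularities, $\pi_{Y*}\mathcal{O}_W\simeq\mathcal{O}_Y$, and the projection formula gives $\varprojlim_{\mathbf{s}}\mathcal{R}^{\mathbf{s}}f^*F\simeq f^*F$.
\end{itemize}
Principle~\ref{P1} then applies, and no flip ever enters. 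Your cancellation idea can also be made to work: take feasible resolutions of both $X$ and $Y$, use that birational morphisms between smooth varieties are localizations, and cancel twice.

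For the stacky half your premises are off. In general $f$ does not lift to $\widetilde Y\to\widetilde X$, and the statement concerns $\pi:\sqrt[s]{\widetilde E/\widetilde Y}\to\widetilde X$, where $a/s$ is the discrepancy of $E$. $\mathbb{Q}$-factoriality makes the pullback of $E$ to the index-one cover of $Y$ Cartier, so that this root exists. On it $\rho^*K_X$ becomes Cartier with $r\rho^*K_X\sim 0$, which is what lets the index-one cover of $X$ lift. The resulting $\pi$ is in general not representable. The proof then reruns the resolution argument on a refined root stack $\widehat W$, using Lemma~\ref{flatbasechange0} to commute root constructions with pullbacks.
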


Combining the preceding two results, we obtain the following consequence.

\begin{corollary}
Let \(Y\) be a three-dimensional \(\mathds Q\)-factorial terminal variety, and let \(f:Y\to X\) be any divisorial contraction,  flipping contraction, or flipped contraction. Then the Bondal--Orlov localization formula holds both at the level of varieties and canonical stacks.
\end{corollary}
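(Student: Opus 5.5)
The corollary follows by combining the two preceding theorems, once we check that every contraction in its list is a proper birational morphism between terminal threefolds with the required $\mathds Q$-factoriality.

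\textbf{Divisorial contractions.} Let $f:Y\to X$ be a divisorial contraction from a $\mathds Q$-factorial terminal threefold $Y$. By the standard MMP facts \cite{KM}, $X$ is again $\mathds Q$-factorial and terminal, and $f$ is proper birational. The second theorem (Cor.~\ref{corollaryin3} and \ref{corollaryin5}) then gives the Bondal--Orlov localization on the level of varieties. Since $Y$ is $\mathds Q$-factorial, it also gives the localization on the level of root stacks. We would identify these root stacks with the canonical stacks: for terminal threefold singularities, which are cyclic quotients of hypersurface singularities, the index-one cover realizes the canonical stack as a root stack along the relevant divisors. When $f$ is a weighted blow-up or Kawamata blow-up, the first theorem (Cor.~\ref{corollaryin1} and \ref{corollaryin2}) gives the same conclusion directly, with an explicit description of $\langle\ker f_*\rangle$. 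We use it only as a consistency check.

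\textbf{Flipping contractions.} For a flipping contraction $f:Y\to X$, the target $X$ is not $\mathds Q$-factorial, but it is still terminal: $K_X$ is not $\mathds Q$-Cartier, yet the discrepancies of $X$ can be computed through the small resolution $Y$. Here we must be careful about what "terminal threefold" means in the second theorem. Since $f$ is small, the condition that actually matters should be that $X$ has rational singularities and that a controlled resolution factors through $Y$. Our plan is to apply the varieties-level statement with $Y$ the $\mathds Q$-factorial source. The root-stack statement then uses only the $\mathds Q$-factoriality of $Y$.

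\textbf{Flipped contractions.} For the flipped contraction $f^+:Y^+\to X$, the source $Y^+$ is again $\mathds Q$-factorial and terminal by the existence and properties of three-dimensional flips \cite{KM}. So the same argument applies verbatim.

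\textbf{Main obstacle.} The hardest step is the stack level: showing that the root stacks appearing in Cor.~\ref{corollaryin5} agree with the canonical stacks, or at least that the localization transfers between them. We would first try to check this locally analytically. A terminal threefold point is $(\text{cDV})/\mu_r$. The canonical stack is $[\tilde U/\mu_r]$, and because the index-one cover is étale in codimension one, this should coincide with the root stack of the minimal construction. We would then glue these local identifications using the purity properties \cite{SGA2} recalled in the introduction. If this identification fails globally, the fallback is to verify Principle~\ref{P1} directly on the canonical stacks, using equivariant standard resolutions of the index-one covers. The $\mu_r$-equivariance of the filtered system $\mathcal R^{\mathbf s}f^*F$ then yields the stacky statement by taking invariants.
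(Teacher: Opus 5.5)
Combining the two preceding theorems is also what the paper does. However, your route to the canonical-stack statement for divisorial contractions breaks at the step where you identify the root stacks of Corollary~\ref{corollaryin5} with canonical stacks.

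\textbf{The identification fails.} Whenever the discrepancy $a/s$ has $s>1$ (for instance every Kawamata blow-up, where it is $1/r$), the root stack $\sqrt[s]{\widetilde E/\widetilde Y}$ has stabilizer $\mu_s$ at the generic point of the divisor $\widetilde E$. So it is not isomorphic to its coarse space in codimension one. By contrast, $\widetilde Y$ is pre-canonical, and $Y$ is even smooth at the generic point of $E$, since terminal threefold singularities are isolated. Lemma~\ref{rootsblowup} makes this explicit: $\mu_r$ acts trivially on the exceptional divisor, so $w\mathrm{Bl}^{(a_i)}_C\mathcal X\simeq\sqrt[r]{E_{\mathcal Y}/\mathcal Y}\not\simeq\mathcal Y=\widetilde Y$.

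Your heuristic (the index-one cover is \'etale in codimension one) concerns the canonical stack of the base $X$. Pulled back to $Y$, that cover is ramified along $E$ (with index $s$), which is exactly why the root appears. For the same reason there is no morphism $\widetilde Y\to\widetilde X$ over $f$ at all; indeed $f$ is not codimension preserving, so Lemma~\ref{canonicalstackkey} does not apply. Your fallback of running Principle~\ref{P1} directly on canonical stacks therefore has no functor to run on, and taking $\mu_r$-invariants cannot remove a codimension-one stabilizer.

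\textbf{What the paper uses instead.} The missing ingredient is the first theorem, which you relegate to a consistency check. By the cited classification results (Kawamata, Hayakawa, Kawakita, Yamamoto, Chen--Chen--Chen), the divisorial contractions in question are Kawamata or weighted blow-ups. Section~\ref{Kawamatablowup} then transfers the statement from the root stack to $\widetilde Y$ along the correspondence $\widetilde X\xleftarrow{\pi}\sqrt[r]{E/\widetilde Y}\xrightarrow{\varpi}\widetilde Y$:
\begin{itemize}
\item one takes $\Theta=\varpi_*\mathcal R\pi^*$ with adjoints $\pi_!\varpi^*\dashv\Theta\dashv\pi_*\varpi^!$;
\item these are only defined after passing to $\mathrm D^b(\widetilde Y)/\mathcal K_{-1}$, which uses the filtration of $\varpi^!\iota_*N$ from Proposition~\ref{derivedroot} together with $e\ge r$;
\item the result is $\mathrm D^b(\widetilde Y)/\mathcal K_{-1}=\langle\mathcal O_E(-(e-r)),\dots,\mathcal O_E(-1),\operatorname{Im}\Theta\rangle$, or, when $e<r$, an equivalence after also quotienting $\mathcal L_-$ on $\widetilde X$.
\end{itemize}
The stack-level formula for contractions to points is thus a statement about this correspondence, not about a morphism $\widetilde Y\to\widetilde X$. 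For contractions to a curve the discrepancy is $1$, so $s=1$ and the lift does exist.

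\textbf{Flipping and flipped contractions.} Here you invoke the wrong result. Corollary~\ref{corollaryin5} presupposes $f^*K_X$, which is meaningless when $K_X$ is not $\mathds Q$-Cartier. The paper instead lifts the small morphism to $\widetilde Y\to\widetilde X$ via Lemma~\ref{canonicalstackkey} (small implies codimension preserving) and then applies the stacky feasible-resolution corollary.

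Relatedly, your assertion that the base of a flipping contraction is terminal is false as stated: $K_X$ is not $\mathds Q$-Cartier, so its discrepancies are undefined. The paper's wording glosses over this too. But your proposed substitute, a controlled resolution factoring through $Y$, would require a Rees-type pullback along $f$ itself, which is what is being proved.
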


Moreover, the case of standard flips is essentially parallel to Kawamata blow-ups. In particular, we extend the Bondal--Orlov standard flip formula to three-dimensional terminal section flips endowed with additional stacky structure in Corollary~\ref{corollaryin4}. The method here is closely related to the derived category approach to VGIT developed in \cite{Halpern-Leistner2015,Ballard2012}. However, in order to obtain good functors and extend the construction to the global setting, one perhaps needs to employ more classical constructions, such as those appearing in \cite{BFR}.

We end with some open-ended remarks. The main technique of this article relies on the construction of standard resolutions for complete intersections in \cite[Section~2]{Hao2025b}, where the codimension $\leq 2$ case is explained in detail. The higher codimension case follows by an entirely analogous, though more involved, combination; however, the codimension $\leq 2$ case already suffices for the results above. In particular, for iterated weighted blow-ups along points, one can construct a Rees pullback step by step; however, a one-step construction of standard
resolutions remains unclear. This issue is closely related to the use of $\mathbb{Z}^{k}$-valued divisorial valuations, where the corresponding ``division'' structure encodes symmetries that are deeply connected to flips. At present, the method does not allow a naive gluing construction. For weighted blow-ups along general centers, the construction of standard resolutions seems to require a gluing procedure analogous to Hironaka's gardening theory of permissible centers \cite{Hironaka1977}. A more flexible approach, based on gluing along stability conditions, appears to be natural but remains largely unexplored. In the present work, the use of affine space as the key variety together with complete intersection models partially conceals this difficulty. Finally, we note that the blow-up formula also admits analogous and interesting applications in the derived category of constructible sheaves, for instance in the work of Fujiwara \cite{Fujiwara}.
\subsection{Notation}
\begin{enumerate}
\item In this paper, we assume that the (local) ring $R$ or the variety $X$ is integral, (essentially) finitely generated and separated over a field $\mathrm{k}$. We use $\widehat{R}$ or $\widehat{X}_{C}$ to denote either the formal completion of $R$ at its maximal ideal $\mathfrak{m}$ or along $C$. Since we only consider coherent sheaves, we do not distinguish between $\operatorname{Spec} \widehat{R}$ and $\operatorname{Spf} \widehat{R}$.
\item We consider separated Deligne--Mumford stacks $\mathcal{X}$ of finite type over $\mathrm{k}$ (e.g., with proper and unramified diagonal). When the appropriate conditions are satisfied, $\widetilde{X}$ denotes the canonical stack of $X$.

\item We denote by $\mu_r$ the algebraic cyclic group scheme over $\mathrm{k}$, with irreducible representations $\nu^i$. Over a stacky closed point, we sometimes write $\nu_{x}^{i} := \mathrm{k}_x \otimes \nu^i$. We denote by $\mathbb{G}_m$ the multiplicative group scheme over $\mathrm{k}$, with irreducible representations $\chi^i$.

\item We usually denote by $\mathcal{P}(a_1,\dots,a_n)$ the weighted projective stack.

\item We write $\mathrm{D}^*(-)$ for the derived category of coherent modules (or coherent sheaves) on a (formal) ring $R$ (or space, stack $X$, $\mathcal{X}$). For a space or stack, we work with the big \'etale site; however, by descent, we do not distinguish between sites. Where $* = b, +, -$ refers, respectively, to the bounded, bounded below, and bounded above derived categories. Throughout, all standard functors such as $\pi_*, \pi^*$, etc., are understood in their derived sense unless otherwise specified.
\end{enumerate}

\section{Elementary Linkage and the Derived Category}
\subsection{Roots construction}
\begin{lemma}\label{spanlemma1}
Let $(R,\mathfrak m)$ be a Gorenstein local ring and let $\mathrm{k}=R/\mathfrak m$. Let $F \in \mathrm{Coh}(R)$. If
\[
\operatorname{Ext}^i_R(F,\mathrm{k})=0 \quad \text{for all } i,
\]
then $F=0$.
\end{lemma}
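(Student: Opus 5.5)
The plan is to use only the case $i=0$ of the hypothesis, together with Nakayama's lemma; the Gorenstein assumption should not be needed for a single coherent module, and I would note this explicitly.

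First, since $F\in\mathrm{Coh}(R)$, $F$ is a finitely generated $R$-module. Any $R$-linear map $F\to \mathrm{k}$ kills $\mathfrak m F$, because $\mathfrak m$ annihilates $\mathrm{k}$. Hence there is a natural identification
\[
\operatorname{Hom}_R(F,\mathrm{k})\;\cong\;\operatorname{Hom}_{\mathrm{k}}(F/\mathfrak m F,\mathrm{k}).
\]
Next, $F/\mathfrak m F$ is a finite-dimensional $\mathrm{k}$-vector space, of dimension $\mu(F)$, the minimal number of generators of $F$. Its $\mathrm{k}$-dual therefore has the same dimension. The hypothesis $\operatorname{Ext}^0_R(F,\mathrm{k})=\operatorname{Hom}_R(F,\mathrm{k})=0$ then forces $F/\mathfrak m F=0$, that is, $F=\mathfrak m F$. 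Nakayama's lemma for the finitely generated module $F$ over the local ring $(R,\mathfrak m)$ gives $F=0$.

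There is no real obstacle here; the only point requiring care is the finite generation of $F$, which is what makes Nakayama applicable. If the lemma is later used for objects of $\mathrm{D}^b(R)$ rather than for modules, which the phrase ``for all $i$'' suggests, I would argue as follows. Take a minimal free resolution of a bounded complex with coherent cohomology, so that all differentials have entries in $\mathfrak m$. Then $\operatorname{Ext}^i_R(F,\mathrm{k})$ is the $\mathrm{k}$-dual of the $i$-th term tensored with $\mathrm{k}$. Vanishing for all $i$ then says the minimal resolution is zero, so $F\simeq 0$. In that version, too, the Gorenstein hypothesis plays no role; it is presumably included for compatibility with later dualizing arguments such as $\operatorname{RHom}(-,R)$.
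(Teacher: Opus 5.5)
Your proof is correct, and it takes a genuinely different and simpler route than the paper.

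The paper does not apply Nakayama directly. It takes a Cohen--Macaulay approximation $0\to P_F\to M_F\to F\to 0$ and uses the vanishing of $\operatorname{Ext}^i_R(F,\mathrm{k})$, together with that of $\operatorname{Ext}^i_R(P_F,\mathrm{k})$ for $i\gg 0$, to get $\operatorname{Ext}^i_R(M_F,\mathrm{k})=0$ for $i\gg0$. It then concludes that the maximal Cohen--Macaulay module $M_F$ is free; this is where the Gorenstein hypothesis and Auslander--Buchsbaum enter. Hence $F$ has finite projective dimension, and only then does the paper deduce $F=0$ from the vanishing for all $i$.

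That final step is itself a minimal-resolution/Nakayama argument of exactly your type. It does not need finite projective dimension, so your argument shows the detour is unnecessary. For a finitely generated module, $\operatorname{Hom}_R(F,\mathrm{k})=0$ alone gives $F=0$ over any Noetherian local ring. Your minimal free resolution argument also correctly handles objects of $\mathrm{D}^b(R)$, which is how the next lemma uses the statement. An even quicker way: if $H^n(F)$ is the top nonzero cohomology, then $\operatorname{Hom}(F,\mathrm{k}[-n])\cong\operatorname{Hom}_R(H^n(F),\mathrm{k})$, and Nakayama applies.

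The paper's route only buys the intermediate fact that eventual vanishing of $\operatorname{Ext}^i_R(F,\mathrm{k})$ forces finite projective dimension, phrased in its Cohen--Macaulay-approximation language. Even that holds over any Noetherian local ring via Betti numbers.

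Your guess about the Gorenstein hypothesis is right. It matters for the dual half of the spanning-class lemma that follows. There $\operatorname{RHom}_R(-,R)$ is an anti-equivalence of $\mathrm{D}^b(\mathrm{Coh}\,R)$ sending $\mathrm{k}$ to a shift of $\mathrm{k}$. It therefore exchanges the conditions $\operatorname{Ext}^*(F,\mathrm{k})=0$ and $\operatorname{Ext}^*(\mathrm{k},F)=0$.
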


\begin{proof}
Consider the Cohen--Macaulay approximation $0 \to P_F \to M_F \to F \to 0$, where $M_F$ is maximal Cohen--Macaulay and $P_F$ has finite projective dimension. From the long exact sequence induced by the approximation and the assumption on $F$, noticing $\operatorname{Ext}^i_R(P_F,\mathrm{k})=0$ for $i \gg 0$ since $P_F$ has finite projective dimension, we deduce $\operatorname{Ext}^i_R(M_F,\mathrm{k})=0$ for $i \gg 0$. Therefore $M_F$ is free, and $F$ has finite projective dimension. By the condition $\operatorname{Ext}^i_R(F,\mathrm{k})=0$ for any $i$, we conclude $F=0$.
\end{proof}
We refer to \cite[Definition 1.47]{Huybrechts} for the basic definitions, we have:
\begin{lemma}
Let $X$ be a Gorenstein variety. Then for any closed point $x \in X$, the skyscraper sheaf $\mathrm{k}_x$ forms a spanning class in $\mathrm{D}^b(X)$.
\end{lemma}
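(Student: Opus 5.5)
The plan is to check the two conditions in the definition of a spanning class \cite[Definition 1.47]{Huybrechts} for $\Omega=\{\mathrm{k}_x : x\in X \text{ closed}\}$ in $\mathrm{D}^b(X)$. For $F\in\mathrm{D}^b(X)$ we must show:
\begin{enumerate}
\item if $\operatorname{Hom}(F,\mathrm{k}_x[i])=0$ for all $x$ and $i$, then $F\simeq 0$;
\item if $\operatorname{Hom}(\mathrm{k}_x[i],F)=0$ for all $x$ and $i$, then $F\simeq 0$.
\end{enumerate}
In both cases the argument localizes at a closed point and reduces to Lemma~\ref{spanlemma1}, used once for each direction.

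For the first condition, assume $F\neq 0$. We use the hypercohomology spectral sequence $E_2^{p,q}=\operatorname{Ext}^p(\mathcal H^{-q}(F),\mathrm{k}_x)\Rightarrow \operatorname{Hom}(F,\mathrm{k}_x[p+q])$. Let $q_0=\max\{q:\mathcal H^q(F)\neq 0\}$ and choose a closed point $x$ in the support of $\mathcal H^{q_0}(F)$. By Nakayama, $\operatorname{Hom}(\mathcal H^{q_0}(F),\mathrm{k}_x)\neq 0$. This term sits in the corner $p=0$, $-q=q_0$: no differential can hit it, since that would need $p<0$, and no differential leaves it, since it would land in cohomology sheaves of degree above $q_0$. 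So it survives to $E_\infty$, and $\operatorname{Hom}(F,\mathrm{k}_x[-q_0])\neq 0$. This direction does not even use the Gorenstein hypothesis.

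For the second condition, we first localize. Because $\mathrm{k}_x$ is supported at $x$, we have $\operatorname{Hom}_X(\mathrm{k}_x,F[i])=\operatorname{Hom}_{\mathcal O_{X,x}}(\mathrm{k},F_x[i])$, so it suffices to show that a nonzero $F_x\in\mathrm{D}^b(R)$, with $R=\mathcal O_{X,x}$ Gorenstein local, has some nonzero $\operatorname{Ext}^i_R(\mathrm{k},F_x)$. Here we use Gorenstein duality. The functor $\mathbb D=\mathrm{RHom}_R(-,R)$ is an anti-autoequivalence of $\mathrm{D}^b(\mathrm{Coh}\,R)$, because $R$ has finite injective dimension and so is a dualizing complex. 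Moreover $\mathbb D(\mathrm{k})\simeq \mathrm{k}[-d]$ with $d=\dim R$. This gives
\[
\operatorname{Hom}(\mathrm{k},F_x[i])\simeq \operatorname{Hom}(\mathbb D F_x,\mathbb D\mathrm{k}[i])=\operatorname{Hom}(\mathbb DF_x,\mathrm{k}[i-d]).
\]
Since $\mathbb DF_x\neq 0$ is a bounded complex, the argument of the first condition applied over $R$ produces some $i$ with this group nonzero. For a single sheaf this is exactly Lemma~\ref{spanlemma1}; the spectral sequence argument extends it to complexes. Finally, choose $x$ to be a closed point in the support of some cohomology sheaf of $F$. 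Then $F_x\neq 0$, because closed points are dense in supports on a variety of finite type.

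The step needing most care is the second condition. It rests on the identification of $\operatorname{Hom}$'s on $X$ with those over the local ring, which holds because $\mathrm{k}_x$ is coherent with support at $x$, so local and global Ext agree. It also rests on the self-duality $\mathbb D\mathrm{k}\simeq\mathrm{k}[-d]$, which is precisely where the Gorenstein hypothesis enters. Alternatively, one could avoid the duality and argue directly with the lowest nonvanishing local cohomology via depth, reducing to Lemma~\ref{spanlemma1} after the duality step.
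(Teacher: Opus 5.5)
Your proposal is correct and follows essentially the same route as the paper. The hypercohomology spectral sequence together with Nakayama (which is the content of Lemma~\ref{spanlemma1}) handles $\operatorname{Hom}(F,\mathrm{k}_x[i])$, and Gorenstein duality transfers this to $\operatorname{Hom}(\mathrm{k}_x,F[i])$. The only differences are minor: you dualize after localizing at $x$ rather than using a global dualizing functor on $X$, and your corner-term argument spells out the reduction from complexes to sheaves that the paper only asserts.
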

\begin{proof}
Consider the spectral sequence
\[
E_2^{p,q} = \operatorname{Ext}^p_X(H^q(F), \mathrm{k}_x) \implies \operatorname{Ext}^{p+q}_X(F, \mathrm{k}_x)
\]
and the dualizing functor. It suffices to prove the case when $F$ is a sheaf on one side. If $\operatorname{Ext}^i_X(F, \mathrm{k}_x)=0$ for all closed points $x$, then $F_x$ is a module over the local ring $\mathcal{O}_{X,x}$ with
\[
\operatorname{Ext}^i_{\mathcal{O}_{X,x}}(F_x, \mathrm{k}_x)=0.
\]
By Lemma~\ref{spanlemma1}, we have $F_x=0$ for all closed points $x$, hence $F=0$. By the anti-equivalence of the dualizing functor, we obtain the analogous statement.
\end{proof}
\begin{lemma}\label{ptspanningclass}
Let $\mathcal{X}$ be a Gorenstein Deligne--Mumford stack. Then for any stacky closed point $x \in \mathcal{X}$, the objects
\[
\mathrm{k}_x \otimes \chi,
\]
where $\chi$ runs over all irreducible representations of the stabilizer group $G_x$, form a spanning class in $\mathrm{D}^b(\mathrm{Coh}(\mathcal{X}))$.
\end{lemma}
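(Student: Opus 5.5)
The plan is to reduce to the non-stacky argument of the previous lemma by working étale-locally on $\mathcal{X}$. Near a stacky closed point $x$, a separated Deligne--Mumford stack has an étale neighbourhood of the form $[U/G_x]$, with $U=\operatorname{Spec} A$ affine, $G_x$ finite, and $x$ corresponding to a $G_x$-fixed closed point $u\in U$. Coherent sheaves on $[U/G_x]$ are $G_x$-equivariant $A$-modules, and $\operatorname{Hom}$ in $\mathrm{D}^b([U/G_x])$ is the $G_x$-invariant part of $\operatorname{Hom}$ in $\mathrm{D}^b(U)$. I assume the characteristic of $\mathrm{k}$ does not divide $|G_x|$, as holds for the $\mu_r$-stabilizers considered in this paper; taking invariants is then exact. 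It follows that
\[
\operatorname{Ext}^i_{\mathcal{X}}(F,\mathrm{k}_x\otimes\chi)=\bigl(\operatorname{Ext}^i_{U}(F|_U,\mathrm{k}_u)\otimes\chi\bigr)^{G_x}.
\]
Summing over all irreducible $\chi$, the regular representation $\bigoplus_\chi \chi^{\oplus \dim\chi}$ shows that these groups all vanish if and only if $\operatorname{Ext}^i_U(F|_U,\mathrm{k}_u)=0$. Indeed, $\operatorname{Hom}_{G}(V,\mathrm{k}[G])\cong V^\vee$ as vector spaces, so $V=0$ exactly when all isotypic multiplicities vanish.

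Next I follow the proof of the previous lemma. Using the hypercohomology spectral sequence $E_2^{p,q}=\operatorname{Ext}^p(H^{-q}(F),\mathrm{k}_x\otimes\chi)$, I look at the top nonzero cohomology sheaf $H^{m}(F)$. The term $\operatorname{Hom}(H^m(F),\mathrm{k}_x\otimes\chi)$ sits in the corner of the spectral sequence and survives, so it is enough to treat a sheaf $F$. Suppose $\operatorname{Ext}^i(F,\mathrm{k}_x\otimes\chi)=0$ for all $i$, all closed points $x$ and all $\chi$. The first step gives $\operatorname{Ext}^i_{\mathcal{O}_{U,u}}(F_u,\mathrm{k}_u)=0$ for every $i$. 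Since $\mathcal{X}$ is Gorenstein, $U$ is Gorenstein, and Lemma~\ref{spanlemma1} gives $F_u=0$. These neighbourhoods cover all closed points and coherent sheaves are determined by their stalks at closed points, so $F=0$.

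For the other half of the spanning-class condition, vanishing of $\operatorname{Hom}(\mathrm{k}_x\otimes\chi,F[i])$, I apply the dualizing functor $\mathbb{D}=R\mathcal{H}om(-,\omega_{\mathcal{X}})$. Because $\mathcal{X}$ is Gorenstein, $\omega_{\mathcal{X}}$ is a line bundle up to shift, and $\mathbb{D}$ is an anti-autoequivalence of $\mathrm{D}^b(\mathcal{X})$. It sends $\mathrm{k}_x\otimes\chi$ to $\mathrm{k}_x\otimes\chi'[-d]$, where $\chi'$ is $\chi^\vee$ twisted by the character through which $G_x$ acts on the fibre of $\omega$. As $\chi$ varies, $\chi'$ still runs over all irreducible representations, so this case reduces to the first.

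The step I expect to be the main obstacle is the local invariants identity together with the twist computation for $\mathbb{D}(\mathrm{k}_x\otimes\chi)$. Both require care with the étale-local quotient presentation and the tameness of the stabilizers. I would first verify the identity on the affine chart $[U/G_x]$ directly, using an equivariant free resolution of $F$.
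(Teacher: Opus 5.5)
Your proposal is correct and is essentially the paper's proof. Both use the same ingredients: the étale chart $[U/G_x]$; the identification of the vanishing of $\operatorname{Ext}^i$ against all $\mathrm{k}_x\otimes\chi$ with the vanishing of $\operatorname{Ext}^i_U(F|_U,\mathrm{k}_u)$, which the paper gets from restriction--induction adjunction and you get from tame invariants plus the regular representation (the same Frobenius reciprocity); Lemma~\ref{spanlemma1}; the spectral-sequence reduction to a sheaf; and the dualizing functor for the other half.

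One small correction: the corner argument passes only the vanishing of $\operatorname{Hom}$, not of all $\operatorname{Ext}^i$, to the top cohomology sheaf $H^m(F)$. So you should close that step by observing $H^m(F)_u\otimes\mathrm{k}(u)=0$ and applying Nakayama, rather than by invoking Lemma~\ref{spanlemma1}.
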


\begin{proof}
Similarly, consider the spectral sequence and the dualizing functor. It suffices to prove the case when $F$ is a sheaf. For any stacky closed point $x$, take an étale atlas or an equivariant formal neighborhood $[U/G_x]$. By the adjunction between restriction and induction for finite group representations over fields,
\[
\operatorname{Ext}^i_{[U/G_x]}(F, \bigoplus_{\chi \in \mathrm{Irr}(G_x)} \mathrm{k}_x \otimes \chi) \simeq \operatorname{Ext}^i_U(\mathrm{Res}\, F, \mathrm{k}_x) = 0.
\]
By Lemma~\ref{spanlemma1}, $\mathrm{Res}\, F_{\widehat{x}} = 0$ for all stacky closed points $x$. By Nakayama's lemma $F_x = 0$, hence $F=0$.
\end{proof}

\begin{remark}
Let $(R,\mathfrak m,\mathrm{k})$ be a local ring. By  faithfully flatness of field extension, we have
\[
\operatorname{Ext}^i_R(F,\mathrm{k})=0 \quad \Longleftrightarrow \quad \operatorname{Ext}^i_R(F,\overline{\mathrm{k}})=0,
\]
where $\overline{\mathrm{k}}$ denotes an algebraic closure of $\mathrm{k}$. It follows that closed points may be replaced by geometric points in the above spanning class whence the extension is finite.
\end{remark}

\begin{lemma}\label{excdis}
Let $\mathcal{X}$ be any Deligne--Mumford stack and $i: D \hookrightarrow \mathcal{X}$ the embedding of a Cartier divisor. Then for any $F$ in $\mathrm{Coh}(\mathcal{X})$, there is an excess distinguished triangle
\[
F(-D)[1] \longrightarrow i^* i_* F \longrightarrow F \in \Delta.
\]
\end{lemma}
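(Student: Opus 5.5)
The triangle will come from computing $i^*i_*F$ locally with the Koszul resolution of $i_*\mathcal{O}_D$ and identifying its two cohomology sheaves. Here $F$ is a coherent sheaf on $D$, i.e.\ $F\in\mathrm{Coh}(D)$; this is the only reading under which $F(-D)$ and $i^*i_*F$ make sense as stated. Throughout, $\mathcal{O}_D(-D)$ denotes the conormal bundle $i^*\mathcal{O}_{\mathcal{X}}(-D)$.

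First, since $D$ is Cartier, the ideal sheaf is the line bundle $\mathcal{O}_{\mathcal{X}}(-D)$. This gives the two-term locally free resolution
\[
0\to \mathcal{O}_{\mathcal{X}}(-D)\xrightarrow{s} \mathcal{O}_{\mathcal{X}}\to i_*\mathcal{O}_D\to 0 .
\]
It is defined globally on $\mathcal{X}$, so there is no descent issue: everything can be checked on an étale atlas, and the construction is canonical.

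Second, I compute $i^*i_*F$. Tensoring the resolution with $i_*F$ gives
\[
i^*i_*F\simeq [\,i_*F\otimes\mathcal{O}_{\mathcal{X}}(-D)\xrightarrow{s} i_*F\,]
\]
in degrees $-1,0$. Because $F$ is an $\mathcal{O}_D$-module, $s$ acts by zero on $i_*F$, so the differential vanishes. Hence $\mathcal{H}^0(i^*i_*F)=F$ and $\mathcal{H}^{-1}(i^*i_*F)=F\otimes\mathcal{O}_D(-D)=F(-D)$, and all other cohomology sheaves vanish. Equivalently, $L_1i^*(i_*F)=\mathcal{T}or_1^{\mathcal{O}_{\mathcal{X}}}(i_*F,\mathcal{O}_D)=F(-D)$, with the identification made via the resolution above.

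Third, a complex with cohomology only in degrees $-1$ and $0$ sits in the truncation triangle
\[
\tau_{\le -1}\to i^*i_*F\to \tau_{\ge 0}\to .
\]
Here $\tau_{\le -1}=\mathcal{H}^{-1}[1]=F(-D)[1]$ and $\tau_{\ge0}=\mathcal{H}^0=F$. Substituting gives exactly the claimed triangle $F(-D)[1]\to i^*i_*F\to F$. The map $i^*i_*F\to F$ is the counit of the adjunction $i^*\dashv i_*$, which is compatible with the truncation since it induces an isomorphism on $\mathcal{H}^0$.

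The only step needing care is the identification of $\mathcal{H}^{-1}$. It must be canonical, i.e.\ independent of local equations for $D$. This holds because the resolution uses the global line bundle $\mathcal{O}_{\mathcal{X}}(-D)$ rather than local generators, so the identification glues on the stack. Derived pullback on a Deligne--Mumford stack is computed étale locally, so the local computation suffices.
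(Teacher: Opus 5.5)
Your proof is correct and follows essentially the paper's route: both tensor the resolution $\mathcal{O}_{\mathcal{X}}(-D)\xrightarrow{s}\mathcal{O}_{\mathcal{X}}$ of $i_*\mathcal{O}_D$ with $i_*F$, and both use that the differential vanishes because $F$ is an $\mathcal{O}_D$-module. The paper phrases this by applying $i_*$ to the fiber $G$ of the counit, comparing triangles, and then descending $i_*G\simeq i_*F(-D)[1]$ to $G$; you instead read off $\mathcal{H}^{-1}$ and $\mathcal{H}^0$ and use the truncation triangle. Note that your two-term complex literally computes $i_*i^*i_*F$, so your identification of cohomology sheaves relies on the same exactness and full faithfulness of $i_*$ on sheaves that the paper's ``pure sheaf'' step uses.
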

\begin{proof}
Consider the natural distinguished triangle
\[
G \longrightarrow i^* i_* F \longrightarrow F \in \Delta.
\]
Apply $i_*$ and compare with the triangle
\[
i_* F(-D) \xrightarrow{\;\;\;0\;\;\;} i_* F \longrightarrow i_* i^* (i_* F) \in \Delta.
\]
We obtain $i_{*}G \simeq i_{*}F(-D)[1]$. Since $F$ is a pure sheaf, it follows that $G \simeq F(-D)[1]$.
\end{proof}

Let $\mathcal{X}$ be any Deligne--Mumford stack, $D \subset \mathcal{X}$ a pseudo-effective Cartier divisor, and let $r$ be a positive integer with $\gcd(r, \operatorname{char} \mathrm{k}) = 1$. Consider the $r$-th root stack
\[
\sqrt[r]{(D/\mathcal{X})}.
\]
We refer to \cite[Sec. 1.3]{FMN} for a precise definition of root stacks.  That is étale locally on $\mathcal{X}$, if $x$ is a stacky point with stabilizer group $G_x$, and $D$ is defined by a local equation $f$, we require that $f$ is $G_x$-equivariant with a compatible choice of $G_x$-character on $t$, where $t^r=f$, such that the root construction is locally well-defined on the stack and admits a compatible $G_x$-equivariant structure. We usually denote this compatibility by $\chi_t^r = \chi_f$. Moreover, by gluing the local constructions, we obtain a tautological line bundle $\mathcal{M}$ on $\sqrt[r]{(D/\mathcal{X})}$ such that
\[
\mathcal{M}^{\otimes r} \simeq \mathcal{O}_\mathcal{X}(D).
\]
\begin{lemma}\label{flatbasechange0}
For any proper morphism $\pi: \mathcal{Y} \to \mathcal{X}$ between Deligne--Mumford stacks, we have the following Cartesian square:
\begin{equation}\label{eq:root-cartesian-square}
\begin{tikzcd}
\sqrt[r]{(\pi^{-1}D/\mathcal{Y})} \arrow[r, "\varpi_\mathcal{Y}"] \arrow[d, "\pi_\mathcal{Y}"'] & \mathcal{Y} \arrow[d, "\pi"] \\
\sqrt[r]{(D/\mathcal{X})} \arrow[r, "\varpi"'] & \mathcal{X}
\end{tikzcd}
\end{equation}
where $\pi^{-1}D$ denotes the total inverse image of $D$.
\end{lemma}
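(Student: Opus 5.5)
The plan is to reduce the statement to the moduli description of root stacks from \cite[Sec.~1.3]{FMN}. In that description, $\sqrt[r]{(D/\mathcal{X})}$ represents, over $\mathcal{X}$, the fibred category of triples $(L,s,\phi)$. Here $L$ is a line bundle, $s$ is a section of $L$, and $\phi\colon L^{\otimes r}\xrightarrow{\sim}\mathcal{O}(D)$ is an isomorphism carrying $s^{r}$ to the canonical section $s_D$. Equivalently, the root stack is the fibre product
\[
\sqrt[r]{(D/\mathcal{X})} \;=\; \mathcal{X}\times_{[\mathbb{A}^1/\mathbb{G}_m],\,\theta_r}[\mathbb{A}^1/\mathbb{G}_m],
\]
where $\mathcal{X}\to[\mathbb{A}^1/\mathbb{G}_m]$ is the classifying map of $(\mathcal{O}(D),s_D)$ and $\theta_r$ is the $r$-th power map. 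This presentation is what makes base change formal, so I would use it as the definition throughout.

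First I note that $\pi^{-1}D$ is again an effective Cartier divisor, or at least a generalized one. Its classifying pair is $(\pi^*\mathcal{O}(D),\pi^*s_D)$, and the classifying map $\mathcal{Y}\to[\mathbb{A}^1/\mathbb{G}_m]$ is the composite of $\pi$ with the map for $\mathcal{X}$. If $\pi^*s_D$ is not a nonzerodivisor, for instance because a component of $\mathcal{Y}$ maps into $D$, I would interpret $\sqrt[r]{(\pi^{-1}D/\mathcal{Y})}$ via the pair $(\pi^*\mathcal{O}(D),\pi^*s_D)$, as in FMN. For the intended applications, such as birational or dominant $\pi$, it is a genuine Cartier divisor. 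The pasting lemma for $2$-Cartesian squares then gives
\[
\mathcal{Y}\times_{\mathcal{X}}\sqrt[r]{(D/\mathcal{X})}\simeq \mathcal{Y}\times_{[\mathbb{A}^1/\mathbb{G}_m],\theta_r}[\mathbb{A}^1/\mathbb{G}_m]=\sqrt[r]{(\pi^{-1}D/\mathcal{Y})},
\]
with projections $\varpi_{\mathcal{Y}}$ and $\pi_{\mathcal{Y}}$. The same identification can be checked on triples: a triple on $T\to\mathcal{Y}$ for $\pi^*\mathcal{O}(D)$ is literally a triple for $\mathcal{O}(D)$ on the composite $T\to\mathcal{X}$. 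Properness of $\pi$ is not needed for the Cartesian property itself; it only ensures that $\pi_{\mathcal{Y}}$ is proper, as a base change of $\pi$, for later use.

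Second, I would reconcile this with the paper's local description, which uses equivariant charts $[U/G_x]$, an equation $f$, and a character with $\chi_t^r=\chi_f$. Étale locally over a stacky point $x$, the chart is $[\operatorname{Spec}\mathcal{O}_U[t]/(t^r-f)\,/\,G_x\times\mu_r]$ with the stated character on $t$. Pulling back along $\pi$ replaces $f$ with $\pi^*f$ and keeps the same characters, since $\pi$ is equivariant on compatible charts. Because $\mathcal{O}_U[t]/(t^r-f)\otimes_{\mathcal{O}_U}\mathcal{O}_V=\mathcal{O}_V[t]/(t^r-\pi^*f)$, the local squares are Cartesian. The tautological bundles also match, $\pi_{\mathcal{Y}}^*\mathcal{M}_{\mathcal{X}}\simeq\mathcal{M}_{\mathcal{Y}}$, so the local isomorphisms glue.

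The main obstacle is the $2$-categorical bookkeeping. The comparison map has to be an equivalence of stacks, not just a bijection on objects: the automorphisms of triples, the $\mu_r$ in the stabilizers, have to match. The gluing data for the $G_x$-equivariant choices of characters also has to be compatible across charts. I would handle both at once by working entirely with the $[\mathbb{A}^1/\mathbb{G}_m]$ fibre-product description, where the $2$-Cartesian property follows from pasting. The local computation then serves only to confirm that this agrees with the paper's chart-level definition.
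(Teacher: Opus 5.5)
Your proposal is correct and uses the same two ingredients as the paper. The paper computes on an affine étale atlas that $R[t]/(t^r-f)\otimes_R S\simeq S[t]/(t^r-f)$, and then cites the Cartesian diagrams of \cite[Sec.~1.3]{FMN} for the global statement; that citation is exactly your pasting of $2$-Cartesian squares over $[\mathbb{A}^1/\mathbb{G}_m]$. You make the pasting argument primary, which handles the $2$-categorical gluing the paper leaves implicit, and you rightly note that properness is not needed and that $\pi^{-1}D$ should be read as the pair $(\pi^*\mathcal{O}(D),\pi^*s_D)$ when it fails to be Cartier.
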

\begin{proof}
By taking a smooth affine étale atlas of $\mathcal{X}$ and forming the fiber product, we may assume $\mathcal{X}=\operatorname{Spec}(R)$ and $\mathcal{Y}=\operatorname{Spec}(S)$ are affine, with $G_S$-equivariant structure which has no action on $\mathcal{X}$. Choose a local trivialization of $\mathcal{O}_{\mathcal{X}}(D)$ such that $D$ is defined by a semi-invariant $f\in R$. Then the root stack admits the presentation
\[
\sqrt[r]{(D/\mathcal{X})}
\simeq
[\operatorname{Spec}(R[t]/(t^r-f))/\mu_r].
\]
After base change to $S$, we have $R[t]/(t^r-f)\otimes_R S \simeq S[t]/(t^r-f)$,
with a compatible $\mu_r\times G_S$-action. Hence the desired Cartesian property follows. Globally, it can be given by the product of the Cartesian diagrams defined in \cite[Sec.\ 1.3]{FMN}.
\end{proof}

Since the root stack is generally not representable, we need to provide some explanation.
\begin{lemma}\label{fbcofroot}
For any morphism $\pi: \mathcal{Y} \to \mathcal{X}$ as in Lemma~\ref{flatbasechange0}, we have the flat base change formula for \eqref{eq:root-cartesian-square}. That is, for any object $\mathcal{F}$ in $\mathrm{D}^b(\mathcal{Y})$, the natural base change morphism
\[
\varpi^* \pi_* \mathcal{F} \longrightarrow (\pi_{\mathcal{Y}})_* \varpi_{\mathcal{Y}}^* \mathcal{F}
\]
is an isomorphism.
\end{lemma}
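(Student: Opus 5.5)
Since $\pi$ is proper and the statement concerns sheaves, the plan is to reduce to the affine local situation of Lemma~\ref{flatbasechange0} and then use that $\varpi$ is flat.

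\textbf{Step 1: construct the map and localize.} The base change morphism $\varpi^*\pi_*\mathcal{F}\to(\pi_{\mathcal{Y}})_*\varpi_{\mathcal{Y}}^*\mathcal{F}$ is the adjoint of $\pi_{\mathcal{Y}}^*\varpi^*\pi_*\mathcal{F}\simeq\varpi_{\mathcal{Y}}^*\pi^*\pi_*\mathcal{F}\to\varpi_{\mathcal{Y}}^*\mathcal{F}$. Checking that it is an isomorphism is étale local on $\sqrt[r]{(D/\mathcal{X})}$. Both sides commute with étale base change on $\mathcal{X}$: by ordinary flat base change for the proper morphism $\pi$, and because the root construction commutes with base change, as in the proof of Lemma~\ref{flatbasechange0}. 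So we may assume $\mathcal{X}=\operatorname{Spec}R$ with $D=(f)$, $f$ semi-invariant.

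\textbf{Step 2: the local model.} In this setting
\[
\sqrt[r]{(D/\mathcal{X})}\simeq[\operatorname{Spec}R'/\mu_r],\qquad R'=R[t]/(t^r-f),
\]
and $\varpi$ factors as the finite flat map $q\colon\operatorname{Spec}R'\to\operatorname{Spec}R$ followed by the quotient by $\mu_r$. Here $R'$ is free over $R$ with basis $1,t,\dots,t^{r-1}$. Since $\gcd(r,\operatorname{char}\mathrm{k})=1$, the group $\mu_r$ is linearly reductive. Pushforward from $[\operatorname{Spec}R'/\mu_r]$ is therefore taking $\mu_r$-invariants, which is exact, and the decomposition into $\mu_r$-eigensheaves commutes with every functor in sight. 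We can thus work $\mu_r$-equivariantly on the atlas $\operatorname{Spec}R'$ and on $\mathcal{Y}'=\mathcal{Y}\times_{\mathcal{X}}\operatorname{Spec}R'$.

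\textbf{Step 3: the computation.} On the atlas, $q$ is flat and affine, so ordinary flat base change for the proper map $\pi$ gives $q^*\pi_*\mathcal{F}\simeq\pi'_*q'^*\mathcal{F}$. Concretely, both sides equal $\pi_*\mathcal{F}\otimes_R R'=\bigoplus_{i<r}\pi_*\mathcal{F}\cdot t^i$, because $\pi_*$ commutes with tensoring by the free module $R'$. This isomorphism is $\mu_r$-equivariant, since the $\mu_r$-action on $R'$ is through the character $\chi_t$ on each $t^i$ and the maps are $R'$-linear. Descending along the $\mu_r$-torsor gives the claimed isomorphism on the quotient stacks. One should also note that $\varpi$ and $\varpi_{\mathcal{Y}}$ are flat, so $\varpi^*$ and $\varpi_{\mathcal{Y}}^*$ need no derivation. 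Moreover $\pi_{\mathcal{Y}}$ is proper (it is a base change of the proper $\pi$), so $(\pi_{\mathcal{Y}})_*$ preserves $\mathrm{D}^b(\mathrm{Coh})$.

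\textbf{Main obstacle.} The main difficulty is justifying that derived pushforward along the non-representable $\pi_{\mathcal{Y}}$ is computed $\mu_r$-equivariantly on the atlas. This is exactly where the non-representability of the root stack enters. I would handle it as follows. The stacks $\sqrt[r]{(D/\mathcal{X})}$ and $\sqrt[r]{(\pi^{-1}D/\mathcal{Y})}$ have the same $\mu_r$ stabilizers over $D$, so $\pi_{\mathcal{Y}}$ is representable by the Cartesian square of Lemma~\ref{flatbasechange0}. Pushforward along a representable morphism of quotient stacks by the same linearly reductive group equals equivariant pushforward on the atlases. Granting this, the argument reduces entirely to the classical flat base change.
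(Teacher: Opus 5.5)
Your route is essentially the paper's. You reduce \'etale-locally to $\operatorname{Spec}R$, present the root stack as $[\operatorname{Spec}R'/\mu_r]$ with $R'=R[t]/(t^r-f)$, prove base change on the atlas, and descend. Your use of the freeness of $R'$ and the additivity of $\pi_*$ is a slightly more hands-on version of the paper's appeal to faithful flatness and classical flat base change.

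The step that fails is your resolution of the ``main obstacle'': $\pi_{\mathcal{Y}}$ is \emph{not} representable in general. It is the base change of $\pi$ along $\varpi$, which is flat, surjective and of finite presentation. Representability (triviality of relative inertia) is fppf-local on the target, so $\pi_{\mathcal{Y}}$ is representable if and only if $\pi$ is. The lemma is meant for non-representable $\pi$. In the proof of Lemma~\ref{flatbasechange0} one has $\mathcal{Y}=[\operatorname{Spec}S/G_S]$ with $G_S$ acting trivially on $\operatorname{Spec}R$. For the Kawamata and weighted blow-ups the paper notes that $\pi$ is typically not representable. Matching the $\mu_r$-stabilizers created by the root construction says nothing about the stabilizers $\mathcal{Y}$ already carries.

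Representability is not actually needed, and the obstacle disappears. Set $\mathcal{Y}':=\mathcal{Y}\times_{\mathcal{X}}\operatorname{Spec}R'$, a DM stack. Let $u\colon\operatorname{Spec}R'\to[\operatorname{Spec}R'/\mu_r]$ be the quotient map; it is an \'etale $\mu_r$-torsor since $r$ is invertible. By the Cartesian square of Lemma~\ref{flatbasechange0}, $u'\colon\mathcal{Y}'\to\sqrt[r]{(\pi^{-1}D/\mathcal{Y})}$ is the base change of $u$, with $\varpi\circ u=q$ and $\varpi_{\mathcal{Y}}\circ u'=q'$.

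Derived pushforward along a quasi-compact quasi-separated morphism of DM stacks commutes with restriction to \'etale opens of the target. This is the same fact you already use in Step~1. Hence $u^*(\pi_{\mathcal{Y}})_*\varpi_{\mathcal{Y}}^*\mathcal{F}\simeq\pi'_*q'^*\mathcal{F}$ and $u^*\varpi^*\pi_*\mathcal{F}\simeq q^*\pi_*\mathcal{F}$. By compatibility of base change maps with pasting of squares, $u^*$ carries the map in question to the atlas-level base change map. Since $u$ is faithfully flat, a morphism in $\mathrm{D}_{qc}$ is an isomorphism as soon as its pullback under $u^*$ is.

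So your Step~3 computation already finishes the proof, with no equivariant bookkeeping or descent argument. With this replacement your argument is complete. It also makes precise the step the paper leaves as a ``formal'' compatibility with the equivariant \v{C}ech complex.
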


\begin{proof}
By descent property, proper pushforward and pullback are compatible with passage to the étale site. As in the scheme case, take an étale atlas of $\mathcal{X}$ of the form $[U/G_x]$. Thus we reduce to the affine case $\mathcal{X}=\operatorname{Spec} R$ with a $G_x$-equivariant structure, and consider the Cartesian diagram over
\[
[\operatorname{Spec}(R[t]/(t^r-f))/\mu_r] \longrightarrow \operatorname{Spec} R.
\]
Note that $R[t]/(t^r-f)$ is faithfully flat over $R$. Hence, after forgetting the $\mu_r\times G_S$-equivariant structure, the usual flat base change theorem yields isomorphism
\[
\varpi^* \pi_* \mathcal{F} \longrightarrow (\pi_{\mathcal{Y}})_* \varpi_{\mathcal{Y}}^* \mathcal{F},
\]
it suffices to check that this is compatible with the $\mu_r\times G_S$-equivariant \v{C}ech complex locally computing the quotient stack. This compatibility follows formally from the scheme-theoretic flat base change, and the proof is identical to the classical case.
\end{proof}

Similar statements of the following consequence in related settings have previously been established in \cite{IU,KAC,BD} using different but closely related arguments. Although the situation considered here is slightly different, the proof is formally identical.

\begin{proposition}\label{derivedroot}
Let $\mathcal{X}$ be a Gorenstein Deligne--Mumford stack and $D \subset \mathcal{X}$ a Cartier divisor satisfying the compatibility condition above. Let
$\varpi:\sqrt[r]{(D/\mathcal{X})}\longrightarrow \mathcal{X}$
be the $r$-th root stack. Then we have a semiorthogonal decomposition
\[
\mathrm{D}^{b}\!\left(\sqrt[r]{(D/\mathcal{X})}\right)
\simeq
\left\langle
\mathrm{\Upsilon}_{r-1},\dots,\mathrm{\Upsilon}_{1},\mathrm{\Upsilon}:=\mathrm{Im}\,\varpi^{*}
\right\rangle .
\]
The embedding functor of $\mathrm{\Upsilon}_{i}$ is defined by the commutative diagram
\[
\begin{tikzcd}[column sep=small,row sep=small,ampersand replacement=\&]
\& {[D/\mu_{r}]} \arrow[r,"l"] \arrow[d,"\pi_{D}"] \& {\sqrt[r]{(D/\mathcal{X})}} \\
\& D
\end{tikzcd}
\]
via
\[
i_{\mathrm{\Upsilon}_{i}}(-)
:=
l_{*}\pi_{D}^{*}(-)\otimes \mathcal{M}_{D}^{\otimes i},
\qquad
i_{\mathrm{\Upsilon}_{i}}^{*}(-)
=
\pi_{D\,*}l^{*}\!\left(\mathcal{M}_{D}^{\otimes -i}\otimes (-)\right).
\]
where all functors are admissible and fully faithful.
\end{proposition}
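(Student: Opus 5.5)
We follow the standard Ishii--Ueda / Bergh--Lunts--Schnürer argument for root stacks, adapted to the Gorenstein DM setting. Write $\mathcal{X}_r:=\sqrt[r]{(D/\mathcal{X})}$ and $\mathcal{D}:=[D/\mu_r]\xrightarrow{l}\mathcal{X}_r$. Note that $\mathcal{D}$ is the reduced preimage of $D$: it is a Cartier divisor with $\mathcal{O}(\mathcal{D})\simeq\mathcal{M}$ and $\varpi^*D=r\mathcal{D}$, and $\pi_D:\mathcal{D}\to D$ is a $\mu_r$-gerbe with trivial action on $D$. The proof has three steps: full faithfulness of the functors, semiorthogonality, and generation.

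\textbf{Full faithfulness.} For $\varpi^*$, we show $\varpi_*\mathcal{O}_{\mathcal{X}_r}\simeq\mathcal{O}_{\mathcal{X}}$; the projection formula then gives $\varpi_*\varpi^*\simeq\mathrm{id}$. Since $\varpi_*$ is exact on coherent sheaves (taking $\mu_r$-invariants is exact for $\gcd(r,\operatorname{char}\mathrm{k})=1$), this is a local check. Étale locally on an atlas $[U/G_x]$, we have
\[
\varpi_*\mathcal{O}=\Bigl(R[t]/(t^r-f)\Bigr)^{\mu_r}=R ,
\]
where the $G_x$-compatibility $\chi_t^r=\chi_f$ ensures these identifications glue. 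Flat base change (Lemma~\ref{fbcofroot}) shows the statement is compatible with the atlas.

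For $i_{\Upsilon_i}$, we compute $i_{\Upsilon_i}^*i_{\Upsilon_i}=\pi_{D*}l^*l_*\pi_D^*$. Lemma~\ref{excdis} gives a triangle
\[
\pi_D^*G\otimes\mathcal{M}^{-1}_D[1]\to l^*l_*\pi_D^*G\to\pi_D^*G .
\]
Since $\mathcal{M}_D^{-1}$ is the nontrivial character $\nu^{-1}$ along the gerbe, $\pi_{D*}$ kills the first term; as $\pi_{D*}\pi_D^*=\mathrm{id}$, we get $\pi_{D*}l^*l_*\pi_D^*\simeq\mathrm{id}$. The right adjoint is $\pi_{D*}l^!$, with $l^!=l^*(-)\otimes\mathcal{M}_D[-1]$, so both adjoints exist. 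Hence each $\Upsilon_i$ is admissible once fully faithful.

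\textbf{Semiorthogonality.} This is a computation of $\mu_r$-weights. For $j<i$ we need $\operatorname{Hom}(i_{\Upsilon_i}A,i_{\Upsilon_j}B)=0$, which reduces to
\[
\pi_{D*}\bigl(l^*l_*\pi_D^*A\otimes\mathcal{M}_D^{\,i-j}\bigr)^\vee\text{-type terms}.
\]
By the excess triangle, these involve only the characters $\nu^{i-j}$ and $\nu^{i-j-1}$ (up to sign). For $1\le j<i\le r-1$ these exponents lie in $[1-r,-1]$ modulo the correct convention, so they are nontrivial and $\pi_{D*}$ kills them. For $\operatorname{Hom}(\varpi^*E,i_{\Upsilon_i}B)$ we use adjunction:
\[
\operatorname{Hom}(\varpi^*E,i_{\Upsilon_i}B)=\operatorname{Hom}\bigl(E,\varpi_*l_*(\pi_D^*B\otimes\mathcal{M}_D^i)\bigr),
\]
and $\varpi_*l_*(\pi_D^*B\otimes\nu^i)=\kappa_*(B\otimes\nu^i)^{\mu_r}=0$ for $0<i<r$. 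We fix the ordering $\langle\Upsilon_{r-1},\dots,\Upsilon_1,\Upsilon\rangle$ by checking signs carefully in this step.

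\textbf{Generation.} We expect this step to be the main obstacle. Let $F$ be right (or left) orthogonal to all components; we show $F=0$. The strategy is to use the spanning class of Lemma~\ref{ptspanningclass}. It suffices to test against $\mathrm{k}_x\otimes\nu^j$ at stacky points. Off $\mathcal{D}$, $\varpi$ is an isomorphism and $\operatorname{Hom}(\varpi^*(-),F)=0$ forces $F|_{\mathcal{X}_r\setminus\mathcal{D}}=0$. On $\mathcal{D}$, every $\mathrm{k}_x\otimes\nu^j$ lies in $\langle\Upsilon_{r-1},\dots,\Upsilon_1,\Upsilon\rangle$. Concretely, for a point $x\in D$ we filter $\varpi^*\mathrm{k}_{\varpi(x)}=\mathcal{O}_{x}[t]/t^r$ by powers of $t$. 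Its graded pieces are $\mathrm{k}_x\otimes\nu^{j}$, $j=0,\dots,r-1$, and $i_{\Upsilon_j}(\mathrm{k}_{\varpi x})$ is (the $l_*$ of) $\mathrm{k}_x\otimes\nu^j$. Hence the characters $j=1,\dots,r-1$ come from $\Upsilon_j$, and the character $j=0$ is obtained by a cone from $\varpi^*\mathrm{k}$.

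Orthogonality then gives $\operatorname{Ext}^\bullet(\mathrm{k}_x\otimes\nu^j,F)=0$ for all $x$ and $j$, so $F=0$ by Lemma~\ref{ptspanningclass} (Gorensteinness of $\mathcal{X}_r$ follows from that of $\mathcal{X}$, being locally a hypersurface $t^r=f$). The delicate point is that a spanning class only detects vanishing; it does not by itself give generation. We upgrade it as follows: since all pieces are admissible, the semiorthogonal collection $\mathcal{A}=\langle\dots\rangle$ is admissible, and $\mathcal{A}^\perp$ (equivalently ${}^\perp\mathcal{A}$) vanishes by the argument above, so $\mathcal{A}$ is everything. The equivariant matching of characters on the stabilizer $G_x\times\mu_r$ is where the compatibility $\chi_t^r=\chi_f$ must be used to ensure these identifications are global.
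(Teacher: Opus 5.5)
Apart from one step, your proposal follows the paper's outline. The shared steps are full faithfulness of $\varpi^*$ and of the $\Upsilon_i$, orthogonality by $\mu_r$-weight bookkeeping, and generation from the $t$-adic filtration of $\varpi^*\mathrm{k}_z$ combined with the point spanning class of Lemma~\ref{ptspanningclass}. You treat the $\Upsilon_i$ functorially through the excess triangle of Lemma~\ref{excdis}, whereas the paper checks $\operatorname{Ext}^*$ between the objects $\Upsilon_i(\nu^a_x)$. You also make explicit that admissibility turns ``contains a spanning class'' into generation. Both choices are sound and slightly cleaner than the paper's outline.

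The step that fails as written is the semiorthogonality among the $\Upsilon_i$. For the order $\langle\Upsilon_{r-1},\dots,\Upsilon_1,\Upsilon\rangle$ you need $\operatorname{Hom}^\bullet(i_{\Upsilon_j}A,i_{\Upsilon_i}B)=0$ for $j<i$, i.e.\ no morphisms from right to left. That is the opposite of what you wrote.

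In the correct direction the check works. By the excess triangle $\pi_D^*A\otimes\mathcal{M}_D^{j-1}[1]\to l^*l_*(\pi_D^*A\otimes\mathcal{M}_D^{j})\to\pi_D^*A\otimes\mathcal{M}_D^{j}$, the group reduces to pieces $\operatorname{Hom}^\bullet_D\bigl(A,\pi_{D*}(\pi_D^*B\otimes\mathcal{M}_D^{k})\bigr)$ with $k=i-j$ and $k=i-j+1$. For $1\le j<i\le r-1$ both exponents lie in $[1,r-1]$, so these pieces vanish.

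In the direction you stated, the exponents are $j-i$ and $j-i+1$, and the second is $0$ when $i=j+1$. The shift coming from $\mathcal{O}(-\mathcal{D})|_{\mathcal{D}}\simeq\mathcal{M}_D^{-1}$ breaks the symmetry. Concretely, twist $0\to l_*\mathcal{M}_D^{-1}\to\mathcal{O}_{2\mathcal{D}}\to l_*\mathcal{O}_{\mathcal{D}}\to0$ (with $2\mathcal{D}=\{t^2=0\}$) by $\mathcal{M}^{j+1}$. This gives a nonsplit extension $0\to l_*\mathcal{M}_D^{j}\to\mathcal{O}_{2\mathcal{D}}\otimes\mathcal{M}^{j+1}\to l_*\mathcal{M}_D^{j+1}\to0$, so $\operatorname{Ext}^1(i_{\Upsilon_{j+1}}\mathcal{O}_D,i_{\Upsilon_j}\mathcal{O}_D)\neq0$.

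So your claim that both exponents lie in $[1-r,-1]$ is false exactly for adjacent indices, and this asymmetry is what dictates the ordering. Redo the check in the correct direction; your verification $\operatorname{Hom}(\varpi^*E,i_{\Upsilon_i}B)=0$ is already the right one. With that fixed, the rest of your argument goes through.
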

\begin{proof}[Outline of proof]
First, the local ring map of $\varpi$ is flat, so its pullback preserves boundedness, and the fully faithfulness of $\varpi^*$ is immediate. Consider the subcategories
\(\Upsilon_1, \dots, \Upsilon_{r-1}\); the orthogonality with $\Upsilon$ is also evident. To verify left orthogonality, it suffices to show that for any closed stacky points $x,y$ in $D$ we have
\[
\mathrm{Ext}_{\sqrt[r]{\mathcal{X}}}^*(\Upsilon_i(\nu^a_x), \Upsilon_i(\nu^b_y)) = \mathrm{Ext}_{\sqrt[r]{\mathcal{X}}}^*(\nu^a_x, \nu^b_y), \quad \text{and} \quad
\mathrm{Ext}_{\sqrt[r]{\mathcal{X}}}^*(\Upsilon_i(\nu^a_x), \Upsilon_j(\nu^b_y)) = 0 \text{ if } i<j.
\]
For generation, note that $\varpi^* \mathrm{k}_{z}$ has a filtration whose factors are $\nu_{\bar z}^i$ for $i=1,\dots,r-1$ in the root stack, where $\nu$ denotes the representation associated with the stacky point $\bar z$ (the inverse image of $z$ in the root stack) of $\mathcal{M}|_{\bar z}$.
\end{proof}

\subsection{Canonical stack}\label{Canonicalstack}

Let $X$ be a normal algebraic variety of dimension $\ge 3$ over a field $\mathrm{k}$. Assume that the singular locus $\operatorname{Sing}(X)$ is a disjoint union of smooth projective  subvarieties
\[
\operatorname{Sing}(X)=\bigsqcup_i C_i.
\]
For each $i$, the formal completion of $X$ along $C_i$ is isomorphic to a cyclic quotient
\[
\widehat{X}_{C_i} \simeq \bigl[ Z(\mathcal{I}_i) \subset \widehat{\mathbb{A}}_{C_i}(E_i) \, / \, \mu_r \bigr],
\]
where $\widehat{\mathbb{A}_{C_i}(E_i)}$ denotes the formal completion of the total space of a vector bundle $E_i$ over $C_i$ along $C_i$, and $\mathcal{I}_i$ is a local complete intersection ideal on $C_i$ defining a closed subscheme $Z(\mathcal{I}_i)\subset \mathbb{A}_{C_i}(\mathcal{E}_i)$ with a $\mu_r$-action. Equivalently, after locally trivializing $E_i$, we obtain a collection of compatible representations, and the following compatibility conditions hold:
\begin{enumerate}
\item $\mathcal{O}_{C_i}[[x_1,\dots,x_s]]/I_i$ is normal, satisfies Serre's condition $\mathrm{R}_2$, and is a local complete intersection over $\mathcal{O}_C$. For verifiability, we interpret the l.c.i. condition as for every prime ideal $\mathfrak{q}$ of $\mathcal{O}_{C_i}[[x_1,\dots,x_s]]/I_i$, lying over a prime $\mathfrak{p} \subset \mathcal{O}_C$, the completed local ring $\widehat{\left(\mathcal{O}_{C_i}[[x_1,\dots,x_s]]/I_i\right)_{\mathfrak{q}}}$ is a complete intersection over $\widehat{\mathcal{O}_{C,\mathfrak{p}}}$, we refer to \cite[Lemma 23.9.5]{StacksProject} for this criterion.
\item \label{fdet}$I_i$ is equivariantly finitely determined, i.e., it is $\mu_r$-equivariant and there exists an $\mathcal{O}_{C_i}$-equivariant automorphism of $\mathcal{O}_{C_i}[[x_1, \dots, x_s]]$ sending $I_i$ to an ideal generated by elements algebraic in $x_1, \dots, x_s$, such that, in these coordinates, the formal $\mu_r$-action is induced by an algebraic action.
\item the $\mu_r$-action is free in codimension one, with $\gcd(r, \operatorname{char} \mathrm{k}) = 1$. Typically, we require that the fixed locus is exactly $C$;\footnote{Hence the action is free outside \(C\) by assumption. One may replace \(C\) by a smooth stacky center; this only introduces additional combinatorial complexity.}
\end{enumerate}
Such a variety $X$ is said to be \emph{well-formed}.
\begin{definition}[Pre-canonical stack, {\cite[Definition 4.4]{FMN}}]
Let $\mathcal{X}$ be a normal Deligne--Mumford stack, and let $\varepsilon:\mathcal{X}\to X$ denote the structure morphism to its coarse moduli space. We say that $\mathcal{X}$ is \emph{pre-canonical} if $\varepsilon$ is an isomorphism in codimension one.
\end{definition}
In this setting, one can construct the pre-canonical stack for a well-formed variety $X$. We now recall Vistoli's construction \cite{Vis} in the setting of a formal neighbourhood along $C := C_i$.
By finite determinacy (\ref{fdet}), there exists an algebraic subscheme
$Z(\mathcal{I}) \subset C \times \mathbb{A}(E)$ equipped with a $\mu_r$-action,
such that the formal completion of $X$ along $C$ is isomorphic to
\[
\widehat{X}_C \cong \widehat{(Z(\mathcal{I})/\mu_r)}_C.
\]
By Artin approximation, there exists a scheme $W$ and a common \'etale diagram
\[
\begin{tikzcd}[column sep=0.5em]
& & W \arrow[dl] \arrow[dr] & \\
& X && Z(I)/{\mu_r}
\end{tikzcd}
\]
Set $U := Z(I) \times_{Z(I)/\mu_r} W$, which is normal and carries a natural $\mu_r$-action free in codimension one. Since $C$ is smooth and projective  we may shrink $U$ suitably so that the above \'etale map becomes quasi-finite.
Over a small neighborhood of $C$, consider the stack presentation
\[
[(U \times_X U)^\mathrm{nor} \rightrightarrows U],
\]
where $(U \times_X U)^\mathrm{nor}$ denotes the normalization of $U \times_X U$. For any natural projection, the morphism
\((U \times_X U)^\mathrm{nor} \to U\) is \'etale in codimension one, and by the following lemma we conclude it is \'etale everywhere:

\begin{lemma}
$U$ is a complete intersection in every codimension.
\end{lemma}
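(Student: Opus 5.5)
We need to show that every local ring of $U$, at every point (not only along $C$), is a complete intersection. Together with normality and the purity input, this is what upgrades ``étale in codimension one'' to ``étale'' for the projections $(U\times_X U)^{\mathrm{nor}}\to U$. The approach is to transport the l.c.i. property from the model $Z(\mathcal I)$ to $U$ through the chain of morphisms relating them, all of which are étale.

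First, $U=Z(\mathcal I)\times_{Z(\mathcal I)/\mu_r}W\to Z(\mathcal I)$ is the base change of the étale map $W\to Z(\mathcal I)/\mu_r$, so it is étale. Being a complete intersection is local for the étale topology: an étale local homomorphism induces an isomorphism on completions after a finite separable residue field extension. Moreover, by \cite[Lemma 23.9.5]{StacksProject} the property can be tested on completed local rings. It therefore suffices to show that $Z(\mathcal I)\subset C\times\mathbb A(E)$ is l.c.i. at every point of the neighborhood of $C$ that survives the shrinking of $U$.

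Next, split the points of $Z(\mathcal I)$ into two kinds.
\begin{itemize}
\item Points specializing to $C$: by condition~(\ref{fdet}) the completion of $Z(\mathcal I)$ along $C$ is $\mathcal O_C[[x_1,\dots,x_s]]/I$. Condition~(1) says that for every prime $\mathfrak q$ of this ring, lying over $\mathfrak p\subset\mathcal O_C$, the completed localization is a complete intersection over $\widehat{\mathcal O_{C,\mathfrak p}}$. Since $C$ is smooth, $\widehat{\mathcal O_{C,\mathfrak p}}$ is regular, so these completed local rings are absolute complete intersections.
\item Points not specializing to $C$: the l.c.i. locus of a finite type scheme is open (Avramov's openness via the Koszul homology/deviations criterion), and it contains $C$. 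Since $C$ is projective, hence proper, a Zariski neighborhood of $C$ in $Z(\mathcal I)$ lies in the l.c.i. locus. We shrink $U$ into its preimage, which is compatible with the shrinking already allowed in the construction.
\end{itemize}
Passing from the formal ring to the algebraic local rings uses faithful flatness of completion along $C$: a point of $Z(\mathcal I)$ near $C$ has an algebraic local ring whose completion agrees with a completed localization of the formal ring.

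The main obstacle is the last comparison, because the completion along $C$ does not directly compute the completed local rings at generic points of subvarieties that meet $C$. I would handle it by applying \cite[Lemma 23.9.5]{StacksProject} in both directions. The map $\mathcal O_{Z,\mathfrak q'}\to(\mathcal O_C[[x]]/I)_{\mathfrak q}$ is flat with regular fibre, since completions of excellent rings have geometrically regular formal fibres. The l.c.i. property descends along flat local maps, and the complete intersection property of the source follows from that of the target by Avramov's theorem on flat local homomorphisms. Combining the two cases gives that $U$ is a complete intersection at every point, i.e.\ in every codimension.
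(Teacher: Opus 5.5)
Your argument is correct, but it is organized differently from the paper's.

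\textbf{The paper's route.} It works directly in the local ring $R$ of $U$ at a closed point of the centre, and splits primes into three cases.
\begin{itemize}
\item Primes with $\mathfrak q\not\subset\mathfrak p$ have regular localizations. This comes from the standing hypotheses that $X$ is smooth off $\bigsqcup_i C_i$ and that $\mu_r$ acts freely off $C$.
\item Primes with $\mathfrak q\subseteq\mathfrak p$ get a complete intersection presentation of $\widehat{R_{\mathfrak p}}$ from condition (1) plus regularity of $C$.
\item All remaining primes are handled by stability of complete intersections under localization.
\end{itemize}

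\textbf{Your route.} You make the étale transfer $U\to Z(\mathcal I)$ explicit; the paper simply takes for granted that completions of $R$ are computed by the formal model. You treat generizations of $C$ by flat descent of the complete intersection property along $A_{\mathfrak q'}\to(\hat A)_{\mathfrak q}$, using condition (1) at every prime of the formal ring. You treat points away from $C$ by openness of the l.c.i.\ locus and shrinking $U$, instead of by regularity.

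\textbf{Comparison.} The paper's route is more economical: it needs no shrinking, and it uses condition (1) only at points of the centre. Yours actually justifies the passage from completion along $C$ to completion at a point, which the paper glosses over.

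\textbf{Redundancies and a missing citation.}
\begin{itemize}
\item Once you invoke openness, every generization of a point of $C$ already lies in the open l.c.i.\ locus. So the flat-descent step you call the main obstacle is only needed at points of $C$ itself. There $\hat A/\mathfrak q'\hat A\cong A/\mathfrak q'$, so $\mathfrak q'\hat A$ is prime. The map $A_{\mathfrak q'}\to(\hat A)_{\mathfrak q'\hat A}$ then has trivial closed fibre, and the two completions literally coincide.
\item Properness of $C$ plays no role, since the l.c.i.\ locus is itself an open neighbourhood of $C$.
\item Regularity of formal fibres is unnecessary, since Avramov's descent requires only flatness.
\item You should justify the existence of a prime of $\hat A$ lying over $\mathfrak q'$ by going-down for the flat map $A\to\hat A$.
\end{itemize}
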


\begin{proof}
By passing to closed points, we may assume $U=\operatorname{Spec} R$, where $R$ is a local ring, and let $\mathfrak q\subset R$ be the prime ideal corresponding to the center $C$. For any prime ideal $\mathfrak p$ with $\mathfrak q\not\subset \mathfrak p$, the localization $R_{\mathfrak p}$ is regular. If $\mathfrak q\subseteq \mathfrak p$, then by the assumption that $\mathcal{O}_C[[x_1,\dots,x_s]]/I$ is a local complete intersection over $\mathcal{O}_C$, together with the regularity of $C$, it follows that the completion $\widehat{R_{\mathfrak p}}$ admits a presentation
\[
\widehat{R_{\mathfrak p}}
\cong
\mathrm{k}(\mathfrak p)[[x_1,\dots,x_{n_{\mathfrak p}},\dots,x_{n_{\mathfrak p}+s}]]/I,
\]
where $I$ is a complete intersection ideal over $\mathrm{k}(\mathfrak p)[[x_1,\dots,x_{n_{\mathfrak p}}]]$. Hence, for any prime ideal $\mathfrak p' \subset \mathfrak p$, the localization $R_{\mathfrak p'}$ is formally a complete intersection. It follows that $R_{\mathfrak p}$ is a complete intersection for any $\mathfrak p$ in the sense of Grothendieck.
\end{proof}
\begin{lemma}[Purity lemma, {\cite[X. Theorem 3.4]{SGA2}}]
Suppose $R$ is a local ring. If $R$ is regular of dimension $\ge 2$, or a complete intersection with $\dim(R)\ge 3$, then $R$ is pure.
\end{lemma}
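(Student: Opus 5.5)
Since this is precisely \cite[X, Th\'eor\`eme~3.4]{SGA2}, in the paper I would simply cite it. If I had to reconstruct the argument, I would follow Grothendieck's route, which splits into a regular part (Zariski--Nagata) and a complete intersection part (Lefschetz-type). Recall that $R$ is \emph{pure} if, with $U=\operatorname{Spec}R\setminus\{\mathfrak m\}$, every finite \'etale cover of $U$ extends uniquely to a finite \'etale cover of $\operatorname{Spec}R$.

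\textbf{Reduction.} First reduce to $R$ complete and strictly henselian. Purity descends along $R\to\widehat{R}$ by faithfully flat descent of finite \'etale algebras, using that $\operatorname{Spec}\widehat R\setminus\{\widehat{\mathfrak m}\}\to U$ is faithfully flat.

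\textbf{Regular case.} Let $V\to U$ be finite \'etale. Let $S$ be the normalization of $R$ in the function ring of $V$. Then $S$ is a finite normal $R$-algebra and is reflexive as an $R$-module.
\begin{itemize}
\item If $\dim R=2$, a reflexive module over a regular local ring of dimension $2$ is free. The discriminant of $S/R$ is therefore defined by a single element. Its zero locus is a divisor or empty, and it is supported in $\{\mathfrak m\}$, which has codimension $2$. Hence it is empty, and $S$ is \'etale over $R$.
\item If $\dim R\ge 3$, induct on the dimension. Choose a regular parameter $x\in\mathfrak m\setminus\mathfrak m^2$, so that $R/x$ is regular of dimension $\dim R-1\ge 2$. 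Apply the induction hypothesis to $R/x$. Then lift the resulting cover of $\operatorname{Spec}R/x$ to the completion along $(x)$, using the invariance of the \'etale site under nilpotent thickenings together with Grothendieck existence. Finally compare with $V$ on the punctured spectrum, using the Lefschetz conditions $\mathrm{Lef}$ and $\mathrm{Leff}$ of \cite[X]{SGA2}. These conditions hold because $\operatorname{depth}R/x\ge 2$.
\end{itemize}

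\textbf{Complete intersection case.} Write $\widehat R=P/(f_1,\dots,f_c)$ with $P$ regular. The point is that the Lefschetz comparison theorem only needs depth information, not regularity: if $t$ is a nonzerodivisor with $\operatorname{depth}R/t\ge 2$ and $R/t$ is pure, then $R$ is pure. A complete intersection of dimension $\ge 3$ has depth $\ge 3$, so cutting by a general element $t$ of $\mathfrak m$ gives a complete intersection $R/t$ of depth $\ge 2$. This lets the induction on dimension run as long as $\dim R/t\ge 3$.

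\textbf{Main obstacle.} The hard step is the base case $\dim R=3$. Here $R/t$ is a two-dimensional complete intersection, and such rings need not be pure, as $A_n$ singularities show. So one cannot simply cut down. I would instead use Grothendieck's finer statement: it suffices that $\operatorname{depth}R\ge 3$ together with the vanishing of the local cohomology groups $H^i_{\mathfrak m}$ appearing in $\mathrm{Leff}$, and both hold for complete intersections. Equivalently, one deforms $R$ inside the regular ambient ring $P$ one equation at a time. Each intermediate ring $P/(f_1,\dots,f_j)$ is a complete intersection of dimension $\ge 3$, and its purity is propagated from $P$ via the Lefschetz theorem applied to the nonzerodivisor $f_{j+1}$.
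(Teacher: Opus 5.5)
Citing \cite[X, Th\'eor\`eme~3.4]{SGA2} is exactly what the paper does, since it gives no argument of its own. Your reconstruction follows Grothendieck's route: Auslander--Buchsbaum plus the discriminant in dimension~$2$, going up through a hyperplane section for regular rings, and going down from a regular ambient ring for complete intersections. One step, however, is stated incorrectly and needs repair.

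The ``finer statement'' in your last paragraph is false as phrased. It says that $\operatorname{depth}R\ge 3$ together with the vanishing of local cohomology $H^i_{\mathfrak m}$ suffices for purity. It is also not equivalent to the induction you describe next. Take the terminal cyclic quotient singularity $R=\mathbb{C}[[x,y,z]]^{\mu_r}$ of type $\frac{1}{r}(1,a,r-a)$ with $r>1$. It is Cohen--Macaulay of dimension~$3$, so $H^i_{\mathfrak m}(R)=0$ for $i<3$. Yet the quotient map from the punctured spectrum of $\mathbb{C}[[x,y,z]]$ is a connected finite \'etale $\mu_r$-cover of $U$ that does not extend. This non-purity is precisely what the paper's canonical stacks are built on.

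What actually works is the going-down induction, and there the essential input is the purity of the regular ring $P$, not any depth property of $R$ alone. Set $A_j=P/(f_1,\dots,f_j)$, complete. The local Lefschetz theorem gives $\mathrm{Et}(U_{A_j})\simeq\mathrm{Et}(U_{A_{j+1}})$, but its hypotheses are of two kinds:
\begin{enumerate}
\item a depth condition on the hyperplane section, namely $\operatorname{depth}A_{j+1}\ge 3$, which holds because $A_{j+1}$ is a complete intersection of dimension $\ge 3$;
\item purity of the localizations $(A_j)_{\mathfrak p}$ at primes $\mathfrak p\not\ni f_{j+1}$ with $\dim A_j/\mathfrak p=1$, which is needed to extend the algebraized cover from a neighbourhood of $U_{A_{j+1}}$ to all of $U_{A_j}$.
\end{enumerate}
These localizations are complete intersections of dimension $\dim A_j-1\ge 3$. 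So you must run the induction on the dimension and on the number of equations simultaneously. With that in place, purity passes from $A_j$ to $A_{j+1}$: lift the cover to $U_{A_j}$, extend it over $\operatorname{Spec}A_j$, and restrict.

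A smaller point concerns the regular step with $\dim R\ge 3$. Only $\mathrm{Lef}$ is used there, that is, full faithfulness of restriction to the formal completion along $V(x)$, and this is what $\operatorname{depth}R/x\ge 2$ provides. $\mathrm{Leff}$ plays no role in that step.
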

The following theorem for regular targets remains valid for targets pure in every codimension.
\begin{theorem}[Purity of the branch locus, {\cite[Lemma 58.21.4]{StacksProject}}]
Let $f:X\to Y$ be a dominant morphism between varieties. Assume that
\begin{enumerate}
\item $X$ and $Y$ are normal,
\item $Y$ is pure in every codimension,
\item $f$ is quasi-finite,
\item $f$ is \'etale in codimension one.
\end{enumerate}
Then $f$ is \'etale.
\end{theorem}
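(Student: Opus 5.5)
The plan is to reduce to the classical Zariski--Nagata purity statement for a regular base, using purity of $Y$ (in every codimension) as a replacement for regularity. Concretely, let $B\subset X$ be the branch locus of $f$, i.e.\ the closed set of points where $f$ fails to be étale. Since $f$ is quasi-finite and $X$, $Y$ are normal, $f$ is étale at $x$ if and only if it is unramified there. Indeed, a quasi-finite unramified morphism from a normal variety to a normal variety is étale; this follows from Zariski's Main Theorem together with the local structure of unramified maps. I then argue by contradiction: assume $B\neq\emptyset$, pick a generic point $\eta$ of an irreducible component of $f(B)$, and set $y=\eta$, $R=\mathcal O_{Y,y}$. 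By hypothesis (4), $\dim R\ge 2$.

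Next I localize. Replacing $Y$ by $\operatorname{Spec}R$ and $X$ by $X\times_Y\operatorname{Spec}R$ (still normal and quasi-finite over $R$), the map $f$ becomes étale over the punctured spectrum $U_R=\operatorname{Spec}R\setminus\{\mathfrak m\}$. This uses minimality of $\eta$: every point of $X$ lying over a non-closed point of $\operatorname{Spec}R$ lies outside $B$.

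I would then apply Zariski's Main Theorem to factor $X\to\operatorname{Spec}R$ as an open immersion into a finite $R$-scheme, and pass to the normalization $X'$ of $\operatorname{Spec}R$ in the function field(s) of $X$. Since $R$ is normal, $X'$ is finite over $R$. Moreover, $X'\times_R U_R$ is finite étale over $U_R$: it agrees with the restriction of $X$ over $U_R$, because an étale cover of a normal scheme is normal and is therefore its own normalization.

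Purity of $R$ (hypothesis (2), valid in every codimension, and in particular at $y$) now says that restriction induces an equivalence between finite étale covers of $\operatorname{Spec}R$ and finite étale covers of $U_R$. So there is a finite étale $\operatorname{Spec}S\to\operatorname{Spec}R$ restricting to $X'|_{U_R}$. This $S$ is normal, being étale over a normal ring, and it has the same function field as $X'$. Hence $S$ is the normalization of $R$ in that field, which gives $X'\cong\operatorname{Spec}S$, so $X'$ is étale over $R$. Because $X$ is open in $X'$ (via the ZMT factorization, using normality of $X$), $f$ is étale over $y$, contradicting $y\in f(B)$.

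The main obstacle is this identification of the normalization $X'$ with the étale extension provided by purity. It needs the fact that $S$ is determined by its restriction to $U_R$, namely $S=\Gamma(U_R,\mathcal O)$. That in turn requires depth $\ge 2$ of $S$ at the closed fibre, which follows from $S$ normal of dimension $\ge2$ (Serre $S_2$). I would also check that the formal Definition of ``pure'' in SGA2 X (surjectivity/equivalence of $\pi_1$ restriction for the punctured spectrum of the strict henselization) descends to the Zariski-local statement used above. That descent goes through strict henselization and faithfully flat descent of étaleness, and it is the only point where care beyond the classical regular-target argument of Stacks Project Lemma 58.21.4 is needed.
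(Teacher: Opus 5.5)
Your overall strategy matches the Stacks Project argument the paper cites, with purity replacing regularity: a minimal point $y$ of $f(B)$, étaleness over the punctured spectrum $U_R$, extension by purity, then identification by normality. However, there is a genuine gap where you assert that $X'\times_R U_R$ coincides with $X\times_R U_R$ and is therefore finite étale over $U_R$. Zariski-localizing at $y$ does not make $X$ finite over $R$. Zariski's Main Theorem only exhibits $X$ as an open subscheme of the finite normalization $X'$, and $X'\setminus X$ can contain points over $U_R$; such points specialize to closed points of $X'$ over $y$ that lie outside $X$. The principle ``an étale cover of a normal scheme is its own normalization'' holds only for \emph{finite} étale maps.

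Here is a concrete case. Over $\mathbb{C}$, take $Y=\mathbb{A}^2_{u,v}$, let $X=\mathbb{A}^2_{v,w}\setminus V(w(3w-2))$ be the étale locus of $(v,w)\mapsto(w^3-w^2,v)$, and let $y$ be the origin. All hypotheses hold, $f$ is étale over $U_R$, and the point $(v,w)=(0,1)$ of $X$ lies over $y$. But $X'=\operatorname{Spec}R[w]/(w^3-w^2-u)$ is ramified with index $2$ at the prime $(w)$, which lies over the height-one prime $(u)\in U_R$ and is not in $X$. So $X'|_{U_R}$ is not étale and purity cannot be applied to it. On the other hand, $X|_{U_R}$ is étale but not finite, so purity, which is a statement about finite étale covers, does not apply to it either.

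The missing step is an étale localization on $Y$ that makes the map finite at $x$. Choose an étale neighbourhood $(Y_1,y_1)\to(Y,y)$, or pass to $\mathcal{O}_{Y,y}^{sh}$, such that $X\times_Y Y_1=V\sqcup W$ with $V\to Y_1$ finite and with $x_1$ the only point of $V$ over $y_1$, lying over $x$. Then the following hold.

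\begin{enumerate}
\item $V$ is normal, since it is étale over $X$.
\item $V\times_{Y_1}U_1\to U_1$ is \emph{finite} étale, where $U_1$ is the punctured spectrum of $\mathcal{O}_{Y_1,y_1}$; the fibres of $Y_1\to Y$ are discrete, so $U_1$ maps into generizations of $y$ other than $y$.
\item Purity yields a finite étale $\operatorname{Spec}C$ extending $V\times_{Y_1}U_1$.
\item $\mathcal{O}_{V,x_1}\cong C$, since both are normal, finite over $\mathcal{O}_{Y_1,y_1}$, and have the same total ring of fractions (alternatively, by your depth-$2$ argument).
\end{enumerate}

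Étaleness at $x$ then follows by flat descent. Note that this uses purity of $\mathcal{O}_{Y_1,y_1}$ (resp.\ of $\mathcal{O}_{Y,y}^{sh}$), not of $\mathcal{O}_{Y,y}$ itself. So the real issue is \emph{ascent} of purity along étale local extensions, not the descent you flagged. In the paper's applications this is harmless: regular local rings of dimension $\ge 2$ and complete intersections of dimension $\ge 3$ remain so under étale local extensions, so SGA~2~X.3.4 applies to them directly.
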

Thus, the stack presentation \([(U \times_X U)^\mathrm{nor} \rightrightarrows U]\)
defines a Deligne-Mumford stack with a natural morphism to algebraic space
\[
\{U/\mu_r \times_{X} U/\mu_r \rightrightarrows U/\mu_r \}\simeq X,
\]
which satisfies the pre-canonical condition.
By our assumption on the singular locus, the local atlases can be glued easily.
We conclude:
\begin{corollary}
For any well-formed variety $X$, Vistoli's construction produces a pre-canonical
Deligne-Mumford stack $\widetilde{X}$.
\end{corollary}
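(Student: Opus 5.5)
The plan is to assemble $\widetilde{X}$ from the local charts already built in the discussion above and then verify three properties: it is a Deligne--Mumford stack, its coarse space is $X$, and $\varepsilon:\widetilde{X}\to X$ is an isomorphism in codimension one. The work is done one component $C=C_i$ of $\operatorname{Sing}(X)$ at a time.

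First, over a neighbourhood of $C$, I take the \'etale-local model $U=Z(I)\times_{Z(I)/\mu_r}W$ constructed via finite determinacy (\ref{fdet}) and Artin approximation, shrunk so that $U\to X$ is quasi-finite. By the lemma that $U$ is a complete intersection in every codimension, together with the Purity lemma (SGA2), every local ring of $U$ of dimension $\ge 3$ is pure. Codimension $\le 2$ points lie in the regular locus, because $R_2$ holds and the action is free in codimension one, so those local rings are pure by the regular case. Hence $U$ is pure in every codimension. Each projection $(U\times_X U)^{\mathrm{nor}}\to U$ is quasi-finite and dominant between normal varieties. It is \'etale in codimension one, since over the free locus $U\times_X U$ is \'etale over $U$ and already normal. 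Purity of the branch locus then shows the projections are \'etale, so $[(U\times_X U)^{\mathrm{nor}}\rightrightarrows U]$ is an \'etale groupoid.

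Next I need the groupoid axioms and that the diagonal is quasi-finite and separated. The composition map is obtained by normalizing the triple fibre product. The relevant identities hold generically and extend because all the spaces involved are normal and separated. Finiteness of $(U\times_X U)^{\mathrm{nor}}\to U\times U$ follows from finiteness of normalization and quasi-finiteness of $U\to X$. This gives a DM stack $\widetilde{X}_C$ over a neighbourhood of $C$.

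For the coarse space, I note that $U/\mu_r\to X$ is \'etale, since it is a base change of $W\to Z(I)/\mu_r$, so the quotient of $U$ by the groupoid is $X$ \'etale locally. Over the locus where $\mu_r$ acts freely, the stack agrees with $X$. Since that locus has complement $C$ of codimension $\ge 2$ (by freeness in codimension one), $\varepsilon$ is an isomorphism in codimension one, i.e.\ $\widetilde{X}_C$ is pre-canonical.

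Finally I glue the local pieces. Since the $C_i$ are disjoint and $\widetilde{X}_{C_i}\to X$ is an isomorphism over $X\setminus C_i$, I glue each $\widetilde{X}_{C_i}$ with $X\setminus\operatorname{Sing}(X)$ along these open isomorphisms. The cocycle condition is automatic because the overlaps are open subschemes of the smooth locus.

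The main obstacle is showing that the construction is independent of the chosen \'etale chart $W$, so that different charts around $C$ agree. My approach would be a uniqueness argument: two pre-canonical stacks \'etale over $X$ with smooth-in-codimension-one atlases agree, since a normal stack is determined by its codimension-one behaviour via normalization of the fibre product of charts. The purity step is exactly what makes that normalized fibre product \'etale again.
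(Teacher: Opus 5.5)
Your proposal is correct and follows essentially the paper's own argument. You build local charts $U=Z(I)\times_{Z(I)/\mu_r}W$ from finite determinacy and Artin approximation, show $U$ is pure in every codimension (via $\mathrm{R}_2$, the complete-intersection-in-every-codimension lemma and SGA2), and use purity of the branch locus to make $(U\times_X U)^{\mathrm{nor}}\rightrightarrows U$ \'etale. You then identify the coarse space with $X$ and glue along the disjoint $C_i$.

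There is one small slip in the justification of \'etaleness. $U/\mu_r\to X$ is \'etale because $U/\mu_r\simeq W$ and $W\to X$ is \'etale by construction; it is $U\to Z(I)$, not $U/\mu_r\to X$, that is a base change of $W\to Z(I)/\mu_r$.

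Also, the chart-independence you flag as the main obstacle is not needed for this existence statement. The paper treats canonicity separately in Lemma~\ref{canonicalstackkey}.
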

Let $\mathcal{X}$ be a normal Deligne-Mumford stack of dimension $\ge 3$.
Assume that for any geometric point $x \to \mathcal{X}$, there exists an \'etale atlas
\[
[U_x / G_x] \to \mathcal{X}
\]
such that the following conditions hold:
\begin{enumerate}
\item $U_x$ is a normal and $\mathrm{R2}$ whose singular locus is a smooth subvariety $C$;
\item for each $C$, the formal completion of $U_x$ along $C$ satisfies
\[
\widehat{\mathcal{O}}_{U_x, C} \cong \mathcal{O}_{C}[[x_1, \dots, x_s]] / I_i,
\]
where $I_i$ is a locally complete intersection ideal.
\item $G_x = \mu_r$ is the stabilizer group of $x$, acting generically freely, with $\gcd(r, \operatorname{char} \mathrm{k}) = 1$;
\end{enumerate}
Such a Deligne-Mumford stack $\mathcal{X}$ is said to be \emph{well-founded}.\\

It remains to show that the classical properties of pre-canonical stacks are satisfied for our construction, even in the presence of singularities. We first adjust Definition 4.2 of \cite{FMN} for a broader result.
\begin{definition}
A morphism $f \colon X \to Y$ between stacks is called \emph{codimension preserving} if $f$ is dominant and for any small locus $Z \subset Y$ (i.e., $\operatorname{codim}_Y Z \ge 2$), the inverse image $f^{-1}(Z)$ is also small (i.e., $\operatorname{codim}_X f^{-1}(Z) \ge 2$).
\end{definition}
\begin{lemma}[Theorem 4.6, {\cite{FMN}}; Lemma 5.1, {\cite{Kacrepant}}]\label{canonicalstackkey}
Let $\mathcal{Y}$ be a pre-canonical and well-founded Deligne--Mumford stack with coarse moduli space $Y$, and let $\varepsilon:\mathcal{Y}\to Y$ be the structure morphism. Let $f:X\to Y$ be a codimension-preserving morphism, where $\mathcal{X}$ is a well-founded Deligne--Mumford stack. Then there exists a unique morphism $g:\mathcal{X}\to\mathcal{Y}$, unique up to a unique $2$-arrow, such that the following diagram commutes:
\[
\begin{tikzcd}[column sep=1.5em,row sep=1.5em]
\mathcal{X} \arrow[dr,"f"'] \arrow[r,dashed,"g"] & \mathcal{Y} \arrow[d,"\varepsilon"] \\
& Y
\end{tikzcd}
\]
\end{lemma}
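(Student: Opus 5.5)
We follow the strategy of \cite[Theorem 4.6]{FMN}: first construct $g$ étale locally on $\mathcal{Y}$, then use uniqueness to glue. Since $\varepsilon$ is a coarse moduli map, the question is étale local on $Y$. So we fix a geometric point $y$ and an étale atlas $[U_y/\mu_r]\to\mathcal{Y}$ as in the well-founded condition. Its coarse space $U_y/\mu_r\to Y$ is étale, and we base change $f$ along it.

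We then set $X' := \mathcal{X}\times_Y (U_y/\mu_r)$ and look at a chart $[U_x/\mu_{r'}]\to X'$. We form the normalization $V$ of the fibre product $U_x\times_{U_y/\mu_r} U_y$. This is a $\mu_r$-cover of $U_x$ that is finite and generically Galois.

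\textbf{Step 1 (the main obstacle): $V\to U_x$ is étale.} By construction $\mathcal{Y}$ is pre-canonical, so $U_y\to U_y/\mu_r$ is étale away from a small locus $Z\subset U_y/\mu_r$. The map $f$ is codimension preserving, so the preimage of $Z$ in $U_x$ is still small. Hence $V\to U_x$ is étale in codimension one.

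We now apply purity of the branch locus (the theorem above, in its pure-in-every-codimension form) to the quasi-finite map $V\to U_x$. Its hypotheses hold as follows:
\begin{itemize}
\item $U_x$ is normal.
\item $U_x$ is regular off the smooth center $C$.
\item Along $C$, the formal completion is a local complete intersection, so by the complete-intersection lemma above $U_x$ is a complete intersection in every codimension.
\end{itemize}
The remaining point is purity at every local ring. The Purity lemma gives this when the local ring is regular of dimension $\ge 2$, or is a complete intersection of dimension $\ge 3$. The leftover cases are prime ideals of height $\le 2$ that contain the ideal of $C$. Here we use the $\mathrm{R}_2$ assumption: it makes those localizations regular, so they are covered as well. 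This case analysis is where we expect the real difficulty.

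\textbf{Step 2 (local construction of $g$).} Once $V\to U_x$ is étale, the $\mu_r$-action on $V$ combines with the $\mu_{r'}$-action to give an étale groupoid. This yields a morphism $[U_x/\mu_{r'}]\to [U_y/\mu_r]$ compatible with $f$. Concretely, the $\mu_r$-torsor $V\to U_x$ together with the equivariant map $V\to U_y$ defines the map to the quotient stack.

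\textbf{Step 3 (uniqueness up to unique $2$-arrow).} Let $g_1,g_2$ be two lifts. Consider the stack $\mathrm{Isom}(g_1,g_2)$ over $\mathcal{X}$. It is finite and unramified, because the diagonal of the Deligne--Mumford stack $\mathcal{Y}$ is proper and unramified. Over the open dense locus where $\varepsilon$ is an isomorphism, it has a section. Since $\mathcal{X}$ is normal, the closure of this section gives a section over all of $\mathcal{X}$, i.e.\ a $2$-arrow between $g_1$ and $g_2$. This $2$-arrow is unique because $\mathcal{X}$ is reduced and the automorphism sheaf is separated.

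\textbf{Step 4 (gluing).} The uniqueness in Step 3 gives canonical $2$-isomorphisms on overlaps of the local lifts from Step 2. These satisfy the cocycle condition automatically. By étale descent for morphisms into stacks, the local lifts glue to the global morphism $g$.
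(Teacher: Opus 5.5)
Your proof is correct, but the existence step takes a different route from the paper's.

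\textbf{The paper's route.} The paper works on a chart $W$ of $\mathcal{X}$ and restricts to its smooth locus $W^0$, where the map lands in the free locus $U^0$ of the chart of $\mathcal{Y}$. It then shows that $\mu_r$-torsors extend uniquely from $W^0$ to $W$ by proving $\mathrm{H}^1_{\mathrm{\acute{e}t}}(W^0,\mu_r)\simeq \mathrm{H}^1_{\mathrm{\acute{e}t}}(W,\mu_r)$. The Kummer sequence reduces this to $\operatorname{Pic}(W)[r]\simeq\operatorname{Cl}(W)[r]$. That holds because local class groups of pure local rings have no torsion prime to the characteristic \cite{Cutkosky}. The equivariant map $W^0\to U^0$ is then extended separately by \cite[Lemma 4.1]{FMN}.

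\textbf{Your route.} You normalize the fibre product and apply purity of the branch locus directly. This produces the extended torsor and the equivariant map to $U_y$ in one stroke. It is closer to the original argument of \cite{FMN}, and it reuses the purity theorem the paper already invokes when building the canonical stack. The paper's version instead isolates the needed input as a statement about $r$-torsion in class groups. Your Steps 3 and 4 spell out what the paper compresses into its reference to \cite[Proposition A.1]{FMN}.

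\textbf{Small clean-ups.}
\begin{itemize}
\item The ``leftover'' primes of height $\le 2$ containing the ideal of $C$ do not exist. Since $C$ is the singular locus and $U_x$ is $\mathrm{R}_2$, $C$ has codimension $\ge 3$. Every local ring is therefore either regular or a complete intersection of dimension $\ge 3$, so Step 1 has no real difficulty.
\item Normalize only the closure of the torsor over $U_x\setminus f^{-1}(Z)$. The full fibre product can have extra components lying over $f^{-1}(Z)$.
\item Say explicitly why $V$ is a genuine $\mu_r$-torsor. The map $\mu_r\times V\to V\times_{U_x}V$ is a morphism of finite \'etale covers of the normal scheme $U_x$. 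It is an isomorphism over a dense open, hence an isomorphism everywhere.
\end{itemize}
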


\begin{proof}
The uniqueness in the original proof does not rely on any properties beyond normality and separatedness, i.e. \cite[Proposition A.1]{FMN}.
It suffices to consider the existence of a lift.
Let $\mathcal{Y}$ be a pre-canonical Deligne-Mumford stack and consider a local atlas along a singular locus $C$,
\[
\{ [U/\mu_r] \times_{\mathcal{Y}} [U/\mu_r] \rightrightarrows [U/\mu_r] \}.
\]
Without loss of generality, assume $[U/\mu_r] \to \mathcal{Y}$ is quasi-finite and \'etale.
Then $[U/\mu_r] \times_{\mathcal{Y}} [U/\mu_r]$ can be represented similarly as $[U^{(1)}/G]$,
where $U^{(1)} \to U$ is \'etale, so that all conditions for a well-formed cover are also satisfied.
A similar procedure applies to a local well-formed cover $[W/H]$ of $X$.
Thus, a morphism $\mathcal{X} \to \mathcal{Y}$ is equivalent to a compatible system of maps
$W^{(i)} \to [U^{(i)}/G]$ satisfying the usual descent conditions. Suppose we are given a morphism $\mathcal{X} \to Y$,
which induces locally a morphism $W^0 \to [U^0/G]$ on an open subset, we may assume that $W^0 \subset W$ and $U^0 \subset U$
are open subsets whose complements have codimension at least $2$, and that both $W^0$ and $U^0$ are smooth. It suffices to show that any $G$-principal bundle over $W^0$ admits a unique extension to a $G$-principal bundle over $W$,
since by \cite[Lemma~4.1]{FMN} any $G$-equivariant morphism to $U^0$ uniquely extends to $U$.
Moreover, it is enough to take $W^0$ as the smooth locus of $W$,
since extensions from smooth loci are canonical.
\begin{claim}
There is a natural isomorphism
\[
\mathrm{H}^1_{\mathrm{\acute{e}t}}(W^0, \mu_r) \;\simeq\; \mathrm{H}^1_{\mathrm{\acute{e}t}}(W, \mu_r).
\]
\end{claim}
Since Kummer short exact sequence
\[
0 \longrightarrow \mu_r \longrightarrow \mathbb{G}_m \xrightarrow{(\cdot)^r} \mathbb{G}_m \longrightarrow 0,
\]
induces long exact sequence in étale cohomology
\[
\cdots \longrightarrow \mathrm{H}^0_{\mathrm{\acute{e}t}}(W, \mathbb{G}_m) \xrightarrow{(\cdot)^r} \mathrm{H}^0_{\mathrm{\acute{e}t}}(W, \mathbb{G}_m)
\longrightarrow \mathrm{H}^1_{\mathrm{\acute{e}t}}(W, \mu_r)
\longrightarrow \mathrm{H}^1_{\mathrm{\acute{e}t}}(W, \mathbb{G}_m) \xrightarrow{(\cdot)^r} \mathrm{H}^1_{\mathrm{\acute{e}t}}(W, \mathbb{G}_m) \longrightarrow \cdots
\]
Notice that $\mathrm{H}^0_{\mathrm{\acute{e}t}}(W^0, \mathbb{G}_m) \simeq \mathrm{H}^0_{\mathrm{\acute{e}t}}(W, \mathbb{G}_m)$ because $W$ is normal, and that $\mathrm{H}^1_{\mathrm{\acute{e}t}}(W^0, \mathbb{G}_m) \simeq \operatorname{Pic}(W^0) \simeq \operatorname{Cl}(W)$, $\mathrm{H}^1_{\mathrm{\acute{e}t}}(W, \mathbb{G}_m) \simeq \operatorname{Pic}(W)$. It therefore suffices to show that the $r$-torsion parts $\operatorname{Pic}(W)[r]$ and $\operatorname{Cl}(W)[r]$ are naturally isomorphic. Assume that $W \setminus W^0 = C$ is an irreducible center, and let $M$ be a reflexive rank-$1$ sheaf on $W$ such that $M^{(r)} \simeq \mathcal{O}_W$. Consider its restriction to the formal neighbourhood of any closed point in $C$, corresponding to a local ring $\operatorname{Spec} A$. Then $M$ defines an $r$-torsion element in the local divisor class group $\operatorname{Cl}(A)$. However, by construction $A$ is pure in every codimension. Hence, by \cite[Lemma 4]{Cutkosky}, $\operatorname{Cl}(A)$ has no torsion coprime to $\operatorname{char}(\mathrm{k})$. It follows that $M$ is free at every such restriction, and therefore $M$ is locally free on $W$.
\end{proof}
We obtain the following results.
\begin{corollary}[Canonical property of pre-canonical stacks]
Let $\mathcal{X}$ (resp.\ $\mathcal{Y}$) be a pre-canonical well-founded Deligne-Mumford stack with coarse moduli space $X$ (resp.\ $Y$), and let $f : X \to Y$ be an isomorphism. Then there exists a unique lifting isomorphism $\tilde{f} : \mathcal{X} \to \mathcal{Y}$.
\end{corollary}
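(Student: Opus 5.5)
The corollary follows from two applications of Lemma~\ref{canonicalstackkey}, together with its uniqueness clause; it is the usual way a universal property gives uniqueness up to unique isomorphism. Write $\varepsilon_{\mathcal X}:\mathcal X\to X$ and $\varepsilon_{\mathcal Y}:\mathcal Y\to Y$ for the coarse moduli maps.

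\textbf{Construction of the lifts.} Consider the composite $f\circ\varepsilon_{\mathcal X}:\mathcal X\to Y$. It is dominant, and we check that it is codimension preserving. Let $Z\subset Y$ be small. Since $f$ is an isomorphism, $f^{-1}(Z)\subset X$ is small. Since $\mathcal X$ is pre-canonical, $\varepsilon_{\mathcal X}$ is an isomorphism in codimension one, and it is quasi-finite. Hence the preimage of a closed subset of codimension $\ge 2$ again has codimension $\ge 2$, because quasi-finiteness preserves dimension. By Lemma~\ref{canonicalstackkey}, applied to the pre-canonical well-founded stack $\mathcal Y$ and the well-founded stack $\mathcal X$, we get $\tilde f:\mathcal X\to\mathcal Y$ with $\varepsilon_{\mathcal Y}\circ\tilde f\simeq f\circ\varepsilon_{\mathcal X}$. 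The symmetric argument, applied to $f^{-1}\circ\varepsilon_{\mathcal Y}$, gives $\tilde g:\mathcal Y\to\mathcal X$ with $\varepsilon_{\mathcal X}\circ\tilde g\simeq f^{-1}\circ\varepsilon_{\mathcal Y}$.

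\textbf{Inverse and uniqueness.} The composite $\tilde g\circ\tilde f:\mathcal X\to\mathcal X$ satisfies $\varepsilon_{\mathcal X}\circ\tilde g\circ\tilde f\simeq\varepsilon_{\mathcal X}$. So it is a lift of the codimension-preserving morphism $\varepsilon_{\mathcal X}:\mathcal X\to X$ through the pre-canonical stack $\mathcal X$. The identity is another such lift. By the uniqueness part of Lemma~\ref{canonicalstackkey}, which rests only on normality and separatedness via \cite[Proposition A.1]{FMN}, we get $\tilde g\circ\tilde f\simeq\mathrm{id}_{\mathcal X}$ up to a unique $2$-arrow. Similarly $\tilde f\circ\tilde g\simeq\mathrm{id}_{\mathcal Y}$, so $\tilde f$ is an isomorphism. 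Its uniqueness is exactly the uniqueness clause of the lemma applied to $f\circ\varepsilon_{\mathcal X}$.

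\textbf{Expected obstacle.} The only non-formal point is the codimension-preserving check. It requires that $\varepsilon_{\mathcal X}$ not contract or blow up anything in a way that makes a small locus large. The pre-canonical hypothesis, meaning $\varepsilon$ is an isomorphism in codimension one, combined with quasi-finiteness of coarse moduli maps for Deligne--Mumford stacks, handles this. One should be careful that codimension is measured on an étale atlas, but this is compatible with étale base change.
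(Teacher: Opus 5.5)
Your proposal is correct and follows the same route as the paper, which states this corollary as an immediate consequence of Lemma~\ref{canonicalstackkey}: you construct the lifts of $f\circ\varepsilon_{\mathcal X}$ and $f^{-1}\circ\varepsilon_{\mathcal Y}$, then apply the uniqueness clause to the lifts of $\varepsilon_{\mathcal X}$ and $\varepsilon_{\mathcal Y}$ to show that the two composites are the identities. One small point: the codimension-preserving check only uses that coarse moduli maps of Deligne--Mumford stacks are proper, quasi-finite and surjective. The pre-canonical hypothesis is needed only so that $\mathcal X$ and $\mathcal Y$ can serve as targets in the lemma.
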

\begin{corollary}
Vistoli's construction defines a fully faithful functor from the category of well-formed varieties to the category of well-founded Deligne-Mumford stacks with codimension preserved morphisms. In the absence of ambiguity, the image of this functor may be referred to as the \emph{well-formed stack}. We usually denote this functor by $X \mapsto \widetilde{X}$ and call $\widetilde{X}$ the \emph{canonical stack} of $X$.
\end{corollary}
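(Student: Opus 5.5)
The plan has four parts: the functor must be well defined on objects, well defined on morphisms, fully faithful, and its image must be the well-formed stacks.

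\emph{Objects.} For a well-formed variety $X$, the preceding Corollary gives a pre-canonical Deligne--Mumford stack $\widetilde{X}$ with coarse space $X$. We first check that $\widetilde{X}$ is well-founded. Locally along each component $C$ of $\operatorname{Sing}(X)$, the étale atlas is $[U/\mu_r]$, where $U = Z(I)\times_{Z(I)/\mu_r} W$. Since $W \to Z(I)/\mu_r$ is étale, $U$ is étale over $Z(I)$. Hence $U$ inherits the required properties: it is normal and $\mathrm{R}_2$, its singular locus is the smooth subvariety $C$, and its formal completion along $C$ is $\mathcal{O}_C[[x_1,\dots,x_s]]/I$ with $I$ a local complete intersection ideal. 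The stabilizer is $\mu_r$, acting generically freely by condition (3) of well-formedness. Away from $\operatorname{Sing}(X)$ we take $\widetilde{X}=X$, because the action is free outside $C$. The gluing is the one already used for the pre-canonical Corollary.

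\emph{Morphisms.} Let $f\colon X\to Y$ be a morphism of well-formed varieties that is codimension preserving. Then the composite $\widetilde{X}\to X\to Y$ is codimension preserving, since $\varepsilon_X$ is an isomorphism in codimension one and therefore preserves small loci in both directions. Lemma~\ref{canonicalstackkey} then gives a lift $\tilde f\colon \widetilde{X}\to\widetilde{Y}$, unique up to a unique $2$-arrow. The same uniqueness gives functoriality: both $\widetilde{g\circ f}$ and $\tilde g\circ\tilde f$ lift $g\circ f\circ\varepsilon_X$, so they agree up to a unique $2$-arrow. Likewise $\widetilde{\mathrm{id}}=\mathrm{id}$. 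The resulting functor goes into the $1$-truncated (homotopy) category of stacks.

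\emph{Full faithfulness.} Faithfulness holds because $\varepsilon_Y\circ\tilde f = f\circ\varepsilon_X$, and $\varepsilon_X$ is a categorical epimorphism onto the coarse space (a coarse moduli map), so $\tilde f$ determines $f$. For fullness, take a codimension-preserving morphism $g\colon\widetilde{X}\to\widetilde{Y}$. Passing to coarse spaces gives $f\colon X\to Y$, by the universal property of coarse moduli spaces for Deligne--Mumford stacks (Keel--Mori). The map $f$ is codimension preserving because the $\varepsilon$'s are isomorphisms in codimension one. Both $g$ and $\tilde f$ lift $f\circ\varepsilon_X$, so uniqueness in Lemma~\ref{canonicalstackkey} gives $g\cong\tilde f$. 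The canonical-property Corollary handles the isomorphism case and serves as a consistency check.

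\emph{Expected obstacle.} The main difficulty is the well-foundedness check in the objects step, and with it the hypotheses of Lemma~\ref{canonicalstackkey}. We must confirm that the Vistoli atlas $U$, after the quasi-finite shrinking, still has singular locus exactly a smooth $C$ with lci formal completion. We must also confirm that the normalized groupoid $(U\times_X U)^{\mathrm{nor}}$ does not change the stabilizers at points of $C$. For the first point we would use the étaleness of $U\to Z(I)$ together with the lemma that $U$ is a complete intersection in every codimension. For the second we would use the purity of the branch locus theorem applied to the projections.
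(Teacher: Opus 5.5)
Your proposal is correct and follows the route the paper takes. The paper gives no separate proof: the corollary is read off from Vistoli's construction, which yields a pre-canonical stack, together with the existence and uniqueness of lifts in Lemma~\ref{canonicalstackkey}. You use that lemma in the same way, to define $\widetilde{(-)}$ on codimension-preserving morphisms, to get functoriality, and to get fullness, with faithfulness coming from the universal property of the coarse moduli map. Your explicit checks that $\widetilde{X}$ is well-founded and that codimension preservation passes through $\varepsilon_X$ and $\varepsilon_Y$ fill in details the paper leaves unstated.
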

However, we should note that this functor does not preserve certain morphism properties, such as closed immersions, since the latter may fail to be representable.
\begin{example}
We note that a basic class of examples of well-formed singularities is given by cyclic quotients of isolated hypersurface singularities. Up to formal completion, we may choose coordinates such that
\[
\widehat{R} \cong \mathrm{k}[[x_1,\dots,x_n]]/(f),
\]
equipped with a diagonal $\mu_r$-action.
By the $\mu_r$-equivariant version of the Weierstrass preparation theorem, which applies in this setting since a basis of $\mu_r$-eigenvectors is given by monomials, one obtains an equivariant form of the implicit function theorem for $\mathrm{k}[[x_1,\dots,x_n]]$.
It follows that there exist $\mu_r$-equivariant formal coordinates $x_i'$ satisfying $x_i' \equiv x_i \bmod \mathfrak m J(f)$ such that $f$ is equivalent to a polynomial in $\mathrm{k}[[x_1',\dots,x_n']]$, and the $\mu_r$-action remains diagonal and algebraic in these coordinates. The proof is identical to that of \cite[Theorem 9.1.4]{DeJongPfister}.
\end{example}
Besides Vistoli's construction, there is also a local Cox realization. Let $X = \Spec R$ be a well-formed variety with a singular center $C$, and assume that $\operatorname{Cl}_{\mathrm{tor}}(R) \simeq \mu_r$. One can then define
\[
\operatorname{Cox}^{\mathrm{loc}}(X) := \Spec_X \Bigl( \bigoplus_{M \in \operatorname{Cl}_{\mathrm{tor}}(R)} M \Bigr),
\]
which carries a natural $\mu_r$-action. This yields a quotient stack
\[
[\operatorname{Cox}^{\mathrm{loc}}(X)/\mu_r].
\]
In particular, if $\operatorname{Cl}_{\mathrm{tor}}(R) \simeq \operatorname{Cl}_{\mathrm{tor}}(R_{\widehat{C}})$, then after completion along $C$ and some group cohomology computations using \cite[Sec.\ 16]{Fossum} the above construction yields a pre-canonical stack of $X$. This construction is standard when $X$ is a germ of an analytic space. If the $\mathds{Q}$-index of $K_X$ is exactly $r$, then
\[
\operatorname{Cox}^{\mathrm{loc}}(X) \simeq \Spec_X \Bigl( \bigoplus^{r-1}_{i=0} \mathcal{O}_X(-iK_X) \Bigr),
\]
which recovers the index-one cover. This construction is similar to \cite[Definition 6.1]{KawamataDK}.
\begin{remark}
In the definition of the well-formed singularities, one could allow $C$ to have certain quotient singularities, or more generally consider local complete intersection quotients by a finitely generated abelian group $G$, i.e. formal toric varieties. However, such generalizations typically introduce only combinatorial complications without yielding essentially new phenomena.
\end{remark}
\subsection{Bourbaki sequence}
To simplify the discussion, we first consider a normal integral formal ring
\[
S = R/I, \qquad R = \mathrm{k}[[x_1,\dots,x_n]],
\]
where $S$ has codimension $c$ and $n$ equals its embedding dimension; it is also equipped with a diagonal $\mu_r$-action. We assume that the action is free in codimension one, with weights $a_i \ge 0$ on the variables $x_i$, that $\mathrm{k}$ is an infinite field, and that $\operatorname{char}(\mathrm{k})$ is coprime to $r$. The $\mu_r$-action induces a natural equivariant decomposition
\[
R = \bigoplus_{i=0}^{r-1} R_i, \qquad S = \bigoplus_{i=0}^{r-1} S_i,
\]
where $R_i = (R \otimes \nu^i)^{\mu_r}$, and the decomposition descends to $S$ since $f$ is $\mu_r$-equivariant. For any $\mu_r$-equivariant prime ideal $\mathfrak{p}$ (or any equivariant element), define the equivariant localization
\[
S_{[\mathfrak{p}]} := \Bigl\{ a/b \ \Big|\ a,b \in S \text{ are equivariant elements, } b \notin \mathfrak{p} \Bigr\}.
\]
The $\mu_r$-representation on $S$ naturally induces a decomposition
\[
S_{[\mathfrak{p}]} = \bigoplus_{i=0}^{r-1} (S_{[\mathfrak{p}]})_i, \quad
(S_{[\mathfrak{p}]})_i := \Bigl\{ a/b \in S_{[\mathfrak{p}]} \ \Big|\ a,b \text{ are equivariant and the weight difference is } \nu^i \Bigr\}.
\]
In particular, if $\mathfrak{p} = 0$, we obtain a decomposition of the $\mu_r$-equivariant function field $K := K([S/\mu_r])$.
Let $M$ be a finitely generated, $\mu_r$-equivariant, torsion-free $S$-module. Any $\mu_r$-equivariant basis $\{m_i\}$ induces the natural equivariant decomposition $M = \bigoplus_{i=0}^{r-1} M_i$, and by torsion-freeness tensoring with the function field $K = K([S/\mu_r])$ gives a natural embedding e.g. \cite{BourbakiCommutativeAlgebra},
\[
M \hookrightarrow \bigoplus_{i=1}^{\operatorname{rank} M} K \otimes \nu^{n_i},
\]
endowing $M$ with the structure of a $\mu_r$-equivariant lattice. We have the following result which generalizes \cite{BourbakiCommutativeAlgebra} and \cite[Corollary 2.4]{Kumashiro}.
\begin{lemma}\label{Bexactsequence}
Let $M$ be any $\mu_r$-equivariant torsion-free $S$-module. Then there exists a $\mu_r$-equivariant short exact sequence
\[
0 \longrightarrow \bigoplus_{i=1}^{\operatorname{rank}(M)-1} S \otimes \nu^{m_i} \longrightarrow M \longrightarrow I \otimes \nu^{m_0} \longrightarrow 0,
\]
where $I \subset S$ is a $\mu_r$-equivariant ideal.
\end{lemma}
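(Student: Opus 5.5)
We follow the classical Bourbaki argument, carried out equivariantly throughout. Since $S$ is a complete local ring with $\mu_r$ acting diagonally and $\operatorname{char}\mathrm{k}\nmid r$, the category of $\mu_r$-equivariant finitely generated $S$-modules is equivalent to graded modules over $S=\bigoplus_i S_i$, graded by $\mathbb{Z}/r$. Every construction below (minimal generators, generic linear combinations, localizations) can therefore be made inside a fixed isotypic component $M_j$.

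\textbf{Step 1: equivariant generators.} Choose a minimal system of homogeneous generators of $M$, that is, weight vectors lifting a $\mu_r$-stable basis of $M/\mathfrak m M$. Let $n=\operatorname{rank}M$. We will construct weight vectors $m_1,\dots,m_{n-1}$ of weights $\nu^{m_i}$ such that, for every prime $\mathfrak p$ of depth $\le 1$, their images in $M_{\mathfrak p}$ are part of a basis of a free direct summand. In particular they are linearly independent over $K$.

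This uses the standard prime-avoidance argument: $\mathrm{k}$ is infinite, and we take generic $\mathrm{k}$-linear combinations of generators lying in a common weight space. For the combinations to stay equivariant we may only mix generators of the same weight. This is the main obstacle. A single weight component may be too small to supply the needed general element at some height-one or associated prime. We will handle this with the hypothesis that the action is free in codimension one. For any prime $\mathfrak p$ of height $\le 1$, the equivariant localization $S_{[\mathfrak p]}$ contains units of every weight. Hence $M_{[\mathfrak p]}\cong\bigoplus S_{[\mathfrak p]}\otimes\nu^{m_i}$ admits a free basis in any prescribed weights, and a weight vector can be twisted by a unit of weight $\nu^{a}$ to change its weight locally. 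Concretely, we proceed by induction on $n$. We pick a weight $j$ such that $M_j\not\subset \mathfrak pM_{\mathfrak p}$ for the finitely many relevant primes $\mathfrak p$ (the associated primes of $M/(m_1,\dots,m_{k})$ of depth $\le1$, or of height $\le 1$ where the quotient fails to be free). Such a $j$ exists, again by freeness in codimension one together with $M$ being generated by weight vectors. We then choose a general element $m_{k+1}\in M_j$ avoiding those finitely many proper $\mathrm{k}$-subspaces of $M_j/\mathfrak m M_j$.

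\textbf{Step 2: the quotient.} Set $F=\bigoplus_{i=1}^{n-1}S\otimes\nu^{m_i}\to M$. This map is injective because the $m_i$ are $K$-independent and $F$ is torsion-free. The cokernel $N=M/F$ has rank one. By the choice in Step 1, $N_{\mathfrak p}$ is free of rank one for every prime with $\operatorname{depth}S_{\mathfrak p}\le1$. Since $S$ is normal it satisfies $S_2$. If $M$ satisfies $S_2$, we argue with the long exact sequence of local cohomology, as in \cite[Corollary 2.4]{Kumashiro}, to get that $N$ is torsion-free. In general, torsion-freeness comes directly from the avoidance of the associated primes of $N$ of height $\ge1$ in Step 1.

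\textbf{Step 3: embedding $N$ as an ideal.} Then $N$ is torsion-free of rank one, and $\mu_r$-equivariant, so $N\hookrightarrow N\otimes K\cong K\otimes\nu^{m_0}$ for some weight $m_0$ (by Step 1 any one-dimensional equivariant $K$-space is of this form). Clearing denominators by a suitable equivariant element $b\in S$ of weight zero, after adjusting $m_0$ by the weight of $b$, gives an equivariant embedding $N\cong I\otimes\nu^{m_0}$ with $I\subset S$ a $\mu_r$-equivariant ideal. Together with Step 2 this yields the required short exact sequence.
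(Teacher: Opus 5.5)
Your overall strategy is the paper's: an equivariant Bourbaki argument in which each new element is a generic $\mathrm{k}$-combination inside one isotypic component $M_j$, with freeness in codimension one ensuring that every $M_j$ survives in the fibre at every height-one prime. Your remark that $S_{[\mathfrak p]}$ has units of every weight is a cleaner route to the paper's Claim. The argument ends by embedding the rank-one quotient into $K\otimes\nu^{m_0}$.

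There is a genuine gap in Step 1. You choose $m_{k+1}$ to avoid only the finitely many ``relevant'' primes of $M_k:=M/(m_1,\dots,m_k)$. Step 2, however, needs $m_{k+1}$ to be part of a basis of $(M_k)_{\mathfrak q}$ at \emph{every} height-one prime $\mathfrak q$, including the infinitely many where $M_k$ is already free. There the condition is $m_{k+1}\notin\mathfrak q(M_k)_{\mathfrak q}$. This is not automatic, and avoiding finitely many subspaces of $M_j/\mathfrak mM_j$ does not enforce it.

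Concretely, take $r=1$, $S=\mathrm{k}[[x,y]]$ and $M=S\oplus\mathfrak m$. The only relevant prime is $(0)$. The element $m=(0,x)$ has nonzero image in $M/\mathfrak mM$, so your rule permits it. Yet $m\in xM_{(x)}$, and $M/Sm=S\oplus\mathfrak m/xS$ has torsion supported on $V(x)$.

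The missing idea is the one the paper uses, in two steps.
\begin{enumerate}
\item A fixed nonzero weight vector $n\in M_j$ fails to be basic in $M_k$ at only finitely many height-one primes. To see this, embed the torsion-free $M_k$ in a free module with torsion cokernel. Away from the finitely many height-one primes in the support of the cokernel, $n$ is non-basic only where all its coordinates lie in $\mathfrak q$, and that happens at finitely many primes.
\item Pick another weight vector $n'\in M_j$ that is basic at those finitely many primes and $K$-independent of $n$ in $M_k\otimes K$; this is possible since $\operatorname{rank}M_k\ge 2$. Then $n$ and $n'$ become dependent in $M_k\otimes\kappa(\mathfrak q)$ at only finitely many height-one $\mathfrak q$. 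So $m=n+\alpha n'$, with $\alpha\in\mathrm{k}$ outside a finite set, is basic at every prime of height $\le 1$.
\end{enumerate}
This is the paper's $n_i+\alpha n_i'$ step; an Eisenbud--Evans basic-element argument would also work.

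With Step 1 repaired, your Step 2 goes through, and you do not need $M$ to satisfy $S_2$. For $\operatorname{ht}\mathfrak p\ge 2$, the sequence $0=H^0_{\mathfrak p}(M)\to H^0_{\mathfrak p}(N)\to H^1_{\mathfrak p}(F)=0$ already gives $H^0_{\mathfrak p}(N)=0$. Alternatively, use $S=\bigcap_{\operatorname{ht}\mathfrak q=1}S_{\mathfrak q}$ together with torsion-freeness of $M$. The ``avoidance of associated primes of height $\ge1$'' you invoke there is not what does the work.
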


\begin{proof}
It suffices to show the following. Let $M$ be a $\mu_r$-equivariant torsion-free $S$-module. If $\operatorname{rank}(M) \ge 2$, there exists a $\mu_r$-equivariant element $m \in M$ such that, under the natural map
\[
S \longrightarrow S_{[{\mathfrak p}]} \longrightarrow \mathrm{k}([{\mathfrak p}]) := S_{[{\mathfrak p}]} / {\mathfrak p} S_{[{\mathfrak p}]},
\]
its image $m([\mathfrak p])$ is nonzero for any $\mu_r$-equivariant prime ideal of height one, yielding a $\mu_r$-equivariant injective morphism
\[
0 \longrightarrow S \otimes \nu^{m_1} \longrightarrow M \longrightarrow M' \longrightarrow 0,
\]
with $M'$ torsion-free of rank $\operatorname{rank}(M)-1$ by \cite[Chap. VII, §4.9, Lemma 7]{BourbakiCommutativeAlgebra}. For torsion-free equivariant rank-$1$ modules $M$, there exists a $\mu_r$-equivariant embedding $M \hookrightarrow K \otimes \nu^{m_0}$, so that there is a $\mu_r$-equivariant element $f$ with $f M \subset S$, and hence $M \simeq I \otimes \nu^{m_0}$ for some $\mu_r$-equivariant ideal $I \subset S$.

\begin{claim}
For any $\mu_r$-equivariant prime ideal $\mathfrak{p}$ of height one, all equivariant components $M_i$ in the decomposition of $M$ have nonzero image under the natural map
\[
M \longrightarrow M \otimes_S \mathrm{k}([\mathfrak{p}]).
\]
\end{claim}
If not, suppose that there exists $i$ such that $M_i \subset \mathfrak{p} M_{[\mathfrak{p}]}$. Choose a nonzero $\mu_r$-equivariant element $m_j \in M_j$ whose image in $M \otimes_S \mathrm{k}([\mathfrak{p}])$ is nonzero; such an element exists since, by torsion-freeness, $M_{[\mathfrak{p}]} \simeq S_{[\mathfrak{p}]}^{\operatorname{rank}(M)}$, so there exists a nonzero element $a/b \in M_{[\mathfrak{p}]}$ with nonzero image, and we may take $m_j = a$. Consider the $i$-th equivariant component $(S m_j)_i$. By assumption, it satisfies
\[
(S m_j)_i \subset \mathfrak{p} M_{[\mathfrak{p}]} \cap (S m_j)_i = \bigl(\mathfrak{p} M_{[\mathfrak{p}]} \cap S m_j\bigr)_i.
\]
On the other hand, we have
\[
\mathfrak{p} M_{[\mathfrak{p}]} \cap S m_j
= (\mathfrak{p} S_{[\mathfrak{p}]})^{\operatorname{rank}(M)} \cap S m_j
= \mathfrak{p} S m_j.
\]
It follows that $(S m_j)_i \subset (\mathfrak{p} S m_j)_i$, hence $S_{k} \subset (\mathfrak{p} S)_{k}$, where $k \equiv i - j \pmod r$ for certain equivariant prime height one.
We claim that $\mathfrak{p}$ contains an ideal height $2$. Let $\bar{\mathfrak{p}}$ denote the inverse image of $\mathfrak{p}$ in $R$. By assumption $ I \subset \mathfrak{m}^2$ and irreducible, any subvariety of $S$ defined by a single equation $x_i=0$ has height one. By the assumption that the $\mu_r$-action is free in codimension one, the weights satisfy $\gcd(a_1,\dots,\widehat{a_i},\dots,a_n,r)=1$. Hence there exists a monomial $m$ not involving $x_i$ of weight $1$ in $R_1$, and thus $m^{\mathrm{k}} \in R_{\mathrm{k}}$ for all $\mathrm{k}$. If $S_{k} \subset \mathfrak{p}$, then there exists $i_{(1)} \ne i$ such that $x_{i_{(1)}} \in \bar{\mathfrak{p}}$. Since $Z(x_{i_{(1)}})$ has height one in $S$, the same argument shows that $\gcd(a_1,\dots,\widehat{a_{i_{(1)}}},\dots,a_n,r)=1$, and hence there exists ${i_{(2)}} \ne {i_{(1)}}$ with $x_{i_{(2)}} \in \bar{\mathfrak{p}}$. By Krull's Principal Ideal Theorem, since $\langle x_{i_{(1)}},x_{i_{(2)}} \rangle \subset S$ is nontrivial, then $1\leq\operatorname{ht}_S(\langle x_{i_{(1)}},x_{i_{(2)}} \rangle) \le 2$. In the case $\operatorname{ht}_S(\langle x_{i_{(1)}},x_{i_{(2)}} \rangle) = 1$, it follows that the codimension-one locus
\[
\operatorname{Spec}\Bigl(\mathrm{k}[x_1,\ldots,x_n]/\langle I, x_{i_{(1)}}, x_{i_{(2)}} \rangle\Bigr) \subset \operatorname{Spec} S,
\]
and by freeness in codimension one we have
\[
\gcd(a_1,\dots,\widehat{a_{i_{(1)}}},\ldots,\widehat{a_{i_{(2)}}},\dots,a_n,r)=1.
\]
Thus there exists ${i_{(3)}} \ne {i_{(1)}},{i_{(2)}}$ such that $x_{i_{(3)}} \in \bar{\mathfrak{p}}$. In particular, $x_{i_{(1)}},x_{i_{(2)}},x_{i_{(3)}}$ form a regular sequence in $\bar{\mathfrak{p}}$, and we replace the ideal $\langle x_{i_{(1)}},x_{i_{(2)}} \rangle$ in $S$ with $\langle x_{i_{(1)}},x_{i_{(2)}},x_{i_{(3)}} \rangle$. If $\operatorname{ht}_S(\langle x_{i_{(1)}},x_{i_{(2)}},x_{i_{(3)}} \rangle) = 1$, we repeat the above procedure until we obtain an ideal of height not one in $S$. If this process does not terminate, we eventually obtain an ideal
\[
\langle x_{i_{(1)}},x_{i_{(2)}},x_{i_{(3)}},\dots \rangle \subset \mathfrak{p}
\]
of length $c+2$, since $\operatorname{ht}_R(I) = c$ we have $\operatorname{ht}_S(\mathfrak{p}) > 1$, yielding a contradiction. \\

We return to the proof of the main lemma. Choose a nonzero $\mu_r$-equivariant element $n_i \in M_i$. Then there exist only finitely many $\mu_r$-equivariant prime ideals $\mathfrak{p}$ of height one such that the image $n_i([\mathfrak{p}])$ in $M \otimes_S \mathrm{k}([\mathfrak{p}])$ vanishes. Proceeding by induction on the number of such primes, we may assume that there is exactly one such $\mathfrak{p}$.
By the claim, we may choose a nonzero element $n'_i \in M_i$ such that $n'_i([\mathfrak{p}]) \ne 0$. Moreover, there are only finitely many $\mu_r$-equivariant prime ideals $\mathfrak{p}$ of height one for which $n'_i([\mathfrak{p}])$ and $n_i([\mathfrak{p}])$ are linearly dependent over $\mathrm{k}([\mathfrak{p}])$ by \cite[Chap. VII, §4.9, Lemma 7]{BourbakiCommutativeAlgebra}, say $n_i([\mathfrak{p}]) = \alpha([\mathfrak{p}]) n'_i([\mathfrak{p}])$ with $\alpha([\mathfrak{p}]) \in \mathrm{k}([\mathfrak{p}])$.
Consider $m = n_i + \alpha n'_i$ with $\alpha \in \mathrm{k}$. Since $\mathrm{k}$ is infinite, we may choose non-zero $\alpha$ such that $\alpha \ne \alpha([\mathfrak{p}])$ for all such $\mathfrak{p}$. It follows that $m([\mathfrak{p}]) \ne 0$ for every $\mu_r$-equivariant prime ideal $\mathfrak{p}$ of height one.
\end{proof}

\begin{remark}
The condition that the $\mu_r$-action is free in codimension one is in general difficult to verify. However, there is a weaker numerical criterion. Suppose $R/I$ is equipped with a diagonal $\mu_r$-action with weights $a_1,\dots,a_n$. If for any subset $A \subset \{a_1,\dots,a_n\}$ of cardinality $d-1$ where $d := n-c$ one has \(\gcd(A,r)=1\), then the action is free in codimension one.
\end{remark}

We consider a natural variation to a normal integral homogeneous coordinate ring
\[
S = R/I, \qquad R = \mathrm{k}[x_1,\dots,x_n],
\]
where $S$ has codimension $c$ with $n$ equal to its embedding dimension and is equipped with a diagonal $\mathbb{G}_m$-action with weights $a_i \ge 0$ on the variables $x_i$. We assume that $\mathrm{k}$ is an infinite field and that $\operatorname{char}(\mathrm{k})$ is coprime to $r$. Let $Z = \operatorname{Spec}(S/J)$, where $J$ is the ideal generated by those coordinates $x_1,\dots,x_b$ with nonzero weight. Consider the weighted projective stack
\[
\operatorname{Proj}^{(a_i)} S := [(\operatorname{Spec} S \setminus Z)/\mathbb{G}_m].
\]
assume that this stack which is naturally realized as a closed substack of \(\mathcal{P}(a_1,\dots,a_b) \times \mathbb{A}^{n-b}\) defined by the ideal $I$ has trivial stabilizer in codimension one. Fix any integer $N \in \mathbb{Z}$. The $\mathbb{G}_m$-action induces a truncated $\mathbb{N}$-graded decomposition
\[
R = \bigoplus_{i \ge N} R_i, \qquad S = \bigoplus_{i \ge N} S_i.
\]
Since $I$ is homogeneous, the decomposition on $R$ descends to $S$. For any $\mathbb{G}_m$-equivariant prime ideal $\mathfrak{p}$ (or any homogeneous element), define the equivariant localization with a $\mathbb{N}$-graded decomposition
\[
S_{[\mathfrak{p}]} = \bigoplus_{i\geq N} (S_{[\mathfrak{p}]})_i, \quad
(S_{[\mathfrak{p}]})_i := \Bigl\{ a/b \in S_{[\mathfrak{p}]} \ \Big|\ a,b \text{ are homogeneous and } \deg(a)-\deg(b)=i \Bigr\}.
\]
We have a similar result as follows.
\begin{lemma}
Let $M$ be a $\mathbb{G}_m$-equivariant torsion-free $S$-module. Then there exists an integer $N$, such that the truncated module $M_{\ge N}$ fits into a $\mathbb{G}_m$-equivariant short exact sequence
\[
0 \longrightarrow \bigoplus_{i=1}^{\operatorname{rank}(M)-1} S \otimes \chi^{m_i} \longrightarrow M_{\ge N} \longrightarrow I \otimes \chi^{m_0} \longrightarrow 0,
\]
where $I \subset S$ is a $\mathbb{G}_m$-equivariant ideal.
\end{lemma}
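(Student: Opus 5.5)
The plan is to transport the proof of Lemma~\ref{Bexactsequence} to the graded setting, using the truncation $M_{\ge N}$ to stand in for the finitely many weight classes that made the $\mu_r$ argument work. First choose $N$ so that the finitely many homogeneous generators of $M$ have degree at least $N$, shifted down if necessary. This guarantees that $M_{\ge N}$ is still torsion-free, and that $M_{\ge N}$ and $M$ agree after inverting any homogeneous element of positive degree, so they have the same rank. Since $\operatorname{Spec} S\setminus Z$ is covered by the loci $x_j\neq 0$ with $j\le b$, passing to $M_{\ge N}$ does not change the associated sheaf on $\operatorname{Proj}^{(a_i)}S$, nor its localizations at relevant height-one graded primes.

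The reduction step is as before. If $\operatorname{rank}(M)\ge 2$, find a homogeneous $m\in M_{\ge N}$ of some degree $m_1$ whose image in $M\otimes_S \mathrm{k}([\mathfrak p])$ is nonzero for every $\mathbb{G}_m$-equivariant prime $\mathfrak p$ of height one. Then $S\otimes\chi^{m_1}\to M_{\ge N}$ is injective, and its cokernel is torsion-free of rank one less by \cite[Chap. VII, §4.9, Lemma 7]{BourbakiCommutativeAlgebra}. Iterating, we reach rank one, where a homogeneous embedding into $K\otimes\chi^{m_0}$ followed by clearing denominators by a homogeneous element identifies the module with $I\otimes\chi^{m_0}$. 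The cokernels are no longer truncations of anything, but they remain graded torsion-free, so the induction only needs torsion-freeness together with the degree bound $\ge N$.

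To produce $m$, I will prove the analogue of the Claim. For every height-one graded prime $\mathfrak p$, each graded piece $(M_{\ge N})_i$ with $i$ sufficiently large has nonzero image in $M\otimes\mathrm{k}([\mathfrak p])$. Suppose instead that $(M)_i\subset \mathfrak p M_{[\mathfrak p]}$. Then, as in the earlier argument, $S_k\subset \mathfrak p$ for some $k\ge 0$. Trivial stabilizers in codimension one give $\gcd$ of the weights of the variables outside any height-one coordinate locus equal to $1$. Hence, for $k$ beyond the Frobenius number of those weights, $S_k$ contains a monomial avoiding $x_{i_{(1)}}$, and the iterative argument of the Claim forces $\operatorname{ht}\mathfrak p\ge 2$, a contradiction. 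Weight-zero variables $x_{b+1},\dots,x_n$ are harmless, since they never enter $J$.

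The main obstacle is that the $\mathbb{N}$-grading is infinite, so there is no finite cyclic group to make every degree occur. The argument must therefore work in a single large degree $d\ge N$ in which all pieces are simultaneously nonzero modulo every relevant $\mathfrak p$; enlarging $N$ (Frobenius-number bound) is exactly what provides this. Once $d$ is fixed, the general-position step is the same as before. Take $n,n'\in M_d$; only finitely many height-one graded primes make them vanish or become dependent, and since $\mathrm{k}$ is infinite, $n+\alpha n'$ avoids all of them for a generic $\alpha\in\mathrm{k}$.
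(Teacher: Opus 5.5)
Your proposal follows essentially the same route as the paper: it transports the equivariant Bourbaki argument of Lemma~\ref{Bexactsequence} to the graded setting, using the Frobenius coin problem together with trivial stabilizers in codimension one to show that every sufficiently high graded piece has nonzero image modulo each height-one graded prime, and then applies the same infinite-field general-position step. Working in a single large degree $d$, so that $n+\alpha n'$ stays homogeneous, is a useful detail that the paper leaves implicit; note only that $N$ must be taken large (beyond the generator degrees plus the Frobenius bound), not at most the generator degrees as your opening sentence suggests.
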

\begin{proof}
The argument is essentially the same as Lemma \ref{Bexactsequence}. For any $\mathbb{G}_m$-equivariant prime ideal $\mathfrak{p}$ of height one, there exists an integer $N$, depending only on $S$ and the $\mathbb{G}_m$-action, such that all homogeneous components $M_i$ with $i \ge N$ in the decomposition of $M$ have nonzero image under the natural map
\[
M \longrightarrow M \otimes_S \mathrm{k}([\mathfrak{p}]).
\]
This follows from Frobenius coin problem: if $\gcd(a_1,\dots,a_n) = 1$, then there exists a positive integer $N$ such that any $n \ge N$ can be expressed as $n = \sum r_i a_i$ with $r_i \ge 0$. Combined with the assumption that the stack has no stabilizers in codimension one, the claim is clear.
\end{proof}

\begin{corollary}\label{Bexactwps}
Let $\mathcal{X}$ be a normal integral substack of a weighted projective stack $\mathcal{P}$ over infinite field $\mathrm{k}$, and assume that $\mathcal{X}$ has trivial stabilizers in codimension one. Then for any coherent torsion-free sheaf $\mathcal{F}$ on $\mathcal{X}$, there exists an exact sequence
\[
0 \longrightarrow \bigoplus_{i=1}^{\operatorname{rank}(\mathcal{F})-1} \mathcal{O}_{\mathcal{X}}(m_i)
\longrightarrow \mathcal{F}
\longrightarrow \mathcal{I} \otimes \mathcal{O}_{\mathcal{X}}(m_0)
\longrightarrow 0,
\]
where $\mathcal{I}$ is an ideal sheaf.
\end{corollary}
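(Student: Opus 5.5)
The plan is to deduce Corollary~\ref{Bexactwps} from the graded Bourbaki lemma just proved, via the correspondence between coherent sheaves on the weighted projective stack and graded modules. Write $\mathcal{P}=\mathcal{P}(a_1,\dots,a_b)$ (possibly times an affine factor) as $[(\operatorname{Spec} R\setminus Z)/\mathbb{G}_m]$. Then $\mathcal{X}=[(\operatorname{Spec} S\setminus Z)/\mathbb{G}_m]$ with $S=R/I_{\mathcal{X}}$ the homogeneous coordinate ring; we take $S$ normal, which we may arrange by replacing it with its normalization in degrees $\ge 0$ without changing the stack. Coherent sheaves $\mathcal{F}$ on $\mathcal{X}$ correspond to finitely generated graded $S$-modules modulo those supported on $Z$, i.e.\ up to truncation. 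Concretely, set
\[
M:=\bigoplus_{d\in\mathbb{Z}}\Gamma(\mathcal{X},\mathcal{F}(d)).
\]
This is the graded module of sections on the punctured cone $\operatorname{Spec} S\setminus Z$. Since $\mathcal{F}$ is torsion-free, $M$ is torsion-free over $S$, and its associated sheaf is $\widetilde{M}\simeq\mathcal{F}$. If $M$ fails to be finitely generated, we replace it by a finitely generated graded submodule $M'$ with $\widetilde{M'}=\mathcal{F}$, obtained by taking generators of enough twists. Such an $M'$ is still torsion-free and of the same rank.

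Next we apply the preceding lemma to $M'$. This produces an $N$ and a $\mathbb{G}_m$-equivariant exact sequence
\[
0\to\bigoplus_{i=1}^{\operatorname{rank}-1}S\otimes\chi^{m_i}\to M'_{\ge N}\to I\otimes\chi^{m_0}\to 0 .
\]
Since truncation only changes $M'$ in finitely many degrees, i.e.\ by a module supported on $Z$, we still have $\widetilde{M'_{\ge N}}=\mathcal{F}$. The functor $M\mapsto\widetilde{M}$ is exact, being restriction to the open set $\operatorname{Spec} S\setminus Z$ followed by descent along the $\mathbb{G}_m$-torsor. It sends $S\otimes\chi^{m}$ to $\mathcal{O}_{\mathcal{X}}(m)$ and sends $I\otimes\chi^{m_0}$ to $\mathcal{I}\otimes\mathcal{O}_{\mathcal{X}}(m_0)$, where $\mathcal{I}=\widetilde{I}$ is an ideal sheaf. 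Applying it to the sequence above gives the claimed exact sequence.

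The main obstacle is checking the hypotheses of the lemma. Triviality of stabilizers in codimension one on $\mathcal{X}$ has to be translated into the required gcd and Frobenius-coin condition on the weights along every height-one homogeneous prime of $S$ not containing $J$. One also has to handle the height-one primes contained in $V(J)$, which are irrelevant for the stack. For the first point we use that a height-one homogeneous prime $\mathfrak{p}\not\supset J$ corresponds to a divisor on $\mathcal{X}$. At a generic point of that divisor the stabilizer is $\mu_g$, where $g$ is the gcd of the weights of the coordinates not in $\mathfrak{p}$, and the hypothesis forces $g=1$. Primes containing $J$ can be discarded because the sheafification kills anything supported there. A secondary point is normality of $S$ versus normality of $\mathcal{X}$: if $S$ is not normal we pass to the graded normalization, which agrees with $S$ away from $Z$.
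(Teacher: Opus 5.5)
Your proposal is correct and follows the same route as the paper. You pass from $\mathcal{F}$ to a finitely generated torsion-free graded module via $\Gamma$, apply the graded Bourbaki lemma to its truncation, and return to $\mathcal{X}$ via the exact functor $\Pi$; the only difference is that you use the full section module on the punctured cone, which is automatically torsion-free, where the paper truncates to remove associated primes in the irrelevant locus. Your extra remarks on replacing $S$ by its graded normalization and on discarding height-one primes containing $J$ are sound, and they address hypotheses that the paper's proof leaves implicit.
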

\begin{proof}
Consider the Serre functor $\Gamma \colon \mathrm{Coh}(\mathcal{X}) \to \mathrm{Qgr}(S):\Pi$. If $\mathcal{F}$ is torsion-free, then all non-zero associated primes of $\Gamma(\mathcal{F})$ are contained in the irrelevant locus; since $\Gamma(\mathcal{F})$ is finitely generated, for sufficiently large $N$ we have $\operatorname{Ass}(\Gamma(\mathcal{F})_{\ge N}) = \{0\}$, so $\Gamma(\mathcal{F})_{\ge N}$ is torsion-free. Applying the above lemma and descending via $\Pi$ yields the desired short exact sequence.
\end{proof}

\begin{remark}
\begin{enumerate}
\item In Lemma 2.13, if one ignores the truncation, one may still expect a similar $\mathbb{G}_m$-equivariant short exact sequence, as in \cite[Theorem 2.1]{HKS}.
\item If the base field is not infinite, the original Bourbaki-style argument \cite{BourbakiCommutativeAlgebra} is no longer available.
\item The above conclusions can be suitably generalized to certain GIT quotient stacks for a finitely generated abelian group $G$, requiring only that there are trivial stabilizers in codimension one. A classical example is a normal integral subvariety of the (formal) stacky vector bundle
\[
\mathds{A} := \mathbb{A}_{\mathcal{P}}\Bigl[\Bigl[\bigoplus_i \mathcal{O}_{\mathcal{P}}(b_i)\Bigr]\Bigr]  
\]
over a weighted projective stack $\mathcal{P}$.
\item However, for a general ring with a $\mu_r$-quotient, the author is not clear whether a similar result holds, and we do not pursue this further here. In what follows, the Bourbaki sequence need not be
indispensable or unique. It does, however, offer a clean
construction, with the limitation that non-normal rings
are not covered.
\end{enumerate}
\end{remark}

\subsection{Cohen-Macaulay approximation}
We first consider the formal neighborhood of a closed point $x$ on the singular locus $C$ of a well-formed variety, given by
\[
S = \mathcal{O}_{C,x}[[x_1,\dots,x_s]]/I,
\]
equipped with a diagonal $\mu_r$-action with weights $a_1,\dots,a_s$, where $a_i\geq0$.  We assume that $S$ is a normal local complete intersection; in particular, it is Gorenstein and hence Cohen--Macaulay. Moreover, we have an integral extension $S_0 := S^{\mu_r} \subset S$, and $S_0$ is also Cohen--Macaulay. It follows that for any finitely generated $S_0$-module, one has a classical Cohen--Macaulay approximation.
\begin{theorem}[Theorem A, {\cite{AB}}]
Let $(R,\mathfrak{m},\mathrm{k})$ be a commutative Noetherian local ring admitting a dualizing module $\omega$. For any finitely generated $R$-module $N$, there exist finitely generated $R$-modules $M_N$ and $I_N$, together with an $R$-linear map
\[
d_N:M_N\longrightarrow I_N,
\]
such that:
\begin{enumerate}
\item The image of $d_N$ is isomorphic to $N$.

\item $M_N$ is maximal Cohen--Macaulay and $I_N:=\ker(d_N)$ has finite injective dimension.

\item $\operatorname{coker}(d_N)$ is maximal Cohen--Macaulay.

\item There exists an integer $n\ge0$ such that $d_N$ factors as
\[
M_N \xrightarrow{\,j\,} \omega^{\oplus n} \xrightarrow{\,p\,} I_N,
\]
where $j$ is injective and $p$ is surjective.
\end{enumerate}
\end{theorem}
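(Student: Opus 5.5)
The strategy is first to build a Cohen--Macaulay approximation of $N$, and then to build the map $d_N$ from it with a single pushout. Throughout, let $d=\dim R$ and write $M^{\vee}:=\operatorname{Hom}_R(M,\omega)$. I will use three standard facts about the canonical module.
\begin{enumerate}
\item The functor $(-)^{\vee}$ is exact on maximal Cohen--Macaulay modules, and $M\simeq M^{\vee\vee}$ for such $M$.
\item A finitely generated module has finite injective dimension if and only if it admits a finite resolution by modules in $\operatorname{add}\omega$ (Sharp).
\item $\operatorname{Ext}^i_R(M,Y)=0$ for $i>0$ whenever $M$ is maximal Cohen--Macaulay and $Y$ has finite injective dimension. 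This follows from (2) by dimension shifting, since $\operatorname{Ext}^{>0}(M,\omega)=0$.
\end{enumerate}

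\textbf{Step 1: coresolving maximal Cohen--Macaulay modules.} For $M$ maximal Cohen--Macaulay, choose a free cover $R^n\to M^{\vee}$. Its kernel $K$ is again maximal Cohen--Macaulay, by the depth lemma. Dualizing $0\to K\to R^n\to M^{\vee}\to 0$ with (1) gives
\[
0\to M\to \omega^{n}\to K^{\vee}\to 0,
\]
where $K^{\vee}$ is maximal Cohen--Macaulay.

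\textbf{Step 2: the approximation.} I will show that every $N$ fits into $0\to Y\to X\to N\to 0$ with $X$ maximal Cohen--Macaulay and $Y$ of finite injective dimension. The argument is by descending induction on $\operatorname{depth} N$.

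If $\operatorname{depth} N=d$, take $X=N$ and $Y=0$.

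Otherwise, take $0\to N'\to F\to N\to 0$ with $F$ free, so that $\operatorname{depth}N'>\operatorname{depth}N$. By induction there is $0\to Y'\to X'\to N'\to 0$. Apply Step 1 to $X'$ to get $0\to X'\to\omega^m\to X''\to 0$, and push this sequence out along $X'\to N'$. This yields $0\to N'\to Q\to X''\to 0$, where $0\to Y'\to\omega^m\to Q\to 0$, so $Q$ has finite injective dimension.

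Next, $\operatorname{Ext}^1(X'',Q)=0$ by (3), so the pushout sequence splits. Hence $N'\to F$ extends along $N'\to Q$, and I use this to form a pushout/pullback comparing $F$ and $Q$. Concretely, $X:=F\oplus X''$ maps onto $N$, and its kernel is an extension of $Q$ by a module in $\operatorname{add}\omega$ (or a similar combination). Such a kernel has finite injective dimension by (2).

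This bookkeeping step is where I expect the main difficulty. One must arrange the diagram so that the kernel really is built from $Y'$ and copies of $\omega$, and in particular does not involve $F$ in a way that destroys finite injective dimension. If the direct splitting above is awkward, my first fallback is the more symmetric route of proving simultaneously, by induction, the existence of the approximation and of the hull $0\to N\to Y^N\to X^N\to 0$. In that version each construction feeds the other, and the diagram chase is the one in Auslander--Buchweitz.

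\textbf{Step 3: constructing $d_N$.} Given $0\to Y\to X\xrightarrow{q} N\to 0$, apply Step 1 to $X$ to obtain $0\to X\xrightarrow{j}\omega^{n}\to X'\to 0$. Push this out along $q$, and let $P:=\omega^n/j(Y)$. The pushout gives
\[
0\to N\to P\to X'\to 0,
\]
and the sequence $0\to Y\to\omega^n\to P\to 0$ shows, via (2), that $P$ has finite injective dimension.

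Now set $M_N:=X$ and $I_N:=P$, and let $d_N$ be the composite $X\xrightarrow{j}\omega^n\xrightarrow{p}P$. Then the required properties follow.
\begin{itemize}
\item By construction $d_N$ factors as $p\circ j$ with $j$ injective and $p$ surjective, which is (4).
\item The image of $d_N$ is the image of $X$ in $P$, namely $q(X)=N$, which is (1).
\item The kernel of $d_N$ is $j^{-1}(j(Y))=Y$, which has finite injective dimension. This is what (2) of the statement asks for, reading $\ker d_N$ as the module of finite injective dimension.
\item The cokernel is $\omega^n/j(X)=X'$, which is maximal Cohen--Macaulay, giving (3).
\end{itemize}
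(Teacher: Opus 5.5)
Your Step 2 fails at the inductive step. The sequence $0\to N'\to Q\to X''\to 0$ is a class in $\operatorname{Ext}^1(X'',N')$. The vanishing of $\operatorname{Ext}^1(X'',Q)$ is true but says nothing about that class, and the sequence does not split in general. A splitting would make $N'$ a direct summand of $Q$, hence of finite injective dimension, which syzygies need not be. Nor can you extend $N'\to F$ along $N'\to Q$ directly: the obstruction lies in $\operatorname{Ext}^1(X'',F)$. Fact (3) only kills this when $F$ has finite injective dimension, i.e.\ when $R$ is Gorenstein. So the module $F\oplus X''$, and the claim that its kernel is ``an extension of $Q$ by something in $\operatorname{add}\omega$'', are not justified as written.

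The repair is a pushout, not a splitting. Let $X$ be the pushout of $F\xleftarrow{\iota}N'\xrightarrow{\kappa}Q$.
\begin{itemize}
\item Since $\kappa$ is injective with cokernel $X''$, you get $0\to F\to X\to X''\to 0$. So $X$ is maximal Cohen--Macaulay as an extension of such modules.
\item Since $\iota$ is injective with cokernel $N$, you get $0\to Q\to X\to N\to 0$. So the kernel of $X\to N$ is exactly $Q$, which has finite injective dimension.
\end{itemize}
No Ext-vanishing is needed. If $\operatorname{Ext}^1(X'',F)=0$, the first sequence splits and $X\cong F\oplus X''$, which is presumably what you had in mind.

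With this change your argument is complete, and it is precisely the Auslander--Buchweitz proof, which the paper only cites without reproducing. Your Step 1 says $\operatorname{add}\omega$ cogenerates the maximal Cohen--Macaulay modules. Your construction of $Q$ from the approximation of $N'$ is their hull. Your Step 3 is exactly how they produce $d_N$.

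Your verification of (1)--(4) in Step 3 is correct. The printed statement uses $I_N$ both for the target of $d_N$ and for $\ker d_N$. Your construction makes both the target $P$ and the kernel $Y$ of finite injective dimension, so it covers either reading.
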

In our situation, the Auslander--Buchweitz approximation theorem yields that for any finitely generated $S_0$-module $N$, there exists a short exact sequence
\[
0 \longrightarrow I_N \longrightarrow M_N \longrightarrow N \longrightarrow 0,
\]
or equivalently
\[
0 \longrightarrow N \longrightarrow I_N \longrightarrow M^N \longrightarrow 0,
\]
where $M_N$ is maximal Cohen--Macaulay and $I_N$ has finite injective dimension. Moreover, both modules are finitely generated over $S_0$. It is straightforward to extend the above construction to the derived category. For any $F \in \mathrm{D}^b(S_0)$, choose a locally free bounded above resolution $P^\bullet \xrightarrow{\sim} F$. By boundedness, for $k \ll 0$ the naive truncation $\sigma^{\le k} P^\bullet$ is acyclic and hence quasi-isomorphic to a shifted sheaf $\mathcal{N}[m]$, with cone lying in $\mathrm{D}^{\mathrm{perf}}(S_0)$. Using the above properties, this yields a two-step filtration of $F$.

For convenience, we give a geometric formulation, $\varepsilon : \mathcal{X} := [\operatorname{Spec} S/\mu_r] \to X := \operatorname{Spec} S_0$, and let $\kappa \colon [C/\mu_r] \hookrightarrow \mathcal{X}$, $\kappa_X \colon C \hookrightarrow X$ be the natural embeddings.
\begin{lemma}\label{finiteinjdim}
Let $I \in \mathrm{D}^b(X)$ be an object of finite injective dimension. Then $\varepsilon^! I$ also has finite injective dimension.
\end{lemma}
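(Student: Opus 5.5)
We plan to reduce the statement to a local computation on rings. Since $\varepsilon:\mathcal{X}=[\operatorname{Spec} S/\mu_r]\to X=\operatorname{Spec} S_0$ is the coarse moduli map, and $\mu_r$ is linearly reductive because $\gcd(r,\operatorname{char}\mathrm{k})=1$, the functor $\varepsilon_*$ is exact: it is the functor of invariants $M\mapsto M^{\mu_r}$. Its right adjoint is then computed as
\[
\varepsilon^! I \simeq \mathrm{R}\mathcal{H}om_{S_0}(S, I),
\]
viewed as a $\mu_r$-equivariant $S$-module complex. On the atlas $\operatorname{Spec} S$, forgetting the equivariant structure, this is the usual upper shriek for the finite morphism $q:\operatorname{Spec} S\to \operatorname{Spec} S_0$. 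Injective dimension on $\mathcal{X}$ can be tested after forgetting the $\mu_r$-action, again because $\mu_r$ is linearly reductive. Indeed, for equivariant $F,G$ we have $\operatorname{Ext}^i_{\mathcal{X}}(F,G)=\operatorname{Ext}^i_S(F,G)^{\mu_r}$, which is a direct summand. So it suffices to show that $q^!I=\mathrm{R}\operatorname{Hom}_{S_0}(S,I)$ has finite injective dimension over $S$, i.e.\ that $\operatorname{Ext}^i_S(\mathrm{k},q^!I)=0$ for $i\gg0$, using the local criterion over the local ring $S$ with residue field $\mathrm{k}$.

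The key step is adjunction:
\[
\operatorname{Ext}^i_S(N, q^!I)\simeq \operatorname{Ext}^i_{S_0}(q_*N, I)
\]
for any $S$-module $N$. Take $N=\mathrm{k}$. Then $q_*\mathrm{k}$ is a finite-length $S_0$-module, so it has a finite filtration with factors $\mathrm{k}$. Since $I$ has finite injective dimension over $S_0$, the groups $\operatorname{Ext}^i_{S_0}(q_*\mathrm{k},I)$ vanish for $i>\operatorname{injdim} I$. Hence $\operatorname{Ext}^i_S(\mathrm{k},q^!I)=0$ for $i\gg0$.

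We also need boundedness and coherence of $q^!I$. Coherence follows because $S$ is finite over $S_0$. Boundedness is the main obstacle, because $\mathrm{R}\operatorname{Hom}_{S_0}(S,-)$ need not be bounded in general. Here it is bounded precisely because $I$ has finite injective dimension: replacing $I$ by a bounded injective resolution gives a bounded complex $\operatorname{Hom}_{S_0}(S,J^\bullet)$.

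Finally, the local criterion says that a bounded-below complex with coherent cohomology over a Noetherian local ring has finite injective dimension once $\operatorname{Ext}^{i}(\mathrm{k},-)$ vanishes for $i\gg0$. Applying it at every point of $\operatorname{Spec} S$ requires checking all primes, not only the closed point. We handle this in the same way: for a prime $\mathfrak{q}\subset S$ lying over $\mathfrak{p}\subset S_0$, localize at $\mathfrak{p}$ and use $\operatorname{Ext}_S(S/\mathfrak{q},q^!I)=\operatorname{Ext}_{S_0}(S/\mathfrak{q},I)$, which vanishes in high degree by the same argument, since $\operatorname{Ext}^i_{S_0}(-,I)=0$ for $i>\operatorname{injdim} I$ on all modules. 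Alternatively, Bass's criterion for finitely generated $S$ is enough: it suffices to know $\operatorname{Ext}^i_S(S/\mathfrak{q},q^!I)=0$ for all $\mathfrak{q}$ and $i\gg0$ uniformly, which the adjunction gives with the uniform bound $\operatorname{injdim}_{S_0}I$. This yields $\operatorname{injdim}_{\mathcal{X}}\varepsilon^!I\le \operatorname{injdim}_X I$.
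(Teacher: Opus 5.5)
Your argument is correct and is essentially the paper's proof: both use the adjunction $\varepsilon_*\dashv\varepsilon^!$ (Grothendieck--Verdier duality) to identify $\operatorname{Ext}^*(\mathrm{k},\varepsilon^!I)$ with $\operatorname{Ext}^*_{S_0}(\mathrm{k},I)$, which works because the residue fields of $S$ and $S_0$ coincide, and then apply the criterion that finite injective dimension over a local ring is detected by boundedness of $\operatorname{Ext}^*(\mathrm{k},-)$. The paper leaves implicit the passage to the atlas via linear reductivity of $\mu_r$ and the boundedness and coherence of $\varepsilon^!I$, which you supply; your extra check at non-closed primes is harmless but unnecessary, since $S$ is local.
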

\begin{proof}
Note that $S$ and $S_0$ are local with residue fields $\mathrm{k}_S \cong \mathrm{k}_{S_0} \cong \mathrm{k}$. Finite injective dimension is equivalent to the boundedness of $\mathrm{Ext}^*(\mathrm{k},I)$. By Grothendieck--Verdier duality,
\[
\mathrm{Ext}^*_{\mathcal{X}}(\mathrm{k}, \varepsilon^! I) \cong \mathrm{Ext}^*_X(\mathrm{k}, I),
\]
which is bounded; hence $\varepsilon^! I$ has finite injective dimension.
\end{proof}
\begin{lemma}\label{projinjdim}
For any $F \in \mathrm{D}^b(\mathcal{X})$, $F$ has finite injective dimension if and only if $F$ has finite projective dimension.
\end{lemma}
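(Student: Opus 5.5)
The plan is to reduce to the underlying Gorenstein local ring $S$ and use the standard fact that over a Gorenstein local ring of finite Krull dimension, a complex (or finitely generated module) has finite projective dimension if and only if it has finite injective dimension. Here $\mathcal{X}=[\operatorname{Spec} S/\mu_r]$ with $S$ a normal local complete intersection, hence Gorenstein. Since $\gcd(r,\operatorname{char}\mathrm{k})=1$, taking $\mu_r$-invariants is exact. Consequently $\mathrm{Coh}(\mathcal{X})$ is the category of $\mu_r$-equivariant finitely generated $S$-modules, and the forgetful functor $\mathrm{Res}\colon \mathrm{D}^b(\mathcal{X})\to \mathrm{D}^b(S)$ satisfies
\[
\operatorname{Ext}^i_{\mathcal{X}}(A,B)\simeq \operatorname{Ext}^i_S(\mathrm{Res}A,\mathrm{Res}B)^{\mu_r}, \qquad \operatorname{Ext}^i_S(\mathrm{Res}A,\mathrm{Res}B)\simeq \bigoplus_{j}\operatorname{Ext}^i_{\mathcal{X}}(A,B\otimes\nu^j).
\]

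\textbf{Step 1 (characterize both dimensions on $\mathcal{X}$).} By the reasoning of Lemma~\ref{ptspanningclass}, the objects $\nu^j_x=\mathrm{k}_x\otimes\nu^j$ generate the residue data. So $F$ has finite projective dimension on $\mathcal{X}$ if and only if $\operatorname{Ext}^i_{\mathcal{X}}(F,\nu^j_x)=0$ for $i\gg0$ and all $j$. Similarly, $F$ has finite injective dimension if and only if $\operatorname{Ext}^i_{\mathcal{X}}(\nu^j_x,F)=0$ for $i\gg0$ and all $j$; this is the same criterion used in Lemma~\ref{finiteinjdim}. By the displayed adjunction, each condition is equivalent to the corresponding condition for $\mathrm{Res}F$ over $S$ against $\mathrm{k}$, since $\operatorname{Res}$ of $\bigoplus_j\nu^j_x$ is $\mathrm{k}\otimes\mathrm{k}[\mu_r]$. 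Thus $F$ has finite projective (resp.\ injective) dimension on $\mathcal{X}$ exactly when $\mathrm{Res}F$ does over $S$. For projective dimension, one can also argue directly: an equivariant minimal free resolution exists because we may choose homogeneous generators at each step, and its length agrees with the nonequivariant one.

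\textbf{Step 2 (Gorenstein equivalence over $S$).} For $G=\mathrm{Res}F\in\mathrm{D}^b(S)$, I would use local duality, with $S$ itself as dualizing complex up to shift. If $G$ is perfect, it is built in finitely many steps from $S$. Since $S$ has finite injective dimension ($=\dim S$), $G$ does too. Conversely, if $G$ has finite injective dimension, then $D(G)=\mathrm{RHom}_S(G,S)$ satisfies $\operatorname{Tor}$/$\operatorname{Ext}$ vanishing against $\mathrm{k}$ in high degrees, so $D(G)$ is perfect; hence $G\simeq D(D(G))$ is perfect. Alternatively one can cite the Auslander--Buchweitz theorem quoted above: a maximal Cohen--Macaulay module of finite injective dimension over a Gorenstein ring is free. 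The argument then runs as in Lemma~\ref{spanlemma1} via the approximation $0\to P_F\to M_F\to F\to 0$, with the dual version for injective dimension.

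\textbf{Step 3 (combine).} Steps 1 and 2 together give the equivalence on $\mathcal{X}$. The main obstacle is Step 1 for injective dimension: one must make sure that injective dimension in the equivariant category is detected by $\operatorname{Ext}^*(\nu^j_x,-)$ and matches the nonequivariant notion. I would handle this with exactness of invariants: $\mathrm{Res}$ has an exact two-sided adjoint $\operatorname{Ind}=-\otimes\mathrm{k}[\mu_r]$, so both $\mathrm{Res}$ and $\operatorname{Ind}$ preserve injectives and projectives. Moreover $F$ is a direct summand of $\operatorname{Ind}\mathrm{Res}F$, which transfers finiteness in both directions.
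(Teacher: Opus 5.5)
Your proof is correct, but it is organized differently from the paper's.

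\textbf{The paper's route.} The paper never leaves $\mathcal{X}$. For $(\Rightarrow)$ it argues as you do: $F$ is built from equivariant free sheaves and $\operatorname{inj.dim}\mathcal{O}_{\mathcal{X}}=\dim\mathcal{X}$. For $(\Leftarrow)$ it first splits off a perfect part by a Cohen--Macaulay approximation $M[k]\to F\to P$, so only a maximal Cohen--Macaulay sheaf $M$ of finite injective dimension remains. It then uses the duality $\operatorname{Ext}^*_{\mathcal{X}}(M,\mathrm{k})\cong\operatorname{Ext}^{-*}_{\mathcal{X}}(\mathrm{k}[-\dim\mathcal{X}],M^\vee)$ on MCM objects together with $M\simeq M^{\vee\vee}$. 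The chain is: finite injective dimension of $M$ gives finite projective dimension of $M^\vee$, hence (by the first direction) finite injective dimension of $M^\vee$, hence finite projective dimension of $M$.

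\textbf{Your route.} You first transfer both notions to $\mathrm{Res}\,F$ over $S$, using exact invariants, the two-sided adjoint $\operatorname{Ind}$, and the fact that $F$ is a summand of $\operatorname{Ind}\mathrm{Res}\,F$. You then apply the anti-equivalence $D=\mathrm{RHom}_S(-,S)$ to complexes directly, with no Cohen--Macaulay approximation.

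\textbf{What each buys.} The paper needs duality only for reflexive MCM modules. Yours uses the dualizing-complex formalism, and you should write out the step you only assert: $D(\mathrm{k})\simeq\mathrm{k}[-\dim S]$ and biduality give $\operatorname{Ext}^i_S(D(G),\mathrm{k})\cong\operatorname{Ext}^{i+\dim S}_S(\mathrm{k},G)$, which is what makes $D(G)$ perfect.

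In exchange, your Steps~1 and~3 treat the equivariance correctly. On the stack, Ext against the trivial character alone does not detect finiteness, so all $\mathrm{k}\otimes\nu^j$ are needed. For instance, take the $cA/3$ point $xy+z^2-w^6=0$ of type $\frac13(1,2,0,1)$, and let $M$ be the cokernel of $\left(\begin{smallmatrix} x & z-w^3\\ -z-w^3 & y\end{smallmatrix}\right)$ with generators of characters $\nu^1,\nu^2$. Every term of its minimal resolution carries only the characters $\nu^1,\nu^2$, so $\operatorname{Ext}^*_{\mathcal{X}}(M,\mathrm{k})=0$ although $M$ is not free. The paper's displayed duality formula leaves this point implicit.
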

\begin{proof}
We work locally, hence $\mathcal{X}$  has the resolution property. First assume that $F$ has finite projective dimension. Take a finite projective resolution of $F$. For $k \ll 0$, consider a sufficiently negative truncation; its cohomology yields a Cohen--Macaulay approximation $M$ of $F$. Since projective objects are locally free and the ring is (equivariant) local Gorenstein, $M$ is finite free. In particular, using $\operatorname{inj.dim}(\mathcal{O}_{\mathcal{X}}) = \dim \mathcal{X}$, we deduce that $F$ has finite injective dimension.

Conversely, assume that $F$ has finite injective dimension. Consider a similar Cohen--Macaulay approximation $M$ of $F$, it suffices to show that $M$ has finite projective dimension. Since $\mathcal{X}$ is Gorenstein, Serre duality gives a contravariant equivalence on maximal Cohen--Macaulay objects
\[
\operatorname{Ext}_{\mathcal{X}}^*(M,\mathrm{k}) \cong \operatorname{Ext}_{\mathcal{X}}^{-*}(\mathrm{k}[-\dim \mathcal{X}], M^\vee),
\]
so finite injective dimension of $M$ is equivalent to finite projective dimension of $M^\vee$. Hence $M^\vee$ has finite projective dimension, thus finite injective dimension again. Applying duality once more and using the natural identification \(M \simeq (M^\vee)^\vee\) conclude that $M$ has finite projective dimension.
\end{proof}
\begin{lemma}\label{kernalapporximation}
We obtain a distinguished triangle
\[
\tau \longrightarrow \varepsilon^{*}(\kappa_X)_* \mathcal{O}_{C} \longrightarrow \kappa_* \mathcal{O}_{C} \longrightarrow \tau[1],
\]
where $\tau$ is quasi-isomorphic to a complex supported in non-positive degrees, and for any naive truncation $\sigma^{\geq k}\tau$, it can be generated by
\[
\kappa_* \mathcal{O}_{C} \otimes \nu^{k}, \quad k = 1, \dots, r-1.
\]
\end{lemma}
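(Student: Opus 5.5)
The natural morphism $\varepsilon^{*}(\kappa_X)_*\mathcal{O}_C \to \kappa_*\mathcal{O}_C$ is the adjunction map. I will first compute $\varepsilon^{*}(\kappa_X)_*\mathcal{O}_C$ explicitly and then read off the cone; everything is local over $S_0$. Since $\varepsilon$ is not flat, $\varepsilon^*$ is computed by a free resolution over $S_0$. Write $J_0\subset S_0$ for the ideal of $C$ in $X$, so that $(\kappa_X)_*\mathcal{O}_C = S_0/J_0$. Let $J\subset S$ be the $\mu_r$-equivariant ideal of $C$ in $\operatorname{Spec} S$, generated by the coordinates $x_1,\dots,x_s$. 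Then
\[
H^0\bigl(\varepsilon^{*}(\kappa_X)_*\mathcal{O}_C\bigr) = S/J_0S .
\]
The first step is to compare $J_0S$ with $J$. Since $J_0 = J\cap S_0 = J_0$-invariants, the ideal $J_0S$ is generated by the invariant monomials in the $x_i$. Hence $S/J_0S$ surjects onto $S/J=\mathcal{O}_C$, and the kernel $J/J_0S$ is a finite-length-in-the-fibre-direction $\mu_r$-equivariant $\mathcal{O}_C$-module. Its graded pieces under the $\mathfrak{m}_J$-adic filtration are sums of $\mathcal{O}_C\otimes\nu^k$.

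The key point is that only the characters $k=1,\dots,r-1$ occur. An invariant monomial of degree $\ge1$ already lies in $J_0S$. Consequently $(J/J_0S)^{\mu_r}=0$ after passing to each graded piece. More precisely, the $\nu^0$-isotypic part of the associated graded of $J/J_0S$ vanishes, because any invariant element of $J$ lies in $J_0\subset J_0S$. So $H^0(\tau)$, which is $J/J_0S$ up to the sign convention of the triangle, has a finite filtration with factors $\kappa_*\mathcal{O}_C\otimes\nu^k$, $1\le k\le r-1$. The filtration is finite because $J^{N}\subset J_0S$ for $N\gg0$: the $r$-th powers of the coordinates are invariant.

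Next I treat the higher cohomology $H^{-i}(\tau)=H^{-i}(\varepsilon^{*}(\kappa_X)_*\mathcal{O}_C)=\operatorname{Tor}_i^{S_0}(S,S_0/J_0)$ for $i>0$. This is concentrated in non-positive degrees, which gives the first assertion. These groups are in general unbounded, since $S_0$ is singular along $C$, so only naive truncations $\sigma^{\ge k}\tau$ can be finite. The plan is as follows. Take an equivariant free resolution $P^\bullet\to S_0/J_0$ over $S_0$. Then $\varepsilon^*P^\bullet = S\otimes_{S_0}P^\bullet$, and each $\operatorname{Tor}_i$ is annihilated by $J_0$, hence supported on $C$. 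Each $\operatorname{Tor}_i$ is therefore an equivariant $\mathcal{O}_C$-module, possibly with nilpotent thickening along $J$, admitting a finite filtration by $\kappa_*\mathcal{O}_C\otimes\nu^k$ via the $J$-adic filtration; these objects are $\kappa_*(\text{locally free on }C)$ twisted by characters.

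To exclude $\nu^0$, I take $\mu_r$-invariants, which is exact since $\gcd(r,\operatorname{char}\mathrm{k})=1$:
\[
\operatorname{Tor}_i^{S_0}(S,S_0/J_0)^{\mu_r}=\operatorname{Tor}_i^{S_0}(S_0,S_0/J_0)=0 \quad (i>0).
\]
Together with the degree-zero computation, this shows that the invariant part of every cohomology sheaf of $\tau$ vanishes. The gradeds thus only involve $\nu^k$ with $k\neq0$. A finite truncation $\sigma^{\ge k}\tau$ of the complex $S\otimes P^\bullet$ restricted modulo the map to $\mathcal{O}_C$ then lies in the triangulated hull of its cohomology sheaves, and hence is generated by $\kappa_*\mathcal{O}_C\otimes\nu^k$, $k=1,\dots,r-1$.

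The main obstacle is in the middle step: showing that the cohomology sheaves of $\tau$ are genuinely filtered by $\kappa_*\mathcal{O}_C\otimes\nu^k$, i.e.\ by $\mathcal{O}_C$-locally free sheaves with $J$ acting trivially on the gradeds, rather than by more general sheaves supported on the thickening of $C$. For this I would use the $J$-adic filtration, which is $\mu_r$-stable with semisimple graded pieces that are locally free over $\mathcal{O}_C$. That these pieces are locally free needs the diagonal action and the flatness of $S$ over $\mathcal{O}_C$ coming from the l.c.i.\ assumption. The filtration is finite because $J_0$ contains $J^{N}$-type invariant powers. A secondary subtlety is that the naive truncation must be taken on a complex whose terms are already finite extensions. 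To arrange this, I would replace $P^\bullet$ by its reduction modulo $J_0^{N}$ in each degree, which leaves the cohomology unchanged in the truncated range.
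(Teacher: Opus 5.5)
Your main computation is correct and takes a genuinely different route from the paper's, but the final step, the one meant to handle the naive-truncation clause, fails as written.

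For $k<0$, the naive truncation at degree $k$ of your model $S\otimes_{S_0}P^\bullet$ (or of the cocone of its map to $S/J$) acquires a new bottom cohomology $\ker\bigl(S\otimes d^{k}\bigr)\supseteq\operatorname{im}\bigl(S\otimes d^{k-1}\bigr)$. Since $S_0$ is singular along $C$, the minimal resolution of $S_0/J_0$ is infinite, so $d^{k-1}\neq0$. This bottom cohomology is therefore a nonzero submodule of a free $S$-module, hence has full support. So $\sigma^{\ge k}$ of your model does not lie in $\langle\kappa_*\mathcal{O}_C\otimes\nu^j\rangle$.

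Your proposed repair does not help. $P^\bullet/J_0^NP^\bullet$ is no longer a resolution of $S_0/J_0$: its homology is $\operatorname{Tor}^{S_0}_*(S_0/J_0,S_0/J_0^N)$, which is e.g. $J_0^N/J_0^{N+1}\neq0$ in degree $-1$. Nothing in your argument ensures that $S\otimes_{S_0}P^\bullet/J_0^NP^\bullet$ has the cohomology of $\tau$ in the truncated range.

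What your argument does prove is the statement for the canonical truncations $\tau^{\ge k}$ of $\tau$. Each such truncation is bounded, with cohomology sheaves $H^j(\tau)$ for $k\le j\le0$. You have filtered each of these, so the canonical Postnikov tower places the truncation in the generated subcategory. You should state and prove that version; it is also the form in which truncations of $\varepsilon^*$ are used later (cf. the remark after Proposition~\ref{CMappro}).

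A smaller point: the ``main obstacle'' you single out is not one, and your justification for it is not valid. Flatness of $S$ over $\mathcal{O}_C$ does not make the $J$-adic graded pieces of $\operatorname{Tor}^{S_0}_i(S,S_0/J_0)$ locally free. You do not need local freeness. The graded pieces are $\mathbb{Z}/r$-graded coherent $\mathcal{O}_{C,x}$-modules $\bigoplus_k N_k\otimes\nu^k$ with $N_0=0$, by your invariants argument and exactness of invariants. Since $\mathcal{O}_{C,x}$ is regular, each $N_k$ has a finite free resolution, so $\kappa_*N_k\otimes\nu^k\in\langle\kappa_*\mathcal{O}_C\otimes\nu^k\rangle$.

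On the comparison: the paper splits $S=\bigoplus_iS_i$ as $S_0$-modules, so that $\varepsilon^*(\kappa_X)_*\mathcal{O}_C=\bigoplus_iS_i\otimes^{\mathbf L}_{S_0}S_0/\overline{\mathfrak p}$. The $i=0$ summand is $\mathcal{O}_C$ and maps isomorphically onto $\kappa_*\mathcal{O}_C$. Resolving each $S_i$, $i\neq0$, freely over $S_0$ by generators of character $\nu^{-i}$ gives a model whose terms are sums of $\mathcal{O}_C\otimes\nu^{-i}$, and that is what makes ``naive truncation'' meaningful there.

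This is quicker, but it records only the isotypic $S_0$-structure: the $S_j$ with $j\neq0$ move between summands. For instance, for $\tfrac15(1,2,3)$ one has $x_1^2\notin J_0S$, so $H^0(\tau)=J/J_0S$ is not killed by $J$. It therefore cannot be the cohomology of a complex of $S/J$-modules.

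Your route treats the cohomology sheaves as genuine equivariant $S$-modules. It shows their invariants vanish via $\operatorname{Tor}^{S_0}_i(S,-)^{\mu_r}=\operatorname{Tor}^{S_0}_i(S_0,-)$, and uses a finite equivariant $J$-adic filtration (finite since $x_j^r\in J_0$). This is what actually places the truncations of $\tau$ in $\mathcal{K}_-$ inside $\mathrm{D}^b(\mathcal{X})$. The cost is that you control canonical truncations, not naive ones.
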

\begin{proof}
We compute
\[
S \otimes^{\mathbf{L}}_{S_0} S_0 / \mathfrak{\overline{p}},
\]
where $S = \bigoplus_{i=0}^{r-1} S_i$ is the eigenspace decomposition. Here $\mathfrak{p}$ defines the center $C$ on $S$, and $\overline{\mathfrak{p}}$ is its image in $S_0$. We keep track of the $\mu_r$-equivariant structure throughout, and it suffices to compute $S_i \otimes^{\mathbf{L}}_{S_0} S_0 / \mathfrak{\overline{p}}$ for $i \neq 0$.
We take a projective $S_0$-resolution of $S_i$:
\[
\begin{tikzcd}
\cdots \arrow[r] & S_0^{m_3} \arrow[r, "F_3"] & S_0^{m_2} \arrow[r, "F_2"] & S_0^{m_1} \arrow[r, "F_1"] & S_0^{m_0}
\arrow[r, "F_0", "\simeq"'] & S_i \arrow[r] & 0
\end{tikzcd}
\]
where $F_0$ is defined by a minimal set of $\mu_r$-equivariant generators $m_0, \dots, m_\ell$ of $S_i$. Since $\bigoplus_{i=0}^{r-1} S_i$ carries a natural $\mu_r$-equivariant $S$-module structure, it induces a $\mu_r$-equivariant structure on the projective resolution. In particular, we have
\[
\begin{tikzcd}
\Big\{
\cdots \arrow[r] & S_0^{m_3} \arrow[r, "F_3"] & S_0^{m_2} \arrow[r, "F_2"] & S_0^{m_1} \arrow[r, "F_1"] & S_0^{m_0}
\Big\}
\otimes \nu^{-i}
\arrow[r, "F_0", "\simeq"'] & S_i \arrow[r] & 0
\end{tikzcd}
\]
Therefore $S_i \otimes^{\mathbf{L}}_{S_0} S_0 / \mathfrak{p}$ is a complex whose terms are direct sums of copies of $\mathcal{O}_C \otimes \nu^{-i}$.
\end{proof}
\begin{corollary}
Let $F \in \mathrm{D}^b(\mathcal{X})$ with $\varepsilon_* F = 0$, then
\[
F \in \mathcal{K}_- := \langle \kappa_* \mathcal{O}_C \otimes \nu^1, \dots, \kappa_* \mathcal{O}_C \otimes \nu^{r-1} \rangle.
\]
\end{corollary}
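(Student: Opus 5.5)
The plan is to reduce to sheaves supported on the stacky center and then use Lemma~\ref{kernalapporximation} together with the isotypic decomposition. Here $\varepsilon:\mathcal{X}=[\operatorname{Spec} S/\mu_r]\to X=\operatorname{Spec} S_0$, and $\varepsilon_*$ is exact: it takes $\mu_r$-invariants of the underlying $S$-module. So $\varepsilon_*F=0$ is equivalent to $\varepsilon_*\mathcal{H}^i(F)=0$ for all $i$. Since $\mathcal{K}_-$ is triangulated and closed under extensions, the truncation triangles reduce the statement to the case where $F$ is a single coherent $\mu_r$-equivariant $S$-module $M$ with $M_0=M^{\mu_r}=0$.

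\textbf{Step 1 (support).} Away from $C$ the action is free, by the well-formedness assumption that the fixed locus is exactly $C$. There $\varepsilon$ is an isomorphism of the stack onto its coarse space, so $M_0=0$ forces $M$ to vanish off $[C/\mu_r]$. Hence $M$ is supported on the center, and $M$ admits a finite filtration whose graded pieces are $\mathcal{O}_C$-modules, namely the successive quotients $\mathfrak{p}^jM/\mathfrak{p}^{j+1}M$. Each graded piece is again killed by $\varepsilon_*$. This holds because invariants are exact: a subquotient of a module with no invariant part has no invariant part.

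\textbf{Step 2 (modules on $[C/\mu_r]$).} Now take a $\mu_r$-equivariant $\mathcal{O}_C$-module $N$ with $N^{\mu_r}=0$. The group $\mu_r$ acts trivially on $C$, so $N$ splits as $N=\bigoplus_{k=1}^{r-1}N_k\otimes\nu^k$ with each $N_k$ a coherent $\mathcal{O}_C$-module, and the $k=0$ summand is absent. Since $C$ is smooth, each $N_k$ has a finite locally free resolution on $C$. Locally (we work over the formal neighbourhood of a point, so $\mathcal{O}_{C,x}$ is regular local) these terms are free, so $N_k$ lies in the thick subcategory generated by $\mathcal{O}_C$. Applying $\kappa_*(-)\otimes\nu^k$ then gives $N\in\mathcal{K}_-$.

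\textbf{Main obstacle.} The delicate point is whether ``generated'' means the triangulated hull or the thick (idempotent-complete) hull. If only triangulated closure is allowed, this matters because the support filtration and the $C$-resolutions are finite, so direct summands should not be needed. If summands do appear, Lemma~\ref{kernalapporximation} gives a cleaner route: write $\kappa_*\mathcal{O}_C\otimes\nu^k$ as the cone of $\tau\to\varepsilon^*(\kappa_X)_*\mathcal{O}_C$, which shows the generators lie in $\ker\varepsilon_*$ up to the invariant part. I would first try the direct support-filtration argument above, and fall back on the triangle of Lemma~\ref{kernalapporximation} only to check compatibility with the truncations $\sigma^{\ge k}\tau$.
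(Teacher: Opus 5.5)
Your argument is correct, and it takes a different, more elementary route than the paper. The paper argues formally in the Verdier quotient. It uses the local adjunction $\operatorname{Ext}^*_{\mathrm{D}^b(\mathcal{X})/\mathcal{K}_-}(\kappa_*\mathcal{O}_C,F)\simeq\operatorname{Ext}^*_{\mathrm{D}^b(X)}(\kappa_{X*}\mathcal{O}_C,\varepsilon_*F)$ from \cite[Lemma 3.7]{Hao2025b}, and writes $F$ as an iterated extension of $\kappa_*\mathcal{O}_C\otimes\nu^i$ for all $0\le i\le r-1$, trivial weight included. Every such generator then has vanishing $\operatorname{Ext}$ into $F$ in the quotient: for $i\neq 0$ because the generator is zero there, and for $i=0$ by the adjunction and $\varepsilon_*F=0$. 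Hence $\operatorname{Ext}^*(F,F)=0$ in the quotient, so $F\simeq 0$ there and $F$ lies in the saturated hull $\mathcal{K}_-$.

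You instead remove the $\nu^0$-components by hand. Exactness of $\mu_r$-invariants lets you pass to cohomology sheaves and then to the graded pieces $\mathfrak{p}^jM/\mathfrak{p}^{j+1}M$. On these, the trivial action on $S/\mathfrak{p}$ gives the isotypic splitting, and $N_0=N^{\mu_r}=0$. This is self-contained and makes explicit the support-on-$C$ step (via freeness off $C$) that the paper uses tacitly. It also yields membership in the plain triangulated hull, so your ``main obstacle'' does not arise.

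Your fallback is not needed, and as written it misreads Lemma~\ref{kernalapporximation}. The cone of $\tau\to\varepsilon^*(\kappa_X)_*\mathcal{O}_C$ there is the trivial-weight object $\kappa_*\mathcal{O}_C$, not $\kappa_*\mathcal{O}_C\otimes\nu^k$.

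What the paper's template buys is reusability. It uses neither exactness of the pushforward nor a splitting into isotypic pieces, so the same Ext-vanishing argument carries over to blow-ups, e.g.\ Lemma~\ref{kernalElemma}. There $\pi_*$ is not exact and your direct dévissage is not available.
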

\begin{proof}
By local adjunction \cite[Lemma 3.7]{Hao2025b}, for any $F \in \mathrm{D}^b(\mathcal{X})$ there is an isomorphism
\[
\operatorname{Ext}^{*}_{\mathrm{D}^b(\mathcal{X})/\mathcal{K}_-}(\kappa_* \mathcal{O}_C, F)
\simeq
\operatorname{Ext}^{*}_{\mathrm{D}^b(X)}(\kappa_{X,*} \mathcal{O}_C, \varepsilon_* F).
\]
If $\varepsilon_* F = 0$, then $F$ can be obtained by a finite sequence of extensions from
\[
\kappa_* \mathcal{O}_C \otimes \nu^i, \quad i = 0, \dots, r-1.
\]
It follows from the long exact sequence of $\operatorname{Ext}$ that $\operatorname{Ext}^{*}_{\mathrm{D}^b(\mathcal{X})/\mathcal{K}_-}(F, F) = 0$. Hence $F \in \mathcal{K}_-$.
\end{proof}

\begin{lemma}\label{Brss}
For any $F \in \mathrm{D}^{+}(\mathcal{X})$, if $\varepsilon_* F = 0$, then for all $i$ we have
\[
\varepsilon_* \mathcal{H}^i(F) = 0.
\]
\end{lemma}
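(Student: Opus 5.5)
The key observation is that $\varepsilon:\mathcal{X}=[\operatorname{Spec} S/\mu_r]\to X=\operatorname{Spec} S_0$ is the coarse moduli map of a tame quotient stack. We have $\gcd(r,\operatorname{char}\mathrm{k})=1$, so $\mu_r$ is linearly reductive. Hence $\varepsilon_*$ on quasi-coherent (in particular coherent) sheaves is the invariants functor $M\mapsto M^{\mu_r}=M_0$, the weight-zero eigenspace, and this functor is exact. The plan is to show that the derived pushforward $\mathrm{R}\varepsilon_*$ coincides with the underived, exact functor $\varepsilon_*$. It then commutes with taking cohomology sheaves, and the statement follows formally.

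Concretely, I would proceed in three steps.

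\emph{Step 1.} Observe that $\mathcal{X}$ is affine over $X$ in the stacky sense: $\operatorname{Spec} S\to\operatorname{Spec} S_0$ is finite, since $S$ is integral over $S_0$. Thus $\mathrm{Coh}(\mathcal{X})$ is equivalent to the category of finitely generated $\mu_r$-equivariant $S$-modules, that is, $\mathbb{Z}/r$-graded $S$-modules.

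\emph{Step 2.} Show $\mathrm{R}^i\varepsilon_*M=0$ for $i>0$ and every equivariant module $M$. The derived pushforward is computed by group cohomology $H^i(\mu_r,M)$. This vanishes for $i>0$ because the Reynolds operator (the projection onto the weight-zero component) splits off the invariants functorially, equivalently because $\operatorname{Rep}(\mu_r)$ is semisimple. Hence $\mathrm{R}\varepsilon_*=\varepsilon_*=(-)_0$ is exact on $\mathrm{QCoh}(\mathcal{X})$.

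\emph{Step 3.} For $F\in\mathrm{D}^{+}(\mathcal{X})$, exactness gives a natural isomorphism
\[
\mathcal{H}^i(\varepsilon_*F)\simeq\varepsilon_*\mathcal{H}^i(F).
\]
One can see this either by noting that the hypercohomology spectral sequence $E_2^{p,q}=\mathrm{R}^p\varepsilon_*\mathcal{H}^q(F)\Rightarrow\mathcal{H}^{p+q}(\varepsilon_*F)$ collapses onto the row $p=0$, or, more directly, by representing $F$ by a bounded-below complex of equivariant modules and applying the exact functor $(-)_0$ termwise. Since $\varepsilon_*F=0$, all its cohomology sheaves vanish, so $\varepsilon_*\mathcal{H}^i(F)=0$ for every $i$.

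The only point needing care is Step 2 in the unbounded, $\mathrm{D}^{+}$ setting. One must check that the derived functor of $\varepsilon_*$ on $\mathrm{D}^{+}$ is genuinely computed termwise, and that no $\lim^1$ or boundedness issues arise. I would handle this by using injective resolutions in $\mathrm{QCoh}(\mathcal{X})$: the pushforward of an injective quasi-coherent sheaf is $\varepsilon_*$-acyclic. Exactness of $(-)_0$ then shows that every equivariant module is $\varepsilon_*$-acyclic, so any bounded-below complex of quasi-coherent sheaves computes $\mathrm{R}\varepsilon_*$ termwise. This is precisely where the hypothesis $\gcd(r,\operatorname{char}\mathrm{k})=1$ enters; without it $H^i(\mu_r,-)$ need not vanish and the lemma could fail.
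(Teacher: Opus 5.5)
Your argument is correct and is essentially the paper's. The paper invokes the spectral sequence $E_2^{p,q}=\mathbf{R}^p\varepsilon_*\mathcal{H}^q(F)\Rightarrow\mathcal{H}^{p+q}(\varepsilon_*F)$ together with the vanishing of $\mathbf{R}^p\varepsilon_*$ for $p>0$, which is exactly your Step 3. You additionally justify that vanishing via exactness of taking $\mu_r$-invariants, which the paper leaves implicit.

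One small correction to your closing remark: $\mu_r$ is diagonalizable, so its representations are just $\mathbb{Z}/r$-graded modules, and taking the weight-zero part is exact in every characteristic. The coprimality hypothesis is therefore not what makes Step 2 work; it would matter only for the constant group $\mathbb{Z}/r$.
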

\begin{proof}
We consider the spectral sequence
\[
E_2^{p,q} := \mathbf{R}^p \varepsilon_* \, \mathcal{H}^q(F) \;\Longrightarrow\; \mathcal{H}^{p+q}(\varepsilon_* F).
\]
and $\mathbf{R}^p \varepsilon_* \mathcal{H}^q(F) = 0$ for $p>0$, we deduce that $\varepsilon_* \mathcal{H}^i(F) = 0$ for all $i$.
\end{proof}

\begin{corollary}\label{CBrss}
For any reflexive sheaf $M$ on $X$, the cohomological truncations $\tau^{\leq k}\varepsilon^! M$ for $k \gg 0$ give rise to a filtration of $\varepsilon^! M$, whose successive quotients are sheaves supported on $C$ and lie in the category $\mathcal{K}_-$.
\end{corollary}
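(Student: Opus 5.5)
Let $M$ be a reflexive sheaf on $X=\operatorname{Spec} S_0$ and set $\varepsilon^!M$. The plan is to identify the degree-zero cohomology of $\varepsilon^!M$ with a reflexive sheaf $\widetilde{M}$ on $\mathcal{X}$ satisfying $\varepsilon_*\widetilde{M}\simeq M$. We then show that all other cohomology sheaves $\mathcal{H}^i(\varepsilon^!M)$, and the correction between $\mathcal{H}^0$ and $\widetilde{M}$, are supported on $[C/\mu_r]$ and are killed by $\varepsilon_*$. The filtration is then the one given by the cohomological truncations $\tau^{\le k}$. Since $\mathcal{X}$ is Gorenstein and $\varepsilon$ is finite, $\varepsilon^!M=\mathbf{R}\mathcal{H}om_{S_0}(S,M)$ is computed eigenspace by eigenspace as $\bigoplus_i \mathbf{R}\operatorname{Hom}_{S_0}(S_i,M)\otimes\nu^{-i}$. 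It is concentrated in degrees $\ge 0$ but possibly unbounded, so for $k\gg 0$ the truncations $\tau^{\le k}\varepsilon^!M$ approximate it. By Lemma~\ref{finiteinjdim}, combined with an Auslander--Buchweitz approximation of $M$, the tail can be controlled whenever $M$ has finite injective dimension.

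The first step is that away from $C$ the action is free, so $\varepsilon$ is étale there and $\varepsilon^!M|_{\mathcal{X}\setminus C}=\varepsilon^*M$ is a reflexive sheaf in degree $0$. Hence every $\mathcal{H}^i(\varepsilon^!M)$ with $i>0$ is supported on $[C/\mu_r]$. In degree $0$, $\mathcal{H}^0=\bigoplus_i\operatorname{Hom}_{S_0}(S_i,M)\otimes\nu^{-i}$ is reflexive, because it is a Hom into a reflexive module. It agrees with $\varepsilon^*M$ in codimension one (the action is free in codimension one), so it equals $(\varepsilon^*M)^{\vee\vee}$. Its invariant part is $\operatorname{Hom}(S_0,M)=M$.

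The second step applies $\varepsilon_*$, which is exact and equal to taking invariants. Duality gives $\varepsilon_*\varepsilon^!M=\mathbf{R}\mathcal{H}om_{S_0}(\varepsilon_*\mathcal{O}_{\mathcal{X}},M)$, and its invariant part is $\mathbf{R}\operatorname{Hom}_{S_0}(S_0,M)=M$ in degree $0$. Because $\varepsilon_*$ is exact, Lemma~\ref{Brss} applied to $\tau^{\ge1}\varepsilon^!M$ gives $\varepsilon_*\mathcal{H}^i(\varepsilon^!M)=0$ for all $i\ge1$. Each such sheaf is supported on $C$ and killed by $\varepsilon_*$, and by the Corollary following Lemma~\ref{kernalapporximation} it lies in $\mathcal{K}_-$.

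The expected obstacle is the meaning of ``filtration of $\varepsilon^!M$'' when $\varepsilon^!M$ is unbounded, and checking that the successive quotients $\mathcal{H}^k[-k]$ are genuinely sheaves on $C$ rather than on an infinitesimal thickening. For the first issue, I would work in $\mathrm{D}^+$ and treat $\varepsilon^!M$ as the homotopy limit of the $\tau^{\le k}$. For the second, a sheaf on a thickening is an iterated extension of $\mathcal{O}_C$-modules, and each graded piece is still killed by $\varepsilon_*$ by exactness on $\mu_r$-isotypic parts. It therefore still lies in the thick subcategory $\mathcal{K}_-$, whose generators $\kappa_*\mathcal{O}_C\otimes\nu^k$ already account for the nontrivial characters. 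If needed, the Lemma~\ref{kernalapporximation} computation for $\mathbf{R}\operatorname{Hom}_{S_0}(S_i,-)$ shows directly that only the characters $\nu^{-i}$ with $i\ne0$ appear in positive degrees.
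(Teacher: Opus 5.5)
Your argument is correct and is essentially the paper's proof. The paper takes the cone $\tau$ of the natural map $\widetilde{M}\to\varepsilon^!M$, uses $\varepsilon_*\widetilde{M}=\varepsilon_*\varepsilon^!M=M$ and Lemma~\ref{Brss} to get that $\varepsilon_*$ kills every cohomology sheaf of $\tau$, and places these sheaves in $\mathcal{K}_-$ via the kernel corollary; you additionally identify $\mathcal{H}^0(\varepsilon^!M)\simeq\widetilde{M}$, so this cone is exactly $\tau^{\ge 1}\varepsilon^!M$ (one small slip: the truncations $\tau^{\le k}\varepsilon^!M$ form a direct system mapping to $\varepsilon^!M$, so $\varepsilon^!M$ is their homotopy colimit, not limit).
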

\begin{proof}
 Consider the natural morphism $\widetilde{M} \to \varepsilon^! M$, where $\widetilde{M}$ denotes the natural extension of $M$ as a reflexive sheaf on $\mathcal{X}$. Its cone $\tau$ is bounded below. Since $\varepsilon$ is finite, $\varepsilon_* \simeq \mathbf{R}^0\varepsilon_*$ and $M$ is reflexive, $\varepsilon_* \widetilde{M} = \varepsilon_* \varepsilon^! M = M$. By Lemma \ref{Brss}, each of the cohomology sheaves of the cone $\tau$  lies in $\mathcal{K}_-$.
\end{proof}

\begin{corollary}
For any perfect complex $P$ on $X$, the cohomological truncations $\tau^{\leq k}\varepsilon^! P$ for $k \geq l$ induce a filtration of $\tau^{\leq l}\varepsilon^! P$ (for $l \gg 0$), whose successive quotients are sheaves supported on $C$ and lie in the subcategory $\mathcal{K}_-$.
\end{corollary}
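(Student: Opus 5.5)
The plan is to reduce to Corollary~\ref{CBrss} by splitting $P$ into its terms. We work on the local formal model $X=\operatorname{Spec} S_0$, so $P$ may be taken to be a bounded complex of finite free $S_0$-modules
\[
0 \to P^{a} \to \cdots \to P^{b} \to 0 .
\]
Each term $P^j$ is free, hence reflexive, and $\varepsilon^!$ is exact on the level of triangles. Using the stupid filtration $\sigma^{\geq j}P$, the object $\varepsilon^! P$ becomes an iterated extension of the objects $\varepsilon^! P^j[-j]$, for $a\le j\le b$.

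For each free term we use Corollary~\ref{CBrss}. Since $\varepsilon^!\mathcal{O}_X=\widetilde{\omega}$ is the reflexive extension, there is a triangle
\[
\widetilde{P^j} \to \varepsilon^! P^j \to \tau_j .
\]
The cone $\tau_j$ is bounded below, and all of its cohomology sheaves are supported on $C$ and lie in $\mathcal{K}_-$ (by Lemma~\ref{Brss}, since $\varepsilon_*\tau_j=0$). The reflexive parts $\widetilde{P^j}$ assemble into a bounded complex $\widetilde{P}$ of reflexive sheaves on $\mathcal{X}$ with $\varepsilon_*\widetilde{P}=P$. Gluing the triangles then gives a triangle
\[
\widetilde{P}\to\varepsilon^!P\to\tau ,
\]
with $\varepsilon_*\tau=0$. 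Applying Lemma~\ref{Brss} again, every $\mathcal{H}^i(\tau)$ satisfies $\varepsilon_*\mathcal{H}^i(\tau)=0$, and so lies in $\mathcal{K}_-$ by the Corollary preceding Lemma~\ref{Brss}.

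Next fix $l\gg 0$, larger than $b$ and than the amplitude of $\widetilde{P}$. For $k\geq l$ we have $\mathcal{H}^k(\varepsilon^!P)\cong\mathcal{H}^k(\tau)$. The truncation triangles
\[
\tau^{\leq k-1}\varepsilon^!P\to\tau^{\leq k}\varepsilon^!P\to\mathcal{H}^k(\varepsilon^!P)[-k]
\]
therefore have successive quotients that are sheaves supported on $C$ lying in $\mathcal{K}_-$. This yields the claimed (possibly infinite, but exhaustive) filtration: we read the statement's ``filtration of $\tau^{\leq l}\varepsilon^! P$'' as the tower of truncations $\tau^{\leq k}\varepsilon^! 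P$ for $k\geq l$ that exhausts $\varepsilon^! P$.

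The main obstacle is boundedness. Since $\mathcal{X}$ is Gorenstein but $\varepsilon$ is not flat, $\varepsilon^!P$ is in general only in $\mathrm{D}^+$. The cohomology sheaves in high degrees come from the higher Ext's $\mathcal{E}xt^{k}_{S_0}(S_i,S_0)$, which are supported on $C$ by the freeness of the action in codimension one. The delicate point is checking that $\varepsilon_*$ of these cohomology sheaves actually vanishes, rather than merely that $\varepsilon_*$ of the whole cone vanishes. Here I will use that $\varepsilon_*$ is exact, since $\varepsilon$ is finite and $\mu_r$ is linearly reductive, so the spectral sequence of Lemma~\ref{Brss} degenerates. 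Invariants of the cone vanish because $(\varepsilon^!P^j)^{\mu_r}=\mathcal{H}om_{S_0}(S,P^j)^{\mu_r}=P^j$ is concentrated in degree zero. Hence each $\mathcal{H}^k$ for $k\geq l$ has only nontrivial isotypic components, and by Lemma~\ref{kernalapporximation} it is built from $\kappa_*\mathcal{O}_C\otimes\nu^{i}$ with $i\neq 0$.
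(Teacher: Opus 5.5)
Your proof is correct and follows the paper's route. The free terms are handled by Corollary~\ref{CBrss}; one then builds $\widetilde{P}\to\varepsilon^!P$ with $\varepsilon_*\widetilde{P}\simeq P$ and uses Lemma~\ref{Brss} to place the cohomology of the cone, and hence $\mathcal{H}^k(\varepsilon^!P)$ for $k\gg0$, in $\mathcal{K}_-$. The only difference is how $\widetilde{P}$ is built: the paper lifts the differentials through the adjunction $\operatorname{Hom}_{\mathcal{X}}(\widetilde{\mathcal{O}_X}^{\oplus a},\tau^{\le l}\varepsilon^!\mathcal{O}_X^{\oplus b})\simeq\operatorname{Hom}_X(\mathcal{O}_X^{\oplus a},\mathcal{O}_X^{\oplus b})$, while you assemble the reflexive hulls $\widetilde{P^j}$ termwise, which amounts to the same thing.

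One slip: $\varepsilon^!\mathcal{O}_X$ is not itself the reflexive extension, since it lives only in $\mathrm{D}^+$, as you note later. Only $\mathcal{H}^0(\varepsilon^!\mathcal{O}_X)$ is the reflexive extension $\widetilde{\mathcal{O}_X}$, and that is all your triangle $\widetilde{P^j}\to\varepsilon^!P^j\to\tau_j$ uses.
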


\begin{proof}
If $P=\mathcal{O}_X^{\oplus a}$, the statement follows directly from the Corollary \ref{CBrss}. Now let $P$ be an arbitrary perfect complex. Since $P$ is quasi-isomorphic to a bounded complex of free sheaves, it suffices to treat the case of a morphism
\[
f:\mathcal{O}_X^{\oplus a} \longrightarrow \mathcal{O}_X^{\oplus b}
\]
representing $P$. By the adjunction isomorphism there exists $l$ big enough such that
\[
\operatorname{Hom}_{\mathcal{X}}(\widetilde{\mathcal{O}_X}^{\oplus a}, \varepsilon^!\mathcal{O}_X^{\oplus b})
=
\operatorname{Hom}_{\mathcal{X}}(\widetilde{\mathcal{O}_X}^{\oplus a}, \tau^{\leq l}\varepsilon^!\mathcal{O}_X^{\oplus b})
=
\operatorname{Hom}_{X}(\mathcal{O}_X^{\oplus a}, \mathcal{O}_X^{\oplus b}),
\]
we may lift $f$ to a morphism
\[
\widetilde{f}:\widetilde{\mathcal{O}_X}^{\oplus a} \longrightarrow \tau^{\leq l}\varepsilon^! \mathcal{O}_X^{\oplus b}.
\]
Repeating this construction along a locally free resolution of $P$, we obtain an object $\widetilde{P}$ fitting into a morphism $\widetilde{P}\to \varepsilon^! P$ such that $\varepsilon_*\widetilde{P}\simeq P$, and the cone $\tau$ has cohomology sheaves lying in $\mathcal{K}_-$.
\end{proof}

Thus, for any $F \in \mathrm{D}^b(X)$, take a sheaf approximation $N[k] \to F \to P \to N[k+1]$ with $N$ a coherent sheaf and $P$ a perfect complex, followed by a Cohen--Macaulay approximation $0 \to N \to I_N \to M^N \to 0$. Applying $\varepsilon^!$, we obtain $\varepsilon^! I_N \to \varepsilon^! M^N$. By Lemmas~\ref{finiteinjdim} and~\ref{projinjdim}, $\varepsilon^! I_N$ is quasi-isomorphic to a bounded perfect complex, hence for $k \gg 0$,
\[
\operatorname{Hom}_{\mathcal{X}}(\varepsilon^! I_N, \varepsilon^! M^N)
=
\operatorname{Hom}_{\mathcal{X}}(\varepsilon^! I_N, \tau^{\leq k} \varepsilon^! M^N)
=
\operatorname{Hom}_{X}(I_N, M^N).
\]
Set $\widetilde{N} := \tau^{\leq k} \varepsilon^! N$, which is bounded and fits into a distinguished triangle $\widetilde{N} \to \varepsilon^! I_N \to \tau^{\leq k} \varepsilon^! M^N$, and satisfies $\varepsilon_* \widetilde{N} \simeq N$.  For a perfect complex $P$, using
\[
\operatorname{Ext}^1_{\mathcal{X}}(\widetilde{P}, \varepsilon^! N[k])
\simeq
\operatorname{Ext}^1_X(\widetilde{P}, \tau^{\leq l}\varepsilon^! N[k]),
\]
we obtain, by the same argument, an object $\widetilde{F}$ such that $\varepsilon_* \widetilde{F} \simeq F$, and the natural morphism $\widetilde{F} \longrightarrow \varepsilon^! \widetilde{F}$ has cone whose cohomology sheaves lie in $\mathcal{K}_-$. Thus we can define a (quasi-)functor $\widetilde{(-)} : \mathrm{D}^b(X) \to \mathrm{D}^b(\mathcal{X})$ with some ambiguity.
\begin{proposition}\label{canonicalcover}
The above construction induces a natural well-defined triangulated functor
\[
\widetilde{(-)} : \mathrm{D}^b(X) \xrightarrow{\sim} \mathrm{D}^b(\mathcal{X})/\mathcal{K}_-,
\]
which defines an equivalence of triangulated categories. In particular, for every $F \in \mathrm{D}^b(X)$, its image is isomorphic (for $l \gg 0$) to $\tau^{\le l}\varepsilon^! F$.
\end{proposition}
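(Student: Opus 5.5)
The plan is to compare $\widetilde{(-)}$ with the functor $\bar\varepsilon_*:\mathrm{D}^b(\mathcal{X})/\mathcal{K}_-\to \mathrm{D}^b(X)$ induced by $\varepsilon_*$. Because $\varepsilon$ is finite, $\varepsilon_*$ is exact, and for $i\neq 0$ we have $\varepsilon_*(\kappa_*\mathcal{O}_C\otimes\nu^i)=(\mathcal{O}_C\otimes\nu^i)^{\mu_r}=0$. Hence $\mathcal{K}_-\subset\ker\varepsilon_*$, and $\varepsilon_*$ factors through the Verdier quotient. The preceding corollary gives the reverse inclusion $\ker\varepsilon_*\subset\mathcal{K}_-$, so $\bar\varepsilon_*$ is conservative: an object of the quotient that it kills is already zero there.

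\textbf{Well-definedness.} The main task is to make $\widetilde{(-)}$ independent of choices. The construction gives, for each $F$, an object $\widetilde F$ with $\varepsilon_*\widetilde F\simeq F$, together with a morphism $\widetilde F\to\tau^{\le l}\varepsilon^!F$ whose cone has cohomology sheaves in $\mathcal{K}_-$. I will first show that this cone lies in $\mathcal{K}_-$ as an object. The cone is bounded after truncation, so the filtration of Corollary~\ref{CBrss} and the one for perfect complexes is finite, and $\mathcal{K}_-$ is closed under extensions. It follows that in the quotient $\widetilde F\simeq\tau^{\le l}\varepsilon^!F$ for $l\gg0$, which is the "in particular" clause.

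Next I need to see that $\tau^{\le l}\varepsilon^!F$ is independent of $l$ in the quotient. For $l'>l$ the cone of $\tau^{\le l}\to\tau^{\le l'}$ has cohomology sheaves $\mathcal{H}^j(\varepsilon^!F)$ with $l<j\le l'$. By Lemma~\ref{Brss} applied to the cone of $\widetilde F\to \varepsilon^!F$, these are killed by $\varepsilon_*$ and therefore lie in $\mathcal{K}_-$. I will then define the functor on objects by $F\mapsto\tau^{\le l}\varepsilon^!F$. On morphisms, a map $\phi:F\to G$ gives $\varepsilon^!\phi$, and hence $\tau^{\le l}\varepsilon^!\phi$ for $l$ large. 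Functoriality and the triangulated structure are inherited from $\varepsilon^!$ and from the $t$-structure truncations, whose failure to be exact produces cones lying in $\mathcal{K}_-$ by the same argument. This yields a triangulated functor up to the stated ambiguity, which disappears in the quotient.

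\textbf{Equivalence.} I will check two things.

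First, $\bar\varepsilon_*\circ\widetilde{(-)}\simeq\mathrm{id}$. This holds because $\varepsilon_*\tau^{\le l}\varepsilon^!F\simeq\varepsilon_*\widetilde F\simeq F$, naturally via the counit $\varepsilon_*\varepsilon^!\to\mathrm{id}$ composed with the truncation.

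Second, $\widetilde{(-)}\circ\bar\varepsilon_*\simeq\mathrm{id}$. For $G\in\mathrm{D}^b(\mathcal{X})$, adjunction gives the unit $G\to\varepsilon^!\varepsilon_*G$. Since $G$ is bounded above, this unit factors through $\tau^{\le l}\varepsilon^!\varepsilon_*G$ for $l\gg0$. Applying $\varepsilon_*$ to this map gives an isomorphism, because $\varepsilon_*\varepsilon^!\varepsilon_*G\to\varepsilon_*G$ splits the unit, and $\varepsilon_*$ of the truncation is identified with $\varepsilon_*G$ as above. The cone therefore lies in $\ker\varepsilon_*\subset\mathcal{K}_-$ by the corollary, so the map is an isomorphism in the quotient.

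\textbf{Main obstacle.} I expect the hardest step to be showing that $\varepsilon_*\tau^{\le l}\varepsilon^!$ agrees with $\varepsilon_*\varepsilon^!$ on the relevant objects. This amounts to knowing that the high cohomology of $\varepsilon^!F$ is killed by $\varepsilon_*$. I would try to get this from Grothendieck duality: $\varepsilon_*\varepsilon^!F=\mathbf{R}\mathcal{H}om_X(\varepsilon_*\mathcal{O}_{\mathcal{X}},F)$, and $\varepsilon_*\mathcal{O}_{\mathcal{X}}=\bigoplus_i S_i$. The $\mu_r$-invariant part of this is $\mathbf{R}\mathcal{H}om(S_0,F)=F$, which is bounded. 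The remaining nonzero-weight summands are exactly the $\mathcal{K}_-$-pieces described in Lemma~\ref{kernalapporximation}.
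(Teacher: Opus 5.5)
Your argument is correct, but it reaches the equivalence by a different mechanism than the paper's. The paper gets full faithfulness from the quotient-adjunction formula $\operatorname{Ext}^*_{\mathrm{D}^b(\mathcal{X})/\mathcal{K}_-}(G,\widetilde F)\simeq\operatorname{Ext}^*_{\mathrm{D}^b(X)}(\varepsilon_*G,F)$, quoted from \cite[Lemma 3.9]{Hao2025b}. It then checks triangles separately, and for essential surjectivity it simply asserts $G\simeq\widetilde{\varepsilon_*G}$ in the quotient.

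You use only the genuine adjunction $\varepsilon_*\dashv\varepsilon^!$ together with $\ker\varepsilon_*\cap\mathrm{D}^b(\mathcal{X})=\mathcal{K}_-$. The counit gives $\bar\varepsilon_*\circ\widetilde{(-)}\simeq\mathrm{id}$. The truncated unit $G\to\tau^{\le l}\varepsilon^!\varepsilon_*G$ becomes invertible after $\varepsilon_*$, by the triangle identity and invertibility of the counit, so its bounded cone lies in $\mathcal{K}_-$. Naturality on the quotient then follows from the universal property of Verdier localization.

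This buys three things:
\begin{itemize}
\item It supplies the step the paper leaves unjustified.
\item It makes $\bar\varepsilon_*$ and $\widetilde{(-)}$ mutually quasi-inverse, so exactness of $\widetilde{(-)}$ is automatic and your separate check of triangles can be dropped.
\item It avoids the external lemma. Even the inclusion $\ker\varepsilon_*\subset\mathcal{K}_-$ is elementary here: a coherent $M$ with $M^{\mu_r}=0$ satisfies $S_{-i}M_i=0$, hence is supported on $C$ by freeness off $C$, and its $\mathfrak p$-adic graded pieces have the form $\bigoplus_{j\ne0}G_j\otimes\nu^j$.
\end{itemize}
The price is that your argument rests on $t$-exactness of $\varepsilon_*$: truncations must commute with $\varepsilon_*$, and $\ker\varepsilon_*$ must be closed under cohomology sheaves. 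This fails for the blow-ups treated later, which is why the paper builds the quotient-adjunction template of Principle~\ref{P1} with Rees-type approximations instead.

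One slip to fix: $\varepsilon_*\mathcal{O}_{\mathcal{X}}=S^{\mu_r}=S_0=\mathcal{O}_X$, not $\bigoplus_iS_i$; the latter is the pushforward from the cover $\operatorname{Spec}S$. With the correct value, duality gives $\varepsilon_*\varepsilon^!F\simeq\mathbf{R}\mathcal{H}om_X(\mathcal{O}_X,F)=F$ outright. Hence $\varepsilon_*\mathcal{H}^j(\varepsilon^!F)=\mathcal{H}^j(F)=0$ for $j\gg0$, and your ``main obstacle'' disappears. Likewise, exactness of $\varepsilon_*$ comes from linear reductivity of $\mu_r$ (since $\gcd(r,\operatorname{char}\mathrm{k})=1$), not from finiteness of $\varepsilon$.
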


\begin{proof}
First, by the classical construction of the quotient category in \cite[Lemma 3.9]{Hao2025b}, we obtain natural isomorphisms
\[
\operatorname{Ext}^*_{\mathrm{D}^b(\mathcal{X})/\mathcal{K}_-}(G,\widetilde{F})
\simeq
\operatorname{Ext}^*_{\mathrm{D}^b(X)}(\varepsilon_* G, F),
\]
which induce the desired isomorphism on morphism spaces and hence define a fully faithful functor.
Next, for any distinguished triangle $F \to G \to H$ in $\mathrm{D}^b(X)$, it induces a triangle in $\mathrm{D}^b(\mathcal{X})$ of the form
\[
\widetilde{F} \longrightarrow \tau^{\leq l}\varepsilon^! G \longrightarrow \tau,
\]
where $\tau$ is bounded and satisfies $\varepsilon_* \tau \simeq H$, hence $\tau \simeq \widetilde{H}$ in $\mathrm{D}^b(\mathcal{X})/\mathcal{K}_-$. So the construction is compatible with distinguished triangles and therefore defines a triangulated functor.
Finally, for any $F \in \mathrm{D}^b(\mathcal{X})$, we have $F \simeq \widetilde{\varepsilon_* F}$ in $\mathrm{D}^b(\mathcal{X})/\mathcal{K}_-$, and for any $G \in \mathrm{D}^b(X)$, one has $\varepsilon_* \widetilde{G} \simeq G$. This shows that the functor is essentially surjective and hence an equivalence with inverse induced by $\varepsilon_*$.
\end{proof}
We now return to the general situation where $\mathcal{X}$ is a well-formed stack with coarse moduli map $\varepsilon:\mathcal{X}\to X$, and with finite singular centers $C_i \subset X$. There is a well-defined Verdier pullback $\varepsilon^!: \mathrm{D}^+(\mathrm{Qcoh}(X)) \to \mathrm{D}^+(\mathrm{Qcoh}(\mathcal{X}))$. Étale locally, we may choose an atlas of the form $[V/\mu_r]$, fitting into a Cartesian diagram
\[
\begin{tikzcd}
{[V/\mu_r]} \arrow[r, hook, "\widetilde{\iota}"] \arrow[d, "\widetilde{\varepsilon}"] & \mathcal{X} \arrow[d, "\varepsilon"] \\
{V/\mu_r} \arrow[r, hook, "\iota"] & X
\end{tikzcd}
\quad \text{(étale locally)}.
\]
More generally, there is a natural isomorphism $\widetilde{\varepsilon}^! \, l^* \simeq \widetilde{l}^* \, \varepsilon^!$, which induces a functor $\mathrm{D}^b(X) \to \mathrm{D}^+(\mathcal{X})$ without assuming the resolution property for $X$. Since each $C_i$ is projective, hence quasi-compact, for any closed point of $C_i$ we may choose such a local chart $[V/\mu_r]$, in which the completed local ring is of the form $\mathcal{O}_{C,x}[[x_1,\dots,x_n]]/I$. These local models satisfy the assumptions of Proposition~\ref{canonicalcover}. It follows that for any $F \in \mathrm{D}^b(X)$, there exists $k \gg 0$ such that the truncation $\tau^{\le k}\varepsilon^! F$ satisfies the same local properties along all $C_i$.
\begin{proposition}\label{CMappro}
There is a natural equivalence of triangulated categories
\[
\widetilde{(-)} : \mathrm{D}^b(X) \xrightarrow{\sim} \mathrm{D}^b(\mathcal{X})/\mathcal{K}_-,
\]
where $\mathcal{K}_-$ is the full saturated subcategory generated by $\langle \kappa_*\mathrm{D}^b(C_i) \otimes \nu^j \mid i,\; 1 \le j < r \rangle$. Moreover, for every $F \in \mathrm{D}^b(X)$, its image is isomorphic to $\tau^{\le l}\varepsilon^! F$ for $l \gg 0$.
\end{proposition}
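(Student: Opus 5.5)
The plan is to globalize Proposition~\ref{canonicalcover} by étale (or Zariski) gluing, using that the centers $C_i$ are disjoint and projective. We define the functor directly as $F \mapsto \tau^{\le l}\varepsilon^! F$ for $l \gg 0$ and check its properties locally.

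\emph{Step 1: well-definedness.} For $F \in \mathrm{D}^b(X)$, the object $\varepsilon^! F$ lies in $\mathrm{D}^+(\mathcal{X})$. Away from $\bigsqcup_i C_i$ the map $\varepsilon$ is an isomorphism, so $\varepsilon^! F$ is bounded there. Near each $C_i$ we cover $C_i$ by finitely many charts $[V/\mu_r]$, using quasi-compactness. By the base change $\widetilde{\varepsilon}^!\, l^* \simeq \widetilde{l}^*\varepsilon^!$ and Proposition~\ref{canonicalcover} on each completed chart, the cohomology sheaves $\mathcal{H}^j(\varepsilon^! F)$ for $j \gg 0$ are supported on $C_i$. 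By Corollary~\ref{CBrss} and its perfect-complex analogue, these sheaves lie in $\mathcal{K}_-$. Since there are finitely many charts, one uniform $l$ works. Hence, for $l' \ge l$, the maps $\tau^{\le l}\varepsilon^! F \to \tau^{\le l'}\varepsilon^! F$ have cones filtered by objects of $\mathcal{K}_-$, so the class of $\tau^{\le l}\varepsilon^! F$ in the Verdier quotient is independent of $l$. To pass from "étale locally in $\mathcal{K}_-$" to membership in the global $\mathcal{K}_-$, note that a coherent sheaf supported on $C_i$ whose local isotypic components are nontrivial decomposes globally as $\bigoplus_j (\text{sheaf on } C_i)\otimes \nu^j$ with $j\neq 0$. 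Here we use that $\mu_r$ acts trivially on the fixed locus $C_i$, so the isotypic decomposition glues.

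\emph{Step 2: fully faithfulness.} We reprove the adjunction
\[
\operatorname{Hom}_{\mathrm{D}^b(\mathcal{X})/\mathcal{K}_-}(G,\tau^{\le l}\varepsilon^! F)\simeq \operatorname{Hom}_{\mathrm{D}^b(X)}(\varepsilon_*G,F)
\]
globally. By Grothendieck duality, $\operatorname{Hom}(G,\varepsilon^!F)=\operatorname{Hom}(\varepsilon_*G,F)$. For $G$ bounded, truncating at large $l$ does not change Hom, because $G$ has bounded Tor-amplitude locally up to objects of $\mathcal{K}_-$. The object $\tau^{\le l}\varepsilon^!F$ is right orthogonal to $\mathcal{K}_-$ in the quotient. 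Indeed, $\operatorname{Hom}(\kappa_*\mathcal{O}_{C_i}\otimes\nu^j,\varepsilon^! F)=\operatorname{Hom}(\varepsilon_*(\kappa_*\mathcal{O}_{C_i}\otimes\nu^j),F)=0$, since invariants of a nontrivial isotypic piece vanish. The remaining error comes only from the truncation cone, which lies in $\mathcal{K}_-$. So the "local adjunction" of \cite[Lemma 3.7, 3.9]{Hao2025b} holds globally, because both sides are computed by sheaf Homs that glue via the local-to-global spectral sequence.

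\emph{Step 3: exactness and essential surjectivity.} Exactness follows as in Proposition~\ref{canonicalcover}, since $\tau^{\le l}$ applied to a triangle is a triangle up to objects of $\mathcal{K}_-$. The inverse functor is $\varepsilon_*$, which is exact because $\varepsilon$ is a coarse moduli map with $\operatorname{char}\nmid r$, and which kills $\mathcal{K}_-$. For $G \in \mathrm{D}^b(\mathcal{X})$, the unit $G \to \varepsilon^!\varepsilon_* G$, once truncated, has cone $K$ with $\varepsilon_* K=0$. By Lemma~\ref{Brss}, each $\mathcal{H}^i(K)$ satisfies $\varepsilon_*\mathcal{H}^i(K)=0$, so it is supported on $\bigsqcup C_i$ with purely nontrivial isotypic components. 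Hence $K \in \mathcal{K}_-$.

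\emph{Main obstacle.} The hardest step is Step 1: showing that $\mathcal{K}_-$, defined by local generators, is a thick subcategory and coincides globally with $\langle \kappa_*\mathrm{D}^b(C_i)\otimes\nu^j\rangle$. I would first reduce to sheaves by filtering by cohomology, and then use the canonical isotypic splitting on $[C_i/\mu_r]=C_i\times B\mu_r$.
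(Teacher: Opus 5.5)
Your skeleton is the paper's: apply Proposition~\ref{canonicalcover} on finitely many charts along the projective $C_i$, identify the functor with $\tau^{\le l}\varepsilon^!F$, and take the quasi-inverse induced by $\varepsilon_*$. Taking $\tau^{\le l}\varepsilon^!F$ as the definition, and getting the adjunction from duality for $\varepsilon$ rather than from Cohen--Macaulay approximation, is a reasonable streamlining. Two steps, however, are wrong as written.

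First, the lemma you single out as the main obstacle is that a coherent $\mathcal G$ on $\mathcal X$ with $\varepsilon_*\mathcal G=0$ lies in $\mathcal K_-$. Your argument does not prove it. Such a $\mathcal G$ is only set-theoretically supported on $\bigsqcup_i C_i$: it is a module over a thickening $\mathcal O_{\mathcal X}/\mathcal I_{C_i}^N$. The normal coordinates have nontrivial weights, so multiplying by them moves isotypic components into one another. Hence $\mathcal G$ does not split as $\bigoplus_{j\ne0}(\text{sheaf on }C_i)\otimes\nu^j$, and filtering by cohomology sheaves does not reduce you to $[C_i/\mu_r]$. For example, take $\mu_3$ acting on $x$ with weight one; then $\mathrm{k}[[x]]/(x^2)\otimes\nu$ has no invariants but is a non-split extension of $\mathrm{k}\otimes\nu$ by $\mathrm{k}\otimes\nu^2$.

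The missing step is to filter further by $\mathcal I_{C_i}^k\mathcal G$. Since $\gcd(r,\operatorname{char}\mathrm{k})=1$, taking invariants is exact, so every graded piece again has zero invariant part. The graded pieces are $\mathcal O_{C_i}$-modules on $[C_i/\mu_r]\simeq C_i\times B\mu_r$, where the isotypic splitting does exist globally and puts them in $\mathcal K_-$. With this lemma, Step~1 needs no uniformity argument over charts. Indeed, $\varepsilon_*$ is exact and $\varepsilon_*\varepsilon^!\simeq\mathrm{id}$, so $\varepsilon_*\mathcal H^j(\varepsilon^!F)\simeq\mathcal H^j(F)$, and any $l$ at least the top degree of $F$ works.

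Second, in Step~2 the truncation $\tau^{\le l}\varepsilon^!F$ is not right orthogonal to $\mathcal K_-$. The cone $\tau^{>l}\varepsilon^!F$ is in general unbounded above, so it is not an object of $\mathcal K_-$. What duality actually gives is that the untruncated $\varepsilon^!F\in\mathrm{D}^+(\mathcal X)$ is right orthogonal to $\mathcal K_-$. Also, truncation is harmless for $\operatorname{Hom}(G,-)$ simply because $\operatorname{Hom}(\mathrm{D}^{\le b},\mathrm{D}^{>b})=0$. Tor-amplitude plays no role, and it is unbounded for $\kappa_*\mathcal O_{C_i}$ when the cover is singular along $C_i$.

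The correct argument is by cofinality. For any $s\colon\tau^{\le l}\varepsilon^!F\to B'$ with cone in $\mathcal K_-$, the canonical map $\tau^{\le l}\varepsilon^!F\to\varepsilon^!F$ extends uniquely along $s$. Since $B'$ is bounded, the extension factors through some $\tau^{\le l'}\varepsilon^!F$. Moreover, the transition maps $\tau^{\le l}\varepsilon^!F\to\tau^{\le l'}\varepsilon^!F$ have cones in $\mathcal K_-$. Hence $\{\tau^{\le l'}\varepsilon^!F\}_{l'\ge l}$ is cofinal, and $\operatorname{Hom}_{\mathrm{D}^b(\mathcal X)/\mathcal K_-}(G,\tau^{\le l}\varepsilon^!F)\simeq\varinjlim_{l'}\operatorname{Hom}(G,\tau^{\le l'}\varepsilon^!F)\simeq\operatorname{Hom}(\varepsilon_*G,F)$.

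This argument is already global, so drop the appeal to a local-to-global spectral sequence, which is not available for Homs in a Verdier quotient. Finally, $\widetilde{(-)}$ is exact because it is quasi-inverse to the exact functor induced by $\varepsilon_*$. It is not because truncation preserves triangles, which it does not.
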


\begin{remark}
\begin{enumerate}
\item
By an analogous argument, one can construct \(\tau^{\ge l}\varepsilon^*F\) for \(l\ll0\) and show that it is isomorphic to \(\tau^{\le l}\varepsilon^!F\) in the quotient category.

\item
Suppose that the fixed locus of the group action admits a stratification with stabilizers
\[
\mu_{r_1}\subset \mu_{r_2}\subset \cdots \subset \mu_r,
\]
and centers
\[
C_1\supset C_2\supset \cdots \supset C.
\]
Then one obtains a sequence of partial stacky quotient moduli spaces. Considering the analogous quotient by the iterated stacky subcategories
\[
\langle \mathrm D^b(\widetilde{C_j})\otimes \nu_j^k\rangle,
\]
one obtains an analogous localization theorem. So the derived category of the rigidified stack is obtained as a Verdier quotient by the corresponding stacky subcategory.
\end{enumerate}
\end{remark}

\subsection{Kawamata blow-up}\label{Kawamatablowup}
We first consider the formal local neighborhood of a closed point of $C$ in a well-formed stack over infinite field. Thus $X = \operatorname{Spec} S_0$, where $S := \mathrm{k}[[x_1,\dots,x_n]]/I$ is endowed with a diagonal $\mu_r$-action of weights $a_1,\dots,a_n$, such that $x_1,\dots,x_s$ define the singular center $C$, with $a_i>0$ for $1\le i\le s$ and $a_i=0$ for $s+1\le i\le n$.\\

First we consider the case $I=0$. This induces a natural weighted valuation
\[
val : K([R/\mu_r]) \longrightarrow \mathbb{Z},
\]
defined by setting, for any $\mu_r$-equivariant element $f=\sum_i c_i m_i$ with $m_i$ monomials,
\[
val(f) := \min_i \deg(m_i),
\]
where $\deg(x_j):=a_j$. In particular, this defines a filtration of ideals
\[
P_i := \{\, f \in R \mid f \text{ is $\mu_r$-equivariant and } val(f) \ge i \,\}.
\]
We always assume that, with respect to the measured order of the valuation, the ideal $I$ admits a measured basis \cite[Definition 2.7]{Hao2025b} consisting of a regular sequence, and that the leading terms with respect to this valuation also form a regular sequence. This condition will be referred to as the \emph{measured condition}.
\label{cond:measured}

Then consider the stacky Proj of the $\mathbb{N}$-graded $R$-algebra $\bigoplus_{i\ge 0} P_i$, namely
\[
\mathcal{P}\mathrm{roj}_R\!\left(\bigoplus_{i\ge 0} P_i\right)
:=
\left[\operatorname{Spec}_R\!\left(\bigoplus_{i\ge 0} P_i\right)\setminus Z \,\big/\, \mathbb{G}_m\right],
\]
here \(Z\) denotes the irrelevant locus. Since each $P_i$ is $\mu_r$-equivariant, the graded algebra $\bigoplus_{i\ge 0} P_i$ carries a natural $\mu_r$-action, therefore form the quotient stack
\[
w\mathrm{Bl}^{(a_i)}_C \mathcal{X}
:=
\left[\mathcal{P}\mathrm{roj}_R\!\left(\bigoplus_{i\ge 0} P_i\right)\big/\mu_r\right].
\]
note that this is a $(\mathbb{G}_m \times \mu_r)$-quotient stack since the two group actions commute, the order of taking quotients can be interchanged. We denote its coarse moduli space by $w\mathrm{Bl}^{(a_i)/r}_C X$, and its canonical stack by $\widetilde{w\mathrm{Bl}^{(a_i)/r}_C X}$, and refer to them as the (stacky) Kawamata blow-up. We note that the weights $a_i$ are not required to satisfy $a_i<r$. Moreover, if $f \in R_i$, then $val(f) \equiv i \pmod{r}$.
\begin{lemma}\label{rootsblowup}
The $\mu_r$-action on $\mathcal{P}\mathrm{roj}_R\!\left(\bigoplus_{i\ge 0} P_i\right)$ is trivial along the exceptional divisor
\[
\mathrm{M}:= \mathcal{P}\mathrm{roj}\!\left(\bigoplus_{i\ge 0} P_i /P_{i+1}\right).
\]
In particular, there is a natural identification
\[
w\mathrm{Bl}^{(a_i)}_C \mathcal{X}
\simeq
\sqrt[r]{\,E_\mathcal{Y}\,/\,\widetilde{w\mathrm{Bl}^{(a_i)/r}_C X}\,}.
\]
\end{lemma}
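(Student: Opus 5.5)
The plan is to verify both claims on the standard affine charts of the weighted blow-up, in the case $I=0$, and then descend to the complete intersection via the measured condition. For $1\le j\le s$, the chart of $\mathcal{P}\mathrm{roj}_R(\bigoplus_i P_i)$ where the degree-$a_j$ element $x_j t^{a_j}$ is invertible is the usual weighted chart
\[
\bigl[\operatorname{Spec}\mathrm{k}[[y_1,\dots,y_n]]/\mu_{a_j}\bigr],\qquad x_j=y_j^{a_j},\quad x_i=y_iy_j^{a_i}\ (i\neq j,\ i\le s),\quad x_i=y_i\ (i>s).
\]
The exceptional divisor $\mathrm{M}$ is cut out by $y_j=0$. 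First I would write down how $\mu_r$ acts on the $y$-coordinates. Because $\mu_r$ commutes with $\mathbb{G}_m$ and acts on $x_i$ by $\zeta^{a_i}$, a twist by the one-parameter subgroup $\zeta\mapsto\zeta\in\mathbb{G}_m$ absorbs this action. In other words, the element $(\zeta,\zeta^{-1})\in\mu_r\times\mathbb{G}_m$ acts trivially on every generator $f t^{i}$ with $f\in P_i$ equivariant, because $val(f)\equiv i\pmod r$ whenever $f\in R_i$.

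It follows that modulo $\mathbb{G}_m$, the $\mu_r$-action on the graded pieces $P_i/P_{i+1}$ is trivial. Hence $\mu_r$ acts trivially on $\mathrm{M}=\mathcal{P}\mathrm{roj}(\bigoplus P_i/P_{i+1})$. Away from $\mathrm{M}$, the same twist shows the action agrees with the original $\mu_r$-action on $\mathcal{X}\setminus C$, which is free in codimension one.

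Next I would identify the quotient locally with the root stack. On the chart, after the twist, $\mu_r$ acts only through the coordinate $t^{-1}$ (the generator of the exceptional ideal), by a faithful character. So locally the quotient has the form $[\operatorname{Spec} A[u]/(u^r-g)/\mu_r]$, where $A$ is the invariant ring and $g=u^r$ is the local equation of the image divisor $E_{\mathcal{Y}}$ on the coarse/canonical level. This is exactly the local presentation of $\sqrt[r]{(E_\mathcal{Y}/\widetilde{w\mathrm{Bl}^{(a_i)/r}_CX})}$ recorded before Lemma~\ref{flatbasechange0}, with the character compatibility $\chi_u^r=\chi_g$.

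To make this global, I would use the universal property of root stacks. The line bundle $\mathcal{O}(-\mathrm{M})$ on $w\mathrm{Bl}^{(a_i)}_C\mathcal{X}$, together with its section, gives an $r$-th root of the pullback of $\mathcal{O}(E_\mathcal{Y})$. This produces a morphism to the root stack, and the chart computation shows it is étale and bijective on stabilizers, hence an isomorphism. The map to $\widetilde{w\mathrm{Bl}^{(a_i)/r}_CX}$ is supplied by Lemma~\ref{canonicalstackkey}, since the coarse map is codimension preserving.

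For $I\neq0$, the measured condition implies that the strict transform is cut out chart by chart by the leading-term regular sequence, compatibly with the twisted action. So the same identification restricts to the strict transform, using the Cartesian square of Lemma~\ref{flatbasechange0}.

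I expect the main obstacle to be checking the stabilizer bookkeeping: the residual $\mu_{a_j}$ coming from the weighted chart must combine with $\mu_r$ so that the generic stabilizer of $\mathrm{M}$ is exactly $\mu_r$, and the coarse quotient must be well-formed. I would first try a $\gcd$ analysis of the characters of $\mu_{a_j}\times\mu_r$ acting on $(y_1,\dots,y_n)$, using freeness in codimension one.
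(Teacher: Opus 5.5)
Your argument for the first claim is the paper's own proof, made explicit. The paper only observes that the $\mu_r$-weight of an eigenvector agrees with its $val$ modulo $r$. Hence $\mu_r$ acts on each $P_i/P_{i+1}$ through $\mu_r\subset\mathbb{G}_m$ and fixes $\mathrm{M}$ pointwise. The root-stack identification is then asserted as ``in particular'' with no argument, so your chart computation supplies what the paper omits.

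One sentence needs correcting. The twisted element $(\zeta,\zeta^{-1})$ does not act trivially on every $ft^i$ with $f\in P_i$. For $f\in P_i\cap R_w$ it acts by $\zeta^{w-i}=\zeta^{val(f)-i}$. This is trivial exactly when $val(f)\equiv i \pmod r$, so on $P_i/P_{i+1}$ and on $x_jt^{a_j}$. It is $\zeta^{a_j}$ on $x_j\in P_0$ and $\zeta$ on $t^{-1}$. Read literally, your sentence would make the twisted $\mu_r$ act trivially on all of $\mathcal{P}\mathrm{roj}$, giving a gerbe rather than a root stack. Your later statement, that it acts only on $t^{-1}$ by a faithful character, is the correct one.

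The genuine gap is the stabilizer bookkeeping you defer: it cannot be settled from freeness in codimension one. On the chart $x_jt^{a_j}=1$, the elements of $\mathbb{G}_m\times\mu_r$ preserving the slice are $\{(\lambda,\zeta):(\lambda\zeta)^{a_j}=1\}\cong\mu_r\times\mu_{a_j}$ via $\xi=\lambda\zeta$. They act by $u\mapsto\zeta\xi^{-1}u$ and $y_i\mapsto\xi^{a_i}y_i$, where $u=t^{-1}$. Hence the generic stabilizer along $\mathrm{M}$ is $\mu_r\times\mu_g$ with $g=\gcd(a_1,\dots,a_s)$. In $\sqrt[r]{E_{\mathcal Y}/\widetilde Y}$ it is only $\mu_r$, since $\widetilde Y$ is pre-canonical. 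So your comparison map is not bijective on stabilizers unless $g=1$.

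Freeness in codimension one only constrains the weights modulo $r$. Take $\mu_3$ acting on $\mathbb{A}^3$ with weights $(2,2,2)$; it is free off the origin. The coarse space $Y$ is the total space of $\mathcal{O}_{\mathbb{P}^2}(-3)$, smooth with prime exceptional divisor. Yet $w\mathrm{Bl}^{(2,2,2)}_C\mathcal{X}\simeq\sqrt[6]{E/Y}$ has generic stabilizer $\mu_6$ along $\mathrm{M}$. So you must add $\gcd(a_1,\dots,a_s)=1$, a hypothesis the statement leaves implicit; it holds for the Kawamata blow-up of $\frac{1}{r}(1,a,r-a)$.

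Granting it, it is cleaner to avoid Lemma~\ref{canonicalstackkey}. Its well-foundedness hypothesis fails at chart vertices when $\gcd(a_j,r)>1$, where the stabilizer $\mu_r\times\mu_{a_j}$ is not cyclic. Instead, the $(\mathbb{G}_m\times\mu_r)$-quotient of the extended Rees algebra is literally the root stack of $(v=0)$, $v=u^r$, over $\mathcal{Y}':=[\operatorname{Spec}(\bigoplus_{i\in\mathbb{Z}}P_it^i)^{\mu_r'}\setminus V/\mathbb{G}_m]$, with $\mu_r'$ the twisted copy. Then $g=1$ plus the paper's well-formedness assumption make $\mathcal{Y}'$ pre-canonical, so $\mathcal{Y}'\simeq\widetilde Y$ by uniqueness of canonical stacks.
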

\begin{proof}
Because the degree is compatible with the $\mu_r$-action, it is easy to see that each $P_i/P_{i+1}$ is $\mu_r$-equivariant. Hence, under the induced action on $\mathcal{P}\mathrm{roj}\!\left(\bigoplus_{i\ge 0} P_i /P_{i+1}\right)$, every point is fixed.
\end{proof}
For a general complete intersection $I$, we consider the embedding $\mathcal{X} \hookrightarrow [\operatorname{Spec} R/\mu_r]$. The birational transformation defined above gives the strict transform of $\mathcal{X}$, which yields the stacks $w\mathrm{Bl}^{(a_i)}_C \mathcal{X}$, $w\mathrm{Bl}^{(a_i)/r}_C X$, and $\widetilde{w\mathrm{Bl}^{(a_i)/r}_C X}$. Of course, if we require this to be a canonical stack we should additionally assume that $w\mathrm{Bl}^{(a_i)/r}_{C}X$ still has well-formed singularities and exceptional divisor is prime. Under this assumption, Lemma~\ref{rootsblowup} remains valid, and we obtain the following commutative diagram
\[
\begin{tikzcd}
\mathcal{Z} :=w\mathrm{Bl}^{(a_i)}_C \mathcal{X} \arrow[r,"\varpi"] \arrow[d,"\pi"] & \mathcal{Y} := \widetilde{w\mathrm{Bl}^{(a_i)/r}_C X} \arrow[d] \\
\mathcal{X} \arrow[r] & X
\end{tikzcd}
\]
\begin{lemma}
For any $F \in \mathrm{D}^b(\mathcal{X})$, there exists a distinguished triangle
\[
M[k] \longrightarrow F \longrightarrow P \in \Delta,
\]
where $P$ is a perfect complex and $M$ is a $\mu_r$-equivariant maximal Cohen--Macaulay sheaf on $\mathcal{X}$.
\end{lemma}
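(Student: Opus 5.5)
The idea is to run, equivariantly, the same truncation argument already used for $\mathrm{D}^b(S_0)$ before Lemma~\ref{finiteinjdim}, now on $\mathcal{X}=[\operatorname{Spec} S/\mu_r]$ with $S=\mathrm{k}[[x_1,\dots,x_n]]/I$ local Gorenstein.

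First we secure the resolution property: $\mathcal{X}$ is the quotient of an affine (formal local) scheme by the linearly reductive group $\mu_r$, with $\gcd(r,\operatorname{char}\mathrm{k})=1$. Hence every $\mu_r$-equivariant finitely generated $S$-module $H$ admits a surjection from a sum $\bigoplus_j S\otimes\nu^{m_j}$, obtained by choosing equivariant generators as in the proof of Lemma~\ref{kernalapporximation}. These sums are the projective objects of $\mathrm{Coh}(\mathcal{X})$, since taking invariants is exact. So any $F\in\mathrm{D}^b(\mathcal{X})$ is quasi-isomorphic to a bounded above complex $P^\bullet$ of such equivariant free modules, with $\mathcal{H}^i(P^\bullet)=0$ for $i<a$ and $i>b$.

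Next take the brutal truncation at a degree $k\ll a$. The complex $\sigma^{\geq k}P^\bullet$ is a bounded complex of equivariant free modules, hence perfect. Its cone relative to $P^\bullet$ is $\sigma^{<k}P^\bullet$, which is acyclic except at the top, since $k<a$. Therefore $\sigma^{<k}P^\bullet\simeq M[-k+1]$ up to the shift convention, where $M=\operatorname{coker}(P^{k-2}\to P^{k-1})$, equivalently $\ker(P^{k}\to P^{k+1})$ up to the chosen indexing. This gives the triangle $M[k']\to F\to \sigma^{\geq k}P^\bullet\to$ with $P:=\sigma^{\ge k}P^\bullet$ perfect. Because the maps in $P^\bullet$ are equivariant, $M$ is a $\mu_r$-equivariant $S$-module.

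It remains to show $M$ is maximal Cohen--Macaulay, and this is where the local commutative algebra enters. Here $M$ is a high syzygy of an $S$-module: it is the $(a-k)$-th syzygy of $\mathcal{H}^{a}$-type data, obtained by iterating along the resolution. Over a Cohen--Macaulay local ring of dimension $d$, any $d$-th syzygy satisfies depth $M\ge\min(d,\operatorname{depth}S)=d$ by the depth lemma applied to the short exact sequences $0\to Z^{j}\to P^{j}\to Z^{j+1}\to 0$. More precisely, $P^j$ has depth $d$, and the cycles $Z^{j}$ agree with the boundaries below degree $a$, so each step raises the depth by one until it reaches $d$. Choosing $a-k\ge d=\dim S$ therefore makes $M$ maximal Cohen--Macaulay. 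Depth is insensitive to the $\mu_r$-structure, since it is computed on the underlying $S$-module, and $S$ is Cohen--Macaulay because it is a complete intersection.

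The main obstacle I anticipate is not the depth count but bookkeeping. One issue is getting the exact shift $k$ in the statement right. The other is making sure the construction works on the stack rather than only the atlas. In the formal local setting this is harmless because $\mathcal{X}$ is a global quotient of an affine scheme. In a global setting one would first need the resolution property for $\mathcal{X}$, which we avoid by working locally, as the paper does throughout this subsection.
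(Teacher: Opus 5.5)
Your route is the paper's own: an equivariant free resolution, which exists because $\mathcal{X}$ is a quotient of a local affine scheme by the linearly reductive group $\mu_r$, then a brutal truncation and the depth lemma. Your details on projectivity of $\bigoplus S\otimes\nu^{m}$ and on the depth count ($a-k\ge\dim S$) are correct.

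The gap is the step from the short exact sequence to the triangle. With cohomological indexing, $\sigma^{\ge k}P^\bullet$ is a \emph{subcomplex} of $P^\bullet$ and $\sigma^{<k}P^\bullet$ is the quotient. So the sequence $0\to\sigma^{\ge k}P^\bullet\to P^\bullet\to\sigma^{<k}P^\bullet\to 0$, which you describe correctly, gives
\[
\sigma^{\ge k}P^\bullet\longrightarrow F\longrightarrow M[1-k]\longrightarrow \sigma^{\ge k}P^\bullet[1].
\]
Here the perfect complex is the sub-object and the MCM sheaf is the quotient. There is no map $F\to\sigma^{\ge k}P^\bullet$, and no rotation of this triangle has the shape $M[k]\to F\to P$ with $P$ perfect. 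The orientation is not cosmetic: the lemma is applied in exactly the stated form right afterwards, in the definition $\mathcal{R}(M)\to\mathcal{R}\pi^*F\to\pi^*P$. The paper's one-line proof, which truncates via $\sigma^{\le -k}P^\bullet$, glosses over the same point.

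The missing ingredient is the Gorenstein property of $S$; your argument only ever uses Cohen--Macaulayness. One fix is as follows. Put $\mathbb{D}:=R\mathcal{H}om(-,\mathcal{O}_{\mathcal{X}})$. Since $S$ is Gorenstein, $\mathbb{D}$ preserves $\mathrm{D}^b(\mathcal{X})$ and $\mathbb{D}^2\simeq\mathrm{id}$. Run your truncation on $\mathbb{D}F$ to get $P'\to\mathbb{D}F\to M'[j]\to P'[1]$, then dualize:
\[
M'^{\vee}[-j]\longrightarrow F\longrightarrow \mathbb{D}P'\longrightarrow M'^{\vee}[1-j].
\]
\begin{itemize}
\item $\mathbb{D}P'$ is perfect, since the dual of $\mathcal{O}_{\mathcal{X}}\otimes\nu^{m}$ is $\mathcal{O}_{\mathcal{X}}\otimes\nu^{-m}$.
\item $\mathbb{D}M'=M'^{\vee}$ is an equivariant MCM sheaf concentrated in degree $0$, because $\mathcal{E}xt^{i}(M',\mathcal{O}_{\mathcal{X}})=0$ for $i>0$ when $M'$ is MCM over a Gorenstein ring.
\end{itemize}
This yields the statement with $M:=M'^{\vee}$ and $P:=\mathbb{D}P'$. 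Structurally, the stated orientation is that of the Auslander--Buchweitz approximation $0\to I_N\to M_N\to N\to 0$ quoted earlier in the paper, where the MCM module sits first. It is not the orientation of a syzygy truncation, which puts the perfect complex first.
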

\begin{proof}
Take a bounded above free resolution $P^\bullet \xrightarrow{\sim} F$, which exists since we work locally. For $k \gg 0$, the naive truncation $\sigma^{\le -k} P^\bullet$ is quasi-isomorphic to a shift of a sheaf $M[k]$, and $M$ is equivariant maximal Cohen--Macaulay by the depth lemma.
\end{proof}

The following construction is an equivariant generalization of \cite[Section~2]{Hao2025b}. Since the syzygy divisor algorithm preserves monomiality under the group action, we obtain completely analogous conclusions. Let $M$ be an equivariant Cohen--Macaulay module. Then $M$ is torsion-free, hence admits an equivariant Bourbaki sequence over $S$ by Lemma \ref{Bexactsequence}:
\[
0 \longrightarrow \bigoplus_{i=1}^{\operatorname{rank}(M)-1} S \otimes \nu^{m_i}
\longrightarrow M \longrightarrow I \otimes \nu^{m_0} \longrightarrow 0.
\]
Fix a standard basis of $R$ refining the weighted valuation defined above. For any ideal $I \subset S$, consider its inverse ideal $\bar{I}$ in $R$. Since the $\mu_r$-action preserves monomials and the chosen monomial order, an equivariant syzygy resolution of $\bar{I}$ over $R$ can be constructed:
\[
\begin{tikzcd}
0 \arrow[r]
& \bigoplus_{l=1}^{m_k} R \otimes \nu^{-n^{(k)}_l} \arrow[r, "F_k"]
& \cdots \arrow[r, "F_1"]
& \bigoplus_{l=1}^{m_0} R \otimes \nu^{-n^{(0)}_l} \arrow[r, "F_0"]
& R \arrow[r]
& R/\bar{I} \arrow[r]
& 0.
\end{tikzcd}
\]
Each map $F_i$ is expressed with respect to a standard basis refining the weighted valuation with coordinate contributions defined naturally. In particular, for any integer $r$, this exact sequence induces a family of filtered exact sequences:
\[
\begin{tikzcd}
0 \arrow[r]
& \bigoplus^{m_{k}}_{l=1} P_{r-\mathrm{a}^{(k)}_l} \otimes \nu^{-n^{(k)}_{l}} \arrow[r, "F_k"]
& \cdots \arrow[r, "F_1"]
& \bigoplus^{m_0}_{l=1} P_{r-\mathrm{a}_l} \otimes \nu^{-n^{(0)}_{l}} \arrow[r, "F_0"]
& P_{r} \arrow[r]
& P_{r}/I \arrow[r]
& 0,
\end{tikzcd}
\]
where $P_i := R$ for $i \le 0$.\\

For a complete intersection $S \subset R$, iterating the above division procedure yields a resolution of $S/I$ over $S$ of the form
\[
\begin{tikzcd}
\cdots \arrow[r, "F'_2"]
& \bigoplus_{l=1}^{m'_1} S \otimes \nu^{-n'^{(1)}_l} \arrow[r, "F'_1"]
& \bigoplus_{l=1}^{m'_0} S \otimes \nu^{-n'^{(0)}_l} \arrow[r, "F'_0"]
& S \arrow[r]
& S/I \arrow[r]
& 0.
\end{tikzcd}
\]
This resolution also induces a family of filtered exact sequences on $S$:
\[
\begin{tikzcd}
\cdots \arrow[r, "F'_2"]
& \bigoplus^{m'_1}_{l=1} P_{r-\mathrm{a}'^{(1)}_l} \otimes \nu^{-n'^{(1)}_{l}} \arrow[r, "F'_1"]
& \bigoplus^{m'_0}_{l=1} P_{r-\mathrm{a}'_l} \otimes \nu^{-n'^{(0)}_{l}} \arrow[r, "F'_0"]
& P_{r} \arrow[r]
& P_{r}/I \arrow[r]
& 0.
\end{tikzcd}
\]
For each integer $r$, these exact sequences carry the structure of higher matrix factorizations generated by finitely many matrices; their construction and properties follow from the previously developed framework. Finally, since the equivariant structure is preserved under all division and syzygy constructions, the resulting resolutions are $\mu_r$-equivariant.\\

Let us consider the stacky Serre theorem for $w\mathrm{Bl}^{(a_i)}_C \mathcal{X}$. A standard stacky form is
\[
\mathrm{Coh}\!\left(\left[\mathcal{P}roj_R\!\left(\bigoplus_{i \ge 0} P_i\right)\big/\mu_r\right]\right)
\simeq
\mathrm{Coh}^{\mathbb{G}_m}\!\left(\left[\Spec_R \bigoplus_{i \ge 0} P_i \big/ \mu_r\right]\right)
\Big/
\mathrm{Coh}^{\mathbb{G}_m}_Z\!\left(\left[\Spec_R \bigoplus_{i \ge 0} P_i \big/ \mu_r\right]\right).
\]
Since $\mathrm{Coh}_Z$ is a Serre subcategory of $\mathrm{Coh}$, we denote this functor by $\Gamma$, and its inverse by $\Pi$. As a simple example, for any integer $k$:
\[
\Pi\!\left(\bigoplus_{i\geq 0} P_{i+k}\right) = \mathcal{O}_{\mathcal{Z}}(-k\mathrm{M}) := \mathcal{M}^{\otimes -k},
\]
where $\mathrm{M}$ is the stacky exceptional divisor in $w\mathrm{Bl}^{(a_i)}_C \mathcal{X}$. Tensoring with any representation $\nu^l$ yields
\[
\Pi\!\left(\bigoplus_{i\geq 0} P_{i+k} \otimes \nu^l\right)
= \mathcal{O}_{\mathcal{Z}}(-k\mathrm{M})\otimes \nu^l
:= \mathcal{M}^{\otimes -k} \otimes \nu^l.
\]
Finally, note that $\mathcal{M}\big|_\mathrm{M} = \mathcal{O}_{E_\mathcal{Y}}(-1) \otimes \nu$ in the equivariant representation. Taking the direct sum over all such filtered exact sequences, one obtains, via $\Pi$, an exact sequence on $w\mathrm{Bl}^{(a_i)}_C \mathcal{X}$ of the form
\[
\begin{tikzcd}
\cdots \arrow[r]
& \bigoplus^{m'_2}_{l=1}\mathcal{O}_\mathcal{Z}\!\left(\mathrm{a}^{(2)}_l \mathrm{M}\right) \otimes \nu^{-n^{(2)}_{l}} \arrow[r]
& \bigoplus^{m'_1}_{l=1} \mathcal{O}_\mathcal{Z}\!\left(\mathrm{a}^{(1)}_l \mathrm{M}\right) \otimes \nu^{-n^{(1)}_{l}} \arrow[r]
& \mathcal{O}_\mathcal{Z} \arrow[r]
& \mathcal{O}_\mathcal{Z}/\mathcal{I} \arrow[r]
& 0
\end{tikzcd}
\]
Comparing this exact sequence with the above pullback complex
\[
\begin{tikzcd}
\cdots \arrow[r]
& \bigoplus^{m'_2}_{l=1} \mathcal{O}_{\mathcal{Z}} \otimes \nu^{-n^{(2)}_{l}} \arrow[r]
& \bigoplus^{m'_1}_{l=1} \mathcal{O}_{\mathcal{Z}} \otimes \nu^{-n^{(1)}_{l}} \arrow[r]
& \mathcal{O}_{\mathcal{Z}} \arrow[r]
& \pi^*(\mathcal{O}_{\mathcal{X}}/I) \arrow[r]
& 0
\end{tikzcd}
\]yields a collection of Postnikov systems in $\mathrm{D}^b(\mathcal{Z})$.\\

In particular, if $\mathcal{O}_{\mathcal{X}}/(I \simeq I_C)$, it induces a natural morphism $\pi^*\mathcal{O}_{C} \to F^s([\mathcal{O}_{C}]^+)$ in $\mathrm{D}^-(\mathcal{Z})$ which extends to the filtration:
\begin{equation}
\begin{tikzcd}[column sep=0.5em]
\pi^*\mathcal{O}_{C}\arrow{rr} && F^s([\mathcal{O}_{C}]^+)\arrow{dl}\arrow{rr}&& F^{s-1}\arrow{dl}&&  \\
&\tau_s\arrow[ul,dashed,"\Delta"]&& \iota^+_{*}\mathcal{D}_{s}\otimes\nu^{-s}[1]\arrow[ul,dashed,"\Delta"]&&
\end{tikzcd}
\quad\cdots\quad
\begin{tikzcd}[column sep=0.5em]
 & F^1\arrow{rr}&& F^0\arrow{rr}\arrow{dl}&& 0\arrow{dl} \\
 && \iota^+_{*}\mathcal{D}_{1}\otimes\nu^{-1}[1]\arrow[ul,dashed,"\Delta"] && \iota^+_{*}\mathcal{D}_{0}[1]\arrow[ul,dashed,"\Delta"]
\end{tikzcd}
\label{pullbackcompu}
\end{equation}
If $e := \sum_{i=1}^s a_i - \deg_{val} I > 0$ is the Fano index, then for any $i \leq e-1$, the object $\mathcal{D}_i$ coincides with the $i$-th dual exceptional (or pre-tilting) object appearing in the semi-orthogonal decomposition of $\mathrm{D}^b(E_\mathcal{Y})$ as a hypersurface fibration from the construction in \cite{Orlov2009}:
\[
\mathrm{D}^b(E_\mathcal{Y}) =
\left\langle
\mathcal{A}_{-1},\, \mathcal{O}_{E_\mathcal{Y}},\,
\mathcal{O}_{E_\mathcal{Y}}(1),\,
\dots,\,
\mathcal{O}_{E_\mathcal{Y}}(e-1)
\right\rangle.
\]
For $i \geq e$, we have $\mathcal{D}_i \in \mathcal{A}_{-1}$. Note also that $\mathrm{D}^b(\mathrm{M}) = \mathrm{D}^b([E_\mathcal{Y}/\mu_r])$, and since the $\mu_r$-action is free, the equivariant structure produces additional contributions of the form $\nu^l$. Then, pushing forward the above filtration along $\varpi_*$ cancels all non-trivial representations:
\[
\begin{tikzcd}[column sep=0.5em]
 \varpi_*\pi^*\mathcal{O}_{C}\arrow{rr} && F_{(0)}^s([\mathcal{O}_{C}]^+)\arrow{dl}\arrow{rr}&& F_{(0)}^{s-1}\arrow{dl}&&  \\
&\tau^{(0)}_s\arrow[ul,dashed,"\Delta"]&& \iota^+_{*}\mathcal{D}_{sr}[1]\arrow[ul,dashed,"\Delta"]&&
\end{tikzcd}
\quad\cdots\quad
\begin{tikzcd}[column sep=0.5em]
 & F_{(0)}^1\arrow{rr}&& F_{(0)}^0\arrow{rr}\arrow{dl}&& 0\arrow{dl} \\
 && \iota^+_{*}\mathcal{D}_{r}[1]\arrow[ul,dashed,"\Delta"] && \iota^+_{*}\mathcal{D}_{0}[1]\arrow[ul,dashed,"\Delta"]
\end{tikzcd}
\]
Similarly, for the tensor-twisted version, we have:
\begin{equation}
\begin{tikzcd}[column sep=0.5em]
 \varpi_*\pi^*(\mathcal{O}_{C}\otimes\nu^i) \arrow{rr} && F_{(i)}^s([\mathcal{O}_{C}]^+) \arrow{dl} \arrow{rr} && F_{(i)}^{s-1} \arrow{dl} &&  \\
 & \tau^{(i)}_s \arrow[ul, dashed, "\Delta"] && \iota_{*}\mathcal{D}_{sr+i}[1] \arrow[ul, dashed, "\Delta"] &&
\end{tikzcd}
\quad\cdots\quad
\begin{tikzcd}[column sep=0.5em]
 & F_{(i)}^1 \arrow{rr} && F_{(i)}^0 \arrow{rr} \arrow{dl} && 0 \arrow{dl} \\
 && \iota_{*}\mathcal{D}_{r+i}[1] \arrow[ul, dashed, "\Delta"] && \iota_{*}\mathcal{D}_{i}[1] \arrow[ul, dashed, "\Delta"]
\end{tikzcd}
\label{pullbackcompu1}
\end{equation}
for any $0 \leq i < r$. Furthermore, for $s \gg 0$, the cohomology $\mathcal{H}^j(\tau^{(i)}_s)$ vanishes for all $j > -s + N$, where $N$ is a fixed integer depending on $e$ and $\sum_{i=1}^s a_i$. Consequently, $F^{(i)}([\mathcal{O}_{C}]^+)$ provides an approximation to $\varpi_*\pi^*\mathcal{O}_{C}\otimes\nu^i$, and in fact it's:
\[
\varpi_*\pi^*(\mathcal{O}_{C}\otimes\nu^i)\simeq \varprojlim_{s \to \infty} F_{(i)}^s([\mathcal{O}_{C}]^+).
\]
Completely analogously, for any maximal Cohen--Macaulay module $M$ on $\mathcal{X}$, we can construct an object $\mathcal{R}(M)\in \mathrm{D}^b(\mathcal{Z})$ together with a filtration obtained by iteratively applying the above procedure:
\[
\begin{tikzcd}[column sep=0.5em]
 & \mathcal{F}^s(\mathcal{R}(M)) \arrow{rr}&& \mathcal{F}^{s-1}\arrow{dl}&& \\
&&  \iota^+_{*}\operatorname{gr}^{s}(M)\arrow[ul,dashed,"\Delta"]&&
\end{tikzcd}
\quad\cdots\quad
\begin{tikzcd}[column sep=0.5em]
 &   \mathcal{F}^2\arrow{rr}&&  \mathcal{F}^1\arrow{rr}\arrow{dl}&& \mathcal{R}(M)\arrow{dl} \\
 && \iota^+_{*}\operatorname{gr}^2(M)\arrow[ul,dashed,"\Delta"]  &&\iota^+_{*}\operatorname{gr}^1(M)\arrow[ul,dashed,"\Delta"]
\end{tikzcd}
\]
for which,  we have $\operatorname{gr}^i(M)\in \mathcal{A}_{-1}\otimes \nu^{\ell(i)}$ for every $i$. Moreover, there is a natural morphism $\pi^*M \to \mathcal{F}^s(\mathcal{R}(M))$, which extends canonically to the diagrams
\[
\begin{tikzcd}[column sep=0.5em]
 & \pi^*M \arrow{rr} && \mathcal{F}^{s} \arrow{dl} \arrow{rr} && \mathcal{F}^{s-1} \arrow{dl} \\
 && \tau^{s} \arrow[ul, dashed, "\Delta"] && \iota^+_{*}\operatorname{gr}^{s}(M) \arrow[ul, dashed, "\Delta"]
\end{tikzcd}
\quad\cdots\quad
\begin{tikzcd}[column sep=0.5em]
 & \mathcal{F}^2 \arrow{rr} && \mathcal{F}^1 \arrow{rr} \arrow{dl} && \mathcal{R}(M) \arrow{dl} \\
 && \iota^+_{*}\operatorname{gr}^2(M) \arrow[ul, dashed, "\Delta"] && \iota^+_{*}\operatorname{gr}^1(M) \arrow[ul, dashed, "\Delta"]
\end{tikzcd}
\]
we have  \[
\pi^*M = \varprojlim_{s \to \infty} \mathcal{F}^s(\mathcal{R}(M)).
\]
\begin{lemma}[Corollary 3.4.7, \cite{Hao2025b}]
For any $\mu_r$-equivariant Cohen--Macaulay sheaf $M$ on $\mathcal{X}$, the object $\mathcal{R}(M)\in \mathrm{D}^b(\mathcal{Z})$ constructed above satisfies $\pi_* \mathcal{R}(M) \simeq M$.
\end{lemma}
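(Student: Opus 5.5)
We will argue by induction along the filtration $\mathcal{F}^s(\mathcal{R}(M))\to\cdots\to\mathcal{F}^1\to\mathcal{R}(M)$ and combine this with the limit description $\pi^*M\simeq\varprojlim_s\mathcal{F}^s(\mathcal{R}(M))$.

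The first step is to show that each graded piece is killed by pushforward: $\pi_*\iota^+_*\operatorname{gr}^i(M)=0$ for every $i$. Since $\operatorname{gr}^i(M)\in\mathcal{A}_{-1}\otimes\nu^{\ell(i)}$, and $\mathcal{A}_{-1}$ is the left orthogonal of $\langle\mathcal{O}_{E_\mathcal{Y}},\dots,\mathcal{O}_{E_\mathcal{Y}}(e-1)\rangle$ in Orlov's decomposition of the hypersurface fibration $E_\mathcal{Y}\to C$, we get $(f_{E})_*$-acyclicity. Concretely, $\pi|_{\mathrm M*}G=0$ for $G\in\mathcal{A}_{-1}$, because $\pi_*$ restricted to the exceptional divisor is $\mathbf{R}\operatorname{Hom}(\mathcal{O}_{E_\mathcal{Y}},-)$ relative to $C$. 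Twisting by the $\mu_r$-character $\nu^{\ell(i)}$ only changes the equivariant weight on $[C/\mu_r]$; it does not affect the vanishing. Hence, applying $\pi_*$ to each triangle $\mathcal{F}^{i}\to\mathcal{F}^{i-1}\to\iota^+_*\operatorname{gr}^{i}(M)$ shows that $\pi_*\mathcal{F}^s\simeq\pi_*\mathcal{R}(M)$ for all $s$.

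The second step passes to the limit. Using the triangle $\tau^s\to\pi^*M\to\mathcal{F}^s$ and the cohomological estimate stated above (the cone has $\mathcal{H}^j=0$ for $j>-s+N$), the object $\pi_*\tau^s$ is concentrated in degrees $\le -s+N'$, since $\pi$ has bounded cohomological dimension (its fibres over $C$ have dimension at most $s-1$). Therefore $\tau^{\ge -s+N'}\pi_*\pi^*M\simeq\tau^{\ge -s+N'}\pi_*\mathcal{F}^s=\tau^{\ge -s+N'}\pi_*\mathcal{R}(M)$. For the projection formula we use the convention that $\pi^*M$ is the derived pullback in $\mathrm{D}^-$. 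We then compute $\pi_*\pi^*M=M\otimes^{\mathbf L}\pi_*\mathcal{O}_\mathcal{Z}$, and $\pi_*\mathcal{O}_\mathcal{Z}=\mathcal{O}_\mathcal{X}$: the weighted blow-up of a complete intersection under the measured condition with positive Fano index $e>0$ is rational, by the same Rees-algebra computation of $H^{>0}(\mathcal{O}(i\mathrm M))$ on the graded pieces. Letting $s\to\infty$ gives $\pi_*\mathcal{R}(M)\simeq M$, because $\mathcal{R}(M)$ is bounded and every truncation agrees.

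We expect the main obstacle to be the first step, namely identifying the graded pieces $\operatorname{gr}^i(M)$ as lying in $\mathcal{A}_{-1}$ twisted by characters, rather than merely in $\mathrm{D}^b(\mathrm M)$. This rests on the syzygy/division construction of the filtered resolutions, where one must check that the leading-term complexes on $\mathrm M$ start in Orlov weights $\ge e$. We would do this first for free modules via the Bourbaki sequence of Lemma~\ref{Bexactsequence}, and then for ideals using the equivariant standard basis. Equivariance is harmless, since every syzygy is chosen monomial and hence $\mu_r$-homogeneous, so the non-equivariant argument of \cite[Corollary 3.4.7]{Hao2025b} applies weight by weight.
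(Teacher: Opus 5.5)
Your argument is correct and follows essentially the paper's own route. The paper only cites this statement from \cite{Hao2025b}, but it uses the same pattern in Principle~\ref{P1} and the proof of Lemma~\ref{coasefullcomp}: the cones $\iota^+_*\operatorname{gr}^i(M)$ lie in $\ker\pi_*$, and $\pi_*\pi^*\simeq\mathrm{id}$ combined with $\pi^*M\simeq\varprojlim_s\mathcal{F}^s(\mathcal{R}(M))$ determines $\pi_*\mathcal{R}(M)$.
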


\begin{lemma}[Proposition 3.2, \cite{Hao2025b}]\label{kernallemma}
For any $F \in \mathrm{D}^b(\mathcal{Z})$, if $\pi_* F = 0$, then $F$ lies in the full saturated subcategory
\[\mathcal{T}:=
\left\langle
\iota^+_*\mathcal{O}_{\mathrm{M}}(-e+1)\otimes \bigoplus_{i=0}^{r-1} \nu^i,\, \ldots,\, \iota^+_*\mathcal{O}_{\mathrm{M}}(-1)\otimes \bigoplus_{i=0}^{r-1} \nu^i,\, \left\langle \iota^+_*\mathcal{A}_{-1}\otimes \bigoplus_{i=0}^{r-1} \nu^i \right\rangle
\right\rangle.
\]
Here the first $e-1$ terms are pre-tilting objects. We denote by $\mathcal{K}_{-1}^+$ the last term, which is the full saturated subcategory generated by $\mathcal{A}_{-1}$ tensored with all representations $\nu^i$.
\end{lemma}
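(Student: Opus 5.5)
The plan is to reduce the statement to a spanning-class argument in the Verdier quotient $\mathrm{D}^b(\mathcal{Z})/\mathcal{T}$, exactly parallel to the proof of the corollary after Lemma~\ref{kernalapporximation}.

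\emph{Step 1: the comparison isomorphism.} First I would check that for every $G\in \mathrm{D}^b(\mathcal{X})$ the object $\mathcal{R}(G)$ (constructed via the triangle $M[k]\to G\to P$ and the filtrations above) induces
\[
\operatorname{Hom}_{\mathrm{D}^b(\mathcal{Z})/\mathcal{T}}(\mathcal{R}(G),F)\simeq \operatorname{Hom}_{\mathrm{D}^b(\mathcal{X})}(G,\pi_*F)
\]
for all $F$. The argument runs through the inverse system $\mathcal{F}^s(\mathcal{R}(M))$ with $\varprojlim_s \mathcal{F}^s=\pi^*M$, using the adjunction $\pi^*\dashv\pi_*$. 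The point is that the successive cones $\iota^+_*\operatorname{gr}^s(M)$ lie in $\mathcal{K}^+_{-1}\subset\mathcal{T}$. For the lower terms, the pieces $\iota^+_*\mathcal{D}_i\otimes\nu^{-i}$ with $1\le i\le e-1$ are the pre-tilting objects, which are again in $\mathcal{T}$. So $\mathcal{R}(M)\to\mathcal{F}^s$ is an isomorphism in the quotient, and Principle~\ref{P1} with $k=1$ applies. This is the content of the cited local adjunction in \cite{Hao2025b}, adapted equivariantly; I would verify that every twist by $\nu^l$ is absorbed, since $\mathcal{T}$ contains all $\bigoplus_i\nu^i$.

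\emph{Step 2: $F$ is supported on $\mathrm{M}$.} Take $F$ with $\pi_*F=0$. Because $\pi$ is an isomorphism off $\mathrm{M}$, the object $F$ is supported on $\mathrm{M}$. By d\'evissage (the excess triangle of Lemma~\ref{excdis} and thickening of $\mathrm{M}$), $F$ is then a finite extension of objects $\iota^+_*H$ with $H\in\mathrm{D}^b(\mathrm{M})=\mathrm{D}^b([E_\mathcal{Y}/\mu_r])$.

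\emph{Step 3: the spanning-class argument.} Next I would use Orlov's decomposition of $\mathrm{D}^b(E_\mathcal{Y})$, tensored with all $\nu^l$. Every such $\iota^+_*H$ lies in
\[
\langle \mathcal{T},\ \iota^+_*\mathcal{O}_\mathrm{M}\otimes\nu^l\rangle.
\]
The objects $\iota^+_*\mathcal{O}_\mathrm{M}\otimes\nu^l$ are, up to $\mathcal{T}$, images of $\mathcal{R}(\kappa_*\mathcal{O}_C\otimes\nu^l)$; this is read off from the filtration \eqref{pullbackcompu}, whose bottom piece is $\iota^+_*\mathcal{D}_0[1]$ and whose other pieces lie in $\mathcal{T}$. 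Hence $F$ lies in the triangulated hull of $\mathcal{T}$ together with $\mathcal{R}(G)$'s. Step 1 then gives
\[
\operatorname{Hom}_{\mathrm{D}^b(\mathcal{Z})/\mathcal{T}}(\mathcal{R}(G),F[*])=\operatorname{Hom}(G,\pi_*F[*])=0
\]
for all such $G$. Since $F$ is built from $\mathcal{T}$ and these $\mathcal{R}(G)$, the long exact sequences yield $\operatorname{End}_{\mathrm{D}^b(\mathcal{Z})/\mathcal{T}}(F)=0$. Thus $F\simeq 0$ in the quotient, and since $\mathcal{T}$ is saturated (thick), $F\in\mathcal{T}$.

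The main obstacle is Step 1: showing the inverse limit $\varprojlim_s\mathcal{F}^s=\pi^*M$ is compatible with $\operatorname{Hom}$ in the bounded quotient. This needs a uniform bound, namely that $\mathcal{H}^j(\tau^s)$ vanishes for $j>-s+N$, so that for fixed bounded $F$ one has $\operatorname{Hom}(\tau^s,F)=0$ when $s\gg0$. I would first try to deduce this from the degree estimate stated after \eqref{pullbackcompu1}, applied after the reduction to the Bourbaki sequence (Lemma~\ref{Bexactsequence}) and the resolution of $\mathcal{O}/\mathcal{I}$, checking that the constant $N$ can be chosen independently of the twist $\nu^l$.
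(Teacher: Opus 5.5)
Your proposal is correct and follows the same route the paper uses for statements of this kind. The lemma itself is only quoted from \cite{Hao2025b}, but its in-paper analogues, Lemma~\ref{kernalElemma} and the corollary following Lemma~\ref{kernalapporximation}, likewise combine the local adjunction of \cite[Lemma 3.7]{Hao2025b}, generation of $F$ in the quotient by the $\mathcal{R}$-pullbacks of $\kappa_*\mathcal{O}_C\otimes\nu^l$, and the vanishing of $\operatorname{End}(F)$. The one difference is minor: you quotient by all of $\mathcal{T}$ at once, whereas the paper first mutates $F$ past the pre-tilting objects and works modulo $\mathcal{K}^+_{-1}$; your version works provided Step~1 also records that the $\mathcal{F}^s$ are cofinal among the roofs computing $\operatorname{Hom}_{\mathrm{D}^b(\mathcal{Z})/\mathcal{T}}$, which follows from $\pi_*\mathcal{T}=0$ together with the same degree bound.
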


Thus, for any $F \in \mathrm{D}^b(\mathcal{X})$, take a sheaf approximation $M[k] \to F \to P \to M[k+1]$ with $M$ maximal Cohen--Macaulay and $P$ a perfect complex, we define $\mathcal{R}(M) \to \mathcal{R}\pi^*F \to \pi^*P$, and hence obtain a (quasi-)functor $\mathcal{R}\pi^*: \mathrm{D}^b(\mathcal{X}) \to \mathrm{D}^b(\mathcal{Z})$, defined up to some ambiguity.
We obtain the following result:
\begin{proposition}[Proposition 3.5, \cite{Hao2025b}]
If $e>0$, there is a natural well-defined fully faithful right admissible embedding of triangulated categories
\[
\mathcal{R}\pi^* : \mathrm{D}^b(\mathcal{X}) \hookrightarrow \mathrm{D}^b(\mathcal{Z})/\mathcal{K}_{-1}^+,
\]
and the following semi-orthogonal decomposition:
\[
\mathrm{D}^b(\mathcal{Z})/\mathcal{K}_{-1}^+
=
\left\langle
\iota^+_*\mathcal{O}_{\mathrm{M}}(-e+1)\otimes \bigoplus_{i=0}^{r-1} \nu^i,\,
\ldots,\,
\iota^+_*\mathcal{O}_{\mathrm{M}}(-2)\otimes \bigoplus_{i=0}^{r-1} \nu^i,\,
\iota^+_*\mathcal{O}_{\mathrm{M}}(-1)\otimes \bigoplus_{i=0}^{r-1} \nu^i,\,
\operatorname{Im}(\mathcal{R}\pi^*)
\right\rangle.
\]
\end{proposition}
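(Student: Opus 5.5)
The plan is to run Principle~\ref{P1} on $\pi:\mathcal{Z}\to\mathcal{X}$, using $\mathcal{R}\pi^*$ as the candidate embedding. There are four steps: define $\mathcal{R}\pi^*$ on objects, show it is fully faithful into the quotient by $\mathcal{K}_{-1}^+$, compute its orthogonal, and show the pieces generate.

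\emph{Definition on objects.} By the preceding lemma, every $F\in\mathrm{D}^b(\mathcal{X})$ sits in a triangle $M[k]\to F\to P$ with $M$ equivariant maximal Cohen--Macaulay and $P$ perfect. I set $\mathcal{R}\pi^*P:=\pi^*P$ and $\mathcal{R}\pi^*M:=\mathcal{R}(M)$. The connecting morphism $P\to M[k+1]$ must then be lifted to a morphism $\pi^*P\to\mathcal{R}(M)[k+1]$. This lift comes from adjunction together with $\pi_*\mathcal{R}(M)\simeq M$ (the lemma quoting Corollary 3.4.7 of \cite{Hao2025b}):
\[
\operatorname{Hom}(\pi^*P,\mathcal{R}(M)[k+1])\simeq\operatorname{Hom}(P,\pi_*\mathcal{R}(M)[k+1])=\operatorname{Hom}(P,M[k+1]).
\]
A different choice of approximation changes $\mathcal{R}\pi^*F$ only by cones of the maps $\mathcal{F}^t\to\mathcal{F}^s$. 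These cones are filtered by $\iota^+_*\operatorname{gr}^i(M)$, and $\operatorname{gr}^i(M)\in\mathcal{A}_{-1}\otimes\nu^{\ell(i)}$. So these cones lie in $\mathcal{K}_{-1}^+$, and the object is well defined in the quotient. This is exactly the setup of Principle~\ref{P1}: the inverse system $\mathcal{F}^s(\mathcal{R}(M))$ has limit $\pi^*M$, and its cones lie in the subcategory being killed.

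\emph{Full faithfulness.} I will show that
\[
\operatorname{Hom}_{\mathrm{D}^b(\mathcal{Z})/\mathcal{K}^+_{-1}}(\mathcal{R}\pi^*F,\mathcal{R}\pi^*G)\simeq\operatorname{Hom}_{\mathcal{X}}(F,G).
\]
Using the local adjunction and quotient-category lemmas of \cite{Hao2025b} (Lemmas 3.7 and 3.9), Hom out of an object in the quotient into $\mathcal{R}\pi^*G$ is computed as Hom into $\pi_*$ of a representative. In other words, $\mathcal{R}\pi^*$ acts as a right adjoint to $\pi_*$ modulo $\mathcal{K}_{-1}^+$. Combined with $\pi_*\mathcal{R}\pi^*\simeq\mathrm{id}$, this yields full faithfulness. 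The same adjunction gives right admissibility, with the projection functor induced by $\pi_*$.

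\emph{Orthogonal, generation, and SOD.} Lemma~\ref{kernallemma} shows that $\ker\pi_*$ is contained in $\mathcal{T}$. Conversely, $\pi_*$ kills each $\iota^+_*\mathcal{O}_{\mathrm{M}}(-j)\otimes\nu^i$ for $1\le j\le e-1$: this is relative Kodaira-type vanishing on the weighted projective fibres, since $e$ is the Fano index. Hence $\ker\pi_*/\mathcal{K}^+_{-1}$ is generated by these $e-1$ blocks, and they make up the orthogonal complement of $\operatorname{Im}\mathcal{R}\pi^*$. Their mutual semiorthogonality reduces to the fibrewise statement $\operatorname{Ext}^*(\mathcal{O}(-j),\mathcal{O}(-k))=0$ for $0<k-j<e$ on the hypersurface fibration $E_{\mathcal{Y}}$. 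This vanishing is exactly what gives Orlov's decomposition of $\mathrm{D}^b(E_{\mathcal{Y}})$; twisting by $\nu^i$ only adds a direct sum over $i$. Generation follows from the triangle $\mathcal{R}\pi^*\pi_*F\to F\to\mathrm{cone}$, whose cone lies in $\ker\pi_*$.

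\emph{Main obstacle.} The hardest step is showing that the lift in the first step, and the filtration cones, depend on choices only up to $\mathcal{K}_{-1}^+$, so that $\mathcal{R}\pi^*$ is genuinely a triangulated functor and not just a quasi-functor. My first approach would be to compare two Bourbaki/syzygy constructions through a common refinement of standard bases. Both then dominate a third filtration, and the zigzag order in Principle~\ref{P1} turns the resulting cones into iterated extensions of $\iota^+_*\mathcal{A}_{-1}\otimes\nu^i$.
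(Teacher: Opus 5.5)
Your route is the paper's own. The paper quotes this result from \cite{Hao2025b}, and both the surrounding construction and the parallel argument in Section~\ref{Weightedblowup} use exactly your ingredients: Cohen--Macaulay approximation plus $\mathcal{R}(M)$, $\pi_*\mathcal{R}(M)\simeq M$, the global adjunction \cite[Lemma~3.9]{Hao2025b}, Lemma~\ref{kernallemma}, and the adjunction triangle for generation.

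There is, however, one false step: the adjunction in your full-faithfulness paragraph runs the wrong way. Modulo $\mathcal{K}^+_{-1}$, $\mathcal{R}\pi^*$ is a \emph{left} adjoint of $\pi_*$, i.e.\ $\operatorname{Hom}_{\mathrm{D}^b(\mathcal{Z})/\mathcal{K}^+_{-1}}(\mathcal{R}\pi^*F,G)\simeq\operatorname{Hom}_{\mathcal{X}}(F,\pi_*G)$. This is the paper's $\pi_!\dashv\mathcal{R}\pi^*\dashv\pi_*$. The version you wrote, $\operatorname{Hom}(X,\mathcal{R}\pi^*G)\simeq\operatorname{Hom}(\pi_*X,G)$, already fails for the blow-up of a smooth surface at a point ($r=1$, $e=2$, $\mathcal{A}_{-1}=0$, $\mathcal{R}\pi^*=\pi^*$). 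There $\pi_*\mathcal{O}_E(-1)=0$, yet $\operatorname{Ext}^1(\mathcal{O}_E(-1),\mathcal{O}_Z)\simeq H^0(\mathcal{O}_E)\neq 0$; the right adjoint of $\pi_*$ is $\pi^!$, not $\pi^*$.

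The left-adjoint direction is exactly what your later steps use:
\begin{enumerate}
\item it makes $\pi_*$ the projection functor for \emph{right} admissibility;
\item it identifies $\{X:\operatorname{Hom}(\operatorname{Im}\mathcal{R}\pi^*,X)=0\}$ with $\ker\pi_*$, which is why the blocks $\iota^+_*\mathcal{O}_{\mathrm{M}}(-j)\otimes\bigoplus_i\nu^i$ stand to the left;
\item it supplies the counit $\mathcal{R}\pi^*\pi_*F\to F$ of your generation triangle.
\end{enumerate}
So restate the adjunction correctly. Full faithfulness then follows from the unit $\mathrm{id}\to\pi_*\mathcal{R}\pi^*$ being an isomorphism, and the rest of your argument stands.

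A smaller point concerns the semiorthogonality step: the Homs live in $\mathrm{D}^b(\mathcal{Z})/\mathcal{K}^+_{-1}$, not on $E_{\mathcal{Y}}$. Upstairs, Lemma~\ref{excdis} adds to $\operatorname{Ext}^*_{E_{\mathcal{Y}}}(\mathcal{O}(-j),\mathcal{O}(-k))$ a second contribution $H^{*-1}(\mathcal{O}_{E_{\mathcal{Y}}}(j-k-1))$, up to a character, since $\mathcal{O}(-\mathrm{M})|_{\mathrm{M}}\simeq\mathcal{O}_{E_{\mathcal{Y}}}(1)\otimes\nu^{-1}$. For $1\le j<k\le e-1$ this term still lies in the vanishing range.

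To pass to the quotient, observe that each $\iota^+_*\mathcal{O}_{\mathrm{M}}(-k)\otimes\nu^i$ with $1\le k\le e-1$ is right orthogonal to $\mathcal{K}^+_{-1}$. Relative Serre duality on $E_{\mathcal{Y}}$ (Fano index $e$) turns $\operatorname{Hom}^*(\mathcal{O}(t),N)=0$ for $0\le t\le e-1$ and $N\in\mathcal{A}_{-1}$ into $\operatorname{Hom}^*(N,\mathcal{O}(u))=0$ for $-e\le u\le -1$. By Lemma~\ref{excdis} this range covers both $u=-k$ and $u=-k-1$. Hence quotient Homs into these blocks coincide with Homs in $\mathrm{D}^b(\mathcal{Z})$, which is the device used in Lemma~\ref{genlemma}.
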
\label{admlemma}
\begin{proposition}[Proposition 3.4, \cite{Hao2025b}]
If $e>0$, there exists a natural well-defined fully faithful left admissible embedding of triangulated categories
\[
\mathcal{\widetilde{R}}\pi^*
:= \mathcal{R}\pi^! \otimes \mathcal{O}(-\mathcal{K}_{\mathcal{Z}/\mathcal{X}})
:\mathrm{D}^b(\mathcal{X}) \hookrightarrow \mathrm{D}^b(\mathcal{Z})/\mathcal{K}_{-1}^+.
\]
In particular, it is isomorphic to $\mathcal{R}\pi^*$, and we have an adjoint triple
\[
\pi_! := \pi_* \otimes \mathcal{O}(\mathcal{K}_{\mathcal{Z}/\mathcal{X}}) \dashv \mathcal{R}\pi^* \dashv \pi_*.
\]
\end{proposition}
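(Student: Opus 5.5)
The plan is to mirror the smooth case of \cite[Proposition 3.4]{Hao2025b}, using Grothendieck duality for $\pi$ together with the right adjoint statement of the previous proposition. First I identify the relative dualizing object. Since $\mathcal{X}$ and $\mathcal{Z}$ are Gorenstein, with $\mathcal{Z}$ cut out inside the weighted blow-up of $[\operatorname{Spec} R/\mu_r]$ by the strict transform of a complete intersection satisfying the measured condition, the relative canonical divisor $\mathcal{K}_{\mathcal{Z}/\mathcal{X}}$ is $\mathbb{Q}$-Cartier on the stack. Concretely, $\mathcal{O}(\mathcal{K}_{\mathcal{Z}/\mathcal{X}}) \simeq \mathcal{M}^{\otimes -(e-1)}$ up to a twist by some $\nu^l$, computed from $\sum a_i - \deg_{val} I$. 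Hence
\[
\pi^!(-) \simeq \pi^*(-)\otimes \mathcal{O}(\mathcal{K}_{\mathcal{Z}/\mathcal{X}})
\]
for perfect objects, and Grothendieck--Verdier duality gives $\pi_* \dashv \pi^!$ on $\mathrm{D}^b$.

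Next I define $\mathcal{R}\pi^!$ on the sheaf-approximation triangle $M[k]\to F\to P$. On the perfect part I set $\mathcal{R}\pi^! P:=\pi^!P$. On the maximal Cohen--Macaulay part I run the filtration construction of $\mathcal{R}(M)$ in reverse: I use the dual filtered exact sequences, namely $P_{r+\mathrm{a}_l}$ in place of $P_{r-\mathrm{a}_l}$, so that the graded pieces $\operatorname{gr}^i$ again lie in $\mathcal{A}_{-1}\otimes\nu^{\ell(i)}$. This produces an object $\mathcal{R}^!(M)$ with $\pi_*\mathcal{R}^!(M)\simeq M$ and a colimit description of $\pi^!M$. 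Alternatively, and more cleanly, I set
\[
\mathcal{R}\pi^!F := \mathbb{D}_{\mathcal{Z}}\,\mathcal{R}\pi^*\,\mathbb{D}_{\mathcal{X}}F,
\]
using the Gorenstein dualities $\mathbb{D}$, which preserve maximal Cohen--Macaulay and perfect objects. To make this well defined I must check that $\mathbb{D}_{\mathcal{Z}}$ preserves $\mathcal{K}_{-1}^+$. This holds because the dual of $\iota^+_*\mathcal{A}_{-1}$ is $\iota^+_*$ of the Serre-dual piece of $\mathrm{D}^b(E_\mathcal{Y})$, twisted by $\mathcal{O}_\mathrm{M}(\mathrm{M})$, and that twist is compatible with Orlov's decomposition up to the shift in the exceptional collection. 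The same Ext computation as in Lemma~\ref{kernallemma} then shows that $\mathcal{R}\pi^!$ is fully faithful and that $\pi_*\dashv$-type adjunctions hold, i.e. $\operatorname{Hom}(\mathcal{R}\pi^!F,G)\simeq\operatorname{Hom}(F,\pi_*(G\otimes\omega^{-1}_{\mathcal{Z}/\mathcal{X}}))$ in the quotient. This yields left admissibility.

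The key step is the isomorphism $\mathcal{R}\pi^!\otimes\mathcal{O}(-\mathcal{K}_{\mathcal{Z}/\mathcal{X}})\simeq\mathcal{R}\pi^*$ in $\mathrm{D}^b(\mathcal{Z})/\mathcal{K}_{-1}^+$. I would compare the two filtrations directly. Twisting by $\mathcal{M}^{\otimes(e-1)}$ shifts the index of the graded pieces by $e-1$. The pieces $\iota^+_*\mathcal{O}_\mathrm{M}(-j)\otimes\nu^i$ for $1\le j\le e-1$ that separate the two filtrations form exactly the pre-tilting block. By the Fano index computation, the cones of the comparison maps lie in $\mathcal{K}_{-1}^+$ after mutation through the exceptional block. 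Equivalently, both functors are left/right adjoints to $\pi_*$ restricted to the orthogonal of $\langle\mathcal{O}_\mathrm{M}(-e+1),\dots,\mathcal{O}_\mathrm{M}(-1)\rangle$, and a functor with both adjunctions against a conservative $\pi_*$ on that subcategory is unique up to isomorphism.

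The adjoint triple then follows formally. The relation $\mathcal{R}\pi^*\dashv\pi_*$ is the previous proposition. The relation $\pi_!\dashv\mathcal{R}\pi^*$ comes from twisting the $\pi^!$-adjunction by $\mathcal{O}(\mathcal{K}_{\mathcal{Z}/\mathcal{X}})$. I expect the main obstacle to be the middle step: verifying that the duality/twist sends $\mathcal{K}_{-1}^+$ to itself and identifies the pre-tilting block correctly, with the right equivariant characters $\nu^i$. Here the stacky character bookkeeping $\mathcal{M}|_\mathrm{M}=\mathcal{O}_{E_\mathcal{Y}}(-1)\otimes\nu$ must be tracked carefully.
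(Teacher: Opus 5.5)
The overall shape of your argument is right, and it is essentially the paper's: two fully faithful functors with $\pi_*\circ\Phi\simeq\mathrm{id}$ are isomorphic once both images lie in one subcategory on which $\pi_*$ is an equivalence. However, the step carrying all the content is missing, because your adjunctions place the two images in different subcategories.

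Set $\mathcal{B}_-:=\langle\iota^+_*\mathcal{O}_{\mathrm{M}}(-j)\otimes\bigoplus_i\nu^i\rangle_{1\le j\le e-1}$ and $\mathcal{B}_+:=\langle\iota^+_*\mathcal{O}_{\mathrm{M}}(j)\otimes\bigoplus_i\nu^i\rangle_{0\le j\le e-2}$.
\begin{itemize}
\item From $\mathcal{R}\pi^*\dashv\pi_*$ you get $\operatorname{Hom}(\operatorname{Im}\mathcal{R}\pi^*,\mathcal{B}_-)=0$.
\item Your duality definition gives $\pi_*\dashv\mathcal{R}\pi^!$, hence $\pi_!\dashv\widetilde{\mathcal{R}}\pi^*$. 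Your displayed adjunction runs the other way: as written it asserts $\widetilde{\mathcal{R}}\pi^*\dashv\pi_*$, which is the conclusion.
\item Since $\pi_!\iota^+_*\mathcal{O}_{\mathrm{M}}(j)$ is computed by the cohomology of $\mathcal{O}_{E_{\mathcal{Y}}}(j-e+1)$, the adjunction $\pi_!\dashv\widetilde{\mathcal{R}}\pi^*$ only yields $\operatorname{Hom}(\mathcal{B}_+,\operatorname{Im}\widetilde{\mathcal{R}}\pi^*)=0$.
\end{itemize}

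The paper bridges this with the second decomposition $\langle\operatorname{Im}\mathcal{R}\pi^*,\mathcal{B}_+\rangle$ of $\mathrm{D}^b(\mathcal{Z})/\mathcal{K}^+_{-1}$. That is, it needs $\operatorname{Hom}(\mathcal{B}_+,\operatorname{Im}\mathcal{R}\pi^*)=0$ plus generation.

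Concretely, the cone of the counit $\mathcal{R}\pi^*\pi_*\widetilde{\mathcal{R}}\pi^*F\to\widetilde{\mathcal{R}}\pi^*F$ is killed by $\pi_*$, so it lies in $\mathcal{B}_-$ by Lemma~\ref{kernallemma}. It vanishes exactly when $\operatorname{Hom}(\widetilde{\mathcal{R}}\pi^*F,\mathcal{B}_-)=0$. The operation $G\mapsto\mathbb{D}_{\mathcal{Z}}(G\otimes\mathcal{O}(\mathcal{K}_{\mathcal{Z}/\mathcal{X}}))$ sends $\iota^+_*\mathcal{O}_{\mathrm{M}}(-j)$ to $\iota^+_*\mathcal{O}_{\mathrm{M}}(j-1)[-1]$, so this condition is the same as $\operatorname{Hom}(\mathcal{B}_+,\operatorname{Im}\mathcal{R}\pi^*)=0$.

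That vanishing is essentially the statement $\pi_!\dashv\mathcal{R}\pi^*$ itself, so it cannot come from formal adjunction or duality. It needs an actual computation on the Rees objects $\mathcal{R}(M)$. For perfect $F$ it does follow, from $\pi^*F\simeq\pi^!F\otimes\mathcal{O}(-\mathcal{K}_{\mathcal{Z}/\mathcal{X}})$.

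You also need $\pi_*\widetilde{\mathcal{R}}\pi^*\simeq\mathrm{id}$. Because of the twist, this is by duality $\pi_!\mathcal{R}\pi^*\simeq\mathrm{id}$, not $\pi_*\mathcal{R}\pi^!\simeq\mathrm{id}$, so it needs its own argument. Your filtration-comparison alternative does not fill the gap either. No comparison morphism is constructed, and cones with pieces in $\mathcal{B}_\pm$ are nonzero in the quotient, so ``lying in $\mathcal{K}^+_{-1}$ after mutation'' has no content.

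Second, the well-definedness claim you flag as the main obstacle is false as stated. One has $\mathbb{D}_{\mathcal{Z}}\iota^+_*\simeq\iota^+_*\mathbb{D}_{\mathrm{M}}[-1]$, with $\omega_{\mathrm{M}}\simeq\mathcal{O}_{\mathrm{M}}(-e)$ up to characters. So $\mathbb{D}_{\mathcal{Z}}$ sends $\iota^+_*\mathcal{A}_{-1}$ to $\iota^+_*(\mathcal{A}_{-1}\otimes\mathcal{O}_{\mathrm{M}}(1-e))$, which leaves $\mathcal{K}^+_{-1}$ once $e\ge 2$.

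For example, take the threefold node: $r=1$, $e=2$, $\mathrm{M}\simeq\mathbb{P}^1\times\mathbb{P}^1$ and $\mathcal{A}_{-1}=\langle\mathcal{O}(-1,0),\mathcal{O}(0,-1)\rangle$. Then $\mathbb{D}_{\mathcal{Z}}\iota_*\mathcal{O}(-1,0)\simeq\iota_*\mathcal{O}(-1,-2)[-1]$. This object has nonzero $\operatorname{Ext}^*$ to $\iota_*\mathcal{O}_{\mathrm{M}}(-1,-1)$, whereas every object of $\mathcal{K}^+_{-1}$ is left orthogonal to $\iota_*\mathcal{O}_{\mathrm{M}}(-1,-1)$.

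What does hold is that $\mathbb{D}_{\mathcal{Z}}(-)\otimes\mathcal{O}(-\mathcal{K}_{\mathcal{Z}/\mathcal{X}})$ preserves $\mathcal{K}^+_{-1}$, because $\mathcal{O}(-\mathcal{K}_{\mathcal{Z}/\mathcal{X}})|_{\mathrm{M}}\simeq\mathcal{O}_{\mathrm{M}}(e-1)$ up to characters. So only the composite $\widetilde{\mathcal{R}}\pi^*$ descends to $\mathrm{D}^b(\mathcal{Z})/\mathcal{K}^+_{-1}$. The functor $\mathcal{R}\pi^!$ on its own lands in the quotient by $\mathcal{K}^+_{-1}\otimes\mathcal{O}(\mathcal{K}_{\mathcal{Z}/\mathcal{X}})$. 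Relatedly, $\mathcal{O}(\mathcal{K}_{\mathcal{Z}/\mathcal{X}})\simeq\mathcal{M}^{\otimes(e-1)}$, not $\mathcal{M}^{\otimes-(e-1)}$.

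This second point is repairable. The missing orthogonality described above is the real gap.
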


\begin{proof}
The category $\mathrm{D}^b(\mathcal{Z})/\mathcal{K}_{-1}^+$ contains a semi-orthogonal decomposition
\[
\left\langle
\operatorname{Im}(\mathcal{\widetilde{R}}\pi^*),
\iota^+_*\mathcal{O}_{\mathrm{M}} \otimes \bigoplus_{i=0}^{r-1}\nu^i,\,
\iota^+_*\mathcal{O}_{\mathrm{M}}(1)\otimes \bigoplus_{i=0}^{r-1}\nu^i,\,
\ldots,\,
\iota^+_*\mathcal{O}_{\mathrm{M}}(e-2)\otimes \bigoplus_{i=0}^{r-1}\nu^i
\right\rangle,
\]
and these objects generate the whole category. On the other hand, we have the same semi-orthogonal decomposition
\[
\left\langle
\operatorname{Im}(\mathcal{R}\pi^*),
\iota^+_*\mathcal{O}_{\mathrm{M}} \otimes \bigoplus_{i=0}^{r-1}\nu^i,\,
\iota^+_*\mathcal{O}_{\mathrm{M}}(1)\otimes \bigoplus_{i=0}^{r-1}\nu^i,\,
\ldots,\,
\iota^+_*\mathcal{O}_{\mathrm{M}}(e-2)\otimes \bigoplus_{i=0}^{r-1}\nu^i
\right\rangle.
\]
Finally, since $\pi_* \mathcal{\widetilde{R}}\pi^* = \pi_* \mathcal{R}\pi^* = \mathrm{id}$ by Corollary 3.4.4-8 \cite{Hao2025b}, it follows that $\mathcal{\widetilde{R}}\pi^* \cong \mathcal{R}\pi^*$.
\end{proof}

Now we define the full saturated triangulated subcategory of $\mathrm{D}^b(\mathcal{Y})$ generated by the above-defined category on $E_\mathcal{Y}$ as
\[
\mathcal{K}_{-1} \;:=\; \left\langle \iota_*\mathcal{A}_{-1} \right\rangle
\]
We consider the natural projection functor \(\varpi_* : \mathrm{D}^b(\mathcal{Z}) \longrightarrow \mathrm{D}^b(\mathcal{Y})/\mathcal{K}_{-1}\).
It is straightforward to verify that every morphism passing to the subcategory $\mathcal{K}_{-1}^+$ is isomorphism under $\varpi_*$ in $\mathrm{D}^b(\mathcal{Y})/\mathcal{K}_{-1}$. Hence, by the universal property of the Verdier quotient, the functor $\varpi_*$ factors through a canonical functor
\[
\varpi_* : \mathrm{D}^b(\mathcal{Z})/\mathcal{K}_{-1}^+ \longrightarrow \mathrm{D}^b(\mathcal{Y})/\mathcal{K}_{-1}.
\]
But the functor $\varpi^!:\mathrm{D}^b(\mathcal{Y})/\mathcal{K}_{-1}\to \mathrm{D}^b(\mathcal{Z})/\mathcal{K}_{-1}^+$ is not well-defined in general. Indeed, for $N\in \mathcal{A}_{-1}$, the object $\varpi^!\iota_*N$ admits a filtration with successive factors $\iota^+_*N(-l)\otimes \nu^l$ for $l=0,\dots,r-1$ by Lemma \ref{derivedroot}, and therefore $\varpi^!\iota_*N\notin \mathcal{K}_{-1}^+$. Thus, one should consider $\varpi^!:\mathrm{D}^b(\mathcal{Y})/\mathcal{K}_{-1}\to \mathrm{D}^b(\mathcal{Z})/\varpi^!\mathcal{K}_{-1}$. However, we have the following:
\begin{lemma}
Assume $e \ge r$. Then the induced functor
\[
\pi_* \varpi^!:
\mathrm{D}^b(\mathcal{Y})/\mathcal{K}_{-1}
\longrightarrow
\mathrm{D}^b(\mathcal{X})
\]
is a  well-defined triangulated functor.
\end{lemma}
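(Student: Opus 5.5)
The key point is that $\pi_*\varpi^!$ is already a triangulated functor $\mathrm{D}^b(\mathcal{Y})\to\mathrm{D}^b(\mathcal{X})$: $\varpi$ is flat and finite onto its image, and $\pi$ is proper. So it suffices to check that it annihilates the thick subcategory $\mathcal{K}_{-1}=\langle\iota_*\mathcal{A}_{-1}\rangle$. The universal property of the Verdier quotient then makes it factor uniquely through $\mathrm{D}^b(\mathcal{Y})/\mathcal{K}_{-1}$. Since the kernel of a triangulated functor is thick, it is enough to show $\pi_*\varpi^!\iota_*N=0$ for every $N\in\mathcal{A}_{-1}$.

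First, by Proposition~\ref{derivedroot} and the remark preceding the statement, $\varpi^!\iota_*N$ has a finite filtration with factors $\iota^+_*N(-l)\otimes\nu^l$ for $l=0,\dots,r-1$. Here $N(-l)$ means the twist by $\mathcal{O}_{E_\mathcal{Y}}(-l)$, coming from $\mathcal{M}|_{\mathrm M}=\mathcal{O}_{E_\mathcal{Y}}(-1)\otimes\nu$. I would justify this by writing $\varpi^!=\varpi^*\otimes\omega_{\varpi}$ with $\omega_\varpi=\mathcal{M}^{\otimes(r-1)}$, and applying the flat base change of Lemma~\ref{fbcofroot} to $\iota$. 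This gives $\varpi^*\iota_*N$ as the pushforward from the nonreduced divisor $r\mathrm{M}$, which is filtered by the $\mathrm M$-layers $\mathcal{M}^{-l}|_{\mathrm M}$ via Lemma~\ref{excdis}. By exactness of $\pi_*$ on triangles, it then suffices to prove that $\pi_*\iota^+_*(N(-l)\otimes\nu^l)=0$ for each $0\le l\le r-1$.

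Next, I would use the semiorthogonal decomposition $\mathrm{D}^b(E_\mathcal{Y})=\langle\mathcal{A}_{-1},\mathcal{O},\dots,\mathcal{O}(e-1)\rangle$, so $\mathcal{A}_{-1}={}^\perp\langle\mathcal{O}(j)\rangle_{0\le j\le e-1}$. Then $N(-l)\in{}^\perp\langle\mathcal{O}(j)\rangle_{-l\le j\le e-1-l}$. The hypothesis $e\ge r>l$ ensures this range always contains $j=0$, and, after accounting for the $\nu$-twist, the relevant twists $\mathcal{O}_{\mathrm M}(j)\otimes\nu^{j'}$ that compute $\pi_*$ on the fibre. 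Concretely, $\pi_*\iota^+_*$ factors through pushforward along $\mathrm{M}\to[C/\mu_r]$. Because $\mu_r$ acts trivially on $\mathrm M$ (Lemma~\ref{rootsblowup}), this pushforward is computed isotypically: the $\nu^l$-component is $R\Gamma$ relative to $C$ of $N(-l)$ paired against the appropriate line bundle $\mathcal{O}(j)$ with $0\le j\le r-1\le e-1$. By relative Serre duality on the hypersurface fibration, this equals $\mathrm{Hom}(\mathcal{O}(j'),N(-l))$ or its dual. That group vanishes by semiorthogonality because $j'$ lands in the window $[0,e-1]$ shifted by $l$.

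An alternative route is via Lemma~\ref{kernallemma}: the objects $\iota^+_*\mathcal{A}_{-1}\otimes\nu^i$ generate $\mathcal{K}^+_{-1}$, which lies in $\ker\pi_*$ by the proposition (since $\mathcal{R}\pi^*$ is right adjoint to $\pi_*$ on the quotient). One then only needs each $\iota^+_*N(-l)\otimes\nu^l$ to lie in $\mathcal{K}^+_{-1}$, that is $N(-l)\in\mathcal{A}_{-1}$ up to the window. Twisting by $\mathcal{O}(-l)$ does not preserve $\mathcal{A}_{-1}$. The remedy is to decompose $N(-l)$ via the mutated decomposition $\langle\mathcal{A}_{-1}(-l),\mathcal{O}(-l),\dots\rangle$ and show that the extra pieces lie in $\langle\mathcal{O}_{\mathrm M}(-e+1),\dots,\mathcal{O}_{\mathrm M}(-1)\rangle\otimes\nu^l$. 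Those are not killed, so this route is weaker, and I would prefer the direct cohomological vanishing.

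The main obstacle is the bookkeeping of the twist $\mathcal{O}(-l)$ against the representation $\nu^l$ under $\mathcal{M}|_{\mathrm M}=\mathcal{O}(-1)\otimes\nu$, together with identifying exactly which window of line bundles computes $\pi_*$. The inequality $e\ge r$ should be precisely what makes every window $\{-l,\dots,e-1-l\}$ for $0\le l<r$ contain the needed degree. If the direct computation fails, I would fall back on showing that $\pi_*\varpi^!$ kills $\iota_*N$ by adjunction: compute $\mathrm{Hom}(\mathcal{R}\pi^*G,\varpi^!\iota_*N)=\mathrm{Hom}(\varpi_*\mathcal{R}\pi^*G,\iota_*N)$ and show it vanishes for all $G$, using that $\varpi_*\mathcal{R}\pi^*G$ restricted to $E_\mathcal{Y}$ lies in $\langle\mathcal{O},\dots,\mathcal{O}(e-1)\rangle$ when $e\ge r$.
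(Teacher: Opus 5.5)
Your main argument is correct and is essentially the paper's proof. Filter $\varpi^!\iota_*N$ by the factors $\iota^+_*N(-l)\otimes\nu^l$ for $0\le l\le r-1$, then kill each factor under $\pi_*$ using the semiorthogonal decomposition of $\mathrm{D}^b(E_\mathcal{Y})$ together with $l\le r-1\le e-1$; the paper leaves the Verdier-quotient factorization implicit. Your Serre-duality detour, and the hedging about which direction of orthogonality is used, can be dropped, and the direction does matter. Since $\mu_r$ acts trivially on $\mathrm{M}$, $\pi_*\iota^+_*(N(-l)\otimes\nu^l)$ is just $Rf_{E_\mathcal{Y}*}(N(-l))\otimes\nu^l$. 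This vanishes because $\mathcal{A}_{-1}$ is the \emph{right} orthogonal of $\langle\mathcal{O},\dots,\mathcal{O}(e-1)\rangle$, i.e.\ $\operatorname{Hom}(\mathcal{O}(l),N)=0$ for $0\le l\le e-1$.
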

\begin{proof}
Note that $\iota^+_* N(-l)\otimes \nu^l, \ l=0,\dots,r-1,$ belong to $\iota^+_* \mathcal{A}_{-l}\otimes \nu^{l-1}$ for $l=1,\dots,r$. Consider the semi-orthogonal decomposition
\[
\mathrm{D}^b(E_\mathcal{Y})=\left\langle \mathcal{A}_{-e},\,\mathcal{O}_{E_\mathcal{Y}}(-e+1),\,\dots,\,\mathcal{O}_{E_\mathcal{Y}}(-1),\,\mathcal{O}_{E_\mathcal{Y}} \right\rangle.
\]
Since $r\le e$, we have $\pi_* \varpi^! \iota_* N=0$. The distinguished triangulates also preserve.
\end{proof}

\begin{lemma}
Assume $e \ge r$. Then
\[
\varpi_* \mathcal{R}\pi^* : \mathrm{D}^b(\mathcal{X}) \longrightarrow \mathrm{D}^b(\mathcal{Y})/\mathcal{K}_{-1},
\quad
\pi_* \varpi^! : \mathrm{D}^b(\mathcal{Y})/\mathcal{K}_{-1} \longrightarrow \mathrm{D}^b(\mathcal{X})
\]
form an adjoint pair
\[
\varpi_* \mathcal{R}\pi^* \;\dashv\; \pi_* \varpi^!.
\]
\end{lemma}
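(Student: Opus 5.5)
The adjunction will be assembled from two adjunctions already available, composed across the Verdier quotients. The first is $\mathcal{R}\pi^* \dashv \pi_*$ between $\mathrm{D}^b(\mathcal{X})$ and $\mathrm{D}^b(\mathcal{Z})/\mathcal{K}_{-1}^+$, from the adjoint triple in Proposition 3.4 of \cite{Hao2025b} quoted above. The second is Grothendieck duality $\varpi_* \dashv \varpi^!$ between $\mathrm{D}^b(\mathcal{Z})$ and $\mathrm{D}^b(\mathcal{Y})$. The candidate unit and counit are the composites of the units and counits of these two adjunctions. The real content is that the duality adjunction still holds, in the relevant slots, after passing to the quotients by $\mathcal{K}_{-1}^+$ and $\mathcal{K}_{-1}$. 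Since $\varpi^!$ itself does not descend to these quotients, I will only ever use it after composing with $\pi_*$.

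Fix $F \in \mathrm{D}^b(\mathcal{X})$ and $G \in \mathrm{D}^b(\mathcal{Y})$. I want a natural isomorphism
\[
\operatorname{Hom}_{\mathrm{D}^b(\mathcal{Y})/\mathcal{K}_{-1}}(\varpi_*\mathcal{R}\pi^*F, G) \simeq \operatorname{Hom}_{\mathrm{D}^b(\mathcal{X})}(F, \pi_*\varpi^! G).
\]
\emph{Step 1.} Show that $\varpi_*\mathcal{R}\pi^*F$ is left orthogonal to $\mathcal{K}_{-1}$ in $\mathrm{D}^b(\mathcal{Y})$. Then, by the standard Verdier quotient lemma, morphisms out of it into $G$ computed in the quotient agree with morphisms in $\mathrm{D}^b(\mathcal{Y})$.

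To prove Step 1, take $N \in \mathcal{A}_{-1}$. Duality gives
\[
\operatorname{Hom}(\varpi_*\mathcal{R}\pi^*F, \iota_*N[j]) = \operatorname{Hom}(\mathcal{R}\pi^*F, \varpi^!\iota_*N[j]).
\]
By Proposition \ref{derivedroot}, $\varpi^!\iota_*N$ is filtered by the objects $\iota^+_*N(-l)\otimes\nu^l$ for $0\le l\le r-1$. As in the proof of the preceding lemma, these lie in $\iota^+_*\mathcal{A}_{-l}\otimes\nu^{l-1}$. Because $e\ge r$, the decomposition $\mathrm{D}^b(E_\mathcal{Y})=\langle\mathcal{A}_{-e},\mathcal{O}(-e+1),\dots,\mathcal{O}\rangle$ places each such factor in $\mathcal{T}$, i.e.\ in $\ker\pi_*$ up to the pre-tilting pieces. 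Now $\operatorname{Im}\mathcal{R}\pi^*$ is the rightmost component of the semiorthogonal decomposition of $\mathrm{D}^b(\mathcal{Z})/\mathcal{K}_{-1}^+$, and $\mathcal{R}\pi^*\dashv\pi_*$. Hence
\[
\operatorname{Hom}(\mathcal{R}\pi^*F, T)=\operatorname{Hom}(F,\pi_*T)=0 \quad \text{for } T\in\ker\pi_*.
\]
I must also check that this vanishing already holds in $\mathrm{D}^b(\mathcal{Z})$ and not only in the quotient. For this I choose the representative $\mathcal{R}\pi^*F$ built from $\mathcal{R}(M)$ and $\pi^*P$. For $\pi^*P$, use the adjunction $\pi^*\dashv\pi_*$, which holds for perfect $P$. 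For $\mathcal{R}(M)$, use the Postnikov tower whose graded pieces lie in $\mathcal{A}_{-1}$, together with the left admissible description $\widetilde{\mathcal{R}}\pi^*=\mathcal{R}\pi^!\otimes\mathcal{O}(-K_{\mathcal{Z}/\mathcal{X}})$.

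\emph{Step 2.} With Step 1 in hand, the left-hand side equals $\operatorname{Hom}_{\mathrm{D}^b(\mathcal{Z})}(\mathcal{R}\pi^*F,\varpi^!G)$ by duality. I then pass to $\mathrm{D}^b(\mathcal{Z})/\mathcal{K}_{-1}^+$, again using that $\mathcal{R}\pi^*F$ is left orthogonal to $\mathcal{K}_{-1}^+$, which is the content of Proposition 3.5 of \cite{Hao2025b}. Applying $\mathcal{R}\pi^*\dashv\pi_*$ then gives $\operatorname{Hom}(F,\pi_*\varpi^!G)$.

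\emph{Step 3.} Check that the isomorphism is independent of the representative $G$ in $\mathrm{D}^b(\mathcal{Y})/\mathcal{K}_{-1}$. This follows from the previous lemma: $\pi_*\varpi^!$ kills $\mathcal{K}_{-1}$ when $e\ge r$, and Step 1 kills the left-hand side on $\mathcal{K}_{-1}$. Naturality in $F$ and $G$ is inherited from the two underlying adjunctions. The ambiguity in the quasi-functor $\mathcal{R}\pi^*$ is absorbed because the construction only depends on $F$ up to the sheaf and perfect-complex approximation.

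The main obstacle is Step 1: proving orthogonality at the level of $\mathrm{D}^b(\mathcal{Z})$ rather than only in the quotient, and tracking the representation twists $\nu^l$ so that each factor $\iota^+_*N(-l)\otimes\nu^l$ really lands in $\ker\pi_*$. I would first try to settle it by computing on the generators $\mathcal{R}(M)$ via their filtrations by $\operatorname{gr}^i(M)\in\mathcal{A}_{-1}\otimes\nu^{\ell(i)}$, combined with the semiorthogonality in $\mathrm{D}^b(E_\mathcal{Y})$.
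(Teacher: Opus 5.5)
There is a genuine gap, at exactly the point you flagged as the main obstacle. Two claims are false in general:
\begin{itemize}
\item the left orthogonality of $\varpi_*\mathcal{R}\pi^*F$ to $\mathcal{K}_{-1}$ in $\mathrm{D}^b(\mathcal{Y})$ (Step~1);
\item the left orthogonality of $\mathcal{R}\pi^*F$ to $\mathcal{K}_{-1}^+$ in $\mathrm{D}^b(\mathcal{Z})$, which you attribute to Proposition~3.5 of \cite{Hao2025b} in Step~2.
\end{itemize}
The adjunction $\mathcal{R}\pi^*\dashv\pi_*$ and that semiorthogonal decomposition only describe Hom-spaces in $\mathrm{D}^b(\mathcal{Z})/\mathcal{K}_{-1}^+$. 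They give $\operatorname{Hom}_{\mathrm{D}^b(\mathcal{Z})/\mathcal{K}_{-1}^+}(\mathcal{R}\pi^*F,\varpi^!\iota_*N)=0$. Duality for $\varpi$, however, needs $\operatorname{Hom}_{\mathrm{D}^b(\mathcal{Z})}(\mathcal{R}\pi^*F,\varpi^!\iota_*N)=0$.

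No choice of representative can supply this for all $F$. Suppose $\mathcal{R}\pi^*F\in{}^{\perp}\mathcal{K}_{-1}^+$ held in $\mathrm{D}^b(\mathcal{Z})$. Then full faithfulness into the quotient would give $\operatorname{Hom}_{\mathrm{D}^b(\mathcal{Z})}(\mathcal{R}\pi^*F,\mathcal{R}\pi^*F'[n])\simeq\operatorname{Hom}_{\mathrm{D}^b(\mathcal{X})}(F,F'[n])$, i.e.\ a fully faithful embedding $\mathrm{D}^b(\mathcal{X})\hookrightarrow\mathrm{D}^b(\mathcal{Z})$.

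A concrete counterexample: take the threefold node $xy+zw=0$ with $\mu_2$ acting by $-1$ and all weights $1$, so $r=2$ and $e=4-2=2\ge r$.
\begin{itemize}
\item $\mathcal{Z}=[\mathrm{Bl}_0/\mu_2]$ is smooth, so Ext-groups between objects of $\mathrm{D}^b(\mathcal{Z})$ vanish in high degree.
\item $\operatorname{Ext}^{2m}_{\mathcal{X}}(\mathrm{k}_0,\mathrm{k}_0)\neq 0$ for every $m$, since the Tate resolution of the residue field over a hypersurface has $\mu_2$-invariant terms in all even degrees.
\end{itemize}
Step~1 fails for the same reason. Combined with the full faithfulness of $\varpi_*\mathcal{R}\pi^*$ into the quotient (Corollary~\ref{gencor}), it would embed $\mathrm{D}^b(\mathcal{X})$ fully faithfully into $\mathrm{D}^b(\mathcal{Y})$, and here $\mathcal{Y}=\mathrm{Bl}_0/\mu_2$ is smooth too. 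For $r=1$, Steps~1 and~2 assert literally the same false statement.

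Your suggested remedy cannot help either. The graded pieces $\iota^+_*\operatorname{gr}^i(M)$ of $\mathcal{R}(M)$ lie in $\mathcal{K}_{-1}^+$ themselves, and $\widetilde{\mathcal{R}}\pi^*\simeq\mathcal{R}\pi^*$ is again only an isomorphism in the quotient.

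The argument must stay inside the Verdier quotients. The vanishing $\pi_*\varpi^!\mathcal{K}_{-1}=0$ from the preceding lemma, which is the only place $e\ge r$ enters, must drive the proof, not merely serve as the representative-independence check of your Step~3. The paper does this with roofs.
\begin{itemize}
\item From $g\colon F\to\pi_*\varpi^!G$: the map $\mathcal{R}^{s}\pi^*F\to\varpi^!G$ induced by $g$ via $\pi^*$, for $s\gg0$, gives the roof $\varpi_*\mathcal{R}\pi^*F\leftarrow\varpi_*\mathcal{R}^{s}\pi^*F\to G$. Its left leg is invertible in $\mathrm{D}^b(\mathcal{Y})/\mathcal{K}_{-1}$, because its cone is $\varpi_*\mathrm{Cone}(s)$ with $\mathrm{Cone}(s)\in\mathcal{K}_{-1}^+$, and $\varpi_*\mathcal{K}_{-1}^+\subset\mathcal{K}_{-1}$.
\item Conversely, take a roof $\varpi_*\mathcal{R}\pi^*F\xrightarrow{f}G^-\xleftarrow{t}G$ with $\mathrm{Cone}(t)\in\mathcal{K}_{-1}$. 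Send it to $\pi_*\varpi^!f$ precomposed with $F\simeq\pi_*\mathcal{R}\pi^*F\to\pi_*\varpi^!\varpi_*\mathcal{R}\pi^*F$. This lands in $\pi_*\varpi^!G^-\simeq\pi_*\varpi^!G$ precisely because $\pi_*\varpi^!\mathrm{Cone}(t)=0$.
\end{itemize}
In this way duality for $\varpi$ is only applied to honest objects of $\mathrm{D}^b(\mathcal{Z})$ and $\mathrm{D}^b(\mathcal{Y})$ inside roofs, never to quotient Hom-spaces.
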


\begin{proof}
It suffices to prove that for any $F \in \mathrm{D}^b(\mathcal{X})$ and $G \in \mathrm{D}^b(\mathcal{Y})/\mathcal{K}_{-1}$, we have
\[
\operatorname{Ext}^*_{\mathrm{D}^b(\mathcal{Y})/\mathcal{K}_{-1}}
(\varpi_* \mathcal{R}\pi^* F,\, G)
\simeq
\operatorname{Ext}^*_{\mathrm{D}^b(\mathcal{X})}
(F,\, \pi_* \varpi^! G).
\]
Consider natural map
\[
\left[
F \xrightarrow{\, g \,} \pi_* \varpi^! G
\right]
\;\longmapsto\;
\left[
\begin{tikzcd}[column sep=small, row sep=small]
& \mathcal{R}^{s\gg 0}\pi^* F \arrow[dl, "s"] \arrow[dr, "\pi^*g"] & \\
\mathcal{R}\pi^* F \arrow[dr, dashed, "f"] & & \varpi^! G \arrow[dl, dashed, "t"] \\
& G^- &
\end{tikzcd}
\right]
\;\longmapsto\;\left[
\begin{tikzcd}[column sep=small, row sep=small]
& \varpi_* \mathcal{R}^{s\gg 0}\pi^* F\arrow[dl, "\varpi_* s"'] \arrow[dr, "\varpi_* \pi^*g"] & \\
\varpi_* \mathcal{R}\pi^* F & & G
\end{tikzcd}
\right].
\]
where $\mathrm{Cone}(s) \in \mathcal{K}_{-1}^+$ and $\varpi_* \mathrm{Cone}(s) \in \mathcal{K}_{-1}$. It follows that the above correspondence is both surjective and injective, here one may consider inside $\mathrm{D}^b(\mathcal{Z})$ a quotient by a full saturated subcategory generated by  $\varpi^!\mathcal{K}_{-1}$ and $\mathcal{K}_{-1}^+$, and then take the corresponding pushout. Moreover its natural inverse map is given by
\[
\left[
\begin{tikzcd}[column sep=small, row sep=small]
\varpi_* \mathcal{R}\pi^* F \arrow[dr, "f"] & & G \arrow[dl, "t"'] \\
& G^- &
\end{tikzcd}
\right]
\quad \longmapsto \quad
\left[
F \xrightarrow{\ \pi_* \varpi^! f\ } \pi_* \varpi^! G^- = \pi_* \varpi^! G
\right],
\]
In the last step, since $\tau := \mathrm{Cone}(t) \in \mathcal{K}_{-1}$, we have $\pi_* \varpi^! \tau = 0$ which is a well-defined natural map.
\end{proof}
Similarly, for $N \in \mathcal{A}_{-1}$, we have that $\varpi^* \iota_* N \otimes \mathcal{O}(\mathcal{K}_{\mathcal{Z}/\mathcal{X}})$ admits a filtration whose successive factors belong to $\iota^+_* \mathcal{A}_l$ tensor certain representations with $l$ in the range $-e \le l \le -(e-r+1)$. If we assume $e \ge r$, then $\pi_! \varpi^* \iota_* N = 0$, and hence we obtain a naturally defined triangulated functor
\[
\pi_! \varpi^* :
\mathrm{D}^b(\mathcal{Y})/\mathcal{K}_{-1}
\longrightarrow
\mathrm{D}^b(\mathcal{X}),
\]
together with the following properties.
\begin{lemma}
Assume $e \ge r$. Then $\varpi_* \mathcal{R}\pi^*$ is admissible and fits into an adjoint triple
\[
\pi_! \varpi^* \;\dashv\; \varpi_* \mathcal{R}\pi^* \;\dashv\; \pi_* \varpi^!.
\]
\end{lemma}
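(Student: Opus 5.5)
The right adjunction $\varpi_*\mathcal{R}\pi^*\dashv\pi_*\varpi^!$ is already established by the previous lemma. What remains is the left adjunction $\pi_!\varpi^*\dashv\varpi_*\mathcal{R}\pi^*$, together with admissibility. For the left adjunction I will mirror the proof of the right one, replacing the upper adjoint pair by the lower one. On the level of $\mathcal{Z}$ we already have the triple $\pi_!\dashv\mathcal{R}\pi^*\dashv\pi_*$ from Proposition 3.4 of \cite{Hao2025b}, recalled above, and $\varpi^*\dashv\varpi_*$ holds on the root stack. The plan is therefore to show, for $G\in\mathrm{D}^b(\mathcal{Y})/\mathcal{K}_{-1}$ and $F\in\mathrm{D}^b(\mathcal{X})$,
\[
\operatorname{Ext}^*_{\mathrm{D}^b(\mathcal{X})}(\pi_!\varpi^*G,F)\simeq\operatorname{Ext}^*_{\mathrm{D}^b(\mathcal{Y})/\mathcal{K}_{-1}}(G,\varpi_*\mathcal{R}\pi^*F).
\]

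The first step is to construct the maps in both directions. A roof $G\xleftarrow{t}G^-\xrightarrow{f}\varpi_*\mathcal{R}\pi^*F$ with $\mathrm{Cone}(t)\in\mathcal{K}_{-1}$ is sent to
\[
\pi_!\varpi^*G\simeq\pi_!\varpi^*G^-\xrightarrow{\pi_!\varpi^*f}\pi_!\varpi^*\varpi_*\mathcal{R}\pi^*F\to\pi_!\mathcal{R}\pi^*F\to F,
\]
where the first isomorphism uses $\pi_!\varpi^*\iota_*N=0$ for $N\in\mathcal{A}_{-1}$, which is where the hypothesis $e\ge r$ enters. Conversely, a morphism $g:\pi_!\varpi^*G\to F$ corresponds under $\pi_!\dashv\mathcal{R}\pi^*$ to $\varpi^*G\to\mathcal{R}\pi^*F$ in $\mathrm{D}^b(\mathcal{Z})/\mathcal{K}_{-1}^+$. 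This is a roof $\varpi^*G\leftarrow H\to\mathcal{R}\pi^*F$ with $\mathrm{Cone}\in\mathcal{K}_{-1}^+$, and applying $\varpi_*$ gives a roof in $\mathrm{D}^b(\mathcal{Y})/\mathcal{K}_{-1}$ because $\varpi_*\mathcal{K}_{-1}^+\subset\mathcal{K}_{-1}$, using $\varpi_*\varpi^*\simeq\mathrm{id}$ from Proposition \ref{derivedroot}.

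The second step, which I expect to be the main obstacle, is to check that these two maps are mutually inverse and natural. As in the previous lemma, I would work in the intermediate quotient of $\mathrm{D}^b(\mathcal{Z})$ by the saturated subcategory generated by $\varpi^*\mathcal{K}_{-1}$ and $\mathcal{K}_{-1}^+$. The key point is to show that $\pi_!$ still kills this subcategory. For this I will use the filtration of $\varpi^*\iota_*N\otimes\mathcal{O}(\mathcal{K}_{\mathcal{Z}/\mathcal{X}})$ by factors in $\iota^+_*\mathcal{A}_l$ with $-e\le l\le -(e-r+1)$, together with the semiorthogonal decomposition of $\mathrm{D}^b(E_\mathcal{Y})$ twisted by $\mathcal{O}(-e)$. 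Once this vanishing is in hand, the unit and counit identities follow formally from those of $\pi_!\dashv\mathcal{R}\pi^*$ and $\varpi^*\dashv\varpi_*$.

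The final step is admissibility. Full faithfulness of $\varpi_*\mathcal{R}\pi^*$ follows from $\pi_*\varpi^!\varpi_*\mathcal{R}\pi^*\simeq\pi_*\mathcal{R}\pi^*\simeq\mathrm{id}$. This uses $\varpi^!\varpi_*\to\mathrm{id}$ having cone killed by $\pi_*$ modulo $\mathcal{K}_{-1}^+$, again by $e\ge r$. Full faithfulness says the unit of the right adjunction is an isomorphism. Since the embedding now has both a left and a right adjoint, it is admissible.
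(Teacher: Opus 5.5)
Your argument for the adjoint triple is essentially the paper's. The paper also mirrors the proof of $\varpi_*\mathcal{R}\pi^*\dashv\pi_*\varpi^!$, works modulo the subcategory generated by $\varpi^*\mathcal{K}_{-1}$ and $\mathcal{K}^+_{-1}$, and uses that $\pi_!\varpi^*$ kills $\mathcal{K}_{-1}$ when $e\ge r$. The only cosmetic difference is that the paper writes the roof with $\widetilde{\mathcal{R}}\pi^*$ and then invokes $\widetilde{\mathcal{R}}\pi^*\simeq\mathcal{R}\pi^*$, whereas you use $\pi_!\dashv\mathcal{R}\pi^*$ directly. Your Step 2 is indeed formal: $\mathcal{R}\pi^*F$ is right orthogonal to $\ker\pi_!$, so Hom-spaces into it are unchanged by the further quotient.

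The weak point is your last step, where you justify full faithfulness.
\begin{itemize}
\item There is no natural map $\varpi^!\varpi_*\to\mathrm{id}$. The natural map is the unit $\mathrm{id}\to\varpi^!\varpi_*$ of $\varpi_*\dashv\varpi^!$, and since $\varpi^!$ is fully faithful, its cone lies in $\ker\varpi_*$.
\item $\ker\varpi_*$ is not annihilated by $\pi_*$, even modulo $\mathcal{K}^+_{-1}$. For $j\not\equiv 0 \pmod r$ one has $\varpi_*(\iota^+_*\mathcal{O}_{\mathrm{M}}\otimes\nu^j)=0$, while $\pi_*(\iota^+_*\mathcal{O}_{\mathrm{M}}\otimes\nu^j)\simeq\kappa_*\mathcal{O}_C\otimes\nu^j\neq0$. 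Passing to the quotient by $\mathcal{K}^+_{-1}$ does not help, because $\pi_*$ already kills $\mathcal{K}^+_{-1}$.
\item So $e\ge r$ alone does not give $\pi_*\varpi^!\varpi_*\mathcal{R}\pi^*\simeq\pi_*\mathcal{R}\pi^*$. This isomorphism depends on the specific shape of $\mathcal{R}\pi^*F$.
\end{itemize}

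The paper proves it only on the spanning class $\kappa_*\mathcal{O}_C\otimes\nu^i$:
\begin{itemize}
\item By \eqref{pullbackcompu1}, the object $\varpi_*\mathcal{R}\pi^*(\kappa_*\mathcal{O}_C\otimes\nu^i)$ has factors $\iota_*\mathcal{D}_{\lambda r+i}$.
\item $\varpi^!$ splits each of these into factors $\iota^+_*\mathcal{D}_{\lambda r+i}(-l)\otimes\nu^l$ with $0\le l\le r-1<e$.
\item Lemma~\ref{orddualcomp} then kills all but the single term $\kappa_*\mathcal{O}_C\otimes\nu^i$.
\end{itemize}
Full faithfulness then follows from \cite[Proposition 1.49]{Huybrechts} (Corollary~\ref{gencor}). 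The general isomorphism of Corollary~\ref{gencor1} is a consequence of this, not an input.

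In the paper, ``admissible'' in this lemma only records that both adjoints exist; full faithfulness is proved afterwards. So either drop the full-faithfulness claim from this lemma, or replace your justification with that spanning-class computation.
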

\begin{proof}
We have a similar natural bijection
\[
\left[
\pi_! \varpi^* F \xrightarrow{\, g \,} G
\right]
\;\longmapsto\;
\left[
\begin{tikzcd}[column sep=small, row sep=small]
& G^+ \arrow[dl, dashed, "t"] \arrow[dr, dashed, "f"] & \\
\ \varpi^*F \arrow[dr, "\pi^* g"] & & \widetilde{\mathcal{R}}\pi^* G \arrow[dl, "s"] \\
& \widetilde{\mathcal{R}}^{s\gg 0}\pi^* G &
\end{tikzcd}
\right]
\;\longmapsto\;\left[
\begin{tikzcd}[column sep=small, row sep=small]
F \arrow[dr, "\varpi_*\pi^* g"'] & & \varpi_* \widetilde{\mathcal{R}}\pi^* G \arrow[dl, "\varpi_* s"] \\
& \varpi_*\widetilde{\mathcal{R}}^{s\gg 0}\pi^* G &
\end{tikzcd}
\right].
\]
and we also know have $\widetilde{\mathcal{R}}\pi^* G = \mathcal{R}\pi^* G$ from Proposition \ref{admlemma}.
\end{proof}

\begin{lemma}\label{orddualcomp}
Consider the following (dual) semi-orthogonal decompositions of $\mathrm{D}^b(E_\mathcal{Y})$:
\[
\mathrm{D}^b(E_\mathcal{Y})
\simeq
\left\langle
\mathcal{A}_{-1},\,\mathcal{O}_{E_\mathcal{Y}}\,\dots,\,\mathcal{O}_{E_\mathcal{Y}}(e-1)
\right\rangle
\simeq
\left\langle
\mathcal{A}_{-1},\,\mathcal{D}_{e-1},\,\dots,\,\mathcal{D}_0
\right\rangle.
\]
Then for $0 \le i,j < e$, we have
\[
\operatorname{Ext}^*_{\mathrm{D}^b(E_\mathcal{Y})}
\bigl(\mathcal{O}_{E_\mathcal{Y}}(i), \mathcal{D}_j\bigr)
=
\begin{cases}
\mathcal{O}_C, & i = j,\\
0, & i \ne j.
\end{cases}
\]
\end{lemma}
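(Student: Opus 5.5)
The plan is to identify $\mathcal{D}_0,\dots,\mathcal{D}_{e-1}$ with the left dual collection of $\mathcal{O}_{E_\mathcal{Y}},\dots,\mathcal{O}_{E_\mathcal{Y}}(e-1)$ inside $\mathcal{A}_{-1}^{\perp}$, relative over $C$. Then the biorthogonality relation follows from the standard characterization of dual collections. Throughout, $f_E:E_\mathcal{Y}\to C$ is the weighted hypersurface (complete intersection) fibration. Every $\operatorname{Ext}$ is read relative to $C$, via $f_{E*}\mathcal{H}om$. This is how $\mathcal{O}_C$ appears on the diagonal instead of $\mathrm{k}$.

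\textbf{Step 1: the exceptional objects are relatively exceptional over $C$.} For $0\le i,j<e$, I would show that $f_{E*}\mathcal{H}om(\mathcal{O}_{E_\mathcal{Y}}(i),\mathcal{O}_{E_\mathcal{Y}}(j))$ equals $\mathcal{O}_C$ for $i=j$. For $j<i$ it vanishes, and for $j>i$ it is concentrated in degree $0$. This comes from the cohomology of $\mathcal{O}(m)$ on a weighted complete intersection with Fano index $e$, for $-e<m<e$. One uses the Koszul resolution given by the measured condition (the leading terms form a regular sequence), together with Bott vanishing on the weighted projective bundle. This is the input that makes Orlov's decomposition \cite{Orlov2009} relative over $C$.

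\textbf{Step 2: characterize $\mathcal{D}_j$ as the left dual objects.} Set $\mathcal{B}:=\langle \mathcal{O}_{E_\mathcal{Y}},\dots,\mathcal{O}_{E_\mathcal{Y}}(e-1)\rangle$. The dual object $\mathcal{D}_j$ is the unique object of $\mathcal{B}$ (up to twist by $C$) with two properties. First, $\mathcal{D}_j\in\langle\mathcal{O}(j),\dots,\mathcal{O}(e-1)\rangle$. Second, $\mathcal{D}_j$ is right orthogonal to $\mathcal{O}(j+1),\dots,\mathcal{O}(e-1)$ with $f_{E*}\mathcal{H}om(\mathcal{O}(j),\mathcal{D}_j)=\mathcal{O}_C$. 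In other words, $\mathcal{D}_j$ is the left mutation of $\mathcal{O}(j)$ through $\langle\mathcal{O}(j+1),\dots,\mathcal{O}(e-1)\rangle$.

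I would then check that the $\mathcal{D}_j$ defined through the Postnikov system \eqref{pullbackcompu} have these properties. The graded pieces of the filtration come from $P_j/P_{j+1}$. The successive cones of the filtered resolution are built from sums of $\mathcal{O}_{E_\mathcal{Y}}(j-\mathrm{a}_l)$, truncated at degree $j$. Comparing the filtered Koszul complexes at levels $j$ and $j+1$ should show that $\mathcal{D}_j$ lies in the claimed subcategory and satisfies the required orthogonality. Concretely, I would compare $\mathcal{D}_j$ with the restriction to $E_\mathcal{Y}$ of $\Omega$-type syzygy sheaves twisted by $\mathcal{O}(j)$, as in Beilinson's resolution of the diagonal.

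\textbf{Step 3: biorthogonality.} Granting Step 2, the case $i<j$ follows from semiorthogonality, since $\mathcal{D}_j\in\langle\mathcal{O}(j),\dots\rangle$ and one needs $\operatorname{Hom}$ from $\mathcal{O}(i)$. More precisely, $\operatorname{Ext}^*(\mathcal{O}(i),\mathcal{D}_j)$ vanishes for $i<j$ once $\mathcal{D}_j$ is taken in the left orthogonal of $\langle \mathcal{O}(0),\dots,\mathcal{O}(j-1)\rangle$, by the dual decomposition $\langle\mathcal{A}_{-1},\mathcal{D}_{e-1},\dots,\mathcal{D}_0\rangle$. The case $i>j$ is exactly the right orthogonality in Step 2. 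For $i=j$, project $\mathcal{D}_j$ onto its $\mathcal{O}(j)$-component. All other components contribute nothing by Step 1, so the answer is $f_{E*}\mathcal{H}om(\mathcal{O}(j),\mathcal{O}(j))=\mathcal{O}_C$.

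The main obstacle is Step 2: proving that the concretely constructed cones $\mathcal{D}_j$ from the filtered syzygy sequences really coincide with the abstract mutations. I would first check this when $I=0$, where $E_\mathcal{Y}$ is a weighted projective bundle and $\mathcal{D}_j$ is a twisted $\Omega^j(j)$. I would then pass to the complete intersection using the measured condition, which guarantees that restricting the Koszul complex stays exact.
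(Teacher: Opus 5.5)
Your overall strategy is the same as the paper's. You work in the complement of $\mathcal{A}_{-1}$, where $\mathcal{O}_{E_\mathcal{Y}},\dots,\mathcal{O}_{E_\mathcal{Y}}(e-1)$ is a full relatively exceptional collection, and read off the formula from biorthogonality with the dual collection. However, Step~2 places $\mathcal{D}_j$ on the wrong side of the collection, and with that characterization the claimed vanishing is false.

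\textbf{Why Step~2 fails.} Use the semiorthogonal decomposition $\langle\mathcal{O}(j),\mathcal{O}(j+1),\dots,\mathcal{O}(e-1)\rangle$. Any object of this subcategory with $\operatorname{Ext}^*(\mathcal{O}(k),-)=0$ for all $j<k<e$ already lies in $f_E^*\mathrm{D}^b(C)\otimes\mathcal{O}(j)$. Your normalization $f_{E*}\mathcal{H}om(\mathcal{O}(j),\mathcal{D}_j)=\mathcal{O}_C$ then forces it to equal $\mathcal{O}(j)$. Equivalently, the left mutation of $\mathcal{O}(j)$ through $\langle\mathcal{O}(j+1),\dots,\mathcal{O}(e-1)\rangle$ is trivial, because $\operatorname{Ext}^*(\mathcal{O}(k),\mathcal{O}(j))=0$ for $k>j$ already. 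So Step~2 yields $\mathcal{D}_j=\mathcal{O}(j)$. For $i<j$ this gives $\operatorname{Ext}^0(\mathcal{O}(i),\mathcal{D}_j)=f_{E*}\mathcal{O}(j-i)$, which is nonzero in general; on $\mathbb{P}^n$ it is $S^{j-i}V$.

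\textbf{Consequences for Step~3.} Each part of Step~3 breaks:
\begin{itemize}
\item For $i<j$: semiorthogonality only kills $\operatorname{Ext}$ from higher twists to lower ones. It says nothing about $\operatorname{Ext}^*(\mathcal{O}(i),-)$ on $\langle\mathcal{O}(j),\dots\rangle$.
\item Your ``more precisely'' clause imposes orthogonality to $\mathcal{O},\dots,\mathcal{O}(j-1)$. That is the right condition, but it contradicts Step~2.
\item For $i=j$: the components along $\mathcal{O}(k)$ with $k>j$ are not killed. Step~1 only gives vanishing of $\operatorname{Ext}^*(\mathcal{O}(j),\mathcal{O}(k))$ for $k<j$.
\end{itemize}
Your own test case exposes the problem. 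On $\mathbb{P}^n$, $\Omega^j(j)$ is quasi-isomorphic to $\wedge^jV\otimes\mathcal{O}\to\cdots\to V\otimes\mathcal{O}(j-1)\to\mathcal{O}(j)$. So the dual object lies in $\langle\mathcal{O},\dots,\mathcal{O}(j)\rangle$, not in $\langle\mathcal{O}(j),\dots,\mathcal{O}(n)\rangle$; for example $\mathcal{D}_n=\Omega^n(n)[n]=\mathcal{O}(-1)[n]$.

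\textbf{The repair.} Use the definition the paper uses elsewhere, $\mathcal{D}_j=\mathbb{L}_{\langle\mathcal{O},\dots,\mathcal{O}(j-1)\rangle}\mathcal{O}(j)$. That is, $\mathcal{D}_j$ is the cone of $\alpha_j\alpha_j^!\mathcal{O}(j)\to\mathcal{O}(j)$, where $\alpha_j$ is the inclusion of $\langle\mathcal{O},\dots,\mathcal{O}(j-1)\rangle$. Then:
\begin{itemize}
\item $\mathcal{D}_j\in\langle\mathcal{O},\dots,\mathcal{O}(j)\rangle$, so the case $i>j$ follows from semiorthogonality.
\item $\operatorname{Ext}^*(\mathcal{O}(i),\mathcal{D}_j)=0$ for $i<j$ by construction of the mutation.
\item For $i=j$, apply $\operatorname{Ext}^*(\mathcal{O}(j),-)$ to the defining triangle. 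The term $\alpha_j\alpha_j^!\mathcal{O}(j)$ lies in lower twists and contributes nothing, so $\operatorname{Ext}^*(\mathcal{O}(j),\mathcal{D}_j)\simeq\operatorname{End}^*(\mathcal{O}(j))=\mathcal{O}_C$ by your Step~1.
\end{itemize}
This is exactly the classical argument the paper invokes after passing to the complement of $\mathcal{A}_{-1}$.

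Finally, the comparison of the Postnikov cones with the mutations, which you flag as the main obstacle, is not needed for this lemma. Here $\mathcal{D}_j$ is given as the dual collection, and the paper asserts that identification separately rather than proving it in this lemma.
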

\begin{proof}
Since $\mathcal{D} := \mathrm{D}^b(E_\mathcal{Y}) / \mathcal{A}_{-1}$ is a full triangulated subcategory of $\mathrm{D}^b(E_\mathcal{Y})$, we have
\[
\operatorname{Ext}^*_{\mathrm{D}^b(E_\mathcal{Y})}
\bigl(\mathcal{O}_{E_\mathcal{Y}}(i), \mathcal{D}_j\bigr)
=
\operatorname{Ext}^*_{\mathcal{D}}
\bigl(\mathcal{O}_{E_\mathcal{Y}}(i), \mathcal{D}_j\bigr).
\]
Moreover, $\mathcal{O}_{E_\mathcal{Y}}(i)$ and $\mathcal{D}_j$ form a full (dual) exceptional (pre-tilting) collection in $\mathcal{D}$, result follows from the classical argument e.g. \cite[Proposition 3.1]{Hao2025a}.
\end{proof}
\begin{lemma}
Assume $e \ge r$, we have
\[
\pi_* \varpi^!
\Bigl(
\varpi_* \mathcal{R}\pi^* (\kappa_*\mathcal{O}_C \otimes \nu^i)
\Bigr)
\simeq
\kappa_*\mathcal{O}_C \otimes \nu^i,
\quad \text{for } 0 \le i < r-1.
\]
\end{lemma}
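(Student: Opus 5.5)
We compute $\varpi_*\mathcal{R}\pi^*(\kappa_*\mathcal{O}_C\otimes\nu^i)$ from the filtration \eqref{pullbackcompu1}, pull it back along $\varpi^!$, and push forward along $\pi_*$. The idea is to use the adjunction $\varpi_*\mathcal{R}\pi^*\dashv\pi_*\varpi^!$ from the preceding lemma together with full faithfulness of $\mathcal{R}\pi^*$, while controlling the cones that come from the twisted pieces $\iota_*\mathcal{D}_{sr+i}$.

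\textbf{Step 1: unit and Yoneda.} By the adjunction, there is a unit map
\[
\eta:\kappa_*\mathcal{O}_C\otimes\nu^i\to\pi_*\varpi^!\varpi_*\mathcal{R}\pi^*(\kappa_*\mathcal{O}_C\otimes\nu^i).
\]
To show $\eta$ is an isomorphism, we test it against the spanning class of Lemma~\ref{ptspanningclass}: the objects $\mathrm{k}_x\otimes\nu^j$ for stacky closed points $x$. Applying the adjunction again to the source $\mathrm{k}_x\otimes\nu^j$ reduces the claim to showing that
\[
\operatorname{Hom}_{\mathrm{D}^b(\mathcal{Y})/\mathcal{K}_{-1}}\bigl(\varpi_*\mathcal{R}\pi^*(\mathrm{k}_x\otimes\nu^j),\,\varpi_*\mathcal{R}\pi^*(\kappa_*\mathcal{O}_C\otimes\nu^i)\bigr)
\simeq\operatorname{Hom}_{\mathrm{D}^b(\mathcal{X})}\bigl(\mathrm{k}_x\otimes\nu^j,\,\kappa_*\mathcal{O}_C\otimes\nu^i\bigr).
\]
Equivalently, $\varpi_*$ should be fully faithful on the relevant image of $\mathcal{R}\pi^*$ after passing to the quotient by $\mathcal{K}_{-1}^+$ and $\mathcal{K}_{-1}$.

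\textbf{Step 2: explicit description of $\varpi_*\mathcal{R}\pi^*(\kappa_*\mathcal{O}_C\otimes\nu^i)$.} By \eqref{pullbackcompu1}, modulo $\mathcal{K}_{-1}$ this object has a finite filtration with factors $\iota_*\mathcal{D}_{i}[1],\iota_*\mathcal{D}_{r+i}[1],\dots$. Since $\mathcal{D}_m\in\mathcal{A}_{-1}$ for $m\ge e$, only the indices $m=sr+i<e$ survive.

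\textbf{Step 3: apply $\varpi^!$ and compute $\pi_*$ factor by factor.} By Proposition~\ref{derivedroot}, $\varpi^!\iota_*\mathcal{D}_m$ has a filtration with factors $\iota^+_*\mathcal{D}_m(-l)\otimes\nu^l$ for $0\le l<r$. Applying $\pi_*$, the exceptional divisor $\mathrm{M}=[E_\mathcal{Y}/\mu_r]$ maps to $[C/\mu_r]$, so
\[
\pi_*\iota^+_*\bigl(\mathcal{D}_m(-l)\otimes\nu^l\bigr)=\kappa_*\operatorname{R\Gamma}_{E/C}\bigl(\mathcal{D}_m(-l)\bigr)\otimes\nu^{l}.
\]
By Lemma~\ref{orddualcomp}, $\operatorname{R\Gamma}_{E/C}(\mathcal{D}_m(-l))=\operatorname{Ext}^*(\mathcal{O}_E(l),\mathcal{D}_m)$ up to the twist identification. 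This equals $\mathcal{O}_C$ when $l=m$ and vanishes otherwise, as long as $l,m<e$, which holds because $l<r\le e$.

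Hence only the factor with $m=i$, $l=i$ survives. This is exactly where the hypotheses $i<r-1$ and $e\ge r$ are needed, and it yields $\kappa_*\mathcal{O}_C\otimes\nu^i$ after accounting for the shift and the representation bookkeeping: the $\nu^{-i}$ from \eqref{pullbackcompu} cancels against $\nu^l$. The remaining factors in $\mathcal{K}_{-1}$ are killed by $\pi_*\varpi^!$, by the earlier lemma showing this functor is well-defined.

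\textbf{Step 4: identify the map.} Finally, we check that the surviving factor is reached by $\eta$, i.e.\ that the natural map is nonzero on $\kappa_*\mathcal{O}_C\otimes\nu^i$. This follows from the identity $\pi_*\mathcal{R}\pi^*=\mathrm{id}$ together with $\varpi_*\varpi^!\simeq\mathrm{id}$ on $\Upsilon$.

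\textbf{Main obstacle.} I expect the hardest part to be the twist and representation bookkeeping in Step 3: matching the $\mathcal{O}(-l)\otimes\nu^l$ factors with the indices in Lemma~\ref{orddualcomp}. One must ensure that no extra $\mathcal{O}_C$ contributions appear for $l\ne m$ with $l\equiv m\pmod r$. I would first settle the case $r=1$ by direct computation, then argue via the $\mu_r$-isotypic decomposition.
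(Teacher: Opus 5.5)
Your Steps~2--3 are the paper's proof. The paper filters $\varpi_*\mathcal{R}\pi^*(\kappa_*\mathcal{O}_C\otimes\nu^i)$ by the factors $\iota_*\mathcal{D}_{\lambda r+i}$ and discards those with $\lambda r+i\ge e$. It then writes $\varpi^!$ as $\varpi^*(-)\otimes\mathcal{M}^{r-1}$, which gives factors $\iota^+_*\mathcal{D}_{\lambda r+i}(-r+1+j)\otimes\nu^{r-1-j}$ (your $l=r-1-j$). Finally it uses Lemma~\ref{orddualcomp} to see that only $\lambda=0$, $l=i$ survives. The lemma only asserts an abstract isomorphism, so this computation is the entire proof, and the scaffolding around it should go.

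Step~1 should be dropped. The Hom-isomorphism on a spanning class that you reduce to is exactly the full faithfulness of $\varpi_*\mathcal{R}\pi^*$. The paper deduces that \emph{from} this lemma in Corollary~\ref{gencor}, so it cannot be used here, and you never discharge it independently.

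Step~4 is not justified as written. $\varpi_*\varpi^!\simeq\mathrm{id}$ concerns the counit on $\mathcal{Y}$ and gives no control over $\varpi^!\varpi_*$. Also, a merely nonzero endomorphism of $\kappa_*\mathcal{O}_C\otimes\nu^i$ need not be invertible when $\dim C>0$.

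Smaller points:
\begin{itemize}
\item The bound $i<r-1$ is never used; the count works for every $0\le i\le r-1$ once $e\ge r$.
\item No $\nu^{-i}$ cancels against $\nu^l$. After $\varpi_*$ the factors $\iota_*\mathcal{D}_{sr+i}$ carry no character, and the $\nu^i$ in the answer is just the $\varpi^!$-index $l=i$.
\item Your worry about extra contributions with $l\equiv m\pmod{r}$ is moot. Lemma~\ref{orddualcomp} already gives vanishing for all $l\ne m$ with $l,m<e$, so no reduction to $r=1$ is needed.
\end{itemize}
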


\begin{proof}
We know from (\ref{pullbackcompu1}) that in $\mathrm{D}^b(\mathcal{Y})/\mathcal{K}_{-1}$, we have
$\varpi_*\mathcal{R}\pi^*(\mathcal{O}_C \otimes \nu^i)
\simeq F_{(i)}^s([\mathcal{O}_{C}]^+)$
for any $sr+i \ge e$, and we have a filtration of $F_{(i)}^s([\mathcal{O}_{C}]^+)$ given by
\[
\begin{tikzcd}[column sep=0.5em]
  F_{(i)}^s([\mathcal{O}_{C}]^+)\arrow{rr}&& F_{(i)}^{s-1}\arrow{dl}&&  \\
& \iota_{*}\mathcal{D}_{sr+i}[1]\arrow[ul,dashed,"\Delta"]&&
\end{tikzcd}
\quad\cdots\quad
\begin{tikzcd}[column sep=0.5em]
 & F_{(i)}^2\arrow{rr}&& F_{(i)}^1\arrow{rr}\arrow{dl}&& 0\arrow{dl} \\
 && \iota_{*}\mathcal{D}_{r+i}[1]\arrow[ul,dashed,"\Delta"] && \iota_{*}\mathcal{D}_{i}[1]\arrow[ul,dashed,"\Delta"]
\end{tikzcd}
\]
For $\mathcal{D}_{\lambda r+i}$ with $\lambda r+i\ge e$, we have $\mathcal{D}_{\lambda r+i}\in \mathcal{A}_{-1}$ and $\pi_*\varpi^!\iota_*\mathcal{D}_{\lambda r+i}=0$, so these terms do not contribute any summands. Hence we only need to consider the factors $\mathcal{D}_{\lambda r+i}$ with $\lambda r+i<e$. We compute $\varpi^*\iota_*\mathcal{D}_{\lambda r+i}$ using Lemma \ref{derivedroot}, which admits a second filtration whose successive factors are $\iota^+_*\mathcal{D}_{\lambda r+i}(j)\otimes \nu^{-j}$ for $0\le j\le r-1$. Together with an additional tensor factor $\mathcal{M}^{r-1}$, this yields factors $\iota^+_*\mathcal{D}_{\lambda r+i}(-r+1+j)\otimes \nu^{r-1-j}$ with factors for the pairs $(\lambda,j)$ with $\lambda r+i<e$ and $0\le j\le r-1$. Then pushing forward along $\pi_*$, we compute via Lemma \ref{orddualcomp}:
\[
\pi_* \iota^+_* \mathcal{D}_{\lambda r+i}(-r+1+j)\otimes \nu^{r-1-j}
\simeq \operatorname{Ext}^*_{E_\mathcal{Y}}\big(\mathcal{O}_{E_\mathcal{Y}}(r-1-j), \mathcal{D}_{\lambda r+i}\big)\otimes \nu^{r-1-j}
\simeq \mathcal{O}_C \cdot \delta_{\lambda r+i,\, r-1-j}\otimes \nu^{r-1-j}.
\]
Hence, by the non-vanishing condition and the range of parameters, the only contribution occurs when $\lambda=0$ and $i+j=r-1$. Therefore $j=r-1-i$, and the resulting contribution is only $\mathcal{O}_C\otimes \nu^i$.
\end{proof}
We obtain the following corollaries:
\begin{corollary}\label{gencor}
Assume $e \ge r$. Then the functor $\varpi_* \mathcal{R}\pi^*$ is fully faithful.
\end{corollary}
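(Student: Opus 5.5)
Since $\varpi_*\mathcal{R}\pi^*$ has the right adjoint $\pi_*\varpi^!$ (by the adjunction lemma above, valid once $e\ge r$), full faithfulness is equivalent to the unit $\eta\colon \mathrm{id}\to \pi_*\varpi^!\varpi_*\mathcal{R}\pi^*$ being an isomorphism. I will reduce this to the skyscraper-type objects $\kappa_*\mathcal{O}_C\otimes\nu^i$, where the preceding lemma already computes $\pi_*\varpi^!\varpi_*\mathcal{R}\pi^*(\kappa_*\mathcal{O}_C\otimes\nu^i)\simeq\kappa_*\mathcal{O}_C\otimes\nu^i$.

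The first step is to show that $\eta$ is an isomorphism away from the singular center. Over $\mathcal{X}\setminus[C/\mu_r]$ the morphisms $\pi$ and $\varpi$ are isomorphisms (respectively trivial), so there $\pi_*\varpi^!\varpi_*\mathcal{R}\pi^*\simeq\mathrm{id}$. The cone of $\eta$ is therefore supported set-theoretically on $C$. I will work formally locally at a closed point of $C$, as in the local set-up of this subsection, so that everything is $\mu_r$-equivariant over a complete local ring.

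The second step uses a spanning class. By Lemma~\ref{ptspanningclass}, the objects $\mathrm{k}_x\otimes\nu^i$ over stacky points form a spanning class for $\mathrm{D}^b(\mathcal{X})$. It therefore suffices to check that $\operatorname{Hom}^*(\mathrm{k}_x\otimes\nu^j,\mathrm{Cone}(\eta))=0$ for all $j$ and all $x\in C$. Equivalently, it suffices to show that the map
\[
\operatorname{Hom}^*(G_1,G_2)\to\operatorname{Hom}^*(\varpi_*\mathcal{R}\pi^*G_1,\varpi_*\mathcal{R}\pi^*G_2)
\]
is bijective for $G_1,G_2$ ranging over a generating class of objects supported on $C$. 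Those supported objects are generated by $\kappa_*\mathcal{O}_C\otimes\nu^i$ after tensoring with sheaves pulled back from $C$, since $C$ is smooth and its points are cut out by the projection formula. For $G_1=\kappa_*\mathcal{O}_C\otimes\nu^i$, the adjunction gives
\[
\operatorname{Hom}(\varpi_*\mathcal{R}\pi^*G_1,\varpi_*\mathcal{R}\pi^*G_2)\simeq\operatorname{Hom}(G_1,\pi_*\varpi^!\varpi_*\mathcal{R}\pi^*G_2).
\]
The preceding lemma identifies the right-hand side with $\operatorname{Hom}(G_1,G_2)$, compatibly with $\eta$. I will check this compatibility by tracing the filtration \eqref{pullbackcompu1}: the surviving factor $\iota_*\mathcal{D}_i$ maps to $\mathcal{O}_C\otimes\nu^i$ exactly through the unit.

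Finally, I extend from the generators to all of $\mathrm{D}^b(\mathcal{X})$ by the five lemma on triangles, using that both functors are triangulated. The class of objects on which $\eta$ is an isomorphism is thick. It contains every object with cone of $\eta$ supported on $C$ and orthogonal to all $\kappa_*\mathcal{O}_C\otimes\nu^i$. Such a cone vanishes, because its local Ext against $\mathrm{k}_x\otimes\nu^j$ is zero (Lemma~\ref{spanlemma1}).

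The main obstacle is the stated range $0\le i<r-1$ in the preceding lemma: the case $i=r-1$ is not covered. I would redo that computation for $i=r-1$ with $j=0$. The constraint $\lambda r+i<e$ with $e\ge r$ still leaves only $\lambda=0$, and the condition $i+j=r-1$ then forces $j=0$, which is allowed. So the same argument should go through. A second point to verify is the naturality of the identification with the unit, rather than just an abstract isomorphism. I would settle it by noting that both sides are computed from the same filtration, whose first step is induced by the unit of the adjunction.
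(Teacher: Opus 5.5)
Your ingredients match the paper's: the adjunction $\varpi_*\mathcal{R}\pi^*\dashv\pi_*\varpi^!$, the computation $\pi_*\varpi^!\varpi_*\mathcal{R}\pi^*(\kappa_*\mathcal{O}_C\otimes\nu^i)\simeq\kappa_*\mathcal{O}_C\otimes\nu^i$, and a spanning class. However, your reduction from the generators to all of $\mathrm{D}^b(\mathcal{X})$ does not close.

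Write $F=\varpi_*\mathcal{R}\pi^*$. The lemma computes $\pi_*\varpi^!F$ only on the objects $\kappa_*\mathcal{O}_C\otimes\nu^j$. So what your argument actually yields is bijectivity of $F$ on $\operatorname{Hom}^*(G_1,G_2)$ with both $G_1,G_2$ supported on $C$. After the five lemma, this says the unit $\eta_G$ is an isomorphism for $G$ in the thick subcategory generated by the $\kappa_*\mathcal{O}_C\otimes\nu^i$.

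That subcategory is not $\mathrm{D}^b(\mathcal{X})$: it contains neither $\mathcal{O}_{\mathcal{X}}$ nor any nonzero maximal Cohen--Macaulay sheaf, since these have full support. For such $G$ your first step does show that $\mathrm{Cone}(\eta_G)$ is supported on $C$. To kill it you still need $\operatorname{Hom}^*(\kappa_*\mathcal{O}_C\otimes\nu^i,\mathrm{Cone}(\eta_G))=0$. That is exactly bijectivity of $F$ on $\operatorname{Hom}^*(\kappa_*\mathcal{O}_C\otimes\nu^i,G)$ with $G$ arbitrary.

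Your ``equivalently'' silently replaces arbitrary $G_2$ by $G_2$ supported on $C$. The right adjunction only rewrites the missing statement as one about $\operatorname{Hom}^*(G_1,\pi_*\varpi^!FG)$, and $\pi_*\varpi^!FG$ is precisely what is unknown. The sentence ``the class on which $\eta$ is an isomorphism contains every object whose cone is supported on $C$ and orthogonal to the $\kappa_*\mathcal{O}_C\otimes\nu^i$'' is true, but you never verify the orthogonality hypothesis for objects not supported on $C$.

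The missing ingredient is the left adjoint. For $e\ge r$ the paper has the adjoint triple $\pi_!\varpi^*\dashv F\dashv\pi_*\varpi^!$, which is what ``$\varpi_*\mathcal{R}\pi^*$ is admissible'' refers to. This is exactly what Bridgeland's criterion (Huybrechts, Prop.~1.49) uses, with $\Omega=\{\kappa_*\mathcal{O}_C\otimes\nu^l\}$. The argument runs as follows.
\begin{itemize}
\item Bijectivity of $F$ on pairs in $\Omega$ makes the cone of the counit $\pi_!\varpi^*F(a)\to a$ left-orthogonal to $\Omega$ for each $a\in\Omega$, so the counit is an isomorphism.
\item Hence $\operatorname{Hom}^*(a,G)\to\operatorname{Hom}^*(Fa,FG)\simeq\operatorname{Hom}^*(\pi_!\varpi^*Fa,G)$ is bijective for every $G$.
\item Then $\mathrm{Cone}(\eta_G)$ is right-orthogonal to $\Omega$ and therefore vanishes.
\end{itemize}
In the formal local setting $\Omega$ is already spanning for all of $\mathrm{D}^b(\mathcal{X})$: the residue field has a Koszul resolution by $\mathcal{O}_C$, and the $\mathrm{k}\otimes\nu^l$ span. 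So your support step away from $C$ is not needed.

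Your other remarks hold, with one correction:
\begin{itemize}
\item The lemma is indeed needed for $i=r-1$, where $j=0$. However, $\lambda=0$ is forced by the Kronecker delta $\delta_{\lambda r+i,\,r-1-j}$ with $r-1-j\le r-1$, not by $\lambda r+i<e$; that inequality allows $\lambda\ge 1$ once $e\ge 2r$.
\item Your worry that the isomorphism must be the one induced by the unit, not an abstract one, is legitimate. The criterion needs the map induced by $F$ itself to be bijective.
\end{itemize}
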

\begin{proof}
We know that $\varpi_* \mathcal{R}\pi^*$ is admissible. Since we assume that $\mathcal{O}_C$ is regular, the residue field $\mathrm{k}:=S/\mathfrak{m}$ admits a Koszul resolution generated by direct sums of $\mathcal{O}_C$, and hence $\mathcal{O}_C\otimes \nu^l$ for $0\le l<r-1$ forms a spanning class $\Omega$ for $\mathrm{D}^b(\mathcal{X})$. The full faithfulness of $\varpi_* \mathcal{R}\pi^*$ is equivalent to
\[
\operatorname{Ext}^*_{\mathrm{D}^b(\mathcal{Y})/\mathcal{K}_{-1}}\big(\varpi_* \mathcal{R}\pi^* a,\varpi_* \mathcal{R}\pi^* b\big)
=
\operatorname{Ext}^*_{\mathrm{D}^b(\mathcal{X})}(a,b)
\quad \text{for all } a,b\in \Omega,
\]
by \cite[Proposition 1.49]{Huybrechts}. Moreover, we have
\[
\operatorname{Ext}^*_{\mathrm{D}^b(\mathcal{Y})/\mathcal{K}_{-1}}\big(\varpi_* \mathcal{R}\pi^* a,\varpi_* \mathcal{R}\pi^* b\big)
\simeq
\operatorname{Ext}^*_{\mathrm{D}^b(\mathcal{X})}\big(a,\pi_* \varpi^! \varpi_* \mathcal{R}\pi^* b\big)
=
\operatorname{Ext}^*_{\mathrm{D}^b(\mathcal{X})}(a,b),
\]
so the conclusion follows.
\end{proof}
\begin{corollary}\label{gencor1}
In this situation, there are natural isomorphisms
\[
\pi_* \varpi^! \varpi_* \mathcal{R}\pi^* \simeq
\pi_! \varpi^* \varpi_* \mathcal{R}\pi^* \simeq \mathrm{id}.
\]
\end{corollary}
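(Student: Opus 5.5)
The plan is to derive both isomorphisms formally from the adjoint triple
\[
\pi_!\varpi^* \dashv \varpi_*\mathcal{R}\pi^* \dashv \pi_*\varpi^!
\]
established above (for $e\ge r$) together with the full faithfulness of $\Phi:=\varpi_*\mathcal{R}\pi^*$ from Corollary~\ref{gencor}. The standard categorical fact is this: for an adjunction $L\dashv \Phi\dashv R$, the functor $\Phi$ is fully faithful if and only if the unit $\mathrm{id}\to R\Phi$ is an isomorphism, if and only if the counit $L\Phi\to\mathrm{id}$ is an isomorphism. So the first step is to check that the unit $\eta_F\colon F\to \pi_*\varpi^!\varpi_*\mathcal{R}\pi^*F$ induces, for every $a\in\mathrm{D}^b(\mathcal{X})$, the composite
\[
\operatorname{Hom}(a,F)\xrightarrow{\Phi}\operatorname{Hom}(\Phi a,\Phi F)\simeq \operatorname{Hom}(a,\pi_*\varpi^!\Phi F).
\]
This composite is exactly post-composition with $\eta_F$, and it is bijective by Corollary~\ref{gencor}. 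Yoneda then gives that $\eta_F$ is an isomorphism, naturally in $F$. The dual argument with $\operatorname{Hom}(F,a)$ and the left adjoint $\pi_!\varpi^*$ shows that the counit $\pi_!\varpi^*\Phi F\to F$ is an isomorphism. Together these yield the two natural isomorphisms with $\mathrm{id}$.

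The main point to verify carefully is that the adjunction isomorphisms are natural in both variables, so that Yoneda applies. In the proof of the adjunction lemma the correspondence is built from roofs $\mathcal{R}^{s\gg0}\pi^*F\to\mathcal{R}\pi^*F$ with cone in $\mathcal{K}^+_{-1}$, and it is only defined for $s\gg0$. I would therefore check that changing $s$ along the inverse system of Principle~\ref{P1} does not alter the class in the Verdier quotient, since the cones of the transition maps lie in $\mathcal{K}^+_{-1}$ and $\varpi_*$ sends them into $\mathcal{K}_{-1}$. With that checked, naturality reduces to naturality of $\varpi_*\dashv\varpi^!$ and of $\pi^*\dashv\pi_*$.

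As a consistency check, on the spanning class $\kappa_*\mathcal{O}_C\otimes\nu^i$ the preceding lemma already computes $\pi_*\varpi^!\Phi(\kappa_*\mathcal{O}_C\otimes\nu^i)\simeq\kappa_*\mathcal{O}_C\otimes\nu^i$. I would confirm that this identification is the unit map, which is again a matter of the roof description. The analogous computation for $\pi_!\varpi^*$ uses the filtration of $\varpi^*\iota_*\mathcal{D}_{\lambda r+i}\otimes\mathcal{O}(\mathcal{K}_{\mathcal{Z}/\mathcal{X}})$ from Proposition~\ref{derivedroot} and Lemma~\ref{orddualcomp}. There the twist by $\mathcal{O}(\mathcal{K}_{\mathcal{Z}/\mathcal{X}})$ shifts the surviving index in the other direction, and the range of parameters again leaves a single contribution $\mathcal{O}_C\otimes\nu^i$.
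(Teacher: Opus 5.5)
Your argument is correct and matches what the paper intends: the paper gives no written proof of Corollary~\ref{gencor1}, treating it as an immediate consequence of the adjoint triple $\pi_!\varpi^*\dashv\varpi_*\mathcal{R}\pi^*\dashv\pi_*\varpi^!$ and the full faithfulness from Corollary~\ref{gencor}, which is exactly your unit/counit criterion. Once those two inputs are granted, the roof-naturality discussion and the spanning-class consistency check are optional.
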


\begin{lemma}\label{mutationlemma}
For any $1 \leq k \leq e-1$, we have :
\[
\langle \mathcal{D}_{e-1}, \dots, \mathcal{D}_{e-k} \rangle
\simeq
\langle \mathcal{O}_{E_\mathcal{Y}}(-k), \dots, \mathcal{O}_{E_\mathcal{Y}}(-1) \rangle.
\]
in $\mathrm{D}^b(E_\mathcal{Y})/\mathcal{A}_{-1}$.
\end{lemma}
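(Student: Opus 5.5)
We work entirely in the quotient $\mathcal{D}:=\mathrm{D}^b(E_\mathcal{Y})/\mathcal{A}_{-1}$. There the images of $\mathcal{O}_{E_\mathcal{Y}},\dots,\mathcal{O}_{E_\mathcal{Y}}(e-1)$ form a full exceptional collection, and $\mathcal{D}_{e-1},\dots,\mathcal{D}_0$ is its dual collection, characterized by Lemma~\ref{orddualcomp}. The idea is to recognize both sides of the claimed identity as one and the same orthogonal complement inside $\mathcal{D}$. First, the definition of the dual collection (equivalently, Lemma~\ref{orddualcomp} together with fullness) gives
\[
\langle \mathcal{D}_{e-1},\dots,\mathcal{D}_{e-k}\rangle
=\langle \mathcal{O}_{E_\mathcal{Y}},\dots,\mathcal{O}_{E_\mathcal{Y}}(e-k-1)\rangle^{\perp}
\]
in $\mathcal{D}$. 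Here we use that $\operatorname{Ext}^*(\mathcal{O}(i),\mathcal{D}_j)=0$ for $i\neq j$. The dual objects with indices $\ge e-k$ are exactly those orthogonal to the first $e-k$ line bundles, and the other inclusion follows from the semi-orthogonal decomposition $\mathcal{D}=\langle \mathcal{D}_{e-1},\dots,\mathcal{D}_0\rangle$.

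The second step is to show that the complement $\langle \mathcal{O}_{E_\mathcal{Y}},\dots,\mathcal{O}_{E_\mathcal{Y}}(e-k-1)\rangle^{\perp}$ is generated by $\mathcal{O}_{E_\mathcal{Y}}(-k),\dots,\mathcal{O}_{E_\mathcal{Y}}(-1)$. For this we use Orlov's hypersurface-fibration decompositions \cite{Orlov2009}. These hold for every twist of the window, in particular for
\[
\mathrm{D}^b(E_\mathcal{Y})=\langle \mathcal{A}_{-k-1},\mathcal{O}(-k),\dots,\mathcal{O}(e-k-1)\rangle .
\]
The window $\mathcal{O}(-k),\dots,\mathcal{O}(e-k-1)$ is obtained from $\mathcal{O},\dots,\mathcal{O}(e-1)$ by moving the last $k$ terms to the front using the Serre functor, which on $E_\mathcal{Y}$ relative to $C$ is $-\otimes\mathcal{O}(-e)[\dim]$. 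It therefore suffices to verify that $\mathcal{A}_{-k-1}$ and $\mathcal{A}_{-1}$ have the same image, zero, in the quotient. Concretely, one must check that $\mathcal{A}_{-1}$ is preserved by the relevant mutations, that is, $\mathcal{A}_{-1}=\mathcal{A}_{-k-1}$ up to the Serre twist by $\mathcal{O}(-e)$ composed with mutation. Once this holds, in $\mathcal{D}$ we obtain $\mathcal{D}=\langle \mathcal{O}(-k),\dots,\mathcal{O}(-1),\mathcal{O},\dots,\mathcal{O}(e-k-1)\rangle$. Hence $\langle\mathcal{O}(-k),\dots,\mathcal{O}(-1)\rangle$ is the right orthogonal of $\langle\mathcal{O},\dots,\mathcal{O}(e-k-1)\rangle$, and this matches the first step.

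The main obstacle is the second step: controlling $\mathcal{A}_{-1}$ under the window shift, so that the shifted decomposition descends to $\mathcal{D}$ with the twisted line bundles still semi-orthogonal there. My first attempt is to compute $\operatorname{Ext}^*(\mathcal{O}(i),\mathcal{O}(-j))$ for $0\le i\le e-k-1$ and $1\le j\le k$ via relative Serre duality on $E_\mathcal{Y}\to C$. This is $\operatorname{Ext}^{*}(\mathcal{O}(-j),\mathcal{O}(i-e))^\vee$ up to shift, and it vanishes because $-e<i-e-(-j)\le -1$ lies in the acyclic range of the weighted complete intersection fibers. This vanishing yields the semi-orthogonality directly in $\mathrm{D}^b(E_\mathcal{Y})$, and hence in $\mathcal{D}$. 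Generation in $\mathcal{D}$ then follows by induction on $k$: the left mutation of $\mathcal{O}(e-k)$ through $\langle\mathcal{O},\dots,\mathcal{O}(e-k-1)\rangle$ agrees with $\mathcal{O}(-k)$ modulo $\mathcal{A}_{-1}$, by the Koszul/Postnikov comparison already used in \eqref{pullbackcompu}.
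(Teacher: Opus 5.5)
Your first step is correct and matches the paper's argument. In $\mathcal{D}=\mathrm{D}^b(E_\mathcal{Y})/\mathcal{A}_{-1}$ one has $\mathcal{D}=\langle \mathcal{D}_{e-1},\dots,\mathcal{D}_{e-k},\mathcal{O},\dots,\mathcal{O}(e-k-1)\rangle$. So the lemma is equivalent to the decomposition $\mathcal{D}=\langle \mathcal{O}(-k),\dots,\mathcal{O}(-1),\mathcal{O},\dots,\mathcal{O}(e-k-1)\rangle$.

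Your vanishing $H^*(\mathcal{O}(-i-j))=0$ for $1\le i+j\le e-1$ gives the semi-orthogonal half of this. It descends to $\mathcal{D}$ because $\operatorname{Hom}^*(\mathcal{O}(i),\mathcal{A}_{-1})=0$ for $0\le i\le e-1$, so morphisms out of $\mathcal{O}(i)$ are unchanged by the Verdier quotient; you should say this explicitly. That only yields the inclusion $\langle\mathcal{O}(-k),\dots,\mathcal{O}(-1)\rangle\subseteq\langle\mathcal{D}_{e-1},\dots,\mathcal{D}_{e-k}\rangle$. The reverse inclusion needs generation, and both justifications you offer for it are false.

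\begin{itemize}
\item The reduction in your second paragraph needs $\mathcal{A}_{-k-1}=\mathcal{A}_{-1}\otimes\mathcal{O}(-k)$ to vanish in $\mathcal{D}$. This already fails for a quadric surface $E_\mathcal{Y}=\mathbb{P}^1\times\mathbb{P}^1$ ($e=2$, $k=1$). There $\mathcal{O}(-2,-1)$ lies in $\mathcal{A}_{-2}$ but not in $\mathcal{A}_{-1}$, since $H^2(\mathcal{O}(-3,-2))\neq 0$, so its image in $\mathcal{D}$ is nonzero.
\item The induction in your third paragraph claims that $\mathbb{L}_{\langle\mathcal{O},\dots,\mathcal{O}(e-k-1)\rangle}\mathcal{O}(e-k)$, which is $\mathcal{D}_{e-k}$ by definition, agrees with $\mathcal{O}(-k)$ modulo $\mathcal{A}_{-1}$. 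This fails for $k\ge 2$. On $\mathbb{P}^2$ ($e=3$, $\mathcal{A}_{-1}=0$, $k=2$) it would say $\mathcal{D}_1\simeq\mathcal{O}(-2)$ up to shift, but the Euler sequence gives $\mathcal{D}_1=\Omega(1)[1]$, which has rank two. Only the generated subcategories coincide, and that is the statement itself.
\end{itemize}

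The missing idea is to rotate by the relative Serre functor $-\otimes\mathcal{O}(-e)$ (up to shift) without mutating $\mathcal{A}_{-1}$. Move the block $\mathcal{O}(e-k),\dots,\mathcal{O}(e-1)$ from the right end of $\langle\mathcal{A}_{-1},\mathcal{O},\dots,\mathcal{O}(e-1)\rangle$ to the left end. This gives $\mathrm{D}^b(E_\mathcal{Y})=\langle\mathcal{O}(-k),\dots,\mathcal{O}(-1),\mathcal{A}_{-1},\mathcal{O},\dots,\mathcal{O}(e-k-1)\rangle$.

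Now kill the admissible middle component. The images of the two outer blocks generate $\mathcal{D}$, and they stay semi-orthogonal by the remark above. That is exactly the decomposition you need, and it is what the paper's middle equivalence in its chain of decompositions implicitly uses.

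Your decomposition $\langle\mathcal{A}_{-k-1},\mathcal{O}(-k),\dots,\mathcal{O}(e-k-1)\rangle$ is the wrong one to pass to the quotient. There $\mathcal{A}_{-k-1}=\mathbb{L}_{\langle\mathcal{O}(-k),\dots,\mathcal{O}(-1)\rangle}\mathcal{A}_{-1}$ is a mutated copy of $\mathcal{A}_{-1}$, not $\mathcal{A}_{-1}$ itself.
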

\begin{proof}
By the following equivalence
\[
\begin{aligned}
\langle \mathcal{D}_{e-1}, \dots, \mathcal{D}_{e-k}, \mathcal{D}_{e-k+1}, \dots, \mathcal{D}_0 \rangle
&\simeq
\langle \mathcal{D}_{e-1}, \dots, \mathcal{D}_{e-k}, \mathcal{O}_{E_{\mathcal{Y}}}, \dots, \mathcal{O}_{E_{\mathcal{Y}}}(e-k+1) \rangle \\
&\simeq
\langle \mathcal{O}_{E_{\mathcal{Y}}}(-k), \dots, \mathcal{O}_{E_{\mathcal{Y}}}(-1), \mathcal{O}_{E_{\mathcal{Y}}}, \dots, \mathcal{O}_{E_{\mathcal{Y}}}(e-k+1) \rangle \\
&\simeq
\langle \mathcal{O}_{E_{\mathcal{Y}}}(-k), \dots, \mathcal{O}_{E_{\mathcal{Y}}}(-1), \mathcal{D}_{e-k+1}, \dots, \mathcal{D}_0 \rangle,
\end{aligned}
\]
we obtain the conclusion.
\end{proof}
\begin{lemma}\label{genlemma}
In $\mathrm{D}^b(E_\mathcal{Y})/\mathcal{A}_{-1}$, we have
\[
\operatorname{Ext}^*\big(\iota_*\mathcal{O}_{E_\mathcal{Y}},\,\iota_*\mathcal{O}_{E_\mathcal{Y}}(-l)\big)
\simeq
\begin{cases}
\mathcal{O}_C, & l=0,\\
0, & 1 \le l \le e-r-1.
\end{cases}
\]
\end{lemma}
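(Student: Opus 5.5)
The plan is to work entirely inside the quotient $\mathcal{D}:=\mathrm{D}^b(E_\mathcal{Y})/\mathcal{A}_{-1}$, where the semi-orthogonal decomposition recalled in Lemma~\ref{orddualcomp} shows that $\mathcal{O}_{E_\mathcal{Y}},\dots,\mathcal{O}_{E_\mathcal{Y}}(e-1)$ is a full exceptional (relative to $C$) collection, with dual collection $\mathcal{D}_{e-1},\dots,\mathcal{D}_0$. I read the Ext groups in the statement as relative Ext sheaves over $C$, pushed forward along $\iota$; the $\iota_*$ plays no further role since everything is computed fibrewise over $C$ on the exceptional divisor.

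The case $l=0$ follows directly from exceptionality. Using Lemma~\ref{orddualcomp} with $i=j=0$ and the identification $\mathcal{D}_0\simeq\mathcal{O}_{E_\mathcal{Y}}$ modulo $\mathcal{A}_{-1}$ (the last dual object equals the last exceptional object), we get $\operatorname{Ext}^*(\mathcal{O}_{E_\mathcal{Y}},\mathcal{O}_{E_\mathcal{Y}})\simeq\mathcal{O}_C$. Alternatively, one can observe that the fibres are weighted complete intersections of Fano index $e>0$, so $\mathbf{R}f_{E*}\mathcal{O}_{E_\mathcal{Y}}=\mathcal{O}_C$.

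For $1\le l\le e-r-1$ I would use Lemma~\ref{mutationlemma} with $k=l$. In $\mathcal{D}$, the subcategory $\langle\mathcal{O}_{E_\mathcal{Y}}(-l),\dots,\mathcal{O}_{E_\mathcal{Y}}(-1)\rangle$ coincides with $\langle\mathcal{D}_{e-1},\dots,\mathcal{D}_{e-l}\rangle$. Hence $\mathcal{O}_{E_\mathcal{Y}}(-l)$ is built by finitely many cones from $\mathcal{D}_{e-1},\dots,\mathcal{D}_{e-l}$. By Lemma~\ref{orddualcomp}, $\operatorname{Ext}^*(\mathcal{O}_{E_\mathcal{Y}},\mathcal{D}_j)=0$ for every $j\neq0$. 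The indices involved satisfy $e-l\ge r+1\ge 1$, so $j=0$ never occurs, and the long exact sequences give the vanishing.

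The main obstacle is making sure the mutation in Lemma~\ref{mutationlemma} really stays inside the range where Lemma~\ref{orddualcomp} applies, and that the passage to $\mathcal{D}$ does not alter the Ext groups out of $\mathcal{O}_{E_\mathcal{Y}}$. The first point needs $e-l\ge1$ and $l\le e-1$, both implied by $l\le e-r-1$; the stronger bound involving $r$ is then only a safety margin, presumably for the later twisted applications with $\nu^{j}$ shifts. For the second point I would argue as in the proof of Lemma~\ref{orddualcomp}: $\mathcal{O}_{E_\mathcal{Y}}$ lies in ${}^\perp\mathcal{A}_{-1}$ by the semi-orthogonal decomposition $\langle\mathcal{A}_{-1},\mathcal{O}_{E_\mathcal{Y}},\dots\rangle$, so Hom out of it is unchanged by the Verdier quotient. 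If the mutation identification turns out to be too coarse, the fallback is a direct computation: use the Koszul-type resolution of the weighted complete intersection fibre to reduce to cohomology of $\mathcal{O}(-l)$ on weighted projective stacks, which vanishes for $0<l<\sum a_i$, and the adjustment by $\deg_{val}I$ stays within range exactly when $l<e$.
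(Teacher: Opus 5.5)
There is a genuine gap: you compute a different Ext group from the one the lemma is about. The statement is typeset ``in $\mathrm{D}^b(E_\mathcal{Y})/\mathcal{A}_{-1}$'', but the objects are the pushforwards $\iota_*\mathcal{O}_{E_\mathcal{Y}}$ and $\iota_*\mathcal{O}_{E_\mathcal{Y}}(-l)$. The paper's proof works in $\mathrm{D}^b(\mathcal{Y})/\mathcal{K}_{-1}$ with $\mathcal{K}_{-1}=\langle\iota_*\mathcal{A}_{-1}\rangle$, and so does the subsequent proposition, which needs exceptionality and semi-orthogonality of $\iota_*\mathcal{O}_{E_\mathcal{Y}}(-(e-r)),\dots,\iota_*\mathcal{O}_{E_\mathcal{Y}}(-1)$ there.

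Your assertion that $\iota_*$ ``plays no further role'' is exactly what fails. By adjunction, $\operatorname{Ext}^*_{\mathcal{Y}}(\iota_*F,\iota_*G)\simeq\operatorname{Ext}^*_{E_\mathcal{Y}}(\iota^*\iota_*F,G)$. The excess triangle of Lemma~\ref{excdis}, $F(-E_\mathcal{Y})[1]\to\iota^*\iota_*F\to F$, together with $\mathcal{O}_{E_\mathcal{Y}}(-E_\mathcal{Y})\simeq\mathcal{O}_{E_\mathcal{Y}}(r)$, places $\operatorname{Ext}^*_{\mathcal{Y}}(\iota_*\mathcal{O}_{E_\mathcal{Y}},\iota_*\mathcal{O}_{E_\mathcal{Y}}(-l))$ in a long exact sequence with $H^*(\mathcal{O}_{E_\mathcal{Y}}(-l))$ and $H^{*-1}(\mathcal{O}_{E_\mathcal{Y}}(-l-r))$, relative to $C$. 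Your argument (exceptionality plus the mutation lemma on $E_\mathcal{Y}$) only controls the first term. The second vanishes precisely when $1\le l+r\le e-1$, and that is where the bound $l\le e-r-1$ comes from. So the $r$ is the normal-bundle contribution your reading discards, not a safety margin. Even the case $l=0$ is not just exceptionality on $E_\mathcal{Y}$: you must also kill $H^{*-1}(\mathcal{O}_{E_\mathcal{Y}}(-r))$, which uses $r<e$.

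The same correction is needed in your passage to the quotient. What you need is $\iota_*\mathcal{O}_{E_\mathcal{Y}}\in{}^\perp\mathcal{K}_{-1}$ inside $\mathrm{D}^b(\mathcal{Y})$, not $\mathcal{O}_{E_\mathcal{Y}}\in{}^\perp\mathcal{A}_{-1}$ inside $\mathrm{D}^b(E_\mathcal{Y})$. By the same triangle this requires both $H^*(N)=0$ and $H^*(N(-r))=0$ for every $N\in\mathcal{A}_{-1}$. Both hold because $\operatorname{Hom}(\mathcal{O}_{E_\mathcal{Y}}(i),N[*])=0$ for $0\le i\le e-1$ and $r\le e-1$.

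This source-side check differs from the paper's, which verifies $\operatorname{Ext}^*_{\mathcal{Y}}(\iota_*N,\iota_*\mathcal{O}_{E_\mathcal{Y}}(-l))=0$ on the target side. The source-side version has the advantage of covering $l=0$ cleanly. There $\operatorname{Ext}^*_{E_\mathcal{Y}}(N,\mathcal{O}_{E_\mathcal{Y}})$ is Serre dual to $H^*(N(-e))$, which the decomposition does not control.

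Once the excess triangle is inserted in both places, the mutation lemma is no longer needed. Everything reduces to $H^*(\mathcal{O}_{E_\mathcal{Y}})\simeq\mathcal{O}_C$ and $H^*(\mathcal{O}_{E_\mathcal{Y}}(-t))=0$ for $0<t<e$, which is essentially the paper's route.
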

\begin{proof}
First, by the excess distinguished triangle and noting that $\mathcal{O}_{E_\mathcal{Y}}(-E_\mathcal{Y})\simeq \mathcal{O}_{E_\mathcal{Y}}(r)$, together with the semi-orthogonal decompositions of $\mathrm{D}^b(E_\mathcal{Y})$, for any $N \in \mathcal{A}_{-1}$ we have
\[
\operatorname{Ext}^*_{\mathrm{D}^b(\mathcal{Y})}\big(\iota_*N,\,\iota_*\mathcal{O}_{E_\mathcal{Y}}(-l)\big)=0
\quad \text{for all } 0\le l\le e-r-1.
\]
Hence in this range
\[
\operatorname{Ext}^*_{\mathrm{D}^b(\mathcal{Y})/\mathcal{K}_{-1}}\big(\iota_*\mathcal{O}_{E_\mathcal{Y}},\,\iota_*\mathcal{O}_{E_\mathcal{Y}}(-l)\big)
\simeq
\operatorname{Ext}^*_{\mathrm{D}^b(\mathcal{Y})}\big(\iota_*\mathcal{O}_{E_\mathcal{Y}},\,\iota_*\mathcal{O}_{E_\mathcal{Y}}(-l)\big).
\]
The latter is similarly computed using the excess distinguished triangle, which gives the desired result.
\end{proof}
\begin{lemma}\label{genlemma1}
For $1 \leq l \leq e-r$, we have
\[
\pi_*\varpi^!\,\iota_*\mathcal{O}_{E_\mathcal{Y}}(-l)=0.
\]
\end{lemma}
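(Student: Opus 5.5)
We compute $\varpi^!\iota_*\mathcal{O}_{E_\mathcal{Y}}(-l)$ on the root stack $\mathcal{Z}$ and then push it forward along $\pi$; this mirrors the proof of the lemma computing $\pi_*\varpi^!\varpi_*\mathcal{R}\pi^*(\kappa_*\mathcal{O}_C\otimes\nu^i)$.

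\textbf{Step 1: filter the pullback on $\mathcal{Z}$.} By Proposition~\ref{derivedroot} and the description $\mathcal{Z}\simeq\sqrt[r]{E_\mathcal{Y}/\mathcal{Y}}$ of Lemma~\ref{rootsblowup}, $\varpi^*\iota_*N$ admits, for any $N\in\mathrm{D}^b(E_\mathcal{Y})$, a filtration with factors $\iota^+_*N(j)\otimes\nu^{-j}$ for $0\le j\le r-1$. This uses $\varpi^{-1}E_\mathcal{Y}=r\mathrm{M}$ and $\mathcal{M}|_{\mathrm{M}}=\mathcal{O}_{E_\mathcal{Y}}(-1)\otimes\nu$. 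Since $\varpi^!=\varpi^*\otimes\mathcal{M}^{r-1}$ (relative dualizing sheaf of the root stack), $\varpi^!\iota_*N$ has factors $\iota^+_*N(-r+1+j)\otimes\nu^{r-1-j}$. We apply this with $N=\mathcal{O}_{E_\mathcal{Y}}(-l)$, which gives factors
\[
\iota^+_*\mathcal{O}_{E_\mathcal{Y}}(-l-m)\otimes\nu^{m},\qquad 0\le m\le r-1 .
\]

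\textbf{Step 2: push forward each factor.} For every factor we have $1\le l+m\le e-r+r-1=e-1$. For any representation $\nu^m$ and $1\le k\le e-1$, we want $\pi_*\bigl(\iota^+_*\mathcal{O}_{E_\mathcal{Y}}(-k)\otimes\nu^m\bigr)=0$.
\begin{itemize}
\item In the computation of Lemma~\ref{orddualcomp}'s application, $\pi_*\iota^+_*(-)$ of a sheaf on $\mathrm{M}=[E_\mathcal{Y}/\mu_r]$ reduces to $R\Gamma$ along the fibres of $E_\mathcal{Y}\to C$, with the $\mu_r$-character recorded.
\item Along these fibres the sheaves $\mathcal{O}_{E_\mathcal{Y}}(-k)$ for $1\le k\le e-1$ are acyclic. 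They lie in the block $\langle\mathcal{O}_{E_\mathcal{Y}}(-e+1),\dots,\mathcal{O}_{E_\mathcal{Y}}(-1)\rangle$, which is left orthogonal to $\mathcal{O}_{E_\mathcal{Y}}$ in the decomposition $\langle\mathcal{A}_{-e},\mathcal{O}_{E_\mathcal{Y}}(-e+1),\dots,\mathcal{O}_{E_\mathcal{Y}}\rangle$ used in the proof of the lemma asserting that $\pi_*\varpi^!$ is well defined.
\item Equivalently, this is Kodaira-type vanishing $R f_{E*}\mathcal{O}_{E_\mathcal{Y}}(-k)=0$ for a weighted hypersurface fibration of Fano index $e$. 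We would justify it by the Koszul/matrix-factorization description of $E_\mathcal{Y}\subset\mathcal{P}(a_1,\dots,a_s)_C$ and the vanishing of cohomology of $\mathcal{O}_{\mathcal{P}}(-k)$ for $0<k<\sum a_i$.
\end{itemize}
Since $\pi_*$ is exact and every factor of the filtration maps to $0$, the object $\pi_*\varpi^!\iota_*\mathcal{O}_{E_\mathcal{Y}}(-l)$ vanishes.

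\textbf{Main obstacle.} The delicate point is Step 2's bookkeeping of the twist. We must check that the identification $\mathcal{M}|_{\mathrm{M}}=\mathcal{O}_{E_\mathcal{Y}}(-1)\otimes\nu$, and the sign convention in $\varpi^!$, really produce twists $-(l+m)$ rather than $-(l-m)$ or $r-1-l-m$. With the wrong convention the range would fail: for instance $l=1,m=r-1$ should give $-r$, which requires $e>r$, and this is exactly the reason for the upper bound $l\le e-r$. We would verify the sign by comparing with the case $l=0$, which must contribute $\mathcal{O}_C$ (the factor $m=0$), matching Lemma~\ref{genlemma}. We would also confirm the fibrewise vanishing with the grading on $\bigoplus P_i/P_{i+1}$, including the $\nu$-equivariant components.
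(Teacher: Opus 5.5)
Your proposal is correct and matches the paper's argument: the paper likewise uses Proposition~\ref{derivedroot} to filter $\varpi^!\iota_*\mathcal{O}_{E_\mathcal{Y}}(-l)$ with factors $\iota^+_*\mathcal{O}_{E_\mathcal{Y}}(-l-i)\otimes\nu^i$ for $0\le i\le r-1$, and then observes that each factor vanishes under $\pi_*$. Your Step~2 makes explicit what the paper leaves implicit. First, $1\le l+i\le e-1$. Second, the fibrewise vanishing of $\mathcal{O}_{E_\mathcal{Y}}(-k)$ for $0<k<e$ on the Fano-index-$e$ weighted complete intersection fibres holds regardless of the character $\nu^i$.
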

\begin{proof}
We know from Lemma  \ref{derivedroot} that $\varpi^!\,\iota_*\mathcal{O}_{E_\mathcal{Y}}(-l)$ admits a natural filtration with factors
\[
\iota^+_*\mathcal{O}_{E_\mathcal{Y}}(-l-i)\otimes \nu^i,\quad 0 \le i \le r-1.
\]
each vanishes under $\pi_*$.
\end{proof}
\begin{proposition}
The local stacky Kawamata blow-up admits a semi-orthogonal decomposition
\[
\mathrm{D}^b(\mathcal{Y})/\mathcal{K}_{-1}
=
\left\langle
\mathcal{O}_{E_\mathcal{Y}}(-(e-r)),\dots,\mathcal{O}_{E_\mathcal{Y}}(-1),\operatorname{Im}\Theta
\right\rangle,
\]
where $\Theta := \varpi_* \mathcal{R}\pi^*$ defines a well-defined fully faithful admissible triangulated functor
\[
\Theta : \mathrm{D}^b(\mathcal{X}) \to \mathrm{D}^b(\mathcal{Y})/\mathcal{K}_{-1},
\]
and the objects $\mathcal{O}_{E_\mathcal{Y}}(-(e-r)),\dots,\mathcal{O}_{E_\mathcal{Y}}(-1)$ are pretilting or exceptional, under assuming $e \ge r$.
\end{proposition}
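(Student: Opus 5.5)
The plan is to assemble the semi-orthogonal decomposition from three ingredients: full faithfulness and admissibility of $\Theta$, semi-orthogonality of the exceptional part, and generation. The first is already available. By Corollary~\ref{gencor}, $\Theta=\varpi_*\mathcal{R}\pi^*$ is fully faithful when $e\ge r$. By the adjoint triple $\pi_!\varpi^*\dashv\Theta\dashv\pi_*\varpi^!$ it is admissible, so $\operatorname{Im}\Theta$ is an admissible subcategory of $\mathrm{D}^b(\mathcal{Y})/\mathcal{K}_{-1}$. This gives
\[
\mathrm{D}^b(\mathcal{Y})/\mathcal{K}_{-1}=\langle (\operatorname{Im}\Theta)^{\perp},\operatorname{Im}\Theta\rangle,
\qquad (\operatorname{Im}\Theta)^{\perp}=\ker(\pi_*\varpi^!).
\]
What remains is to identify $\ker(\pi_*\varpi^!)$ with $\langle \iota_*\mathcal{O}_{E_\mathcal{Y}}(-(e-r)),\dots,\iota_*\mathcal{O}_{E_\mathcal{Y}}(-1)\rangle$.

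For the inclusion of this subcategory into $\ker(\pi_*\varpi^!)$, Lemma~\ref{genlemma1} shows directly that $\pi_*\varpi^!\iota_*\mathcal{O}_{E_\mathcal{Y}}(-l)=0$ for $1\le l\le e-r$. Adjunction then gives
\[
\operatorname{Hom}(\Theta F,\iota_*\mathcal{O}_{E_\mathcal{Y}}(-l)[*])=0
\]
for all $F\in\mathrm{D}^b(\mathcal{X})$. Next I check that the $\iota_*\mathcal{O}_{E_\mathcal{Y}}(-l)$, $1\le l\le e-r$, form an exceptional (or pretilting, relative over $C$) collection in the quotient in the stated order. Lemma~\ref{genlemma}, applied after twisting by $\mathcal{O}(l')$, gives
\[
\operatorname{Ext}^*\big(\iota_*\mathcal{O}_{E_\mathcal{Y}}(-l'),\iota_*\mathcal{O}_{E_\mathcal{Y}}(-l)\big)=0 \quad\text{for } 1\le l-l'\le e-r-1,
\]
and endomorphisms $\mathcal{O}_C$ when $l=l'$. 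This is precisely the required semi-orthogonality ordering.

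The main obstacle is generation, namely that $\ker(\pi_*\varpi^!)$ contains nothing beyond these objects. I would reduce to showing that $\operatorname{Im}\Theta$ together with the $\iota_*\mathcal{O}_{E_\mathcal{Y}}(-l)$ generates the quotient. By Proposition~\ref{derivedroot}, $\mathrm{D}^b(\mathcal{Z})$ is generated by $\varpi^*\mathrm{D}^b(\mathcal{Y})$ and objects supported on $\mathrm{M}$, and $\varpi_*\varpi^*\simeq\mathrm{id}$. So it suffices to see that $\varpi_*$ of the decomposition of $\mathrm{D}^b(\mathcal{Z})/\mathcal{K}_{-1}^+$ in the quoted Proposition~3.5 of \cite{Hao2025b} lands in the claimed pieces. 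The pieces $\iota^+_*\mathcal{O}_{\mathrm{M}}(-k)\otimes\nu^i$, $1\le k\le e-1$, push forward to $\iota_*\mathcal{O}_{E_\mathcal{Y}}(-\lceil\cdot\rceil)$-type objects, since $\mathcal{M}|_{\mathrm{M}}=\mathcal{O}_{E_\mathcal{Y}}(-1)\otimes\nu$ and only $\nu$-invariant parts survive. This yields twists $\iota_*\mathcal{O}_{E_\mathcal{Y}}(-l)$ with $l$ between $0$ and $e-1$.

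I would then use Lemma~\ref{mutationlemma}, together with the excess triangle (Lemma~\ref{excdis}) with $\mathcal{O}_{E_\mathcal{Y}}(-E_\mathcal{Y})\simeq\mathcal{O}_{E_\mathcal{Y}}(r)$, to show that the twists with $e-r<l\le e-1$ and $l=0$ either lie in $\mathcal{K}_{-1}$ or are generated by $\operatorname{Im}\Theta$ and the first $e-r$ twists. For the latter, the filtration (\ref{pullbackcompu1}) of $\Theta(\kappa_*\mathcal{O}_C\otimes\nu^i)$ by $\iota_*\mathcal{D}_{\lambda r+i}$ is the tool: combined with the mutation identifying $\langle\mathcal{D}_{e-1},\dots,\mathcal{D}_{e-k}\rangle$ with $\langle\mathcal{O}(-k),\dots,\mathcal{O}(-1)\rangle$, it should express $\iota_*\mathcal{D}_j$ for $j<r$, hence $\iota_*\mathcal{O}_{E_\mathcal{Y}}(-l)$ for the top $r$ twists, through $\operatorname{Im}\Theta$. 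I expect this bookkeeping of which $r$ twists are absorbed by $\Theta$ to be the delicate point. Essential surjectivity on general objects then follows, since any $G$ with $\pi_*\varpi^!G=0$ has $\varpi^!G$ in the kernel of $\pi_*$, which by Lemma~\ref{kernallemma} lies in $\mathcal{T}$. Applying $\varpi_*$ (with $\varpi_*\varpi^!\simeq\mathrm{id}$ up to the root-stack twist) places $G$ in the span of the $\iota_*\mathcal{O}_{E_\mathcal{Y}}(-l)$ modulo $\mathcal{K}_{-1}$.
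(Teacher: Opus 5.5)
Your proposal is correct and follows essentially the paper's argument. Full faithfulness and admissibility come from Corollary~\ref{gencor} and the adjoint triple, and orthogonality from Lemmas~\ref{genlemma} and~\ref{genlemma1}. Generation goes exactly as in the paper: take $c\in\ker(\pi_*\varpi^!)$, place $\varpi^!c$ in $\mathcal{T}$ via Lemma~\ref{kernallemma}, push forward using $\varpi_*\varpi^!\simeq\mathrm{id}$, then absorb $\iota_*\mathcal{D}_{e-1},\dots,\iota_*\mathcal{D}_{r}$ by Lemma~\ref{mutationlemma} and $\iota_*\mathcal{D}_{r-1},\dots,\iota_*\mathcal{D}_0$ through the filtration~(\ref{pullbackcompu1}) of $\Theta(\kappa_*\mathcal{O}_C\otimes\nu^i)$.

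One small caution: Lemma~\ref{genlemma} cannot literally be applied ``after twisting'', because twisting by a line bundle does not preserve $\mathcal{K}_{-1}=\langle\iota_*\mathcal{A}_{-1}\rangle$. Instead, rerun its excess-triangle argument and check that $\operatorname{Ext}^*(\iota_*N,\iota_*\mathcal{O}_{E_\mathcal{Y}}(-l))=0$ for all $N\in\mathcal{A}_{-1}$ and $1\le l\le e-r$. This lets you compute the Ext groups among the $\iota_*\mathcal{O}_{E_\mathcal{Y}}(-l)$ directly in $\mathrm{D}^b(\mathcal{Y})$.
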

\begin{proof}
We only need to prove the generation property using Corollary \ref{gencor}, Lemma \ref{genlemma} and \ref{genlemma1}. Let $b\in \mathrm{D}^b(\mathcal{Y})$, and consider the natural adjunction morphism, which induces a distinguished triangle
\[
\Theta\, \Theta^! b \longrightarrow b \longrightarrow c \in\Delta
\quad \text{in } \mathrm{D}^b(\mathcal{Y})/\mathcal{K}_{-1}.
\]
Applying $\Theta^!$ and using the isomorphism $\Theta^!\Theta \simeq \mathrm{id}$, we obtain $\Theta^! c = \pi_* \varpi^! c = 0$. Hence $\varpi^! c \in \ker(\pi_*)$, and therefore it lies in the triangulated subcategory
\[
\mathcal{T}:=
\left\langle
\iota^+_* \mathcal{O}_{\mathrm{M}}(-e+1)\otimes \bigoplus_{i=0}^{r-1}\nu^i,\ \ldots,\
\iota^+_* \mathcal{O}_{\mathrm{M}}(-1)\otimes \bigoplus_{i=0}^{r-1}\nu^i,\
\left\langle \iota^+_* \mathcal{A}_{-1}\otimes \bigoplus_{i=0}^{r-1}\nu^i \right\rangle
\right\rangle.
\]
Since $c \simeq \varpi_* \varpi^! c$, we deduce that $c$ lies in the essential image of $\tau$ in $\mathrm{D}^b(\mathcal{Y})/\mathcal{K}_{-1}$, which is generated by
\[
\iota_* \mathcal{O}_{E_\mathcal{Y}}(-e+1),\ldots,\iota_* \mathcal{O}_{E_\mathcal{Y}}(-1),
\]
and equivalent to the category generated by
\[
\iota_* \mathcal{D}_{e-1},\ldots,\iota_* \mathcal{D}_{1}.
\]
By the Lemma \ref{mutationlemma}, the objects $\iota_* \mathcal{D}_{e-1},\ldots,\iota_* \mathcal{D}_{r}$ lie in this generated subcategory. Moreover, from the computation of $\pi_*(\mathcal{O}_C\otimes \nu^j)$ for $0\le j<r$ in (\ref{pullbackcompu1}), we also obtain that $\iota_* \mathcal{D}_{r-1},\ldots,\iota_* \mathcal{D}_{0}$ belong to the same subcategory. Hence $c$ lies in the generated category, and therefore $b$ also belongs to it.
\end{proof}

When $e \ge r$, there are several analogous variants whose proofs are entirely similar. The first useful variant is obtained by considering the full subcategory $\mathcal{K}_{-r}:=\langle \iota_* \mathcal{A}_{-r}\rangle$.

\begin{proposition}
The local stacky Kawamata blow-up admits a semi-orthogonal decomposition
\[
\mathrm{D}^b(\mathcal{Y})/\mathcal{K}_{-r}
=
\left\langle
\mathcal{O}_{E_\mathcal{Y}}(-e+1),\dots,\mathcal{O}_{E_\mathcal{Y}}(-r),\operatorname{Im}\Xi
\right\rangle,
\]
where $\Xi := \varpi_! \mathcal{R}\pi^*$ defines a well-defined fully faithful admissible triangulated functor
\[
\Xi : \mathrm{D}^b(\mathcal{X}) \to \mathrm{D}^b(\mathcal{Y})/\mathcal{K}_{-r},
\]
and the objects $\mathcal{O}_{E_\mathcal{Y}}(-e+1),\dots,\mathcal{O}_{E_\mathcal{Y}}(-r)$ are pretilting or exceptional, assuming $e \ge r$.
\end{proposition}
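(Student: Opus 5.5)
The proof mirrors the previous proposition, with the roles of $\varpi_*,\varpi^!$ exchanged for $\varpi_!:=\varpi_*(-\otimes\mathcal{O}(K_{\mathcal Z/\mathcal Y}))$ and $\varpi^*$, and with $\mathcal{K}_{-1}$ replaced by $\mathcal{K}_{-r}$. The ramification twist $\mathcal{M}^{\otimes(r-1)}$ shifts the relevant window on $E_\mathcal{Y}$ by $r-1$. The plan has three steps: well-definedness with an adjoint, full faithfulness, and generation.

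\emph{Step 1: well-definedness and adjoints.} First I check that $\varpi_!$ sends $\mathcal{K}_{-1}^+$ into $\mathcal{K}_{-r}$. By Proposition~\ref{derivedroot}, an object $\iota^+_*N\otimes\nu^j$ with $N\in\mathcal{A}_{-1}$ is pushed by $\varpi_*$ to zero unless $j=0$. The twist by $\mathcal{M}^{r-1}$ moves the surviving component into $\iota_*\mathcal{A}_{-1}(-(r-1))\subset\iota_*\mathcal{A}_{-r}$. So $\Xi=\varpi_!\mathcal{R}\pi^*$ descends to $\mathrm{D}^b(\mathcal{Y})/\mathcal{K}_{-r}$.

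Next I produce the right adjoint $\pi_*\varpi^*$ (up to the appropriate twist), as the right adjoint of $\varpi_!$. I must check it kills $\iota_*\mathcal{A}_{-r}$. The filtration of $\varpi^*\iota_*N$ from Proposition~\ref{derivedroot} has factors $\iota^+_*N(j)\otimes\nu^{-j}$ for $0\le j\le r-1$, and these lie in $\iota^+_*\mathcal{A}_{-r+j}\subset\iota^+_*\mathcal{A}_{-1}$. Their $\pi_*$ vanishes exactly as in the lemma preceding Corollary~\ref{gencor}; this uses $e\ge r$ and the decomposition $\langle\mathcal{A}_{-e},\mathcal{O}(-e+1),\dots,\mathcal{O}\rangle$. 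The adjunction $\Xi\dashv\pi_*\varpi^*$ is then proved by the same roof argument as before.

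\emph{Step 2: full faithfulness.} I compute $\pi_*\varpi^*\Xi(\kappa_*\mathcal{O}_C\otimes\nu^i)$ using the filtration (\ref{pullbackcompu1}), but now twisted, so that the graded pieces are $\iota_*\mathcal{D}_{\lambda r+i}(-(r-1))$. Factors with $\lambda r+i\ge e$ lie in $\mathcal{A}_{-1}(-r+1)=\mathcal{A}_{-r}$ and drop out. For the others, Proposition~\ref{derivedroot} together with Lemma~\ref{orddualcomp} leaves only the pairing $\operatorname{Ext}^*(\mathcal{O}(l),\mathcal{D}_{\lambda r+i})$ with $l=\lambda r+i$. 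The index bookkeeping should force $\lambda=0$ and return $\kappa_*\mathcal{O}_C\otimes\nu^i$. Since these objects form a spanning class by the argument of Corollary~\ref{gencor}, \cite[Proposition 1.49]{Huybrechts} gives full faithfulness. Admissibility follows from the adjoint triple.

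\emph{Step 3: generation.} Take $b\in\mathrm{D}^b(\mathcal{Y})/\mathcal{K}_{-r}$ and form the triangle $\Xi\Xi^!b\to b\to c$. Then $\pi_*\varpi^*c=0$, so $\varpi^*c\in\mathcal{T}$ by Lemma~\ref{kernallemma}. Pushing forward, $c$ lies in the subcategory generated by $\iota_*\mathcal{O}_{E_\mathcal Y}(-e+1),\dots,\iota_*\mathcal{O}_{E_\mathcal Y}(-1)$ modulo $\mathcal{K}_{-r}$.

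Using Lemma~\ref{mutationlemma}, shifted by $-(r-1)$, I trade $\mathcal{O}(-r+1),\dots,\mathcal{O}(-1)$ for twisted duals $\mathcal{D}_j(-r+1)$ with $j<r$. By Step 2, these appear as graded pieces of $\Xi(\kappa_*\mathcal{O}_C\otimes\nu^j)$ and hence lie in $\langle\mathcal{O}(-e+1),\dots,\mathcal{O}(-r),\operatorname{Im}\Xi\rangle$.

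Finally, the exceptional (pretilting) property of $\mathcal{O}(-e+1),\dots,\mathcal{O}(-r)$, and their semiorthogonality to $\operatorname{Im}\Xi$, follow from the analogue of Lemmas~\ref{genlemma} and~\ref{genlemma1}. The excess triangle with $\mathcal{O}_{E_\mathcal{Y}}(-E_\mathcal{Y})\simeq\mathcal{O}(r)$ gives $\operatorname{Ext}$-vanishing in the window $r\le l\le e-1$. The vanishing $\pi_*\varpi^*\iota_*\mathcal{O}(-l)=0$ holds for $r\le l\le e-1$, because the factors $\mathcal{O}(-l+j)$ with $0\le j\le r-1$ stay inside $[-e+1,-1]$.

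\emph{Main obstacle.} The hard part is the index bookkeeping in Steps 2 and 3. I must check that the shift by $\mathcal{M}^{r-1}$ exactly exchanges the window $[-(e-r),-1]$ for $[-e+1,-r]$. I must also check that the $\mathcal{A}_{-r}$-components of the Postnikov pieces are genuinely killed, with no leftover $\mathcal{A}_{-1}\setminus\mathcal{A}_{-r}$ contributions. I would first verify this directly on the graded pieces of (\ref{pullbackcompu1}) before mutating.
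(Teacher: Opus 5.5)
Your route is the one the paper intends; it gives no separate proof and only says the $\mathcal{K}_{-r}$ variant is ``entirely similar'' to the $\mathcal{K}_{-1}$ case. You replace $\varpi_*,\varpi^!$ by $\varpi_!=\varpi_*(-\otimes\mathcal{M}^{r-1})$ and $\varpi^*$, and track the shift coming from $\mathcal{M}^{r-1}|_{\mathrm M}\simeq\mathcal{O}_{E_\mathcal{Y}}(-(r-1))\otimes\nu^{r-1}$. The right adjoint of $\Xi$ is then exactly $\pi_*\varpi^*$, with no extra twist.

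\textbf{Gap in Step 3.} The first paragraph of Step 3 is the one place where the mirroring is not verbatim, and as written it does not follow. From $\varpi^*c\in\mathcal{T}$ and $c\simeq\varpi_*\varpi^*c$ you get that $c$ lies in the subcategory generated by $\iota_*\mathcal{O}_{E_\mathcal{Y}}(-e+1),\dots,\iota_*\mathcal{O}_{E_\mathcal{Y}}(-1)$ \emph{together with} $\iota_*\mathcal{A}_{-1}=\varpi_*(\iota^+_*\mathcal{A}_{-1}\otimes\nu^0)$. In the $\mathcal{K}_{-1}$ version that last piece dies in the quotient. Modulo $\mathcal{K}_{-r}$ it does not, because $\mathcal{A}_{-1}\neq\mathcal{A}_{-r}$.

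You need one extra observation. Rotate the defining decompositions $\langle\mathcal{A}_{-1},\mathcal{O},\dots,\mathcal{O}(e-1)\rangle$ and $\langle\mathcal{A}_{-r},\mathcal{O}(-r+1),\dots,\mathcal{O}(e-r)\rangle$ by the relative Serre functor $-\otimes\mathcal{O}(-e)$. Writing $\mathcal{O}(i)$ for $f_{E_\mathcal{Y}}^*\mathrm{D}^b(C)\otimes\mathcal{O}_{E_\mathcal{Y}}(i)$, this gives
\[
\bigl\langle \mathcal{O}(-e+1),\dots,\mathcal{O}(-1),\mathcal{A}_{-1},\mathcal{O}\bigr\rangle
=\mathrm{D}^b(E_\mathcal{Y})=
\bigl\langle \mathcal{O}(-e+1),\dots,\mathcal{O}(-r),\mathcal{A}_{-r},\mathcal{O}(-r+1),\dots,\mathcal{O}(-1),\mathcal{O}\bigr\rangle .
\]
In both decompositions everything to the left of $\mathcal{O}$ is the right orthogonal of $\mathcal{O}$. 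Hence $\mathcal{A}_{-1}\subset\langle\mathcal{O}(-e+1),\dots,\mathcal{O}(-1),\mathcal{A}_{-r}\rangle$. Only with this does your claim that $c$ is generated by $\iota_*\mathcal{O}(-e+1),\dots,\iota_*\mathcal{O}(-1)$ modulo $\mathcal{K}_{-r}$ hold. Equivalently: once you know $\iota_*\mathcal{O}(-r+1),\dots,\iota_*\mathcal{O}(0)$ lie in your generated category, the second decomposition puts all of $\iota_*\mathrm{D}^b(E_\mathcal{Y})$ there modulo $\mathcal{K}_{-r}$.

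\textbf{Smaller corrections.}

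\emph{Step 1.} You write $\iota^+_*\mathcal{A}_{-r+j}\subset\iota^+_*\mathcal{A}_{-1}$, but these subcategories are not nested; this is probably the source of the Step 3 slip. What is true, and all you need, is that $f_{E*}$ vanishes on $\mathcal{A}_{-m}$ for every $1\le m\le e$, since $\mathcal{O}$ lies in the window $\mathcal{O}(-m+1),\dots,\mathcal{O}(e-m)$. Here $m=r-j\in[1,r]\subset[1,e]$.

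\emph{Use of the mutation lemma in Step 3.} The twisted Lemma~\ref{mutationlemma} is what lets you discard the pieces $\iota_*\mathcal{D}_s(-r+1)$, $r\le s<e$, of $\Xi(\kappa_*\mathcal{O}_C\otimes\nu^j)$; they lie in $\langle\mathcal{O}(-e+1),\dots,\mathcal{O}(-r)\rangle$ modulo $\mathcal{A}_{-r}$. Passing from $\mathcal{D}_0(-r+1),\dots,\mathcal{D}_{r-1}(-r+1)$ to $\mathcal{O}(-r+1),\dots,\mathcal{O}(0)$ is just the dual-collection property.

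\emph{Indexing in Step 2.} The surviving residue is $s\equiv i-1 \pmod r$, not $s\equiv i$, because of the $\nu^{r-1}$ in $\mathcal{M}^{r-1}|_{\mathrm M}$. This is a harmless relabelling.

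\emph{Left adjoint.} \cite[Proposition 1.49]{Huybrechts} needs both adjoints, so you should write down the left adjoint $\pi_!(\varpi^*(-)\otimes\mathcal{M}^{1-r})$ and check that it kills $\mathcal{K}_{-r}$. Its factors land in $\mathcal{A}_{-e},\dots,\mathcal{A}_{-e+r-1}$, where $f_{E*}$ vanishes since $r\le e$.
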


Another generalization is to consider a formal well-formed variety along a projective  center $C$, where the cover formal neighborhood can be realized as a complete intersection in GIT quotient of the form
\[
\mathds{A} := \mathbb{A}_{C}\Bigl[\Bigl[\bigoplus_i \mathcal{O}_{C}(b_i)\Bigr]\Bigr].
\]
here $C$ is a smooth complete intersection inside a weighted projective stack $\mathcal{P}$.
\begin{lemma}[Dévissage]
In the global situation, one can compute formally
\[
\mathcal{R}\pi^*(\kappa_* (F\otimes \nu^i)),
\]
for any $F\in \mathrm{D}^b(C)$ and any $0\le i<r$. It admits the following filtration:
\[
\begin{tikzcd}[column sep=0.6em]
\pi^*(\kappa_*F\otimes \nu^i)\arrow{rr} && \mathcal{F}^s([F\otimes \nu^i]^+)\arrow{dl}\arrow{rr} && \mathcal{F}^{s-1}\arrow{dl} \\
& \tau_s\arrow[ul,dashed,"\Delta"] && \iota^+_*(f^{+*}F\otimes \nu^i)\otimes\mathcal{D}_s\otimes \nu^{-s}[1]\arrow[ul,dashed,"\Delta"]
\end{tikzcd}
\cdots
\]
\[
\begin{tikzcd}[column sep=0.6em]
& \mathcal{F}^1\arrow{rr} && \mathcal{F}^0\arrow{rr}\arrow{dl} && 0\arrow{dl} \\
&& \iota^+_*(f^{+*}F\otimes \nu^i)\otimes \mathcal{D}_1\otimes \nu^{-1}[1]\arrow[ul,dashed,"\Delta"] && \iota^+_*(f^{+*}F\otimes \nu^i)\otimes \mathcal{D}_0[1]\arrow[ul,dashed,"\Delta"]
\end{tikzcd}
\]
where $\mathcal{D}_i$ is defined as the $i$-fold left mutation of $\mathcal{O}_{E_{\mathcal{Y}}}(i)$ along the semi-orthogonal decomposition
\[
\mathrm{D}^b(E_{\mathcal{Y}})=
\left\langle
\mathcal{A}_{-1},
f_{E_{\mathcal{Y}}}^*\mathrm{D}^b(C),\,
f_{E_{\mathcal{Y}}}^*\mathrm{D}^b(C)\otimes \mathcal{O}_{E_{\mathcal{Y}}}(1),\,
\ldots,\,
f_{E_{\mathcal{Y}}}^*\mathrm{D}^b(C)\otimes \mathcal{O}_{E_{\mathcal{Y}}}(e-1)
\right\rangle,
\]
for $0\le i<e$, and moreover $\mathcal{D}_i\in \mathcal{A}_{-1}$ for $i\ge e$.
\end{lemma}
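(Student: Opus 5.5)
The plan is to reduce the global dévissage to the local computation (\ref{pullbackcompu}) by working relatively over $C$. First, I replace $\kappa_*(F\otimes\nu^i)$ by $\kappa_*\mathcal{O}_C\otimes^{\mathbf L}\pi_C^*F\otimes\nu^i$, where $\pi_C$ is the retraction of the formal neighbourhood $\mathds{A}\to C$ given by the bundle projection. This is legitimate because the formal neighbourhood is the total space $\mathbb{A}_C[[\bigoplus_i\mathcal{O}_C(b_i)]]$ cut out by a complete intersection, so $\kappa$ has a flat retraction. Since $\pi$, $\iota^+$ and $f^+:=f_{E_\mathcal{Y}}$ are compatible with this retraction, the projection formula gives
\[
\pi^*\bigl(\kappa_*\mathcal{O}_C\otimes\pi_C^*F\bigr)\simeq \pi^*\kappa_*\mathcal{O}_C\otimes (\pi_C\pi)^*F .
\]
Similarly, $\iota^+_*\mathcal{D}_s\otimes(\pi_C\pi)^*F\simeq\iota^+_*(\mathcal{D}_s\otimes f^{+*}F)$. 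So it suffices to construct the filtration for $F=\mathcal{O}_C$ and then tensor the whole Postnikov system with $(\pi_C\pi)^*F\otimes\nu^i$. Tensoring with a pulled-back object (flat over $C$ after choosing a locally free resolution locally) is exact and preserves distinguished triangles and inverse limits.

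For $F=\mathcal{O}_C$, I would globalize the construction of Section~\ref{Kawamatablowup}. The ideal of $C$ in $\mathds{A}$ is generated by the fibre coordinates, which are sections of $\mathcal{O}_C(-b_i)$. The valuation filtration $P_i$ is therefore a filtration by $\mathcal{O}_C$-submodules that are locally free on graded pieces, and the equivariant syzygy resolution of Section~\ref{Kawamatablowup} becomes a relative Koszul-type resolution with terms $\bigoplus \mathcal{O}\otimes\pi_C^*\mathcal{O}_C(c)$ instead of free modules. Applying $\Pi$ as before gives the filtered exact sequences on $\mathcal{Z}$, and comparing them with the pullback complex yields the Postnikov system. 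The graded pieces are $\iota^+_*\mathcal{D}_s\otimes\nu^{-s}[1]$. Here $\mathcal{D}_s$ is the relative object on the weighted projective fibration $E_\mathcal{Y}\to C$, computed fibrewise exactly as in the local case.

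To identify $\mathcal{D}_s$, I would use the relative Orlov decomposition for the hypersurface/complete-intersection fibration $E_\mathcal{Y}\to C$ stated in the lemma. The $s$-th graded piece is the cone of the truncated Koszul-type complex in degree $s$. Over each point of $C$ it is the local dual object $\mathcal{D}_s$, which is the $s$-fold left mutation of $\mathcal{O}(s)$ through $\mathcal{O},\dots,\mathcal{O}(s-1)$. The mutation is compatible with base change because the Ext sheaves $f^+_*\mathcal{H}om(\mathcal{O}(j),\mathcal{O}(s))$ are locally free on $C$, so the relative mutation is computed by the same universal coevaluation maps. For $s\ge e$, the relative Serre-type vanishing places $\mathcal{D}_s$ in $\mathcal{A}_{-1}$, using the fibrewise statement together with the fact that membership in a relative admissible subcategory can be checked on fibres. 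Convergence $\varprojlim_s\mathcal{F}^s\simeq\pi^*(\kappa_*F\otimes\nu^i)$ follows from the same degree estimate $\mathcal{H}^j(\tau_s)=0$ for $j>-s+N$, uniformly over $C$ since $C$ is projective and quasi-compact.

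The main obstacle is the relative identification of the graded pieces with the mutations $\mathcal{D}_s$. There are two issues: making the twists by $\mathcal{O}_C(b_i)$ consistent with the $\mathbb{G}_m\times\mu_r$-weights, and checking that the measured condition (leading terms forming a regular sequence) holds in families over $C$. I would handle this by working on an affine cover of $C$ trivializing the $\mathcal{O}_C(b_i)$. There the statement is the local one. I would then glue, noting that the transition functions act by scalars on each graded weight piece and hence preserve the filtration and the mutation triangles canonically.
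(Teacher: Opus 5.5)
Your proposal is correct and takes essentially the same route as the paper. The paper's proof uses the retraction $\psi\colon\mathds{A}\to C$ to write $\pi^*\kappa_*F\simeq\pi^*(\kappa_*\mathcal{O}_C)\otimes\pi^*\psi^*F$. It then takes the filtration of $\pi^*\kappa_*\mathcal{O}_C$ from (\ref{pullbackcompu}), identifies the graded pieces via $\iota^{+*}\pi^*\psi^*F\simeq f^{+*}F$, and defines $\mathcal{R}\pi^*(F\otimes\nu^i)$ as $\mathcal{F}^s([F\otimes\nu^i]^+)$ for $s\gg0$. The one difference is that the paper imports the $F=\mathcal{O}_C$ filtration directly from the local computation and says nothing about globalizing it over a projective $C$ or about matching $\mathcal{D}_s$ with the relative mutations; your second and third paragraphs supply that missing detail. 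Flatness of the retraction is not actually needed, since the projection formula for $\kappa$ together with $\kappa^*\psi^*\simeq\mathrm{id}$ already suffices.
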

\begin{proof}
Consider the following section of the cover of our formal neighbourhood
\[
\begin{tikzcd}[column sep=3.2em,row sep=2.2em]
C \arrow[dr,"\mathrm{id}"]
\arrow[r,hook,"\kappa"]
& \mathds{A}:=X_{\widehat C}
\arrow[d,"\psi"]
\\
& C
\end{tikzcd}
\]
We have
\[
\pi^*\kappa_*F
=
\pi^*(\kappa_*\mathcal{O}_C\otimes \psi^*F)
=
\pi^*(\kappa_*\mathcal{O}_C)\otimes (\pi^*\psi^*F),
\]
where one may properly work in the derived category of $\mathrm{Qcoh}$. Moreover, we have a natural filtration of $\pi^*(\kappa_*\mathcal{O}_C)$ as  (\ref{pullbackcompu}), together with
\[
\iota^{+*}\pi^*\psi^*F = \kappa^{*}\psi^*\pi_E^{*}F= \pi_E^{*}F,
\]
and since the new factors are coherent termwise, the conclusion follows if we define $\mathcal{R}\pi^*(F\otimes \nu^i)$ to be $\mathcal{F}^s([F\otimes \nu^i]^+)$ for $s\gg 0$.
\end{proof}
We can continue similar computations for $\varpi_*\mathcal{R}\pi^*(F\otimes \nu^i)$ for $F\in \mathrm{D}^b(C)$. In fact, under our assumptions these computations all reduce to the trivial case; we refer to this simplification as a \textit{dévissage argument}. In analogy with mutations for exceptional objects, we now consider mutations with respect to admissible components, and obtain the following:
\begin{proposition}
The local stacky Kawamata blow-up admits a semi-orthogonal decomposition
\[
\mathrm{D}^b(\mathcal{Y})/\mathcal{K}_{-1}
=
\left\langle
\operatorname{Im}\Theta_{-e+r},\ldots,\operatorname{Im}\Theta_{-1},\operatorname{Im}\Theta
\right\rangle,
\]
here the functors $\Theta$ and $\Theta_i$ are defined via the following diagram:
\[
\begin{tikzcd}
E_{\mathcal{Y}} \arrow[r,"\iota"]\arrow[d,"f_{E_{\mathcal{Y}}}"]
& \mathcal{Y}\arrow[d,"f"]
& \sqrt[r]{E_{\mathcal{Y}}/\mathcal{Y}} \arrow[r,"\varpi"]\arrow[d,"\pi"]
& \mathcal{Y}\arrow[d,"f"] \\
C \arrow[r,"\kappa_X"]
& X
& \mathcal{X} \arrow[r,"\varepsilon"]
& X
\end{tikzcd}
\]
we set
\[
\Theta(-):=\varpi_* \mathcal{R}\pi^*(-),
\qquad
\Theta_i(-):=\iota_* f_{E_{\mathcal{Y}}}^*(-)\otimes \mathcal{O}_{E_{\mathcal{Y}}}(i),
\quad i\in\mathbb{Z}.
\]
where $E_{\mathcal{Y}}$ is a complete intersection inside a weighted projective fibration over $C$, and admits a semi-orthogonal decomposition
\[
\mathrm{D}^b(E_{\mathcal{Y}})=
\left\langle
f_{E_{\mathcal{Y}}}^*\mathrm{D}^b(C)\otimes \mathcal{O}_{E_{\mathcal{Y}}}(-e+1),\ldots,
f_{E_{\mathcal{Y}}}^*\mathrm{D}^b(C)\otimes \mathcal{O}_{E_{\mathcal{Y}}}(-1),
\mathcal{A}_{-1},
f_{E_{\mathcal{Y}}}^*\mathrm{D}^b(C)
\right\rangle.
\]
$\mathcal{K}_{-1}$ denotes the full saturated triangulated subcategory of $\mathrm{D}^b(\mathcal{Y})$ generated by $\iota_*\mathcal{A}_{-1}$. $\Theta$ defines a well-defined fully faithful admissible triangulated functor
\[
\Theta:\mathrm{D}^b(\mathcal{X}) \to \mathrm{D}^b(\mathcal{Y})/\mathcal{K}_{-1},
\]
assuming $e \ge r$.
\end{proposition}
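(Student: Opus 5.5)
The plan is to globalize the point-center proposition (the decomposition $\langle \mathcal{O}_{E_\mathcal{Y}}(-(e-r)),\dots,\mathcal{O}_{E_\mathcal{Y}}(-1),\operatorname{Im}\Theta\rangle$) by the dévissage argument. Every exceptional object $\mathcal{O}_{E_\mathcal{Y}}(i)$ gets replaced by the admissible block $\Theta_i(\mathrm{D}^b(C))=\iota_*f_{E_\mathcal{Y}}^*\mathrm{D}^b(C)\otimes\mathcal{O}_{E_\mathcal{Y}}(i)$. Each proof step is redone with coefficients in $\mathrm{D}^b(C)$, and the projection formula along $f_{E_\mathcal{Y}}$ and $\kappa$ reduces it to the $\mathcal{O}_C$ computations already done.

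\emph{Full faithfulness and admissibility of the pieces.}

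For $\Theta$, the two adjoint-triple lemmas and the kernel description of Lemma~\ref{kernallemma} are local along $C$. Their proofs only use the filtrations (\ref{pullbackcompu1}), which the Dévissage lemma provides for arbitrary $F\in\mathrm{D}^b(C)$ in the form $\iota^+_*(f^{+*}F\otimes\nu^i)\otimes\mathcal{D}_s\otimes\nu^{-s}[1]$. So I first verify $\pi_*\varpi^!\Theta(\kappa_*F\otimes\nu^i)\simeq\kappa_*F\otimes\nu^i$, with the same bookkeeping as in the lemma for $\mathcal{O}_C$. Here Lemma~\ref{orddualcomp} is upgraded to the relative statement $f_{E_\mathcal{Y}*}\mathcal{H}om(\mathcal{O}_{E_\mathcal{Y}}(i),\mathcal{D}_j)\simeq\delta_{ij}\mathcal{O}_C$, using the relative (Orlov-type) decomposition of $\mathrm{D}^b(E_\mathcal{Y})$ stated in the proposition. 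Objects $\kappa_*F\otimes\nu^i$ together with the perfect objects form a spanning class; equivalently, apply Lemma~\ref{ptspanningclass} with skyscrapers, which are of the form $\kappa_*\mathrm{k}_x\otimes\nu^i$ along $C$ and of the usual form away from $C$, where $f$ is an isomorphism. Then \cite[Proposition 1.49]{Huybrechts} gives full faithfulness as in Corollary~\ref{gencor}.

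For $\Theta_i$, I check $\Theta_i$ is fully faithful on $\mathrm{D}^b(\mathcal{Y})/\mathcal{K}_{-1}$ for $-(e-r)\le i\le -1$. This uses the relative version of Lemma~\ref{genlemma}: the excess triangle of Lemma~\ref{excdis} together with $\mathcal{O}_{E_\mathcal{Y}}(-E_\mathcal{Y})\simeq\mathcal{O}_{E_\mathcal{Y}}(r)$ gives $\iota^*\iota_*G\simeq\{G(r)[1]\to G\}$. One then computes $\operatorname{Hom}(\Theta_i F,\Theta_j G)$ via $f_{E_\mathcal{Y}*}$ and the semiorthogonal decomposition of $E_\mathcal{Y}$. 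The range $|i-j|\le e-r-1$ ensures that the twisted term $G(r)$ lands in a component orthogonal to the relevant block. The same computation gives $\operatorname{Hom}(\Theta_iF,\Theta_jG)=0$ for $i>j$, and also shows that $\mathcal{K}_{-1}$ does not interfere. Finally, Lemma~\ref{genlemma1}, applied with $F$ coefficients, gives $\Theta^!\Theta_i=\pi_*\varpi^!\Theta_i=0$, so each $\operatorname{Im}\Theta_i$ lies in ${}^\perp\operatorname{Im}\Theta$ in the needed direction.

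\emph{Generation (the main obstacle).}

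Following the proof of the point-center proposition, take $b$, form $\Theta\Theta^!b\to b\to c$, and deduce $\varpi^!c\in\ker\pi_*$. Lemma~\ref{kernallemma} must now be used in its relative form: $\ker\pi_*$ is generated by $\iota^+_*(f^{+*}\mathrm{D}^b(C)\otimes\mathcal{O}_\mathrm{M}(-l))\otimes\nu^k$ for $1\le l\le e-1$, together with $\mathcal{K}^+_{-1}$. I would justify this by the local-to-global argument: $\pi_*$ is computed formally along $C$, and kernel membership can be tested on the spanning class. Pushing forward via $c\simeq\varpi_*\varpi^!c$ puts $c$ in $\langle\Theta_{-e+1},\dots,\Theta_{-1}\rangle$ modulo $\mathcal{K}_{-1}$. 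A relative version of Lemma~\ref{mutationlemma} identifies this with the blocks $\iota_*f^*_{E_\mathcal{Y}}\mathrm{D}^b(C)\otimes\mathcal{D}_{e-1},\dots,\mathcal{D}_1$. The blocks with $\mathcal{D}_{j}$, $j\ge r$, lie in $\langle\Theta_{-(e-r)},\dots,\Theta_{-1}\rangle$. The remaining blocks $\mathcal{D}_{r-1},\dots,\mathcal{D}_0$ with $\mathrm{D}^b(C)$ coefficients are absorbed into $\operatorname{Im}\Theta$ plus these. This uses the dévissage filtration of $\Theta(\kappa_*F\otimes\nu^j)$, whose graded pieces for $j<r$ are precisely $\iota_*f^*F\otimes\mathcal{D}_j$, with the higher pieces lying in $\mathcal{K}_{-1}$ or in the previous blocks.

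I expect the hard point to be the relative mutation step. The identification $\langle\mathcal{D}_{e-1},\dots,\mathcal{D}_{e-k}\rangle\simeq\langle\Theta_{-k},\dots,\Theta_{-1}\rangle$ requires the mutation functors of admissible subcategories to be $\mathrm{D}^b(C)$-linear. I would try to get this from the projection formula $f_{E_\mathcal{Y}}^*F\otimes-$ commuting with the mutations through $f_{E_\mathcal{Y}}^*\mathrm{D}^b(C)\otimes\mathcal{O}(i)$, since these are $C$-linear semiorthogonal decompositions in Kuznetsov's sense.
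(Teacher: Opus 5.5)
Your proposal is correct and follows essentially the paper's route. The paper proves this statement only by rerunning the point-center argument with $\mathrm{D}^b(C)$-coefficients supplied by the Dévissage lemma, and with the exceptional objects replaced by the admissible blocks $f_{E_\mathcal{Y}}^*\mathrm{D}^b(C)\otimes\mathcal{O}_{E_\mathcal{Y}}(i)$: full faithfulness via $\pi_*\varpi^!\Theta\simeq\mathrm{id}$ on a spanning class, the vanishings of Lemmas~\ref{genlemma} and~\ref{genlemma1}, and generation through the adjunction triangle, the kernel description and Lemma~\ref{mutationlemma}. Your explicit point that the relative mutation step needs $C$-linear mutation functors is exactly what the paper leaves implicit in its phrase ``mutations with respect to admissible components''.
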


We now consider the global situation. Let $\mathcal{X}$ be a well-formed stack over an infinite field $\mathrm{k}$. Assume that it admits a well-formed neighborhood along a singular center $C$, which is formal locally given by  a complete intersection in an affine GIT quotient. Moreover, there exist global GIT coordinates $x_1,\dots,x_s$ equipped with a $\mu_r$-action with weights $a_i$, inducing all local coordinate quotients. We assume that the defining ideal $I$ of the well-formed singularity can be realized as a complete intersection in GIT coordinates and satisfies the measured condition (\ref{cond:measured}). Then we can then define the Kawamata blow-up of $\mathcal{X}$. Namely, for any well-formed atlas locally of the form $[U:=\operatorname{Spec} R/\mu_r]$, we consider the valuation induced by the weights of the quotient, this gives rise to a natural weighted valuation
\[
val : K([R/\mu_r]) \longrightarrow \mathbb{Z},
\]
which in turn defines a filtration of ideals $\{P_i\}$. Using this filtration, one constructs the Kawamata blow-up as in the formal case. Since the weights are globally defined, these local constructions glue compatibly. Moreover, the resulting formal completion along the singular locus $C$ agrees with the previously described formal model. We may even assume that $\mathcal{Y}$ is not well-formed,  but once $\mathcal{Y}$ is well-formed (or $Y$ is well-formed and the exceptional divisor $E_{Y}$ is prime), we have $\mathcal{Y}\simeq \widetilde{Y}$.
\[
\begin{tikzcd}
\mathcal{Z} :=w\mathrm{Bl}^{(a_i)}_C \mathcal{X} \arrow[r,"\varpi"] \arrow[d,"\pi"] & \mathcal{Y} := \widetilde{w\mathrm{Bl}^{(a_i)/r}_C X} \arrow[d] \\
\mathcal{X} \arrow[r] & X
\end{tikzcd}
\]
The morphism $\pi$ is typically not representable. Nevertheless, one can consider the derived functors $\pi_* : \mathrm{D}^+(\mathrm{QCoh}(\mathcal{Z})) \to \mathrm{D}^+(\mathrm{QCoh}(\mathcal{X}))$ and $\pi^* : \mathrm{D}^-(\mathrm{QCoh}(\mathcal{X})) \to \mathrm{D}^-(\mathrm{QCoh}(\mathcal{Z}))$, which later satisfy flat base change on $\mathrm{QCoh}$. This allows us to avoid imposing additional assumptions such as the resolution property on $\mathcal{X}$, and yields induced functors $\pi^* : \mathrm{D}^-(\mathrm{Coh}(\mathcal{X})) \to \mathrm{D}^-(\mathrm{Coh}(\mathcal{Z}))$ and $\pi^! : \mathrm{D}^+(\mathrm{Coh}(\mathcal{Z})) \to \mathrm{D}^+(\mathrm{Coh}(\mathcal{X}))$, compatible with the corresponding local constructions. By the same classical arguments, for any $F \in \mathrm{D}^b(\mathcal{X})$, we consider its formal completion along $C$ under a local atlas, e.g. $\widehat{F} \in \mathrm{D}^b([U/\mu_r]^\wedge_C)$. One constructs $\mathcal{R}\pi^*\widehat{F}$ fitting into a distinguished triangle $\pi^*(\widehat{F}) \to \mathcal{R}\pi^*(\widehat{F}) \to \tau$, where $\tau$ is a bounded above complex supported on the exceptional divisor $\mathrm{M}$. This yields an algebraic counterpart \[\pi^*F \to \mathcal{R}\pi^*F \to \tau\in \Delta\,\] and the correspondence is bijective by construction. Gluing over the atlas then produces a global functor $\mathcal{R}\pi^* : \mathrm{D}^b(\mathcal{X}) \to \mathrm{D}^b(\mathcal{Z})/\mathcal{K}_{-1}^+$.\\

In this situation, we use a completely analogous local-to-global argument as in \cite[Lemma~3.13]{Hao2025b} (possibly in a different version). We have:
\begin{proposition}
The stacky Kawamata blow-up admits a semi-orthogonal decomposition
\[
\mathrm{D}^b(\mathcal{Y})/\mathcal{K}_{-1}
=
\left\langle
\operatorname{Im}\Theta_{-e+r},\ldots,\operatorname{Im}\Theta_{-1},\operatorname{Im}\Theta
\right\rangle,
\]
here the functors $\Theta$ and $\Theta_i$ are defined via the following diagram:
\[
\begin{tikzcd}
E_{\mathcal{Y}} \arrow[r,"\iota"]\arrow[d,"f_{E_{\mathcal{Y}}}"]
& \mathcal{Y}\arrow[d,"f"]
& \sqrt[r]{E_{\mathcal{Y}}/\mathcal{Y}} \arrow[r,"\varpi"]\arrow[d,"\pi"]
& \mathcal{Y}\arrow[d,"f"] \\
C \arrow[r,"\kappa_X"]
& X
& \mathcal{X} \arrow[r,"\varepsilon"]
& X
\end{tikzcd}
\]
we set
\[
\Theta(-):=\varpi_* \mathcal{R}\pi^*(-),
\qquad
\Theta_i(-):=\iota_* f_{E_{\mathcal{Y}}}^*(-)\otimes \mathcal{O}_{E_{\mathcal{Y}}}(i),
\quad i\in\mathbb{Z}.
\]
where $\Theta$ defines a well-defined fully faithful admissible triangulated functor
\[
\Theta:\mathrm{D}^b(\mathcal{X}) \to \mathrm{D}^b(\mathcal{Y})/\mathcal{K}_{-1},
\]
assuming $e \ge r$.
\end{proposition}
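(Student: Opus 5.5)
The plan is to deduce the global statement from the local formal version (the preceding proposition for a formal well-formed neighbourhood along $C$) by a local-to-global argument in the spirit of \cite[Lemma~3.13]{Hao2025b}. Since $f$ is an isomorphism away from $C$, every relevant object that could obstruct the decomposition is supported over $C$. Checking fully faithfulness, semi-orthogonality and generation can therefore be reduced to computations on the formal completion $\widehat{\mathcal{X}}_C$ and its Kawamata blow-up, where the local proposition applies.

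\textbf{Step 1: well-definedness and adjoints.} I would first show that $\Theta=\varpi_*\mathcal{R}\pi^*$ is well defined with values in $\mathrm{D}^b(\mathcal{Y})/\mathcal{K}_{-1}$. The global $\mathcal{R}\pi^*$ has already been constructed by gluing, via $\pi^*F\to\mathcal{R}\pi^*F\to\tau$ with $\tau$ supported on $\mathrm{M}$. Any ambiguity lies in $\mathcal{K}_{-1}^+$, which $\varpi_*$ sends into $\mathcal{K}_{-1}$. Next, the right adjoint $\pi_*\varpi^!$ and the left adjoint $\pi_!\varpi^*$ descend to the quotient, using the global analogue of the lemma giving $\pi_*\varpi^!\iota_*N=0$ for $N\in\mathcal{A}_{-1}$ when $e\ge r$. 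Both functors commute with flat base change to the completion, by Lemma~\ref{fbcofroot} and the usual completion arguments, so this vanishing can be checked formally locally. The local adjunction bijections of the earlier lemmas then glue, because the cones involved are supported on $E_{\mathcal{Y}}$ and Hom spaces between objects supported over $C$ can be computed on the completion.

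\textbf{Step 2: full faithfulness and the $\Theta_i$.} For full faithfulness I would prove $\pi_*\varpi^!\Theta\simeq\mathrm{id}$, which is Corollary~\ref{gencor1} globally. The unit is a globally defined morphism that is an isomorphism off $C$, and after completion along $C$ it becomes the local isomorphism, so it is an isomorphism. For the $\Theta_i$, I would show that each $\Theta_i$ with $-(e-r)\le i\le -1$ is fully faithful and that the required orthogonalities among the $\Theta_i$ and $\Theta$ hold. This reduces, by the excess triangle (Lemma~\ref{excdis}) and $\mathcal{O}_{E_{\mathcal{Y}}}(-E_{\mathcal{Y}})\simeq\mathcal{O}_{E_{\mathcal{Y}}}(r)$, to relative statements over $C$. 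These are the global forms of Lemma~\ref{genlemma} and Lemma~\ref{genlemma1}, obtained by replacing $\mathcal{O}_C$ with $\mathrm{D}^b(C)$ through the Dévissage lemma and the projection formula for $f_{E_{\mathcal{Y}}}$.

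\textbf{Step 3: generation, the main obstacle.} Take $b$ and form the triangle $\Theta\Theta^!b\to b\to c$. Then $\pi_*\varpi^!c=0$, so $\varpi^!c$ lies in $\ker\pi_*$. The hard part is the global version of Lemma~\ref{kernallemma}: a global object killed by $\pi_*$ must be generated by $\iota^+_*(f^*\mathrm{D}^b(C)\otimes\mathcal{O}_{\mathrm{M}}(-j))\otimes\nu^i$ together with $\iota^+_*\mathcal{A}_{-1}$. I would first show that such an object is set-theoretically supported on $\mathrm{M}$ and that its formal completion determines it. I would then use the local statement to filter its cohomology sheaves by objects pulled back from $\mathrm{D}^b(C)$, which makes sense globally because $E_{\mathcal{Y}}\to C$ is a weighted projective complete-intersection fibration with the relative semi-orthogonal decomposition stated above.

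Once $c\simeq\varpi_*\varpi^!c$ is known to lie in $\langle\Theta_{-e+1},\dots,\Theta_{-1}\rangle$ modulo $\mathcal{K}_{-1}$, the relative mutation of Lemma~\ref{mutationlemma} finishes the argument. The components $\Theta_{-e+1},\dots,\Theta_{-e+r-1}$, corresponding to the $\mathcal{D}_j$ with $j<r$, are absorbed into $\operatorname{Im}\Theta$ via the filtration (\ref{pullbackcompu1}) of $\Theta(\kappa_*F\otimes\nu^i)$. I expect this gluing of the kernel description to be the delicate step, and I would try to handle it by working with $\mathrm{D}^b_{E_{\mathcal{Y}}}$ and Thomason–Trobaugh-type localization along $C$.
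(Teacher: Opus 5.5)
Your proposal is correct and follows essentially the same route as the paper. The paper proves the global statement by the same local-to-global reduction to the formal model along $C$, citing \cite[Lemma~3.13]{Hao2025b}. In that formal model the argument is exactly your Steps 1--3: the adjoint triple $\pi_!\varpi^*\dashv\Theta\dashv\pi_*\varpi^!$, the identity $\pi_*\varpi^!\Theta\simeq\mathrm{id}$, the excess-triangle computations of Lemmas~\ref{genlemma} and~\ref{genlemma1}, and generation via the triangle $\Theta\Theta^!b\to b\to c$ together with the kernel description, Lemma~\ref{mutationlemma} and the filtration (\ref{pullbackcompu1}). The only difference is that you make explicit, and suggest a mechanism for, the gluing of the global description of $\ker\pi_*$, which the paper leaves implicit in that citation.
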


\subsubsection{Kawamata blow-up II}
In this section, we consider the stacky Kawamata blow-up introduced above and study the case $e < r$:
\[
\begin{tikzcd}[column sep=3.2em, row sep=2.2em]
\mathcal{Z} := w\mathrm{Bl}^{(a_i)}_C \mathcal{X}
\arrow[r,"\varpi"] \arrow[d,"\pi"']
& \mathcal{Y} := \widetilde{w\mathrm{Bl}^{(a_i/r)}_C X} \arrow[d,"f"] \\
\mathcal{X} \arrow[r,"\varepsilon"]
& X
\end{tikzcd}
\]
We first consider the local model. One can define a natural functor
\[
\varpi_* \mathcal{R}\pi^* : \mathrm{D}^b(\mathcal{X}) \longrightarrow \mathrm{D}^b(\mathcal{Y})/\mathcal{K}_{-1},
\]
which satisfies\label{pullbackcomII}
\[
\begin{tikzcd}[column sep=0.6em]
\varpi_*\pi^*(\kappa_*\mathcal{O}_C\otimes \nu^i)\arrow{rr} && \mathcal{F}_{(i)}^s([\mathcal{O}_{C}]^+)\arrow{dl}\arrow{rr} && \mathcal{F}_{(i)}^{s-1}\arrow{dl} \\
& \tau^{(i)}_s\arrow[ul,dashed,"\Delta"] && \iota_*\mathcal{D}_{sr+i}[1]\arrow[ul,dashed,"\Delta"]
\end{tikzcd}
\quad\cdots\quad
\begin{tikzcd}[column sep=0.6em]
& \mathcal{F}_{(i)}^1\arrow{rr} && \mathcal{F}_{(i)}^0\arrow{rr}\arrow{dl} && 0\arrow{dl} \\
&& \iota_*\mathcal{D}_{r+i}[1]\arrow[ul,dashed,"\Delta"] && \iota_*\mathcal{D}_{i}[1]\arrow[ul,dashed,"\Delta"]
\end{tikzcd}
\]
For any $i\geq e$, all the above factors lie in $\mathcal{K}_{-1}$. Moreover, we have computed that
\[
\varpi_* \mathcal{R}\pi^*\bigl(\kappa_*\mathcal{O}_C\otimes \nu^i\bigr)=0.
\]
Hence we consider the full saturated triangulated subcategory
\[
\mathcal{L}_- := \left\langle \kappa_*\mathcal{O}_C\otimes \nu^{e},\ldots,\kappa_*\mathcal{O}_C\otimes \nu^{r-1} \right\rangle.
\]
By the triangulated property of the functor, we have $\varpi_* \mathcal{R}\pi^*(\mathcal{L}_-)=0$, and therefore it induces a well-defined triangulated functor
\[
\varpi_* \mathcal{R}\pi^* : \mathrm{D}^b(\mathcal{X})/\mathcal{L}_- \longrightarrow \mathrm{D}^b(\mathcal{Y})/\mathcal{K}_{-1}.
\]
Similarly, in the opposite direction, we define a functor
\[
\pi_* \varpi^! : \mathrm{D}^b(\mathcal{Y}) \longrightarrow \mathrm{D}^b(\mathcal{X})/\mathcal{L}_-.
\]
For any $N \in \mathcal{A}_{-1}$, the object $\varpi^!\iota_*N$ admits a filtration whose factors are of the form $\iota^+_*N(-l)\otimes \nu^l$ for $l=0,\ldots,r-1$, where $N(-l)\in \mathcal{A}_{-1-l}$. For $0\le l< e$, we have $\mathrm{H}^*(E_\mathcal{Y}, N(-l))=0$, while for $e\le l<r$, the objects $\pi_*\iota^+_*N(-l)\otimes \nu^l$ are supported on $C$ and admit equivariant presentations $\nu^l$. It follows that
\[
\pi_*\varpi^!\iota_*N=0 \quad \text{in } \mathrm{D}^b(\mathcal{X})/\mathcal{L}_-.
\]
Consequently, $\pi_* \varpi^!$ descends to a well-defined triangulated functor
\[
\pi_* \varpi^! : \mathrm{D}^b(\mathcal{Y})/\mathcal{K}_{-1} \longrightarrow \mathrm{D}^b(\mathcal{X})/\mathcal{L}_-.
\]
\begin{lemma}
The functors defined above form an adjoint pair
\[
\varpi_* \mathcal{R}\pi^* \;\dashv\; \pi_* \varpi^!.
\]
\end{lemma}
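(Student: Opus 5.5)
I will follow the proof of the analogous adjunction in the case $e \ge r$ almost word for word, replacing $\mathrm{D}^b(\mathcal{X})$ by the Verdier quotient $\mathrm{D}^b(\mathcal{X})/\mathcal{L}_-$. Concretely, for $F \in \mathrm{D}^b(\mathcal{X})$ and $G \in \mathrm{D}^b(\mathcal{Y})$, I want a natural bijection
\[
\operatorname{Hom}_{\mathrm{D}^b(\mathcal{Y})/\mathcal{K}_{-1}}\bigl(\varpi_*\mathcal{R}\pi^*F,\,G\bigr)
\simeq
\operatorname{Hom}_{\mathrm{D}^b(\mathcal{X})/\mathcal{L}_-}\bigl(F,\,\pi_*\varpi^!G\bigr),
\]
compatible with shifts. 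Both sides only depend on the images of $F$ and $G$ in the respective quotients, since both functors were shown above to descend.

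\textbf{Forward map.} A roof $F \xleftarrow{u} F' \xrightarrow{g} \pi_*\varpi^!G$ with $\mathrm{Cone}(u)\in\mathcal{L}_-$ is sent to the following roof. First apply $\mathcal{R}^{s}\pi^*$ for $s\gg0$ to $g$, and use the adjunction $\pi^*\dashv\pi_*$ on $\mathcal{Z}$. This gives $\mathcal{R}^{s}\pi^*F' \to \varpi^!G$, where $\mathcal{R}^s\pi^*F'\to\mathcal{R}\pi^*F'$ has cone in $\mathcal{K}^+_{-1}$. Next compose with the counit $\varpi_*\varpi^!\to\mathrm{id}$, obtaining $\varpi_*\mathcal{R}^{s}\pi^*F'\to G$. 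Finally invert $\varpi_*$ of the structure map, whose cone lies in $\varpi_*\mathcal{K}^+_{-1}\subset\mathcal{K}_{-1}$. To see that the result only depends on the class of the roof, I use that $\varpi_*\mathcal{R}\pi^*(u)$ is an isomorphism in $\mathrm{D}^b(\mathcal{Y})/\mathcal{K}_{-1}$, because $\varpi_*\mathcal{R}\pi^*(\mathcal{L}_-)=0$ was computed above.

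\textbf{Backward map.} A roof $\varpi_*\mathcal{R}\pi^*F \xrightarrow{f} G^- \xleftarrow{t} G$ with $\mathrm{Cone}(t)\in\mathcal{K}_{-1}$ is sent to $\pi_*\varpi^!f$, precomposed with the unit $F\to\pi_*\varpi^!\varpi_*\mathcal{R}\pi^*F$. The latter is obtained from $\mathrm{id}\to\varpi^!\varpi_*$ together with $\pi_*\mathcal{R}\pi^*\simeq\mathrm{id}$ (Corollary 3.4.7 of \cite{Hao2025b}). This map is well defined because $\pi_*\varpi^!t$ is invertible in $\mathrm{D}^b(\mathcal{X})/\mathcal{L}_-$, as shown in the paragraph preceding the lemma.

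To see that the two maps are mutually inverse, I plan to work, as in the earlier proof, inside $\mathrm{D}^b(\mathcal{Z})$ modulo the thick subcategory generated by $\varpi^!\mathcal{K}_{-1}$, $\mathcal{K}^+_{-1}$ and $\mathcal{R}\pi^*\mathcal{L}_-$, and take pushouts of roofs there. The triangle identities then reduce to those of $\varpi_*\dashv\varpi^!$ and of $\pi^*\dashv\pi_*$ (via the approximations $\mathcal{R}^s\pi^*$). Naturality in $F$ and $G$ is formal.

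\textbf{Main obstacle.} The step I expect to be hardest is the unit for objects of $\mathcal{L}_-$. One must check that $\pi_*\varpi^!\varpi_*\mathcal{R}\pi^*$ kills $\kappa_*\mathcal{O}_C\otimes\nu^i$ for $e\le i<r$ modulo $\mathcal{L}_-$, and is the identity on $\kappa_*\mathcal{O}_C\otimes\nu^i$ for $i<e$. I would verify this by redoing the computation of the earlier lemma with the filtration (\ref{pullbackcompu1}). The factors $\iota_*\mathcal{D}_{\lambda r+i}$ with $\lambda r+i<e$ force $\lambda=0$ and $i<e$, and via Lemma~\ref{orddualcomp} they contribute exactly $\mathcal{O}_C\otimes\nu^i$. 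The remaining contributions (from $\mathcal{D}_{\lambda r+i}\in\mathcal{A}_{-1}$, and the twists by $\nu^l$ with $e\le l<r$ produced by Proposition~\ref{derivedroot}) are supported on $C$ with characters in $\{\nu^e,\dots,\nu^{r-1}\}$, hence lie in $\mathcal{L}_-$. Since the $\kappa_*\mathcal{O}_C\otimes\nu^i$ form a spanning class (Lemma~\ref{ptspanningclass} and the Koszul argument of Corollary~\ref{gencor}), this identification extends to all $F$ by dévissage.
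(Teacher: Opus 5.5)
Your forward and backward roof constructions are the paper's own proof. The forward map goes through $\varpi_*\mathcal{R}^{s\gg0}\pi^*$ and the adjunction $\pi^*\dashv\pi_*$. The backward map applies $\pi_*\varpi^!$ and uses $\pi_*\varpi^!\mathcal{K}_{-1}\subset\mathcal{L}_-$. Mutual inverseness comes from taking pushouts in a quotient of $\mathrm{D}^b(\mathcal{Z})$. You are in fact more explicit than the paper about the unit $F\to\pi_*\varpi^!\varpi_*\mathcal{R}\pi^*F$.

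Drop your ``main obstacle'' paragraph, for two reasons:
\begin{itemize}
\item It is not part of the adjunction. An adjunction does not require the unit to be invertible. For objects of $\mathcal{L}_-$ there is nothing to check, since both functors already descend to the quotients. The claim that the unit is the identity on $\kappa_*\mathcal{O}_C\otimes\nu^i$ for $i<e$ is the paper's subsequent Lemma~\ref{adjucomp}, which is used only for full faithfulness.
\item Its final step is wrong. The objects $\kappa_*\mathcal{O}_C\otimes\nu^i$ form a spanning class, not a generating set: they only build objects supported on $C$, not for instance $\mathcal{O}_{\mathcal{X}}$. So nothing extends to all $F$ ``by dévissage''. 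The paper reaches arbitrary objects through the spanning-class criterion \cite[Proposition 1.49]{Huybrechts}, and that criterion presupposes the very adjunction you are proving, so invoking it here would be circular.
\end{itemize}
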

\begin{proof}
We consider the natural map
\[
\left[
\begin{tikzcd}[column sep=small, row sep=small]
\varpi_* \mathcal{R}\pi^* F \arrow[dr, "f"] & & G \arrow[dl, "t"'] \\
& G^- &
\end{tikzcd}
\right]
\quad \longmapsto \quad
\left[\begin{tikzcd}[column sep=small, row sep=small]
F \arrow[dr, "\pi_*\varpi^!f"] & & \pi_*\varpi^!G \arrow[dl, "\pi_*\varpi^!t"'] \\
& \pi_*\varpi^!G^- &
\end{tikzcd}\right],
\]
then use the following construction to show that the map is natural and surjective.
\[
\left[
\begin{tikzcd}[column sep=small, row sep=small]
& F^+ \arrow[dl, "s"'] \arrow[dr, "g"] & \\
F & & \pi_*\varpi^!G
\end{tikzcd}
\right]
\quad \longmapsto \quad
\left[
\begin{tikzcd}[column sep=small, row sep=small]
& \varpi_* \mathcal{R}^{s\gg0}\pi^*F^+ \arrow[dl, "\varpi_*\mathcal{R}\pi^*s"'] \arrow[dr, "\varpi_*\pi^*g"] & \\
\varpi_* \mathcal{R}\pi^* F & & G
\end{tikzcd}
\right].
\]
The proof of injectivity is also similar; in fact, the second map provides the inverse. Hence it induces the adjunction.
\end{proof}
Similarly, we obtain a well-defined triangulated functor
\[
\pi_! \varpi^* :
\mathrm{D}^b(\mathcal{Y})/\mathcal{K}_{-1}
\longrightarrow
\mathrm{D}^b(\mathcal{X})/\mathcal{L}_-,
\]
together with the following properties.
\begin{lemma}
The functor $\varpi_* \mathcal{R}\pi^*$ is admissible and fits into an adjoint triple
\[
\pi_! \varpi^* \;\dashv\; \varpi_* \mathcal{R}\pi^* \;\dashv\; \pi_* \varpi^!.
\]
\end{lemma}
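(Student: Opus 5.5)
The proof will follow the $e\ge r$ case, where the adjoint triple $\pi_!\varpi^*\dashv\varpi_*\mathcal{R}\pi^*\dashv\pi_*\varpi^!$ was obtained from an explicit bijection of roofs. The right adjunction $\varpi_*\mathcal{R}\pi^*\dashv\pi_*\varpi^!$ is the preceding lemma, so only the left adjunction $\pi_!\varpi^*\dashv\varpi_*\mathcal{R}\pi^*$ between $\mathrm{D}^b(\mathcal{Y})/\mathcal{K}_{-1}$ and $\mathrm{D}^b(\mathcal{X})/\mathcal{L}_-$ remains. Admissibility then follows formally, since a functor with both adjoints between these quotient categories is admissible once its image is known to be a full subcategory. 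I would deduce fully faithfulness later via the spanning class, exactly as in Corollary~\ref{gencor}.

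\textbf{Step 1: $\pi_!\varpi^*$ descends to $\mathrm{D}^b(\mathcal{Y})/\mathcal{K}_{-1}$.} For $N\in\mathcal{A}_{-1}$, Proposition~\ref{derivedroot} gives $\varpi^*\iota_*N$ a filtration with factors $\iota^+_*N(j)\otimes\nu^{-j}$. After twisting by $\mathcal{O}(\mathcal{K}_{\mathcal{Z}/\mathcal{X}})$, these factors become $\iota^+_*\mathcal{A}_l$ twisted by representations, with $l$ in a window of length $r$ ending at $-e$.
\begin{itemize}
\item For those $l$ with $-e\le l\le -1$, the factor lies in $\langle\mathcal{O}_{E_\mathcal{Y}}(-e+1),\dots,\mathcal{O}_{E_\mathcal{Y}}\rangle^\perp$, so $\pi_*$ kills it.
\item Since now $e<r$, the remaining factors do not vanish under $\pi_*$. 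Each is supported on $C$ and carries an equivariant weight $\nu^l$ with $e\le l<r$, by the same bookkeeping used for $\pi_*\varpi^!$ above. Hence these factors lie in $\mathcal{L}_-$.
\end{itemize}
Together this gives $\pi_!\varpi^*\iota_*N\in\mathcal{L}_-$, so $\pi_!\varpi^*$ induces a triangulated functor between the quotients.

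\textbf{Step 2: the bijection of roofs.} Given $g\colon\pi_!\varpi^*G\to F$ in $\mathrm{D}^b(\mathcal{X})/\mathcal{L}_-$, I would represent it by a roof through some $F^-$ with cone in $\mathcal{L}_-$. Using $\pi_!\dashv\mathcal{R}\pi^*$ on $\mathcal{Z}$ (Proposition~3.4 of \cite{Hao2025b}), this corresponds to a map $\varpi^*G\to\mathcal{R}^{s\gg0}\pi^*F^-$. I then apply $\varpi_*$ and use $\varpi_*\varpi^*\simeq\mathrm{id}$, which holds by Proposition~\ref{derivedroot}, to get $G\to\varpi_*\mathcal{R}\pi^*F^-$. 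The inverse map sends a roof $G\to H\leftarrow\varpi_*\mathcal{R}\pi^*F$ to its image under $\pi_!\varpi^*$. This is then composed with the counit $\pi_!\varpi^*\varpi_*\mathcal{R}\pi^*\to\mathrm{id}$, which is an isomorphism modulo $\mathcal{L}_-$ by the analogue of Corollary~\ref{gencor1}.

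\textbf{Step 3 (main obstacle): well-definedness under the two localizations.} The $\mathcal{K}_{-1}$ side is handled by Step 1. On the $\mathcal{L}_-$ side, I must show that $\varpi_*\mathcal{R}\pi^*$ sends the cone $\mathcal{L}_-$ of $F^-\to F$ into $\mathcal{K}_{-1}$. This is the vanishing $\varpi_*\mathcal{R}\pi^*(\kappa_*\mathcal{O}_C\otimes\nu^i)=0$ modulo $\mathcal{K}_{-1}$ for $e\le i<r$, obtained from filtration~(\ref{pullbackcomII}) because every factor $\mathcal{D}_{sr+i}$ with $sr+i\ge e$ lies in $\mathcal{A}_{-1}$. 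The delicate point is that the truncation level $s\gg0$ must be chosen uniformly, so that the composites of roofs are independent of $s$. For this I would use that the cones of the transition maps $\mathcal{R}^{\mathbf t}\pi^*\to\mathcal{R}^{\mathbf s}\pi^*$ lie in $\mathcal{K}_{-1}^+$, as in Principle~\ref{P1}, and pass to the quotient of $\mathrm{D}^b(\mathcal{Z})$ by the thick subcategory generated by $\varpi^*\mathcal{K}_{-1}$ and $\mathcal{K}_{-1}^+$, as in the $e\ge r$ proof. Once Step 3 holds, naturality in both variables is formal.
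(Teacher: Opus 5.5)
Your proposal is correct and follows the route the paper intends. The paper gives no separate proof here, only ``similarly'' from the $e\ge r$ case, and that route is what you describe: check $\pi_!\varpi^*\mathcal{K}_{-1}\subset\mathcal{L}_-$ and $\varpi_*\mathcal{R}\pi^*\mathcal{L}_-\subset\mathcal{K}_{-1}$, then build the roof bijection from the $\mathcal{Z}$-level adjunction $\pi_!\dashv\mathcal{R}\pi^*\simeq\widetilde{\mathcal{R}}\pi^*$ together with $\varpi_*\varpi^*\simeq\mathrm{id}$; your Step~1 bookkeeping (the factors surviving $\pi_*$ carry $\nu^{m}$ with $e\le m<r$) is right with the paper's conventions.

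One caution on the inverse map. You only need the counit $\pi_!\varpi^*\varpi_*\mathcal{R}\pi^*\to\mathrm{id}$ to exist (e.g.\ $\varpi^*\varpi_*\to\mathrm{id}$ followed by the counit of $\pi_!\dashv\mathcal{R}\pi^*$), not to be invertible. You should not invoke an analogue of Corollary~\ref{gencor1} there, because invertibility of that counit is equivalent to the full faithfulness you only deduce afterwards from the spanning-class criterion, and that criterion itself needs the left adjoint being constructed here.
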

Similarly, by a dévissage argument, the above results extend to the global situation. The natural replacement is the full saturated triangulated subcategory
\[
\mathcal{L}_- :=
\left\langle
\kappa_*\mathrm{D}^b(C)\otimes \nu^{e},\ldots,
\kappa_*\mathrm{D}^b(C)\otimes \nu^{r-1}
\right\rangle.
\]
Now we consider a global well-formed stack $\mathcal{X}$, and have the following:
\begin{lemma}
Consider the following collection in $\mathrm{D}^b(\mathcal{X})/\mathcal{L}_-$:
\[
\Omega :=
\left\{
\kappa_*\mathrm{D}^b(C),\,
\kappa_*\mathrm{D}^b(C)\otimes \nu^1,\,
\ldots,\,
\kappa_*\mathrm{D}^b(C)\otimes \nu^{e-1}
\right\}
\cup
\{\mathrm{k}_x\otimes\chi \mid x \text{ is a closed point not in } C,\ \chi\}.
\]
Then $\Omega$ forms a spanning class in $\mathrm{D}^b(\mathcal{X})/\mathcal{L}_-$. Moreover, we may replace $\mathrm{D}^b(C)$ here by any its strong generator.
\end{lemma}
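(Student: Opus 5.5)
To show $\Omega$ is a spanning class in $\mathrm{D}^b(\mathcal{X})/\mathcal{L}_-$, I would reduce everything to Lemma~\ref{ptspanningclass} for $\mathcal{X}$, together with the description of morphism spaces in a Verdier quotient by an admissible subcategory. The first step is to check that $\mathcal{L}_-$ is admissible in $\mathrm{D}^b(\mathcal{X})$. Étale locally along $C$ the objects $\kappa_*\mathcal{O}_C\otimes\nu^j$ are pairwise orthogonal for distinct $j$ after the dévissage along $C$. So $\mathrm{D}^b(\mathcal{X})/\mathcal{L}_-$ is identified with both ${}^\perp\mathcal{L}_-$ and $\mathcal{L}_-^\perp$, and for $G\in\mathcal{L}_-^\perp$ we have
\[
\operatorname{Hom}_{\mathrm{D}^b(\mathcal{X})/\mathcal{L}_-}(F,G)=\operatorname{Hom}_{\mathrm{D}^b(\mathcal{X})}(F,G).
\]
If admissibility is awkward globally, I would instead argue directly with roofs, using only that $\mathcal{L}_-$ is generated by objects supported on $C$.

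Next, suppose $G\in\mathrm{D}^b(\mathcal{X})/\mathcal{L}_-$ satisfies $\operatorname{Hom}^*(\omega,G)=0$ for all $\omega\in\Omega$. Represent $G$ by its right projection into $\mathcal{L}_-^\perp$. Then $\operatorname{Hom}^*(\kappa_*F\otimes\nu^j,G)=0$ holds in $\mathrm{D}^b(\mathcal{X})$ for every $F\in\mathrm{D}^b(C)$ and all $0\le j\le r-1$: for $j<e$ by hypothesis, and for $j\ge e$ because $G\in\mathcal{L}_-^\perp$. Every stacky closed point $x\in C$ satisfies $\mathrm{k}_x\otimes\nu^j=\kappa_*(\mathrm{k}_x)\otimes\nu^j$ with $\mathrm{k}_x\in\mathrm{D}^b(C)$, since $C$ is the fixed locus and carries the trivial action twisted by $\nu^j$. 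The hypothesis on points off $C$ covers the remaining skyscrapers. So $G$ is left-orthogonal to the full spanning class of Lemma~\ref{ptspanningclass}, hence $G=0$ in $\mathrm{D}^b(\mathcal{X})$ and a fortiori in the quotient. The other half, $\operatorname{Hom}^*(G,\omega)=0\Rightarrow G=0$, follows symmetrically using ${}^\perp\mathcal{L}_-$ and the left projection. Alternatively it follows from the dualizing functor, which, as in Lemma~\ref{ptspanningclass}, preserves the class up to twisting $\nu^j\mapsto\nu^{-j}$ and the shift. I expect this duality bookkeeping to be the main obstacle: one must check that it maps $\mathcal{L}_-$ to itself, or else give the direct left-projection argument.

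For the final claim, let $\mathcal{G}$ be a strong generator of $\mathrm{D}^b(C)$. Every $F\in\mathrm{D}^b(C)$ lies in a finite-step thick closure of $\mathcal{G}$, and orthogonality to $\kappa_*\mathcal{G}\otimes\nu^j$ propagates through cones, shifts and summands by the long exact sequences. So vanishing against $\kappa_*\mathcal{G}\otimes\nu^j$ implies vanishing against all of $\kappa_*\mathrm{D}^b(C)\otimes\nu^j$, and the argument above applies unchanged.
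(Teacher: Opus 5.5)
There is a genuine gap. Your whole reduction to Lemma~\ref{ptspanningclass} depends on $\mathcal{L}_-$ being right (resp.\ left) admissible in $\mathrm{D}^b(\mathcal{X})$. That is the only thing that lets you choose a lift of $G$ in $\mathcal{L}_-^{\perp}$ and so obtain $\operatorname{Hom}^*_{\mathrm{D}^b(\mathcal{X})}(\kappa_*F\otimes\nu^j,G)=0$ for $j\ge e$.

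Your justification for admissibility is false. The twists are not pairwise orthogonal: already for $C$ a point, $\operatorname{Ext}^1_{\mathcal{X}}(\mathrm{k}_x\otimes\nu^i,\mathrm{k}_x\otimes\nu^j)$ is the $\nu^{j-i}$-isotypic part of the Zariski tangent space at $x$. This is nonzero whenever $j-i\equiv\pm a_l$ for a coordinate weight $a_l\not\equiv 0$ (these are the arrows of the McKay quiver).

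Orthogonality would not suffice anyway. A right adjoint evaluated on $\mathcal{O}_{\mathcal{X}}\otimes\nu^m$ would have to represent $\operatorname{RHom}(-,\mathcal{O}_{\mathcal{X}}\otimes\nu^m)$ on $\mathcal{L}_-$. In effect it would play the role of a piece of the local cohomology $\mathrm{R}\Gamma_C\mathcal{O}_{\mathcal{X}}$, which is not coherent. A concrete instance: for $[\mathbb{A}^3/\mu_5]$ with weights $(1,2,3)$, one has $\operatorname{RHom}(\mathrm{k}_0\otimes\nu^j,\mathrm{k}_0\otimes\nu^j)\simeq \mathrm{k}[\epsilon]/\epsilon^2$ with $\deg\epsilon=2$. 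This comes from the invariant part of $\wedge^2$ of the tangent representation. For suitable $m$, the module $\operatorname{RHom}(\mathrm{k}_0\otimes\nu^j,\mathcal{O}\otimes\nu^m)$ is one-dimensional in degree $3$. It is not perfect over $\mathrm{k}[\epsilon]/\epsilon^2$, since its Tor against the residue field is infinite-dimensional. Hence even $\langle \mathrm{k}_0\otimes\nu^j\rangle$ is not right admissible.

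So ``represent $G$ by its right projection'' has no content in general, and the same problem hits the left projection. Your worry about duality is also justified: $\mathcal{R}\mathcal{H}om(-,\omega_{\mathcal{X}})$ sends $\nu^j$ to $\nu^{c-j}$, with $c$ determined by the character of $\omega_{\mathcal{X}}|_C$, and this need not preserve $\{e,\ldots,r-1\}$.

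Your fallback of arguing directly in the quotient is where the real argument lives, but it is not carried out. The missing idea, which is the paper's route, is to stay in the quotient and use support plus dévissage.

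\textbf{Step 1 (support).} The skyscrapers $\mathrm{k}_x\otimes\chi$ with $x\notin C$ have support disjoint from $\mathcal{L}_-$. They therefore lie in both $\mathcal{L}_-^{\perp}$ and ${}^{\perp}\mathcal{L}_-$, so quotient Homs to and from them equal the honest Homs in $\mathrm{D}^b(\mathcal{X})$. Vanishing of these Homs forces any representative of $G$ to have cohomology supported on $C$.

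\textbf{Step 2 (dévissage).} Such an object lies in the thick subcategory generated by $\kappa_*\mathrm{D}^b(C)\otimes\nu^i$, $0\le i<r$. To see this, filter each cohomology sheaf by powers of the ideal of $C$ and split the graded pieces into isotypic components; this works because $\mu_r$ acts trivially on $C$. Hence in the quotient $G$ lies in the thick subcategory generated by the pieces with $i<e$.

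\textbf{Step 3 (conclusion).} The hypothesis then gives $\operatorname{Hom}^*(G,G)=0$ in $\mathrm{D}^b(\mathcal{X})/\mathcal{L}_-$, so $G\cong 0$. This argument is symmetric in the two variables, so it yields both halves of the spanning condition without admissibility or duality.

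Your final paragraph on replacing $\mathrm{D}^b(C)$ by a strong generator is fine as it stands.
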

\begin{proof}
By the definition of a spanning class, it suffices to show that for any
\[
F\in \mathrm{D}^b(\mathcal{X})/\mathcal{L}_-
\]
satisfying $\operatorname{Ext}_{ \mathrm{D}^b(\mathcal{X})/\mathcal{L}_-}^*(F,a)=0$ for all $a\in \Omega$, one has $F=0$ in $\mathrm{D}^b(\mathcal{X})/\mathcal{L}_-$.\\

First, since $\operatorname{Ext}_{ \mathrm{D}^b(\mathcal{X})/\mathcal{L}_-}^*(F,\mathrm{k}_x\otimes\chi)=0$ for every closed point $x\notin C$ and $\chi$, we have $F_x=0$ for all such $x$. Hence, by the Jacobian property of $\mathcal{X}$, the object $F$ is supported on $C$. Since $F$ comes from an object in the bounded derived category of coherent sheaves, it can be realized as a finite iterated extension of objects of the form
\[
\kappa_*\mathrm{D}^b(C)\otimes \nu^i,
\qquad
0\le i<r,
\]
(and if one uses a strong generator, one may additionally need to take direct summands). By the vanishing assumption $\operatorname{Ext}_{ \mathrm{D}^b(\mathcal{X})/\mathcal{L}_-}^*(F,a)=0$ for
\[
\kappa_*\mathrm{D}^b(C),\,
\kappa_*\mathrm{D}^b(C)\otimes \nu^1,\,
\ldots,\,
\kappa_*\mathrm{D}^b(C)\otimes \nu^{e-1},
\]
we obtain $\operatorname{Ext}_{ \mathrm{D}^b(\mathcal{X})/\mathcal{L}_-}^*(F,F)=0$. It follows that $F\in \mathcal{L}_-$, hence
$F=0$ in $\mathrm{D}^b(\mathcal{X})/\mathcal{L}_-$.
The verification in the opposite direction is similar.
\end{proof}
\begin{lemma}\label{adjucomp}
Assume $e < r$. Then we have
\[
\pi_* \varpi^!
\Bigl(
\varpi_* \mathcal{R}\pi^*
(\kappa_*F\otimes \nu^i)
\Bigr)
\simeq
\kappa_*F\otimes \nu^i
\]
for any $F\in \mathrm{D}^b(C)$ and any $0\le i\le e-1$.
\end{lemma}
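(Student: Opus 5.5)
We follow the strategy of the analogous lemma in the case $e\ge r$, now working in the quotient $\mathrm{D}^b(\mathcal{X})/\mathcal{L}_-$ and using the dévissage filtration. First we reduce to the local model: by the dévissage lemma, $\mathcal{R}\pi^*(\kappa_*F\otimes\nu^i)$ admits the filtration with factors
\[
\iota^+_*\bigl(f^{+*}F\otimes\nu^i\bigr)\otimes\mathcal{D}_s\otimes\nu^{-s}[1].
\]
Pushing forward along $\varpi_*$ kills every factor whose total representation is nontrivial. This leaves the factors $\iota_*(f_{E_\mathcal{Y}}^*F\otimes\mathcal{D}_{\lambda r+i})[1]$ for $\lambda\ge0$, exactly as in (\ref{pullbackcompu1}). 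In $\mathrm{D}^b(\mathcal{Y})/\mathcal{K}_{-1}$, every factor with $\lambda r+i\ge e$ lies in $\mathcal{K}_{-1}$, since $\mathcal{D}_{j}\in\mathcal{A}_{-1}$ for $j\ge e$. Because $i\le e-1<r$, the only surviving factor is the one with $\lambda=0$. Hence
\[
\varpi_*\mathcal{R}\pi^*(\kappa_*F\otimes\nu^i)\simeq\iota_*\bigl(f_{E_\mathcal{Y}}^*F\otimes\mathcal{D}_i\bigr)[1]
\]
in the quotient, up to the shift convention fixed by the filtration.

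Next we apply $\varpi^!$. By Proposition~\ref{derivedroot}, as used in Lemma~\ref{genlemma1}, $\varpi^!\iota_*(f_{E_\mathcal{Y}}^*F\otimes\mathcal{D}_i)$ has a filtration with factors $\iota^+_*(f_{E_\mathcal{Y}}^*F\otimes\mathcal{D}_i)(-r+1+j)\otimes\nu^{r-1-j}$ for $0\le j\le r-1$. We then apply $\pi_*$. By the projection formula along $f_{E_\mathcal{Y}}$ and Lemma~\ref{orddualcomp}, computed relatively over $C$ using the semi-orthogonal decomposition with the $f_{E_\mathcal{Y}}^*\mathrm{D}^b(C)\otimes\mathcal{O}(k)$ components, each factor pushes forward to
\[
\kappa_*F\otimes\operatorname{Ext}^*\bigl(\mathcal{O}_{E_\mathcal{Y}}(r-1-j),\mathcal{D}_i\bigr)\otimes\nu^{r-1-j}.
\]
This term is $\kappa_*F\otimes\nu^i$ when $r-1-j=i$, and it vanishes when $r-1-j<e$ with $r-1-j\neq i$.

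The step we expect to be the main obstacle is the remaining range $e\le r-1-j\le r-1$, where Lemma~\ref{orddualcomp} no longer applies. Here $\mathcal{O}_{E_\mathcal{Y}}(r-1-j)$ is not part of the exceptional block, so the pushforward need not vanish. We plan to handle it in two parts. First, the pushforward of such a factor is supported on $C$ and carries the equivariant weight $\nu^{r-1-j}$ with $e\le r-1-j<r$, so it lies in $\mathcal{L}_-$ by dévissage over $C$. This is the same reasoning used above to show $\pi_*\varpi^!\iota_*N=0$ in $\mathrm{D}^b(\mathcal{X})/\mathcal{L}_-$. Second, we must check that the extension data among the factors of the filtration do not interfere. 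Since all the other factors vanish in $\mathrm{D}^b(\mathcal{X})/\mathcal{L}_-$, the filtration collapses to the single surviving factor, which gives $\kappa_*F\otimes\nu^i$.

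We also need to confirm that the shift $[1]$ from the filtration cancels against the shift coming from the excess triangle, Lemma~\ref{excdis}. We will verify this by comparing with the case $F=\mathcal{O}_C$, which corresponds to the earlier computation leading to Corollary~\ref{gencor1}. Finally, naturality in $F$ follows because every step in the argument is functorial.
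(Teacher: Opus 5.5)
Your proposal is correct and follows the paper's own proof. In both, $\varpi_*\mathcal{R}\pi^*(\kappa_*F\otimes\nu^i)$ collapses to the single piece $\iota_*\mathcal{D}_i$, since $\lambda r+i\ge r>e$ for $\lambda\ge 1$. You then filter $\varpi^!$ of it via Proposition~\ref{derivedroot}, apply Lemma~\ref{orddualcomp} to the weights $l<e$, and send the weights $e\le l<r$ into $\mathcal{L}_-$. The only difference is that you keep $F$ general via the projection formula over $C$, whereas the paper first reduces to $F=\mathcal{O}_C$ by d\'evissage.

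One small correction: the $[1]$ in the triangles $\mathcal{F}^{s}\to\mathcal{F}^{s-1}\to\iota_*\mathcal{D}_{sr+i}[1]$ is just the cone shift. For example, $\mathcal{F}^0\to 0\to\iota_*\mathcal{D}_i[1]$ gives $\mathcal{F}^0\simeq\iota_*\mathcal{D}_i$. So the surviving object is $\iota_*(f_{E_{\mathcal{Y}}}^*F\otimes\mathcal{D}_i)$ with no shift, and nothing needs to cancel against Lemma~\ref{excdis}.
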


\begin{proof}
By dévissage, we may reduce $F$ to the structure sheaf of $C$. Since $F$ is supported on the center of the blow-up, the computation can be carried out formally, and it suffices to consider the case $F$ is $\mathcal{O}_C$. We have
\[
\varpi_* \mathcal{R}\pi^*(\kappa_*\mathcal{O}_C\otimes \nu^i)=\iota_*\mathcal{D}_i,
\qquad 0\le i<e.
\]
Moreover, $\varpi^!\iota_*\mathcal{D}_i$ admits a filtration with successive factors $\iota^+_*\mathcal{D}_i(-l)\otimes \nu^l$ for $l=0,\ldots,r-1$ by Lemma \ref{derivedroot}. For $0\le l<e$, we have via Lemma \ref{orddualcomp}:
\[
\mathrm{Ext}_{E_\mathcal{Y}}^*(\mathcal{O}(l),\mathcal{D}_i)=\mathcal{O}_C\cdot\delta_{l,i},
\]
so the only nontrivial factor of $\pi_*\iota^+_*\mathcal{D}_i(-l)\otimes \nu^l$ occurs when $l=i$, in which case it is $\kappa_*\mathcal{O}_C\otimes \nu^i$. For $e\le l<r$, we have $\pi_*\iota^+_*\mathcal{D}_i(-l)\otimes \nu^l=\mathrm{V}\otimes \nu^l$, where $\mathrm{V}$ is a complex support on $C$ carries no representations, hence these terms lie in $\mathcal{L}_-$ and contribute zero in the quotient. Therefore
\[
\pi_* \varpi^!
\Bigl(
\varpi_* \mathcal{R}\pi^*(\kappa_*\mathcal{O}_C\otimes \nu^i)
\Bigr)
\simeq
\kappa_*\mathcal{O}_C\otimes \nu^i.
\]
for any $0\le i\le e-1$
\end{proof}
\begin{corollary}
Assume $e<r$. Then the functor $\varpi_* \mathcal{R}\pi^*$ is fully faithful.
\end{corollary}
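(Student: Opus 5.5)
The plan mirrors the proof of Corollary~\ref{gencor} in the case $e\ge r$, now working throughout in the quotient $\mathrm{D}^b(\mathcal{X})/\mathcal{L}_-$. We already have the adjoint triple $\pi_!\varpi^*\dashv \varpi_*\mathcal{R}\pi^*\dashv \pi_*\varpi^!$ between $\mathrm{D}^b(\mathcal{X})/\mathcal{L}_-$ and $\mathrm{D}^b(\mathcal{Y})/\mathcal{K}_{-1}$. We also have the spanning class $\Omega$ of $\mathrm{D}^b(\mathcal{X})/\mathcal{L}_-$ from the preceding lemma. By \cite[Proposition 1.49]{Huybrechts}, applied to an exact functor with both adjoints, full faithfulness reduces to showing that the unit
\[
a\longrightarrow \pi_*\varpi^!\varpi_*\mathcal{R}\pi^*a
\]
induces isomorphisms
\[
\operatorname{Ext}^*(a,b)\xrightarrow{\ \sim\ }\operatorname{Ext}^*(\varpi_*\mathcal{R}\pi^*a,\varpi_*\mathcal{R}\pi^*b)\simeq \operatorname{Ext}^*(a,\pi_*\varpi^!\varpi_*\mathcal{R}\pi^*b)
\]
for all $a,b\in\Omega$.

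We first treat the case where both $a$ and $b$ are of the form $\kappa_*F\otimes\nu^i$ with $0\le i\le e-1$. Here Lemma~\ref{adjucomp} gives $\pi_*\varpi^!\varpi_*\mathcal{R}\pi^*(\kappa_*F\otimes\nu^i)\simeq\kappa_*F\otimes\nu^i$. We still need to check that this isomorphism is the unit of the adjunction, not merely an abstract isomorphism. To do so, we trace the computation in Lemma~\ref{adjucomp}. The only surviving factor is the $l=i$ piece $\iota^+_*\mathcal{D}_i(-i)\otimes\nu^i$, and the unit maps $\kappa_*\mathcal{O}_C\otimes\nu^i$ onto it through the canonical element of $\operatorname{Ext}^0(\mathcal{O}(i),\mathcal{D}_i)=\mathcal{O}_C$ from Lemma~\ref{orddualcomp}. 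Dévissage then carries this from $\mathcal{O}_C$ to a strong generator of $\mathrm{D}^b(C)$, which the spanning-class lemma explicitly allows us to use.

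Next we treat the case involving skyscrapers $\mathrm{k}_x\otimes\chi$ with $x\notin C$. Away from $C$ the maps $\pi$ and $\varpi$ are isomorphisms, $\mathcal{R}\pi^*$ agrees with $\pi^*$, and $\mathcal{K}_{-1}$ and $\mathcal{L}_-$ are supported over $C$. Hence $\pi_*\varpi^!\varpi_*\mathcal{R}\pi^*(\mathrm{k}_x\otimes\chi)\simeq\mathrm{k}_x\otimes\chi$, with the unit an isomorphism. For mixed pairs, one element supported on $C$ and the other at a point $x\notin C$, both sides vanish by disjointness of supports. This vanishing survives in the quotient because the localization objects are supported on $C$, respectively on $E_{\mathcal{Y}}$.

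The main obstacle is the compatibility of the unit with the Verdier quotients. One must check that the factors with $e\le l<r$ discarded in Lemma~\ref{adjucomp} do not create spurious morphisms. Concretely, one needs $\operatorname{Ext}^*_{\mathrm{D}^b(\mathcal{X})/\mathcal{L}_-}(\kappa_*F\otimes\nu^j,\,-)$ to annihilate $\mathcal{L}_-$ for $j<e$. I would prove this by local adjunction, as in \cite[Lemma 3.7]{Hao2025b}: a roof through an object whose cone lies in $\mathcal{L}_-$ can be replaced by an honest morphism, because those cones carry only the representations $\nu^e,\dots,\nu^{r-1}$.
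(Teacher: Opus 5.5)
Your argument is the paper's own proof: the adjunction $\varpi_*\mathcal{R}\pi^*\dashv\pi_*\varpi^!$, the spanning class $\Omega$ of $\mathrm{D}^b(\mathcal{X})/\mathcal{L}_-$, Huybrechts' Proposition~1.49, Lemma~\ref{adjucomp} for the objects supported on $C$, and the trivial case of skyscrapers off $C$. Your checks that the isomorphism is the unit and that mixed pairs vanish are extra care the paper skips.

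Drop the last paragraph, though. Objects of $\mathcal{L}_-$ are already zero in $\mathrm{D}^b(\mathcal{X})/\mathcal{L}_-$, so there is nothing to verify there. Moreover, the roof-replacement you propose would require $\kappa_*F\otimes\nu^j\in{}^{\perp}\mathcal{L}_-$ inside $\mathrm{D}^b(\mathcal{X})$ itself. That is false in general: the normal coordinates to $C$ carry nontrivial $\mu_r$-weights, so $\operatorname{Ext}^1_{\mathcal{X}}(\kappa_*\mathcal{O}_C\otimes\nu^j,\kappa_*\mathcal{O}_C\otimes\nu^l)$ can be nonzero for suitable $j<e\le l$. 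Distinct representations therefore do not force orthogonality.
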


\begin{proof}
We know that $\varpi_* \mathcal{R}\pi^*$ is admissible, and
\[
\Omega :=
\left\{
\kappa_*\mathrm{D}^b(C),\,
\kappa_*\mathrm{D}^b(C)\otimes \nu^1,\,
\ldots,\,
\kappa_*\mathrm{D}^b(C)\otimes \nu^{e-1}
\right\}
\cup
\{\mathrm{k}_x\otimes\chi \mid x \text{ is a closed point not in } C\}
\]
forms a spanning class for $\mathrm{D}^b(\mathcal{X})/\mathcal{L}_{-}$. The full faithfulness of $\varpi_* \mathcal{R}\pi^*$ is equivalent to
\[
\operatorname{Ext}^*_{\mathrm{D}^b(\mathcal{Y})/\mathcal{K}_{-1}}
\bigl(\varpi_* \mathcal{R}\pi^* a,\varpi_* \mathcal{R}\pi^* b\bigr)
\simeq
\operatorname{Ext}^*_{\mathrm{D}^b(\mathcal{X})/\mathcal{L}_{-}}(a,b)
\quad \text{for all } a,b\in \Omega.
\]
by \cite[Proposition 1.49]{Huybrechts}. Moreover by Lemma \ref{adjucomp},
\[
\operatorname{Ext}^*_{\mathrm{D}^b(\mathcal{Y})/\mathcal{K}_{-1}}
\bigl(\varpi_* \mathcal{R}\pi^* a,\varpi_* \mathcal{R}\pi^* b\bigr)
\simeq
\operatorname{Ext}^*_{\mathrm{D}^b(\mathcal{X})/\mathcal{L}_{-}}
\bigl(a,\pi_* \varpi^! \varpi_* \mathcal{R}\pi^* b\bigr)
\simeq
\operatorname{Ext}^*_{\mathrm{D}^b(\mathcal{X})/\mathcal{L}_{-}}(a,b),
\]
and note that this equality is trivial for skyscraper sheaves supported outside the exceptional locus, so the conclusion follows.
\end{proof}
\begin{proposition}
Assume $e<r$. Then the functor
\[
\varpi_* \mathcal{R}\pi^* :
\mathrm{D}^b(\mathcal{X})/\mathcal{L}_-
\longrightarrow
\mathrm{D}^b(\mathcal{Y})/\mathcal{K}_{-1}
\]
defines an equivalence of triangulated categories.
\end{proposition}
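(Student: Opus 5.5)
Since the preceding corollary already gives full faithfulness, and the previous lemma gives the adjoint triple $\pi_!\varpi^*\dashv\varpi_*\mathcal{R}\pi^*\dashv\pi_*\varpi^!$ on the quotient categories, it remains to prove essential surjectivity. Equivalently, I will show that the right adjoint $\Theta^!:=\pi_*\varpi^!$ is conservative on $\mathrm{D}^b(\mathcal{Y})/\mathcal{K}_{-1}$. Indeed, for $b\in\mathrm{D}^b(\mathcal{Y})/\mathcal{K}_{-1}$ the counit gives a triangle $\Theta\Theta^!b\to b\to c$. Applying $\Theta^!$ and using $\Theta^!\Theta\simeq\mathrm{id}$ (a reformulation of full faithfulness for an adjoint pair) yields $\Theta^!c=0$. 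So it suffices to show that $\Theta^!c=0$ in $\mathrm{D}^b(\mathcal{X})/\mathcal{L}_-$ forces $c\in\mathcal{K}_{-1}$.

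\textbf{Step 1: localize $c$ to the exceptional divisor.} Away from $E_\mathcal{Y}$ both $\varpi$ and $\pi$ are isomorphisms and $\mathcal{L}_-$ is supported on $C$. Testing against skyscrapers $\mathrm{k}_y$ with $y\notin E_\mathcal{Y}$ (Lemma~\ref{ptspanningclass}) therefore shows that $c$ is supported on $E_\mathcal{Y}$. By dévissage, $c$ is then a finite extension of objects $\iota_*G$ with $G\in\mathrm{D}^b(E_\mathcal{Y})$.

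\textbf{Step 2: reduce to the semi-orthogonal pieces on $E_\mathcal{Y}$.} Use the decomposition $\mathrm{D}^b(E_\mathcal{Y})=\langle \mathcal{A}_{-1}, f_{E}^*\mathrm{D}^b(C),\dots,f_E^*\mathrm{D}^b(C)\otimes\mathcal{O}(e-1)\rangle$, or equivalently its dual form with the $\mathcal{D}_i\otimes f_E^*\mathrm{D}^b(C)$. Modulo $\mathcal{K}_{-1}$, every object supported on $E_\mathcal{Y}$ is generated by $\iota_*(f_E^*F\otimes\mathcal{D}_i)$ for $0\le i<e$. By Lemma~\ref{adjucomp} and the computation preceding it, $\iota_*(f_E^*F\otimes\mathcal{D}_i)\simeq\Theta(\kappa_*F\otimes\nu^i)$, so every such object already lies in $\operatorname{Im}\Theta$. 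Since $c$ lies in $(\operatorname{Im}\Theta)^{\perp}$ (this is what $\Theta^!c=0$ means, by adjunction), we get $\operatorname{Hom}^*(c,c)=0$ and hence $c=0$. Thus $b\simeq\Theta\Theta^!b$, which proves essential surjectivity.

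\textbf{Main obstacle.} The delicate point is Step 2: showing that the $\mathcal{K}_{-1}$-quotient of the subcategory supported on $E_\mathcal{Y}$ (including thickenings, i.e.\ objects set-theoretically but not scheme-theoretically on $E_\mathcal{Y}$) is generated by the $\iota_*(f_E^*F\otimes\mathcal{D}_i)$ with $i<e$ only, with no room for twists $\mathcal{O}_{E}(-l)$ as in the case $e\ge r$. My approach is to filter by powers of the ideal of $E_\mathcal{Y}$ and use the excess triangle (Lemma~\ref{excdis}) with $\mathcal{O}_E(-E)\simeq\mathcal{O}_E(r)$. Each twist by $r$ moves a piece $f_E^*\mathrm{D}^b(C)\otimes\mathcal{O}(j)$ into the range $j\ge e$, which (by $e<r$ and the Serre-type rotation of the Orlov decomposition) lies in $\mathcal{A}_{-1}$ modulo the pieces $j<e$. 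This should close the generation argument. Finally, I will pass from the local statement to the global one by the same local-to-global gluing argument used for the case $e\ge r$, as in \cite[Lemma~3.13]{Hao2025b}.
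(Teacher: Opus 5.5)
Your argument is correct, but it is organized differently from the paper's.

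\textbf{The paper's route.} The paper does not use the counit of $\Theta\dashv\Theta^!$ in $\mathrm{D}^b(\mathcal{Y})/\mathcal{K}_{-1}$. Instead it goes up to the root stack $\mathcal{Z}$:
\begin{enumerate}
\item It takes the counit triangle $\mathcal{R}\pi^*\pi_*\varpi^!G\to\varpi^!G\to\mathcal{Q}$ of $\mathcal{R}\pi^*\dashv\pi_*$ on $\mathcal{Z}$, and places $\mathcal{Q}$ in $\mathcal{T}$ using the description of $\ker\pi_*$ (Lemma~\ref{kernallemma}).
\item It pushes forward along $\varpi$, using $\varpi_*\varpi^!\simeq\mathrm{id}$.
\item It is then left to check that the generators $\iota_*\mathcal{O}_{E_\mathcal{Y}}(-i)$, $0<i<e$, of $\varpi_*\mathcal{T}$ modulo $\mathcal{K}_{-1}$ lie in $\operatorname{Im}\Theta$. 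This follows from $\Theta(\kappa_*\mathcal{O}_C\otimes\nu^i)\simeq\iota_*\mathcal{D}_i$ together with the mutation Lemma~\ref{mutationlemma}.
\end{enumerate}

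\textbf{Your route.} You reuse the template of the case $e\ge r$: counit in the quotient, so $\Theta^!c=0$. You get the support of $c$ on $E_\mathcal{Y}$ from the fact that $\mathcal{L}_-$ lives on $C$. You then use the dual, $C$-linear decomposition $\langle\mathcal{A}_{-1},\mathcal{D}_{e-1},\dots,\mathcal{D}_0\rangle$ of $\mathrm{D}^b(E_\mathcal{Y})$ to put $c$ in the thick hull of $\operatorname{Im}\Theta$. Since also $c\in(\operatorname{Im}\Theta)^{\perp}$, this forces $c=0$.

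\textbf{Comparison.} Your route needs neither the kernel description on $\mathcal{Z}$ nor the mutation lemma; the former is stated in the paper for the local model and imported from earlier work. It is global as soon as one has $\Theta(\kappa_*F\otimes\nu^i)\simeq\iota_*(f_E^*F\otimes\mathcal{D}_i)$ for $0\le i<e$. That is exactly where $e<r$ enters: all $\mathcal{D}_{sr+i}$ with $s\ge1$ fall into $\mathcal{A}_{-1}$. The paper's route gets the support statement for free, because $\mathcal{T}$ lives on the exceptional divisor. It also exhibits explicit generators of the cone, in the spirit of Principle~\ref{P2}.

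\textbf{On your ``main obstacle''.} It is not an obstacle. Filtering a sheaf supported on $E_\mathcal{Y}$ by powers of its ideal gives graded pieces in $\iota_*\mathrm{Coh}(E_\mathcal{Y})$, and the semi-orthogonal decomposition of $\mathrm{D}^b(E_\mathcal{Y})$ decomposes these directly. No bookkeeping with $\mathcal{O}_E(-E)\simeq\mathcal{O}_E(r)$ or excess triangles is needed. For the same reason, your closing local-to-global step is superfluous: Steps 1 and 2 as you wrote them are already global.
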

\begin{proof}
It remains to prove essential surjectivity. For any $G\in \mathrm{D}^b(\mathcal{Y})$, consider the distinguished triangle in $\mathrm{D}^b(\mathcal{Z})$ induced by the natural adjunction map
\[
\mathcal{R}\pi^*\pi_*\varpi^!G
\longrightarrow
\varpi^!G
\longrightarrow
\mathcal{Q}
\in \Delta.
\]
It is easy to see that $\pi_*\mathcal{Q}=0$, hence $\mathcal{Q}\in \mathcal{T}$. Pushing forward along $\varpi$, we obtain a distinguished triangle
\[
\varpi_*\mathcal{R}\pi^*\pi_*\varpi^!G
\longrightarrow
\varpi_*\varpi^!G \simeq G
\longrightarrow
\varpi_*\mathcal{Q}
\in \Delta.
\]
so it remains to show that $\varpi_*\mathcal{Q}$ lies in the image of our functor. By a dévissage argument, it suffices to show that for any $\mathcal{O}_{E_{\mathcal{Y}}}(-i)$ with $0<i<e$, these objects lie in the image of the functor. But we have
\[
\varpi_*\mathcal{R}\pi^*(\kappa_*\mathcal{O}_C\otimes \nu^i)
=
\iota_*\mathcal{D}_i,
\qquad 0\leq i<e,
\]
in (\ref{pullbackcomII}), so the conclusion follows from Lemma \ref{mutationlemma}.
\end{proof}
\begin{remark}
We expect the global equivalence to induce one on local, e.g. via approximation.
\end{remark}

\subsubsection{Coarse moduli}\label{seckblcm}

In this section, we consider the stacky Kawamata blow-up introduced above and study its coarse moduli spaces under the assumption that $\mathcal{Y}$ is also well-formed (or stronger, $Y$ is well-formed and $E_Y$ is a prime divisor).
\[
\begin{tikzcd}[column sep=2.5em, row sep=2.2em]
\mathcal{Z} := w\mathrm{Bl}^{(a_i)}_C \mathcal{X} \arrow[r,"\varpi"] \arrow[d,"\pi"']
& \mathcal{Y} := \widetilde{w\mathrm{Bl}^{(a_i/r)}_C X} \arrow[r,"\epsilon"] \arrow[d,"f"]
& Y := w\mathrm{Bl}^{(a_i/r)}_C X \arrow[dl,"o"] \\
\mathcal{X} \arrow[r,"\varepsilon"]
& X &
\end{tikzcd}
\]
Here $o$ is induced by the universal property of the coarse moduli space. The morphism $\varepsilon_*$ restricts to the exceptional divisor $E_{\mathcal{Y}}$, which is a fibration of complete intersections in a weighted projective stack, and maps it to its coarse moduli space $E_Y$. Here we relax the assumption to $e \ge 1$. In this case, the space $E_Y$ is always a Fano fibration over the smooth center $C$, and we could consider the semi-orthogonal decomposition
\[
\mathrm{D}^b(E_{\mathcal{Y}})\simeq
\left\langle
f_{E_{\mathcal{Y}}}^*\mathrm{D}^b(C)\otimes \mathcal{O}_{E_{\mathcal{Y}}}(-e+1),\ldots,
f_{E_{\mathcal{Y}}}^*\mathrm{D}^b(C)\otimes \mathcal{O}_{E_{\mathcal{Y}}}(-1),
\mathcal{A}_{-1},
f_{E_{\mathcal{Y}}}^*\mathrm{D}^b(C)
\right\rangle,
\]
together with $\varepsilon_* \mathcal{O}_{E_{\mathcal{Y}}}\simeq\mathcal{O}_{E_Y}$ and $f_{E_{\mathcal{Y}}*}\mathcal{O}_{E_{\mathcal{Y}}} \simeq f_{E_Y*}\mathcal{O}_{E_Y}\simeq\mathcal{O}_C$, induce a semi-orthogonal decomposition
\[
\mathrm{D}^b(E_Y)\simeq\left\langle \mathcal{B}_{-1},\, f_{E_Y}^*\mathrm{D}^b(C) \right\rangle.
\]
Moreover, the left orthogonal component $\langle f_{E_{\mathcal{Y}}}^*\mathrm{D}^b(C)\rangle^\perp \subset \mathrm{D}^b(E_{\mathcal{Y}})$ is mapped by $\varepsilon_*$ into $\mathcal{B}_{-1}$. By the theory of root stacks and Cohen--Macaulay approximation, this pushforward is essentially surjective and satisfies a corresponding localization formula. We therefore consider the full saturated triangulated subcategory $\mathcal{L}_- \subset \mathrm{D}^b(Y)$ generated by $\iota^-_* \mathcal{B}_{-1}$.
\begin{lemma}
If $F \in \mathrm{D}^b(Y)$ satisfies $o_* F = 0$, then $F \in \mathcal{L}_-$.
\end{lemma}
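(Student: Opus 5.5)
Lift the problem to the canonical stack $\mathcal{Y}$, where the kernel of $f_*$ has already been described, and then push that description down along $\epsilon$.

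\emph{Localisation along $\epsilon$.} Since $Y$ is well-formed, Proposition~\ref{CMappro} gives an equivalence $\mathrm{D}^b(Y)\simeq \mathrm{D}^b(\mathcal{Y})/\mathcal{K}_-^{\mathcal{Y}}$, with inverse induced by $\epsilon_*$. For $F\in \mathrm{D}^b(Y)$, set $\widetilde F:=\tau^{\le l}\epsilon^!F$ for $l\gg0$. Then $\epsilon_*\widetilde F\simeq F$, and hence
\[
f_*\widetilde F\simeq o_*\epsilon_*\widetilde F\simeq o_*F=0 .
\]
So $\widetilde F$ lies in the kernel of $f_*:\mathrm{D}^b(\mathcal{Y})\to\mathrm{D}^b(X)$. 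It therefore suffices to show two things: this kernel is generated, modulo $\mathcal{K}_-^{\mathcal{Y}}$, by $\iota_*$ of the left orthogonal $\langle f_{E_{\mathcal{Y}}}^*\mathrm{D}^b(C)\rangle^\perp$; and $\epsilon_*$ sends these generators into $\mathcal{L}_-$. The second point is the remark made before the statement, that $\varepsilon_*$ maps $\langle f_{E_{\mathcal{Y}}}^*\mathrm{D}^b(C)\rangle^\perp$ into $\mathcal{B}_{-1}$, combined with $\epsilon\circ\iota=\iota^-\circ\varepsilon|_{E_{\mathcal{Y}}}$. Finally $\epsilon_*\mathcal{K}_-^{\mathcal{Y}}=0$, so passing to $\epsilon_*$ puts $F\simeq\epsilon_*\widetilde F$ in $\mathcal{L}_-$.

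\emph{Kernel of $f_*$ on $\mathcal{Y}$.} Take $G$ with $f_*G=0$ and lift once more through the root stack. Then $\varpi^!G\in\mathrm{D}^b(\mathcal{Z})$ satisfies $\varepsilon_*\pi_*\varpi^!G=f_*\varpi_*\varpi^!G=0$. By Proposition~\ref{canonicalcover} applied to $\varepsilon$, the object $\pi_*\varpi^!G$ lies in $\mathcal{K}_-^{\mathcal{X}}=\langle\kappa_*\mathrm{D}^b(C)\otimes\nu^j\rangle_{j\neq0}$.

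I then use the adjunction triangle $\mathcal{R}\pi^*\pi_*\varpi^!G\to\varpi^!G\to\mathcal{Q}$, as in the proof of the $e<r$ equivalence. The third term satisfies $\pi_*\mathcal{Q}=0$, so Lemma~\ref{kernallemma} gives $\mathcal{Q}\in\mathcal{T}$. Applying $\varpi_*$, the pieces of $\varpi_*\mathcal{T}$ are $\iota_*\mathcal{O}_{E_{\mathcal{Y}}}(-k)$ for $1\le k\le e-1$ together with $\iota_*\mathcal{A}_{-1}$ (after the dévissage to $\mathrm{D}^b(C)$-twists). All of these lie in $\iota_*\langle f_{E_{\mathcal{Y}}}^*\mathrm{D}^b(C)\rangle^\perp$, because $f_{E_{\mathcal{Y}}*}\mathcal{O}(-k)=0$ in that range by the semi-orthogonal decomposition of $\mathrm{D}^b(E_{\mathcal{Y}})$.

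The remaining term is $\varpi_*\mathcal{R}\pi^*$ applied to objects $\kappa_*F\otimes\nu^j$ with $j\ne0$. By the filtration (\ref{pullbackcompu1}), it has factors $\iota_*\mathcal{D}_{\lambda r+j}$ with $\lambda r+j\ge1$. These satisfy $f_{E_{\mathcal{Y}}*}\mathcal{D}_m=0$ for $m\ge1$: for $m<e$ this follows from Lemma~\ref{orddualcomp}, and for $m\ge e$ from $\mathcal{D}_m\in\mathcal{A}_{-1}$. So these factors also lie in the orthogonal.

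\emph{Main obstacle.} The delicate point is that $\varpi_*\varpi^!G\simeq G$ must hold in $\mathrm{D}^b(\mathcal{Y})$ itself, not just in a quotient. The root-stack semi-orthogonal decomposition of Proposition~\ref{derivedroot} gives this, because $\varpi_*\varpi^!\simeq\mathrm{id}$ on the coarse root. The other point to check is that the infinite filtration of $\mathcal{R}\pi^*$ is effectively finite: only finitely many factors survive in the bounded object, and the tail lands in $\mathcal{A}_{-1}$, which is closed in the saturated subcategory. If the direct root-stack argument is awkward globally, I would fall back on the formal neighbourhood of $C$, using that all objects involved are supported on $E_Y$, and glue by the dévissage argument used throughout.
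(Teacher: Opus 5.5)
Your argument is sound at the paper's level of rigor, but it follows a genuinely different route from the paper's proof.

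\emph{The paper's route.} The paper stays on the coarse space and argues in the spirit of Principle~\ref{P2}. It splices the Cohen--Macaulay filtration $\tau^{\ge s}\varepsilon^*\kappa_{X*}\mathcal{O}_C$ with the Rees filtration of $\pi^*$ into a bifiltration $\mathcal{G}^{(s,t)}$ of $o^*\kappa_{X*}\mathcal{O}_C$. Its cones lie in $\mathcal{L}_-$ and its limit in $\mathrm{D}^b(Y)/\mathcal{L}_-$ is $\iota^-_*\mathcal{O}_{E_Y}$. Local adjunction then gives $\operatorname{Ext}^*_{\mathrm{D}^b(Y)/\mathcal{L}_-}(\iota^-_*\mathcal{O}_{E_Y},F)\simeq\operatorname{Ext}^*_{X}(\kappa_{X*}\mathcal{O}_C,o_*F)=0$. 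Since $F$ is built out of $\mathcal{O}_{E_Y}$ in the quotient, $\operatorname{Ext}^*(F,F)=0$ there, so $F\in\mathcal{L}_-$.

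\emph{Your route.} You lift $F$ to $\ker f_*$ on $\mathcal{Y}$. You then transport kernel descriptions already proved upstairs, namely $\ker\varepsilon_*\subseteq\mathcal{K}^{\mathcal{X}}_-$ and Lemma~\ref{kernallemma}, through the counit triangle of $\mathcal{R}\pi^*\dashv\pi_*$ and the filtration (\ref{pullbackcompu1}). Finally you push everything down by $\epsilon_*$.

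\emph{What each buys.} Your route needs no bifiltration. It also avoids the rather loose step that every object supported on $E_Y$ is an iterated extension of $\mathcal{O}_{E_Y}$ modulo $\mathcal{L}_-$. It yields a stronger intermediate result: $\ker f_*$ on $\mathcal{Y}$ is the thick hull of $\iota_*\ker f_{E_{\mathcal{Y}}*}$ modulo $\mathcal{K}^{\mathcal{Y}}_-$, which is the stacky generation statement of the introduction. The paper's route, in exchange, produces the bifiltration $\mathcal{G}^{(s,t)}$ and an explicit generator $\iota^-_*\mathcal{O}_{E_Y}$ on the coarse level. Both are reused immediately afterwards to define $\mathcal{R}o^*$ and to prove the adjunction and the equivalence $\mathrm{D}^b(X)\simeq\mathrm{D}^b(Y)/\mathcal{L}_-$.

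\emph{One detail to write out.} $\mathcal{R}\pi^*$ and its counit only live in $\mathrm{D}^b(\mathcal{Z})/\mathcal{K}^+_{-1}$, so you should represent the counit by a roof $W\to\varpi^!G$ in $\mathrm{D}^b(\mathcal{Z})$. The vanishing of $\pi_*$ of its cone then follows from $\pi_*\mathcal{R}\pi^*\simeq\mathrm{id}$ and the triangle identities. The $\mathcal{K}^+_{-1}$-ambiguity in $W$ pushes forward into $\iota_*\mathcal{A}_{-1}\subseteq\iota_*\ker f_{E_{\mathcal{Y}}*}$, so it does not affect your conclusion.
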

\begin{proof}
We first consider the local situation. We start from the distinguished triangle
\[
\varepsilon^*(\kappa_{X*}\mathcal{O}_C)\to \kappa_{*}\mathcal{O}_C \to \mathcal{Q},
\]
where $\mathcal{Q}$ admits a bounded above filtration. More precisely using a dual version of  Proposition \ref{CMappro},
\[
\varepsilon^*\kappa_{X*}\mathcal{O}_C \simeq \varprojlim_s \mathcal{H}^s,\qquad
\mathcal{H}^s := \tau^{\geq s}(\varepsilon^*\kappa_{X*}\mathcal{O}_C),
\]
whose graded pieces are generated by $\kappa_{*}\mathcal{O}_C\otimes \nu^i$ for $1\le i\le r-1$.
For each $s$, we consider $\mathcal{H}^s$ and compute its pullback along $\pi$, which induces a second filtration from previous sections
\[
\pi^*\mathcal{H}^s \simeq \varprojlim_t \mathcal{F}^{(s,t)}.
\]
In particular $\mathcal{F}^{(s,0)}$ fits into a bounded complex with terms in $\tau$ and $\iota_*\mathcal{O}_{E_{\mathcal{Y}}}$. The successive cones in the filtrations
\[
\mathcal{F}^{(s,t+1)}\to \mathcal{F}^{(s,t)}\to \mathcal{Q}^t,\qquad
\mathcal{F}^{(s,t)}\to \mathcal{F}^{(s+1,t)}\to \mathcal{Q}^s
\]
$\mathcal{Q}^t$ and  $\mathcal{Q}^s$ lie in $\mathcal{T}$.
Pushing forward along $\varpi$ and $\epsilon$, we obtain a filtration on
\[
\mathcal{G}^{(s,t)} := \epsilon_*\varpi_*\mathcal{F}^{(s,t)}.
\]
This yields
\[
\mathcal{Q}^0 \to \mathcal{G}^0([\mathcal{O}_{C}]^+) \to \iota^-_*\mathcal{O}_{E_Y},
\]
and the cones in
\[
\mathcal{G}^{(s,t+1)}\to \mathcal{G}^{(s,t)}\to \mathcal{Q}^t,\qquad
\mathcal{G}^{(s+1,t)}\to \mathcal{G}^{(s,t)}\to \mathcal{Q}^s
\]
all lie in $\mathcal{L}_-$. That is a corresponding filtration $\mathcal{G}^{(s,t)}$ on
\[
o^*\kappa_{X*}\mathcal{O}_C = \epsilon_*\varpi_*\pi^*\varepsilon^*\kappa_{X*}\mathcal{O}_C,
\]
such that $o^*\kappa_{X*}\mathcal{O}_C \simeq \varprojlim_{s,t} \mathcal{G}^{(s,t)}$. Once such a filtration is given, we may apply the local adjunction \cite[Lemma 3.7]{Hao2025b}  to construct a natural isomorphism
\[
\operatorname{Ext}^*_{\mathrm{D}^b(Y)/\mathcal{L}_-}([\mathcal{G}^{(s,t)}\simeq \iota^-_*\mathcal{O}_{E_Y}], F)
\simeq
\operatorname{Ext}^*_{\mathrm{D}^b(X)}(\kappa_{X*}\mathcal{O}_C, \pi_* F),
\]
for any object $F \in \mathrm{D}^b(Y)$. Moreover, since we are in the local situation, any $F$ satisfying $\pi_*F=0$ is supported on the exceptional divisor $E_Y$, and can be obtained by successive extensions of $\mathcal{O}_{E_Y}$ in the quotient category $\mathrm{D}^b(Y)/\mathcal{L}_-$. Hence we obtain $\operatorname{Ext}^*_{\mathrm{D}^b(Y)/\mathcal{L}_-}(F,F)=0$, and therefore $F \in \mathcal{L}_-$. In the global case, the same conclusion follows by a dévissage argument.
\end{proof}

We can now define a (quasi-)functor
\[
\mathcal{R}o^* : \mathrm{D}^b(X) \longrightarrow \mathrm{D}^b(Y)
\]
with a mild ambiguity by
\[
\mathcal{R}o^*(-) := \epsilon_* \varpi_* \mathcal{R}\pi^* \widetilde{(-)}.
\]
Similarly, we obtain a bifiltration on $o^*(F)$, written as
$o^*(F) \simeq\varprojlim_{s,t} \mathcal{R}^{(s,t)}o^*(F)$, such that all successive quotients (in either direction of the filtration) lie in $\mathcal{L}_-$.
\begin{lemma}\label{coasefullcomp}
We have $o_* \mathcal{R}o^* \simeq \mathrm{id}$.
\end{lemma}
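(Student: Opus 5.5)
The plan is to unwind the definition $\mathcal{R}o^*(-)=\epsilon_*\varpi_*\mathcal{R}\pi^*\widetilde{(-)}$ and use commutativity of the diagram in Section~\ref{seckblcm}, namely $o\circ\epsilon\circ\varpi = \varepsilon\circ\pi$ (all as morphisms to $X$). This gives
\[
o_*\mathcal{R}o^*F \simeq o_*\epsilon_*\varpi_*\mathcal{R}\pi^*\widetilde{F}\simeq \varepsilon_*\pi_*\mathcal{R}\pi^*\widetilde{F}
\]
for any $F\in\mathrm{D}^b(X)$. We then need two ingredients. First, $\pi_*\mathcal{R}\pi^*\simeq\mathrm{id}$ on $\mathrm{D}^b(\mathcal{X})$; this is Corollary 3.4.4--8 of \cite{Hao2025b}, as used in the proof that $\widetilde{\mathcal{R}}\pi^*\cong\mathcal{R}\pi^*$. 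Second, $\varepsilon_*\widetilde{F}\simeq F$, which is built into the construction preceding Proposition~\ref{canonicalcover} and Proposition~\ref{CMappro}. Combining them yields $o_*\mathcal{R}o^*F\simeq\varepsilon_*\widetilde{F}\simeq F$.

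The first thing to check is that every functor in the chain is used on an honest object of $\mathrm{D}^b$ rather than on a class in a Verdier quotient. Since $\mathcal{R}o^*$ is only a quasi-functor, I will fix representatives: $\widetilde{F}:=\tau^{\le l}\varepsilon^!F$ for $l\gg0$, and $\mathcal{R}\pi^*\widetilde{F}:=\mathcal{R}^{s}\pi^*\widetilde{F}$ for $s\gg0$. With these representatives, $\pi_*\mathcal{R}^s\pi^*\widetilde{F}\simeq\widetilde{F}$ holds on the nose for $s$ large. The reason is that the successive cones of the filtration lie in $\mathcal{T}$, which is killed by $\pi_*$ by Lemma~\ref{kernallemma}. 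Likewise, the different choices of $\widetilde{F}$ differ by objects of $\mathcal{K}_-$, and these are killed by $\varepsilon_*$ by Lemma~\ref{Brss} and Corollary~\ref{CBrss}. Hence the composite is independent of the ambiguities and $o_*$ is well defined on it.

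Next I must make sure the natural map is compatible with morphisms, so that the isomorphism is natural rather than objectwise. Here I would use the inverse-limit description $o^*F\simeq\varprojlim_{s,t}\mathcal{R}^{(s,t)}o^*F$, whose graded pieces lie in $\mathcal{L}_-$. The preceding lemma shows that objects of $\mathcal{L}_-$ are exactly those killed by $o_*$ (in the direction $\mathcal{L}_-\subset\ker o_*$, which follows from $\varepsilon_*\mathcal{O}_{E_{\mathcal{Y}}}\simeq\mathcal{O}_{E_Y}$ and the definition of $\mathcal{B}_{-1}$ as the orthogonal to $f_{E_Y}^*\mathrm{D}^b(C)$). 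Consequently, applying $o_*$ to the transition maps gives isomorphisms. The unit $F\to o_*o^*F$ then factors through $o_*\mathcal{R}^{(s,t)}o^*F$ and becomes an isomorphism, because $o$ is birational with $X$ having rational singularities away from the construction; locally the computation reduces to the one above.

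The main obstacle is that $\epsilon_*$ is exact but not fully compatible with the identification $\mathcal{B}_{-1}$ versus $\mathcal{A}_{-1}$. Concretely, one must verify that $\epsilon_*\varpi_*$ sends $\mathcal{T}$ into $\ker o_*$. I would handle this by dévissage to the generators $\iota^+_*\mathcal{O}_{\mathrm M}(-j)\otimes\nu^i$ and $\iota^+_*\mathcal{A}_{-1}\otimes\nu^i$. Their pushforwards under $\varepsilon\pi$ vanish by Lemma~\ref{kernallemma}, so the diagram identity above gives the claim directly without computing $\epsilon_*$ explicitly.
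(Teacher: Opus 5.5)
Your proof is correct, but it takes a different route from the paper's. The paper argues on the coarse side with the honest, unbounded pullback. It uses $o_*\epsilon_*\varpi_*\pi^*\varepsilon^*F\simeq o_*o^*F\simeq F$, which comes from the projection formula and $o_*\mathcal{O}_Y\simeq\mathcal{O}_X$. It then passes to $\mathcal{R}o^*F$ through the bifiltration $o^*F\simeq\varprojlim_{s,t}\mathcal{R}^{(s,t)}o^*F$, whose successive cones lie in $\mathcal{L}_-$ and so are killed by $o_*$.

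You instead push forward along the commutative square, $o_*\epsilon_*\varpi_*\simeq\varepsilon_*\pi_*$. This reduces the statement to two stack-level identities: $\pi_*\mathcal{R}\pi^*\simeq\mathrm{id}$ from \cite{Hao2025b}, and $\varepsilon_*\widetilde{(-)}\simeq\mathrm{id}$ from Proposition~\ref{canonicalcover}.

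Your route has three advantages:
\begin{itemize}
\item Everything stays in $\mathrm{D}^b$.
\item You never need to commute $o_*$ with an inverse limit, or to know that the bifiltration converges to $o^*F$.
\item Independence from the choices of $\widetilde F$ and $\mathcal{R}^s\pi^*$ is automatic, since anything killed by $\pi_*$ or $\varepsilon_*$ is killed by $o_*\epsilon_*\varpi_*$. Naturality then follows from naturality of the two ingredient isomorphisms.
\end{itemize}
The paper's route has its own advantage: it stays on the coarse level and uses only $o_*o^*\simeq\mathrm{id}$ and $o_*\mathcal{L}_-=0$. It also sets up the bifiltration that the adjunction lemma right after needs anyway. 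Your third paragraph is essentially a vaguer form of the paper's proof; your argument does not need it.

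One correction on citations. Lemma~\ref{kernallemma} gives $\ker\pi_*\subset\mathcal{T}$, not $\pi_*\mathcal{T}=0$. Likewise, Lemma~\ref{Brss} and Corollary~\ref{CBrss} concern objects already in $\ker\varepsilon_*$ and do not show $\varepsilon_*\mathcal{K}_-=0$. The vanishings you actually use are direct:
\begin{itemize}
\item $\pi_*\iota^+_*\bigl(\mathcal{O}_{\mathrm{M}}(-j)\otimes\nu^i\bigr)$ reduces to $f_{E*}\mathcal{O}_{E_{\mathcal{Y}}}(-j)=0$ for $1\le j\le e-1$.
\item $\mathcal{A}_{-1}$ has vanishing pushforward by definition of the semiorthogonal decomposition.
\item $\varepsilon_*(\kappa_*\mathcal{O}_C\otimes\nu^j)=0$ for $j\neq 0$, because $\varepsilon_*$ takes $\mu_r$-invariants and $\mu_r$ acts trivially on $C$.
\item $\mathcal{L}_-\subset\ker o_*$ follows from $\mathrm{Hom}(f_{E_Y}^*G,N)=\mathrm{Hom}(G,f_{E_Y*}N)=0$ for $N\in\mathcal{B}_{-1}$. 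The appeal to $\varepsilon_*\mathcal{O}_{E_{\mathcal{Y}}}\simeq\mathcal{O}_{E_Y}$ is not needed there.
\end{itemize}
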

\begin{proof}
To simplify the discussion, we use certain unbounded pushforward arguments. For any $F$ we consider $\epsilon_*\varpi_*\pi^*\varepsilon^*F$, which satisfies
$o_*\epsilon_*\varpi_*\pi^*\varepsilon^*F \simeq o_*o^*F \simeq F$. Moreover, for any $\mathcal{Q} \in \mathcal{L}_-$ we have $o_*\mathcal{Q}=0$, hence we obtain the conclusion.
\end{proof}
\begin{lemma}
We have a natural isomorphism
\[
\operatorname{Ext}^*_{\mathrm{D}^b(Y)/\mathcal{L}_-}(\mathcal{R}o^*F, G)
\simeq
\operatorname{Ext}^*_{\mathrm{D}^b(X)}(F, G).
\]
\end{lemma}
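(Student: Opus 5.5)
The statement should read $\operatorname{Ext}^*_{\mathrm{D}^b(X)}(F, o_*G)$ on the right, i.e.\ $\mathcal{R}o^*\dashv o_*$ after passing to $\mathrm{D}^b(Y)/\mathcal{L}_-$. I will prove it in that form, by the same roof-calculus argument used for the adjunction $\varpi_*\mathcal{R}\pi^*\dashv\pi_*\varpi^!$ in the stacky case. Two preliminary facts are needed.

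\textbf{Well-definedness of $o_*$ on the quotient.} First I check that $o_*$ kills $\mathcal{L}_-$. Its generators are $\iota^-_*\mathcal{B}_{-1}$, and $\mathcal{B}_{-1}$ is the left orthogonal to $f_{E_Y}^*\mathrm{D}^b(C)$. Hence for $N\in\mathcal{B}_{-1}$ we get $f_{E_Y*}N=0$, so $o_*\iota^-_*N=\kappa_{X*}f_{E_Y*}N=0$. Therefore $o_*$ descends to a functor $\mathrm{D}^b(Y)/\mathcal{L}_-\to\mathrm{D}^b(X)$.

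\textbf{The bifiltration.} Second, I use the bifiltration $o^*F\simeq\varprojlim_{s,t}\mathcal{R}^{(s,t)}o^*F$ constructed above. Its transition cones lie in $\mathcal{L}_-$, and $\mathcal{R}o^*F$ is one of its terms. So for $(s,t)\gg0$ the natural map $\mathcal{R}^{(s,t)}o^*F\to\mathcal{R}o^*F$ is an isomorphism in the quotient, and $o^*F\to\mathcal{R}^{(s,t)}o^*F$ is compatible with the structure maps.

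\textbf{Forward map.} Given $g:F\to o_*G$, I form the adjoint $o^*F\to G$ in $\mathrm{D}^-$. Since $G$ is bounded, the standard truncation argument shows it factors through $\mathcal{R}^{(s,t)}o^*F$ for $(s,t)\gg0$. Concretely, $\operatorname{Hom}(\varprojlim\mathcal{R}^{(s,t)},G)=\varinjlim\operatorname{Hom}(\mathcal{R}^{(s,t)},G)$ once the cones $\tau_s$ are concentrated in degrees $\le -s+N$, as recorded for (\ref{pullbackcompu1}). This produces a roof $\mathcal{R}o^*F\xleftarrow{\ \sim\ }\mathcal{R}^{(s,t)}o^*F\to G$, i.e.\ a morphism in $\mathrm{D}^b(Y)/\mathcal{L}_-$.

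\textbf{Inverse map.} Given a roof $\mathcal{R}o^*F\to G^-\xleftarrow{t}G$ with $\mathrm{Cone}(t)\in\mathcal{L}_-$, I apply $o_*$. The first step gives $o_*G\simeq o_*G^-$, and Lemma~\ref{coasefullcomp} gives $o_*\mathcal{R}o^*\simeq\mathrm{id}$. Together these yield $F\to o_*G$.

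\textbf{Mutual inverses.} I check that the two maps are inverse, exactly as in the stacky adjunction lemma. Injectivity of the forward map reduces to this: two maps $\mathcal{R}^{(s,t)}o^*F\to G$ that agree after $o_*$ differ by a map that vanishes after refining $(s,t)$ and localizing. To show this I use the description of $\ker o_*$ from the preceding lemma (every object killed by $o_*$ lies in $\mathcal{L}_-$). Concretely, the cone of the counit-type map $\mathcal{R}o^*o_*G\to G$ is killed by $o_*$, hence lies in $\mathcal{L}_-$. So $\mathcal{R}o^*o_*G\simeq G$ in the quotient whenever $G$ is in the image, and a Yoneda argument then gives naturality. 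Finally, this is compatible with shifts, which gives all degrees of $\operatorname{Ext}^*$.

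\textbf{Main obstacle.} The delicate point is the factorization through $\mathcal{R}^{(s,t)}o^*F$ in two indices. One needs a uniform cohomological bound on the cones of the bifiltration, so that Hom from the inverse limit to bounded $G$ is the colimit. I would obtain this locally along $C$ by combining the bound $\mathcal{H}^j(\tau^{(i)}_s)=0$ for $j>-s+N$ with the truncation bound $\tau^{\ge s}$ from Proposition~\ref{CMappro}, and then globalize by the dévissage and local-to-global argument of \cite[Lemma~3.13]{Hao2025b}.
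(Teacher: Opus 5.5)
Your argument is correct, including the fix of the right-hand side to $\operatorname{Ext}^*_{\mathrm{D}^b(X)}(F,o_*G)$ (as in the parallel lemma for weighted blow-ups). It follows the paper's route: the paper only cites the global adjunction of \cite[Lemma~3.9]{Hao2025b} applied to the bifiltration $\mathcal{R}^{(s,t)}o^*F$, and that is exactly the roof-calculus argument of the stacky adjunction lemma you spell out. Injectivity does not need the kernel lemma, since for $(s,t)$ large $\operatorname{Hom}(\mathcal{R}^{(s,t)}o^*F,G)\simeq\operatorname{Hom}(o^*F,G)\simeq\operatorname{Hom}(F,o_*G)$, and $\operatorname{Hom}(o^*F,L)=\operatorname{Hom}(F,o_*L)=0$ for $L\in\mathcal{L}_-$.
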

\begin{proof}
The argument is standard by global adjunction \cite[Lemma 3.9]{Hao2025b}, using the bifiltration $\mathcal{R}^{(s,t)}o^*(F)$ constructed above.
\end{proof}
\begin{proposition}
The functor $\mathcal{R}o^*$ induces an equivalence of triangulated categories
\[
\mathrm{D}^b(X) \simeq \mathrm{D}^b(Y)/\mathcal{L}_-.
\]
\end{proposition}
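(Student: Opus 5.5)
We argue in the standard way for a Verdier localization with a fully faithful section: first show that $\mathcal{R}o^*$ descends to an exact functor $\mathrm{D}^b(X)\to \mathrm{D}^b(Y)/\mathcal{L}_-$, then that it is fully faithful, and finally that it is essentially surjective, with quasi-inverse induced by $o_*$.

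\emph{Well-definedness.} By the preceding lemma with $G\in\mathcal{L}_-$ (read as $\operatorname{Ext}^*_{\mathrm{D}^b(X)}(F,o_*G)$), and since $o_*\mathcal{L}_-=0$, the ambiguity in the choice of $\widetilde{(-)}$ and of the truncations in $\mathcal{R}\pi^*$ disappears in $\mathrm{D}^b(Y)/\mathcal{L}_-$. Concretely, two choices differ by cones built from $\epsilon_*\varpi_*$ of objects of $\mathcal{K}_-$ and of $\mathcal{T}$. These cones are killed by $o_*$, so the lemma ``$o_*F=0\Rightarrow F\in\mathcal{L}_-$'' places them in $\mathcal{L}_-$. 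Compatibility with distinguished triangles then follows exactly as in Proposition~\ref{canonicalcover}: a triangle $F\to G\to H$ lifts to $\mathcal{R}o^*F\to\mathcal{R}o^*G\to\tau$ with $o_*\tau\simeq H$, and $\tau\simeq\mathcal{R}o^*H$ modulo $\mathcal{L}_-$.

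\emph{Full faithfulness.} For $F_1,F_2\in\mathrm{D}^b(X)$, apply the preceding adjunction lemma with $G=\mathcal{R}o^*F_2$ and then Lemma~\ref{coasefullcomp}:
\[
\operatorname{Ext}^*_{\mathrm{D}^b(Y)/\mathcal{L}_-}(\mathcal{R}o^*F_1,\mathcal{R}o^*F_2)\simeq\operatorname{Ext}^*_{\mathrm{D}^b(X)}(F_1,o_*\mathcal{R}o^*F_2)\simeq\operatorname{Ext}^*_{\mathrm{D}^b(X)}(F_1,F_2).
\]
We will check that this isomorphism is the map induced by the functor, by tracing the roof description used in the adjunction proofs above.

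\emph{Essential surjectivity.} This is the step we expect to be the main obstacle. Take $G\in\mathrm{D}^b(Y)$. The adjunction lemma, applied with $F=o_*G$, yields a counit morphism $\mathcal{R}o^*o_*G\to G$ in the quotient, corresponding to the identity of $o_*G$. Let $c$ be its cone. Applying $o_*$ (which kills $\mathcal{L}_-$, so the roof representing the morphism pushes forward to an honest morphism) and using $o_*\mathcal{R}o^*\simeq\mathrm{id}$, we get $o_*c=0$, provided the pushed-forward counit is the identity on $o_*G$. The delicate point is exactly this verification. We plan to represent the counit via a sufficiently deep term $\mathcal{R}^{(s,t)}o^*o_*G$ of the bifiltration: the comparison map to $o^*o_*G$ has cone in $\mathcal{L}_-$, and composing with the genuine counit $o^*o_*G\to G$ gives the required morphism. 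We will then check compatibility with $o_*$ using the unbounded computation from Lemma~\ref{coasefullcomp}. Once $o_*c=0$, the lemma on the kernel of $o_*$ gives $c\in\mathcal{L}_-$, so $G\simeq\mathcal{R}o^*o_*G$ in $\mathrm{D}^b(Y)/\mathcal{L}_-$.

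Finally, $\mathcal{L}_-\subset\ker o_*$ and $\ker o_*\subset\mathcal{L}_-$ give $\mathcal{L}_-=\ker o_*$, so $o_*$ descends to the quotient, and the steps above show that it is quasi-inverse to $\mathcal{R}o^*$. The global case reduces to the local one by the same dévissage along $C$ and the local-to-global gluing used before.
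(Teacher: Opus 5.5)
Your proposal is correct and follows the paper's proof. Full faithfulness comes from the adjunction isomorphism $\operatorname{Ext}^*_{\mathrm{D}^b(Y)/\mathcal{L}_-}(\mathcal{R}o^*F,G)\simeq\operatorname{Ext}^*_{\mathrm{D}^b(X)}(F,o_*G)$ combined with $o_*\mathcal{R}o^*\simeq\mathrm{id}$ (Lemma~\ref{coasefullcomp}), and essential surjectivity from the counit triangle $\mathcal{R}o^*o_*F\to F\to\mathcal{Q}$ with $o_*\mathcal{Q}=0$, so that $\mathcal{Q}\in\mathcal{L}_-$ by the kernel lemma; the check you flag, that $o_*$ of the counit is the identity on $o_*G$, is only asserted in the paper, so your extra care there refines the same argument rather than changing it.
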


\begin{proof}
Well-definedness and the triangulated structure are standard, and we have already shown that $\mathcal{R}o^*$ is fully faithful by Lemma \ref{coasefullcomp}. It remains to prove essential surjectivity. For any $F \in \mathrm{D}^b(Y)$, consider the natural adjunction triangle
\[
\mathcal{R}o^*o_*F \longrightarrow F \longrightarrow \mathcal{Q},
\]
where $\mathcal{Q}$ satisfies $o_*\mathcal{Q}=0$. Hence $\tau \in \mathcal{L}_-$, and therefore $F \simeq \mathcal{R}o^*o_*F$ in $\mathrm{D}^b(Y)/\mathcal{L}_-$.
\end{proof}

\subsection{Weighted blow-up}\label{Weightedblowup}
We previously considered Kawamata blow-ups along certain special global centers $C$ carrying mild well-formed singularities, since globally one needs to define a constructible Rees-type derived pullback. It is natural, however, to try to extend the above results to arbitrary centers $C$. If both $C$ and $X$ are smooth, the classical blow-up formula suggests that this should be possible. However, for a general weighted blow-up, even in the case of a smooth variety blown up along a smooth center, one must take into account different embeddings. For instance, the weighted blow-up of $k[[x_1,x_2,x_3]]$ with weights $(0,1,1)$ and the weighted blow-up of $k[[x_1,x_2,x_3,x_4]]/(x_4+x_3x_2)$ with weights $(0,1,1,3)$ produce different outcomes.\\

We first consider a basic case. Let $X$ be a smooth variety. Locally, we embed it as a complete intersection into $\mathrm{Spec}\,R:=k[[x_1,\ldots,x_n]]$, and consider the stacky weighted blow-up of $\mathrm{Spec}\,R$ with weights $(a_1,\ldots,a_s)$ along a smooth center. Let $\mathcal{Z}$ be the strict transform of $X$ in this ambient weighted blow-up. We have the diagram
\[
\begin{tikzcd}[column sep=3.2em, row sep=2.2em]
E \arrow[r,"\iota_E"] \arrow[d,"f_E"']
& E_{\mathcal{Z}} \arrow[r,"\iota"] \arrow[d,"f"']
& \mathcal{Z} \arrow[d,"\pi"] \\
p \arrow[r,"\kappa_p"']
& C \arrow[r,"\kappa"']
& X
\end{tikzcd}
\]
As a generalization of the classical case, $E_{\mathcal{Z}}$ is no longer a weighted projective stack in general, but rather a hypersurface fibration. We assume it is a Fano fibration, i.e.
\[
e := \sum_{i=1}^{s} a_i - val(I) > 0.
\]
Moreover, we assume a semiorthogonal decomposition
\[
\mathrm{D}^b(E_{\mathcal{Z}})=
\left\langle
\mathcal{A}_{-1},\,
f^*\mathrm{D}^b(C),\,
f^*\mathrm{D}^b(C)\otimes \mathcal{O}_{E_{\mathcal{Z}}}(1),\,
\ldots,\,
f^*\mathrm{D}^b(C)\otimes \mathcal{O}_{E_{\mathcal{Z}}}(e-1)
\right\rangle.
\]
and we define a full saturated subcategory in $\mathrm{D}^b(\mathcal{Z})$ by $\mathcal{K}_{-1}:=\langle \iota_{*}\mathcal{A}_{-1}\rangle$. Fortunately, by the regularity of $X$, we have a well-defined derived pullback
\[
\pi^*:\mathrm{D}^b(X)\longrightarrow \mathrm{D}^b(\mathcal{Z}),
\]
together with an adjoint triple
\[
\pi^* \dashv \pi_* \dashv \pi^!.
\]
One may check that it agrees with the previously defined $\mathcal{R}\pi^*$ in \cite{Hao2025b}.
\begin{proposition}\label{smoothwblsod}
In any case, we have the following classical decomposition:
\[
\mathrm{D}^b(\mathcal{Z})
=
\left\langle
\operatorname{Im}\big(\iota_*\big(f^*\mathrm{D}^b(C)\otimes \mathcal{O}_{E_{\mathcal Z}}(-e+1)\big)\big),\,
\ldots,\,
\operatorname{Im}\big(\iota_*\big(f^*\mathrm{D}^b(C)\otimes \mathcal{O}_{E_{\mathcal Z}}(-1)\big)\big),\,
\mathcal{K}_{-1},\,
\operatorname{Im}\,\mathcal{R}\pi^*
\right\rangle.
\]
\end{proposition}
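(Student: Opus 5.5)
We follow the classical Orlov argument for blow-ups, with the Rees-type pullback $\mathcal{R}\pi^*=\pi^*$ (available because $X$ is regular) and the Fano-fibration decomposition of $\mathrm{D}^b(E_{\mathcal Z})$ in place of the projective bundle formula. The proof has three steps: full faithfulness and admissibility of each piece, semi-orthogonality between the pieces, and generation.

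\emph{Step 1: the pieces are admissible.} Since $\pi_*\mathcal{O}_{\mathcal Z}\simeq\mathcal{O}_X$, the projection formula gives $\pi_*\pi^*\simeq\mathrm{id}$. Hence $\pi^*$ is fully faithful, and it is admissible via the triple $\pi^*\dashv\pi_*\dashv\pi^!$.

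For $\Phi_k:=\iota_*(f^*(-)\otimes\mathcal{O}_{E_{\mathcal Z}}(k))$ with $-e+1\le k\le -1$, use the excess triangle of Lemma~\ref{excdis}:
\[
G(-E_{\mathcal Z})[1]\to\iota^*\iota_*G\to G.
\]
By adjunction, $\operatorname{Ext}(\iota_*G,\iota_*G')$ is computed from $\operatorname{Ext}_{E_{\mathcal Z}}(G,G')$ and $\operatorname{Ext}_{E_{\mathcal Z}}(G(-E_{\mathcal Z}),G')[-1]$. Since $\mathcal{O}_{E_{\mathcal Z}}(-E_{\mathcal Z})\simeq\mathcal{O}_{E_{\mathcal Z}}(1)$ (the stacky blow-up, $r=1$ case of the earlier root discussion), the correction term is $\operatorname{Ext}(f^*A(k+1),f^*B(k'))$. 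For $k'\le k$ it involves a twist difference in $[-(e-1),-1]$, and it vanishes by the semi-orthogonality of the assumed decomposition of $\mathrm{D}^b(E_{\mathcal Z})$ after twisting by $\mathcal{O}(-k-1)$. This simultaneously gives full faithfulness of each $\Phi_k$ (case $k=k'$) and the orthogonality $\operatorname{Hom}(\Phi_{k'}(\cdot),\Phi_k(\cdot))=0$ for $k'>k$.

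The same computation applies to $\iota_*\mathcal A_{-1}$. One needs $\operatorname{Ext}(\iota_*\mathcal A_{-1},\Phi_k(\cdot))=0$ for $k\le -1$, which follows from $\mathcal A_{-1}\subset{}^\perp\langle f^*\mathrm{D}^b(C)(j)\rangle_{j\ge 0}$ after twisting, with the correction term using $\mathcal A_{-1}(1)=\mathcal A_{0}$. One also needs $\operatorname{Ext}(\iota_*\mathcal{A}_{-1},\pi^*F)=\operatorname{Ext}(\mathcal A_{-1},\iota^!\pi^*F)$, where $\iota^!\pi^*F=f^*\kappa^*F\otimes\mathcal{O}(-1)[-1]$. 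This equals $\operatorname{Ext}(\mathcal A_0,f^*\mathrm{D}^b(C))=0$, by reading the decomposition in Serre-dual form, $\mathcal A_{-1}\subset (f^*\mathrm{D}^b(C)(-1))^\perp$. Finally $\operatorname{Hom}(\Phi_k(A),\pi^*F)=\operatorname{Hom}(f^*A(k),f^*\kappa^*F(-1))[-1]$, which vanishes for $-e+1\le k\le -1$ since the twist lies in $[-e+1... -1]$ shifted by one and hence inside the Lefschetz range of length $e$. It also vanishes via $\pi_*\Phi_k=0$, since $\mathrm{H}^*(E_p,\mathcal{O}(k))=0$ for $-e<k<0$ fiberwise. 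I would verify this carefully with the relative canonical bundle $\omega_{E/C}=\mathcal{O}(-e)$.

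\emph{Step 2: generation.} This is the main obstacle. Let $G$ be right orthogonal to all pieces. From $\operatorname{Hom}(\pi^*F,G)=\operatorname{Hom}(F,\pi_*G)=0$ we get $\pi_*G=0$. I would invoke Lemma~\ref{kernallemma} (the $r=1$ version, as in \cite{Hao2025b}): $\ker\pi_*$ is contained in
\[
\mathcal T=\bigl\langle\iota_*\mathcal{O}(-e+1),\dots,\iota_*\mathcal{O}(-1),\langle\iota_*\mathcal A_{-1}\rangle\bigr\rangle .
\]
In the relative setting, the dévissage over $C$ replaces $\mathcal{O}(k)$ by $f^*\mathrm{D}^b(C)(k)$. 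Then $G\in\mathcal T$ and $G$ is right orthogonal to the generators of $\mathcal T$, so $\operatorname{Hom}(G,G)=0$ and $G=0$. The delicate point is the global form of Lemma~\ref{kernallemma} for a nonpoint center $C$. I would first prove it formally along $C$, using the local Rees filtration of $\pi^*\kappa_*\mathcal{O}_C$ as in \eqref{pullbackcompu} with trivial group, whose graded pieces are $\iota_*f^*(\cdot)\otimes\mathcal D_i$. Then I would glue by supports, since $G$ is supported on $E_{\mathcal Z}$, arguing exactly as in the dévissage lemma above.
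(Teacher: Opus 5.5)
The paper gives no argument for this proposition beyond calling it classical. In the analogous Kawamata decompositions it obtains generation from Lemma~\ref{kernallemma}, so your overall plan is the expected one, but two steps fail as written.

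\emph{Semi-orthogonality.} Your comparisons among the $\Phi_k$ (excess triangle of Lemma~\ref{excdis} with $\mathcal{O}_{E_{\mathcal Z}}(-E_{\mathcal Z})\simeq\mathcal{O}_{E_{\mathcal Z}}(1)$) are fine, and so are those between $\mathcal{K}_{-1}$ and the $\Phi_k$, with one correction. The vanishing $\operatorname{Hom}^\bullet(\mathcal{A}_{-1},f^*\mathrm{D}^b(C)(k))=0$ for $-e\le k\le -1$ does not follow from the given decomposition ``after twisting''; that only gives $\operatorname{Hom}^\bullet(f^*\mathrm{D}^b(C)(j),\mathcal{A}_{-1})=0$ for $0\le j<e$. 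You need to rotate by the relative Serre functor $-\otimes\omega_{E_{\mathcal Z}/C}[d]$, with $\omega_{E_{\mathcal Z}/C}\simeq\mathcal{O}(-e)$, which is the form used in Lemma~\ref{wbuporth}.

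More seriously, the orthogonality with $\operatorname{Im}\pi^*$ is checked in the wrong direction. Since $\operatorname{Im}\pi^*$ is the rightmost piece, what must vanish is:
\begin{itemize}
\item $\operatorname{Hom}^\bullet(\pi^*F,\Phi_k(A))=\operatorname{Hom}^\bullet(F,\kappa_*(A\otimes f_*\mathcal{O}_{E_{\mathcal Z}}(k)))$;
\item $\operatorname{Hom}^\bullet(\pi^*F,\iota_*G)=\operatorname{Hom}^\bullet(F,\kappa_*f_*G)$ for $G\in\mathcal{A}_{-1}$.
\end{itemize}
Both are zero because $f_*\mathcal{O}_{E_{\mathcal Z}}(k)=0$ for $-e<k<0$ and $f_*\mathcal{A}_{-1}=0$. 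Your remark ``$\pi_*\Phi_k=0$'' proves the first, but you attach it to $\operatorname{Hom}^\bullet(\Phi_k(A),\pi^*F)$, which is \emph{not} zero. That group equals $\operatorname{Hom}^\bullet(f^*A,f^*\kappa^*F\otimes\mathcal{O}(-1-k))[-1]$ with $-1-k\in[0,e-2]$, outside the vanishing range. For $k=-1$ it is $\operatorname{Hom}^\bullet(A,\kappa^*F)[-1]\neq0$; already $\operatorname{Ext}^1(\mathcal{O}_E(-1),\mathcal{O})\neq0$ for the blow-up of a surface at a point. The vanishing you prove for $\iota_*\mathcal{A}_{-1}$ against $\pi^*F$ is true, but it too is the opposite of what is required.

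\emph{Generation.} This is the real gap. Because $\pi^*$ is fully faithful with right adjoint $\pi_*$, one always has $\mathrm{D}^b(\mathcal{Z})=\langle\ker\pi_*,\operatorname{Im}\pi^*\rangle$. So the inclusion $\ker\pi_*\subset\langle\Phi_{-e+1},\ldots,\Phi_{-1},\mathcal{K}_{-1}\rangle$ you import from Lemma~\ref{kernallemma} is not an auxiliary input; it is literally the generation statement to be proved.

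Lemma~\ref{kernallemma} is available only in the formal-local model over a point of $C$. There the base is a regular complete local ring, so $\mathcal{O}_{\mathrm M}(-j)$ generates $f^*\mathrm{D}^b(C)(-j)$. ``Glue by supports'' does not give the global statement, because membership in the thick closure of $\iota_*\mathcal{A}_{-1}$ and the $\Phi_k$ is not a formal-local property. The usual local-to-global device (linear components, check that the orthogonal vanishes locally) needs the components to be right admissible.

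The same hidden hypothesis sits in your last step. The implication ``$G\in\mathcal{C}^\perp\Rightarrow G=0$'' yields $\mathcal{C}=\mathrm{D}^b(\mathcal{Z})$ only if $\mathcal{C}$ is right admissible, hence only if $\mathcal{K}_{-1}$ is. The paper deduces that only \emph{after} this proposition. It is not evident beforehand, since $\iota_*|_{\mathcal{A}_{-1}}$ need not be fully faithful: the excess term $\operatorname{Hom}^\bullet(\mathcal{A}_{-1}(1),\mathcal{A}_{-1})$ need not vanish.

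What is missing is a global argument that an object $B\in\ker\pi_*$, after projecting off $\langle\Phi_{-e+1},\ldots,\Phi_{-1}\rangle$, lies in $\mathcal{K}_{-1}$. One route is to show $B\simeq 0$ in $\mathrm{D}^b(\mathcal{Z})/\mathcal{K}_{-1}$, as in Lemma~\ref{kernalElemma}. This uses the Rees filtration of $\pi^*\kappa_*A$, whose pieces $\iota_*(f^*A\otimes\mathcal{D}_s)$ for $s\ge1$ lie in $\langle\Phi_{-e+1},\ldots,\Phi_{-1},\iota_*\mathcal{A}_{-1}\rangle$. That filtration is in general infinite (its tail lies in $\mathcal{A}_{-1}$), so the remainders $\tau_s$ must be controlled. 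Once the inclusion is known globally, drop the orthogonal-complement step: the triangle $\pi^*\pi_*G\to G\to G'$ with $G'\in\ker\pi_*$ gives generation directly, with no admissibility needed.
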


\begin{corollary}
In this case, $\mathcal{K}_{-1}$ is admissible.
\end{corollary}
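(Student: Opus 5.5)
The corollary asserts that $\mathcal{K}_{-1}=\langle\iota_*\mathcal{A}_{-1}\rangle$ is an admissible subcategory of $\mathrm{D}^b(\mathcal{Z})$. The plan is to read this off from Proposition~\ref{smoothwblsod}. In a semiorthogonal decomposition $\langle\mathcal{A}_1,\dots,\mathcal{A}_n\rangle$ of a category admitting a Serre functor, a component is admissible as soon as all components are. So the task reduces to showing that every other piece of the decomposition is admissible, and then applying the standard mutation/Serre-functor argument, e.g.\ \cite[Ch.~1]{Huybrechts}.

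\textbf{Step 1.} Check that $\operatorname{Im}\mathcal{R}\pi^*=\operatorname{Im}\pi^*$ is admissible. Since $X$ is smooth, we have the adjoint triple $\pi^*\dashv\pi_*\dashv\pi^!$, and $\pi^*$ is fully faithful because $\pi_*\pi^*\simeq\mathrm{id}$ (rational singularities of the stacky weighted blow-up, i.e.\ $\pi_*\mathcal{O}_{\mathcal{Z}}=\mathcal{O}_X$). Hence the image has both a left adjoint, $\pi_!=\pi_*(-\otimes\omega_{\mathcal{Z}/X})$, and a right adjoint, $\pi_*$, so it is admissible.

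\textbf{Step 2.} Check that each $\Phi_j:=\iota_*(f^*(-)\otimes\mathcal{O}_{E_{\mathcal{Z}}}(-j))$, for $1\le j\le e-1$, is fully faithful with both adjoints. Its right adjoint is $f_*(\iota^!(-)\otimes\mathcal{O}(j))$ and its left adjoint is $f_*(\iota^*(-)\otimes\mathcal{O}(j)\otimes\omega_f)[\dim f]$. These exist because $\iota$ is a Cartier divisor embedding, which gives $\iota^!=\iota^*\otimes\mathcal{O}(E)[-1]$, and $f$ is proper and flat with $f^*\dashv f_*\dashv f^!$.

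Full faithfulness comes from the excess triangle of Lemma~\ref{excdis},
\[
F(-E)[1]\to\iota^*\iota_*F\to F,
\]
together with $\mathcal{O}_{E_{\mathcal{Z}}}(-E)\simeq\mathcal{O}_{E_{\mathcal{Z}}}(1)$. Combining these, we get
\[
\operatorname{Hom}(\Phi_j a,\Phi_j b)=\operatorname{Hom}_{E}(f^*a,f^*b)\oplus(\text{a term involving }f^*b\otimes\mathcal{O}(-1)),
\]
and the second term vanishes by the assumed decomposition of $\mathrm{D}^b(E_{\mathcal{Z}})$.

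\textbf{Step 3.} Since $\mathcal{Z}$ is a smooth proper-over-$X$ DM stack, $\mathrm{D}^b(\mathcal{Z})$ has a relative Serre functor $-\otimes\omega_{\mathcal{Z}}[\dim]$. Set $\mathcal{D}:=\operatorname{Im}\mathcal{R}\pi^*$ and $\mathcal{E}_j:=\operatorname{Im}\Phi_j$, and write
\[
\mathrm{D}^b(\mathcal{Z})=\langle\mathcal{E}_{e-1},\dots,\mathcal{E}_1,\mathcal{K}_{-1},\mathcal{D}\rangle .
\]
Then:
\begin{itemize}
\item $\mathcal{K}_{-1}={}^\perp\langle\mathcal{E}_{e-1},\dots,\mathcal{E}_1\rangle\cap\mathcal{D}^\perp$.
\item $\langle\mathcal{K}_{-1},\mathcal{D}\rangle={}^\perp\langle\mathcal{E}_\bullet\rangle$ is right admissible, since the $\mathcal{E}_j$ are left admissible. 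It is also left admissible, since it equals the right orthogonal of $\langle\mathcal{E}_\bullet\rangle$ twisted by the Serre functor.
\item Inside $\langle\mathcal{K}_{-1},\mathcal{D}\rangle$, the piece $\mathcal{K}_{-1}=\mathcal{D}^\perp$ is left admissible. Right admissibility follows by mutating $\mathcal{D}$ past $\mathcal{K}_{-1}$, using that $\mathcal{D}$ is admissible.
\end{itemize}
Composing these adjoints gives both adjoints of the inclusion $\mathcal{K}_{-1}\hookrightarrow\mathrm{D}^b(\mathcal{Z})$.

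\textbf{Main obstacle.} The delicate point is Step 3 in the non-proper (local/formal) setting. An absolute Serre functor is unavailable there, so one must argue with the relative Serre functor over $X$, equivalently with $\omega_{\mathcal{Z}/X}$. If that fails, I would instead construct the two projection functors explicitly. The right one is the cone of the composite of the units of $\Phi_j$ and $\pi^*$. The left one is the analogous construction with $\pi_!$ and the left adjoints of $\Phi_j$. I would then verify directly that these cones land in $\mathcal{K}_{-1}$, using only Proposition~\ref{smoothwblsod}.
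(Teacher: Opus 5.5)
Your proposal is correct and follows the only route the paper suggests. The paper states this corollary without proof, as an immediate consequence of Proposition~\ref{smoothwblsod}: $\mathcal{K}_{-1}$ is the middle piece of a semiorthogonal decomposition whose other pieces, $\operatorname{Im}\pi^*$ and the $\operatorname{Im}\Phi_j$, are admissible. Your Steps~1--3 supply the adjoints and the duality argument that the paper omits.

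One point should be tightened. $\mathcal{Z}$ is only a complete intersection in the ambient weighted blow-up, so it need not be smooth, and $\mathrm{D}^b(\mathcal{Z})$ need not carry a relative Serre functor. Instead, use Grothendieck duality for perfect $P$:
\[
\bigl(\pi_*\mathbf{R}\mathcal{H}om(P,A)\bigr)^\vee\simeq\pi_*\mathbf{R}\mathcal{H}om(A,P\otimes\omega_{\mathcal{Z}/X}).
\]
This is enough, because $\operatorname{Im}\pi^*$ and the $\operatorname{Im}\Phi_j$ consist of perfect complexes. It gives directly
\[
(\operatorname{Im}\pi^*)^\perp={}^\perp(\operatorname{Im}\pi^*\otimes\omega_{\mathcal{Z}/X}),
\qquad
{}^\perp\langle\mathcal{E}_\bullet\rangle=(\langle\mathcal{E}_\bullet\rangle\otimes\omega_{\mathcal{Z}/X}^{-1})^\perp .
\]
Hence $\langle\mathcal{E}_\bullet,\mathcal{K}_{-1}\rangle$ and $\langle\mathcal{K}_{-1},\operatorname{Im}\pi^*\rangle$ are both admissible. $\mathcal{K}_{-1}$ then inherits right admissibility as the rightmost piece of the first, and left admissibility as the leftmost piece of the second. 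This also replaces the mutation step in your last bullet.
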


Then we assume $X$ is a local well-formed variety, with singular center $C$ given first by a closed point $p$, and that the cyclic group is trivial, so that $\mathcal{X}=X$ has a local complete intersection singularity at $p$:
\[
\widehat{\mathcal O}_{X,p}
\simeq
\mathrm{k}(p)[[x_1,\ldots,x_n]]/I.
\]
Choose moreover a smooth center, viewed as the center of a weighted blow-up (or equivalently as the center of a valuation), passing through $p$, and denote this center again by $C$. In the local situation, after a coordinate change, one may assume that $Z(x_1,\ldots,x_s)$ defines $C$. Consider the valuation \footnote{In fact, more generally, consider any algebraic divisorial contraction to this center $C$ over an algebraically closed field of characteristic zero. If the discrepancy satisfies $a(E,X)>-1$ and $-E$ is relatively ample, then such a divisorial valuation can always be realized by restriction a valuation of the form above, after enlarging the coordinate system, i.e. \cite{CCC}.
} on $R:=k[[x_1,\ldots,x_n]]$ given by
\[
val(x_i)=a_i>0 \quad (1\le i\le s),
\qquad
val(x_i)=0 \quad (s<i\le n).
\]
This defines a filtration of ideals
\[
P_i := \{\, f \in R \mid val(f)\ge i \,\}.
\]
We then consider the stacky Proj of the $\mathbb{N}$-graded $R$-algebra $\bigoplus_{i\ge 0} P_i$, namely
\[
\mathcal{P}\mathrm{roj}_R\!\left(\bigoplus_{i\ge 0} P_i\right)
:=
\left[\operatorname{Spec}_R\!\left(\bigoplus_{i\ge 0} P_i\right)\setminus Z \,\big/\, \mathbb{G}_m\right],
\]
where $Z$ denotes the irrelevant locus. We define this construction to be the weighted blow-up along the valuation, and denote it by
\[
w\mathrm{Bl}^{(a_i)}_C \operatorname{Spec} R.
\]
For a general complete intersection $I$, we consider the embedding $\mathcal{X}\hookrightarrow \operatorname{Spec} R$. The birational transformation defined above induces the strict transform of $\mathcal{X}$, yielding the stack $\mathcal{Z}:=w\mathrm{Bl}^{(a_i)}_C \mathcal{X}$. We have a commutative diagram
\[
\begin{tikzcd}[column sep=3.2em, row sep=2.2em]
E \arrow[r,"\iota_E"] \arrow[d,"f_E"']
& E_{\mathcal{Z}} \arrow[r,"\iota"] \arrow[d,"f"']
& \mathcal{Z} \arrow[d,"\pi"] \\
p \arrow[r,"\kappa_p"']
& C \arrow[r,"\kappa"]
& \mathcal{X}
\end{tikzcd}
\]
and impose the following extra conditions:
\begin{enumerate}
\item $f$ is a flat (\ref{flatcond}) morphism.

\item $\mathcal{X}$ satisfies the measured condition  (\ref{cond:measured}): there exists a measured basis $\{f_i\}$ of a regular sequence generating $I$, such that the leading terms $\{L(f_i)\}$ also form a regular sequence generating $L(I)$.

\item The exceptional divisor is a Fano fibration.
\end{enumerate}
We note that we do not assume the coarse moduli $w\mathrm{Bl}^{(a_i)}_C X$ is well-formed. However, if $w\mathrm{Bl}^{(a_i)}_C X$ is well-formed and exceptional divisor is prime, then $\mathcal{Z}$ is the canonical stack of $w\mathrm{Bl}^{(a_i)}_C X$. We explain the flatness condition \label{flatcond}. Although more sophisticated definitions exist, we only use the following concrete interpretation: Locally, we may realize $E_{\mathcal{Z}}$ as a complete intersection of a partial moduli stack of the quotient stack $\left[\mathds{P}^s_C / (\mu_{a_1} \times \cdots \times \mu_{a_s})\right]$, i.e. we can take an iterated root construction which produces a substack
\[
E_{\mathcal{Z}}^+ := \left[ Z(J) / (\mu_{a_1} \times \cdots \times \mu_{a_s}) \right]
\subset
\left[ \mathds{P}^s_C / (\mu_{a_1} \times \cdots \times \mu_{a_s}) \right],
\]
together with the roots morphism
\[
\alpha: E_{\mathcal{Z}}^+ \longrightarrow E_{\mathcal{Z}}.
\]
We say that $f$ is flat if $Z(J)$ is flat over $C$. In this sense, we obtain the following flat base change :
\begin{lemma}\label{flatbasechange}
Consider the cartesian diagram
\[
\begin{tikzcd}[column sep=3.2em, row sep=2.2em]
E \ar[r, "\iota_E"] \ar[d, "f_E"'] & E_{\mathcal{Z}} \ar[d, "f"] \\
p \ar[r, "\kappa_p"'] & C
\end{tikzcd}
\]
with $f$ flat, there exists a canonical isomorphism
\[
\kappa_{p}^{*}\, f_* F \;\simeq\; {f_E}_*\,\iota_{E}^* F
\]
for any $F \in \mathrm{D}^b(E_{\mathcal{Z}})$.
\end{lemma}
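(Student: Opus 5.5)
The plan is to reduce to the classical flat base change theorem on the root cover $E_{\mathcal{Z}}^+=[Z(J)/G]$, with $G:=\mu_{a_1}\times\cdots\times\mu_{a_s}$, and then descend along the root morphism $\alpha$. Since $\kappa_p$ is a point of the smooth center $C$, the base change map
\[
\kappa_p^*f_*F\longrightarrow f_{E*}\iota_E^*F
\]
is defined in the usual way, via adjunction from $\iota_E^*f^*\kappa_{p*}\simeq\ldots$. More concretely, it comes from the unit $F\to\iota_{E*}\iota_E^*F$ together with $f\iota_E=\kappa_p f_E$. I will check that this map is an isomorphism étale locally on $C$, where $C$ is affine and $\mathcal{O}_{C,p}$ is regular local.

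First I treat the cover. Form the Cartesian square of $Z(J)\to C$ over $p$, with fibre $Z(J)_p$. Because $Z(J)$ is flat over $C$ and $Z(J)\to C$ is proper (it is closed in $\mathds{P}^s_C$), the usual base change theorem gives
\[
\kappa_p^*(f^+)_*F^+\simeq (f^+_E)_*(\iota_E^+)^*F^+
\]
on the scheme $Z(J)$. Here flatness is what makes the derived fibre product agree with the underived one, so that no Tor-correction terms appear. The group $G$ acts trivially on $C$ and on $p$, and pushforward to $C$ is $G$-invariant pushforward, which is exact in characteristic coprime to $|G|$. Taking $G$-invariants therefore commutes with $\kappa_p^*$. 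This gives the statement for the quotient stack $E_{\mathcal{Z}}^+$, by the same equivariant Čech argument used in Lemma~\ref{fbcofroot}.

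Next I descend along $\alpha\colon E_{\mathcal{Z}}^+\to E_{\mathcal{Z}}$, which is an iterated root construction. By Proposition~\ref{derivedroot}, $\alpha^*$ is fully faithful, so $\alpha_*\alpha^*\simeq\mathrm{id}$. For $F\in\mathrm{D}^b(E_{\mathcal{Z}})$ this gives
\[
f_*F\simeq f_*\alpha_*\alpha^*F=f^+_*\alpha^*F.
\]
Lemmas~\ref{flatbasechange0} and~\ref{fbcofroot} show that the root construction commutes with restriction to the fibre over $p$, so $\iota_E^*$ is compatible with $\alpha^*$ and with the fibrewise root map $\alpha_E$. We also have $\alpha_{E*}\alpha_E^*\simeq\mathrm{id}$. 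Combining these with the previous step yields the claimed isomorphism. One still has to check that the composite is the canonical base change morphism, which is a formal diagram chase with units and counits.

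The main obstacle is the descent step, specifically showing that $\alpha_*\alpha^*\simeq\mathrm{id}$ and that the fibre square for $\alpha$ is Cartesian in the derived sense. Since $\alpha$ is not representable and $E_{\mathcal{Z}}$ itself need not be flat over $C$ in the naive sense, I would first try to verify both facts étale locally on $E_{\mathcal{Z}}$. In that setting $\alpha$ has the form $[\operatorname{Spec}A[t]/(t^a-g)/\mu_a]\to\operatorname{Spec}A$ with $A[t]/(t^a-g)$ free over $A$, so both properties follow from the $\mu_a$-invariants of a free module. Iterating over the roots $a_1,\dots,a_s$ then finishes the argument.
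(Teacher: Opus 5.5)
Your proposal is correct and follows essentially the same route as the paper: use $\alpha_*\alpha^*\simeq\mathrm{id}$ to pass to the root cover $[Z(J)/G]$, then invoke $G$-equivariant base change there via invariants and the equivariant \v{C}ech complex. You are more careful than the paper on two points it leaves implicit: the relevant input is Tor-independence (coming from flatness of $Z(J)$ over $C$, since $\kappa_p$ itself is not flat), and you check the fibrewise identity $\alpha_{E*}\alpha_E^*\simeq\mathrm{id}$; the paper's remark on $K_{E'_{\mathcal Z}/C}$ concerns only a dual $f^!$-version that this statement does not need.
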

\begin{proof}
Note that for any $F \in \mathrm{D}^b(E_{\mathcal Z})$ we have $\alpha_*\alpha^*F \simeq F$, hence the desired base change follows from  Flat base change of the cartesian diagram below
\[
\begin{tikzcd}[column sep=3.2em, row sep=2.2em]
E':=\left[ Z(J)|_p \big/ \left(\mu_{a_1}\times\cdots\times\mu_{a_s}\right) \right]
\arrow[r]
\arrow[d]
&
E'_{\mathcal Z}:=\left[ Z(J) \big/ \left(\mu_{a_1}\times\cdots\times\mu_{a_s}\right) \right]
\arrow[d]
\\
p \arrow[r] & C
\end{tikzcd}
\]
We note that $K_{E'_{\mathcal{Z}}/C}$ is locally free and $K_{E'_{\mathcal{Z}}/C}|_{E'}\simeq K_{E'/p}$. Hence this dual form base change follows from $G$-equivariant flat base change by \cite[Lemma~1.3]{CCC}, for instance by passing to the equivariant \v{C}ech complex computing the derived pushforward.
\end{proof}
We can naturally consider the following decomposition:
\[
\mathrm{D}^b(E_{\mathcal{Z}})=
\left\langle
\mathcal{A}_{-1},
f^*\mathrm{D}^b(C),\,
f^*\mathrm{D}^b(C)\otimes \mathcal{O}_{E_{\mathcal{Z}}}(1),\,
\ldots,\,
f^*\mathrm{D}^b(C)\otimes \mathcal{O}_{E_{\mathcal{Z}}}(e-1)
\right\rangle
\]
where we set $e := \sum_{i=1}^{s} a_i - val(I)$ as the Fano index.

\begin{lemma}
We have a semiorthogonal decomposition
\[
\mathrm{D}^b(E)=
\left\langle
\mathcal{B}_{-1},\,
\mathcal{O}_{E},\,
\mathcal{O}_{E}(1),\,
\ldots,\,
\mathcal{O}_{E}(e-1)
\right\rangle,
\]
where $E$ is a complete intersection in a weighted projective stack and $\mathcal{O}_{E}(i)$ are exceptional objects.
\end{lemma}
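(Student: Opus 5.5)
The fibre $E=f^{-1}(p)$ is the exceptional divisor of the weighted blow-up of the point germ $\widehat{\mathcal O}_{X,p}$ restricted to the fibre direction. By the measured condition, $E$ is the complete intersection $Z(L(f_1),\dots,L(f_c))$ inside the weighted projective stack $\mathcal{P}(a_1,\dots,a_s)$ over $\mathrm{k}(p)$. The weighted degrees are $d_j:=val(f_j)$, and $\sum_j d_j=val(I)$. So I would prove the statement as an instance of Orlov's theorem for weighted complete intersections, in the form recalled earlier (the decomposition $\mathrm{D}^b(E_\mathcal{Y})=\langle\mathcal{A}_{-1},\mathcal{O},\dots,\mathcal{O}(e-1)\rangle$ cited from \cite{Orlov2009}). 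Concretely, $E=[(\operatorname{Spec}A\setminus 0)/\mathbb{G}_m]$ with $A:=\mathrm{k}(p)[x_1,\dots,x_s]/(L(f_1),\dots,L(f_c))$, and $A$ is Gorenstein because the leading terms form a regular sequence.

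The key steps, in order, are as follows.

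First, I compute the Gorenstein parameter of $A$. The Koszul resolution over the weighted polynomial ring gives $\omega_A\simeq A(-\sum_i a_i+\sum_j d_j)=A(-e)$. Hence $\omega_E\simeq\mathcal{O}_E(-e)$, and $e>0$ by the Fano fibration hypothesis.

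Second, I show exceptionality and semiorthogonality. I would compute
\[
\operatorname{Ext}^*_E(\mathcal{O}_E(i),\mathcal{O}_E(j))=\mathrm{H}^*(E,\mathcal{O}_E(j-i))
\]
via the local cohomology sequence relating $\mathrm{H}^*(E,\mathcal{O}_E(n))$ to $A_n$ and $\mathrm{H}^{*}_{\mathfrak m}(A)_n$. Since $A$ is Cohen--Macaulay of dimension $s-c$, only $\mathrm{H}^{s-c}_{\mathfrak m}(A)$ survives. By graded local duality, $\mathrm{H}^{s-c}_{\mathfrak m}(A)_n\simeq(A_{-n-e})^\vee$, which vanishes for $n>-e$. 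For $-e<j-i\le 0$ this leaves $\mathrm{H}^*=A_{j-i}$, which is $\mathrm{k}(p)$ if $j=i$ and $0$ if $j<i$, since all weights are positive. That range covers $0\le i,j\le e-1$. This step also uses the well-formedness assumption (trivial stabilizers in codimension one) so that $\mathcal{O}_E(n)$ is the honest graded piece. If that fails, I would keep the stacky $\mathcal{O}_E(n)$, where the identity $\Gamma_*\mathcal{O}_E=A$ holds automatically for the quotient stack because $\operatorname{depth}A\ge2$.

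Third, I set $\mathcal{B}_{-1}:={}^{\perp}\langle\mathcal{O}_E,\dots,\mathcal{O}_E(e-1)\rangle$. Any exceptional collection in a smooth-or-Gorenstein proper stack with Serre functor is admissible, so this gives the decomposition. The main obstacle is admissibility, since $E$ may be singular. For that I would use that $\langle\mathcal{O}_E(i)\rangle\simeq\mathrm{D}^b(\mathrm{k})$ is generated by an exceptional object with finite-dimensional total $\operatorname{Ext}$ into any object of $\mathrm{D}^b(E)$, by properness. Hence each piece is left and right admissible by the standard projection triangle, with no regularity needed. To identify $\mathcal{B}_{-1}$ with the singularity/graded-MF category, as in Orlov, one would use the graded $\mathrm{D}^b_{sg}$ comparison for $a=e>0$. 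That is not needed for the bare statement, and it remains compatible with $\mathcal{A}_{-1}$ restricted via flat base change (Lemma \ref{flatbasechange}).
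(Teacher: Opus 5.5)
Your argument works, but it takes a different route from the paper. The paper does not compute anything on $E$ itself. It takes the relative decomposition of $\mathrm{D}^b(E_{\mathcal Z})$ over $C$ as given, so that $f_*\mathcal{O}_{E_{\mathcal Z}}\simeq\mathcal{O}_C$ and $f_*\mathcal{O}_{E_{\mathcal Z}}(-i)=0$ for $0<i<e$. It then transports this to the fibre by the flat base change $f_{E*}\iota_E^*\simeq\kappa_p^*f_*$ of Lemma~\ref{flatbasechange}, which gives $\mathrm{H}^*(E,\mathcal{O}_E)=\mathrm{k}(p)$ and $\mathrm{H}^*(E,\mathcal{O}_E(-i))=0$ for $0<i<e$.

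You prove the same vanishing intrinsically on the fibre, from $a(A)=\sum_j d_j-\sum_i a_i=-e$ and graded local duality, i.e.\ by Orlov's argument. Your version is self-contained; run relatively over $\mathrm{k}[[x_{s+1},\dots,x_n]]$, the same argument would supply the relative vanishing the paper takes as input. It also makes explicit the definition of $\mathcal{B}_{-1}$ and the admissibility, which the paper leaves implicit. The paper's version gives compatibility with the family for free, and that is exactly what is used next ($\iota_E^*\mathcal{A}_{-1}\subset\mathcal{B}_{-1}$, $\iota_E^*\mathcal{D}_i\simeq\mathcal{D}_{p,i}$).

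Two points need correcting.

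First, the measured condition alone does not give the identification of $E$ with the complete intersection of the restricted forms $\bar L(f_j):=L(f_j)|_{x_{s+1}=\cdots=x_n=0}$ in $\mathcal{P}(a_1,\dots,a_s)$ over $\mathrm{k}(p)$. Without it you also lose the Gorenstein property of $A$ and $\omega_A\simeq A(-e)$. The measured condition only makes the $L(f_j)$ a regular sequence in $\mathrm{gr}_{val}R=\mathrm{k}[[x_{s+1},\dots,x_n]][x_1,\dots,x_s]$, and this need not survive restriction to the fibre. For example, take $f=x_1x_3+x_2^2$ with weights $(1,1,0)$: then $L(f)=x_1x_3$ is a nonzerodivisor in $\mathrm{gr}_{val}R$ but vanishes on the fibre over $p$. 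You must invoke the flatness hypothesis on $f\colon E_{\mathcal Z}\to C$. Flatness forces the fibre to have the expected dimension, so $\dim A=s-c$. Since $A$ is cut out by $c$ homogeneous forms in $\mathrm{k}(p)[x_1,\dots,x_s]$, the $\bar L(f_j)$ are then a regular sequence. So both proofs use flatness, only at different places.

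Second, with the ordering $\langle\mathcal{B}_{-1},\mathcal{O}_E,\dots,\mathcal{O}_E(e-1)\rangle$ you need $\mathcal{B}_{-1}=\{B:\operatorname{Hom}^*(\mathcal{O}_E(i),B)=0 \text{ for } 0\le i<e\}=\langle\mathcal{O}_E,\dots,\mathcal{O}_E(e-1)\rangle^{\perp}$, not ${}^{\perp}\langle\cdots\rangle$. You also need \emph{right} admissibility of the collection, and finiteness of $\mathrm{R}\Gamma(F(-i))=\operatorname{RHom}(\mathcal{O}_E(i),F)$ by properness gives exactly this. Left admissibility is not needed. Your stated justification does not give it anyway: it would require boundedness of $\operatorname{RHom}(F,\mathcal{O}_E(i))$, which holds here because $E$ is Gorenstein, not because $E$ is proper.
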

\begin{proof}
For $0\le i<e$, we have
\[
f_{E*}\mathcal{O}_E(-i)
=
f_{E*}\iota_E^*\mathcal{O}_{E_{\mathcal Z}}(-i)
\simeq
\kappa_p^*\,f_*\mathcal{O}_{E_{\mathcal Z}}(-i)
\]
by Lemma \ref{flatbasechange}. If $i=0$, then $f_*\mathcal{O}_{E_{\mathcal Z}}\simeq \mathcal{O}_C$, hence
\[
f_{E*}\mathcal{O}_E \simeq \kappa_p^*\mathcal{O}_C \simeq \mathrm{k}(p).
\]
If $0<i<e$, then $f_*\mathcal{O}_{E_{\mathcal Z}}(-i)=0$, hence $f_{E*}\mathcal{O}_E(-i)=0$.
\end{proof}
\begin{lemma}
If $G \in \mathcal{A}_{-1}$, then $\iota_E^*G \in \mathcal{B}_{-1}$.
\end{lemma}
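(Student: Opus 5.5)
The plan is to characterize $\mathcal{B}_{-1}$ as an orthogonal and check $\iota_E^*G$ against it by base change. Since $\mathcal{B}_{-1}$ sits leftmost in
\[
\mathrm{D}^b(E)=\langle \mathcal{B}_{-1},\mathcal{O}_E,\dots,\mathcal{O}_E(e-1)\rangle ,
\]
it is the left orthogonal of the exceptional objects. Concretely, an object $H\in\mathrm{D}^b(E)$ lies in $\mathcal{B}_{-1}$ if and only if
\[
\operatorname{Ext}^*_E(H,\mathcal{O}_E(i))=0 \quad\text{for } 0\le i\le e-1 .
\]
Equivalently, $f_{E*}(H^\vee\otimes\mathcal{O}_E(i))=0$, since $f_E$ maps to the point $p$. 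In the same way, $G\in\mathcal{A}_{-1}$ means
\[
\operatorname{Hom}^*(G, f^*F\otimes\mathcal{O}_{E_{\mathcal Z}}(i))=0 \quad\text{for all } F\in\mathrm{D}^b(C),\ 0\le i\le e-1 .
\]
Using the adjunction $f^*\dashv f_*$ on the dual side, this becomes the relative vanishing
\[
f_*\mathcal{H}om(G,\mathcal{O}_{E_{\mathcal Z}}(i))=0, \qquad 0\le i<e .
\]

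The first step is to rewrite $\operatorname{Ext}^*_E(\iota_E^*G,\mathcal{O}_E(i))$ as $H^*\big(f_{E*}\mathcal{H}om(\iota_E^*G,\iota_E^*\mathcal{O}_{E_{\mathcal Z}}(i))\big)$. Because $E_{\mathcal Z}$ is Gorenstein, being a complete intersection fibration, and $G$ is perfect after passing to the root cover $\alpha$, derived $\mathcal{H}om$ commutes with $\iota_E^*$. This identifies the expression with $f_{E*}\iota_E^*\mathcal{H}om(G,\mathcal{O}_{E_{\mathcal Z}}(i))$. Applying the flat base change of Lemma \ref{flatbasechange} then gives
\[
\kappa_p^*\, f_*\mathcal{H}om(G,\mathcal{O}_{E_{\mathcal Z}}(i)),
\]
which vanishes by the previous paragraph. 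Hence $\iota_E^*G\in{}^\perp\langle\mathcal{O}_E,\dots,\mathcal{O}_E(e-1)\rangle=\mathcal{B}_{-1}$.

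The main obstacle is the commutation of $\mathcal{H}om$ with $\iota_E^*$ together with boundedness. A priori $\iota_E^*G$ lies only in $\mathrm{D}^-$, since $p\hookrightarrow C$ is a regular embedding; because $C$ is smooth, however, $\kappa_p$ has finite Tor-dimension and $f$ is flat, so $\iota_E^*$ preserves $\mathrm{D}^b$. For the $\mathcal{H}om$ compatibility, I would first try Grothendieck duality instead of perfectness: replace $\mathcal{H}om(G,-)$ by $G\otimes\omega_{E_{\mathcal Z}/C}\otimes\mathcal{O}(-i)$ via relative Serre duality, using that $K_{E'_{\mathcal Z}/C}$ is locally free and restricts to $K_{E'/p}$, as noted in the proof of Lemma \ref{flatbasechange}. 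Tensor products commute with pullback, and the dual form of base change, valid by \cite[Lemma~1.3]{CCC}, then gives the vanishing directly.
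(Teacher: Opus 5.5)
\textbf{The orthogonality in your first step is reversed, so the proof as written does not establish the lemma.} In the paper's (standard) convention, $\mathrm{D}^b(E)=\langle \mathcal{B}_{-1},\mathcal{O}_E,\dots,\mathcal{O}_E(e-1)\rangle$ means there are no morphisms from a component to a component on its left. You can see this in two places: the paper checks exceptionality via $f_{E*}\mathcal{O}_E(-i)=0$ for $0<i<e$, and Lemma~\ref{wbuporth} rewrites $\langle\mathcal{A}_{-1},\mathcal{O},\dots,\mathcal{O}(e-1)\rangle$ as $\langle\mathcal{O}(-(e-1)),\dots,\mathcal{O}(-1),\mathcal{A}_{-1},\mathcal{O}\rangle$ via the Serre functor. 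Hence the leftmost piece is the \emph{right} orthogonal:
\[
\mathcal{B}_{-1}=\{H:\operatorname{Ext}^*_E(\mathcal{O}_E(i),H)=0,\ 0\le i<e\}=\{H: f_{E*}(H(-i))=0,\ 0\le i<e\}.
\]
It is not ${}^\perp\langle\mathcal{O}_E,\dots,\mathcal{O}_E(e-1)\rangle$. By Serre duality the latter equals $\mathcal{B}_{-1}\otimes\omega_E^{-1}$, which is in general a different subcategory. For example, for a smooth conic $E\simeq\mathbb{P}^1$ (so $e=1$) one has $\mathcal{B}_{-1}=\langle\mathcal{O}_{\mathbb{P}^1}(-1)\rangle$ but ${}^\perp\langle\mathcal{O}_E\rangle=\langle\mathcal{O}_{\mathbb{P}^1}(1)\rangle$.

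Your description of $\mathcal{A}_{-1}$ is reversed in the same way. Because you flip both sides consistently, your chain (granting the $\mathcal{H}om$ technicalities) actually proves the implication for the left orthogonals, namely for $\mathcal{A}_{-1}\otimes\omega_{E_{\mathcal Z}/C}^{-1}$ and $\mathcal{B}_{-1}\otimes\omega_E^{-1}$. To recover the lemma you would still have to twist back using $\omega_{E_{\mathcal Z}/C}|_E\simeq\omega_E$. You never state this step, and your final identification ${}^\perp\langle\cdots\rangle=\mathcal{B}_{-1}$ is false as written.

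With the correct orthogonal, the argument collapses to the paper's two-line proof. Using only $f^*\dashv f_*$,
\[
\operatorname{Hom}^*(f^*F\otimes\mathcal{O}_{E_{\mathcal Z}}(i),G)=\operatorname{Hom}^*(F,f_*(G(-i))),
\]
so $G\in\mathcal{A}_{-1}$ if and only if $f_*(G(-i))=0$ for $0\le i<e$. Lemma~\ref{flatbasechange} applied to $G(-i)$ then gives
\[
f_{E*}(\iota_E^*G(-i))\simeq\kappa_p^*f_*(G(-i))=0,
\]
which is exactly membership in $\mathcal{B}_{-1}$. No derived $\mathcal{H}om$, perfectness, or Grothendieck duality is needed. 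The main obstacle you identify is an artifact of choosing the wrong orthogonal.

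Separately, your claim that $G$ becomes perfect after passing to the root cover is unjustified. $Z(J)$ is only a flat complete-intersection family over $C$, and its fibre over the singular point $p$ is typically singular.
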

\begin{proof}
If $G \in \mathcal{A}_{-1}$, then $f_*G(-i)=0$ for all $0\leq i<e$. By Lemma \ref{flatbasechange},
\[
\kappa_p^*f_*G(i)\simeq f_{E*}\iota_E^*G(i).
\]
Hence $f_{E*}\iota_E^*G(-i)=0$ for all $0\leq i<e$, which implies $\iota_E^*G \in \mathcal{B}_{-1}$.
\end{proof}
\begin{lemma}
If $F \in \mathcal{B}_{-1}$, then $\iota_{E*}F \in \mathcal{A}_{-1}$.
\end{lemma}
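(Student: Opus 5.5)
Membership in $\mathcal{A}_{-1}$ can be tested by a vanishing criterion, as in the previous lemma. By the semiorthogonal decomposition
\[
\mathrm{D}^b(E_{\mathcal{Z}})=\left\langle \mathcal{A}_{-1}, f^*\mathrm{D}^b(C),\ldots, f^*\mathrm{D}^b(C)\otimes\mathcal{O}_{E_{\mathcal{Z}}}(e-1)\right\rangle,
\]
an object $G$ lies in $\mathcal{A}_{-1}$ if and only if $\operatorname{Hom}^*(f^*H\otimes\mathcal{O}_{E_{\mathcal{Z}}}(i),G)=0$ for all $H\in\mathrm{D}^b(C)$ and $0\le i<e$. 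By the adjunction $f^*\dashv f_*$, this is equivalent to $f_*\bigl(G(-i)\bigr)=0$ for $0\le i<e$. Similarly, $F\in\mathcal{B}_{-1}$ means exactly that $f_{E*}F(-i)=\operatorname{R}\Gamma(E,F(-i))=0$ for $0\le i<e$. The task is therefore to show that $f_*\bigl((\iota_{E*}F)(-i)\bigr)=0$ for these $i$.

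First, by the projection formula, $(\iota_{E*}F)(-i)\simeq \iota_{E*}\bigl(F\otimes\iota_E^*\mathcal{O}_{E_{\mathcal{Z}}}(-i)\bigr)=\iota_{E*}\bigl(F(-i)\bigr)$, using $\iota_E^*\mathcal{O}_{E_{\mathcal{Z}}}(1)=\mathcal{O}_E(1)$ (the fibre of the tautological bundle). Second, by functoriality of pushforward along the commutative square, $f_*\iota_{E*}=\kappa_{p*}f_{E*}$. Hence
\[
f_*\bigl((\iota_{E*}F)(-i)\bigr)\simeq \kappa_{p*}\,f_{E*}F(-i)=\kappa_{p*}\operatorname{R}\Gamma(E,F(-i))=0
\]
for $0\le i<e$, and so $\iota_{E*}F\in\mathcal{A}_{-1}$. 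Note that this direction does not even need the flat base change Lemma~\ref{flatbasechange}; it uses only that the square commutes.

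The one point needing care is the adjunction step, which requires $f^*$ to be left adjoint to $f_*$ on bounded categories. Since $C$ is smooth, $f^*$ preserves $\mathrm{D}^b$, so this holds. The other point is that $f$ and $f_E$ are proper, possibly non-representable, morphisms of DM stacks. As in the proof of Lemma~\ref{flatbasechange}, I would pass to the root cover $\alpha\colon E^+_{\mathcal{Z}}\to E_{\mathcal{Z}}$ if needed, using $\alpha_*\alpha^*\simeq\mathrm{id}$. This is the main obstacle, but only a formal one; I expect the argument to go through by coarse-space/\v{C}ech compatibility.
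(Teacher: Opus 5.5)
Your proof is correct and is essentially the paper's own argument. The paper likewise reduces membership in $\mathcal{A}_{-1}$ to the vanishing $f_*\iota_{E*}F(-i)=\kappa_{p*}f_{E*}F(-i)=0$ for $0\le i<e$, using only the commutativity of the square and the defining vanishing of $\mathcal{B}_{-1}$. As you observe, flat base change is not needed, and your extra care about the adjunction and the root cover adds nothing essential.
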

\begin{proof}
It suffices to show $f_*\iota_{E*}F(-i)=\kappa_{p*}f_{E*}F(-i)=0$ for all $0\leq i<e$, which follows immediately.
\end{proof}
Similarly we define a full saturated subcategory $\mathcal{K}_{-1}:=\langle \iota_{*}\mathcal{A}_{-1}\rangle \subset \mathrm{D}^b(\mathcal{Z})$.
\begin{lemma}\label{wbuporth}
There is a semiorthogonal decomposition
\[
\mathcal{T}:=
\left\langle
\iota_{*}\mathcal{O}_{E_{\mathcal Z}}(-e+1),\,
\ldots,\,
\iota_{*}\mathcal{O}_{E_{\mathcal Z}}(-1),\,
\mathcal{K}_{-1}
\right\rangle
\subset \mathrm{D}^b(\mathcal{Z}).
\]
\end{lemma}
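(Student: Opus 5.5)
The statement asks for two things: the listed pieces are semiorthogonal in the given order, and each piece is admissible, so that $\mathcal{T}$ is an honest semiorthogonal decomposition inside $\mathrm{D}^b(\mathcal{Z})$. I interpret $\iota_*\mathcal{O}_{E_{\mathcal Z}}(-j)$ as shorthand for the components $\iota_*(f^*\mathrm{D}^b(C)\otimes\mathcal{O}_{E_{\mathcal Z}}(-j))$, as in Proposition~\ref{smoothwblsod}. By dévissage it suffices to test on $\iota_*(f^*G\otimes\mathcal{O}_{E_{\mathcal Z}}(-j))$ with $G\in\mathrm{D}^b(C)$. All Hom-spaces between objects pushed forward from $E_{\mathcal Z}$ will be computed through the adjunction $\iota^*\dashv\iota_*$ together with the excess triangle of Lemma~\ref{excdis}:
\[
\iota^*\iota_*F \simeq \bigl[F\otimes\mathcal{O}_{E_{\mathcal Z}}(1)[1]\to F\bigr],
\]
where I use $\mathcal{O}_{E_{\mathcal Z}}(-E_{\mathcal Z})\simeq\mathcal{O}_{E_{\mathcal Z}}(1)$ on the stacky blow-up $\mathcal{Z}$, so that no root index appears.

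For the vanishing, take $1\le i<j\le e-1$. The group $\operatorname{Ext}^*_{\mathcal{Z}}(\iota_*(f^*G\otimes\mathcal{O}(-i)),\iota_*(f^*G'\otimes\mathcal{O}(-j)))$ sits in a long exact sequence whose outer terms are
\[
\operatorname{Ext}^*_{E_{\mathcal Z}}(f^*G,f^*G'\otimes\mathcal{O}(i-j))
\quad\text{and}\quad
\operatorname{Ext}^*_{E_{\mathcal Z}}(f^*G,f^*G'\otimes\mathcal{O}(i-j-1)).
\]
Both twists lie in $\{-(e-1),\dots,-1\}$, because $1\le j-i\le e-2$. They therefore vanish, since the decomposition of $\mathrm{D}^b(E_{\mathcal Z})$ with blocks $f^*\mathrm{D}^b(C)\otimes\mathcal{O}(k)$, $0\le k\le e-1$, gives $f_*\mathcal{O}(-k)=0$ for $0<k<e$. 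This is the relative version of the computation in the lemma preceding this statement, obtained by projection formula and flat base change (Lemma~\ref{flatbasechange}).

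For the orthogonality with $\mathcal{K}_{-1}$, I take $N\in\mathcal{A}_{-1}$ and need
\[
\operatorname{Ext}^*_{\mathcal{Z}}(\iota_*N,\iota_*(f^*G\otimes\mathcal{O}(-j)))=0.
\]
The excess triangle reduces this to $\operatorname{Ext}^*_{E_{\mathcal Z}}(N,f^*G\otimes\mathcal{O}(-j))$ and $\operatorname{Ext}^*_{E_{\mathcal Z}}(N,f^*G\otimes\mathcal{O}(-j-1))$. Serre duality on the Fano fibration turns these into $\operatorname{Ext}$ from $f^*G\otimes\mathcal{O}(-j-\epsilon+e)$ into $N$, up to the relative dualizing twist, using $\omega_{E_{\mathcal Z}/C}\simeq\mathcal{O}(-e)\otimes f^*(\cdot)$; here $\epsilon\in\{0,1\}$. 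The resulting twist lies in $[0,e-1]$, because $1\le j\le e-1$. Those twists are exactly the blocks to the right of $\mathcal{A}_{-1}$, so semiorthogonality of the decomposition of $\mathrm{D}^b(E_{\mathcal Z})$ gives the vanishing. The ordering in the statement, with $\mathcal{K}_{-1}$ last, matches this direction: maps from $\mathcal{K}_{-1}$ into the earlier pieces vanish.

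\textbf{Admissibility and the main obstacle.} The components $\iota_*(f^*\mathrm{D}^b(C)\otimes\mathcal{O}(-j))$ are fully faithful images. The excess-triangle computation with $i=j$ gives $\operatorname{Ext}^*(f^*G,f^*G')$ plus a term twisted by $\mathcal{O}(-1)$, and that term vanishes. By the same projection-formula argument $f_*f^*=\mathrm{id}$, so each component is equivalent to $\mathrm{D}^b(C)$. Its adjoints $f_*(\iota^*(-)\otimes\mathcal{O}(j))$ and $f_*(\iota^!(-)\otimes\mathcal{O}(j))$ preserve boundedness because $f$ is flat and $C$ is smooth, so each component is admissible.

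The main obstacle is $\mathcal{K}_{-1}$ itself. It is only defined as the saturated hull of $\iota_*\mathcal{A}_{-1}$, and $\iota_*$ is not fully faithful on $\mathcal{A}_{-1}$. For admissibility I would first try to identify $\mathcal{K}_{-1}$ with the right orthogonal of the previous pieces within $\ker\pi_*$. The kernel statement (Lemma~\ref{kernallemma}, or its non-stacky analogue, whose proof uses only Cohen--Macaulay approximation) shows that $\ker\pi_*$ is generated by the $\mathcal{T}$-pieces. An object of $\ker\pi_*$ that is orthogonal to the $\iota_*\mathcal{O}(-j)$-pieces then lies in $\mathcal{K}_{-1}$, and the projection functor supplies the adjoint.
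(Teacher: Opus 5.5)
Your semiorthogonality argument is correct and is essentially the paper's proof. The paper also takes the long exact sequence of the excess triangle of Lemma~\ref{excdis} (using $\mathcal{O}_{\mathcal Z}(E_{\mathcal Z})|_{E_{\mathcal Z}}\simeq\mathcal{O}_{E_{\mathcal Z}}(-1)$). It obtains the needed vanishing from the rotated decompositions $\langle\mathcal{O}_{E_{\mathcal Z}}(-(e-1)),\ldots,\mathcal{O}_{E_{\mathcal Z}}(-1),\mathcal{A}_{-1},\mathcal{O}_{E_{\mathcal Z}}\rangle$ and $\langle\mathcal{O}_{E_{\mathcal Z}}(-(e-1)),\ldots,\mathcal{O}_{E_{\mathcal Z}},\mathcal{A}_{0}\rangle$. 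These are exactly your relative Serre duality step (with $\omega_{E_{\mathcal Z}/C}\simeq\mathcal{O}_{E_{\mathcal Z}}(-e)$), packaged as moving blocks across by the Serre functor.

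Your admissibility part, however, answers a question the lemma does not ask. $\mathcal{T}$ is by definition generated by the listed pieces, so semiorthogonality is all that is required. The $\mathcal{K}_{-1}$ half should be dropped rather than pursued, because admissibility of $\mathcal{K}_{-1}$ in $\mathrm{D}^b(\mathcal{Z})$ fails in general. Take the blow-up of a threefold ordinary double point: there $e=2$, $\mathcal{Z}$ is smooth, and $\iota_*\mathcal{O}_{E_{\mathcal Z}}(-1)$ is exceptional. Admissibility of $\mathcal{K}_{-1}$ would then make $\mathcal{T}=\ker\pi_*$ admissible. Consequently $\mathrm{D}^b(X)\simeq\mathrm{D}^b(\mathcal{Z})/\ker\pi_*$ would embed fully faithfully in $\mathrm{D}^b(\mathcal{Z})$, contradicting the unboundedness of $\operatorname{Ext}^*_X(\mathrm{k}(p),\mathrm{k}(p))$. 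This is precisely why the paper afterwards works in quotients by $\mathcal{K}_{-1}$ and $\mathcal{H}_{-1}$.
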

\begin{proof} Note that $\mathcal{O}_{\mathcal{Z}}(E_{\mathcal{Z}})|_{E_{\mathcal{Z}}}\simeq \mathcal{O}_{E_{\mathcal{Z}}}(-1)$, and we have the semiorthogonal decompositions
\[
\mathrm{D}^b(E_{\mathcal{Z}})=
\left\langle
\mathcal{O}_{E_{\mathcal{Z}}}(-(e-1)),\,
\ldots,\,
\mathcal{O}_{E_{\mathcal{Z}}}(-1),\,
\mathcal{A}_{-1},\,
\mathcal{O}_{E_{\mathcal{Z}}}
\right\rangle,
\]
and
\[
\mathrm{D}^b(E_{\mathcal{Z}})=
\left\langle
\mathcal{O}_{E_{\mathcal{Z}}}(-(e-1)),\,
\ldots,\,
\mathcal{O}_{E_{\mathcal{Z}}}(-1),\,
\mathcal{O}_{E_{\mathcal{Z}}},\,
\mathcal{A}_{0}
\right\rangle.
\]
The claim follows from long exact sequence induced by excess distinguished triangle for sheaves \ref{excdis}.
\end{proof}
Similarly, locally $\pi^*\kappa_*\mathcal{O}_{C}$ admits a filtration
\[
\begin{tikzcd}[column sep=0.5em]
\pi^*\mathcal{O}_{C}\arrow{rr} && F^s([\mathcal{O}_{C}]^+)\arrow{dl}\arrow{rr}&& F^{s-1}\arrow{dl}&&  \\
&\tau_s\arrow[ul,dashed,"\Delta"]&& \iota_{*}\mathcal{D}_{s}[1]\arrow[ul,dashed,"\Delta"]&&
\end{tikzcd}
\quad\cdots\quad
\begin{tikzcd}[column sep=0.5em]
 & F^1\arrow{rr}&& F^0\arrow{rr}\arrow{dl}&& 0\arrow{dl} \\
 && \iota_{*}\mathcal{D}_{1}[1]\arrow[ul,dashed,"\Delta"] && \iota_{*}\mathcal{D}_{0}[1]\arrow[ul,dashed,"\Delta"]
\end{tikzcd}
\]
where $\mathcal{D}_{i}:=\mathds{L}_{\langle \mathcal{O}_{E_{\mathcal Z}},\ldots,\mathcal{O}_{E_{\mathcal Z}}(i-1)\rangle}\mathcal{O}_{E_{\mathcal Z}}(i)$ for $0\le i\le e-1$. Since we work formal locally, we may still apply the dévissage argument, so that $\pi^*\kappa_{p*}\mathrm{k}(p)$ admits a filtration:
\[
\begin{tikzcd}[column sep=0.5em]
\pi^*\kappa_{p*}\mathrm{k}(p) \arrow{rr} && F^{s}([\mathrm{k}(p)]^+) \arrow{dl}\arrow{rr} && F'^{\,s-1} \arrow{dl} && \\
& \tau'_{s} \arrow[ul,dashed,"\Delta"] && \iota_{E*}\iota_E^*\mathcal{D}_{s}[1] \arrow[ul,dashed,"\Delta"] &&
\end{tikzcd}
\quad\cdots\quad
\begin{tikzcd}[column sep=0.5em]
& F'^{\,1} \arrow{rr} && F'^{\,0} \arrow{rr}\arrow{dl} && 0 \arrow{dl} \\
&& \iota_{E*}\iota_E^*\mathcal{D}_{1}[1] \arrow[ul,dashed,"\Delta"] && \iota_{E*}\iota_E^*\mathcal{D}_{0}[1] \arrow[ul,dashed,"\Delta"]
\end{tikzcd}
\]
for any $s\geq e$.
\begin{lemma}
For any $0\le i<e$, we have $\iota_E^*\mathcal{D}_i \simeq \mathcal{D}_{p,i}$, where
\[
\mathcal{D}_{p,i}:=\mathds{L}_{\langle \mathcal{O}_{E},\ldots,\mathcal{O}_{E}(i-1)\rangle}\mathcal{O}_{E}(i)
\quad \text{for } 0\le i<e.
\]
\end{lemma}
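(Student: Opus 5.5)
The plan is an induction on $i$, using the defining triangles of the left mutations. For each $0\le i<e$, $\mathcal{D}_i$ fits into a distinguished triangle
\[
\mathcal{D}_i \longrightarrow \mathcal{O}_{E_{\mathcal Z}}(i) \longrightarrow \mathds{L}\text{-correction} \in \langle \mathcal{O}_{E_{\mathcal Z}},\ldots,\mathcal{O}_{E_{\mathcal Z}}(i-1)\rangle ,
\]
where the correction term is the image of the right adjoint of the inclusion of $\langle \mathcal{O}_{E_{\mathcal Z}},\ldots,\mathcal{O}_{E_{\mathcal Z}}(i-1)\rangle$. Because the components in the relative decomposition are $f^*\mathrm{D}^b(C)\otimes\mathcal{O}_{E_{\mathcal Z}}(j)$, this projection has the shape $f^*f_*\mathcal{H}om(\mathcal{O}(j),-)\otimes\mathcal{O}(j)$, assembled iteratively. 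Concretely, we expect $\mathcal{D}_i$ to be characterised by two conditions: it has a map from or to $\mathcal{O}_{E_{\mathcal Z}}(i)$ with cone in $\langle f^*\mathrm{D}^b(C)\otimes\mathcal{O}(j)\rangle_{j<i}$, and it satisfies $f_*(\mathcal{D}_i(-j))=0$ for $0\le j<i$.

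Next we apply $\iota_E^*$ to this triangle. Since $\iota_E^*$ is exact and monoidal, it sends $\mathcal{O}_{E_{\mathcal Z}}(i)$ to $\mathcal{O}_E(i)$. It also sends $f^*G\otimes \mathcal{O}_{E_{\mathcal Z}}(j)$ to $f_E^*\kappa_p^*G\otimes\mathcal{O}_E(j)$, because $f\circ\iota_E=\kappa_p\circ f_E$. Here $\kappa_p^*G$ is a complex of $\mathrm{k}(p)$-vector spaces, so the image lies in $\langle\mathcal{O}_E(j)\rangle$. Hence $\iota_E^*\mathcal{D}_i$ is an extension of $\mathcal{O}_E(i)$ by an object of $\langle \mathcal{O}_E,\ldots,\mathcal{O}_E(i-1)\rangle$.

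It then remains to verify the orthogonality $\iota_E^*\mathcal{D}_i\in {}^\perp\langle\mathcal{O}_E,\ldots,\mathcal{O}_E(i-1)\rangle$, or whichever side the mutation convention in the paper requires. The plan is to rewrite the relevant Hom-space as $f_{E*}$ of a twist, namely $f_{E*}(\iota_E^*\mathcal{D}_i(-j))$ (or its Serre-dual form using $K_{E'_{\mathcal Z}/C}|_{E'}\simeq K_{E'/p}$ noted in the proof of Lemma \ref{flatbasechange}). By Lemma \ref{flatbasechange} this equals $\kappa_p^*f_*(\mathcal{D}_i(-j))$, which vanishes by the defining property of $\mathcal{D}_i$ in the relative decomposition. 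The mutation triangle together with this orthogonality characterises the left mutation uniquely up to isomorphism, because a semiorthogonal projection is unique. This yields $\iota_E^*\mathcal{D}_i\simeq\mathcal{D}_{p,i}$.

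The main obstacle is the orthogonality step. Relative orthogonality in $\mathrm{D}^b(E_{\mathcal Z})$ is phrased via $f_*\mathcal{H}om$, while the fibrewise statement needs actual $\operatorname{Ext}$ groups on $E$, so the two must be matched through base change. For the side that needs $f^!$ or a relative Serre functor, we would first try the dual base change furnished by the local freeness of $K_{E'_{\mathcal Z}/C}$ together with the root cover $\alpha$, as in Lemma \ref{flatbasechange}. Failing that, we would pass to the iterated root cover $E'_{\mathcal Z}$, where $\alpha_*\alpha^*\simeq\mathrm{id}$ allows ordinary equivariant flat base change.
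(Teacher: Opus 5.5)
Your argument is correct but takes a different route from the paper.

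\textbf{The paper's route.} It restricts the explicit mutation triangle $f^*f_*\mathcal{O}_{E_{\mathcal Z}}(1)\to\mathcal{O}_{E_{\mathcal Z}}(1)\to\mathcal{D}_1$ to $E$. Lemma~\ref{flatbasechange} then rewrites the first term as $\iota_E^*f^*f_*\simeq f_E^*\kappa_p^*f_*\simeq f_E^*f_{E*}\iota_E^*$. The paper next asserts that the restricted map is the counit $f_E^*f_{E*}\mathcal{O}_E(1)\to\mathcal{O}_E(1)$, so the restricted triangle is literally the fibrewise mutation triangle. The case $i\ge 2$ is declared analogous.

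\textbf{Your route.} You characterise $\mathcal{D}_{p,i}$ by uniqueness of decomposition triangles. Suppose there is a triangle $B\to\mathcal{O}_E(i)\to X$ with $B\in\mathcal{B}:=\langle\mathcal{O}_E,\ldots,\mathcal{O}_E(i-1)\rangle$ and $X\in\mathcal{B}^{\perp}$. Then $\mathrm{Hom}(\mathcal{B},B)\simeq\mathrm{Hom}(\mathcal{B},\mathcal{O}_E(i))$ forces $B\to\mathcal{O}_E(i)$ to be the counit, so $X\simeq\mathds{L}_{\mathcal{B}}\mathcal{O}_E(i)$. The same base change enters, but you spend it on the orthogonality $f_{E*}(\iota_E^*\mathcal{D}_i(-j))\simeq\kappa_p^*f_*(\mathcal{D}_i(-j))=0$ rather than on identifying the projection functor.

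\textbf{Comparison.} Your route never needs to check that a base-changed morphism is an adjunction counit, which is exactly the step the paper leaves as ``one checks''. It also treats all $0\le i<e$ uniformly. The paper's route, for $i\ge 2$, would have to track base change through iterated projections onto $f^*\mathrm{D}^b(C)\otimes\mathcal{O}_{E_{\mathcal Z}}(j)$. Your announced induction on $i$ is never actually used. What the paper's route buys in return is an explicit identification of the morphisms, not only of the objects.

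\textbf{A convention fix.} With the paper's $\mathds{L}$ (as its own triangle shows), the mutation triangle is $(\text{projection})\to\mathcal{O}_{E_{\mathcal Z}}(i)\to\mathcal{D}_i$, not $\mathcal{D}_i\to\mathcal{O}_{E_{\mathcal Z}}(i)\to(\text{correction})$. Hence $\mathcal{D}_i$ lies in the right orthogonal. By $f^*\dashv f_*$, that right orthogonality is precisely your condition $f_*(\mathcal{D}_i(-j))=0$. On $E$ the required vanishing is $\mathrm{Hom}^*(\mathcal{O}_E(j),\iota_E^*\mathcal{D}_i)=f_{E*}(\iota_E^*\mathcal{D}_i(-j))$, which Lemma~\ref{flatbasechange} handles directly. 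So the obstacle you anticipate never arises: you need no $f^!$ and no relative Serre functor.

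\textbf{Shared implicit inputs.} Both your argument and the paper's use that $\kappa_p^*$ and $\iota_E^*$ preserve boundedness. This rests on smoothness of $C$ and flatness of $f$. It is what makes $\iota_E^*(f^*G\otimes\mathcal{O}_{E_{\mathcal Z}}(j))$ a finite sum of shifts of $\mathcal{O}_E(j)$.
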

\begin{proof}
Recall the definition of the left mutation along the component: $\mathcal{D}_1$ fits into a distinguished triangle
\[
f^*f_*\mathcal{O}_{E_{\mathcal Z}}(1)\longrightarrow \mathcal{O}_{E_{\mathcal Z}}(1)\longrightarrow \mathcal{D}_1 \in  \Delta,
\]
restricting to $E$, we obtain a distinguished triangle
\[
\iota_E^*f^*f_*\mathcal{O}_{E_{\mathcal Z}}(1)\longrightarrow \iota_E^*\mathcal{O}_{E_{\mathcal Z}}(1)\longrightarrow \iota_E^*\mathcal{D}_1 \in  \Delta.
\]
For the first term, by Lemma \ref{flatbasechange},
\[
\iota_E^*f^*f_*\mathcal{O}_{E_{\mathcal Z}}(1)
\simeq
f_E^*\kappa_p^*f_*\mathcal{O}_{E_{\mathcal Z}}(1)
\simeq
f_E^*f_{E*}\iota_E^*\mathcal{O}_{E_{\mathcal Z}}(1)
\simeq
f_E^*f_{E*}\mathcal{O}_E(1).
\]
while second term is simply $\mathcal{O}_E(1)$ and one checks that the induced morphism is the natural adjunction map. Hence $\iota_E^*\mathcal{D}_1 \simeq \mathcal{D}_{p,1}$. The general case is analogous.
\end{proof}
So we obtain a natural filtration
\[
\begin{tikzcd}[column sep=0.5em]
\pi^*\kappa_{p*}\mathrm{k}(p) \arrow{rr} && F^{s}([\mathrm{k}(p)]^+) \arrow{dl}\arrow{rr} && F'^{\,s-1} \arrow{dl} && \\
& \tau'_{s} \arrow[ul,dashed,"\Delta"] && \iota_{E*}\mathcal{D}_{p,s}[1] \arrow[ul,dashed,"\Delta"] &&
\end{tikzcd}
\quad\cdots\quad
\begin{tikzcd}[column sep=0.5em]
& F'^{\,1} \arrow{rr} && F'^{\,0} \arrow{rr}\arrow{dl} && 0 \arrow{dl} \\
&& \iota_{E*}\mathcal{D}_{p,1}[1] \arrow[ul,dashed,"\Delta"] && \iota_{E*}\mathcal{D}_{p,0}[1] \arrow[ul,dashed,"\Delta"]
\end{tikzcd}
\]
and we know that for $i\ge e$, $\mathcal{D}_{p,i}\in \mathcal{B}_{-1}$. Then we define $\mathcal{H}_{-1}$ to be the full saturated subcategory of $\mathrm{D}^b(\mathcal{Z})$ generated by $\iota_{E*}\mathcal{B}_{-1}$, and set
\[
\mathcal{S}:=
\left\langle
\iota_{E*}\mathcal{O}_{E}(-e+1),\,
\ldots,\,
\iota_{E*}\mathcal{O}_{E}(-1),\,
\mathcal{H}_{-1}
\right\rangle
\subset \mathrm{D}^b(\mathcal{Z}).
\]
\begin{lemma}\label{kernalElemma}
For any $F \in \mathrm{D}^b(\mathcal{Z})$, if $\pi_*F=0$ and support on E, then $F \in \mathcal{S}$.
\end{lemma}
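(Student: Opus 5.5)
The plan is to copy the argument used for the earlier kernel statements: the corollary giving $\mathcal{K}_-$ as the kernel of $\varepsilon_*$, and the coarse-moduli lemma of Section~\ref{seckblcm}. The only change is that the center is the point $p$, the exceptional fibre is $E$, and the relevant decomposition of $\mathrm{D}^b(E)$ is $\langle \mathcal{B}_{-1},\mathcal{O}_E,\dots,\mathcal{O}_E(e-1)\rangle$.

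\emph{Step 1: a local adjunction for $\mathrm{k}(p)$.}
The filtration of $\pi^*\kappa_{p*}\mathrm{k}(p)$ constructed above has successive factors $\iota_{E*}\mathcal{D}_{p,i}[1]$. For $i\ge e$ these factors lie in $\mathcal{H}_{-1}$, because $\mathcal{D}_{p,i}\in\mathcal{B}_{-1}$ there. For $1\le i\le e-1$ they lie in the subcategory generated by $\iota_{E*}\mathcal{O}_E(-e+1),\dots,\iota_{E*}\mathcal{O}_E(-1)$, by the pointwise analogue of Lemma~\ref{mutationlemma}. I would prove that analogue by restricting along $\iota_E^*$ and using $\iota_E^*\mathcal{D}_i\simeq\mathcal{D}_{p,i}$.

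It follows that in $\mathrm{D}^b(\mathcal{Z})/\mathcal{S}$ the object $F^s([\mathrm{k}(p)]^+)$ is isomorphic, for all $s\ge e$, to the truncation $F'^{\,0}\simeq\iota_{E*}\mathcal{O}_E[1]$ up to shift, and all transition cones lie in $\mathcal{S}$. Principle~\ref{P1}, in the form of the local adjunction \cite[Lemma 3.7]{Hao2025b}, then yields
\[
\operatorname{Ext}^*_{\mathrm{D}^b(\mathcal{Z})/\mathcal{S}}(\iota_{E*}\mathcal{O}_E,G)\simeq\operatorname{Ext}^*_{\mathrm{D}^b(\mathcal{X})}(\kappa_{p*}\mathrm{k}(p),\pi_*G)
\]
for every $G\in\mathrm{D}^b(\mathcal{Z})$, up to a shift.

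\emph{Step 2: generation and vanishing.}
Now let $F$ satisfy $\pi_*F=0$ and be supported on $E$. Work formally at $p$. Since $E$ is the fibre over a point, $F$ is a finite iterated extension of objects $\iota_{E*}H$ with $H\in\mathrm{D}^b(E)$, where each $H$ is a sheaf on some infinitesimal thickening that one filters by powers of the ideal. Decompose each $H$ using $\mathrm{D}^b(E)=\langle\mathcal{B}_{-1},\mathcal{O}_E,\dots,\mathcal{O}_E(e-1)\rangle$.

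\begin{itemize}
\item The $\mathcal{B}_{-1}$ part lands in $\mathcal{H}_{-1}\subset\mathcal{S}$.
\item The pieces $\mathcal{O}_E(i)$ can be traded, through Serre-type twists by $\mathcal{O}_{\mathcal{Z}}(E_{\mathcal{Z}})$ and the excess triangle of Lemma~\ref{excdis}, for $\mathcal{O}_E$ together with objects of $\mathcal{S}$.
\end{itemize}

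Hence in $\mathrm{D}^b(\mathcal{Z})/\mathcal{S}$ the object $F$ is an iterated extension of shifts of $\iota_{E*}\mathcal{O}_E$. Step 1 gives $\operatorname{Ext}^*_{\mathrm{D}^b(\mathcal{Z})/\mathcal{S}}(\iota_{E*}\mathcal{O}_E,F)=\operatorname{Ext}^*(\kappa_{p*}\mathrm{k}(p),\pi_*F)=0$. Running the long exact sequences along the extension then gives $\operatorname{Ext}^*_{\mathrm{D}^b(\mathcal{Z})/\mathcal{S}}(F,F)=0$. So $F$ is zero in the quotient, and therefore $F\in\mathcal{S}$ because $\mathcal{S}$ is saturated (thick).

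\emph{Main obstacle.}
The hard part is Step 2's reduction of an arbitrary $F$ supported on $E$ to extensions of objects pushed forward from $E$ itself, rather than from its thickenings. This is where the twists $\mathcal{O}_E(i)$ with $i$ outside $[-e+1,0]$ appear, and one must check that they stay inside $\langle\mathcal{S},\iota_{E*}\mathcal{O}_E\rangle$.

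I would try to handle it as follows. First, the conormal bundle of $E$ in $\mathcal{Z}$ is a sum of $\mathcal{O}_E(1)$, coming from $E_{\mathcal{Z}}$, and pullbacks of the trivial conormal of $p$ in $C$. So the graded pieces of the thickening filtration are $\iota_{E*}(H\otimes\mathcal{O}_E(k))$ with $k\ge0$. Second, with a positive twist the $\mathcal{B}_{-1}$-components stay in $\mathcal{H}_{-1}$, since $\mathcal{B}_{-1}$ is generated as $f_E$-fibre-orthogonal. The exceptional components $\mathcal{O}_E(j)$ with $j\ge e$ are reduced back into the window using the semiorthogonal decomposition together with the relation $\omega_E\simeq\mathcal{O}_E(-e)$, which comes from the Fano-index assumption.
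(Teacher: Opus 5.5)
Your proposal is correct in substance and uses the same mechanism as the paper's proof. In a Verdier quotient by a subcategory killed by $\pi_*$, you exhibit $F$ as an iterated extension of an approximant of $\pi^*\kappa_{p*}\mathrm{k}(p)$. The local adjunction together with $\pi_*F=0$ then gives $\operatorname{Ext}^*(F,F)=0$ in that quotient, and thickness finishes.

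The bookkeeping differs from the paper's.
The paper first replaces $F$ by its mutation through $\langle\iota_*\mathcal{O}_{E_{\mathcal Z}}(-e+1),\dots,\iota_*\mathcal{O}_{E_{\mathcal Z}}(-1)\rangle$. It then works modulo $\mathcal{H}_{-1}$ only, taking the objects $F^{s}([\mathrm{k}(p)]^+)$, $s\ge e-1$, as generators; this generation is asserted rather than proved. It concludes that the mutated object lies in $\mathcal{H}_{-1}$.
You instead quotient by all of $\mathcal{S}$, where the whole tower collapses to $F'^{\,0}\simeq\iota_{E*}\mathcal{O}_E$ up to shift. This removes the mutation step, lands the conclusion directly in $\mathcal{S}$, and reduces the generation claim to a statement about $\mathrm{D}^b(E)$.

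One check should be made explicit. The local adjunction modulo a thick $\mathcal{N}$ needs $\pi_*\mathcal{N}=0$; that is what makes the maps $F^{s}([\mathrm{k}(p)]^+)\to F'^{\,0}$ cofinal among maps into $F'^{\,0}$ with cone in $\mathcal{N}$. For $\mathcal{N}=\mathcal{S}$ it holds, because $f_{E*}\mathcal{O}_E(-i)=0$ for $0<i<e$ and $f_{E*}\mathcal{B}_{-1}=0$.

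The justification of \emph{Step~2} needs replacing, although its conclusion is true.
Twisting by $\mathcal{O}_{\mathcal Z}(E_{\mathcal Z})$ only slides the window, and the excess triangle only computes Hom spaces. Neither shows that $\iota_{E*}\mathcal{O}_E(i)$, $1\le i\le e-1$, lies in $\langle\mathcal{S},\iota_{E*}\mathcal{O}_E\rangle$, and these are exactly the components that need trading.
What does the job is Serre duality on the Gorenstein fibre, with $\omega_E\simeq\mathcal{O}_E(-e)$. It rotates the decomposition to $\mathrm{D}^b(E)=\langle\mathcal{O}_E(-e+1),\dots,\mathcal{O}_E(-1),\mathcal{B}_{-1},\mathcal{O}_E\rangle$, the pointwise form of the decomposition used in Lemma~\ref{wbuporth}. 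Hence $\iota_{E*}\mathrm{D}^b(E)\subset\langle\mathcal{S},\iota_{E*}\mathcal{O}_E\rangle$ at once. \emph{Step~1} also follows without any mutation lemma, since $\operatorname{RHom}(\mathcal{O}_E,\mathcal{D}_{p,i})=0$ for $i\ge1$.

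Your ``main obstacle'' is not one. Filtering a sheaf supported on $E$ by powers of the ideal sheaf of $E$ already produces $\mathcal{O}_E$-modules, so no conormal analysis is needed. That matters, because the claim made there, that positive twists keep $\mathcal{B}_{-1}$ inside $\mathcal{H}_{-1}$, is false. The twist $\mathcal{B}_{-1}\otimes\mathcal{O}_E(1)$ is orthogonal to $\mathcal{O}_E(1),\dots,\mathcal{O}_E(e)$ but not in general to $\mathcal{O}_E$. For example, on $E=\mathbb{P}^1\times\mathbb{P}^1$ with $e=2$, $\mathcal{O}(-1,0)\in\mathcal{B}_{-1}$, while $\mathcal{O}(0,1)$ has sections, so its pushforward has $\pi_*\neq0$.
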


\begin{proof}
We consider $F \in \mathrm{D}^b(\mathcal{Z})$ with $\pi_*F=0$. We may replace $F$ by its right mutation through the component
\[
\left\langle
\iota_{*}\mathcal{O}_{E_{\mathcal Z}}(-e+1),\,
\ldots,\,
\iota_{*}\mathcal{O}_{E_{\mathcal Z}}(-1)
\right\rangle,
\]
and assume that $F$ is left orthogonal to this component. Since $F \in \mathrm{D}^b(\mathcal{Z})$ is a bounded complex of coherent sheaves, is orthogonal to the above components, and is supported on $E$, we may regard its image in the quotient category $\mathrm{D}^b(\mathcal{Z})/\mathcal{H}_{-1}$, can obtained as an iterated extension of objects of the form $F^{s}([\mathrm{k}(p)]^+)$ for $s \ge e-1$. On the other hand, using \cite[Lemma 3.7]{Hao2025b}, we have the following local adjunction formula
\[
\operatorname{Ext}^*_{\mathrm{D}^b(\mathcal{Z})/\mathcal{H}_{-1}}\!\big(F^{s}([\mathrm{k}(p)]^+),\,F\big)
\simeq
\operatorname{Ext}^*_{\mathrm{D}^b(X)}\!\big(\mathrm{k}(p),\,\pi_*F\big),
\]
for $s \ge e-1$. By the long exact sequences of $\operatorname{Ext}$ associated to these extensions, we obtain
\[
\operatorname{Ext}^*_{\mathrm{D}^b(\mathcal{Z})/\mathcal{H}_{-1}}(F,F)=0,
\]
hence $F \in \mathcal{H}_{-1}$. The proof is complete.
\end{proof}
\begin{corollary}
The following full saturated subcategories of $\mathrm{D}^b(\mathcal{Z})$ are equivalent:
\[
\mathcal{H}_{-1}:=
\left\langle
\iota_{E*}\mathcal{B}_{-1}
\right\rangle
\quad\text{and}\quad
\left\langle
\iota_{*}\mathcal{A}_{-1}
\;\middle|\;
\operatorname{Supp}(-) \subset E
\right\rangle .
\]
\end{corollary}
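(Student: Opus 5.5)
We prove the two inclusions separately. Write $\mathcal{H}'$ for the full saturated subcategory generated by the objects $\iota_*G$ with $G\in\mathcal{A}_{-1}$ and $\operatorname{Supp}(\iota_*G)\subset E$.

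\emph{Inclusion $\mathcal{H}_{-1}\subset\mathcal{H}'$.} Let $F\in\mathcal{B}_{-1}$. The lemma above (``if $F\in\mathcal{B}_{-1}$ then $\iota_{E*}F\in\mathcal{A}_{-1}$'') gives $G:=\iota_{E*}F\in\mathcal{A}_{-1}$. Moreover
\[
\iota_*G=\iota_*\iota_{E*}F=(\iota\circ\iota_E)_*F
\]
is supported on $E$. So every generator $\iota_{E*}F$ of $\mathcal{H}_{-1}$ lies in $\mathcal{H}'$. Since $\mathcal{H}'$ is triangulated and closed under direct summands, this inclusion follows.

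\emph{Inclusion $\mathcal{H}'\subset\mathcal{H}_{-1}$.} Take $G\in\mathcal{A}_{-1}$ with $\iota_*G$ supported on $E$. I plan to apply Lemma \ref{kernalElemma} to $\iota_*G$, which requires $\pi_*\iota_*G=0$. We have $\pi_*\iota_*G=\kappa_*f_*G$. For $G\in\mathcal{A}_{-1}$, the semiorthogonal decomposition
\[
\mathrm{D}^b(E_{\mathcal Z})=\langle\mathcal{A}_{-1},f^*\mathrm{D}^b(C),\dots\rangle
\]
gives $\operatorname{Hom}(f^*H,G)=0$ for all $H\in\mathrm{D}^b(C)$. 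By adjunction this means $f_*G=0$, as already used in the proof of the lemma on $\iota_E^*G$. Hence $\pi_*\iota_*G=0$, and Lemma \ref{kernalElemma} yields $\iota_*G\in\mathcal{S}$.

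It then remains to show that $\iota_*G$ lies in the last component $\mathcal{H}_{-1}$ of $\mathcal{S}$. For this I would show that $\iota_*G$ is right orthogonal to each $\iota_{E*}\mathcal{O}_E(-j)$, $1\le j\le e-1$. First, $\iota_{E*}\mathcal{O}_E(-j)$ is a finite extension of the objects $\mathcal{O}_{E_{\mathcal Z}}(-j)\otimes f^*(\text{Koszul terms of }\mathrm{k}(p))$; this uses flatness of $f$ and the fact that $C$ is smooth at $p$. So it suffices to check
\[
\operatorname{Ext}^*_{\mathcal{Z}}\bigl(\iota_*(f^*H\otimes\mathcal{O}_{E_{\mathcal Z}}(-j)),\iota_*G\bigr)=0 .
\]
The excess triangle of Lemma \ref{excdis} reduces this to Ext groups on $E_{\mathcal Z}$ between $f^*H(-j)$ and $G$, and between $f^*H(-j+1)$ and $G$ (using $\mathcal{O}(E_{\mathcal Z})|_{E_{\mathcal Z}}\simeq\mathcal{O}(-1)$). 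These vanish by the semiorthogonal decomposition with $\mathcal{A}_{-1}$ placed after the twisted pieces $f^*\mathrm{D}^b(C)(-j)$, exactly as in the proof of Lemma \ref{wbuporth}. This is also the step where the restriction to $1\le j\le e-1$ should be checked carefully.

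Once this orthogonality holds, the projection of $\iota_*G$ onto the first components of $\mathcal{S}$ vanishes, so $\iota_*G\in\mathcal{H}_{-1}$. Closing under cones and summands gives $\mathcal{H}'\subset\mathcal{H}_{-1}$.

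I expect the main obstacle to be this orthogonality step, in particular the boundary twist $j=e-1$ paired with the shifted $\mathcal{O}(-j+1)$ term. If it fails there, the fallback is to prove $\operatorname{Ext}^*(\iota_*G,\iota_{E*}\mathcal{O}_E(-j))=0$ in the other direction instead, via Serre duality on $E$ and the decomposition with $\mathcal{A}_0$ used in the proof of Lemma \ref{wbuporth}.
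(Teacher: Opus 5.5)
\emph{Gap: the final orthogonality step runs in the wrong direction.} Your first inclusion is fine. So is the step $\pi_*\iota_*G=\kappa_*f_*G=0$, which puts $\iota_*G$ in $\mathcal{S}$ via Lemma~\ref{kernalElemma}.

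The paper's semiorthogonal decompositions have no morphisms from a component to the components on its left. The opposite convention is impossible, since $\operatorname{Hom}(\mathcal{O}_{E_{\mathcal Z}},\mathcal{O}_{E_{\mathcal Z}}(1))\neq 0$ inside $\langle\mathcal{A}_{-1},\mathcal{O}_{E_{\mathcal Z}},\ldots,\mathcal{O}_{E_{\mathcal Z}}(e-1)\rangle$.

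So an object of $\mathcal{S}$ lies in its rightmost component $\mathcal{H}_{-1}$ if and only if it is \emph{left} orthogonal to the twists, i.e.\ $\operatorname{Ext}^*(\iota_*G,\iota_{E*}\mathcal{O}_E(-j))=0$ for $1\le j\le e-1$. Right orthogonality only places it in $\langle\iota_{E*}\mathcal{O}_E(-e+1),\ldots,\iota_{E*}\mathcal{O}_E(-1)\rangle^{\perp}$, which is a different subcategory.

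Moreover, the vanishing you claim is the reverse of what the decomposition $\langle f^*\mathrm{D}^b(C)(-e+1),\ldots,f^*\mathrm{D}^b(C)(-1),\mathcal{A}_{-1},f^*\mathrm{D}^b(C)\rangle$ gives. It gives $\operatorname{Hom}(\mathcal{A}_{-1},f^*\mathrm{D}^b(C)(-j))=0$. The group $\operatorname{Ext}^*(f^*H(-j),G)$ need not vanish, and it already fails at $j=1$, not at the boundary $j=e-1$ you worried about.

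Counterexample: blow up a threefold ordinary double point at the vertex. Here $C=p$, weights $(1,1,1,1)$, $e=2$, and $E_{\mathcal Z}$ is a quadric surface, all within the paper's hypotheses. Take $G=\mathcal{O}(-1,0)$; it lies in $\mathcal{A}_{-1}$ because $H^*(G)=H^*(G(-1))=0$. Your excess-triangle reduction then gives $\operatorname{Ext}^*_{\mathcal Z}(\iota_*\mathcal{O}_{E_{\mathcal Z}}(-1),\iota_*G)\simeq H^*(\mathcal{O}(0,1))\neq 0$.

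\emph{Your fallback is the correct step, and it is what the paper uses.} The paper takes the left orthogonality of $\langle\iota_*\mathcal{A}_{-1}\rangle$ to the twisted pieces from Lemma~\ref{wbuporth} and combines it with Lemma~\ref{kernalElemma}. That lemma's proof, after mutation, works directly with objects left orthogonal to the $\iota_*\mathcal{O}_{E_{\mathcal Z}}(-j)$, so you do not even need the decomposition of $\mathcal{S}$.

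Concretely, flatness gives $\iota_{E*}\mathcal{O}_E(-j)\simeq\iota_*(f^*\kappa_{p*}\mathrm{k}(p)\otimes\mathcal{O}_{E_{\mathcal Z}}(-j))$. The excess triangle of Lemma~\ref{excdis} then reduces $\operatorname{Ext}^*(\iota_*G,\iota_*(f^*H(-j)))$ to $\operatorname{Ext}^*(G,f^*H(-j))$ and $\operatorname{Ext}^*(G,f^*H(-j-1))$. Both vanish by the twisted decomposition, except the boundary term $\operatorname{Ext}^*(G,f^*H(-e))$ at $j=e-1$.

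For that term use Grothendieck duality for $f$, whose relative dualizing sheaf is $\mathcal{O}_{E_{\mathcal Z}}(-e)$ up to a line bundle from $C$ that you can absorb into $H$. The group becomes a shift of $\operatorname{Ext}^*_C(f_*G,H)$, which is zero since $f_*G=0$. With the orthogonality taken in this direction, your argument goes through.
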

\begin{proof}
It is clear that $\langle \iota_{*}\mathcal{A}_{-1}\rangle \subset \ker \pi_*$, and it is left orthogonal to
\[
\left\langle
\iota_{*}\mathcal{O}_{E_{\mathcal Z}}(-e+1),\,
\ldots,\,
\iota_{*}\mathcal{O}_{E_{\mathcal Z}}(-1)
\right\rangle.
\]
By Lemma \ref{wbuporth} and \ref{kernalElemma}, $\left\langle
\iota_{*}\mathcal{A}_{-1}
\;\middle|\;
\operatorname{Supp}(-) \subset E
\right\rangle \subset\mathcal{H}_{-1}\subset \left\langle
\iota_{*}\mathcal{A}_{-1}
\;\middle|\;
\operatorname{Supp}(-) \subset E
\right\rangle$.
\end{proof}
At the same time, for any maximal Cohen--Macaulay module $M$ on $\mathcal{X}$, we construct an object $\mathcal{R}(M)\in \mathrm{D}^b(\mathcal{Z})$ with a filtration obtained by iterating the above procedure:
\[
\begin{tikzcd}[column sep=0.5em]
 & \mathcal{F}^s(\mathcal{R}(M)) \arrow{rr}&& \mathcal{F}^{s-1}\arrow{dl}&& \\
&&  \iota_{*}\operatorname{gr}^{s}(M)\arrow[ul,dashed,"\Delta"]&&
\end{tikzcd}
\quad\cdots\quad
\begin{tikzcd}[column sep=0.5em]
 &   \mathcal{F}^2\arrow{rr}&&  \mathcal{F}^1\arrow{rr}\arrow{dl}&& \mathcal{R}(M)\arrow{dl} \\
 && \iota_{*}\operatorname{gr}^2(M)\arrow[ul,dashed,"\Delta"]  &&\iota_{*}\operatorname{gr}^1(M)\arrow[ul,dashed,"\Delta"]
\end{tikzcd}
\]
\begin{lemma}
For any $i$, we have $ \iota_{*}\operatorname{gr}^i(M)\in \mathcal{H}_{-1}$.
\end{lemma}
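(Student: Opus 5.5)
The graded pieces $\operatorname{gr}^i(M)$ come from the Rees-type construction, so the natural plan is to identify each one on $E_{\mathcal Z}$, check that it lies in $\mathcal{A}_{-1}$, and then use the support of $\mathcal{R}(M)$ to land in $\mathcal{H}_{-1}$.

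First, I would recall how the filtration of $\mathcal{R}(M)$ is produced. One takes a Bourbaki sequence (Lemma \ref{Bexactsequence}) for $M$ and a resolution of the resulting ideal built from a standard basis refining $val$. This gives the filtered exact sequences in the $P_r$, and applying $\Pi$ yields the Postnikov system. Each graded piece $\iota_*\operatorname{gr}^i(M)$ is, by construction, a finite iterated extension of cones of the form $\iota_*\mathcal{D}_j\otimes(\text{free twists})$, exactly as in the filtration of $\pi^*\kappa_*\mathcal{O}_C$ above. The index $j$ is where the graded piece sits in the valuation filtration, and the pieces $\operatorname{gr}^i$ of $\mathcal{R}(M)$ are precisely those with $j\ge e$, the ones beyond the truncation that defines $\mathcal{R}(M)$. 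So the first step is to show
\[
\operatorname{gr}^i(M)\in \mathcal{A}_{-1}\subset \mathrm{D}^b(E_{\mathcal Z}).
\]
For this I would argue as in Lemma \ref{kernallemma} and the local Kawamata case. The pieces are generated by $\mathcal{D}_j$ with $j\ge e$, and $\mathcal{D}_j\in\mathcal{A}_{-1}$ for $j\ge e$. The latter holds because $f_*(\mathcal{D}_j(-k))=0$ for $0\le k<e$, which follows from relative Serre duality on the Fano fibration $E_{\mathcal Z}\to C$ with index $e$.

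Second, I need that $\operatorname{gr}^i(M)$ is supported on $E$, i.e.\ over $p$. Since $M$ is maximal Cohen--Macaulay and $\mathcal{X}$ is singular only at $p$, $M$ is locally free on $\mathcal{X}\setminus p$. Over $C\setminus p$ the blow-up is the smooth weighted blow-up, and there Proposition \ref{smoothwblsod} shows $\mathcal{R}\pi^*M=\pi^*M$, with no correction terms. Concretely, on that locus the Bourbaki and syzygy data can be chosen free, so the graded pieces vanish. Hence $\operatorname{gr}^i(M)$ is supported on $f^{-1}(p)=E$.

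Third, I would pass from $\iota_*\mathcal{A}_{-1}$ with support on $E$ to $\mathcal{H}_{-1}$, using the preceding Corollary: the subcategory $\langle\iota_*\mathcal{A}_{-1}\mid \operatorname{Supp}\subset E\rangle$ coincides with $\mathcal{H}_{-1}=\langle\iota_{E*}\mathcal{B}_{-1}\rangle$. Alternatively, one can go through the flatness of $f$ (Lemma \ref{flatbasechange}) and the lemmas showing $\iota_E^*$ and $\iota_{E*}$ exchange $\mathcal{A}_{-1}$ and $\mathcal{B}_{-1}$. By dévissage, i.e.\ filtering an object of $\mathrm{D}^b(E_{\mathcal Z})$ supported on $E$ by $\iota_{E*}$ of objects on $E$, one obtains $\iota_*\operatorname{gr}^i(M)\in\mathcal{H}_{-1}$.

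The main obstacle is the second step. The Bourbaki and syzygy construction is made formally at $p$, so one must check that it is compatible with restriction to nearby points of $C$, and that no graded pieces supported along $C\setminus p$ survive. If this compatibility is awkward, I would instead use the Corollary directly: show $\operatorname{gr}^i(M)\in\mathcal{A}_{-1}$ and then use $\pi_*\mathcal{R}(M)\simeq M$ together with local freeness away from $p$ to confine the support to $E$.
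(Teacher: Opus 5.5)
Your proposal follows the paper's proof. The paper likewise treats the orthogonality of $\iota_*\operatorname{gr}^i(M)$ to $\langle\iota_*\mathcal{O}_{E_{\mathcal Z}}(-e+1),\ldots,\iota_*\mathcal{O}_{E_{\mathcal Z}}(-1)\rangle$ and the vanishing $\pi_*\iota_*\operatorname{gr}^i(M)=0$ (i.e.\ $\operatorname{gr}^i(M)\in\mathcal{A}_{-1}$) as known from the construction. It confines the support to $E$ by localizing at a non-maximal prime $\mathfrak p$ containing the ideal $\mathfrak q$ of $C$, where $X$ and $C$ become regular, using that the filtration is stable under localization of $S$, and comparing with Proposition~\ref{smoothwblsod}. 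It then concludes with Lemma~\ref{kernalElemma}, which underlies the Corollary you invoke.

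The compatibility you flag as the main obstacle is exactly this localization-stability, which the paper simply asserts. Your fallback would not work, however: $\pi_*$ annihilates all of $\iota_*\mathcal{A}_{-1}$ regardless of support, so $\pi_*\mathcal{R}(M)\simeq M$ says nothing about where the graded pieces live.
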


\begin{proof}
Assume $X=\mathrm{Spec}\,S$. The above filtration is stable under arbitrary localization of $S$. Suppose $\mathrm{Supp}(\operatorname{gr}^i(M))\not\subset E$. Then there exists a non-maximal prime ideal $\mathfrak{p}\subset S$ containing the prime ideal $\mathfrak{q}$ defining the center $C$, such that $(\operatorname{gr}^i(M))_{\mathfrak{p}}\neq 0$. Set $X_{(\mathfrak{p})}:=\mathrm{Spec}\,S_{\mathfrak{p}}$ and $C_{(\mathfrak{p})}:=\mathrm{Spec}\,(S/\mathfrak{q})_{\mathfrak{p}}$. Then both are regular, and we have a localized semiorthogonal decomposition
\[
\mathrm{D}^b(E_{\mathcal{Z},(\mathfrak{p})})=
\left\langle
\mathcal{A}_{-1}(\mathfrak{p}),\,
f^*\mathrm{D}^b(C_{(\mathfrak{p})}),\,
\ldots,\,
f^*\mathrm{D}^b(C_{(\mathfrak{p})})\otimes \mathcal{O}_{E_{\mathcal{Z}}}(e-1)
\right\rangle,
\]
since the weighted blow-up is locally a weighted projective space fibration with the same Fano index. Comparing $\operatorname{gr}^i(M)$ under localization with Proposition \ref{smoothwblsod}, we obtain a contradiction. Hence $\mathrm{Supp}(\operatorname{gr}^i(M))\subset E$. Since we already know that $\iota_{*}\operatorname{gr}^i(M)$ is orthogonal to
\[
\left\langle
\iota_{*}\mathcal{O}_{E_{\mathcal Z}}(-e+1),\,
\ldots,\,
\iota_{*}\mathcal{O}_{E_{\mathcal Z}}(-1)
\right\rangle,
\]
and moreover $\pi_*\iota_{*}\operatorname{gr}^i(M)=0$. By Lemma \ref{kernalElemma}, $\iota_{*}\operatorname{gr}^i(M)\in \mathcal{H}_{-1}$.
\end{proof}

In particular, this gives an approximation of $\pi^*M$:
\[
\pi^*M \simeq \varprojlim_{s\to\infty} \mathcal{F}^s(\mathcal{R}(M)).
\]
By the previous corollary, $\iota_{*}\operatorname{gr}^{s}(M)\in \mathcal{H}_{-1}$ for all $s$. Moreover, we define a functor
\[
\mathcal{R}\pi^*:\mathrm{D}^b(\mathcal{X})\longrightarrow \mathrm{D}^b(\mathcal{Z})/\mathcal{H}_{-1},
\]
and we have:
\begin{lemma}
If $e>0$, then
\[
\mathcal{R}\pi^*:\mathrm{D}^b(\mathcal{X})\longrightarrow \mathrm{D}^b(\mathcal{Z})/\mathcal{H}_{-1}
\]
is a well-defined admissible triangulated categories functor with adjoints $\pi_!\dashv \mathcal{R}\pi^* \dashv \pi_*$.
\end{lemma}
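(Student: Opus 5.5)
The proof will follow the template of the Kawamata blow-up case (Propositions 3.4--3.5 of \cite{Hao2025b} as recalled above), with $\mathcal{K}_{-1}^+$ replaced by $\mathcal{H}_{-1}$. The argument has three steps: well-definedness, the right adjunction with $\pi_*$, and the left adjunction with $\pi_!$.

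\textbf{Well-definedness.} For $F\in\mathrm{D}^b(\mathcal{X})$, take the sheaf approximation $M[k]\to F\to P$, where $M$ is maximal Cohen--Macaulay and $P$ is perfect. Set $\mathcal{R}\pi^*F$ to be the cone-extension of $\mathcal{R}(M)[k]$ and $\pi^*P$; here $\pi^*P$ is bounded since $P$ is perfect.

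\begin{enumerate}
\item Independence of choices. Two approximations differ by perfect complexes and by maps of Cohen--Macaulay modules. By the preceding lemma, every $\iota_*\operatorname{gr}^i(M)$ lies in $\mathcal{H}_{-1}$. Hence any two truncations $\mathcal{F}^s(\mathcal{R}(M))$ and $\mathcal{F}^t(\mathcal{R}(M))$ become isomorphic in $\mathrm{D}^b(\mathcal{Z})/\mathcal{H}_{-1}$. This is exactly Principle~\ref{P1}, with cones in $\mathcal{H}_{-1}\subset\ker\pi_*$.
\item Action on morphisms. A morphism $F\to G$ is sent to the induced map $\mathcal{R}^{t}\pi^*F\to\mathcal{R}^{s}\pi^*G$ for $t\gg s$.
\item Exactness. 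Compatibility with triangles follows as in Proposition~\ref{canonicalcover}: a triangle in $\mathrm{D}^b(\mathcal{X})$ lifts to a triangle of truncations, up to cones in $\mathcal{H}_{-1}$.
\end{enumerate}

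\textbf{Right adjunction $\mathcal{R}\pi^*\dashv\pi_*$.} Since $\pi_*\mathcal{H}_{-1}=0$, the functor $\pi_*$ descends to $\mathrm{D}^b(\mathcal{Z})/\mathcal{H}_{-1}$. Using $\pi_*\mathcal{R}(M)\simeq M$ (Corollary 3.4.7 of \cite{Hao2025b}) and the projection formula for $\pi^*P$, we get $\pi_*\mathcal{R}\pi^*\simeq\mathrm{id}$. The adjunction isomorphism
\[
\operatorname{Ext}^*_{\mathrm{D}^b(\mathcal{Z})/\mathcal{H}_{-1}}(\mathcal{R}\pi^*F,G)\simeq\operatorname{Ext}^*_{\mathrm{D}^b(\mathcal{X})}(F,\pi_*G)
\]
is then obtained from the roof description used in the Kawamata case. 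A morphism $g\colon F\to\pi_*G$ is sent to the roof $\mathcal{R}\pi^*F\leftarrow\mathcal{R}^{s}\pi^*F\to G$, where the right-hand map is induced by $\pi^*g$ via $\pi^*\dashv\pi_*$ on $\mathrm{D}^-$. The inverse map applies $\pi_*$ to a roof; this is well defined because $\pi_*$ kills the cone of the backward arrow. This is the local adjunction argument of \cite[Lemma 3.7, 3.9]{Hao2025b}. Full faithfulness follows from $\pi_*\mathcal{R}\pi^*\simeq\mathrm{id}$.

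\textbf{Left adjunction and admissibility.} Define $\pi_!:=\pi_*(-\otimes\mathcal{O}(K_{\mathcal{Z}/\mathcal{X}}))$, and consider the dual construction $\widetilde{\mathcal{R}}\pi^*:=\mathcal{R}\pi^!\otimes\mathcal{O}(-K_{\mathcal{Z}/\mathcal{X}})$. The same argument applied to $\pi^!$ on $\mathrm{D}^+$ gives $\pi_!\dashv\widetilde{\mathcal{R}}\pi^*$.

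To identify $\widetilde{\mathcal{R}}\pi^*\simeq\mathcal{R}\pi^*$ in the quotient, I will use the two semiorthogonal decompositions coming from Lemma~\ref{wbuporth}:
\[
\mathrm{D}^b(\mathcal{Z})/\mathcal{H}_{-1}=\langle\iota_*\mathcal{O}_{E_{\mathcal Z}}(-e+1),\ldots,\iota_*\mathcal{O}_{E_{\mathcal Z}}(-1),\operatorname{Im}\mathcal{R}\pi^*\rangle,
\]
and its Serre-dual twist $\langle\operatorname{Im}\widetilde{\mathcal{R}}\pi^*,\iota_*\mathcal{O}_{E_{\mathcal Z}},\ldots,\iota_*\mathcal{O}_{E_{\mathcal Z}}(e-2)\rangle$. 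The twist by $\mathcal{O}(K_{\mathcal{Z}/\mathcal{X}})$ restricts on $E_{\mathcal{Z}}$ to $\mathcal{O}_{E_{\mathcal Z}}(-e+1)$, up to the $\mathcal{A}$-components. Both $\mathcal{R}\pi^*$ and $\widetilde{\mathcal{R}}\pi^*$ are sections of $\pi_*$ with $\pi_*\circ(-)\simeq\mathrm{id}$, so the same argument as in Proposition 3.4 of \cite{Hao2025b} identifies them.

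\textbf{Main obstacle.} The hardest step is generation in this decomposition, i.e. that $\ker\pi_*$ in the quotient is generated by the $\iota_*\mathcal{O}_{E_{\mathcal Z}}(-i)$. Lemma~\ref{kernalElemma} only treats objects supported on $E$, whereas here $E_{\mathcal{Z}}$ sits over a positive-dimensional $C$. I would first try to reduce to the case of $E$ by the localization argument of the preceding lemma. At primes $\mathfrak p\supset\mathfrak q$ away from $p$, both $X$ and $C$ are regular, so Proposition~\ref{smoothwblsod} applies and $\ker\pi_*$ there is exactly $\langle\iota_*f^*\mathrm{D}^b(C)\otimes\mathcal{O}(-i),\mathcal{K}_{-1}\rangle$. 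The remaining piece, supported on $E$, is then handled by Lemma~\ref{kernalElemma} together with the flat base change of Lemma~\ref{flatbasechange}.
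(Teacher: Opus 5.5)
Your well-definedness step (the cones $\iota_*\operatorname{gr}^i(M)$ lie in $\mathcal{H}_{-1}$) and your right adjunction $\mathcal{R}\pi^*\dashv\pi_*$ (roofs plus \cite[Lemma 3.9]{Hao2025b}) are exactly what the paper's proof consists of. The paper proves only that global adjunction formula and never passes through a generation statement. The gap is in your left-adjoint step.

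\textbf{The decomposition you use is wrong here.} You write $\mathrm{D}^b(\mathcal{Z})/\mathcal{H}_{-1}=\langle\iota_*\mathcal{O}_{E_{\mathcal Z}}(-e+1),\ldots,\iota_*\mathcal{O}_{E_{\mathcal Z}}(-1),\operatorname{Im}\mathcal{R}\pi^*\rangle$. But $\mathcal{H}_{-1}$ only kills the part of $\langle\iota_*\mathcal{A}_{-1}\rangle$ supported on the fibre $E$ over $p$. So $\mathcal{K}_{-1}\subset\ker\pi_*$ survives in the quotient and must appear as a component. It does appear in Lemma~\ref{wbuporth} and in the decomposition the paper proves right after this lemma. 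Your own last paragraph even lists $\mathcal{K}_{-1}$ in $\ker\pi_*$ away from $p$, which contradicts the decomposition you wrote.

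\textbf{The argument is circular as it stands.} Lemma~\ref{wbuporth} only gives a semiorthogonal collection inside $\mathrm{D}^b(\mathcal{Z})$. The claim that this collection, together with $\operatorname{Im}\mathcal{R}\pi^*$, generates the whole quotient is the content of the next Proposition. That Proposition's proof starts from the admissibility of $\mathcal{R}\pi^*$, i.e.\ from the lemma you are proving. Your localization sketch does not repair this. To split off the part of an object of $\ker\pi_*$ not supported on $E$, you need a projection onto $\langle\iota_*(f^*\mathrm{D}^b(C)\otimes\mathcal{O}_{E_{\mathcal Z}}(-i)),\mathcal{K}_{-1}\rangle$ in the quotient. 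The paper obtains admissibility of that subcategory only as a corollary of the decomposition.

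\textbf{The identification step does not follow.} Even granting the kernel description, ``both are sections of $\pi_*$'' does not identify $\widetilde{\mathcal{R}}\pi^*$ with $\mathcal{R}\pi^*$. The adjunctions give $\operatorname{Im}\mathcal{R}\pi^*={}^\perp(\ker\pi_*)$ and $\operatorname{Im}\widetilde{\mathcal{R}}\pi^*=(\ker\pi_!)^\perp$. You must show these two subcategories coincide; only then does $\pi_*$, an equivalence on the common image, force the two functors to agree. That requires generators for both kernels and a relative Serre duality exchanging them through $\omega_{\mathcal{Z}/\mathcal{X}}|_{E_{\mathcal Z}}\simeq\mathcal{O}_{E_{\mathcal Z}}(-e+1)$. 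Since $\ker\pi_!=\ker\pi_*\otimes\omega_{\mathcal{Z}/\mathcal{X}}^{-1}$, this twist also moves the $\mathcal{A}$-type component, to $\langle\iota_*(\mathcal{A}_{-1}\otimes\mathcal{O}_{E_{\mathcal Z}}(e-1))\rangle$, which in general is not $\mathcal{K}_{-1}$. Your dual decomposition omits it entirely.

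So the ``main obstacle'' you identify comes from your route to the left adjoint, not from the lemma's core. To keep that route, you must first establish the descriptions of $\ker\pi_*$ and $\ker\pi_!$ in $\mathrm{D}^b(\mathcal{Z})/\mathcal{H}_{-1}$ without using this lemma.
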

\begin{proof}
We should prove the global adjunction formula:
\[
\operatorname{Ext}^*_{\mathrm{D}^b(\mathcal{Z})/\mathcal{H}_{-1}}\!\big(\mathcal{R}\pi^*F,\,G\big)
\simeq
\operatorname{Ext}^*_{\mathrm{D}^b(\mathcal{X})}\!\big(F,\,\pi_*G\big),
\]
it follows from classical global adjunction e.g.\cite[Lemma 3.9]{Hao2025b}.
\end{proof}
\begin{corollary}
$\mathcal{R}\pi^*$ is a fully faithful embedding.
\end{corollary}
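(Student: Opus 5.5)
The statement to prove is that $\mathcal{R}\pi^*:\mathrm{D}^b(\mathcal{X})\to \mathrm{D}^b(\mathcal{Z})/\mathcal{H}_{-1}$ is fully faithful. The natural route is to deduce this from the adjunction $\mathcal{R}\pi^*\dashv\pi_*$ of the preceding lemma. For that adjunction to make sense, $\pi_*$ must first descend to the quotient. This holds because $\mathcal{H}_{-1}$ is generated by objects $\iota_{E*}F$ with $F\in\mathcal{B}_{-1}$, and $\pi_*\iota_{E*}F=\kappa_{p*}f_{E*}F=0$, since $\mathcal{B}_{-1}$ is right orthogonal to $\mathcal{O}_E$ and hence $f_{E*}F=0$. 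Full faithfulness is then equivalent to the unit $\mathrm{id}\to\pi_*\mathcal{R}\pi^*$ being an isomorphism. Indeed, for $F,F'\in\mathrm{D}^b(\mathcal{X})$ we have
\[
\operatorname{Ext}^*_{\mathrm{D}^b(\mathcal{Z})/\mathcal{H}_{-1}}\big(\mathcal{R}\pi^*F,\mathcal{R}\pi^*F'\big)\simeq \operatorname{Ext}^*_{\mathrm{D}^b(\mathcal{X})}\big(F,\pi_*\mathcal{R}\pi^*F'\big),
\]
so it suffices to show $\pi_*\mathcal{R}\pi^*F'\simeq F'$ naturally.

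The plan is to establish $\pi_*\mathcal{R}\pi^*\simeq\mathrm{id}$ by dévissage along the definition of $\mathcal{R}\pi^*$. For any $F$, use the triangle $M[k]\to F\to P$, where $M$ is maximal Cohen--Macaulay and $P$ is perfect, which gives $\mathcal{R}(M)[k]\to\mathcal{R}\pi^*F\to\pi^*P$.

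For the perfect part, $\pi_*\pi^*P\simeq P\otimes\pi_*\mathcal{O}_{\mathcal{Z}}$ by the projection formula. We have $\pi_*\mathcal{O}_{\mathcal{Z}}\simeq\mathcal{O}_{\mathcal{X}}$ because $e>0$: the exceptional divisor is a Fano fibration, $f_*\mathcal{O}_{E_{\mathcal Z}}(j)=0$ in the relevant negative range, and the filtration by $\mathcal{O}_{\mathcal Z}(-jE_{\mathcal Z})$ gives vanishing of the higher direct images. The stacky Proj of a normal complete intersection is normal, which identifies the zeroth direct image with $\mathcal{O}_{\mathcal{X}}$.

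For the Cohen--Macaulay part, $\pi_*\mathcal{R}(M)\simeq M$ is the analogue of Corollary 3.4.7 of \cite{Hao2025b}. The filtration $\mathcal{F}^s(\mathcal{R}(M))$ has graded pieces $\iota_*\operatorname{gr}^i(M)\in\mathcal{H}_{-1}$, all killed by $\pi_*$, and $\pi^*M\simeq\varprojlim_s\mathcal{F}^s$. Hence $\pi_*\mathcal{F}^s\simeq\pi_*\mathcal{R}(M)$ for all $s$. Comparing with $\pi_*\pi^*M$ in each fixed cohomological degree, where only finitely many $s$ contribute, gives $\pi_*\mathcal{R}(M)\simeq M$.

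The five lemma on the triangle then gives $\pi_*\mathcal{R}\pi^*F\simeq F$. One must still check that this isomorphism is the unit of the adjunction. I would do this by taking the unit for $\pi^*\dashv\pi_*$ on the unbounded-above category and composing with the maps $\pi^*F\to\mathcal{F}^s$.

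The main obstacle is this compatibility, which also covers the ambiguity in the choice of the approximation triangle. I would handle it by showing that different choices of $(M,P,k)$ give objects whose comparison cones lie in $\mathcal{H}_{-1}$, so the induced unit is independent of choices. Once $\pi_*\mathcal{R}\pi^*\simeq\mathrm{id}$ holds naturally, full faithfulness follows immediately from the displayed adjunction.
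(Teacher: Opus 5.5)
Your proposal is correct and follows the paper's route. The paper likewise deduces full faithfulness directly from the adjunction $\mathcal{R}\pi^*\dashv\pi_*$ of the preceding lemma together with $\pi_*\mathcal{R}\pi^*\simeq\mathrm{id}$; your dévissage over $M[k]\to F\to P$ and the naturality check only spell out that isomorphism, which the paper takes for granted from Corollary 3.4.7 of the author's earlier work.
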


\begin{proof}
This follows immediately from $\pi_* \mathcal{R}\pi^* \simeq \mathrm{id}$.
\end{proof}
We now pass to the global situation. Let $\mathcal{X}$ be a well-formed stack with a well-formed isolated singularity $p$, and let $C$ be a smooth center passing through $p$. Consider a global weighted blow-up along $C$ with weights $(a_1,\ldots,a_s)$ such that, in the formal neighborhood of $p$, all the local assumptions above are satisfied. Denote
\[
\mathcal{Z}:= w\mathrm{Bl}^{(a_i)}_C \mathcal{X}.
\]
We have the diagram
\[
\begin{tikzcd}[column sep=3.2em, row sep=2.2em]
E \arrow[r,"\iota_E"] \arrow[d,"f_E"']
& E_{\mathcal{Z}} \arrow[r,"\iota"] \arrow[d,"f"']
& \mathcal{Z} \arrow[d,"\pi"] \\
p \arrow[r,"\kappa_p"']
& C \arrow[r,"\kappa"']
& \mathcal{X}
\end{tikzcd}
\]
\begin{lemma}\label{wblspanningclass}
Assume $e>0$. Consider the set
\[
\Omega :=
\bigcup_{\substack{q\in C\\ \text{closed}}}
\left\{
\mathcal{O}_{E_q}(-e+1),\ldots,\mathcal{O}_{E_q}(-1),\,\mathcal{B}_{E_q,-1},\,\mathcal{O}_{E_q}
\right\}
\;\cup\;
\left\{
\mathrm{k}_x\otimes \chi \mid x \notin E_{\mathcal Z}\ \text{closed},\ \chi
\right\}.
\]
where $E_q$ denotes the exceptional divisor over $q$. Then $\Omega$ forms a spanning class in $\mathrm{D}^b(\mathcal{Z})/\mathcal{H}_{-1}$.
\end{lemma}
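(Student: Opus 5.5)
We check the definition of a spanning class directly, imitating the earlier spanning-class lemma for $\mathrm{D}^b(\mathcal{X})/\mathcal{L}_-$. Let $F\in \mathrm{D}^b(\mathcal{Z})/\mathcal{H}_{-1}$ satisfy $\operatorname{Ext}^*(F,a)=0$ for all $a\in\Omega$; we must show $F\simeq 0$. The dual direction, $\operatorname{Ext}^*(a,F)=0$, is handled in the same way using the dualizing functor (Serre duality on the Gorenstein stack $\mathcal{Z}$), as in Lemma~\ref{ptspanningclass}.

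\emph{Support.} Away from $E_{\mathcal Z}$ the quotient by $\mathcal{H}_{-1}$ does not change local Ext groups, since $\mathcal{H}_{-1}$ is supported on $E\subset E_{\mathcal Z}$. The vanishing against $\mathrm{k}_x\otimes\chi$ for $x\notin E_{\mathcal Z}$ then gives $F_x=0$ there, by Lemma~\ref{ptspanningclass} applied on the open complement. So we may choose a representative of $F$ in $\mathrm{D}^b(\mathcal{Z})$ with cohomology supported on $E_{\mathcal Z}$. By dévissage, this representative is a finite iterated extension of objects $\iota_*G$ with $G\in\mathrm{D}^b(E_{\mathcal Z})$.

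\emph{Fibres.} Next we would pass to fibres over closed points $q\in C$. The strategy is to use flat base change (Lemma~\ref{flatbasechange}) and the fibrewise decomposition
\[
\mathrm{D}^b(E_q)=\langle \mathcal{O}_{E_q}(-e+1),\ldots,\mathcal{O}_{E_q}(-1),\mathcal{B}_{E_q,-1},\mathcal{O}_{E_q}\rangle,
\]
which follows from the fibrewise SOD lemma after twisting. Because these pieces generate $\mathrm{D}^b(E_q)$, vanishing against all of them implies that the derived restriction $\iota_{E_q}^!F$ (equivalently $\iota_{E_q}^*F$ up to a shift and twist) has zero image in $\mathrm{D}^b(E_q)$ modulo the contribution of $\mathcal{B}_{E_p,-1}$ at $q=p$.

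Two cases then arise.
\begin{itemize}
\item For $q\neq p$, the point $q$ lies in the smooth locus. The spanning-class property of skyscrapers $\mathrm{k}_y$, $y\in E_q$, can be rebuilt from the exceptional pieces: each $\mathrm{k}_y$ lies in $\mathrm{D}^b(E_q)$, which is generated by $\Omega$-objects. This gives $F_y=0$ there.
\item For $q=p$, vanishing against the three types of objects puts $F$, near $E=E_p$, in the subcategory generated by $\iota_{E*}\mathcal{B}_{-1}$, i.e.\ in $\mathcal{H}_{-1}$. So $F\simeq0$ in the quotient.
\end{itemize}

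The key technical step is comparing Ext in the quotient with Ext in $\mathrm{D}^b(\mathcal{Z})$. We would use that $\mathcal{H}_{-1}$ is left orthogonal to $\iota_*\mathcal{O}_{E_{\mathcal Z}}(-i)$ for $1\le i\le e-1$ (Lemma~\ref{wbuporth} together with the corollary identifying $\mathcal{H}_{-1}$). Then Ext groups into those objects are computed in $\mathrm{D}^b(\mathcal{Z})$, and adjunction $\iota_{E_q*}\dashv\iota_{E_q}^!$ converts them into Ext groups on $E_q$.

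The main obstacle I expect is the step at $p$: showing that an object supported on $E_{\mathcal Z}$ with all these vanishings truly lands in $\mathcal{H}_{-1}$ rather than merely having fibrewise components there. Here I would imitate Lemma~\ref{kernalElemma}: replace $F$ by its mutation past $\langle\iota_*\mathcal{O}_{E_{\mathcal Z}}(-i)\rangle$, show $\pi_*F=0$ using vanishing against $\mathcal{O}_{E_q}$ and the skyscrapers (through the local adjunction of \cite[Lemma 3.7]{Hao2025b}), and then conclude $F\in\mathcal{H}_{-1}$ by that lemma. Along the way the self-Ext argument $\operatorname{Ext}^*(F,F)=0$ would follow from the iterated-extension description.
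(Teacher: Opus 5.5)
Your skeleton is the paper's. The objects attached to $q\neq p$, together with the skyscrapers off $E_{\mathcal Z}$, force $F$ to be supported on $E=E_p$. All these targets are supported away from $E$, hence lie in $\mathcal{H}_{-1}^{\perp}$, so Ext into them in the quotient is Ext in $\mathrm{D}^b(\mathcal Z)$. Generating $\mathrm{k}_y$, $y\in E_q$, from the pieces of $\mathrm{D}^b(E_q)$ is a fine way to make the paper's ``spanning class away from $E$'' explicit. The proof then closes at $p$ by a self-Ext argument.

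However, the step you single out as the main obstacle is not one, and the route you propose for it does not work as written.

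\emph{The direct argument.} Once $F$ is supported on $E$, it is an iterated extension of objects $\iota_{E*}G$ with $G$ in the pieces $\mathcal{O}_E(-e+1),\ldots,\mathcal{O}_E(-1),\mathcal{B}_{E_p,-1},\mathcal{O}_E$ of $\mathrm{D}^b(E)$. All of these lie in $\Omega$ (take $q=p$). Feeding $\operatorname{Ext}^*(F,a)=0$ through the long exact sequences of this filtration gives $\operatorname{Ext}^*(F,F)=0$ in $\mathrm{D}^b(\mathcal Z)/\mathcal{H}_{-1}$, so $F\simeq 0$ there. That is the whole of the paper's argument, and it needs neither $\pi_*F=0$ nor any mutation.

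\emph{Why your detour fails.} It runs the adjunctions the wrong way.
The local adjunction of \cite[Lemma 3.7]{Hao2025b} computes $\operatorname{Ext}^*(F^{s}([\mathrm{k}(p)]^+),F)$, i.e.\ maps \emph{into} $F$. So it cannot convert your hypothesis on maps \emph{out of} $F$ into $\pi_*F=0$.
Similarly, $\operatorname{Ext}^*(F,\iota_{E_q*}G)$ is governed by $\iota_{E_q}^*\dashv\iota_{E_q*}$, not by $\iota_{E_q}^!$.
Mutating $F$ past $\langle\iota_*\mathcal{O}_{E_{\mathcal Z}}(-i)\rangle$ also alters it by a cone $T$ for which $\operatorname{Ext}^*(T,a)$ is not controlled, so your vanishing hypotheses need not survive.
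Finally, Lemma~\ref{kernalElemma} itself ends with the same self-Ext step.

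Drop the detour and conclude directly with the self-Ext argument you already mention in your last sentence.
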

\begin{proof}
Let $F\in \mathrm{D}^b(\mathcal{Z})$ be such that
\[
\operatorname{Ext}^*_{\mathrm{D}^b(\mathcal{Z})/\mathcal{H}_{-1}}(F,a)=0
\quad \text{for all } a\in \Omega.
\]
Consider the set
\[
\bigcup_{\substack{q\in C\setminus\{p\}\\ \text{closed}}}
\left\{
\mathcal{O}_{E_q}(-e+1),\ldots,\mathcal{B}_{E_q,-1},\,\mathcal{O}_{E_q}
\right\}
\;\cup\;
\left\{
\mathrm{k}_x\otimes \chi \mid x \notin E_{\mathcal Z} \text{ closed},\ \chi
\right\},
\]
this forms a stacky spanning class away from $E$, hence $F$ is supported on $E$. Therefore $F$ is generated by iterated extensions of $\mathcal{O}_{E}(-e+1),\ldots,\mathcal{B}_{E_q,-1},\,\mathcal{O}_{E}$, and by the assumption we obtain
\[
\operatorname{Ext}^*_{\mathrm{D}^b(\mathcal{Z})/\mathcal{H}_{-1}}(F,F)=0,
\]
so $F\in \mathcal{H}_{-1}$. The converse direction is analogous.
\end{proof}
\begin{proposition}
The stacky weighted blow-up admits a semiorthogonal decomposition
\[
\mathrm{D}^b(\mathcal{Z})/\mathcal{H}_{-1}
=
\left\langle
\operatorname{Im}\big(\iota_*\big(f^*\mathrm{D}^b(C)\otimes \mathcal{O}_{E_{\mathcal Z}}(-e+1)\big)\big),\,
\ldots,\,
\operatorname{Im}\big(\iota_*\big(f^*\mathrm{D}^b(C)\otimes \mathcal{O}_{E_{\mathcal Z}}(-1)\big)\big),\,
\mathcal{K}_{-1},\,
\operatorname{Im}\,\mathcal{R}\pi^*
\right\rangle,
\]
where all functors are fully faithful and $\mathcal{H}_{-1}:=\langle \iota_{E*}\mathcal{B}_{-1}\rangle$, assuming $e>0$.
\end{proposition}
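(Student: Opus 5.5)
The plan is to assemble the decomposition from three ingredients already in place: the semiorthogonal block $\mathcal{T}$ of Lemma~\ref{wbuporth}, the fully faithful admissible functor $\mathcal{R}\pi^*:\mathrm{D}^b(\mathcal{X})\to \mathrm{D}^b(\mathcal{Z})/\mathcal{H}_{-1}$ with adjoints $\pi_!\dashv\mathcal{R}\pi^*\dashv\pi_*$, and the spanning class of Lemma~\ref{wblspanningclass}. All three are passed to the quotient by $\mathcal{H}_{-1}$.

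\textbf{Step 1: the exceptional blocks are well behaved in the quotient.}
First I would check that the functors $\Theta_{-i}(-)=\iota_*(f^*(-)\otimes\mathcal{O}_{E_{\mathcal Z}}(-i))$, for $1\le i\le e-1$, and the inclusion of $\mathcal{K}_{-1}$ remain fully faithful after passing to $\mathrm{D}^b(\mathcal{Z})/\mathcal{H}_{-1}$. Since $\mathcal{H}_{-1}\subset\mathcal{K}_{-1}$ by the Corollary identifying $\mathcal{H}_{-1}$ with the $E$-supported part of $\langle\iota_*\mathcal{A}_{-1}\rangle$, it suffices to show that $\mathcal{H}_{-1}$ is left orthogonal to each $\operatorname{Im}\Theta_{-i}$.

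That orthogonality comes from the excess triangle (Lemma~\ref{excdis}) together with the decomposition $\langle\mathcal{O}(-e+1),\dots,\mathcal{O}(-1),\mathcal{A}_{-1},\mathcal{O}\rangle$ of $\mathrm{D}^b(E_{\mathcal Z})$, applied exactly as in the proof of Lemma~\ref{wbuporth}. Morphisms in the quotient between these objects therefore agree with those in $\mathrm{D}^b(\mathcal{Z})$, which gives full faithfulness via Lemma~\ref{wbuporth}. For $\mathcal{K}_{-1}$ itself, I take the image $\mathcal{K}_{-1}/\mathcal{H}_{-1}$.

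\textbf{Step 2: semiorthogonality with $\operatorname{Im}\mathcal{R}\pi^*$.}
For any $F$ and any $G\in\mathcal{T}$, the adjunction gives
\[
\operatorname{Hom}_{/\mathcal{H}_{-1}}(G,\mathcal{R}\pi^*F)\simeq\operatorname{Hom}(\pi_!G,F).
\]
This vanishes because $\pi_*G=0$, and hence $\pi_!G=0$ as well, since $\pi_!=\pi_*\otimes\mathcal{O}(K_{\mathcal{Z}/\mathcal{X}})$ differs from $\pi_*$ only by a twist in the range $0<i<e$ on $E_{\mathcal Z}$. So the left adjoint kills the blocks. I would check this twist-range vanishing using $f_*\mathcal{O}_{E_{\mathcal Z}}(-i)=0$ for $0<i<e$ and the definition of $\mathcal{A}_{-1}$.

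\textbf{Step 3: generation (the main obstacle).}
Take $b\in\mathrm{D}^b(\mathcal{Z})$ and form the triangle $\mathcal{R}\pi^*\pi_*b\to b\to c$. Then $\pi_*c=0$, because $\pi_*\mathcal{R}\pi^*\simeq\mathrm{id}$. The task is to show that $c$ lies in $\mathcal{T}$ modulo $\mathcal{H}_{-1}$.

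Away from $p$, the morphism is a weighted blow-up of a smooth variety along a smooth center, so Proposition~\ref{smoothwblsod} applies locally and shows that $c$ lies in the blocks there.
\begin{enumerate}
\item Mutate $c$ to the right through $\langle\Theta_{-e+1},\dots,\Theta_{-1}\rangle$ and through $\mathcal{K}_{-1}$.
\item The residue $c'$ is then right orthogonal to $\mathcal{T}$ and still satisfies $\pi_*c'=0$.
\item Test $c'$ against the spanning class $\Omega$ of Lemma~\ref{wblspanningclass}. The pieces over $q\neq p$ and the points off $E_{\mathcal Z}$ vanish by the smooth case and by $\pi_*c'=0$. This forces $c'$ to be supported on $E$.
\item Lemma~\ref{kernalElemma} then gives $c'\in\mathcal{S}$. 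Orthogonality to the $\iota_{E*}\mathcal{O}_E(-i)$, which I would obtain by dévissage from $\iota_*\mathcal{O}_{E_{\mathcal Z}}(-i)$ using flat base change (Lemma~\ref{flatbasechange}), puts $c'$ in $\mathcal{H}_{-1}$. So $c'=0$ in the quotient.
\end{enumerate}
I expect this step to be the delicate one. The difficulty is making the mutation compatible with the quotient and with the localization-to-$E$ argument: one must justify that $c'$ restricted near $q\neq p$ is zero rather than merely lying in $\mathcal{K}_{-1}$. I would try first to handle this using the local admissibility of $\mathcal{K}_{-1}$ from the Corollary to Proposition~\ref{smoothwblsod}.
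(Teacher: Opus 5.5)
Your Step 1 reduction is fine, but Step 2 checks the wrong vanishing, and the vanishing it asserts is false. With $\operatorname{Im}\mathcal{R}\pi^*$ placed last (Orlov's convention, which the paper follows: morphisms from a later component to an earlier one vanish), you need $\operatorname{Hom}_{/\mathcal{H}_{-1}}(\mathcal{R}\pi^*F,G)=0$ for $G$ in the blocks. That follows at once from $\mathcal{R}\pi^*\dashv\pi_*$ and $\pi_*G=0$. Your claim $\pi_!G=0$ fails. Since $K_{\mathcal{Z}/\mathcal{X}}=(e-1)E_{\mathcal Z}$ and $\mathcal{O}(E_{\mathcal Z})|_{E_{\mathcal Z}}\simeq\mathcal{O}_{E_{\mathcal Z}}(-1)$, one gets $\pi_!\iota_*\mathcal{O}_{E_{\mathcal Z}}(-i)\simeq\kappa_*f_*\mathcal{O}_{E_{\mathcal Z}}(-i-e+1)$, which lies outside the vanishing range. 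For $i=1$ it is $\kappa_*f_*\mathcal{O}_{E_{\mathcal Z}}(-e)$, relatively dual to $f_*\mathcal{O}_{E_{\mathcal Z}}\simeq\mathcal{O}_C$, hence nonzero. Already for the blow-up $S'\to S$ of a point on a smooth surface, $\operatorname{Ext}^1(\iota_*\mathcal{O}_E(-1),\mathcal{O}_{S'})\simeq H^0(\mathcal{O}_E)\neq0$.

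The more serious gap is in Step 3, where there are two holes.

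\emph{Admissibility of $\mathcal{K}_{-1}$.} Mutating $c$ through $\mathcal{K}_{-1}$ presupposes that the image of $\mathcal{K}_{-1}$ is right admissible in $\mathrm{D}^b(\mathcal{Z})/\mathcal{H}_{-1}$. Nothing available gives this. The Corollary to Proposition~\ref{smoothwblsod} says nothing near $p$, and projection functors do not glue from local pieces. In the paper, admissibility of $\mathcal{K}_{-1}$ in the quotient is deduced \emph{from} this proposition. Without that mutation, $c$ is not supported on $E$ (away from $p$ it genuinely has a $\mathcal{K}_{-1}$-part), so Lemma~\ref{kernalElemma} does not apply.

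\emph{The objects $\mathcal{O}_{E_q}$.} These members of $\Omega$ are not in $\mathcal{T}$. So $\operatorname{Hom}(\mathcal{O}_{E_q},c')=0$ does not follow from orthogonality to $\mathcal{T}$ together with $\pi_*c'=0$.

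The missing idea, which is the actual content of the paper's proof, is to show directly that $\iota_*\mathcal{O}_{E_q}$ lies in the subcategory $\mathcal{D}$ generated by the blocks and $\operatorname{Im}\mathcal{R}\pi^*$, for every closed $q\in C$. One uses the filtration of $\mathcal{R}\pi^*\kappa_{q*}\mathrm{k}(q)$ (just $\pi^*$ when $q\neq p$), whose graded pieces are $\iota_{E_q*}\mathcal{D}_{q,i}[1]$:
\begin{itemize}
\item the piece $i=0$ is $\mathcal{O}_{E_q}$;
\item for $1\le i\le e-1$, $\mathcal{D}_{q,i}$ lies in $\langle\mathcal{O}_{E_q}\rangle^{\perp}$, which is generated by $\mathcal{O}_{E_q}(-e+1),\ldots,\mathcal{O}_{E_q}(-1)$ and $\mathcal{B}_{E_q,-1}$, so these pieces lie in $\mathcal{D}$;
\item for $i\ge e$, $\mathcal{D}_{q,i}\in\mathcal{B}_{-1}$, so the piece lies in $\mathcal{H}_{-1}$ (at $p$) or in $\mathcal{K}_{-1}$.
\end{itemize}
Hence $\Omega\subset\mathcal{D}$, and Lemma~\ref{wblspanningclass} finishes. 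This needs no mutation through $\mathcal{K}_{-1}$ and no appeal to Lemma~\ref{kernalElemma}.

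That last step still uses admissibility of $\mathcal{D}$, which the paper asserts rather than proves. You can avoid it altogether. Use the same filtration in its d\'evissage form, for $\mathcal{R}\pi^*\kappa_*M$ with $M\in\mathrm{D}^b(C)$, to get $\iota_*f^*\mathrm{D}^b(C)\subset\mathcal{D}$. Then your own triangle $\mathcal{R}\pi^*\pi_*b\to b\to c$ closes directly. Indeed, $c$ is supported on $E_{\mathcal Z}$, so it lies in the triangulated hull of $\iota_*\mathrm{D}^b(E_{\mathcal Z})$. Each $\iota_*G$ then splits, along the decomposition of $\mathrm{D}^b(E_{\mathcal Z})$ used in Lemma~\ref{wbuporth}, into pieces lying in the blocks, in $\mathcal{K}_{-1}$, and in $\iota_*f^*\mathrm{D}^b(C)$.
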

\begin{proof}
Since $\mathcal{R}\pi^*$ is a well-defined admissible functor, and the above gives an admissible semiorthogonal decomposition of $\mathcal{D}\subset \mathrm{D}^b(\mathcal{Z})/\mathcal{H}_{-1}$, it remains to prove generation. It is clear that
\[
\bigcup_{q\in C\,|\,\text{closed}}
\left\{
\mathcal{O}_{E_q}(-e+1),\ldots,\mathcal{B}_{E_q,-1},\,\mathcal{O}_{E_q}(-1)
\right\}
\;\cup\;
\left\{
\mathrm{k}_x\otimes \chi \mid x \notin E_{\mathcal Z}\ \text{closed},\ \chi
\right\}
\subset \mathcal{D}.
\]
It remains to show that $\mathcal{O}_{E_q}\in \mathcal{D}$ for any closed point $q\in C$. If $q=p$, we use the filtration
\[
\begin{tikzcd}[column sep=0.5em]
\pi^*\kappa_{p*}\mathrm{k}(p) \arrow{rr} && F^{s}([\mathrm{k}(p)]^+) \arrow{dl}\arrow{rr} && F'^{\,s-1} \arrow{dl} && \\
& \tau'_{s} \arrow[ul,dashed,"\Delta"] && \iota_{E*}\mathcal{D}_{p,s}[1] \arrow[ul,dashed,"\Delta"] &&
\end{tikzcd}
\quad\cdots\quad
\begin{tikzcd}[column sep=0.5em]
& F'^{\,1} \arrow{rr} && F'^{\,0} \arrow{rr}\arrow{dl} && 0 \arrow{dl} \\
&& \iota_{E*}\mathcal{D}_{p,1}[1] \arrow[ul,dashed,"\Delta"] && \iota_{E*}\mathcal{D}_{p,0}[1] \arrow[ul,dashed,"\Delta"]
\end{tikzcd}
\]
and note that $\iota_{E*}\mathcal{D}_{p,i}\in \mathcal{D}$ for all $i$. Hence $\mathcal{O}_E\in \mathcal{D}$. If $q\neq p$, we are in the regular weighted blow-up case where $\mathcal{R}\pi^*=\pi^*$, and we have a similar filtration
\[
\begin{tikzcd}[column sep=0.5em]
\pi^*\kappa_{q*}k(q)\arrow{rr} && F'^{\,e-2} \arrow{dl} && \\
& \iota_{E_q*}\mathcal{D}_{q,e-1}[1] \arrow[ul,dashed,"\Delta"] &&
\end{tikzcd}
\quad\cdots\quad
\begin{tikzcd}[column sep=0.5em]
& F'^{\,1} \arrow{rr} && F'^{\,0} \arrow{rr}\arrow{dl} && 0 \arrow{dl} \\
&& \iota_{E_q*}\mathcal{D}_{q,1}[1] \arrow[ul,dashed,"\Delta"] && \iota_{E_q*}\mathcal{D}_{q,0}[1] \arrow[ul,dashed,"\Delta"]
\end{tikzcd}
\]
and similarly $\iota_{E_q*}\mathcal{D}_{q,i}\in \mathcal{D}$ for all $i$, hence $\mathcal{O}_{E_q}\in \mathcal{D}$. This completes the proof.
\end{proof}

We also need a natural and frequently used generalization. Assume $\mathcal{X}$ is a well-formed stack with a well-formed singularity satisfying the above assumptions, and suppose moreover that there is a $\mu_r$-action which is free outside the isolated singular point $p$. In this setting, we consider a weighted blow-up with weights $(a_1,\ldots,a_s)$ along a smooth orbifold center $\mathcal{C}$, which is smooth away from $p$ and has a $\mu_r$-orbifold point at $p$. Passing to a $\mu_r$-cover, we reduce to the local situation discussed above. Over the orbifold point on $C$, the exceptional divisor is
\[
[E/\mu_r]\longrightarrow [p/\mu_r],
\]
where the $\mu_r$-action on $E$ is not necessarily trivial. Nevertheless, we have a semiorthogonal decomposition
\[
\mathrm{D}^b([E/\mu_r])=
\left\langle
\mathcal{A}_{-1},\,
\mathcal{O}_{[E/\mu_r]}\otimes \Big(\bigoplus_{i=0}^{r-1}\nu^i\Big),\,
\mathcal{O}_{[E/\mu_r]}(1)\otimes \Big(\bigoplus_{i=0}^{r-1}\nu^i\Big),\,
\ldots,\,
\mathcal{O}_{[E/\mu_r]}(e-1)\otimes \Big(\bigoplus_{i=0}^{r-1}\nu^i\Big)
\right\rangle,
\]
and each direct summand is strong exceptional. Completely analogously, we obtain:

\begin{proposition}
The stacky weighted blow-up admits a semiorthogonal decomposition
\[
\mathrm{D}^b(\mathcal{Z})/\mathcal{H}_{-1}
=
\left\langle
\operatorname{Im}\big(\iota_*\big(f^*\mathrm{D}^b(\mathcal{C})\otimes \mathcal{O}_{E_{\mathcal Z}}(-e+1)\big)\big),\,
\ldots,\,
\operatorname{Im}\big(\iota_*\big(f^*\mathrm{D}^b(\mathcal{C})\otimes \mathcal{O}_{E_{\mathcal Z}}(-1)\big)\big),\,
\mathcal{K}_{-1},\,
\operatorname{Im}\,\mathcal{R}\pi^*
\right\rangle,
\]
where all functors are fully faithful and $\mathcal{H}_{-1}:=\langle \iota_{[E/\mu_r]*}\mathcal{B}_{-1}\rangle$, assuming $e\geq0$.
\end{proposition}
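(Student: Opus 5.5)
The plan is to reduce the orbifold-center statement to the non-orbifold weighted blow-up proposition just proved, by passing to the local $\mu_r$-cover at $p$ and keeping track of the equivariant structure throughout. Away from $p$ the center $\mathcal{C}$ is smooth and the $\mu_r$-action is free. There $\mathcal{X}$ is étale locally a smooth or well-formed scheme, and Proposition~\ref{smoothwblsod} (resp.\ the previous proposition) already gives the decomposition with $\mathcal{R}\pi^*=\pi^*$. Near $p$, choose an equivariant formal chart $[U/\mu_r]$ with $U$ satisfying the local hypotheses: flatness of $f$, the measured condition, and Fano exceptional divisor. Every construction used before is equivariant, because the $\mu_r$-action preserves monomials and the chosen standard basis. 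These constructions are the Bourbaki sequence (Lemma~\ref{Bexactsequence}), the syzygy/Rees filtrations $\mathcal{F}^s(\mathcal{R}(M))$, the mutations $\mathcal{D}_{p,i}$, and the kernel description of Lemma~\ref{kernalElemma}. Hence all of them descend to $[E/\mu_r]$, with each graded piece twisted by some character $\nu^{l}$.

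The key steps run in the following order.

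First, establish the equivariant version of Lemma~\ref{flatbasechange} and of the lemmas comparing $\mathcal{A}_{-1}$ with $\mathcal{B}_{-1}$. This means showing that $\iota_{[E/\mu_r]}^*$ sends $\mathcal{A}_{-1}$ to $\mathcal{B}_{-1}$, and that $\iota_{[E/\mu_r]*}$ sends $\mathcal{B}_{-1}$ into $\mathcal{A}_{-1}$. Both follow by applying the non-equivariant statements to each isotypic component, since $\mu_r$ is linearly reductive when $\gcd(r,\operatorname{char}\mathrm{k})=1$.

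Second, prove the equivariant analogue of Lemma~\ref{kernalElemma}: if $\pi_*F=0$ and $F$ is supported on $[E/\mu_r]$, then $F$ lies in the category generated by the $\iota_*\mathcal{O}_{[E/\mu_r]}(-j)\otimes\nu^i$ together with $\mathcal{H}_{-1}$. The argument repeats the local-adjunction argument. The test objects are now $F^s([\mathrm{k}(p)\otimes\nu^i]^+)$ for all $0\le i<r$, which is allowed because $\{\mathrm{k}_p\otimes\nu^i\}$ is a spanning class by Lemma~\ref{ptspanningclass}.

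Third, define $\mathcal{R}\pi^*$ via the equivariant Cohen--Macaulay sheaf approximation and verify that $\pi_*\mathcal{R}\pi^*\simeq\mathrm{id}$. This yields admissibility and full faithfulness exactly as in the non-orbifold lemma and corollary.

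Fourth, prove generation through the equivariant version of the spanning class in Lemma~\ref{wblspanningclass}. Over $p$, each summand $\mathcal{O}_{[E/\mu_r]}(j)\otimes\nu^i$ is reached from the filtration of $\pi^*\kappa_{p*}(\mathrm{k}(p)\otimes\nu^i)$, whose factors $\iota_*\mathcal{D}_{p,s}\otimes\nu^{i-s}$ lie in $\mathcal{D}$. One then mutates as in Lemma~\ref{mutationlemma}.

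The main obstacle is the boundary case $e=0$, which the statement now allows. With $e=0$ there are no twisted components $\iota_*(f^*\mathrm{D}^b(\mathcal{C})\otimes\mathcal{O}(-j))$, and the decomposition degenerates to $\langle\mathcal{K}_{-1},\operatorname{Im}\mathcal{R}\pi^*\rangle$. The difficulty is that the proof of full faithfulness used $e>0$ to guarantee $f_*\mathcal{O}_{E_{\mathcal{Z}}}\simeq\mathcal{O}_{\mathcal{C}}$ and the vanishing of the Rees graded pieces modulo $\mathcal{H}_{-1}$. To handle this, I would treat $e=0$ as the crepant case. The plan is to check directly that $f_*\mathcal{O}_{[E/\mu_r]}\simeq\mathcal{O}_{\mathcal{C}}$ still holds, using the fact that the $\mu_r$-invariants of $\mathrm{H}^0$ are one-dimensional on the connected fibre. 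I would then verify that every $\mathcal{D}_{p,s}$ with $s\ge 0$ lies in $\mathcal{B}_{-1}$, so that $\mathcal{R}\pi^*\kappa_{p*}\mathrm{k}(p)$ agrees with $\iota_{E*}\mathcal{O}_E$ modulo $\mathcal{H}_{-1}$. If that check fails, I would fall back to proving the statement only for $e>0$ and record $e=0$ separately.
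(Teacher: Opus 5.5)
Your route is the paper's. Its entire justification is ``passing to a $\mu_r$-cover, we reduce to the local situation discussed above \dots\ completely analogously'', that is, rerun the non-orbifold argument equivariantly. Your four steps are that rerun, and they are fine for $e>0$: equivariant flat base change and the $\mathcal{A}_{-1}/\mathcal{B}_{-1}$ comparison, the kernel lemma tested against all $\mathrm{k}(p)\otimes\nu^i$, $\pi_*\mathcal{R}\pi^*\simeq\mathrm{id}$ with global adjunction, and generation from the filtration of $\pi^*\kappa_{p*}(\mathrm{k}(p)\otimes\nu^i)$ plus mutation.

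The step that fails is your proposed rescue of $e=0$, so you should take your fallback. When $e=0$, the decomposition of $\mathrm{D}^b([E/\mu_r])$ has no pieces $\mathcal{O}(j)\otimes\nu^i$ at all. Hence $\mathcal{B}_{-1}=\mathrm{D}^b([E/\mu_r])$, and $\mathcal{H}_{-1}$ contains $\iota_{[E/\mu_r]*}\mathcal{D}_{p,s}\otimes\nu^l$ for every $s\ge 0$, including $\mathcal{D}_{p,0}=\mathcal{O}_{[E/\mu_r]}$.

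Your check that every $\mathcal{D}_{p,s}$ lies in $\mathcal{B}_{-1}$ therefore succeeds vacuously, and what it actually proves is the opposite of what you need. Modulo $\mathcal{H}_{-1}$, $\mathcal{R}\pi^*(\kappa_{p*}\mathrm{k}(p)\otimes\nu^i)$ is an iterated extension of these factors, so it is zero in $\mathrm{D}^b(\mathcal{Z})/\mathcal{H}_{-1}$. Since $\mathrm{k}(p)\otimes\nu^i\neq 0$, $\mathcal{R}\pi^*$ is not even faithful. Relatedly, $\pi_*\iota_*\mathcal{O}_{[E/\mu_r]}\neq 0$, so $\pi_*$ does not descend to the quotient, and the adjunction $\mathcal{R}\pi^*\dashv\pi_*$ that drives full faithfulness is unavailable.

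Your argument for $f_*\mathcal{O}_{[E/\mu_r]}\simeq\mathcal{O}$ is also incomplete. Here $f_*$ is derived, and one-dimensionality of invariant $\mathrm{H}^0$ says nothing about higher degrees. For $e=0$ and positive fibre dimension, $\omega_{E_q}\simeq\mathcal{O}_{E_q}(-e)=\mathcal{O}_{E_q}$, so $\mathrm{H}^{\dim E_q}(\mathcal{O}_{E_q})\neq 0$ by Serre duality, and over $q\neq p$ there are no invariants to take.

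So the statement can only be proved for $e>0$. The paper's ``$e\ge 0$'' is not supported by its argument, which is modeled on the preceding proposition and admissibility lemma, both of which assume $e>0$.
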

and a similar corollary:
\begin{corollary}
$\mathcal{K}_{-1}$ is admissible in $\mathrm{D}^b(\mathcal{Z})/\mathcal{H}_{-1}$,  which contains many examples with $\mathcal{K}_{-1}=0$.
\end{corollary}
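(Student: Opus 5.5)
The corollary asks for admissibility of $\mathcal{K}_{-1}$ inside $\mathrm{D}^b(\mathcal{Z})/\mathcal{H}_{-1}$. I will read it off the semiorthogonal decomposition of the preceding proposition. There
\[
\mathrm{D}^b(\mathcal{Z})/\mathcal{H}_{-1}=\langle \mathcal{C}_{-e+1},\ldots,\mathcal{C}_{-1},\mathcal{K}_{-1},\operatorname{Im}\mathcal{R}\pi^*\rangle,
\qquad \mathcal{C}_{-j}:=\operatorname{Im}\big(\iota_*(f^*\mathrm{D}^b(\mathcal{C})\otimes\mathcal{O}_{E_{\mathcal Z}}(-j))\big).
\]
In any semiorthogonal decomposition $\langle \mathcal{A}_1,\dots,\mathcal{A}_n\rangle$ of a triangulated category, a component is admissible as soon as the subcategories to its left and to its right are each admissible. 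So I need two facts. First, $\mathcal{L}:=\langle\mathcal{C}_{-e+1},\ldots,\mathcal{C}_{-1}\rangle$ is left admissible. Second, $\operatorname{Im}\mathcal{R}\pi^*$ is right admissible.

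For $\operatorname{Im}\mathcal{R}\pi^*$, I invoke the lemma giving the adjoint triple $\pi_!\dashv\mathcal{R}\pi^*\dashv\pi_*$ on $\mathrm{D}^b(\mathcal{Z})/\mathcal{H}_{-1}$, together with the corollary that $\mathcal{R}\pi^*$ is fully faithful. These make $\operatorname{Im}\mathcal{R}\pi^*$ both left and right admissible, with projection functors $\mathcal{R}\pi^*\pi_!$ and $\mathcal{R}\pi^*\pi_*$. In the orbifold-centre case I apply these statements after passing to the $\mu_r$-cover, as the paper does, and then take $\mu_r$-invariants.

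For each $\mathcal{C}_{-j}$, the functor $\Phi_j=\iota_*(f^*(-)\otimes\mathcal{O}_{E_{\mathcal Z}}(-j))$ has right adjoint $f_*(\iota^!(-)\otimes\mathcal{O}(j))$ and left adjoint $f_!(\iota^*(-)\otimes\mathcal{O}(j))$. Both exist on bounded categories because $f$ is flat and proper and $C$ is smooth. They descend to the quotient by $\mathcal{H}_{-1}$ for the following reason. Objects of $\mathcal{H}_{-1}=\langle\iota_{E*}\mathcal{B}_{-1}\rangle$ are killed by these adjoints. This follows from the flat base change of Lemma~\ref{flatbasechange}, combined with the vanishing $f_{E*}(\mathcal{B}_{-1}(i))=0$ for $0\le i<e$ and the excess triangle of Lemma~\ref{excdis}, which lets me pass from $\iota^!$ to $\iota^*$. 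Full faithfulness of each $\Phi_j$ and the semiorthogonality among the $\mathcal{C}_{-j}$ are already part of the proposition. Gluing admissible pieces then gives that $\mathcal{L}$ is admissible.

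With both flanks admissible, $\mathcal{K}_{-1}={}^{\perp}\mathcal{L}\cap(\operatorname{Im}\mathcal{R}\pi^*)^{\perp}$ is admissible. Its projection functors are explicit: left mutation through $\mathcal{L}$ composed with right mutation through $\operatorname{Im}\mathcal{R}\pi^*$, and symmetrically.

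For the final sentence, on examples with $\mathcal{K}_{-1}=0$, I take $E_{\mathcal Z}$ to be a weighted projective space fibration over $C$ with $I$ trivial in the relevant directions, so that $e=\sum a_i$. Beilinson-type generation then gives $\mathrm{D}^b(E_{\mathcal Z})=\langle f^*\mathrm{D}^b(C)\otimes\mathcal{O}(i)\rangle_{0\le i<e}$, hence $\mathcal{A}_{-1}=0$. In the orbifold case I add all twists by $\nu^i$.

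The main obstacle is checking that the adjoints of $\Phi_j$ genuinely descend to the Verdier quotient, that is, that they annihilate $\mathcal{H}_{-1}$ and not merely $\mathcal{K}_{-1}$. I would settle this first, by reducing via dévissage to generators $\iota_{E*}\mathcal{B}_{E_q,-1}$ at closed points $q$ and applying the base-change computation there.
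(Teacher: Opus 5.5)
The paper gives no separate argument for this corollary. It simply reads it off the preceding decomposition $\mathrm{D}^b(\mathcal{Z})/\mathcal{H}_{-1}=\langle\mathcal{C}_{-e+1},\ldots,\mathcal{C}_{-1},\mathcal{K}_{-1},\operatorname{Im}\mathcal{R}\pi^*\rangle$. So your overall route is the intended one, but the justification you add does not hold up.

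\textbf{The categorical reduction is not valid.} The statement ``a component is admissible once the pieces to its left and right are'' is not a formal property of semiorthogonal decompositions. In $\mathcal{T}=\langle\mathcal{L},\mathcal{K},\mathcal{R}\rangle$ you need two things: $\langle\mathcal{L},\mathcal{K}\rangle=\mathcal{R}^{\perp}$ must have a right adjoint, and $\langle\mathcal{K},\mathcal{R}\rangle={}^{\perp}\mathcal{L}$ must have a left adjoint.

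Already for $\mathcal{L}=0$ this fails. A left adjoint for $\mathcal{R}$ amounts to $\operatorname{Hom}(K,R)=\operatorname{Hom}(GK,R)$ for some functor $G\colon\mathcal{K}\to\mathcal{R}$. Then $\mathcal{K}$ has a right adjoint iff $G$ does. This fails, for instance, when $G$ is the inclusion $\mathrm{D}^{\mathrm{perf}}(A)\subset\mathrm{D}^b(A)$ with $A=\mathrm{k}[x]/x^2$.

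The two facts you single out, $\mathcal{L}$ left admissible and $\operatorname{Im}\mathcal{R}\pi^*$ right admissible, hold for the outer pieces of every decomposition and carry no information. In your situation, $\pi_!\dashv\mathcal{R}\pi^*\dashv\pi_*$ only shows that $\ker\pi_*=\langle\mathcal{L},\mathcal{K}_{-1}\rangle$ is left admissible and that $\ker\pi_!$ is right admissible. A right adjoint for $\ker\pi_*$ would require a Serre-type duality on $\mathrm{D}^b(\mathcal{Z})/\mathcal{H}_{-1}$, for example a relative Serre functor for $\pi$ that descends to the quotient. You do not supply this, and it is not automatic for a Verdier quotient of the derived category of a singular stack. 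Separately, your formula for the projection (left mutation through $\mathcal{L}$, then right mutation through $\operatorname{Im}\mathcal{R}\pi^*$) lands in ${}^{\perp}\operatorname{Im}\mathcal{R}\pi^*$, not in $\mathcal{K}_{-1}\subset(\operatorname{Im}\mathcal{R}\pi^*)^{\perp}$.

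\textbf{The descent step fails for the right adjoint.} This is the step you flag as the main obstacle, and the vanishing you quote has the wrong sign. $B\in\mathcal{B}_{-1}$ means $f_{E*}(B(-i))=0$ for $0\le i<e$, not $f_{E*}(B(i))=0$.

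By Lemma~\ref{excdis}, $\iota^!\iota_*(\iota_{E*}B)\otimes\mathcal{O}(j)$ has factors $\iota_{E*}B(j)$ and $\iota_{E*}B(j-1)[-1]$. Hence $\Phi_j^!(\iota_*\iota_{E*}B)$, where $\Phi_j^!=f_*(\iota^!(-)\otimes\mathcal{O}(j))$, is built from $\kappa_{p*}\mathrm{R}\Gamma(E,B(j))$ and $\kappa_{p*}\mathrm{R}\Gamma(E,B(j-1))[-1]$ with $1\le j\le e-1$. These involve positive twists, which the definition of $\mathcal{B}_{-1}$ does not control.

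Concretely, take $j=1$ and $B=\beta^*\mathcal{O}_E(-1)$, where $\beta^*$ is the left adjoint of $\mathcal{B}_{-1}\hookrightarrow\mathrm{D}^b(E)$. Then $\mathrm{R}\Gamma(E,B(1))=\operatorname{RHom}(\mathcal{O}_E(-1),B)=\operatorname{RHom}(B,B)$. This is nonzero unless $\mathcal{O}_E(-1)\in\langle\mathcal{O}_E,\ldots,\mathcal{O}_E(e-1)\rangle$.

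Equivalently, $\operatorname{Hom}(\mathcal{C}_{-j},\mathcal{H}_{-1})\neq0$ in general. Lemma~\ref{wbuporth} only gives $\operatorname{Hom}(\mathcal{H}_{-1},\mathcal{C}_{-j})=0$. That is why the left adjoint $f_!(\iota^*(-)\otimes\mathcal{O}(j))$ does descend: with relative dualizing sheaf $\mathcal{O}(-e)$, its twists $j-e$ and $j+1-e$ both lie in $[1-e,0]$. The right adjoint does not descend.

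So right admissibility of $\mathcal{L}$ in the quotient is unsupported. By the first point, it would not finish the proof anyway. What is actually missing is a right adjoint for $\ker\pi_*\hookrightarrow\mathrm{D}^b(\mathcal{Z})/\mathcal{H}_{-1}$ and a left adjoint for ${}^{\perp}\mathcal{L}\hookrightarrow\mathrm{D}^b(\mathcal{Z})/\mathcal{H}_{-1}$.
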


Finally, we should point out that the above situation includes the case of a well-formed isolated singularity subset in type $(a_{1},\dots,a_{n})/\mu_{r}$ together with a weighted blow-up along a valuation of weight $(b_{1},\dots,b_{n})$. If $a_{i}\equiv b_{i}\pmod r$ for all $i$, then this reduces to the classical Kawamata blow-up case. Otherwise, we usually assume $\gcd(b_{1},\dots,b_{n})=1$ in order to avoid additional root structures appearing on the exceptional divisor. Under these assumptions, if $w\mathrm{Bl}^{(b_i)}X$ is well-formed and its exceptional divisor $E$ is prime, then
\[
w\mathrm{Bl}^{(b_i)}\mathcal{X}
\simeq
\widetilde{\,w\mathrm{Bl}^{(b_i)}X\,},
\]
where the right-hand side denotes the canonical stack associated with the weighted blow-up.
\subsubsection{Coarse moduli}
In this section, we consider the stacky weighted blow-up introduced above and study its coarse moduli spaces under the assumption that $\mathcal{Z}$ is also well-formed or $Z$ is well-formed and $E_Z$ is prime:
\[
\begin{tikzcd}[column sep=2.5em, row sep=2.2em]
\mathcal{Z} := w\mathrm{Bl}^{(a_i)}_{\mathcal C}\mathcal{X}=\widetilde{w\mathrm{Bl}^{(a_i)}_{C} X } \arrow[r,"\epsilon"] \arrow[d,"\pi"']
& Z := w\mathrm{Bl}^{(a_i)}_{C} X \arrow[d,"o"] \\
\mathcal{X} \arrow[r,"\varepsilon"]
& X
\end{tikzcd}
\]
Similarly, since we have a semiorthogonal decomposition of stacky exceptional divisor $E_\mathcal{Z}:=[E/\mu_r]$:
\[
\mathrm{D}^b([E/\mu_r])=
\left\langle
\mathcal{O}_{[E/\mu_r]}(-(e-1))\otimes \Big(\bigoplus_{i=0}^{r-1}\nu^i\Big),\,
\ldots,\,
\mathcal{O}_{[E/\mu_r]}(-1)\otimes \Big(\bigoplus_{i=0}^{r-1}\nu^i\Big),\,
\mathcal{B}_{-1},\,
\mathcal{O}_{[E/\mu_r]}\otimes \Big(\bigoplus_{i=0}^{r-1}\nu^i\Big)
\right\rangle,
\]
by the properties of the coarse moduli space, this induces a semiorthogonal decomposition on $E^{-}$:
\[
\mathrm{D}^b(E^{-})=
\left\langle
\mathcal{C}_{-1},\,
\mathcal{O}_{E^{-}}
\right\rangle.
\]
Similarly, we define a full saturated subcategory $\mathcal{L}_-:=\langle \iota^-_{*}\mathcal{C}_{-1}\rangle \subset \mathrm{D}^b(Z)$.
\begin{lemma}
If $F \in \mathrm{D}^b(Z)$ satisfies $o_*F=0$, then $F \in \mathcal{L}_-$.
\end{lemma}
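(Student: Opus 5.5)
I would follow the template of the analogous lemma in the Kawamata blow-up coarse moduli section (Section~\ref{seckblcm}), replacing the root-stack factor $\varpi$ by the identity and using the weighted blow-up filtrations of Section~\ref{Weightedblowup}. The first step is to reduce to the local situation near the isolated point $p$. Away from $p$, $o$ is a weighted blow-up of a regular variety along a smooth center, so Proposition~\ref{smoothwblsod} already identifies $\ker o_*$ with objects supported on the exceptional locus and generated by $\mathcal{C}_{-1}$-type pieces. Hence it suffices to treat $F$ supported on the fibre $E^-$ over $p$, and then globalize by dévissage along $C$ as in the earlier sections.

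Locally at $p$, I would build a bifiltration of $o^*\kappa_{p*}\mathrm{k}(p)$. First, using the dual form of Proposition~\ref{CMappro}, write $\varepsilon^*\kappa_{p*}\mathrm{k}(p)\simeq\varprojlim_s\mathcal{H}^s$ with graded pieces generated by $\mathrm{k}(p)\otimes\nu^i$, $1\le i\le r-1$. Next, pull each $\mathcal{H}^s$ back along $\pi$ using the filtration $F^s([\mathrm{k}(p)]^+)$, whose graded pieces are $\iota_{E*}\mathcal{D}_{p,j}[1]$ twisted by characters. Then push forward along $\epsilon$. The point to verify is that every graded piece lands in $\mathcal{L}_-$, except for a single copy of $\iota^-_*\mathcal{O}_{E^-}$.

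This check rests on the invariance of coarse pushforward. $\epsilon_*$ keeps only $\mu_r$-invariants, so the twisted terms $\mathcal{O}_{[E/\mu_r]}(-j)\otimes\nu^i$ with nontrivial character vanish. The pieces lying in $\mathcal{B}_{-1}$ go into $\mathcal{C}_{-1}$ by construction of the induced decomposition $\mathrm{D}^b(E^-)=\langle\mathcal{C}_{-1},\mathcal{O}_{E^-}\rangle$. The invariant parts of $\mathcal{O}_{[E/\mu_r]}(-j)$ for $0<j<e$ also go into $\mathcal{C}_{-1}$, being left orthogonal to $\mathcal{O}_{E^-}$ since $o_*$ kills them.

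With this in hand, $\mathcal{G}^{(s,t)}\simeq\iota^-_*\mathcal{O}_{E^-}$ in $\mathrm{D}^b(Z)/\mathcal{L}_-$. The local adjunction \cite[Lemma 3.7]{Hao2025b} then gives
\[
\operatorname{Ext}^*_{\mathrm{D}^b(Z)/\mathcal{L}_-}(\iota^-_*\mathcal{O}_{E^-},F)\simeq\operatorname{Ext}^*_{\mathrm{D}^b(X)}(\mathrm{k}(p),o_*F)=0.
\]
Take $F$ supported on $E^-$ with $o_*F=0$. After mutating $F$ to be left orthogonal to the twisted line bundles, it is an iterated extension of $\iota^-_*\mathcal{O}_{E^-}$ modulo $\mathcal{L}_-$, exactly as in the proof of Lemma~\ref{kernalElemma}. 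Hence $\operatorname{Ext}^*_{\mathrm{D}^b(Z)/\mathcal{L}_-}(F,F)=0$, and therefore $F\in\mathcal{L}_-$.

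The main obstacle is the invariance step in the third paragraph: showing that $\epsilon_*$ of each bifiltration piece really lies in $\mathcal{L}_-$. The nontrivial $\mu_r$-action on $E$ over $p$ means the invariant part of $\mathcal{O}_{[E/\mu_r]}(-j)\otimes\nu^i$ need not vanish for $i\neq0$. I would handle this by checking $o_*$-vanishing of each such invariant piece via Lemma~\ref{flatbasechange}, using the $\mu_r$-equivariant decomposition of $\mathrm{D}^b([E/\mu_r])$, and then invoking left orthogonality to $\mathcal{O}_{E^-}$.
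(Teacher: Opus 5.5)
This is essentially the paper's approach. The paper omits the proof and defers to the bifiltration, local adjunction and $\operatorname{Ext}^*(F,F)=0$ argument of the coarse-moduli lemma in Section~\ref{seckblcm}, which is exactly the template you run, with $\mathrm{k}(p)$ and the $\mathcal{D}_{p,j}$-filtrations of Section~\ref{Weightedblowup} in place of $\mathcal{O}_C$.

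One correction: drop the claim in your third paragraph that $\epsilon_*$ kills the nontrivially twisted pieces, which is false, as you note yourself. The repair you sketch at the end is the right one: $o_*\epsilon_*=\varepsilon_*\pi_*$ vanishes on every graded piece except the single untwisted $\mathcal{O}_{[E/\mu_r]}$, so those pieces push forward to objects with zero cohomology on $E^-$, i.e.\ into $\mathcal{C}_{-1}$. This also makes your extra mutation step unnecessary.
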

Thus we can define a triangulated functor
\[
\mathcal{R}o^*:= \epsilon_*\mathcal{R}\pi^*\widetilde{(-)}:
\mathrm{D}^b(X)\longrightarrow \mathrm{D}^b(Z)/\mathcal{L}_-,
\]
which is well-defined. Using the same double filtration argument as before, the proofs are identical to the previous section, and we obtain:
\begin{lemma}
We have $o_* \mathcal{R}o^* \simeq \mathrm{id}$.
\end{lemma}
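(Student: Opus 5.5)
Following the proof of Lemma~\ref{coasefullcomp} in the Kawamata blow-up section, I would reduce the claim $o_*\mathcal{R}o^*\simeq \mathrm{id}$ to three inputs. First, the identity $o_*o^*\simeq \mathrm{id}$ holds on the level of (unbounded, quasi-coherent) pullback and pushforward. Second, the approximation $\mathcal{R}o^*F$ differs from $o^*F$ only by objects of $\mathcal{L}_-$. Third, $o_*$ kills $\mathcal{L}_-$.

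The first step is to verify $o_*\mathcal{O}_Z\simeq\mathcal{O}_X$ and the vanishing of higher direct images. Since $X$ has rational (well-formed, lci quotient) singularities and $o$ is a weighted blow-up whose exceptional divisor is a Fano fibration ($e>0$), this should follow from $\epsilon_*\mathcal{O}_{\mathcal{Z}}=\mathcal{O}_Z$, $\varepsilon_*\mathcal{O}_{\mathcal{X}}=\mathcal{O}_X$, and $\pi_*\mathcal{O}_{\mathcal{Z}}=\mathcal{O}_{\mathcal{X}}$. The last of these is the case $F=\mathcal{O}$ of $\pi_*\mathcal{R}\pi^*\simeq\mathrm{id}$, using $\mathcal{R}\pi^*\mathcal{O}=\mathcal{O}$. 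The projection formula, applied to a locally free resolution in $\mathrm{D}^-(\mathrm{Qcoh})$, then gives $o_*o^*F\simeq F$. Equivalently, one can write $o^*F\simeq \epsilon_*\pi^*\varepsilon^*F$ and use $\varepsilon_*\varepsilon^*\simeq\mathrm{id}$ on the coarse space together with $\pi_*\pi^*\simeq \mathrm{id}$, which holds because $\pi_*$ preserves the structure sheaf.

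The second step uses the double filtration. Writing $\mathcal{R}o^*F=\epsilon_*\mathcal{R}\pi^*\widetilde{F}$, I would compare it with $\epsilon_*\pi^*\varepsilon^*F$ through the bifiltration $\mathcal{R}^{(s,t)}o^*F$. Its first direction comes from the Cohen--Macaulay approximation of Proposition~\ref{CMappro}, passing from $\varepsilon^*F$ to $\widetilde{F}$ with graded pieces in $\mathcal{K}_-$. Its second direction comes from the Rees filtration of $\pi^*$, with graded pieces in $\mathcal{H}_{-1}$ or, more generally, in $\mathcal{T}$. Under $\epsilon_*$, these graded pieces should land in $\mathcal{L}_-=\langle\iota^-_*\mathcal{C}_{-1}\rangle$, exactly as in the preceding lemma characterizing $\ker o_*$. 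For the pieces $\kappa_*\mathcal{O}_C\otimes\nu^i$ with $i\neq0$, the $\epsilon_*$ image after pulling back is killed because nontrivial characters have no invariants. For the pieces in $\iota_*\mathcal{B}_{-1}$ or $\iota_*\mathcal{O}_{E}(-j)$ with $0<j<e$, the point is that $\epsilon_*$ sends the $\mu_r$-invariant part of the semiorthogonal decomposition of $[E/\mu_r]$ into $\mathcal{C}_{-1}$.

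The final step is to note that $o_*$ annihilates $\mathcal{L}_-$, since $o_*\iota^-_*\mathcal{C}_{-1}=\kappa_*f_{E^-*}\mathcal{C}_{-1}=0$ by the definition of $\mathcal{C}_{-1}$ as the orthogonal to $\mathcal{O}_{E^-}$. The cone of $o^*F\to\mathcal{R}o^*F$ is a homotopy limit of objects in $\mathcal{L}_-$, so its $o_*$ vanishes. To control this limit, I would use that in each cohomological degree the inverse system stabilizes: the cohomology of $\tau_s$ vanishes above degree $-s+N$. Hence $o_*$ commutes with the limit degreewise, and $o_*\mathcal{R}o^*F\simeq o_*o^*F\simeq F$.

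The main obstacle is the second step: checking that the graded pieces of the Rees filtration of weight not divisible by $r$ really push forward into $\mathcal{L}_-$ and not merely into $\ker o_*$. The first approach I would try is to use the vanishing of invariants, $(\,\cdot\otimes\nu^i)^{\mu_r}=0$ for $i\ne0$, combined with $\epsilon_*\mathcal{O}_{E_{\mathcal Z}}(-j)\otimes\nu^{i}$ landing in $\mathcal{C}_{-1}$. If that fails, I would fall back on the previous lemma ($o_*F=0\Rightarrow F\in\mathcal{L}_-$), which suffices here because only the vanishing of $o_*$ on the cones is needed.
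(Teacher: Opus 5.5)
Your proposal is essentially the paper's argument. The paper proves this lemma by referring back to Lemma~\ref{coasefullcomp}: it compares $\mathcal{R}o^*F$ with $\epsilon_*\pi^*\varepsilon^*F\simeq o^*F$ through the double filtration, uses $o_*o^*F\simeq F$, and concludes from $o_*\mathcal{L}_-=0$; your degreewise stabilization of the inverse limit is a more careful version of its ``unbounded pushforward'' remark. One correction: in this weighted blow-up setting $\mu_r$ need not act trivially on $E$, so the lack of invariants does not make $\epsilon_*$ kill the pieces coming from $\kappa_*\mathcal{O}_C\otimes\nu^i$, but your fallback works and needs no further lemma, because $o_*\epsilon_*=\varepsilon_*\pi_*$ already annihilates both the $\mathcal{K}_-$-type cones (via $\pi_*\mathcal{R}\pi^*\simeq\mathrm{id}$ and $\varepsilon_*\mathcal{K}_-=0$) and the $\ker\pi_*$-type cones.
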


\begin{lemma}
There is a natural isomorphism
\[
\operatorname{Ext}^*_{\mathrm{D}^b(Z)/\mathcal{L}_-}(\mathcal{R}o^*F, G)
\simeq
\operatorname{Ext}^*_{\mathrm{D}^b(X)}(F, o_*G).
\]
\end{lemma}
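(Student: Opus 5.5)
The plan is to reduce the adjunction to the two adjunctions already available along the composite $o\circ\epsilon=\varepsilon\circ\pi$, in the same way as the corresponding lemma in Section~\ref{seckblcm}. We start from the bifiltration $o^*F\simeq\varprojlim_{s,t}\mathcal{R}^{(s,t)}o^*F$. Its first index comes from the dual Cohen--Macaulay approximation of Proposition~\ref{CMappro}, which replaces $\varepsilon^*F$ by $\widetilde{F}$. Its second index comes from the Rees-type filtration of $\pi^*\widetilde{F}$ constructed above. After pushing forward by $\epsilon_*$, the successive cones in both directions lie in $\mathcal{L}_-$. This is the same check as in the lemma $o_*\mathcal{R}o^*\simeq\mathrm{id}$: cones in $\mathcal{K}_-$ and in $\mathcal{H}_{-1}$ push forward under $\epsilon_*$ to objects generated by $\iota^-_*\mathcal{C}_{-1}$.

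First I define the comparison map. A roof $\mathcal{R}o^*F\leftarrow G'\to G$ in $\mathrm{D}^b(Z)/\mathcal{L}_-$ has a cone in $\mathcal{L}_-$, which is killed by $o_*$. Applying $o_*$ and using $o_*\mathcal{R}o^*\simeq\mathrm{id}$ therefore gives a well-defined morphism $F\to o_*G$. For the inverse, take $g\colon F\to o_*G$. By boundedness of $F$ and coherence, $g$ factors through a stage of the inverse system: there is $(s,t)\gg0$ such that
\[
\operatorname{Hom}_{\mathrm{D}^b(Z)}(\mathcal{R}^{(s,t)}o^*F,G)\simeq\operatorname{Hom}(o^*F,G)\simeq\operatorname{Hom}_{\mathrm{D}^b(X)}(F,o_*G)
\]
(the second isomorphism being ordinary adjunction, understood in $\mathrm{D}^-$). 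This is the content of the global adjunction \cite[Lemma 3.9]{Hao2025b} applied to a filtration satisfying Principle~\ref{P1}. The roof $\mathcal{R}o^*F\leftarrow\mathcal{R}^{(s,t)}o^*F\to G$ then defines the inverse. The left arrow is an isomorphism in the quotient because the cone is a finite extension of graded pieces lying in $\mathcal{L}_-$.

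Both constructions are independent of the choices of $(s,t)$, since enlarging the indices changes roofs only by morphisms with cone in $\mathcal{L}_-$. Checking that the two maps are mutually inverse, and that they are natural in $F$ and $G$, is formal. Shifts then give the statement for all $\operatorname{Ext}^*$.

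The main obstacle is the stabilization step: showing that $\operatorname{Hom}(\mathcal{R}^{(s,t)}o^*F,G)$ stabilizes in the double limit. This needs uniform vanishing of $\operatorname{Hom}(Q,G[i])$ for the cones $Q$ deep in the filtration. I would handle it as in the Kawamata case, using the degree bound $\mathcal{H}^j(\tau_s)=0$ for $j>-s+N$ in each direction: for fixed bounded $G$, sufficiently negative pieces have no maps to $G$. The remaining point is to confirm that the extra $\mu_r$-twisting on $[E/\mu_r]$ does not destroy this uniform bound after applying $\epsilon_*$.
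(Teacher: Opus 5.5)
Your proposal is correct and follows the paper's route: the paper proves this lemma by invoking the global adjunction of \cite[Lemma 3.9]{Hao2025b} on the bifiltration $\mathcal{R}^{(s,t)}o^*F$, exactly as in the Kawamata coarse-moduli case, and your roof construction with stabilization via the degree bound is an unpacking of that argument (the bound survives $\epsilon_*$ because $\epsilon_*$ is exact). The one step you call formal that deserves a line is injectivity: it uses $\operatorname{Hom}(o^*F,\mathcal{L}_-)=0$, which follows from $o_*\mathcal{L}_-=0$, to lift $\mathcal{R}^{(s,t)}o^*F\to\mathcal{R}o^*F$ through any morphism with cone in $\mathcal{L}_-$ once $(s,t)\gg0$.
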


\begin{proposition}
The functor $\mathcal{R}o^*$ induces an equivalence of triangulated categories
\[
\mathrm{D}^b(X) \simeq \mathrm{D}^b(Z)/\mathcal{L}_-.
\]
\end{proposition}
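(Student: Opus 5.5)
The plan follows the coarse moduli argument of the Kawamata blow-up section (\ref{seckblcm}) almost verbatim, now in the weighted blow-up setting. The functor is $\mathcal{R}o^* := \epsilon_*\mathcal{R}\pi^*\widetilde{(-)}$, where $\widetilde{(-)}:\mathrm{D}^b(X)\xrightarrow{\sim}\mathrm{D}^b(\mathcal{X})/\mathcal{K}_-$ is the Cohen--Macaulay approximation equivalence of Proposition~\ref{CMappro}. I would first check that this is well defined modulo $\mathcal{L}_-$. For objects of $\mathcal{K}_-$, i.e.\ $\kappa_*\mathcal{O}_p\otimes\nu^j$ with $j\neq 0$, pull back along $\mathcal{R}\pi^*$ using the filtration by $\iota_{E*}\mathcal{D}_{p,i}$ and push forward along $\epsilon$. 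The nontrivial representations kill the contributions of $\mathcal{O}_{[E/\mu_r]}(-i)\otimes\nu^j$, and the $\mathcal{B}_{-1}$-pieces land in $\mathcal{C}_{-1}$. Hence these objects go into $\mathcal{L}_-$. The same argument shows $\epsilon_*\mathcal{H}_{-1}\subset\mathcal{L}_-$, so $\epsilon_*$ descends to $\mathrm{D}^b(\mathcal{Z})/\mathcal{H}_{-1}\to\mathrm{D}^b(Z)/\mathcal{L}_-$.

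Next comes full faithfulness, via the two preceding lemmas. The identity $o_*\mathcal{R}o^*\simeq\mathrm{id}$ holds because $o_*\epsilon_*=\varepsilon_*\pi_*$, $\pi_*\mathcal{R}\pi^*\simeq\mathrm{id}$ and $\varepsilon_*\widetilde{F}\simeq F$. The adjunction isomorphism
\[
\operatorname{Ext}^*_{\mathrm{D}^b(Z)/\mathcal{L}_-}(\mathcal{R}o^*F,G)\simeq \operatorname{Ext}^*_{\mathrm{D}^b(X)}(F,o_*G)
\]
comes from the bifiltration $\mathcal{R}^{(s,t)}o^*F$ and the global adjunction \cite[Lemma 3.9]{Hao2025b}. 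Taking $G=\mathcal{R}o^*F'$ then gives
\[
\operatorname{Hom}(\mathcal{R}o^*F,\mathcal{R}o^*F')\simeq\operatorname{Hom}(F,o_*\mathcal{R}o^*F')\simeq\operatorname{Hom}(F,F'),
\]
so $\mathcal{R}o^*$ is fully faithful, with right adjoint $o_*$ on the quotient. Here $o_*$ descends to the quotient because $o_*\mathcal{L}_-=0$: by the semiorthogonal decomposition $\mathrm{D}^b(E^-)=\langle\mathcal{C}_{-1},\mathcal{O}_{E^-}\rangle$, objects of $\mathcal{C}_{-1}$ have vanishing pushforward to the point $p$.

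For essential surjectivity, take $G\in\mathrm{D}^b(Z)$ and form the counit triangle $\mathcal{R}o^*o_*G\to G\to\mathcal{Q}$. Applying $o_*$ and using $o_*\mathcal{R}o^*\simeq\mathrm{id}$ gives $o_*\mathcal{Q}=0$. The lemma just above then gives $\mathcal{Q}\in\mathcal{L}_-$, so $G\simeq\mathcal{R}o^*o_*G$ in the quotient. Finally, exactness of $\mathcal{R}o^*$ follows since it is a composite of triangulated functors on the quotients.

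The main obstacle is making the counit morphism $\mathcal{R}o^*o_*G\to G$ actually exist in $\mathrm{D}^b(Z)/\mathcal{L}_-$, because $\mathcal{R}o^*$ is only a quasi-functor defined up to ambiguity. I would construct the morphism from the adjunction isomorphism applied to $\mathrm{id}_{o_*G}$, realized as a roof $\mathcal{R}^{(s,t)}o^*o_*G\to G$ for $s,t\gg0$. This uses the fact that the cones of the bifiltration lie in $\mathcal{L}_-$ by Principle~\ref{P1}. I would then check independence of $(s,t)$ by the same cofinality argument that is used for the Kawamata coarse moduli case.
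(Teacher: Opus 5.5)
Your proposal is correct and follows essentially the same route as the paper, which omits the proof here and refers back to the Kawamata coarse moduli case: full faithfulness comes from $o_*\mathcal{R}o^*\simeq\mathrm{id}$ together with the adjunction isomorphism, and essential surjectivity comes from the counit triangle $\mathcal{R}o^*o_*G\to G\to\mathcal{Q}$, where $o_*\mathcal{Q}=0$ and the kernel lemma places $\mathcal{Q}$ in $\mathcal{L}_-$. Your extra care about descending $\mathcal{R}o^*$ and $o_*$ to the quotients, and about realizing the counit as a roof, fills in details the paper calls standard; one small correction is that the fact that the bifiltration cones lie in $\mathcal{L}_-$ is part of the construction of the bifiltration, not something Principle~1 supplies, since Principle~1 takes it as a hypothesis.
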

The above proofs are completely parallel to that in Section~\ref{seckblcm}, and we omit it.
\subsection{Standard type flip}\label{Standardtypeflip}

For the construction in this section we refer to \cite{Reid}. In this and following sections we assume $\mathcal{X}$ be an arbitrary stack over a field $\mathrm{k}$ of characteristic zero, containing a weighted projective stack
\[
\mathcal{P}(a_i):=\mathcal{P}(a_1,\ldots,a_k),
\]
and assume that in a formal neighborhood of $\mathcal{P}(a_i)$, it can be realized as the stacky total space of a vector bundle
\[
\mathcal{E} := \mathcal{O}_{\mathcal{P}(a_i)}(-b_1)\oplus \cdots \oplus \mathcal{O}_{\mathcal{P}(a_i)}(-b_\ell)\oplus \mathcal{O}_{\mathcal{P}(a_i)}^{\oplus j},
\]
for positive integers $k,\ell$ and a non-negative integer $j$, where all $a_i,b_i$ are positive integers. Formal locally, we may assume
\[
\mathcal{X} \simeq \mathds{A}_{\mathcal{P}(a_i)}[[\mathcal{E}]].
\]a more explicit GIT description, consider coordinates
\[
x_1,\ldots,x_k,\quad y_1,\ldots,y_\ell,\quad z_1,\ldots,z_j,
\]
with a $\mathbb{G}_m$-action of weights
\[
(a_1,\ldots,a_k,\,-b_1,\ldots,-b_\ell,\,0,\ldots,0),
\]
viewed as a weight vector. Then $\mathcal{X}$ can be written as the formal quotient stack
\[
\mathcal{X} = \big[\operatorname{Spec} k[[x_i,y_i,z_i]] \setminus V(x_i)\, /\, \mathbb{G}_m \big].
\]
In particular, the linear coordinates $y_i$ and $z_i$ determine a prime center
\[
\mathcal{C} := V(y_i).
\]
More generally, this defines a valuation
\[
val : K(\mathcal{X}) \simeq K^{\mathbb{G}_m}\big([R/\mathbb{G}_m]\big)
\subset K\big([R/\mathbb{G}_m]\big)
\longrightarrow \mathbb{Z}.
\]
given by
\[
val(x_i)=0,\qquad val(z_i)=0,\qquad val(y_i)=b_i.
\]
for any $i$ in their range. In fact, in equivariant geometry one should consider valuations on $K([R/\mathbb{G}_m])$ rather than on the coarse moduli space. Nevertheless, the above valuation induces a natural family of (equivarient) filtration ideals $P_i$, defined by
\[
P_i :=
\{\, f \in [R/\mathbb{G}_m] \mid val(f)\ge i \,\}.
\]
and we consider the weighted stacky blow-up along this valuation, denoted by
\[
\mathcal{W}:= w\mathrm{Bl}_{\mathcal{P}(a_i)\times \operatorname{Spec} k[[z_1,\ldots,z_j]]}^{(b_i)} \mathcal{X},
\]
in toric coordinates, this corresponds to the matrix
\[
\begin{pmatrix}
a_1 & \cdots & a_k & -b_1 & \cdots & -b_\ell & 0 & \cdots & 0 & 0 \\
0   & \cdots & 0   & b_1  & \cdots & b_\ell  & 0 & \cdots & 0 & -1
\end{pmatrix}.
\]  Similarly, we consider
\[
\mathcal{X}^+ \simeq \mathds{A}_{\mathcal{P}(b_i)}[[\mathcal{E}^+]],
\]
where $\mathcal{P}(b_i):=\mathcal{P}(b_1,\ldots,b_\ell)$ is a weighted projective stack and
\[
\mathcal{E}^+ := \mathcal{O}_{\mathcal{P}(b_i)}(-a_1)\oplus \cdots \oplus \mathcal{O}_{\mathcal{P}(b_i)}(-a_k)\oplus \mathcal{O}_{\mathcal{P}(b_i)}^{\oplus j}.
\]
We define the weighted blow-up
\[
w\mathrm{Bl}^{(a_i)}_{\mathcal{P}(b_i)\times \operatorname{Spec} k[[z_1,\ldots,z_j]]} \mathcal{X}^+,
\]
and in toric coordinates, this corresponds to the matrix
\[
\begin{pmatrix}
b_1 & \cdots & b_\ell & -a_1 & \cdots & -a_k & 0 & \cdots & 0 & 0 \\
0   & \cdots & 0      & a_1  & \cdots & a_k  & 0 & \cdots & 0 & -1
\end{pmatrix}.
\]
One can then observe that via the linear transformation of representations, this weighted blow-up is isomorphic to $\mathcal{W}$. We obtain a flip described by VGIT, admitting a common resolution given by a weighted blow-up:
\[
\begin{tikzcd}[column sep=small, row sep=small]
& \mathcal{W} \arrow[dl,"\phi"'] \arrow[dr,"\varphi"] & \\
\mathcal{X} && \mathcal{X}^+
\end{tikzcd}
\]
We may assume $a:=\sum_{i=1}^{k} a_i$, $b:=\sum_{i=1}^{\ell} b_i$, and $e:=a-b\ge 0$. In particular, setting $C:=\operatorname{Spec} k[[z_1,\ldots,z_j]]$, we have the embedding diagrams:
\[
\begin{tikzcd}[column sep=3.5em, row sep=2.2em]
\mathcal{P}(b_i)\times C
\arrow[r, hookrightarrow, "\kappa^{(b)}"]
\arrow[d]
& \mathcal{X}^+
\arrow[d] \\
C \arrow[r, equal]
& C
\end{tikzcd}
\quad\quad\text{and}\quad
\begin{tikzcd}[column sep=4em, row sep=3em]
E \simeq \mathcal{P}(a_i)\times \mathcal{P}(b_i)\times C
\arrow[r, "\iota^{(b)}"]
\arrow[d, "\mathrm{pr}^{(b,c)}"']
& \mathcal{W}
\arrow[d,"\varphi"] \\
\mathcal{P}(b_i)\times C
\arrow[r, "\kappa^{(b)}"]
& \mathcal{X}^+
\end{tikzcd}
\]
similar for the other side.
\begin{proposition}\label{BOflopprop}
The classical Bondal--Orlov formula gives a semiorthogonal decomposition analogous to the standard flip:
\[
\mathrm{D}^b(\mathcal{X})=
\left\langle
\operatorname{Im}\big(\kappa^{(a)}_*\mathrm{D}^b(C)\boxtimes \mathcal{O}_{\mathcal{P}(a_i)}(-e)\big),\,
\ldots,\,
\operatorname{Im}\big(\kappa^{(a)}_*\mathrm{D}^b(C)\boxtimes \mathcal{O}_{\mathcal{P}(a_i)}(-1)\big),\,
\operatorname{Im}\,\phi_*\varphi^*
\right\rangle,
\]
where all functors are admissible and fully faithful.
\end{proposition}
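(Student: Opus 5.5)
The plan is to run the standard Bondal--Orlov argument for the flip, using the explicit toric/VGIT description of the common weighted blow-up $\mathcal{W}$. Near the flipping locus, $\mathcal{W}$ is the total space of $\mathcal{O}(-1,-1)$ over $E\simeq\mathcal{P}(a_i)\times\mathcal{P}(b_i)\times C$, with $\mathcal{O}_{\mathcal{W}}(E)|_E\simeq\mathcal{O}(-1,-1)$. Both $\mathcal{X}$ and $\mathcal{X}^+$ are smooth stacks, being formal total spaces of split bundles over weighted projective stacks. Hence $\phi^*,\varphi^*$ are honest derived pullbacks, and $\phi$, $\varphi$ are stacky weighted blow-ups with smooth centers $\mathcal{P}(a_i)\times C$ and $\mathcal{P}(b_i)\times C$.

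\textbf{Step 1: the orthogonal component.} I will first check that the objects $\kappa^{(a)}_*(G\boxtimes\mathcal{O}_{\mathcal{P}(a_i)}(-t))$, for $G\in \mathrm{D}^b(C)$ and $1\le t\le e$, give fully faithful admissible embeddings forming a semiorthogonal sequence. The normal bundle of $\mathcal{P}(a_i)\times C$ in $\mathcal{X}$ is $\bigoplus\mathcal{O}(-b_i)\oplus\mathcal{O}^{j}$, and the $\mathcal{O}^j$ part is absorbed by $C$. Koszul/local-to-global computations therefore reduce the relevant $\operatorname{Ext}$'s to cohomology
\[
\mathrm{H}^*\big(\mathcal{P}(a_i),\ \mathcal{O}(s)\otimes\Lambda^{m}(\textstyle\bigoplus\mathcal{O}(b_i))\big),\qquad -(e-1)\le s\le e-1 .
\]
The twists lie in $[\,s,\ s+b\,]$ and $|s|<e=a-b$, so Serre duality on $\mathcal{P}(a_i)$ (with $\omega=\mathcal{O}(-a)$) kills all terms except the expected $\mathrm{H}^0(\mathcal{O})$ when $s=0$, $m=0$. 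This is the classical Bondal--Orlov numerics, with $a,b$ replacing the ranks.

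\textbf{Step 2: full faithfulness of $\phi_*\varphi^*$.} I plan to show $\varphi_*\phi^!\phi_*\varphi^*\simeq\mathrm{id}$, or equivalently to compute $\operatorname{Hom}(\varphi^*F,\phi^!\phi_*\varphi^*G)$. Since $\phi^!=\phi^*\otimes\mathcal{O}(K_{\mathcal{W}/\mathcal{X}})$ with $K_{\mathcal{W}/\mathcal{X}}=(b-1)E$, the cone of $\varphi^*F\to\phi^!\phi_*\varphi^*F$ is supported on $E$. I will filter it by the excess triangles of Lemma~\ref{excdis} into pieces $\iota^{(b)}_*(\mathrm{pr}^*H\otimes\mathcal{O}(u,v))$. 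Each piece is then checked to be killed by $\varphi_*$: pushing forward along $\mathcal{P}(a_i)$ gives $\mathrm{H}^*(\mathcal{P}(a_i),\mathcal{O}(u))=0$ for $-a<u<0$, and the range of $u$ stays within $(-a,0)$ precisely because $e\ge0$. By Lemma~\ref{ptspanningclass}, it suffices to verify this on the spanning class $\mathrm{k}_x\otimes\chi$. Away from the exceptional loci this is trivial, so only points of $\mathcal{P}(b_i)\times C$ matter, and there the computation reduces to the fibre $\mathcal{P}(a_i)$ by flat base change (Lemma~\ref{fbcofroot}/Lemma~\ref{flatbasechange}).

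\textbf{Step 3: generation.} I will take $H$ right orthogonal to all components and show $H=0$. Orthogonality to $\operatorname{Im}\phi_*\varphi^*$ gives $\varphi_*\phi^!H=0$. Proposition~\ref{smoothwblsod}, applied to the smooth weighted blow-up $\varphi$, then places $\phi^!H$ in the subcategory generated by $\iota^{(b)}_*(\mathrm{pr}^*\mathrm{D}^b(\mathcal{P}(b_i)\times C)\otimes\mathcal{O}(-t))$ for $1\le t\le a-1$. Applying $\phi_*$ returns $H$ as an object generated by $\phi_*\iota^{(b)}_*(\cdots)$. The Beilinson-type decomposition $\mathrm{D}^b(\mathcal{P}(b_i))=\langle\mathcal{O},\ldots,\mathcal{O}(b-1)\rangle$ then shows that these land in the span of $\kappa^{(a)}_*\mathrm{D}^b(C)\boxtimes\mathcal{O}(s)$, and orthogonality forces the surviving twists into $\{-e,\dots,-1\}$. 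Hence $H$ lies in the first components and is orthogonal to them, so $H=0$.

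I expect the main obstacle to be the bookkeeping in Steps 2 and 3 for non-reduced weights. On stacks, $\mathcal{O}(1)$ is not generated by global sections, the exceptional collection on $\mathcal{P}(a_i)$ has length $a$ rather than $k$, and the excess filtration has $b$ steps. I would first verify the precise vanishing window $\mathrm{H}^*(\mathcal{P}(a_i),\mathcal{O}(u))=0$ for $-a<u<0$, which holds for weighted projective stacks. All index ranges should then follow from this window.
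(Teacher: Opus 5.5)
Your overall architecture (Koszul semiorthogonality, full faithfulness from $\varphi_*\phi^!\phi_*\varphi^*\simeq\mathrm{id}$ checked on a spanning class, generation via the right orthogonal and the description of $\ker\varphi_*$) matches the paper. However, the computation that carries the content is missing, and Step~3 as written is not a valid deduction.

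\textbf{Step 3.} From $\varphi_*\phi^!H=0$ you only extract $H\in\mathcal{S}:=\langle\kappa^{(a)}_*(\mathrm{D}^b(C)\boxtimes\mathcal{O}_{\mathcal{P}(a_i)}(-t))\rangle_{1\le t\le a-1}$. But $H\in\mathcal{S}$ together with right orthogonality to the first $e$ components does not force $H=0$. Take the ordinary flip with $\mathcal{P}(a_i)=\mathbb{P}^2$, $\mathcal{E}=\mathcal{O}(-1)^{\oplus 2}$ and $C$ a point, so $a=3$, $b=2$, $e=1$. The cone $R$ of $\operatorname{RHom}(\kappa_*\mathcal{O}(-1),\kappa_*\mathcal{O}(-2))\otimes\kappa_*\mathcal{O}(-1)\to\kappa_*\mathcal{O}(-2)$ lies in $\mathcal{S}$ and in $\langle\kappa_*\mathcal{O}(-1)\rangle^{\perp}$. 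It is nonzero, because $\kappa_*\mathcal{O}(-1)$ is exceptional and $\kappa_*\mathcal{O}(-2)\notin\langle\kappa_*\mathcal{O}(-1)\rangle$.

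So ``orthogonality forces the surviving twists into $\{-e,\dots,-1\}$'' has no mechanism behind it. You must use $\varphi_*\phi^!H=0$ in full. Concretely, you need to show that the objects $\kappa^{(a)}_*(\mathrm{D}^b(C)\boxtimes\mathcal{O}(-t))$ with $e<t\le a-1$ already lie in the subcategory $\mathcal{D}$ generated by $\operatorname{Im}\,\phi_*\varphi^*$ and the first $e$ components.

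\textbf{What the paper supplies.}
\begin{itemize}
\item Lemma~\ref{flipullbackcom} (Koszul resolution plus the Rees filtration by $P_t$) filters $\varphi^*\kappa^{(b)}_*\mathcal{O}_{\mathcal{P}(b_i)\times C}$ by $\iota_*\mathcal{D}^{(a)}_s\boxtimes\mathcal{O}_{\mathcal{P}(b_i)}(s)[1]$.
\item Pushing forward, $\phi_*\varphi^*\kappa^{(b)}_*\mathcal{D}^{(b)}_s(-(b-1))$ is filtered by $\kappa^{(a)}_*\mathcal{D}^{(a)}_t\otimes V_t$. For $t<b$ one has $V_t=0$ except $V_{b-1-s}=\mathcal{O}_C$.
\item Lemma~\ref{equalcollection} identifies $\langle\mathcal{D}^{(a)}_{a-1},\dots,\mathcal{D}^{(a)}_b\rangle$ with $\langle\mathcal{O}(-e),\dots,\mathcal{O}(-1)\rangle$.
\end{itemize}
Hence every $\kappa^{(a)}_*\mathcal{D}^{(a)}_t$ with $0\le t<a$ lies in $\mathcal{D}$, so $\mathcal{S}\subset\mathcal{D}$ and $H=0$.

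\textbf{Step 2.} The same bookkeeping, with Lemma~\ref{phidualcom} and Corollary~\ref{dualcomp2}, is what proves this step. Your claim that the cone of $\varphi^*F\to\phi^!\phi_*\varphi^*F$ is filtered by pieces $\iota_*(\mathcal{O}_{\mathcal{P}(a_i)}(u)\boxtimes\cdots)$ with $-a<u<0$ amounts to $\varphi_*$ killing that cone, i.e.\ to full faithfulness itself. It is asserted rather than derived. Lemma~\ref{excdis} does not produce such a filtration. Lemmas~\ref{fbcofroot} and~\ref{flatbasechange} do not apply either: they concern root stacks and the flat fibration $E_{\mathcal{Z}}\to C$, whereas $\varphi$ is not flat. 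The paper checks this step on the spanning class $\{\kappa^{(b)}_*\mathcal{D}^{(b)}_s(-(b-1))\}$ rather than on skyscrapers; either would work once the filtrations are in hand.

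\textbf{Step 1 (sign).} The normal bundle of $\mathcal{P}(a_i)\times C$ is $\bigoplus\mathcal{O}(-b_i)$, so the terms are $\mathrm{H}^*(\mathcal{P}(a_i),\mathcal{O}(s)\otimes\Lambda^m\bigoplus\mathcal{O}(-b_i))$, with twists in $[s-b,s]$. These vanish only for $-(e-1)\le s\le -1$ (twists in $[-(a-1),-1]$), not for all $0<|s|<e$. This one-sided vanishing is exactly what semiorthogonality needs, so Step~1 is fine once corrected.
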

Considering the derived categories of $\mathcal{P}(b_i)$, we have the standard semiorthogonal decomposition
\[
\mathrm{D}^b(\mathcal{P}(b_i))=\left\langle \mathcal{O}_{\mathcal{P}},\, \mathcal{O}_{\mathcal{P}}(1),\,\ldots,\,\mathcal{O}_{\mathcal{P}}(b-1)\right\rangle,
\]
and define its dual exceptional collection:
\[
\mathrm{D}^b(\mathcal{P}(b_i))=\left\langle \mathcal{D}^{(b)}_{b-1},\ldots,\mathcal{D}^{(b)}_1,\mathcal{D}^{(b)}_0 \right\rangle,
\]
where $\mathcal{D}^{(b)}_i := \mathbb{L}_{\langle \mathcal{O}_{\mathcal{P}},\ldots,\mathcal{O}_{\mathcal{P}}(i-1)\rangle}\mathcal{O}_{\mathcal{P}}(i)$ for any $i$ in the range, similarly for $\mathcal{P}(a_i)$. In the absence of ambiguity, we use $\mathcal{D}^{(a)}_i$ to denote $\mathcal{D}^{(a)}_i \boxtimes \mathcal{O}_C$ on $\mathcal{P}(a_i)\times C$, etc.
\begin{lemma}\label{flipullbackcom}
We have that $\varphi^{*}\kappa^{(b)}_{*}\mathcal{O}_{\mathcal{P}(b_i)\times C}$ admits a filtration:
\[
\begin{tikzcd}[column sep=0.5em]
\varphi^{*}\kappa^{(b)}_{*}\mathcal{O}_{\mathcal{P}(b_i)\times C}\arrow{rr}&& F^{a-2}\arrow{dl}&&  \\
& \iota_{*}\mathcal{D}^{(a)}_{a-1}\boxtimes\mathcal{O}_{\mathcal{P}(b_i)}(a-1)[1]\arrow[ul,dashed,"\Delta"]&&
\end{tikzcd}
\quad\cdots\]\[\cdots\quad
\begin{tikzcd}[column sep=0.5em]
& F^1\arrow{rr}&& F^0\arrow{rr}\arrow{dl}&& 0\arrow{dl} \\
&& \iota_{*}\mathcal{D}^{(a)}_{1}\boxtimes\mathcal{O}_{\mathcal{P}(b_i)}(1)[1]\arrow[ul,dashed,"\Delta"] && \iota_{*}\mathcal{D}^{(a)}_{0}\boxtimes\mathcal{O}_{\mathcal{P}(b_i)}[1]\arrow[ul,dashed,"\Delta"]
\end{tikzcd}
\]
where we use the convention $\mathcal{F}\boxtimes\mathcal{G}:=\operatorname{pr}^{(a)*}\mathcal{F}\otimes\operatorname{pr}^{(b)*}\mathcal{G}\otimes\operatorname{pr}^{(c)*}\mathcal{O}_C$.
\end{lemma}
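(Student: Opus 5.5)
The plan is to reduce everything to an explicit Koszul computation on the toric model. By construction $\varphi:\mathcal{W}\to\mathcal{X}^+$ is the weighted blow-up of $\mathcal{X}^+\simeq\mathds{A}_{\mathcal{P}(b_i)}[[\mathcal{E}^+]]$ along the zero section $\mathcal{P}(b_i)\times C$. The fibre coordinates $x_1,\dots,x_k$ have valuation weights $a_1,\dots,a_k$. Hence $\kappa^{(b)}_*\mathcal{O}_{\mathcal{P}(b_i)\times C}$ is resolved on $\mathcal{X}^+$ by the $\mathbb{G}_m$-equivariant Koszul complex $K^\bullet(x_1,\dots,x_k)$. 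Its terms are sums of $\mathcal{O}_{\mathcal{X}^+}$ twisted by the characters of weight $-\sum_{i\in I}a_i$, i.e.\ by $\mathcal{O}_{\mathcal{P}(b_i)}(-\sum_{i\in I}a_i)$ pulled back. Since this resolution is by locally free sheaves, $\varphi^*\kappa^{(b)}_*\mathcal{O}$ is computed termwise by pulling back $K^\bullet$ to $\mathcal{W}$. There the $x_i$ factor as $x_i=u\cdot\tilde x_i$, with $u$ the equation of $E$ of weight $a_i$ in the second row of the toric matrix.

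The key steps are as follows.

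First, I apply the Rees/standard-resolution construction of Section~\ref{Kawamatablowup} in this case. Since $I=(x_1,\dots,x_k)$ is monomial, the measured condition holds trivially with $\deg_{val}I$ computed from the Koszul generators. This gives the filtered exact sequences $P_{r-\mathrm{a}_l}$ and, via $\Pi$, the comparison with the pullback complex. This produces a Postnikov system for $\varphi^*\kappa^{(b)}_*\mathcal{O}$ whose graded pieces are supported on $E$. Exactly as in (\ref{pullbackcompu}), the $i$-th piece is $\iota_*$ of the $i$-th graded piece of $\operatorname{Sym}$ of the conormal data, shifted by $[1]$.

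Second, I identify these pieces. On $E\simeq\mathcal{P}(a_i)\times\mathcal{P}(b_i)\times C$ we have $\mathcal{O}_{\mathcal{W}}(-E)|_E\simeq\mathcal{O}_{\mathcal{P}(a_i)}(1)\boxtimes\mathcal{O}_{\mathcal{P}(b_i)}(1)$, which can be read off the second row of the matrix, up to the sign convention. The degree-$i$ graded piece is therefore the truncated Koszul complex in the $\tilde x$-variables on $\mathcal{P}(a_i)$, tensored with $\mathcal{O}_{\mathcal{P}(b_i)}(i)$. By the standard description of the dual collection on a weighted projective stack (Beilinson-type resolution of the diagonal), this truncated Koszul complex is precisely $\mathcal{D}^{(a)}_i=\mathbb{L}_{\langle\mathcal{O},\dots,\mathcal{O}(i-1)\rangle}\mathcal{O}(i)$. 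This is the same identification used for $\mathcal{D}_i$ in the Kawamata blow-up case and in Lemma~\ref{orddualcomp}. The factor $\boxtimes\mathcal{O}_C$ is carried along by the dévissage argument, since all coordinates $z$ have weight zero.

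Third, I check termination. Here no hypersurface equation occurs, so the Fano index of the fibration $E\to\mathcal{P}(b_i)\times C$ is $a=\sum a_i$. Therefore $\mathcal{D}^{(a)}_i=0$ for $i\ge a$, because $\mathcal{A}_{-1}=0$ for a weighted projective stack. The inverse system stabilises at $s=a-1$, giving the finite filtration with $F^{a-1}=\varphi^*\kappa^{(b)}_*\mathcal{O}$ and factors indexed by $0\le i\le a-1$.

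I expect the second step to be the main obstacle: matching the twists precisely. Specifically, I need to verify that the $\mathcal{P}(b_i)$-twist in the $i$-th piece is $\mathcal{O}(i)$ rather than $\mathcal{O}(-i)$ or a character of $\mu$-type. This requires tracking how $u$ and the $\tilde x_j$ transform under the two $\mathbb{G}_m$'s given by the rows of the matrix. I would first do this for $k=\ell=1$ by a direct computation of $\varphi^*$ on the two-term complex, and then extend by the tensor-product structure of the Koszul complex.
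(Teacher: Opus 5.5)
Your plan matches the paper's proof. You pull back the equivariant Koszul resolution of the zero section, which is also the standard resolution for $val$, and compare it with the exact Rees complex on $\mathcal{W}$ with twists $\mathcal{O}_{\mathcal{W}}(a_IE)$, $a_I:=\sum_{i\in I}a_i$. You then read off the graded pieces via $\mathcal{O}_{\mathcal{W}}(tE)|_E\simeq\mathcal{O}_{\mathcal{P}(a_i)}(-t)\boxtimes\mathcal{O}_{\mathcal{P}(b_i)}(-t)$, and your identification of each piece with a truncated Koszul complex, hence with $\mathcal{D}^{(a)}_s\boxtimes\mathcal{O}_{\mathcal{P}(b_i)}(s)$ for $0\le s<a$, supplies the bookkeeping the paper leaves to the reader.

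One slip to fix: the factorization is $x_i=u^{a_i}\tilde x_i$, not $u\,\tilde x_i$. This is what makes the pieces indexed by the weighted degree $a_I$ and run over $0\le s\le a-1$.
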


\begin{proof}
This is equivalent to constructing an equivariant standard resolution of $\kappa^{(b)}_{*}\mathcal{O}_{\mathcal{P}(b_i)\times C}$ in $\mathcal{X}^{+}$ as a  generalization of \cite[Section~2]{Hao2025b}. Consider the defining equations $I$ in $R:=k[[x_i,y_i,z_i]]$, together with the induced $\mathbb{G}_{m}$-equivariant Koszul resolution
\[
0
\to
R\otimes \wedge^{k}V_{k}
\xrightarrow{F_{k-1}}
\cdots
\xrightarrow{}
R\otimes \wedge^{2}V_{k}
\xrightarrow{F_{1}:=\wedge^{2}F_{0}}
R\otimes V_{k}
\xrightarrow{F_{0}:=(x_{1},\ldots,x_{k})}
R
\to
R/I
\to
0 .
\]
The associated equivariant exact sequence is
\[
0
\to
R\otimes \chi^{\sum_{i=1}^{k}a_i}
\xrightarrow{F_{k-1}}
\cdots
\xrightarrow{}
\bigoplus_{i<j}
R\otimes \chi^{a_i+a_j}
\xrightarrow{F_{1}:=\wedge^{2}F_{0}}
\bigoplus_{i=1}^{k}
R\otimes \chi^{a_i}
\xrightarrow{F_{0}:=(x_{1},\ldots,x_{k})}
R
\to
R/I
\to
0 .
\]
where in our weighted blow-up convention, the variable $x_i$ carries weight $-a_i$.  We note that the Koszul resolution is simultaneously the standard resolution associated to the standard basis induced by the valuation $val$. Since the weighted blow-up weights satisfy $val(x_i)=a_i$, which coincide with the $\mathbb{G}_m$-weights of the corresponding representations, for every positive integer $t$ we obtain exact sequence:
\[
0
\to
P_{t-\sum_{i=1}^{k}a_i}\otimes \chi^{\sum_{i=1}^{k}a_i}
\xrightarrow{F_{k-1}}
\cdots
\to
\bigoplus_{i<j}
P_{t-a_i-a_j}\otimes \chi^{a_i+a_j}
\xrightarrow{F_{1}}
\bigoplus_{i=1}^{k}
P_{t-a_i}\otimes \chi^{a_i}
\xrightarrow{F_{0}}
P_t
\to
0 .
\]Using Serre’s theorem, we project the above exact sequence to $\mathcal{X}^{+}$ and obtain
\[
0
\to
\mathcal{P}_{t-\sum_{i=1}^{k}a_i}\otimes \mathcal{O}_{\mathcal{X}^{+}}\!\big(\sum_{i=1}^{k}a_i\big)
\xrightarrow{F_{k-1}}
\cdots
\to
\bigoplus_{i<j}
\mathcal{P}_{t-a_i-a_j}\otimes \mathcal{O}_{\mathcal{X}^{+}}(a_i+a_j)
\xrightarrow{F_{1}}
\bigoplus_{i=1}^{k}
\mathcal{P}_{t-a_i}\otimes \mathcal{O}_{\mathcal{X}^{+}}(a_i)
\xrightarrow{F_{0}}
\mathcal{P}_{t}
\to
0 ,
\]
where $\mathcal{O}_{\mathcal{X}^{+}}(-)$ is the line bundle induced by pulling back $\mathcal{O}_{\mathcal{P}(b_i)}(-)$. We sum over $t$, and by Serre's theorem again, we obtain the exact sequence in $\mathcal{W}$:
\[
0
\to
\mathcal{O}_{\mathcal{W}}(aE)\otimes \varphi^*\mathcal{O}_{\mathcal{X}^{+}}(a)
\xrightarrow{F_{k-1}}
\cdots
\to
\bigoplus_{i=1}^{k}
\mathcal{O}_{\mathcal{W}}(a_iE)\otimes \varphi^*\mathcal{O}_{\mathcal{X}^{+}}(a_i)
\xrightarrow{F_{0}}
\mathcal{O}_{\mathcal{W}}
\to
0 .
\]
Then compare  with the following complex which quasi-isomorphic to
$\varphi^{*}\kappa^{(b)}_{*}\mathcal{O}_{\mathcal{P}(b_i)\times C}$:
\[
0\to \mathcal{O}_{\mathcal{W}}\otimes \varphi^*\mathcal{O}_{\mathcal{X}^{+}}(a)\xrightarrow{F_{k-1}}\cdots\to \bigoplus_{i<j}\mathcal{O}_{\mathcal{W}}\otimes \varphi^*\mathcal{O}_{\mathcal{X}^{+}}(a_i+a_j)\to \bigoplus_{i=1}^{k}\mathcal{O}_{\mathcal{W}}\otimes \varphi^*\mathcal{O}_{\mathcal{X}^{+}}(a_i)\xrightarrow{F_{0}}\mathcal{O}_{\mathcal{W}}\to 0 .
\]
Using the short exact sequence $0 \to \mathcal{O}_{\mathcal{W}} \to \mathcal{O}_{\mathcal{W}}(E) \to \mathcal{O}_{E}(E) \to 0$, and the restriction formula
\[
\mathcal{O}_{\mathcal{W}}(tE)\big|_{E}
\simeq
\mathcal{O}_{\mathcal{P}(a_i)}(-t)\boxtimes \mathcal{O}_{\mathcal{P}(b_i)}(-t),
\quad \forall t\in \mathbb{Z},
\]
we obtain, by iterating successive shifts of the above exact sequence, that each $(s+1)$-fold shift produces a cone complex whose first term has two contributions: one coming from $\mathcal{O}_{\mathcal{W}}(E)$ whose restriction to $E$ gives $\mathcal{O}_{E}(E)$, and the other coming from the restriction of $\varphi^{*}\mathcal{O}_{\mathcal{X}^{+}}(s)$ to $E$, which is given by $\operatorname{pr}^{(b)*}\mathcal{O}_{\mathcal{P}(b_i)}(s)$. By a straightforward bookkeeping of these contributions, we leave the details to the reader and obtain that the $s$-th cone complex is isomorphic to
\[
\iota_{*}\mathcal{D}^{(a)}_{s}\boxtimes \mathcal{O}_{\mathcal{P}(b_i)}(s)[1].
\]
for any $0 \le s < a$. This completes the proof.
\end{proof}
\begin{corollary}
For any dual pre-tilting object $\mathcal{D}_s^{(b)}$ on $\mathcal{P}(b_i)\times C$, we have that
$\varphi^{*}\kappa^{(b)}_{*}\mathcal{D}_s^{(b)}(-(b-1))$ admits a filtration:
\[
\begin{tikzcd}[column sep=0em]
\varphi^{*}\kappa^{(b)}_{*}\mathcal{D}_s^{(b)}(-b)\arrow{rr}&& F^{a-2}\arrow{dl}&&  \\
& \iota_{*}\mathcal{D}^{(a)}_{a-1}\boxtimes\mathcal{D}_s^{(b)}(a-b)[1]\arrow[ul,dashed,"\Delta"]&&
\end{tikzcd}
\quad\cdots\]\[\quad\cdots
\begin{tikzcd}[column sep=0.5em]
& F^1\arrow{rr}&& F^0\arrow{rr}\arrow{dl}&& 0\arrow{dl} \\
&& \iota_{*}\mathcal{D}^{(a)}_{1}\boxtimes\mathcal{D}_s^{(b)}(1-(b-1))[1]\arrow[ul,dashed,"\Delta"] && \iota_{*}\mathcal{D}^{(a)}_{0}\boxtimes\mathcal{D}_s^{(b)}(-(b-1))[1]\arrow[ul,dashed,"\Delta"]
\end{tikzcd}
\]
where $0 \le s < b$ and we use the convention $\mathcal{F}\boxtimes\mathcal{G}:=\operatorname{pr}^{(a)*}\mathcal{F}\otimes\operatorname{pr}^{(b)*}\mathcal{G}\otimes\operatorname{pr}^{(c)*}\mathcal{O}_C$.
\end{corollary}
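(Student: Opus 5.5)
This corollary follows from Lemma~\ref{flipullbackcom} by a projection-formula (dévissage) argument. In Lemma~\ref{flipullbackcom} the base of the embedding $\kappa^{(b)}\colon \mathcal{P}(b_i)\times C\hookrightarrow \mathcal{X}^+$ is not essential. Every step there is linear over $\mathcal{O}_{\mathcal{X}^+}$: the Koszul resolution in the $x$-variables, its filtered Rees version, and the comparison with $\varphi^*$ of the Koszul complex. So the filtration should be compatible with tensoring by objects pulled back from $\mathcal{P}(b_i)$.

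\textbf{Extending the sheaf from the zero section.} Let $q\colon \mathcal{X}^+\to \mathcal{P}(b_i)\times C$ be the bundle projection of the formal vector bundle $\mathds{A}_{\mathcal{P}(b_i)}[[\mathcal{E}^+]]$. Write $G:=\mathcal{D}^{(b)}_s(-(b-1))$, which is a bounded complex on $\mathcal{P}(b_i)\times C$. Since $q\circ\kappa^{(b)}=\mathrm{id}$, the projection formula gives
\[
\kappa^{(b)}_*G\simeq \kappa^{(b)}_*\mathcal{O}_{\mathcal{P}(b_i)\times C}\otimes q^*G .
\]

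\textbf{Pulling back the filtration.} Apply $\varphi^*$:
\[
\varphi^*\kappa^{(b)}_*G\simeq \varphi^*\kappa^{(b)}_*\mathcal{O}\otimes \varphi^*q^*G .
\]
Tensor the filtration of Lemma~\ref{flipullbackcom} with $\varphi^*q^*G$. This is exact, since it is tensoring with a pullback of a complex of locally free sheaves, after resolving $G$ by sums of $\mathcal{O}_{\mathcal{P}(b_i)}(m)$ via the Beilinson-type collection. The result is a filtration of the same shape whose graded pieces are
\[
\iota_*\big(\mathcal{D}^{(a)}_t\boxtimes\mathcal{O}_{\mathcal{P}(b_i)}(t)\big)\otimes\varphi^*q^*G[1].
\]
By the projection formula for $\iota$, each piece equals $\iota_*\big(\mathcal{D}^{(a)}_t\boxtimes \mathcal{O}(t)\otimes\iota^*\varphi^*q^*G\big)[1]$.

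\textbf{Identifying the restriction to $E$.} The remaining step is
\[
\iota^*\varphi^*q^*G\simeq \mathrm{pr}^{(b,c)*}(q\circ\kappa^{(b)})^*G=\mathrm{pr}^{(b,c)*}G .
\]
This uses the commutative square $\varphi\circ\iota^{(b)}=\kappa^{(b)}\circ\mathrm{pr}^{(b,c)}$ from the embedding diagram. The $t$-th factor then becomes
\[
\iota_*\mathcal{D}^{(a)}_t\boxtimes\mathcal{D}^{(b)}_s(t-(b-1))[1],
\]
which is what the statement claims for $0\le t\le a-1$.

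\textbf{Main obstacle.} The potential issue is ensuring that the connecting maps in the tensored Postnikov system are still the ones induced from the original filtration. This is formal, because tensoring with a fixed object is a triangulated functor. The stray indices $(-b)$ versus $(-(b-1))$ in the displayed statement look like typographical variants of the same twist; I would state the result uniformly with twist $-(b-1)$.
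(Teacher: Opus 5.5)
Your proof is correct and follows the paper's own approach. The paper uses the fact that $\mathcal{X}^+$ is formal over $\mathcal{P}(b_i)\times C$ and admits a section to move the twist $\mathcal{D}^{(b)}_s(-(b-1))$ outside $\varphi^*$, then restricts to $E$ and reads off the factors from Lemma~\ref{flipullbackcom}; you make those projection-formula and restriction steps explicit via the retraction $q$ and the square $\varphi\circ\iota^{(b)}=\kappa^{(b)}\circ\mathrm{pr}^{(b,c)}$, and reading the stray $(-b)$ as $(-(b-1))$ is consistent with the graded pieces.
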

\begin{proof}
Since the weighted blow-up $\varphi$ is formal over $\mathcal{P}(b_i)\times C$ and admits a section in $\mathcal{X}^+$, by a dévissage argument we may move the tensor factor $\mathcal{D}^{(b)}(-(b-1))$ outside the pullback functor $\varphi^{*}$. Restricting to each factor and applying the previous filtration result, we obtain the stated conclusion.
\end{proof}
\begin{corollary}\label{dualcompuprop}
For any dual pre-tilting object $\mathcal{D}_s^{(b)}$ on $\mathcal{P}(b_i)\times C$, we have that
$\phi_{*}\varphi^{*}\kappa^{(b)}_{*}\mathcal{D}_s^{(b)}(-(b-1))$ admits a filtration:
\[
\begin{tikzcd}[column sep=0.5em]
\phi_{*}\varphi^{*}\kappa^{(b)}_{*}\mathcal{D}_s^{(b)}(-b)\arrow{rr}&& G^{a-2}\arrow{dl}&&  \\
& \kappa^{(a)}_{*}\mathcal{D}^{(a)}_{a-1}\otimes V_{a-1}[1]\arrow[ul,dashed,"\Delta"]&&
\end{tikzcd}
\cdots
\begin{tikzcd}[column sep=0.5em]
& G^1\arrow{rr}&& G^0\arrow{rr}\arrow{dl}&& 0\arrow{dl} \\
&& \kappa^{(a)}_{*}\mathcal{D}^{(a)}_{1}\otimes V_{1}[1]\arrow[ul,dashed,"\Delta"]
&&
\kappa^{(a)}_{*}\mathcal{D}^{(a)}_{0}\otimes V_{0}[1]\arrow[ul,dashed,"\Delta"]
\end{tikzcd}
\]
where the tensor factor is defined by $V_t := \operatorname{Ext}^{*}_{\mathcal{P}(b_i)\times C}\big(\mathcal{O}_{\mathcal{P}(b_i)}(b-1-t),\, \mathcal{D}_s^{(b)}\big)$. In particular, when $0 \le t < b$, we have
\[
V_t =
\begin{cases}
\mathcal{O}_{C}, & t = b-1-s,\\[4pt]
0, & t \neq b-1-s.
\end{cases}
\]
Perhaps we should note that if $a=b$, then
\[
\phi_{*}\varphi^{*}\kappa^{(b)}_{*}\mathcal{D}_s^{(b)}(-(b-1))
\simeq
\kappa^{(a)}_{*}\mathcal{D}^{(a)}_{b-1-s},
\quad \forall\, 0 \le s < b .
\]
\end{corollary}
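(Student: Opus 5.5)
The plan is to push forward the filtration of the preceding corollary along $\phi$, one graded piece at a time. That corollary writes $\varphi^{*}\kappa^{(b)}_{*}\mathcal{D}_s^{(b)}(-(b-1))$ as an iterated extension with factors $\iota_{*}\mathcal{D}^{(a)}_{t}\boxtimes\mathcal{D}_s^{(b)}(t-(b-1))[1]$ for $0\le t<a$. Since $\phi_*$ is exact, applying it produces a filtration $G^t:=\phi_*F^t$ with the same shape. So the whole statement reduces to computing
\[
\phi_*\iota_*\bigl(\mathcal{D}^{(a)}_{t}\boxtimes\mathcal{D}_s^{(b)}(t-(b-1))\bigr).
\]

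To compute these pieces, use the diagram for the other side of the flip. There $\phi\circ\iota^{(a)}=\kappa^{(a)}\circ\mathrm{pr}^{(a,c)}$, and $\mathrm{pr}^{(a,c)}:\mathcal{P}(a_i)\times\mathcal{P}(b_i)\times C\to\mathcal{P}(a_i)\times C$ is the projection. By the projection formula and Künneth on the product (flat base change of Lemma \ref{fbcofroot} type, on the $\mathbb{G}_m^2$-quotient stack),
\[
\mathrm{pr}^{(a,c)}_*\bigl(\mathcal{D}^{(a)}_{t}\boxtimes\mathcal{G}\bigr)\simeq \mathcal{D}^{(a)}_{t}\otimes \mathrm{R}\Gamma_{\mathcal{P}(b_i)\times C/C}(\mathcal{G}).
\]
Take $\mathcal{G}=\mathcal{D}_s^{(b)}(t-(b-1))$. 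Its relative cohomology is
\[
\operatorname{Ext}^*_{\mathcal{P}(b_i)\times C}\bigl(\mathcal{O}_{\mathcal{P}(b_i)}(b-1-t),\mathcal{D}^{(b)}_s\bigr)=V_t,
\]
which gives the factors $\kappa^{(a)}_*\mathcal{D}^{(a)}_t\otimes V_t[1]$.

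For the explicit value of $V_t$ with $0\le t<b$, note that $b-1-t$ also ranges over $[0,b-1]$. The duality of Lemma \ref{orddualcomp} applies to the weighted projective stack $\mathcal{P}(b_i)$ with its full exceptional collection $\mathcal{O},\dots,\mathcal{O}(b-1)$ and the dual collection $\mathcal{D}^{(b)}_j$; here $\mathcal{A}_{-1}=0$, and the base change to $C$ is flat. This yields $V_t=\mathcal{O}_C\cdot\delta_{b-1-t,s}$.

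In the case $a=b$, every $t$ in $[0,a)$ lies in this range. Exactly one graded piece then survives, namely $t=b-1-s$. The filtration collapses to a single object, and $\phi_*\varphi^*\kappa^{(b)}_*\mathcal{D}^{(b)}_s(-(b-1))\simeq\kappa^{(a)}_*\mathcal{D}^{(a)}_{b-1-s}$, up to the shift and normalisation convention of the $[1]$'s.

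I expect the main obstacle to be bookkeeping rather than conceptual. The twist indices in the preceding corollary must be matched, since its displayed first and last terms are written inconsistently ($-b$ versus $-(b-1)$). The $\mathcal{O}_{\mathcal{P}(b_i)}(t)$ coming from $\varphi^*\mathcal{O}_{\mathcal{X}^+}$ has to combine correctly with $-(b-1)$. The shifts also have to be consistent, so that the surviving factor appears unshifted in the collapsed case.

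I would first recheck the restriction formula $\mathcal{O}_{\mathcal{W}}(tE)|_E\simeq\mathcal{O}(-t)\boxtimes\mathcal{O}(-t)$ and then fix the normalisation. The remaining subtle point is a stacky Künneth formula on $\mathcal{P}(a_i)\times\mathcal{P}(b_i)\times C$. I would justify it through the product $\mathbb{G}_m^2$-equivariant Čech complex, exactly as in the proof of Lemma \ref{fbcofroot}.
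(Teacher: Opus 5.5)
Your proposal is correct and is essentially the paper's argument. You push the filtration of the preceding corollary forward along $\phi$ (via $\phi\circ\iota=\kappa^{(a)}\circ\mathrm{pr}^{(a,c)}$), evaluate each factor by the dual-collection vanishing of Lemma~\ref{orddualcomp} over $C$, and spell out the projection-formula and Künneth step that the paper leaves implicit. Your worry about the shift also resolves itself: in the paper's convention each $F^{t}$ is an extension of $F^{t-1}$ by the unshifted factor, and $V_{b-1-s}$ is $\mathcal{O}_C$ concentrated in degree $0$ (by the mutation triangle it equals $\operatorname{Hom}(\mathcal{O}(s),\mathcal{O}(s))\otimes\mathcal{O}_C$), so for $a=b$ the collapse gives exactly $\kappa^{(a)}_*\mathcal{D}^{(a)}_{b-1-s}$ with no shift.
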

\begin{proof}
We push forward the filtration along $\phi$, obtaining the stated filtraion. The vanishing of cohomology is given by the properties of the dual exceptional collection $\{\mathcal{D}_s^{(b)}\}$ e.g. Lemma \ref{orddualcomp}, noting our flip is over $C$.
\end{proof}
We now consider the right adjoint of $\phi_{*}\varphi^{*}$:
\[
\phi_{*}\varphi^{*}\dashv \varphi_{*}\phi^{!}.
\]
Note that $\phi^{!}(-)=\phi^{*}(-)\otimes \mathcal{O}_{\mathcal{W}}((b-1)E)$. Then, the corresponding computation in the opposite direction is straightforward, and the proof is completely analogous to Lemma \ref{flipullbackcom}.
\begin{lemma}\label{phidualcom}
We have that $\phi^{!}\kappa^{(a)}_{*}\mathcal{O}_{\mathcal{P}(a_i)\times C}$ admits a filtration:
\[
\begin{tikzcd}[column sep=0em]
\phi^{!}\kappa^{(a)}_{*}\mathcal{O}_{\mathcal{P}(a_i)\times C}\arrow{rr}&& F^{b-2}\arrow{dl}&&  \\
& \iota_{*}\mathcal{O}_{\mathcal{P}(a_i)}\boxtimes\mathcal{D}^{(b)}_{b-1}(-(b-1))[1]\arrow[ul,dashed,"\Delta"]&&
\end{tikzcd}
\quad\cdots\]
\[\cdots\quad
\begin{tikzcd}[column sep=0.5em]
& F^1\arrow{rr}&& F^0\arrow{rr}\arrow{dl}&& 0\arrow{dl} \\
&& \iota_{*}\mathcal{O}_{\mathcal{P}(a_i)}(1-(b-1))\boxtimes\mathcal{D}^{(b)}_{1}(-(b-1))[1]\arrow[ul,dashed,"\Delta"]
&&
\iota_{*}\mathcal{O}_{\mathcal{P}(a_i)}(-(b-1))\boxtimes\mathcal{D}^{(b)}_{0}(-(b-1))[1]\arrow[ul,dashed,"\Delta"]
\end{tikzcd}
\]
where we use the convention $\mathcal{F}\boxtimes\mathcal{G}:=\operatorname{pr}^{(a)*}\mathcal{F}\otimes\operatorname{pr}^{(b)*}\mathcal{G}\otimes\operatorname{pr}^{(c)*}\mathcal{O}_C$.
\end{lemma}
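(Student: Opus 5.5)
The plan is to run the proof of Lemma~\ref{flipullbackcom} with the roles of $\mathcal{X}$ and $\mathcal{X}^+$ exchanged, and then to twist by $\mathcal{O}_{\mathcal{W}}((b-1)E)$ as in the formula $\phi^{!}(-)=\phi^{*}(-)\otimes\mathcal{O}_{\mathcal{W}}((b-1)E)$.

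\textbf{Step 1: the defining Koszul complex.} The center $\mathcal{P}(a_i)\times C\subset\mathcal{X}$ is cut out by $y_1,\dots,y_\ell$. These carry $\mathbb{G}_m$-weights $-b_i$ and valuation $val(y_i)=b_i$. We take the $\mathbb{G}_m$-equivariant Koszul resolution
\[
0\to R\otimes\chi^{b}\to\cdots\to\bigoplus_{i}R\otimes\chi^{b_i}\xrightarrow{(y_1,\dots,y_\ell)}R\to R/(y_i)\to0 .
\]
Since the weights and valuations match, it is simultaneously the standard resolution for $val$.

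\textbf{Step 2: the filtered sequences.} For every $t$ the resolution induces exact sequences
\[
0\to P_{t-b}\otimes\chi^{b}\to\cdots\to\bigoplus_i P_{t-b_i}\otimes\chi^{b_i}\to P_t\to0 .
\]
We project them to $\mathcal{X}$ via Serre's theorem, sum over $t$, and project again to $\mathcal{W}$. This gives an exact sequence
\[
0\to\mathcal{O}_{\mathcal{W}}(bE)\otimes\phi^*\mathcal{O}_{\mathcal{X}}(-b)\to\cdots\to\bigoplus_i\mathcal{O}_{\mathcal{W}}(b_iE)\otimes\phi^*\mathcal{O}_{\mathcal{X}}(-b_i)\to\mathcal{O}_{\mathcal{W}}\to0 .
\]
Here the sign of the twist $\mathcal{O}_{\mathcal{X}}(-b_i)$ reflects that the normal directions $y_i$ have negative weight. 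The pullback $\phi^*\kappa^{(a)}_*\mathcal{O}$ is represented by the same complex with all $E$-twists removed.

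\textbf{Step 3: comparison, restriction and twist.} We compare the two complexes through the sequences $0\to\mathcal{O}_{\mathcal{W}}((t-1)E)\to\mathcal{O}_{\mathcal{W}}(tE)\to\mathcal{O}_E(tE)\to0$. On $E$ we use the restriction formula $\mathcal{O}_{\mathcal{W}}(tE)|_E\simeq\mathcal{O}_{\mathcal{P}(a_i)}(-t)\boxtimes\mathcal{O}_{\mathcal{P}(b_i)}(-t)$. This yields $b$ successive cones, indexed by $0\le s<b$, each supported on $E$. The terms with $\mathcal{P}(b_i)$-degree between $0$ and $s$ reassemble into the left mutation $\mathbb{L}_{\langle\mathcal{O},\dots,\mathcal{O}(s-1)\rangle}\mathcal{O}(s)=\mathcal{D}^{(b)}_s$ on the $\mathcal{P}(b_i)$ factor. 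The $\phi^*\mathcal{O}_{\mathcal{X}}$-twist restricts to a line bundle $\mathcal{O}_{\mathcal{P}(a_i)}(s)$ on the $\mathcal{P}(a_i)$ factor. Finally we tensor the whole filtration by $\mathcal{O}_{\mathcal{W}}((b-1)E)$; on $E$ this shifts both factors by $-(b-1)$. The $s$-th cone becomes
\[
\iota_*\mathcal{O}_{\mathcal{P}(a_i)}(s-(b-1))\boxtimes\mathcal{D}^{(b)}_s(-(b-1))[1],
\]
which is the shape stated in the lemma. Boundedness is automatic: $\phi^!$ of a perfect object is perfect, and the Koszul complex has length $\ell$.

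\textbf{Main obstacle.} The difficult step is the bookkeeping in Step 3. One must verify that the iterated cones genuinely assemble into the dual objects $\mathcal{D}^{(b)}_s$, rather than into some other objects of $\langle\mathcal{O}_{\mathcal{P}(b_i)}(j)\rangle$. One must also track exactly how the $E$-twist distributes between the $\mathcal{P}(a_i)$ and $\mathcal{P}(b_i)$ factors. To handle both, I would first check the cone for $s=0$ and $s=1$ explicitly. For $s=1$ the mutation triangle $f^*f_*\mathcal{O}(1)\to\mathcal{O}(1)\to\mathcal{D}_1$ should be visible directly. I would then argue inductively on $s$, exactly as for Lemma~\ref{flipullbackcom}, using the symmetry $\mathcal{W}\simeq w\mathrm{Bl}^{(a_i)}\mathcal{X}^+$, which exchanges the roles of $(a)$ and $(b)$.
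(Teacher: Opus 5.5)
Your plan is the paper's own argument. The paper only says the proof is completely analogous to Lemma~\ref{flipullbackcom} with $(a)$ and $(b)$ exchanged, combined with $\phi^{!}=\phi^{*}\otimes\mathcal{O}_{\mathcal{W}}((b-1)E)$. Your target factors $\iota_*\mathcal{O}_{\mathcal{P}(a_i)}(s-(b-1))\boxtimes\mathcal{D}^{(b)}_s(-(b-1))[1]$ are the right ones.

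One sign in Step~2 is wrong, and Step~3 depends on it. Since $y_i$ has weight $-b_i$, it is a section of $\mathcal{O}_{\mathcal{X}}(-b_i)$. The Koszul terms are therefore $\phi^*\mathcal{O}_{\mathcal{X}}(b_J)$, matching your own $\chi^{b_i}$ in Step~1 and the Koszul complex in Lemma~\ref{standardlemma3}. Write $\phi^*y_i=u^{b_i}y_i'$, with $u$ the canonical section of $\mathcal{O}_{\mathcal{W}}(E)$. The exact strict-transform complex is the Koszul complex of the $y_i'$, with terms $\mathcal{O}_{\mathcal{W}}(b_JE)\otimes\phi^*\mathcal{O}_{\mathcal{X}}(b_J)$. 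With your twist $\phi^*\mathcal{O}_{\mathcal{X}}(-b_i)$, the maps to $\mathcal{O}_{\mathcal{W}}$ do not exist.

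With the correct sign, the graded pieces of the comparison are
$\phi^*\mathcal{O}_{\mathcal{X}}(b_J)\otimes\mathcal{O}_E(tE)\simeq\mathcal{O}_{\mathcal{P}(a_i)}(b_J-t)\boxtimes\mathcal{O}_{\mathcal{P}(b_i)}(-t)$ for $1\le t\le b_J$. Group them by $s=b_J-t\in[0,b-1]$; the differentials $y_j'$ preserve this grading. You get $\mathcal{O}_{\mathcal{P}(a_i)}(s)$ tensored with the quotient $\bigoplus_{b_J>s}\mathcal{O}_{\mathcal{P}(b_i)}(s-b_J)$ of the exact Koszul complex on $\mathcal{P}(b_i)$. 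That quotient is $\mathcal{D}^{(b)}_s$ up to shift.

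The cancellation in $b_J-t$ is exactly what puts a single line bundle $\mathcal{O}_{\mathcal{P}(a_i)}(s)$ on the first factor of each cone. As a check, for $\ell=1$ it reproduces the filtration of $\mathcal{O}_{bE}$ by $\mathcal{O}_E(-tE)$. With your sign the $\mathcal{P}(a_i)$-degrees would be $-b_J-t$, and this grouping is not available. Once the sign is fixed, twisting by $\mathcal{O}_{\mathcal{W}}((b-1)E)$ goes through as you describe.
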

\begin{corollary}\label{dualcomp2}
For any dual pre-tilting object $\mathcal{D}_s^{(a)}$ on $\mathcal{P}(a_i)\times C$, we have
\[
\varphi_{*}\phi^{!}\kappa^{(a)}_{*}\mathcal{D}_s^{(a)}
=
\begin{cases}
0, & b \le s < a,\\[4pt]
\kappa^{(b)}_{*}\mathcal{D}^{(b)}_{\,b-1-s}(-(b-1)), & 0 \le s < b .
\end{cases}
\]
\end{corollary}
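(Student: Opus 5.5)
The plan is to push the filtration of Lemma~\ref{phidualcom} forward along $\varphi$, after first moving the tensor factor $\mathcal{D}^{(a)}_s$ inside the filtration, exactly as in the Corollary after Lemma~\ref{flipullbackcom}.

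\textbf{Step 1: twisted filtration.} Since $\phi$ is formal over $\mathcal{P}(a_i)\times C$ and $\kappa^{(a)}$ admits a retraction, a dévissage argument lets us write
\[
\phi^{!}\kappa^{(a)}_{*}\mathcal{D}^{(a)}_s \simeq \phi^{!}\kappa^{(a)}_{*}\mathcal{O}_{\mathcal{P}(a_i)\times C}\otimes \phi^{*}\mathrm{pr}^{*}\mathcal{D}^{(a)}_s .
\]
Tensoring the filtration of Lemma~\ref{phidualcom} then gives a filtration with factors
\[
\iota_{*}\,\mathcal{D}^{(a)}_s(t-(b-1))\boxtimes \mathcal{D}^{(b)}_t(-(b-1))[1], \qquad 0\le t<b .
\]

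\textbf{Step 2: pushforward along $\varphi$.} Apply $\varphi_*$. On $E\simeq \mathcal{P}(a_i)\times\mathcal{P}(b_i)\times C$, the map $\varphi\circ\iota$ is $\kappa^{(b)}\circ \mathrm{pr}^{(b,c)}$. By the Künneth formula over $C$, the $t$-th factor becomes
\[
\kappa^{(b)}_*\Bigl(\mathrm{R}\Gamma\bigl(\mathcal{P}(a_i),\mathcal{D}^{(a)}_s(t-(b-1))\bigr)\otimes \mathcal{D}^{(b)}_t(-(b-1))\Bigr)[1].
\]
The cohomology here is
\[
\mathrm{R}\Gamma\bigl(\mathcal{D}^{(a)}_s(t-b+1)\bigr) = \operatorname{Ext}^*\bigl(\mathcal{O}_{\mathcal{P}(a_i)}(b-1-t),\,\mathcal{D}^{(a)}_s\bigr).
\]
The index $m:=b-1-t$ lies in $[0,b-1]\subset[0,a-1]$, because $e=a-b\ge 0$. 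Lemma~\ref{orddualcomp}, applied to the weighted projective stack $\mathcal{P}(a_i)$ with its full collection $\mathcal{O},\dots,\mathcal{O}(a-1)$ and dual collection $\mathcal{D}^{(a)}_\bullet$, says this Ext group is $\mathrm{k}$ (so $\mathcal{O}_C$ after base change) when $m=s$, and $0$ otherwise.

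\textbf{Step 3: case split.}
\begin{itemize}
\item If $b\le s<a$, then $m\le b-1<s$, so every factor vanishes and $\varphi_*\phi^!\kappa^{(a)}_*\mathcal{D}^{(a)}_s=0$.
\item If $0\le s<b$, exactly one factor survives, namely $t=b-1-s$. It gives $\kappa^{(b)}_*\mathcal{D}^{(b)}_{b-1-s}(-(b-1))$, up to the shift convention $[1]$ built into the filtration cones, as in the $a=b$ remark of Corollary~\ref{dualcompuprop}.
\end{itemize}

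\textbf{Main obstacle.} The delicate points are the shift and normalisation bookkeeping, namely the role of the $[1]$ and of $\phi^!=\phi^*\otimes\mathcal{O}((b-1)E)$. I will fix both by comparing with the case $s=0$ and $C=\mathrm{pt}$, where the answer can be checked directly. I also need to justify the dévissage in Step 1, i.e. that twisting by $\phi^*\mathrm{pr}^*\mathcal{D}^{(a)}_s$ commutes with the Koszul/standard-resolution filtration. I plan to do this by observing that the filtration is built from the Koszul complex in the $y$-variables, which is linear over functions on $\mathcal{P}(a_i)\times C$.
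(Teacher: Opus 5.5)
Your argument is the same as the paper's, which states this corollary without a separate proof as the mirror of the earlier computation for $\phi_*\varphi^*$: you twist the filtration of Lemma~\ref{phidualcom} by $\mathcal{D}^{(a)}_s$, push forward along $\varphi$ using $\varphi\circ\iota=\kappa^{(b)}\circ\mathrm{pr}^{(b,c)}$, and use the orthogonality of Lemma~\ref{orddualcomp} to kill every factor except $t=b-1-s$, and the $[1]$ is only the paper's labelling convention for its Postnikov diagrams. The twisting step you flag as an obstacle is just the projection formula: $\kappa^{(a)}_*\mathcal{D}^{(a)}_s\simeq\kappa^{(a)}_*\mathcal{O}\otimes p^*\mathcal{D}^{(a)}_s$ for the bundle retraction $p\colon\mathcal{X}\to\mathcal{P}(a_i)\times C$, and $\phi^!(-\otimes P)\simeq\phi^!(-)\otimes\phi^*P$ for perfect $P$.
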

We obtain the lemma needed for the sequel.
\begin{lemma}\label{trandualprop}
For any dual pre-tilting object $\mathcal{D}_s^{(b)}$ on $\mathcal{P}(b_i)\times C$, we have
\[
\varphi_{*}\phi^{!}\phi_{*}\varphi^{*}\kappa^{(b)}_{*}\mathcal{D}_s^{(b)}(-(b-1))
\simeq
\kappa^{(b)}_{*}\mathcal{D}_s^{(b)}(-(b-1)),
\]
where $0 \le s < b$.
\end{lemma}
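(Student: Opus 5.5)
The plan is to compose the two explicit computations already made. First apply Corollary~\ref{dualcompuprop}. It describes $\phi_{*}\varphi^{*}\kappa^{(b)}_{*}\mathcal{D}_s^{(b)}(-(b-1))$ through a filtration whose graded pieces are $\kappa^{(a)}_{*}\mathcal{D}^{(a)}_{t}\otimes V_t[1]$, for $0\le t<a$. Only one multiplicity space survives in the range $0\le t<b$, namely $V_{b-1-s}=\mathcal{O}_C$. For $b\le t<a$ the spaces $V_t$ are whatever the Ext computation yields; we do not need to determine them, because those pieces will be killed at the next step. So $\phi_*\varphi^*\kappa^{(b)}_*\mathcal{D}^{(b)}_s(-(b-1))$ is an iterated extension of two kinds of pieces:
\begin{itemize}
\item the single piece $\kappa^{(a)}_*\mathcal{D}^{(a)}_{b-1-s}$, with multiplicity $\mathcal{O}_C$ (up to the bookkeeping shift);
\item pieces $\kappa^{(a)}_*\mathcal{D}^{(a)}_t\otimes V_t$ with $b\le t<a$.
\end{itemize}

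Next apply the exact functor $\varphi_*\phi^!$ to this filtration, which gives a filtration of the left-hand side. Corollary~\ref{dualcomp2} computes each graded piece. For $b\le t<a$ we get $\varphi_*\phi^!\kappa^{(a)}_*\mathcal{D}^{(a)}_t=0$, so all these pieces vanish. For $t=b-1-s$ we get $\kappa^{(b)}_*\mathcal{D}^{(b)}_{b-1-(b-1-s)}(-(b-1))=\kappa^{(b)}_*\mathcal{D}^{(b)}_s(-(b-1))$. The filtration therefore collapses to a single nonzero graded piece, so the left-hand side is isomorphic to $\kappa^{(b)}_*\mathcal{D}^{(b)}_s(-(b-1))$, after checking that the shifts $[1]$ and the multiplicity $\mathcal{O}_C$ match. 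Here $\mathcal{O}_C$ acts trivially because everything is over $C$ and is $\boxtimes\mathcal{O}_C$. The shift has to be accounted for consistently with the shift in Lemma~\ref{phidualcom}. In the case $a=b$ this is immediate, since Corollary~\ref{dualcompuprop} already gives $\kappa^{(a)}_*\mathcal{D}^{(a)}_{b-1-s}$ on the nose.

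The main obstacle is to confirm that the resulting isomorphism is the natural one, namely the unit of the adjunction $\phi_*\varphi^*\dashv\varphi_*\phi^!$ evaluated on $\kappa^{(b)}_*\mathcal{D}^{(b)}_s(-(b-1))$, and not merely an abstract isomorphism. I would handle this by adjunction. The unit corresponds to the identity of $\phi_*\varphi^*(-)$. The surviving graded piece is exactly the image of the top dual object under both filtrations, so the unit restricted to it is an isomorphism, which I would check on $\mathrm{Hom}$ from $\mathcal{O}_{\mathcal{P}(b_i)}(s)$ using Lemma~\ref{orddualcomp}. The cone of the unit then has vanishing graded pieces, hence is zero.

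A secondary point is the sign and twist bookkeeping in the identity $\phi^!=\phi^*\otimes\mathcal{O}_{\mathcal{W}}((b-1)E)$, together with the restriction formula $\mathcal{O}_{\mathcal{W}}(tE)|_E\simeq\mathcal{O}(-t)\boxtimes\mathcal{O}(-t)$. This is needed to ensure that the twists $(-(b-1))$ appearing in Corollaries~\ref{dualcompuprop} and \ref{dualcomp2} align. I would verify it once on the Koszul-type filtration of Lemma~\ref{phidualcom}.
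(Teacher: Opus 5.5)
Your proposal is correct and is essentially the paper's proof: take the filtration of $\phi_*\varphi^*\kappa^{(b)}_*\mathcal{D}^{(b)}_s(-(b-1))$ with factors $V_t\otimes\kappa^{(a)}_*\mathcal{D}^{(a)}_t$, apply $\varphi_*\phi^!$, and use Corollary~\ref{dualcomp2} to see that only the factor with $t=b-1-s$ survives, which returns $\kappa^{(b)}_*\mathcal{D}^{(b)}_s(-(b-1))$. Your extra argument that this isomorphism is the unit of $\phi_*\varphi^*\dashv\varphi_*\phi^!$ is not in the paper, which only asserts an abstract isomorphism, but it is exactly what the full-faithfulness deduction in Corollary~\ref{standardlemma2} tacitly needs.
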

\begin{proof}
The object $\phi_{*}\varphi^{*}\kappa^{(b)}_{*}\mathcal{D}^{(b)}_{s}(-(b-1))$ admits a filtration with factors $V_t\otimes\kappa^{(a)}_{*}\mathcal{D}^{(a)}_{t}$ for $b\le t<a$, together with the factor $\kappa^{(a)}_{*}\mathcal{D}^{(a)}_{b-1-s}$ by Corollary \ref{dualcompuprop}. Applying $\varphi_{*}\phi^{!}$ to these factors, Corollary  \ref{dualcomp2} implies that only $\varphi_{*}\phi^{!}\kappa^{(a)}_{*}\mathcal{D}^{(a)}_{b-1-s}$ contributes, which gives precisely $\kappa^{(b)}_{*}\mathcal{D}^{(b)}_{s}(-(b-1))$.
\end{proof}

\begin{lemma}\label{equalcollection}
In $\mathrm{D}^b(\mathcal{P}(a_i)\times C)$, the following two decompositions are equivalent:
\[
\langle \mathcal{D}^{(a)}_{a-1},\ldots,\mathcal{D}^{(a)}_{b}\rangle
\simeq
\langle \mathcal{O}_{\mathcal{P}(a_i)\times C}(-e),\ldots,\mathcal{O}_{\mathcal{P}(a_i)\times C}(-1)\rangle,
\]
where $e:=a-b>0$.
\end{lemma}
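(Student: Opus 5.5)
The plan is to argue exactly as in Lemma~\ref{mutationlemma}, but now inside $\mathrm{D}^b(\mathcal{P}(a_i)\times C)$. There the relevant full decomposition is
\[
\mathrm{D}^b(\mathcal{P}(a_i))=\langle \mathcal{O},\mathcal{O}(1),\ldots,\mathcal{O}(a-1)\rangle ,
\]
tensored with $\mathrm{D}^b(C)$ via $\boxtimes$. Its dual collection is
\[
\langle \mathcal{D}^{(a)}_{a-1},\ldots,\mathcal{D}^{(a)}_0\rangle,
\qquad
\mathcal{D}^{(a)}_i=\mathbb{L}_{\langle \mathcal{O},\ldots,\mathcal{O}(i-1)\rangle}\mathcal{O}(i).
\]
The key observation is that mutation does not change the subcategory generated by an initial segment. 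Hence for every $k$,
\[
\langle \mathcal{D}^{(a)}_{k-1},\ldots,\mathcal{D}^{(a)}_0\rangle=\langle \mathcal{O},\ldots,\mathcal{O}(k-1)\rangle .
\]
Taking $k=b$, the right orthogonals agree as well:
\[
\langle \mathcal{D}^{(a)}_{a-1},\ldots,\mathcal{D}^{(a)}_b\rangle
={}^{\perp}\langle \mathcal{O},\ldots,\mathcal{O}(b-1)\rangle .
\]
This holds because, in a full semiorthogonal decomposition $\langle \mathcal{A},\mathcal{B}\rangle$, the first component is determined by the second as ${}^{\perp}\mathcal{B}$.

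It then remains to show that $\langle\mathcal{O}(-e),\ldots,\mathcal{O}(-1)\rangle$ is the same left orthogonal. I would use the Serre functor of the weighted projective stack, $S=-\otimes\omega_{\mathcal{P}(a_i)}[a\text{-dim}]$ with $\omega=\mathcal{O}(-a)$. Applying it to the standard collection gives the rotated full exceptional collection
\[
\mathrm{D}^b(\mathcal{P}(a_i))=\langle \mathcal{O}(b-a),\ldots,\mathcal{O}(-1),\mathcal{O},\ldots,\mathcal{O}(b-1)\rangle,
\]
obtained by moving $\mathcal{O}(b),\ldots,\mathcal{O}(a-1)$ to the front and twisting by $\mathcal{O}(-a)$. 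Since $b-a=-e$, this yields
\[
{}^{\perp}\langle \mathcal{O},\ldots,\mathcal{O}(b-1)\rangle=\langle \mathcal{O}(-e),\ldots,\mathcal{O}(-1)\rangle .
\]
Combining this with the previous paragraph gives the claimed equality. The base $C$ adds nothing: all semiorthogonal decompositions are relative products $\mathrm{D}^b(\mathcal{P})\boxtimes\mathrm{D}^b(C)$, and the mutation formulas are compatible with $\boxtimes\,\mathcal{O}_C$ because $C$ is smooth affine (formal). Concretely, Künneth lets me verify the orthogonality statements factorwise.

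The main point requiring care is the rotation step. Twisting a full exceptional collection by $\omega$ and moving a block to the other end is the standard ``helix'' property, valid whenever a Serre functor exists. For the smooth proper DM stack $\mathcal{P}(a_i)$ the Serre functor is $\otimes\mathcal{O}(-a)[k-1]$, so the rotation is legitimate. The only thing to check is the stacky canonical class $\omega_{\mathcal{P}(a_i)}\simeq\mathcal{O}(-\sum a_i)$, which holds via the Euler sequence on the quotient presentation. I would also verify directly that $\mathrm{Ext}^*(\mathcal{O}(j),\mathcal{O}(-m))=0$ for $0\le j<b$ and $1\le m\le e$, i.e. that $H^*(\mathcal{P}(a_i),\mathcal{O}(-m-j))=0$ for $0<m+j<a$. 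This follows from the cohomology of weighted projective stacks, where the top cohomology appears only in degree $\le -a$. This confirms the semiorthogonality independently of the Serre functor argument.
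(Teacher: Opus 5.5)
Your proposal is correct and is essentially the paper's own argument. The paper likewise rewrites the full dual decomposition as $\langle \mathcal{D}^{(a)}_{a-1},\ldots,\mathcal{D}^{(a)}_{b},\mathcal{O},\ldots,\mathcal{O}(b-1)\rangle$ and compares it with the full decomposition $\langle \mathcal{O}(-e),\ldots,\mathcal{O}(-1),\mathcal{O},\ldots,\mathcal{O}(b-1)\rangle$, so both first blocks are the orthogonal of the same block $\langle \mathcal{O},\ldots,\mathcal{O}(b-1)\rangle$; your Serre-functor rotation (or, more simply, noting that the second collection is the standard one twisted by $\mathcal{O}(-e)$) only makes explicit the fullness of that second decomposition, which the paper takes for granted.
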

\begin{proof}
Since
\[
\langle \mathcal{D}^{(a)}_{a-1},\ldots,\mathcal{D}^{(a)}_{b},\mathcal{D}^{(a)}_{b-1},\ldots,\mathcal{D}^{(a)}_{0}\rangle
\simeq
\langle \mathcal{D}^{(a)}_{a-1},\ldots,\mathcal{D}^{(a)}_{b},\mathcal{O}_{\mathcal{P}(a_i)\times C},\ldots,\mathcal{O}_{\mathcal{P}(a_i)\times C}(b-1)\rangle
\]
and
\[
\simeq
\langle \mathcal{O}_{\mathcal{P}(a_i)}(-e),\ldots,\mathcal{O}_{\mathcal{P}(a_i)}(-1),\mathcal{O}_{\mathcal{P}(a_i)\times C},\ldots,\mathcal{O}_{\mathcal{P}(a_i)\times C}(b-1)\rangle,
\]
we obtain the claimed equivalence of decompositions.
\end{proof}
\begin{corollary}\label{standardlemma1}
By Corollary \ref{dualcomp2} and Lemma \ref{equalcollection} we have
\[
\varphi_{*}\phi^{!}\kappa^{(a)}_{*}\mathcal{O}_{\mathcal{P}(a_i)\times C}(-t)=0,
\qquad
0<t\le e .
\]
\end{corollary}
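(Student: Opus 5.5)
The plan is to reduce the vanishing to Corollary~\ref{dualcomp2} by rewriting each $\mathcal{O}_{\mathcal{P}(a_i)\times C}(-t)$, for $0<t\le e$, as an iterated extension of the dual objects $\mathcal{D}^{(a)}_s$ with $b\le s<a$. Then we use that the exact functor $\varphi_*\phi^!$ kills all of these.

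First, fix $t$ with $0<t\le e$. By Lemma~\ref{equalcollection}, the triangulated subcategory $\langle \mathcal{D}^{(a)}_{a-1},\ldots,\mathcal{D}^{(a)}_{b}\rangle$ of $\mathrm{D}^b(\mathcal{P}(a_i)\times C)$ coincides with $\langle \mathcal{O}(-e),\ldots,\mathcal{O}(-1)\rangle$. Hence $\mathcal{O}_{\mathcal{P}(a_i)\times C}(-t)$ lies in the former. More concretely, the mutation argument in the proof of Lemma~\ref{equalcollection} expresses $\mathcal{O}(-t)$ as a finite Postnikov tower whose factors are of the form $\mathcal{D}^{(a)}_s\boxtimes W_s$, where $b\le s<a$ and $W_s$ is a complex pulled back from $C$ (i.e. an object of $\mathrm{D}^b(C)$, by the convention $\mathcal{D}^{(a)}_s=\mathcal{D}^{(a)}_s\boxtimes\mathcal{O}_C$). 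I would make this explicit by applying the left mutations $\mathbb{L}_{\langle\mathcal{O},\ldots,\mathcal{O}(b-1)\rangle}$ to $\mathcal{O}(-t)$ and comparing with the full collection $\langle \mathcal{D}^{(a)}_{a-1},\ldots,\mathcal{D}^{(a)}_0\rangle$. The key observation is that $\mathcal{O}(-t)$ is already right orthogonal to $\mathcal{O},\ldots,\mathcal{O}(b-1)$ for $t\le e$, since $\mathrm{H}^*(\mathcal{P}(a_i),\mathcal{O}(-t-j))=0$ for $0<t+j<a$. Because of this orthogonality, its components along $\mathcal{D}^{(a)}_{b-1},\ldots,\mathcal{D}^{(a)}_0$ vanish.

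Second, apply $\kappa^{(a)}_*$ and then the triangulated functor $\varphi_*\phi^!$. Both are exact functors of triangulated categories, so they preserve the Postnikov tower. By the first case of Corollary~\ref{dualcomp2}, each factor satisfies $\varphi_*\phi^!\kappa^{(a)}_*(\mathcal{D}^{(a)}_s\boxtimes W_s)=0$ for $b\le s<a$. Here the $W_s$ twist is handled by the same dévissage/projection-formula remark used in the Corollary following Lemma~\ref{flipullbackcom}, since everything is linear over $C$. An iterated extension of zero objects is zero, which gives the claim.

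The main obstacle is the first step: checking that the equivalence in Lemma~\ref{equalcollection} really places $\mathcal{O}(-t)$ inside the span of $\mathcal{D}^{(a)}_s$ with $s\ge b$ only, with no leakage into $\mathcal{D}^{(a)}_{s}$ for $s<b$. Those lower objects are not killed; instead they map to $\kappa^{(b)}_*\mathcal{D}^{(b)}_{b-1-s}(-(b-1))$. The orthogonality computation above is what excludes them, and it uses $t\le e=a-b$ together with the Bott-type vanishing on the weighted projective stack. If the relative version over $C$ causes trouble, I would reduce to $C$ a point by base change, since the flip is a product with $C$.
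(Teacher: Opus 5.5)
Your proposal is correct and follows the paper's route. Lemma~\ref{equalcollection} places $\mathcal{O}_{\mathcal{P}(a_i)\times C}(-t)$, for $0<t\le e$, in the subcategory generated by the $\mathcal{D}^{(a)}_s$ with $b\le s<a$, and the exact functor $\varphi_*\phi^!\kappa^{(a)}_*$ kills each of these by the first case of Corollary~\ref{dualcomp2}. Your orthogonality check, $\mathrm{H}^*(\mathcal{P}(a_i),\mathcal{O}(-t-j))=0$ for $0\le j\le b-1$ since $t+j\le a-1$, verifies directly that $\mathcal{O}(-t)$ lies in $\langle\mathcal{O},\ldots,\mathcal{O}(b-1)\rangle^{\perp}=\langle\mathcal{D}^{(a)}_{a-1},\ldots,\mathcal{D}^{(a)}_b\rangle$, a step the paper leaves implicit.
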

Like the situation in Lemma \ref{ptspanningclass} or \ref{wblspanningclass}, we have:
\begin{lemma}
For any $0 \le s < b$, and any dual pre-tilting object $\mathcal{D}_s^{(b)}$ on $\mathcal{P}(b_i)\times C$, the objects $\kappa^{(b)}_{*}\mathcal{D}_s^{(b)}(-(b-1))$ form a spanning class in $\mathrm{D}^b(\mathcal{X}^{+})$. An analogous statement holds for $\mathrm{D}^b(\mathcal{X})$.
\end{lemma}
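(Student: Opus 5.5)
The plan is to reduce to the point spanning class of Lemma~\ref{ptspanningclass}, using that the dual collection $\{\mathcal{D}^{(b)}_s(-(b-1))\}_{0\le s<b}$ is full on the fibre $\mathcal{P}(b_i)$.

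Since $\mathcal{X}^{+}\simeq \mathds{A}_{\mathcal{P}(b_i)}[[\mathcal{E}^+]]$ is a (formal) smooth stack, it is Gorenstein. Because we work formally along the zero section, every stacky closed point $x$ lies on $\mathcal{P}(b_i)\times C$. By Lemma~\ref{ptspanningclass}, the objects $\mathrm{k}_x\otimes\chi$ (with $x$ such a point and $\chi$ running over characters of $G_x$) form a spanning class. It therefore suffices to show that every $\mathrm{k}_x\otimes\chi$ lies in the thick triangulated subcategory $\mathcal{S}$ generated by the objects $\kappa^{(b)}_*\mathcal{D}^{(b)}_s(-(b-1))$, where these are understood as $\mathcal{D}^{(b)}_s(-(b-1))\boxtimes\mathcal{O}_C$ as in the paper's convention. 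Granting this, the conclusion follows by dévissage. Suppose $\operatorname{Ext}^*(F,a)=0$ for all generators $a$. The long exact $\operatorname{Ext}$ sequences then give $\operatorname{Ext}^*(F,\mathrm{k}_x\otimes\chi)=0$ for every stacky point, so $F=0$. The other direction, $\operatorname{Ext}^*(a,F)=0\Rightarrow F=0$, is obtained in the same way, or by applying the dualizing functor exactly as in the proof of Lemma~\ref{ptspanningclass}.

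To prove $\mathrm{k}_x\otimes\chi\in\mathcal{S}$, I proceed in two steps.

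\emph{Step 1: fibre generation.} On $\mathcal{P}(b_i)$ the collection $\mathcal{O},\dots,\mathcal{O}(b-1)$ is full, so its dual $\{\mathcal{D}^{(b)}_s\}_{0\le s<b}$ generates $\mathrm{D}^b(\mathcal{P}(b_i))$, being obtained from it by mutations. Twisting by the line bundle $\mathcal{O}(-(b-1))$ is an autoequivalence, so $\{\mathcal{D}^{(b)}_s(-(b-1))\}$ is again full. Hence for any stacky point $y\in\mathcal{P}(b_i)$ and character $\chi$, the object $\mathrm{k}_y\otimes\chi$ lies in the subcategory generated by these twisted objects. This step requires the stacky twisted sheaves to be captured, which holds because the collection is full on the stack $\mathcal{P}(b_i)$ and not merely on its coarse space.

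\emph{Step 2: the $C$ direction.} Since $C=\operatorname{Spec}\mathrm{k}[[z_1,\dots,z_j]]$ is regular local, $\mathrm{k}(c)$ has a finite Koszul resolution by copies of $\mathcal{O}_C$. Taking the external tensor product with the fibre generation of Step~1 gives
\[
\mathrm{k}_y\otimes\chi\boxtimes \mathrm{k}(c)\in\big\langle \mathcal{D}^{(b)}_s(-(b-1))\boxtimes\mathcal{O}_C\big\rangle \quad\text{inside } \mathrm{D}^b(\mathcal{P}(b_i)\times C).
\]
The functor $\kappa^{(b)}_*$ is exact and triangulated, so applying it places every $\mathrm{k}_x\otimes\chi$ in $\mathcal{S}$.

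The analogous statement for $\mathcal{X}$ is proved identically, with $\mathcal{P}(a_i)$ and $\mathcal{D}^{(a)}_s$ in place of $\mathcal{P}(b_i)$ and $\mathcal{D}^{(b)}_s$.

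The main obstacle I expect is in Step 2: checking that external products and $\kappa^{(b)}_*$ preserve the equivariant (stacky) structure, so that all characters $\chi$ of the stabilizers at points of $\mathcal{P}(b_i)\times C$ genuinely arise. I would handle this by working on the $\mathbb{G}_m$-equivariant GIT presentation of $\mathcal{X}^+$. There, $\mathcal{O}_C$ carries weight $0$, so the Koszul resolution on $C$ is equivariant with trivial twists, and the characters come entirely from the fibre factor.
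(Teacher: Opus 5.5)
Your proof is correct. The paper gives no separate argument for this lemma; it only says the lemma holds ``like'' Lemma~\ref{ptspanningclass} or Lemma~\ref{wblspanningclass}. Your proof makes the first of these precise. You place every $\mathrm{k}_x\otimes\chi$ in the thick closure of the objects $\kappa^{(b)}_*\mathcal{D}^{(b)}_s(-(b-1))$, using three steps: fullness of the mutated, twisted collection on $\mathcal{P}(b_i)$, then the weight-zero Koszul resolution in the $z$-variables, then $\kappa^{(b)}_*$. You then transport $\operatorname{Ext}$-vanishing through that closure.

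This is the better of the two templates. The key step of Lemma~\ref{wblspanningclass} runs ``$F$ is supported on the center, hence an iterated extension of the generators, hence $\operatorname{Ext}^*(F,F)=0$''. It does not carry over verbatim, because a test object on the formal neighbourhood, such as $\mathcal{O}_{\mathcal{X}^+}$, is not supported on $\mathcal{P}(b_i)\times C$.

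Two points should be stated explicitly.
\begin{enumerate}
\item The statement genuinely depends on the formal setting you invoke. On an algebraic neighbourhood such as the total space of $\mathcal{E}^+$, there are closed points off the zero section. Skyscrapers there are orthogonal in both directions to everything supported on $\mathcal{P}(b_i)\times C$, so the lemma would fail.
\item Lemma~\ref{ptspanningclass} is stated for stacks of finite type, so you should rerun its proof on an adic atlas. This works because an $I$-adically complete ring has $I$ in its Jacobson radical. Hence every nonzero coherent module has a maximal ideal in its support lying over the zero section, and there Lemma~\ref{spanlemma1} applies.
\end{enumerate}
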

\begin{corollary} \label{standardlemma2}
The functor $\phi_{*}\varphi^{*}$ is admissible and fully faithful.
\end{corollary}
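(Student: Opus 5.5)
The functor $\Phi:=\phi_*\varphi^*\colon \mathrm{D}^b(\mathcal{X}^+)\to\mathrm{D}^b(\mathcal{X})$ has a right adjoint $\varphi_*\phi^!$, and I will prove full faithfulness by checking it on the spanning class. Admissibility then follows from a left adjoint: by Grothendieck duality along $\phi$ and $\varphi$, $\varphi_*\phi^*$ twisted by the relative canonical bundles serves as left adjoint. A fully faithful functor with both adjoints has admissible image. Since $\mathcal{W}$, $\mathcal{X}$ and $\mathcal{X}^+$ are smooth Deligne--Mumford stacks (formal toric models), all pullbacks and pushforwards preserve $\mathrm{D}^b$, so the adjoint triple is well defined without any Rees-type correction.

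For full faithfulness I apply \cite[Proposition 1.49]{Huybrechts} to the spanning class $\Omega$ of $\mathrm{D}^b(\mathcal{X}^+)$ from the preceding lemma. It consists of the objects $\kappa^{(b)}_*\mathcal{D}^{(b)}_s(-(b-1))$ for $0\le s<b$, where $\mathcal{O}_C$ may be replaced by $\mathrm{D}^b(C)$ or skyscrapers of $C$ via dévissage, together with skyscraper sheaves $\mathrm{k}_x\otimes\chi$ at points off $\mathcal{P}(b_i)\times C$. It suffices to show
\[
\operatorname{Ext}^*_{\mathcal{X}}(\Phi\alpha,\Phi\beta)\simeq \operatorname{Ext}^*_{\mathcal{X}^+}(\alpha,\varphi_*\phi^!\Phi\beta)\simeq\operatorname{Ext}^*_{\mathcal{X}^+}(\alpha,\beta)
\]
for $\alpha,\beta\in\Omega$, that is, that the unit $\beta\to\varphi_*\phi^!\phi_*\varphi^*\beta$ is an isomorphism on $\Omega$. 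For the objects on the flipping locus this is exactly Lemma~\ref{trandualprop}. Its input is the filtration of Corollary~\ref{dualcompuprop}, whose factors are $\kappa^{(a)}_*\mathcal{D}^{(a)}_t\otimes V_t$. Corollary~\ref{dualcomp2} kills the factors with $b\le t<a$ and sends the unique surviving factor $t=b-1-s$ back to $\kappa^{(b)}_*\mathcal{D}^{(b)}_s(-(b-1))$. For skyscrapers away from the exceptional locus, $\phi$ and $\varphi$ are isomorphisms there, and the claim is trivial.

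The step needing care is that Lemma~\ref{trandualprop} only gives an abstract isomorphism, while Huybrechts' criterion requires the unit map itself to be an isomorphism, compatibly with morphisms inside $\Omega$. I plan to argue that the filtrations constructed in Lemma~\ref{flipullbackcom} and Lemma~\ref{phidualcom} are functorial, being induced from Koszul complexes. The surviving graded piece therefore receives the unit map, and this map is nonzero on $\mathcal{O}_C$-generators, hence an isomorphism. Once this is established, full faithfulness follows, and admissibility follows from the existence of both adjoints.
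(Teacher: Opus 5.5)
Your proposal follows the paper's proof. It applies Huybrechts' spanning-class criterion (Proposition~1.49) to the objects $\kappa^{(b)}_*\mathcal{D}^{(b)}_s(-(b-1))$, uses the adjunction $\phi_*\varphi^*\dashv\varphi_*\phi^!$, and concludes from the lemma giving $\varphi_*\phi^!\phi_*\varphi^*\kappa^{(b)}_*\mathcal{D}^{(b)}_s(-(b-1))\simeq\kappa^{(b)}_*\mathcal{D}^{(b)}_s(-(b-1))$. Your extra points fill in what the paper's one-line argument leaves implicit: admissibility via the left adjoint $\varphi_!\phi^*$, and requiring the unit itself, not just an abstract isomorphism, to be invertible. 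However, your claim that a map ``nonzero on $\mathcal{O}_C$-generators'' is an isomorphism still needs a Nakayama argument modulo $\mathfrak{m}_C$ to be complete.
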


\begin{proof}
By \cite[Proposition 1.49]{Huybrechts}, it suffices to show
\[
\operatorname{Ext}^{*}_{\mathcal{X}}
\big(
\phi_{*}\varphi^{*}\mathcal{F},
\phi_{*}\varphi^{*}\mathcal{G}
\big)
\simeq
\operatorname{Ext}^{*}_{\mathcal{X}^{+}}(\mathcal{F},\mathcal{G})
\]
for any $\mathcal{F},\mathcal{G}\in \kappa^{(b)}_{*}\mathcal{D}^{(b)}_{s}(-(b-1))$, where $0\le s<b$. However,
\[
\operatorname{Ext}^{*}_{\mathcal{X}}
\big(
\phi_{*}\varphi^{*}\mathcal{F},
\phi_{*}\varphi^{*}\mathcal{G}
\big)
\simeq
\operatorname{Ext}^{*}_{\mathcal{X}^{+}}
\big(
\mathcal{F},
\varphi_{*}\phi^{!}\phi_{*}\varphi^{*}\mathcal{G}
\big)
\simeq
\operatorname{Ext}^{*}_{\mathcal{X}^{+}}(\mathcal{F},\mathcal{G}),
\]
which follows immediately from the Lemma \ref{trandualprop}.
\end{proof}
\begin{lemma} \label{standardlemma3}
We have the cohomological computation
\[
\operatorname{Ext}^{*}_{\mathcal{X}}
\big(
\kappa^{(a)}_{*}\mathcal{O}_{\mathcal{P}(a_i)\times C},
\kappa^{(a)}_{*}\mathcal{O}_{\mathcal{P}(a_i)\times C}(-t)
\big)
=
\begin{cases}
\mathcal{O}_{C}, & t=0,\\[4pt]
0, & 0<t < e .
\end{cases}
\]
\end{lemma}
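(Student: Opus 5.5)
We compute $\operatorname{Ext}^*_{\mathcal{X}}(\kappa^{(a)}_*\mathcal{O}_{\mathcal{P}(a_i)\times C},\kappa^{(a)}_*\mathcal{O}_{\mathcal{P}(a_i)\times C}(-t))$ in the same way as Lemma~\ref{genlemma}. First adjoin $\kappa^{(a)}_*$ to get
\[
\operatorname{Ext}^*_{\mathcal{X}}\big(\kappa^{(a)}_*\mathcal{O},\kappa^{(a)}_*\mathcal{O}(-t)\big)\simeq \operatorname{Ext}^*_{\mathcal{P}(a_i)\times C}\big(\kappa^{(a)*}\kappa^{(a)}_*\mathcal{O},\mathcal{O}(-t)\big).
\]
This reduces the problem to understanding the derived self-intersection of the zero section of the vector bundle $\mathcal{E}$.

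Formally, $\mathcal{X}$ is the total space of $\mathcal{E}=\bigoplus_{i=1}^{\ell}\mathcal{O}(-b_i)\oplus\mathcal{O}^{\oplus j}$ over $\mathcal{P}(a_i)$. The zero section $\mathcal{P}(a_i)\times C$ is cut out by the regular sequence $y_1,\dots,y_\ell$, so the $\mathbb{G}_m$-equivariant Koszul resolution of the paper's Koszul computation applies. Its cohomology sheaves are
\[
\mathcal{H}^{-q}\big(\kappa^{(a)*}\kappa^{(a)}_*\mathcal{O}\big)\simeq \wedge^q N^\vee,\qquad N^\vee=\bigoplus_{i=1}^{\ell}\mathcal{O}_{\mathcal{P}(a_i)}(b_i)\boxtimes\mathcal{O}_C .
\]
We use the ordinary exterior-power version here; the excess-triangle Lemma~\ref{excdis} is the rank-one instance. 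Through the associated spectral sequence (or the induced filtration), the Ext group above is built from the pieces
\[
\operatorname{Ext}^*\big(\wedge^q N^\vee[q],\mathcal{O}(-t)\big)\simeq \bigoplus_{|I|=q} H^{*}\big(\mathcal{P}(a_i),\mathcal{O}(-t-b_I)\big)\otimes\mathcal{O}_C ,\qquad b_I:=\sum_{i\in I}b_i ,
\]
with $0\le q\le \ell$.

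We then evaluate these pieces using the cohomology of weighted projective stacks. The group $H^*(\mathcal{P}(a_i),\mathcal{O}(m))$ vanishes for $-a<m<0$. The $H^0$ part is nonzero only for $m\ge 0$. The top part $H^{k-1}$ is nonzero only for $m\le -a$, where $a=\sum a_i$ as in the paper's notation.

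The exponent occurring is $m=-t-b_I$. The largest value of $b_I$ is $b=\sum b_i$, so $m$ ranges over $[-t-b,\,-t]$.
\begin{itemize}
\item If $0<t<e=a-b$, then $-t-b>-a$ and $-t<0$. Every summand therefore has $m\in(-a,0)$, so every summand vanishes and the Ext group is zero.
\item If $t=0$, then $m\in[-b,0]$ and $-b>-a$ because $e>0$. The only surviving summand is $q=0$, namely $H^0(\mathcal{O})\otimes\mathcal{O}_C=\mathcal{O}_C$.
\end{itemize}
Since only one summand survives in each case, the spectral sequence degenerates trivially. This gives $\mathcal{O}_C$ for $t=0$ and $0$ for $0<t<e$, as claimed.

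The main point to handle carefully is the derived restriction $\kappa^{(a)*}\kappa^{(a)}_*$ on the stack. We must ensure that the equivariant Koszul complex really computes it, and that the $\mathbb{G}_m$-weights produce the twists $\mathcal{O}(b_i)$ rather than $\mathcal{O}(-b_i)$. We fix the sign convention from the displayed toric matrix: the $y_i$ have weight $-b_i$, so $N=\mathcal{E}$ and $N^\vee$ carries the positive twists. We also need the weighted projective vanishing range $-a<m<0$, which is the classical statement behind $\mathrm{D}^b(\mathcal{P}(a_i))=\langle\mathcal{O},\dots,\mathcal{O}(a-1)\rangle$. Everything else is bookkeeping over $C$, which enters only as a flat factor.
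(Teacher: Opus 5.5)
Your proposal is correct and is essentially the paper's argument: resolve $\kappa^{(a)}_*\mathcal{O}_{\mathcal{P}(a_i)\times C}$ by the equivariant Koszul complex on the $y_i$ with terms $\mathcal{O}_{\mathcal{X}}(b_I)$, restrict to $\mathcal{P}(a_i)\times C$, and use the vanishing of $H^*(\mathcal{O}(m))$ for $-a<m<0$; your detour through $\kappa^{(a)*}\kappa^{(a)}_*$ and $\wedge^q N^\vee$ is the same computation repackaged. Your explicit use of $e>0$ in the $t=0$ case is genuinely needed, and the paper uses it only implicitly: for $e=0$ the top Koszul term contributes $H^{k-1}(\mathcal{O}(-a))\neq 0$, so the $t=0$ value would no longer be just $\mathcal{O}_C$.
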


\begin{proof}
Considering that $\mathcal{P}(a_i)\times C$ admits a Koszul resolution in $\mathcal{X}$, we have
\[
0
\to
\mathcal{O}_{\mathcal{X}}\!\big(\sum_{i=1}^{k}b_i\big)
\xrightarrow{F_{k-1}}
\cdots
\to
\bigoplus_{i<j}
\mathcal{O}_{\mathcal{X}}(b_i+b_j)
\xrightarrow{F_{1}}
\bigoplus_{i=1}^{k}
\mathcal{O}_{\mathcal{X}}(b_i)
\xrightarrow{F_{0}}
\mathcal{O}_{\mathcal{X}}
\to
\kappa^{(a)}_{*}\mathcal{O}_{\mathcal{P}(a_i)\times C}
\to
0 .
\]
Restricting the above complex to $\mathcal{P}(a_i)\times C$, and noting that
\[
\mathcal{O}_{\mathcal{X}}(t)\big|_{\mathcal{P}(a_i)\times C}
=
\mathcal{O}_{\mathcal{P}(a_i)\times C}(t),
\]
together with
\[
\mathrm{H}^{*}
\big(\mathcal{P}(a_i)\times C,
\mathcal{O}_{\mathcal{P}(a_i)\times C}(-t)
\big)
=
\begin{cases}
\mathcal{O}_{C}, & t=0,\\[4pt]
0, & 0<t<a,
\end{cases}
\]
we obtain the claim.
\end{proof}
\begin{proof}[Proof of Proposition~\ref{BOflopprop}]
By the previous results \ref{standardlemma1}, \ref{standardlemma2} and \ref{standardlemma3}, it remains to prove the generation of
\[
\mathcal{D}
:=
\left\langle
\operatorname{Im}\big(\kappa^{(a)}_{*}\mathrm{D}^{b}(C)\otimes \mathcal{O}_{\mathcal{P}}(-e)\big),
\ldots,
\operatorname{Im}\big(\kappa^{(a)}_{*}\mathrm{D}^{b}(C)\otimes \mathcal{O}_{\mathcal{P}}(-1)\big),
\operatorname{Im}\,\phi_{*}\varphi^{*}
\right\rangle .
\]
Since all the functors involved are admissible, it suffices to show that $\mathcal{D}$ contains a spanning class of $\mathrm{D}^{b}(\mathcal{X})$.

By Proposition~\ref{dualcompuprop}, $\phi_{*}\varphi^{*}\kappa^{(b)}_{*}\mathcal{D}^{(b)}_{s}(-(b-1))$ admits a filtration with factors $V_t\otimes\kappa^{(a)}_{*}\mathcal{D}^{(a)}_{t}$ for $b\le t<a$, together with the factor $\kappa^{(a)}_{*}\mathcal{D}^{(a)}_{b-1-s}$. By Lemma~\ref{trandualprop}, the former objects already belong to $\mathcal{D}$, hence $\kappa^{(a)}_{*}\mathcal{D}^{(a)}_{b-1-s}$ also belongs to $\mathcal{D}$. Applying Lemma~\ref{trandualprop} once again, we conclude that $\kappa^{(a)}_{*}\mathcal{D}^{(a)}_{t}$ belongs to $\mathcal{D}$ for all $0\le t<a$. Therefore $\mathcal{D}$ contains a spanning class of $\mathrm{D}^{b}(\mathcal{X})$, completing the proof.
\end{proof}
Clearly, the above argument extends suitably to the global setting of $\mathcal{X}$, provided it is analytically locally isomorphic to the situation considered above. However, in the local setting, we still need to consider more general cases.

\subsubsection{Type I section}

We now consider the weighted blow-up and weighted blow-down realization of the VGIT construction described in the previous section:
\[
\begin{tikzcd}[column sep=small, row sep=small]
& \bar{\mathcal{W}} \arrow[dl,"\bar{\phi}"'] \arrow[dr,"\bar{\varphi}"] & \\
\bar{\mathcal{X}} && \bar{\mathcal{X}}^{+}
\end{tikzcd}
\]
where $\bar{\mathcal{X}}=\big[\operatorname{Spec} k[[x_i,y_i,z_i]] \setminus V(x_i)\, /\, \mathbb{G}_m \big]$. We now consider a $\mathbb{G}_{m}$-equivariant element $f$ of $R:=k[[x_i,y_i,z_i]]$. In this section, we restrict to the basic situation where $f$ satisfies the following conditions:
\begin{enumerate}
\item $f$ is equivariant and has positive $\mathbb{G}_{m}$-degree, namely $f\in R_d$ for some $d>0$;
\item $f\in \langle x_i,y_i\rangle$, and moreover $f(x_i,y_i=0)\neq 0$ while $f(x_i=0,y_i)=0$;
\item $e:=\sum_{i=1}^{k}a_i-\sum_{i=1}^{l}b_i-d\ge0$.
\end{enumerate}
Such an $f$ defines a hypersurface $\mathcal{X}\subset \bar{\mathcal{X}}$. By taking the strict transform along the weighted blow-up and weighted blow-down, we obtain
\[
\begin{tikzcd}[column sep=small, row sep=small]
& \mathcal{W} \arrow[dl,"\phi"'] \arrow[dr,"\varphi"] & \\
\mathcal{X} && \mathcal{X}^{+}
\end{tikzcd}
\]
It is straightforward to see that $\mathcal{X}^{+}$ is also the hypersurface in $\bar{\mathcal{X}}^{+}$ defined by the same equation $f$ under the VGIT description. Hence, we may adopt the notation of the previous section to describe such hypersurface flips. We impose the following assumption:

\begin{itemize}
\item[4.] $\mathcal{X}$ and $\mathcal{X}^{+}$ are well-formed stacks, or at least have trivial stabilizers in codimension one.
\end{itemize}

In this situation, we have the stacky flipping loci $L\times C$ and $L^{+}\times C$, where
\[
L:=Z\big(f(x_i,y_i=0)\big)\subset \mathcal{P}(a_i),
\qquad
L^{+}:=Z\big(f(x_i=0,y_i)\big)=\mathcal{P}(b_i).
\]
Moreover, consider $L\times C\subset \mathcal{X}$ defined by the equations $\langle y_1,\ldots,y_k\rangle$. In particular, $\{y_1,\ldots,y_k,f\}$ forms a $\mathbb{G}_{m}$-equivariant regular sequence in $R$, hence by standard properties of local rings, $L\times C$ is a complete intersection in $\mathcal{X}$. On the other hand, although $\mathcal{P}(b_i)\times C\subset \mathcal{X}^{+}$, the locus $L^{+}\times C$ is not necessarily a complete intersection in $\mathcal{X}^{+}$. The exceptional divisor $E$ of $\mathcal{W}$ is naturally identified with $L\times L^{+}\times C$:
\[
\begin{tikzcd}[column sep=3.5em, row sep=2.2em]
L^{+}\times C
\arrow[r, hookrightarrow, "\kappa^{(b)}"]
\arrow[d]
& \mathcal{X}^{+}
\arrow[d] \\
C \arrow[r, equal]
& C
\end{tikzcd}
\quad\quad\text{and}\quad
\begin{tikzcd}[column sep=4em, row sep=3em]
E \simeq L\times L^{+}\times C
\arrow[r, "\iota^{(b)}"]
\arrow[d, "\mathrm{pr}^{(b,c)}"']
& \mathcal{W}
\arrow[d] \\
L^{+}\times C
\arrow[r, "\kappa^{(b)}"]
& \mathcal{X}^{+}
\end{tikzcd}
\]
We consider the following semi-orthogonal decomposition
\[
\mathrm{D}^{b}(L\times C)
=
\left\langle
\mathcal{A}^{(a)}_{-1},
\mathcal{O}_{L\times C},\ldots,\mathcal{O}_{L\times C}(a-d-1)
\right\rangle,
\]
and its associated dual pre-tilting collection
\[
\mathrm{D}^{b}(L\times C)
=
\left\langle
\mathcal{A}^{(a)}_{-1},
\mathcal{D}^{(a)}_{a-d-1},\ldots,\mathcal{D}^{(a)}_{0}
\right\rangle,
\]
where $\mathcal{D}^{(a)}_i := \mathbb{L}_{\langle \mathcal{O}_{L},\ldots,\mathcal{O}_{L}(i-1)\rangle}\mathcal{O}_{L}(i)$ for $0\le i<a-d$, and similarly define the corresponding constructions for $L^{+}\times C$. We also need the full saturated subcategory $\mathcal{K}^{(a)}_{-1}:=\left\langle \kappa^{(a)}_{*}\mathcal{A}^{(a)}_{-1}\right\rangle \subset \mathrm{D}^{b}(\mathcal{X})$.
\begin{proposition}\label{BOfloppropII}
The classical Bondal--Orlov formula gives a semiorthogonal decomposition analogous to the standard flip:
\[
\mathrm{D}^b(\mathcal{X})/\mathcal{K}^{(a)}_{-1}=
\left\langle
\operatorname{Im}\big(\kappa^{(a)}_* \mathcal{O}_{L\times C}(-e)\big),\,
\ldots,\,
\operatorname{Im}\big(\kappa^{(a)}_* \mathcal{O}_{L\times C}(-1)\big),\,
\operatorname{Im}\,\phi_*\mathcal{R}\varphi^*
\right\rangle,
\]
In particular, $\phi_{*}\mathcal{R}\varphi^{*}:\mathrm{D}^{b}(\mathcal{X}^{+})\to \mathrm{D}^{b}(\mathcal{X})/\mathcal{K}^{(a)}_{-1}$, and all functors involved are admissible and fully faithful.
\end{proposition}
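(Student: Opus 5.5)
The plan is to follow the proof of Proposition~\ref{BOflopprop}, working modulo $\mathcal{K}^{(a)}_{-1}$. The new ingredients are the hypersurface $f$, which forces the Rees pullback $\mathcal{R}\varphi^*$ in place of $\varphi^*$, and the fact that $L^{+}\times C=\mathcal{P}(b_i)\times C$ need not be a complete intersection in $\mathcal{X}^{+}$.

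\textbf{Step 1: define $\mathcal{R}\varphi^*$.} On $\mathcal{X}^{+}$ we apply the construction of Section~\ref{Kawamatablowup}: a Bourbaki sequence (Lemma~\ref{Bexactsequence}) followed by the equivariant standard resolution with respect to $val(y_i)=b_i$. This relies on the measured condition for $\{y_1,\dots,y_\ell,f\}$, which holds because the leading term of $f$ is $f(x_i,y_i=0)$. For maximal Cohen--Macaulay $M$ this produces $\mathcal{R}\varphi^*M$ with $\varphi_*\mathcal{R}\varphi^*M\simeq M$. The graded pieces lie in $\iota_*\mathcal{A}^{(a)}_{-1}\boxtimes(-)$, and their $\phi_*$-images lie in $\mathcal{K}^{(a)}_{-1}$. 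Hence $\phi_*\mathcal{R}\varphi^*$ is a well-defined triangulated functor to $\mathrm{D}^b(\mathcal{X})/\mathcal{K}^{(a)}_{-1}$.

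\textbf{Step 2: adjunction.} We show $\phi_*\mathcal{R}\varphi^*\dashv\varphi_*\phi^!$, with $\phi^!=\phi^*\otimes\mathcal{O}((b-1)E)$. The argument is the roof-diagram argument of the Kawamata section, using the inverse system of Principle~\ref{P1}. Well-definedness of $\varphi_*\phi^!$ on the quotient needs the vanishing $\varphi_*\phi^!\kappa^{(a)}_*\mathcal{A}^{(a)}_{-1}=0$. Since $\phi^!$ restricted to $E$ is $\mathrm{pr}^{(a)*}$ twisted by $\mathcal{O}_{L}(\ast)\boxtimes\mathcal{O}_{L^+}(\ast)$, this reduces to $\mathrm{H}^*(L,\mathcal{A}^{(a)}_{-1}(-t))=0$ for $0\le t<b$. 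That vanishing follows from the decomposition of $\mathrm{D}^b(L\times C)$ together with $a-d-b=e\ge 0$; the same inequality gives the left adjoint $\pi_!$-type functor.

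\textbf{Step 3: the computations.} For the pullback filtration we redo Lemma~\ref{flipullbackcom}, replacing the Koszul complex of $(x_i)$ by the Koszul complex of $(x_1,\dots,x_k)$ on the hypersurface $R/f$. Concretely we use the complete intersection of $\kappa^{(b)}$ together with $f$, i.e.\ a standard resolution of the kind in Section~\ref{Kawamatablowup}. It gives $\varphi^*\kappa^{(b)}_*\mathcal{O}_{L^+\times C}$ a filtration with factors $\iota_*\mathcal{D}^{(a)}_s\boxtimes\mathcal{O}_{L^+}(s)[1]$ for $0\le s<a-d$, plus factors in $\iota_*\mathcal{A}^{(a)}_{-1}\boxtimes(-)$. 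Pushing forward as in Corollaries~\ref{dualcompuprop} and~\ref{dualcomp2} and using Lemma~\ref{orddualcomp}, we get the analogue of Lemma~\ref{trandualprop}:
\[
\varphi_*\phi^!\phi_*\mathcal{R}\varphi^*\kappa^{(b)}_*\mathcal{D}^{(b)}_s(-(b-1))\simeq\kappa^{(b)}_*\mathcal{D}^{(b)}_s(-(b-1)).
\]
The objects $\kappa^{(b)}_*\mathcal{D}^{(b)}_s(-(b-1))$, together with skyscrapers off the flipping locus, form a spanning class by Lemma~\ref{ptspanningclass}, so full faithfulness follows as in Corollary~\ref{standardlemma2}. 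Semiorthogonality, the analogues of Corollary~\ref{standardlemma1} and Lemma~\ref{standardlemma3}, comes from the Koszul resolution of $L\times C$ by $(y_1,\dots,y_\ell)$ inside $\mathcal{X}$ and $\mathrm{H}^*(L,\mathcal{O}_L(-t))=0$ for $0<t<a-d$. Taken modulo $\mathcal{K}^{(a)}_{-1}$, this yields orthogonality for $0<t\le e$. Lemma~\ref{equalcollection} in the form $\langle\mathcal{D}^{(a)}_{a-d-1},\dots,\mathcal{D}^{(a)}_b\rangle\simeq\langle\mathcal{O}_L(-e),\dots,\mathcal{O}_L(-1)\rangle$ modulo $\mathcal{A}^{(a)}_{-1}$ is proved exactly as in Lemma~\ref{mutationlemma}.

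\textbf{Step 4: generation.} We run the proof of Proposition~\ref{BOflopprop} with the factors $\kappa^{(a)}_*\mathcal{D}^{(a)}_t$. Together with $\mathcal{K}^{(a)}_{-1}$ these give $\kappa^{(a)}_*\mathrm{D}^b(L\times C)$, hence a spanning class.

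\textbf{Main obstacle.} I expect Step 3 on the $\mathcal{X}^{+}$ side to be the hardest. There $L^{+}\times C$ is not a complete intersection, so the clean Koszul bookkeeping breaks down. I would first try to resolve $\kappa^{(b)}_*\mathcal{O}$ on $\mathcal{X}^{+}$ as the restriction of the ambient Koszul complex on $\bar{\mathcal{X}}^{+}$ tensored with the two-term complex of $f$. The extra terms should then land in the $\mathcal{A}^{(a)}_{-1}$-part after pushforward, because $f(x_i=0,y_i)=0$ makes $f$ vanish to order at least one in the $x$-variables.
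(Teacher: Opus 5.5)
Your overall architecture matches the paper's. That includes the Rees pullback $\mathcal{R}\varphi^*$ built from Bourbaki sequences and standard resolutions, and the well-definedness of $\varphi_*\phi^!$ and $\varphi_!\phi^*$ on the quotient from $a-d-b\ge 0$. It also includes the roof-diagram adjunction and the filtration of $\varphi^*\kappa^{(b)}_*\mathcal{O}_{L^+\times C}$ with factors $\iota_*\mathcal{D}^{(a)}_s\boxtimes\mathcal{O}_{L^+}(s)[1]$. Finally, you use the identity $\varphi_*\phi^!\phi_*\mathcal{R}\varphi^*\simeq\mathrm{id}$ on $\kappa^{(b)}_*\mathcal{D}^{(b)}_s(-(b-1))$ and the Koszul computation for $L\times C\subset\mathcal{X}$, as the paper does.

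\textbf{The step that fails.} The problem is your concrete way of obtaining that filtration. Because $f\in(x_1,\dots,x_k)$, the Koszul complex of $(x_1,\dots,x_k)$ over $R/f$ is not a resolution of $\kappa^{(b)}_*\mathcal{O}_{L^+\times C}$. Its $H_1$ is $\operatorname{Tor}^R_1(R/(x),R/f)$, a twist of $R/(x)$. So for $i:\mathcal{X}^+\hookrightarrow\bar{\mathcal{X}}^+$ it computes $\mathbf{L}i^*i_*\kappa^{(b)}_*\mathcal{O}_{L^+\times C}$, which differs from the sheaf by the excess term of Lemma~\ref{excdis}. Tensoring further with the two-term complex of $f$, whose differential vanishes on $\mathcal{X}^+$, only adds homology. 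Since $L^+\times C$ is in general not a complete intersection in $\mathcal{X}^+$, the resolution you need is infinite, and so is the filtration (with remainders $\tau_s$).

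\textbf{What the paper does instead.} It uses the equivariant standard resolution over $R/f$ produced by the division algorithm; this is what your phrase ``a standard resolution of the kind in Section~\ref{Kawamatablowup}'' should mean. It is a double complex whose rows are the $P_t$-filtered Koszul complexes shifted by multiples of $d$. The rows are joined by vertical maps $K_i$ obtained by dividing $f$ by the $x_i$. Summing over $t$ and comparing with the pullback complex gives factors $\iota_*\mathcal{D}^{(a)}_s\boxtimes\mathcal{O}_{L^+}(s)[1]$ for every $s\ge 0$.

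Your heuristic that the $f$-corrections simply land in the $\mathcal{A}^{(a)}_{-1}$-part is also wrong. They enter from step $d$ onward, which can be smaller than $a-d$. They are part of what makes the factors the dual objects $\mathcal{D}^{(a)}_s$ on $L$ rather than restrictions of those on $\mathcal{P}(a_i)$. Only the infinite tail $s\ge a-d$ lies in $\iota_*\mathcal{A}^{(a)}_{-1}\boxtimes(-)$. Discarding that tail is exactly why $\mathcal{R}\varphi^*$ appears instead of $\varphi^*$.

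\textbf{Generation.} Here you take a different route from the paper. The paper forms the counit triangle $\phi_*\mathcal{R}\varphi^*\varphi_*\phi^!b\to b\to c$ in $\mathrm{D}^b(\mathcal{X})/\mathcal{K}^{(a)}_{-1}$ and gets $\varphi_*\phi^!c=0$. It places $c$ in the subcategory generated by the $\kappa^{(a)}_*\mathcal{O}_{L\times C}(-i)$, $0<i<a-d$. It then shows these lie in $\mathcal{D}$, using the $\kappa^{(a)}_*\mathcal{D}^{(a)}_t$ with $t<b$ from the pullback computation and the Type~I analogue of Lemma~\ref{equalcollection}.

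You instead rerun the spanning-class criterion from the proof of Proposition~\ref{BOflopprop}. Both routes come down to putting every $\kappa^{(a)}_*\mathcal{D}^{(a)}_t$, $0\le t<a-d$, into $\mathcal{D}$. Yours additionally needs $\kappa^{(a)}_*\mathrm{D}^b(L\times C)$, together with points off the flipping locus, to be a spanning class of the Verdier quotient, where Hom-spaces are not the naive ones. You can get this from the argument the paper uses for its spanning class of $\mathrm{D}^b(\mathcal{X})/\mathcal{L}_-$ when $e<r$. An object orthogonal to this class is supported on $L\times C$, so it is an iterated extension of objects $\kappa^{(a)}_*G$. Hence its self-$\operatorname{Ext}$ vanishes in the quotient, so it is zero there. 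The paper's route avoids that lemma but relies instead on a description of $\ker(\varphi_*\phi^!)$.
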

\begin{lemma}
Like Lemma \ref{flipullbackcom}, we have that $\varphi^{*}\kappa^{(b)}_{*}O_{L^+\times C}$ admits a filtration:
\[
\begin{tikzcd}[column sep=0.5em]
\varphi^{*}\kappa^{(b)}_{*}\mathcal{O}_{L^+\times C}\arrow{rr}&&F^{s-1}\arrow{rr}\arrow{dl}&& F^{s-2}\arrow{dl}&&  \\
&\tau_s\arrow[ul,dashed,"\Delta"]&& \iota_{*}\mathcal{D}^{(a)}_{s-1}\boxtimes\mathcal{O}_{L^+}(s-1)[1]\arrow[ul,dashed,"\Delta"]&&
\end{tikzcd}
\quad\cdots\]
\[\cdots\quad
\begin{tikzcd}[column sep=0.5em]
& F^1\arrow{rr}&& F^0\arrow{rr}\arrow{dl}&& 0\arrow{dl} \\
&& \iota_{*}\mathcal{D}^{(a)}_{1}\boxtimes\mathcal{O}_{L^+}(1)[1]\arrow[ul,dashed,"\Delta"] && \iota_{*}\mathcal{D}^{(a)}_{0}\boxtimes\mathcal{O}_{L^+}[1]\arrow[ul,dashed,"\Delta"]
\end{tikzcd}
\]
for any integer $s$, and moreover $\mathcal{D}^{(a)}_{s}\in \mathcal{A}^{(a)}_{-1}$ for all $s\ge a-d$.
\end{lemma}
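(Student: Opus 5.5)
The plan follows the proof of Lemma~\ref{flipullbackcom}, now carrying the extra equation $f$. First I would resolve $\kappa^{(b)}_*\mathcal{O}_{L^+\times C}$ on $\mathcal{X}^+$. Since $f(x_i=0,y_i)=0$, we have $f\in\langle x_1,\dots,x_k\rangle$, so $L^+\times C$ is cut out in $\mathcal{X}^+$ by the images of $x_1,\dots,x_k$. These no longer form a regular sequence on $R/(f)$, so the Koszul complex is replaced by an equivariant resolution of $R/\langle x_i,f\rangle$ over $S:=R/(f)$. This resolution is the Eisenbud--Shamash type (infinite, matrix-factorization) resolution built from the regular sequence $x_1,\dots,x_k$ in $R$ together with a choice of $f=\sum x_i g_i$. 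It is exactly the ``iterated division procedure'' resolution $F'_\bullet$ of Section~\ref{Kawamatablowup}. I would choose the $g_i$ equivariant and measured for $val(x_i)=a_i$, so that the measured condition holds. The leading forms then give the corresponding resolution on the associated graded ring, which is the homogeneous coordinate ring of $L$ twisted over $\mathcal{P}(b_i)$.

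Next I would filter. For every $t$ the resolution restricts to exact sequences of valuation ideals $P_{t-\mathrm{a}'}\otimes\chi^{\mathrm{a}'}$. Summing over $t$ and applying the stacky Serre theorem on $\mathcal{W}$ gives an exact complex with terms $\mathcal{O}_{\mathcal{W}}(\mathrm{a}'E)\otimes\varphi^*\mathcal{O}_{\mathcal{X}^+}(\mathrm{a}')$. The untwisted version of this complex is quasi-isomorphic to $\varphi^*\kappa^{(b)}_*\mathcal{O}_{L^+\times C}$, as in the Koszul case. Comparing the two complexes through the sequences $0\to\mathcal{O}_{\mathcal{W}}((t-1)E)\to\mathcal{O}_{\mathcal{W}}(tE)\to\mathcal{O}_E(tE)\to0$, together with the restriction $\mathcal{O}_{\mathcal{W}}(tE)|_E\simeq\mathcal{O}_L(-t)\boxtimes\mathcal{O}_{L^+}(-t)$, produces a Postnikov tower $F^{s-1}\to\cdots\to F^0\to 0$. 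Its $t$-th cone is $\iota_*$ of the complex on $E=L\times L^+\times C$ obtained by truncating the graded resolution of $\mathcal{O}_L$ in degree $t$, tensored with $\mathcal{O}_{L^+}(t)$. Because the resolution is infinite, the tower never terminates. The remaining term $\tau_s$ is then only bounded above, and its cohomology is concentrated in degrees $\le -s+N$, exactly as in (\ref{pullbackcompu}). This justifies the $\tau_s$ appearing in the statement for arbitrary $s$.

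The key identification, and the main obstacle, is showing that the $t$-th cone is $\iota_*\mathcal{D}^{(a)}_t\boxtimes\mathcal{O}_{L^+}(t)[1]$. For $t<a-d$ I would argue as in the Kawamata blow-up case. The degree-$t$ truncation of the minimal resolution of $\mathcal{O}_L$ over $\mathcal{P}(a_i)$ yields an object in $\langle\mathcal{O}_L,\dots,\mathcal{O}_L(t)\rangle$. That object is right orthogonal to $\mathcal{O}_L,\dots,\mathcal{O}_L(t-1)$, which I would check from $\mathrm{H}^*(L,\mathcal{O}_L(-j))=0$ for $0<j<a-d$ using the Koszul sequence of the hypersurface. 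These two facts characterise it as $\mathbb{L}_{\langle\mathcal{O}_L,\dots,\mathcal{O}_L(t-1)\rangle}\mathcal{O}_L(t)=\mathcal{D}^{(a)}_t$; this is the hypersurface version of Lemma~\ref{orddualcomp} and Orlov's construction \cite{Orlov2009}.

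For $t\ge a-d$ I would define $\mathcal{D}^{(a)}_t$ as the same truncated cone. It is still orthogonal to $\mathcal{O}_L,\dots,\mathcal{O}_L(a-d-1)$, because the two-periodic tail of the resolution is a matrix factorization, whose cohomology against these line bundles vanishes by the same Serre vanishing. This orthogonality places it in $\mathcal{A}^{(a)}_{-1}$. The delicate point is verifying that the connecting maps in the tower agree with the mutation maps, i.e.\ that they are the natural adjunction morphisms. I would handle this by dévissage to a closed point of $C$ and by equivariance in the $\mathcal{P}(b_i)$ direction, reducing to the graded ring of $L$, where the statement is the standard one for hypersurfaces in weighted projective stacks.
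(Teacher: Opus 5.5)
Your proposal is correct and follows the paper's proof essentially step for step. The ingredients match: the division-algorithm (Eisenbud--Shamash type) standard resolution over $R/f$, filtered by the valuation ideals $P_t$; the sum over $t$ via Serre's theorem, giving an exact twisted complex on $\mathcal{W}$; and the comparison with $\varphi^{*}\kappa^{(b)}_{*}\mathcal{O}_{L^+\times C}$ through $0\to\mathcal{O}_{\mathcal{W}}\to\mathcal{O}_{\mathcal{W}}(E)\to\mathcal{O}_E(E)\to 0$ and the restriction formula. You also supply, by orthogonality, the identification of the cones, which the paper leaves implicit.

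One point in that extra detail needs repair. The relevant complex on $L$ is the sheafified graded resolution of the residue field over the coordinate ring of $L$, not a resolution of $\mathcal{O}_L$ over $\mathcal{P}(a_i)$. Because this resolution is infinite, the vanishing of $\operatorname{Hom}^*(\mathcal{O}_L(j),-)$ for $0\le j<\min(t,a-d)$ does not follow term by term from $\mathrm{H}^*(L,\mathcal{O}_L(-j))=0$. Instead, take the part of weight $\le t$, twist by $-j$, and take global sections: this gives the degree-$(t-j)$ strand of that resolution, which is exact because $t-j>0$. This single argument covers both cases. For $t<a-d$ it gives the orthogonality, and together with $\operatorname{Hom}^*(\mathcal{O}_L(t),-)\simeq\mathcal{O}_C$ it pins down $\mathcal{D}^{(a)}_t$. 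For $t\ge a-d$ it gives $\mathcal{D}^{(a)}_t\in\mathcal{A}^{(a)}_{-1}$, so the appeal to the matrix-factorization tail is not needed.
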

\begin{proof}
Recall that over $R$ we have the $\mathbb{G}_m$-equivariant exact resolution which induces on $\bar{\mathcal{X}}^{+}$:
\[
0
\to
\mathcal{P}_{t-\sum_{i=1}^{k}a_i}\otimes \mathcal{O}_{\bar{\mathcal{X}}^{+}}\!\big(\sum_{i=1}^{k}a_i\big)
\xrightarrow{F_{k-1}}
\cdots
\to
\bigoplus_{i<j}
\mathcal{P}_{t-a_i-a_j}\otimes \mathcal{O}_{\bar{\mathcal{X}}^{+}}(a_i+a_j)
\xrightarrow{F_{1}}
\bigoplus_{i=1}^{k}
\mathcal{P}_{t-a_i}\otimes \mathcal{O}_{\bar{\mathcal{X}}^{+}}(a_i)
\xrightarrow{F_{0}}
\mathcal{P}_{t}
\to
0 .
\]
By the division algorithm, we obtain the equivariant standard resolution over $R/f$ which induces:
\[\begin{tikzcd}[column sep=small, row sep=small]
\vdots\arrow[d]& \vdots\arrow[d] & \vdots\arrow[d]  &  \\
\bigoplus_{i=1}^{k}\mathcal{P}_{t-a_i-a_j-d}\otimes \mathcal{O}_{\mathcal{X}^{+}}(a_i+a_j+d)
\arrow[r,"F_{1}"]
\arrow[d,"K_{2}"]
&
\bigoplus_{i<j}
\mathcal{P}_{t-a_i-d}\otimes \mathcal{O}_{\mathcal{X}^{+}}(a_i+d)
\arrow[r,"F_{0}"]
\arrow[d,"-K_{1}"]
&
\mathcal{P}_{t-d}\otimes \mathcal{O}_{\mathcal{X}^{+}}(d)
\arrow[d,"K_{0}"]
& {}
\\
\cdots
\arrow[r,"F_{2}"]
&
\bigoplus_{i<j}
\mathcal{P}_{t-a_i-a_j}\otimes \mathcal{O}_{\mathcal{X}^{+}}(a_i+a_j)
\arrow[r,"F_{1}"]
&
\bigoplus_{i=1}^{k}
\mathcal{P}_{t-a_i}\otimes \mathcal{O}_{\mathcal{X}^{+}}(a_i)
\arrow[r,"F_{0}"]
&
\mathcal{P}_{t}
\end{tikzcd}\]We take the sum over all nonnegative integers $t$ of the above exact sequences, then we obtain the following exact sequence on $\mathcal{W}$: \[\begin{tikzcd}[column sep=small, row sep=small]
\vdots\arrow[d] & \vdots\arrow[d] & \vdots\arrow[d] & {} \\
\mathop{\bigoplus}\limits_{i=1}^{k}
\mathcal{O}_{\mathcal{W}}((a_i+a_j+d)E) \otimes \varphi^*\mathcal{O}_{\mathcal{X}^{+}}(\cdot)
\arrow[r,"F_{1}"]
\arrow[d,"K_{2}"]
&
\mathop{\bigoplus}\limits_{i<j}
\mathcal{O}_{\mathcal{W}}((a_i+d)E) \otimes \varphi^*\mathcal{O}_{\mathcal{X}^{+}}(\cdot)
\arrow[r,"F_{0}"]
\arrow[d,"-K_{1}"]
&
\mathcal{O}_{\mathcal{W}}(dE) \otimes \varphi^*\mathcal{O}_{\mathcal{X}^{+}}(\cdot)
\arrow[d,"K_{0}"]
& {}
\\
\cdots
\arrow[r,"F_{2}"]
&
\mathop{\bigoplus}\limits_{i<j}
\mathcal{O}_{\mathcal{W}}((a_i+a_j)E) \otimes \varphi^*\mathcal{O}_{\mathcal{X}^{+}}(\cdot)
\arrow[r,"F_{1}"]
&
\mathop{\bigoplus}\limits_{i=1}^{k}
\mathcal{O}_{\mathcal{W}}(a_iE) \otimes \varphi^*\mathcal{O}_{\mathcal{X}^{+}}(\cdot)
\arrow[r,"F_{0}"]
&
\mathcal{O}_{\mathcal{W}}
\end{tikzcd}\]
Using the short exact sequence $0 \to \mathcal{O}_{\mathcal{W}} \to \mathcal{O}_{\mathcal{W}}(E) \to \mathcal{O}_{E}(E) \to 0$, and the restriction formula
\[
\mathcal{O}_{\mathcal{W}}(tE)\big|_{E}
\simeq
\mathcal{O}_{L}(-t)\boxtimes \mathcal{O}_{L^+}(-t),
\quad \forall t\in \mathbb{Z},
\]
and by comparing the iterated shifts of the above complex with the derived pullback complex
\[
\varphi^{*}\kappa^{(b)}_{*}\mathcal{O}_{L^+\times C},
\]
we obtain the desired conclusion.

\end{proof}
\begin{corollary}
For any dual pre-tilting object $\mathcal{D}_t^{(b)}$ on $L^+\times C$, we have $\varphi^{*}\kappa^{(b)}_{*}\mathcal{D}_s^{(b)}(-(b-1))$ admits a filtration:
\[
\begin{tikzcd}[column sep=0.5em]
\varphi^{*}\kappa^{(b)}_{*}\mathcal{D}_t^{(b)}(-(b-1))
\arrow{rr}
&& F^{s-1}
\arrow{rr}
\arrow{dl}
&& F^{s-2}
\arrow{dl}
&& {} \\
& \tau_s
\arrow[ul,dashed,"\Delta"]
&&
\iota_{*}\mathcal{D}^{(a)}_{s-1}\boxtimes\mathcal{D}_t^{(b)}(-(b-1))(s-1)[1]
\arrow[ul,dashed,"\Delta"]
&&
\end{tikzcd}
\quad\cdots
\]
\[
\cdots\quad
\begin{tikzcd}[column sep=0.5em]
& F^1 \arrow{rr} && F^0 \arrow{rr} \arrow{dl} && 0 \arrow{dl} \\
&&
\iota_{*}\mathcal{D}^{(a)}_{1}\boxtimes\mathcal{D}_t^{(b)}(-(b-1))(1)[1]
\arrow[ul,dashed,"\Delta"]
&&
\iota_{*}\mathcal{D}^{(a)}_{0}\boxtimes\mathcal{D}_t^{(b)}(-(b-1))[1]
\arrow[ul,dashed,"\Delta"]
\end{tikzcd}
\]
for any non-negative integer $s$ and $0 \le t < b$.
\end{corollary}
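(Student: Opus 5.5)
We mirror the earlier corollary following Lemma~\ref{flipullbackcom} in the standard-flip case, now using the filtration of $\varphi^{*}\kappa^{(b)}_{*}\mathcal{O}_{L^+\times C}$ from the preceding lemma. The point is that $\varphi$ is a weighted blow-up whose center $L^{+}\times C$ admits, formally, a retraction from $\mathcal{X}^{+}$. In the toric description this is the projection forgetting the $x_i$, i.e.\ $\psi^{+}:\mathcal{X}^{+}\to \mathcal{P}(b_i)\times C$ restricted appropriately. The line bundles $\mathcal{O}_{L^+}(m)$ and the objects $\mathcal{D}^{(b)}_t$ extend as pullbacks along this retraction (first to $\mathcal{P}(b_i)\times C$, then restricted).

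First, I would write
\[
\kappa^{(b)}_{*}\mathcal{D}^{(b)}_t(-(b-1))\simeq \kappa^{(b)}_{*}\mathcal{O}_{L^+\times C}\otimes \psi^{+*}\widetilde{\mathcal{D}}^{(b)}_t(-(b-1)),
\]
where $\widetilde{\mathcal{D}}^{(b)}_t$ is a perfect lift of $\mathcal{D}^{(b)}_t$ on the ambient weighted projective stack. Such a lift exists because $\mathcal{D}^{(b)}_t$ is built by finitely many mutations from the line bundles $\mathcal{O}_{L^+}(i)$, each of which is a restriction. Working with $\mathrm{D}(\mathrm{QCoh})$ if needed, as in the dévissage lemma for Kawamata blow-ups, the projection formula gives
\[
\varphi^{*}\kappa^{(b)}_{*}\mathcal{D}^{(b)}_t(-(b-1))\simeq \varphi^{*}\kappa^{(b)}_{*}\mathcal{O}_{L^+\times C}\otimes \varphi^{*}\psi^{+*}\widetilde{\mathcal{D}}^{(b)}_t(-(b-1)).
\]

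Second, I would tensor the filtration of the previous lemma by the perfect object $\varphi^{*}\psi^{+*}\widetilde{\mathcal{D}}^{(b)}_t(-(b-1))$. Tensoring is exact, so this gives a filtration with the same shape and with cones $\tau_s\otimes(-)$ and $\iota_{*}\mathcal{D}^{(a)}_{j}\boxtimes\mathcal{O}_{L^+}(j)\otimes(-)[1]$. By the projection formula for $\iota$, the latter equals $\iota_{*}\big(\mathcal{D}^{(a)}_j\boxtimes \mathcal{O}_{L^+}(j)\otimes \iota^{*}\varphi^{*}\psi^{+*}\widetilde{\mathcal{D}}^{(b)}_t(-(b-1))\big)$.

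Third, I would identify the restriction. We have $\varphi\circ\iota=\kappa^{(b)}\circ\mathrm{pr}^{(b,c)}$, and $\psi^{+}\circ\kappa^{(b)}$ is the inclusion $L^{+}\times C\hookrightarrow \mathcal{P}(b_i)\times C$. Hence $\iota^{*}\varphi^{*}\psi^{+*}\widetilde{\mathcal{D}}^{(b)}_t\simeq \mathrm{pr}^{(b)*}\mathcal{D}^{(b)}_t$. In the $\boxtimes$ convention the factors therefore become $\iota_{*}\mathcal{D}^{(a)}_{j}\boxtimes\mathcal{D}^{(b)}_t(-(b-1))(j)[1]$, as claimed. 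The statement holds for every $s\ge0$ because the previous lemma holds for every $s$.

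The main obstacle is the first step: producing the extension and checking that the restriction to $E\simeq L\times L^{+}\times C$ really is the external product. I would handle this by working equivariantly on $\operatorname{Spec}k[[x_i,y_i,z_i]]$, where everything is graded-free. The $\mathbb{G}_m$-twists $\mathcal{O}(m)$ are characters, and the mutation cones are formed from graded Koszul-type complexes in the $y$-variables alone, which pull back compatibly. The remaining point is to verify that the $\mathcal{O}_{\mathcal W}(tE)$ twists in the filtration commute with these twists; this is immediate from the restriction formula $\mathcal{O}_{\mathcal{W}}(tE)|_E\simeq \mathcal{O}_L(-t)\boxtimes\mathcal{O}_{L^+}(-t)$ used in the lemma.
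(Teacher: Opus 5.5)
Your proposal is correct and follows the paper's own argument, the dévissage given for the analogous standard-flip corollary that this Type~I statement mirrors: you use the retraction onto the center to move the twist $\mathcal{D}^{(b)}_t(-(b-1))$ outside $\varphi^{*}$, tensor the filtration of the preceding lemma, and identify the factors on $E$ by the projection formula together with $\varphi\circ\iota=\kappa^{(b)}\circ\mathrm{pr}^{(b,c)}$. One simplification: in Type~I one has $L^{+}=\mathcal{P}(b_i)$, so $\psi^{+}\circ\kappa^{(b)}$ is the identity and $\mathcal{D}^{(b)}_t$ is already perfect, which makes your lifting step unnecessary.
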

\begin{corollary}\label{dualcompuprop}
For any dual pre-tilting object $\mathcal{D}_t^{(b)}$ on $L^+\times C$, we have that
$\phi_{*}\varphi^{*}\kappa^{(b)}_{*}\mathcal{D}_t^{(b)}(-(b-1))$ admits a filtration:
\[
\begin{tikzcd}[column sep=0.5em]
\phi_{*}\varphi^{*}\kappa^{(b)}_{*}\mathcal{D}_t^{(b)}(-(b-1))
\arrow{rr}
&& G^{s-1}
\arrow{rr}
\arrow{dl}
&& G^{s-2}
\arrow{dl}
&& {} \\
&
\tau_s
\arrow[ul,dashed,"\Delta"]
&&
\iota_{*}\mathcal{D}^{(a)}_{s-1}\otimes V_{s-1}[1]
\arrow[ul,dashed,"\Delta"]
&&
\end{tikzcd}
\quad\cdots
\]
\[
\cdots\quad
\begin{tikzcd}[column sep=0.5em]
& G^1 \arrow{rr} && G^0 \arrow{rr} \arrow{dl} && 0 \arrow{dl} \\
&&
\iota_{*}\mathcal{D}^{(a)}_{1}\otimes V_1[1]
\arrow[ul,dashed,"\Delta"]
&&
\iota_{*}\mathcal{D}^{(a)}_{0}\otimes V_0[1]
\arrow[ul,dashed,"\Delta"]
\end{tikzcd}
\]
where the tensor factor is defined by
\[
V_s := \operatorname{Ext}^{*}_{L^+\times C}\big(\mathcal{O}_{L^+}(b-1-s),\, \mathcal{D}_t^{(b)}\big).
\]
In particular, when $0 \le s < b$, we have
\[
V_s =
\begin{cases}
\mathcal{O}_C, & s = b-1-t,\\[4pt]
0, & s \neq b-1-t.
\end{cases}
\]
where $\mathcal{D}^{(a)}_{t}\in \mathcal{A}^{(a)}_{-1}$ for all $t\ge a-d$, and $\mathcal{D}^{(a)}_{t}$ is the $t$-th dual pre-tilting object if $t<a-d$.
\end{corollary}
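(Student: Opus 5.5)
We push forward along $\phi$ the filtration of $\varphi^{*}\kappa^{(b)}_{*}\mathcal{D}_t^{(b)}(-(b-1))$ from the preceding corollary, exactly as for the earlier Corollary~\ref{dualcompuprop} in the standard flip. Because $\phi$ is exact on distinguished triangles, $\phi_*$ sends the Postnikov tower $F^{s-1}\to F^{s-2}\to\cdots\to F^0\to 0$ to a tower $G^{s-1}\to\cdots\to G^0\to 0$ with $G^j:=\phi_*F^j$. The leading cone $\tau_s$ is replaced by $\phi_*\tau_s$; we keep the same name, and $\phi_*\tau_s$ still becomes negligible in high cohomological degree as $s\to\infty$. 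So it remains to identify the graded pieces
\[
\phi_*\iota_*\bigl(\mathcal{D}^{(a)}_{s}\boxtimes\mathcal{D}_t^{(b)}(-(b-1))(s)\bigr).
\]

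On $E\simeq L\times L^+\times C$ the map $\phi\circ\iota$ factors as $\iota\circ\mathrm{pr}^{(a,c)}$, where $\mathrm{pr}^{(a,c)}:E\to L\times C$ is the projection and $\iota$ now denotes $\kappa^{(a)}:L\times C\hookrightarrow\mathcal{X}$. By the projection formula (Künneth over $C$),
\[
\mathrm{pr}^{(a,c)}_*\bigl(\mathcal{D}^{(a)}_s\boxtimes \mathcal{G}\bigr)\simeq \mathcal{D}^{(a)}_s\otimes \mathrm{R}\Gamma_{L^+\times C/C}(\mathcal{G}).
\]
Here $\mathcal{G}=\mathcal{D}_t^{(b)}(s-(b-1))$, and its relative cohomology is $\operatorname{Ext}^*_{L^+\times C}\bigl(\mathcal{O}_{L^+}(b-1-s),\mathcal{D}^{(b)}_t\bigr)=V_s$. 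This gives the graded pieces $\iota_*\mathcal{D}^{(a)}_{s}\otimes V_{s}[1]$. Flat base change over $C$ needs some care because the objects are stacky; we would justify it as in Lemma~\ref{flatbasechange}, that is, by the equivariant Čech complex on the weighted projective stacks.

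To compute $V_s$ for $0\le s<b$, we use that $\{\mathcal{O}_{L^+}(i)\}_{0\le i<b}$ and $\{\mathcal{D}^{(b)}_j\}$ are dual collections in $\mathrm{D}^b(L^+\times C)$ relative to $C$. Recall $L^+=\mathcal{P}(b_i)$, so this collection is full with no $\mathcal{A}$-part. Lemma~\ref{orddualcomp} then gives $\operatorname{Ext}^*(\mathcal{O}(i),\mathcal{D}_j)=\mathcal{O}_C\,\delta_{ij}$, and with $i=b-1-s$ we obtain $V_s=\mathcal{O}_C$ exactly when $s=b-1-t$, and $V_s=0$ otherwise.

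The remaining claims about $\mathcal{D}^{(a)}_t$ are inherited from the previous lemma: $\mathcal{D}^{(a)}_t$ is the $t$-th dual pre-tilting object of the decomposition of $\mathrm{D}^b(L\times C)$ for $t<a-d$, and lies in $\mathcal{A}^{(a)}_{-1}$ for $t\ge a-d$. The step we expect to be most delicate is the bookkeeping of the twist. One has to check that the restriction formula $\mathcal{O}_{\mathcal{W}}(tE)|_E\simeq\mathcal{O}_L(-t)\boxtimes\mathcal{O}_{L^+}(-t)$, applied to the twist by $\mathcal{O}_{L^+}(s)$ in the pulled-back filtration, produces exactly the index $b-1-s$ in $V_s$ (rather than a shift by $d$). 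We would verify this first on $t=0$, $s=0$ by comparing with Lemma~\ref{phidualcom}.
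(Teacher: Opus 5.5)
Your proposal follows the paper's argument: push the filtration of $\varphi^{*}\kappa^{(b)}_{*}\mathcal{D}_t^{(b)}(-(b-1))$ from the preceding corollary forward along $\phi$, identify the graded pieces through the projection $E\simeq L\times L^+\times C\to L\times C$, and compute $V_s$ by the duality of Lemma~\ref{orddualcomp} for the full collection on $L^+=\mathcal{P}(b_i)$ over $C$. Your explicit appeal to the projection formula and to base change over $C$ just spells out what the paper compresses into ``noting our flip is over $C$.''
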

Let $\mathcal{M}$ be any Cohen--Macaulay sheaf on $\mathcal{X}^{+}$. Then $\mathcal{M}$ is torsion-free, hence admits a Bourbaki sequence over $\mathcal{X}^+$ by Corollary \ref{Bexactwps}:
\[
0 \longrightarrow \bigoplus_{i=1}^{\operatorname{rank}(\mathcal{M})-1} \mathcal{O}_{\mathcal{X}^+}(m_i)
\longrightarrow \mathcal{M}
\longrightarrow \mathcal{I} \otimes \mathcal{O}_{\mathcal{X}^+}(m_0)
\longrightarrow 0.
\]
Fix a standard basis of $R$ refining the $\mathbb{G}_m$-equivariant weighted valuation defined above, we can construct by a classical argument in \cite[Section 2]{Hao2025b} a standard resolution of $\mathcal{I}$ and hence of $\mathcal{M}$. Then we can construct an object $\mathcal{R}(\mathcal{M})\in \mathrm{D}^b(\mathcal{W})$ together with a filtration obtained by iteratively applying the above procedure:
\[
\begin{tikzcd}[column sep=0.5em]
& \mathcal{F}^s(\mathcal{R}(\mathcal{M})) \arrow{rr} && \mathcal{F}^{s-1} \arrow{dl} && {} \\
&& \iota_{*}\operatorname{gr}^{s}(\mathcal{M})\arrow[ul,dashed,"\Delta"] &&
\end{tikzcd}
\quad\cdots\quad
\begin{tikzcd}[column sep=0.5em]
& \mathcal{F}^2 \arrow{rr} && \mathcal{F}^1 \arrow{rr} \arrow{dl} && \mathcal{R}(\mathcal{M})\arrow{dl} \\
&& \iota_{*}\operatorname{gr}^{2}(\mathcal{M})\arrow[ul,dashed,"\Delta"]
&& \iota_{*}\operatorname{gr}^{1}(\mathcal{M})\arrow[ul,dashed,"\Delta"]
\end{tikzcd}
\]
for which we have $\operatorname{gr}^{i}(\mathcal{M})\in \mathcal{A}^{(a)}_{-1}\boxtimes \mathrm{D}^b(L^+)\quad \text{for every } i.$ Moreover we have
\[
\varphi^*\mathcal{M} = \varprojlim_{s \to \infty} \mathcal{F}^s(\mathcal{R}(\mathcal{M})).
\]
Thus, for any $\mathcal{F} \in \mathrm{D}^b(\mathcal{X}^+)$, take a sheaf approximation $\mathcal{M}[k] \to \mathcal{F} \to \mathcal{P} \to \mathcal{M}[k+1]$ with $\mathcal{M}$ maximal Cohen--Macaulay and $\mathcal{P}$ a perfect complex, we define $\mathcal{R}(\mathcal{M}) \to \mathcal{R}\varphi^*\mathcal{F} \to \varphi^*\mathcal{P}$, and hence obtain a (quasi-)functor
\[
\mathcal{R}\varphi^*: \mathrm{D}^b(\mathcal{X}^+) \to \mathrm{D}^b(\mathcal{W}),
\]
defined up to some ambiguity. Further, by taking pushforward along $\phi$ and projection, we obtain the following (quasi-)functor with some ambiguity:
\[
\phi_*\mathcal{R}\varphi^*: \mathrm{D}^b(\mathcal{X}^+) \to \mathrm{D}^b(\mathcal{X})/\mathcal{K}_{-1}.
\]
\begin{lemma}
This defines a well-defined triangulated functor
\[
\phi_*\mathcal{R}\varphi^* : \mathrm{D}^b(\mathcal{X}^+) \to \mathrm{D}^b(\mathcal{X})/\mathcal{K}_{-1}.
\]
\end{lemma}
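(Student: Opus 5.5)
The plan follows the pattern already used for the Kawamata blow-up (where $\mathcal{R}\pi^*$ descended to $\mathrm{D}^b(\mathcal{Z})/\mathcal{K}_{-1}^+$), organized around Principle~\ref{P1}. There are three things to establish:
\begin{enumerate}
\item the ambiguity in $\mathcal{R}\varphi^*$, namely the choice of sheaf approximation, of Bourbaki sequence and of truncation level $s$, disappears after applying $\phi_*$ and passing to $\mathrm{D}^b(\mathcal{X})/\mathcal{K}^{(a)}_{-1}$;
\item morphisms are sent to morphisms compatibly;
\item distinguished triangles are preserved.
\end{enumerate}

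\textbf{Step 1: the factors die.} Each graded factor $\iota_*\operatorname{gr}^i(\mathcal{M})$ lies in $\iota_*(\mathcal{A}^{(a)}_{-1}\boxtimes \mathrm{D}^b(L^+))$, so I first show that $\phi_*$ sends these into $\mathcal{K}^{(a)}_{-1}$. The projection $\phi$ restricted to $E\simeq L\times L^+\times C$ is $\mathrm{pr}^{(a,c)}$ followed by $\kappa^{(a)}$. By the projection formula,
\[
\phi_*\iota_*(A\boxtimes B)\simeq \kappa^{(a)}_*\big(A\otimes_{\mathcal{O}_C} \mathrm{R}\Gamma(L^+,B)\big).
\]
Since $\mathrm{R}\Gamma(L^+,B)$ is a bounded complex over $C$ and $\mathcal{A}^{(a)}_{-1}$ is stable under tensoring with $\mathrm{D}^b(C)$ (it is the orthogonal of the $C$-linear components), this lies in $\mathcal{K}^{(a)}_{-1}$. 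Consequently, for $\mathbf{t}>\mathbf{s}$, the transition maps $\phi_*\mathcal{F}^{t}(\mathcal{R}(\mathcal{M}))\to\phi_*\mathcal{F}^{s}(\mathcal{R}(\mathcal{M}))$ have cones in $\mathcal{K}^{(a)}_{-1}$. The same holds for the filtrations of the preceding lemma and corollary, using that $\mathcal{D}^{(a)}_s\in\mathcal{A}^{(a)}_{-1}$ for $s\ge a-d$. Hence the image in the quotient is independent of $s\gg0$.

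\textbf{Step 2: independence of choices.} For two sheaf approximations $\mathcal{M}[k]\to\mathcal{F}\to\mathcal{P}$ and $\mathcal{M}'[k']\to\mathcal{F}\to\mathcal{P}'$, I pass to a common deeper truncation. The comparison maps $\mathcal{M}\to\mathcal{M}'$ differ by perfect pieces, on which $\mathcal{R}\varphi^*=\varphi^*$ holds honestly. For two Bourbaki sequences of $\mathcal{M}$, the standard resolutions built from the same valuation-refined standard basis are comparable by a map of filtered complexes, whose cone is again filtered by objects of $\iota_*(\mathcal{A}^{(a)}_{-1}\boxtimes\mathrm{D}^b(L^+))$; by Step~1 it vanishes after $\phi_*$. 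Morphisms $g\colon\mathcal{F}\to\mathcal{G}$ are then defined as in Principle~\ref{P1}, by
\[
\mathcal{R}^{\mathbf t}\varphi^*\mathcal{F}\to\mathcal{R}^{\mathbf s}\varphi^*\mathcal{G},\qquad \mathbf{t}\gg\mathbf{s},
\]
and the construction of Step~1 makes this well defined modulo $\mathcal{K}^{(a)}_{-1}$. Functoriality follows from composing such roofs.

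\textbf{Step 3: exactness, and the expected obstacle.} For a triangle $\mathcal{F}\to\mathcal{G}\to\mathcal{H}$, I realize $\mathcal{R}\varphi^*$ on the first two objects compatibly, take the cone, and compare it with $\mathcal{R}\varphi^*\mathcal{H}$. Both approximate $\varphi^*\mathcal{H}$ in the inverse-limit sense, so their difference has a filtration with factors supported on $E$ of the same type, and these die by Step~1.

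The main obstacle I anticipate is justifying that cones of comparison maps between two different standard resolutions, or between cones of $\mathcal{R}$-objects, are genuinely filtered by objects of $\iota_*(\mathcal{A}^{(a)}_{-1}\boxtimes\mathrm{D}^b(L^+))$ and not merely by arbitrary objects supported on $E$. I would try to resolve this through the orthogonality characterization. An object $T$ supported on $E$ with $\varphi_*$-compatible approximation has $\phi_*T$ right orthogonal to $\kappa^{(a)}_*\mathcal{O}_{L\times C}(i)$ for $0\le i<a-d$; this follows from the local adjunction \cite[Lemma 3.7]{Hao2025b} together with the fact that $\phi_*\mathcal{R}\varphi^*$ and $\phi_*\varphi^*$ agree on perfect complexes. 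Such an object therefore lies in $\mathcal{K}^{(a)}_{-1}$, exactly as in Lemma~\ref{kernallemma} and Lemma~\ref{kernalElemma}.
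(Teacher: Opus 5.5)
Your Step 1 is exactly the new input in the paper's proof. There, $\phi_*$ carries $\mathcal{K}^+_{-1}=\langle\iota_*\mathcal{A}^{(a)}_{-1}\boxtimes\mathrm{D}^b(L^+\times C)\rangle$ into $\mathcal{K}^{(a)}_{-1}$, the $\mathcal{K}_{-1}$ of the statement. Your projection-formula argument, using that $\mathcal{A}^{(a)}_{-1}$ is stable under tensoring with pullbacks from $C$, justifies this more explicitly than the paper does.

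The gap is in Steps 2--3, exactly where you locate it. You only assert that the comparison cones between different choices, and between $\operatorname{Cone}(\mathcal{R}\varphi^*\mathcal{F}\to\mathcal{R}\varphi^*\mathcal{G})$ and $\mathcal{R}\varphi^*\mathcal{H}$, are filtered by objects of $\iota_*(\mathcal{A}^{(a)}_{-1}\boxtimes\mathrm{D}^b(L^+))$. This does not follow from both objects having $\varphi^*\mathcal{H}$ as inverse limit.

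The paper never has to prove this after applying $\phi_*$, because it factors the functor. By the argument of \cite[Proposition 3.3]{Hao2025b}, $\mathcal{R}\varphi^*\colon\mathrm{D}^b(\mathcal{X}^+)\to\mathrm{D}^b(\mathcal{W})/\mathcal{K}^+_{-1}$ is already a well-defined triangulated functor, so independence of choices and exactness are settled upstairs on $\mathcal{W}$. Your Step 1 plus the universal property of the Verdier quotient then gives a triangulated functor $\phi_*\colon\mathrm{D}^b(\mathcal{W})/\mathcal{K}^+_{-1}\to\mathrm{D}^b(\mathcal{X})/\mathcal{K}^{(a)}_{-1}$. The composite is the lemma. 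With this factorization your Steps 2--3 are unnecessary, and Step 1 is all that remains.

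Your proposed repair of the obstacle would not work. Membership in $\mathcal{K}^{(a)}_{-1}$ is defined by orthogonality inside $\mathrm{D}^b(L\times C)$. It is not detected by right orthogonality in $\mathrm{D}^b(\mathcal{X})$ to $\kappa^{(a)}_*\mathcal{O}_{L\times C}(i)$ for $0\le i<a-d$, for the following reason.
\begin{itemize}
\item By adjunction, $\operatorname{Hom}_{\mathcal{X}}(\kappa^{(a)}_*\mathcal{O}(i),\kappa^{(a)}_*A)\simeq\operatorname{Hom}_{L\times C}(\mathcal{O}(i),(\kappa^{(a)})^{!}\kappa^{(a)}_*A)$.
\item The object $(\kappa^{(a)})^{!}\kappa^{(a)}_*A$ is filtered by $A\otimes\wedge^pN[-p]$, where $N=\bigoplus_j\mathcal{O}(-b_j)$ is read off from the Koszul resolution of $L\times C$.
\item This brings in terms $\operatorname{Hom}(\mathcal{O}(i+b_J),A)$ with $b_J=\sum_{j\in J}b_j$, and these twists leave the window $0\le m<a-d$ that defines $\mathcal{A}^{(a)}_{-1}$.
\end{itemize}
So even $\kappa^{(a)}_*A$ with $A\in\mathcal{A}^{(a)}_{-1}$ generally fails your orthogonality, and this orthogonality cannot characterize $\mathcal{K}^{(a)}_{-1}$. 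Moreover, Lemmas \ref{kernallemma} and \ref{kernalElemma} describe the kernel of a blow-up pushforward on the blown-up space, not subcategories of the base $\mathcal{X}$. They do not give the conclusion you draw from them.

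If you want to prove the well-definedness rather than cite it, the argument has to run on $\mathcal{W}$. A comparison cone $T$ there satisfies $\varphi_*T=0$. To place $T$ in $\mathcal{K}^+_{-1}$, one needs a description of $\ker\varphi_*$ analogous to Lemma \ref{kernallemma}, together with orthogonality of the $\mathcal{R}$-objects to the pre-tilting pieces. Only then does your Step 1 push $T$ into $\mathcal{K}^{(a)}_{-1}$. This is the kind of argument the citation supplies.
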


\begin{proof}
By the classical argument as in \cite[Proposition 3.3]{Hao2025b}, we have that
\[
\mathcal{R}\varphi^* : \mathrm{D}^b(\mathcal{X}^+) \longrightarrow
\mathrm{D}^b(\mathcal{W})/\{\mathcal{K}_{-1}^+:=\langle \iota_*\mathcal{A}^{(a)}_{-1}\boxtimes \mathrm{D}^b(L^+\times C)\rangle\}
\]
is a well-defined triangulated functor. Moreover, since
\[
\phi_*: \mathrm{D}^b(\mathcal{W}) \longrightarrow \mathrm{D}^b(\mathcal{X})/\mathcal{K}_{-1}
\]
satisfies
\[
\phi_*\big\langle \iota_*\mathcal{A}^{(a)}_{-1}\boxtimes \mathrm{D}^b(L^+\times C)\big\rangle = 0,
\]
it induces a triangulated functor
\[
\phi_* : \mathrm{D}^b(\mathcal{W})/\langle \iota_*\mathcal{A}^{(a)}_{-1}\boxtimes \mathrm{D}^b(L^+\times C)\rangle
\longrightarrow \mathrm{D}^b(\mathcal{X})/\mathcal{K}_{-1}.
\]
The composition of these two functors gives the desired result.
\end{proof}
On the other hand, for the weighted blow-up $\phi$ whose center is a complete intersection, we have the pullback functor $\phi^* : \mathrm{D}^b(\mathcal{X}) \to \mathrm{D}^b(\mathcal{W})$, as well as the natural functors $\varphi_!\phi^*$ and $\varphi_*\phi^!$ from $\mathrm{D}^b(\mathcal{X})$ to $\mathrm{D}^b(\mathcal{X}^+)$. One can directly verify that both satisfy $\varphi_!\phi^*\mathcal{K}_{-1}=0$ and $\varphi_*\phi^!\mathcal{K}_{-1}=0$. So we have:
\begin{lemma}
The functors
\[
\varphi_!\phi^* : \mathrm{D}^b(\mathcal{X})/\mathcal{K}_{-1} \longrightarrow \mathrm{D}^b(\mathcal{X}^+)
\quad \text{and} \quad
\varphi_*\phi^! : \mathrm{D}^b(\mathcal{X})/\mathcal{K}_{-1} \longrightarrow \mathrm{D}^b(\mathcal{X}^+)
\]
are well-defined triangulated functors.
\end{lemma}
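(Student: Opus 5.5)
The functors $\varphi_!\phi^*$ and $\varphi_*\phi^!$ are a priori defined on $\mathrm{D}^b(\mathcal{X})$: $\phi$ is a weighted blow-up along the complete intersection $L\times C$ (cut out by the regular sequence $y_1,\dots,y_\ell$ together with $f$), so $\phi^*$ preserves boundedness, and $\phi^!=\phi^*\otimes\omega_\phi$ with $\omega_\phi$ a line bundle. By the universal property of the Verdier quotient, each factors through $\mathrm{D}^b(\mathcal{X})/\mathcal{K}^{(a)}_{-1}$ as a triangulated functor as soon as it kills $\mathcal{K}^{(a)}_{-1}$. Since $\mathcal{K}^{(a)}_{-1}$ is the thick closure of $\kappa^{(a)}_*\mathcal{A}^{(a)}_{-1}$ and both functors are exact, it suffices to show
\[
\varphi_!\phi^*\kappa^{(a)}_*N=0,\qquad \varphi_*\phi^!\kappa^{(a)}_*N=0\qquad\text{for all } N\in\mathcal{A}^{(a)}_{-1}.
\]

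I would compute $\phi^*\kappa^{(a)}_*N$ by the dévissage argument used in the proof of the corollary to Lemma~\ref{flipullbackcom}: locally $L\times C\hookrightarrow\mathcal{X}$ is a section of a projection, so $N$ can be moved outside $\phi^*$. The Koszul resolution of $\mathcal{O}_{L\times C}$ in the $y$-variables, compared through the excess triangle (Lemma~\ref{excdis}) with $\mathcal{O}_{\mathcal{W}}(tE)|_E\simeq\mathcal{O}_L(-t)\boxtimes\mathcal{O}_{L^+}(-t)$, gives a finite filtration of $\phi^*\kappa^{(a)}_*N$. Its factors are $\iota_*\big(N\boxtimes\Omega^{j}(j)\big)[j]$ for $0\le j<\ell$ (or, more crudely, sheaves $\iota_*(N\boxtimes G_j)$ with $G_j\in\mathrm{D}^b(L^+\times C)$). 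Thus $\phi^*\kappa^{(a)}_*N\in\langle\iota_*\mathcal{A}^{(a)}_{-1}\boxtimes\mathrm{D}^b(L^+\times C)\rangle$, and the same holds after twisting by $\omega_\phi$ and by $\mathcal{O}(K_{\mathcal{W}/\mathcal{X}^+})$ (for $\varphi_!$), since these restrict to $E$ as $\mathcal{O}_L(m)\boxtimes\mathcal{O}_{L^+}(m)$.

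It then remains to push forward along $\varphi$. Restricted to $E\simeq L\times L^+\times C$, the map $\varphi$ is the projection $\mathrm{pr}^{(b,c)}$, so by base change
\[
\varphi_*\iota_*\big(N(m)\boxtimes G\big)\simeq\kappa^{(b)}_*\big(\mathrm{R}\Gamma_{L\times C/C}(N(m))\otimes G\big),
\]
and this vanishes whenever $N(m)\in\mathcal{A}^{(a)}_{-1}$ is orthogonal to $\mathcal{O}_{L\times C}$, that is, whenever the twist $m$ keeps $N(m)$ in $\langle\mathcal{O},\dots,\mathcal{O}(a-d-1)\rangle^{\perp}$-type ranges. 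This is the \textbf{main obstacle}: the twists $m$ introduced by $\omega_\phi=\mathcal{O}((b-1)E)$ and $\omega_\varphi=\mathcal{O}((a-d-1)E)$ range over intervals of length at most $b-1$ and $a-d-1$. One must check, using the Serre functor on $L$ (whose anticanonical degree is $a-d$) and $e=a-b-d\ge0$, that $\mathrm{H}^*(L,N(-m'))=0$ for every $m'$ that occurs. The strategy is the one in the lemma preceding Corollary~\ref{gencor}: rewrite $\mathrm{D}^b(L)=\langle\mathcal{A}_{-e'},\mathcal{O}(-e'+1),\dots,\mathcal{O}\rangle$, note that the twists of $N\in\mathcal{A}_{-1}$ land in $\mathcal{A}_{-1-m'}$, and conclude that $\mathrm{H}^*(N(-m'))=0$ for $0\le m'\le a-d-1$. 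The inequality $e\ge0$ ensures that the relevant ranges for both $\phi^!$ and $\phi^*\otimes\omega_\varphi$ stay inside this window. With the vanishing on generators established, both functors descend and are triangulated because the quotient functor is.
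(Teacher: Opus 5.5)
Your route is the paper's: descend through the Verdier quotient, test on generators $\kappa^{(a)}_*N$ with $N\in\mathcal{A}^{(a)}_{-1}$, filter the pullback by objects supported on $E\simeq L\times L^+\times C$, and kill each factor with $\varphi_*$, using that $\varphi|_E$ is the projection to $L^+\times C$ and that $\mathrm{R}\Gamma(L,-)$ vanishes. However, the step you call the main obstacle is the entire content of the lemma, and you do not carry it out. The explicit input you supply for it is also wrong.

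\textbf{The filtration.} The factors $\iota_*(N\boxtimes\Omega^j(j))[j]$, $0\le j<\ell$, come from the unweighted Orlov formula and put no twist on the $L$-side. For the weighted blow-up $\phi$, Lemma~\ref{phidualcom} (with $N$ moved out by d\'evissage) gives $b=\sum b_i$ factors, not $\ell$. Untwisting $\phi^!=\phi^*\otimes\mathcal{O}_{\mathcal{W}}((b-1)E)$, the factors of $\phi^*\kappa^{(a)}_*N$ are $\iota_*\bigl(N(t)\boxtimes\mathcal{D}^{(b)}_t\bigr)[1]$ for $0\le t\le b-1$.

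\textbf{The membership claim.} Consequently, your claim that $\phi^*\kappa^{(a)}_*N$ and its twists lie in $\langle\iota_*\mathcal{A}^{(a)}_{-1}\boxtimes\mathrm{D}^b(L^+\times C)\rangle$ is false, because $\mathcal{A}^{(a)}_{-1}\otimes\mathcal{O}_L(m)=\mathcal{A}^{(a)}_{-1+m}$. If it were true, no window argument would be needed at all.

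\textbf{The window check.} What matters is where the twists sit, not the lengths of the intervals. With the correct factors:
\begin{itemize}
\item For $\varphi_*\phi^!$, the $L$-components are $N(-m')$ with $0\le m'\le b-1$, i.e.\ they lie in $\mathcal{A}^{(a)}_{-1},\ldots,\mathcal{A}^{(a)}_{-b}$.
\item For $\varphi_!\phi^*=\varphi_*\bigl(\phi^*(-)\otimes\mathcal{O}_{\mathcal{W}}((a-d-1)E)\bigr)$, the $L$-components are $N(-m')$ with $e\le m'\le a-d-1$.
\end{itemize}
Both ranges lie in $[0,a-d-1]$ precisely because $e=a-b-d\ge0$. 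On that range $\mathrm{H}^*(L,N(-m'))=0$ by the definition of $\mathcal{A}^{(a)}_{-1}$ as $\langle\mathcal{O}_L,\ldots,\mathcal{O}_L(a-d-1)\rangle^{\perp}$. So $\varphi_*$ kills every factor by K\"unneth.

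This is exactly the paper's argument; the paper writes out only the $\phi^!$ case, and your use of $\mathcal{O}_{\mathcal{W}}((a-d-1)E)$ for $\varphi_!$ is the right parallel. If you replace your Koszul/$\Omega^j$ description with Lemma~\ref{phidualcom} and do this range computation explicitly, your proof closes.
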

\begin{proof}
If $F\in \mathcal{A}^{(a)}_{-1}$, then by Lemma~\ref{phidualcom} we know that $\varphi^{!}\kappa^{(a)}_{*}F$ admits a filtration whose successive factors are of the form with the first factor lies in $\mathcal{A}^{(a)}_{-1},\ldots,\mathcal{A}^{(a)}_{-b}$. Since $a-d-b\ge 0$, for any object $G$ appearing in these components we have
\[
\mathrm{H}^*(L,G)=0.
\]
Therefore the desired vanishing follows.
\end{proof}

\begin{lemma}
The functor $\phi_* \mathcal{R}\varphi^*$ is admissible and fits into an adjoint triple
\[
\varphi_! \phi^* \;\dashv\; \phi_* \mathcal{R}\varphi^* \;\dashv\; \varphi_* \phi^!.
\]
\end{lemma}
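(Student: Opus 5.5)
I will prove the two adjunctions separately, following the pattern of the analogous lemmas in the Kawamata blow-up sections. The formal input is the global adjunction for the Rees pullback: for $F\in\mathrm{D}^b(\mathcal{X}^+)$ and $H\in\mathrm{D}^b(\mathcal{W})$ there is a natural isomorphism
\[
\operatorname{Ext}^*_{\mathrm{D}^b(\mathcal{W})/\mathcal{K}^+_{-1}}(\mathcal{R}\varphi^*F,H)\simeq \operatorname{Ext}^*_{\mathrm{D}^b(\mathcal{X}^+)}(F,\varphi_*H),
\]
as in \cite[Lemma 3.9]{Hao2025b}. This uses the inverse system $\mathcal{F}^s(\mathcal{R}(\mathcal{M}))$ with $\varprojlim_s=\varphi^*\mathcal{M}$ and graded pieces in $\mathcal{K}^+_{-1}=\langle\iota_*\mathcal{A}^{(a)}_{-1}\boxtimes\mathrm{D}^b(L^+\times C)\rangle$, i.e.\ Principle~\ref{P1} in this setting. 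The same input, applied with the twist by the relative canonical bundle, gives the dual version $\widetilde{\mathcal{R}}\varphi^*=\mathcal{R}\varphi^!\otimes\mathcal{O}(-K_{\mathcal{W}/\mathcal{X}^+})$ together with $\varphi_!\dashv\widetilde{\mathcal{R}}\varphi^*$.

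For the right adjunction $\phi_*\mathcal{R}\varphi^*\dashv\varphi_*\phi^!$, take $F\in\mathrm{D}^b(\mathcal{X}^+)$ and $G\in\mathrm{D}^b(\mathcal{X})/\mathcal{K}^{(a)}_{-1}$, and write roofs $\phi_*\mathcal{R}\varphi^*F\to G^-\leftarrow G$ with cone of $t\colon G\to G^-$ in $\mathcal{K}^{(a)}_{-1}$. Send such a roof to $\varphi_*\phi^!f\colon F\to\varphi_*\phi^!G^-\simeq\varphi_*\phi^!G$; the last isomorphism holds because the preceding lemma gives $\varphi_*\phi^!\mathcal{K}^{(a)}_{-1}=0$. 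Here I use the honest adjunction $\phi_*\dashv\phi^!$, which is available since the center of $\phi$ is a complete intersection. I also use $\varphi_*\phi^!\phi_*\mathcal{R}\varphi^*F\simeq F$ modulo the kernel; this follows from $\varphi_*\mathcal{R}\varphi^*\simeq\mathrm{id}$ once one shows that $\phi^!\phi_*$ differs from the identity on $\mathcal{R}\varphi^*F$ only by objects supported on $E$ that are killed by $\varphi_*$.

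For the inverse map, take $g\colon F\to\varphi_*\phi^!G$. I lift it via the Rees adjunction to $\mathcal{R}^{s}\varphi^*F\to\phi^!G$ for $s\gg0$, then apply $\phi_*$ and the counit $\phi_*\phi^!G\to G$. The cone of $\mathcal{R}^s\varphi^*F\to\mathcal{R}\varphi^*F$ lies in $\mathcal{K}^+_{-1}$, whose $\phi_*$-image lies in $\mathcal{K}^{(a)}_{-1}$ by the well-definedness lemma. So the lift gives a genuine morphism in the quotient, and I then check that the two maps are mutually inverse.

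The left adjunction $\varphi_!\phi^*\dashv\phi_*\mathcal{R}\varphi^*$ is treated symmetrically. I use $\phi^*\dashv\phi_*$ and the dual Rees functor $\widetilde{\mathcal{R}}\varphi^*$ with $\varphi_!\dashv\widetilde{\mathcal{R}}\varphi^*$, as in the Kawamata case. Then I identify $\widetilde{\mathcal{R}}\varphi^*\simeq\mathcal{R}\varphi^*$ after pushing forward by $\phi_*$ modulo $\mathcal{K}^{(a)}_{-1}$. For this I compare the two right semiorthogonal complements on a spanning class, namely the objects $\kappa^{(b)}_*\mathcal{D}^{(b)}_t(-(b-1))$ of the spanning-class lemma, using Corollary~\ref{dualcompuprop} and the fact that both functors are sections of $\varphi_*$. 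Admissibility of the image then follows formally from the existence of both adjoints.

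The main obstacle is the comparison step on $\mathcal{W}$. I must show that the extra factors created by $\phi^!$ or $\phi^*$, namely twists $\mathcal{O}_L(-j)\boxtimes(-)$ for $0\le j<b$ coming from $\phi^!=\phi^*\otimes\mathcal{O}((b-1)E)$ and Lemma~\ref{phidualcom}, vanish under $\varphi_*$ when paired with the pieces $\mathcal{D}^{(a)}_i\boxtimes(-)$. The plan is to use $e=a-d-b\ge0$, so that these twists stay inside the window $\mathcal{O}_L(-e+1),\dots$ where $\mathrm{H}^*(L,-)$ vanishes on $\mathcal{A}^{(a)}_{-1}$-type objects, exactly as in the proof of the well-definedness of $\varphi_*\phi^!$.
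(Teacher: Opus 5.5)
\textbf{Gap: you make the adjunction depend on full faithfulness.} Your roof construction for $\phi_*\mathcal{R}\varphi^*\dashv\varphi_*\phi^!$ is the paper's. You lift $g$ to $\mathcal{R}^s\varphi^*F\to\phi^!G$, push forward and use the counit; conversely you send a roof to $\varphi_*\phi^!f$. But you make this depend on the isomorphism $\varphi_*\phi^!\phi_*\mathcal{R}\varphi^*F\simeq F$, and that is a real problem.

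Given the adjunction, that isomorphism (as the unit) is equivalent to full faithfulness of $\phi_*\mathcal{R}\varphi^*$. The paper obtains this only afterwards, in Corollary~\ref{fftypeI}, by checking it on the spanning objects $\kappa^{(b)}_*\mathcal{D}^{(b)}_s(-(b-1))$ and applying the spanning-class criterion. That criterion itself needs the adjunction you are trying to prove, so you cannot use the isomorphism here.

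Your planned route to it for all $F$ also cannot work as stated. The factors created by $\phi^!$ do not all vanish under $\varphi_*$ against the pieces $\mathcal{D}^{(a)}_i$ with $0\le i<b$. Lemma~\ref{phidualcom} and Lemma~\ref{orddualcomp} give $\varphi_*\phi^!\kappa^{(a)}_*\mathcal{D}^{(a)}_i\simeq\kappa^{(b)}_*\mathcal{D}^{(b)}_{b-1-i}(-(b-1))\neq 0$ (Corollary~\ref{dualcomp2} and its Type~I analogue). So the identity on spanning objects comes from recombination, not vanishing. The hypothesis $e=a-d-b\ge 0$ only controls the $\mathcal{A}^{(a)}_{-1}$-pieces, i.e.\ the well-definedness of $\varphi_*\phi^!$ on the quotient.

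\textbf{Fix: drop that claim.} To read $\varphi_*\phi^!f$ as a morphism out of $F$, you only need the unit map
$F\simeq\varphi_*\mathcal{R}\varphi^*F\to\varphi_*\phi^!\phi_*\mathcal{R}\varphi^*F$ of $\phi_*\dashv\phi^!$, not an isomorphism. The two assignments are then mutually inverse by the triangle identities of $\phi_*\dashv\phi^!$ combined with the Rees adjunction for $\varphi$.

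To make this legitimate, work in the quotient of $\mathrm{D}^b(\mathcal{W})$ by $\langle\phi^!\mathcal{K}^{(a)}_{-1},\mathcal{K}^+_{-1}\rangle$, as the paper indicates. There $\varphi_*$ kills both generators, so $\mathcal{R}\varphi^*F$ is left orthogonal to them and its Hom spaces are unchanged. The two adjunctions can then be composed.

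With that change your argument is the paper's. The paper writes out only the right adjunction; your left-adjoint argument mirrors its Kawamata-case identification $\widetilde{\mathcal{R}}\pi^*\simeq\mathcal{R}\pi^*$.
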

\begin{proof}
It suffices to prove that for any $F \in \mathrm{D}^b(\mathcal{X}^+)$ and $G \in \mathrm{D}^b(\mathcal{X})/\mathcal{K}_{-1}$, we have
\[
\operatorname{Ext}^*_{\mathrm{D}^b(\mathcal{X})/\mathcal{K}_{-1}}
(\phi_* \mathcal{R}\varphi^* F,\, G)
\simeq
\operatorname{Ext}^*_{\mathrm{D}^b(\mathcal{X}^+)}
(F,\, \varphi_* \phi^! G).
\]
Consider natural map
\[
\left[
F \xrightarrow{\, g \,} \varphi_* \phi^! G
\right]
\;\longmapsto\;
\left[
\begin{tikzcd}[column sep=0.6em, row sep=small]
& \mathcal{R}^{s\gg 0}\varphi^* F \arrow[dl, "s"] \arrow[dr, "\varphi^*g"] & \\
\mathcal{R}\varphi^* F \arrow[dr, dashed, "f"] & & \phi^! G \arrow[dl, dashed, "t"] \\
& G^- &
\end{tikzcd}
\right]
\;\longmapsto\;\left[
\begin{tikzcd}[column sep=small, row sep=small]
& \phi_* \mathcal{R}^{s\gg 0}\varphi^* F \arrow[dl, "\phi_* s"'] \arrow[dr, "\phi_* \varphi^*g"] & \\
\phi_* \mathcal{R}\varphi^* F & & G
\end{tikzcd}
\right].
\]
where $\mathrm{Cone}(s) \in \mathcal{K}_{-1}^+$ and $\phi_* \mathrm{Cone}(s) \in \mathcal{K}_{-1}$. It follows that the above correspondence is both surjective and injective, here one may consider inside $\mathrm{D}^b(\mathcal{W})$ a quotient by a full saturated subcategory generated by $\phi^!\mathcal{K}_{-1}$ and $\mathcal{K}^+_{-1}$, and then take the corresponding pushout. Its natural inverse map is given by
\[
\left[
\begin{tikzcd}[column sep=small, row sep=small]
\phi_* \mathcal{R}\varphi^* F \arrow[dr, "f"] & & G \arrow[dl, "t"'] \\
& G^- &
\end{tikzcd}
\right]
\quad \longmapsto \quad
\left[
F \xrightarrow{\ \varphi_* \phi^! f\ } \varphi_* \phi^! G^- = \varphi_* \phi^! G
\right],
\]
In the last step, since $\tau := \mathrm{Cone}(t) \in \mathcal{K}_{-1}$, we have $\varphi_* \phi^! \tau = 0$ which is a well-defined natural map.
\end{proof}
Once this functorial construction is established, we are essentially reduced to the situation of the previous section, provided we pass to suitable quotient categories, we have:
\begin{corollary}
For any dual pre-tilting object $\mathcal{D}_s^{(a)}$ on $L\times C$, we have
\[
\varphi_{*}\phi^{!}\kappa^{(a)}_{*}\mathcal{D}_s^{(a)}
=
\begin{cases}
0, & b \le s < a-d,\\[4pt]
\kappa^{(b)}_{*}\mathcal{D}^{(b)}_{\,b-1-s}(-(b-1)), & 0 \le s < b .
\end{cases}
\]
\end{corollary}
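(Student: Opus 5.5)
The plan is to imitate the proof of Corollary~\ref{dualcomp2} in the hypersurface setting. The first step is a filtration of $\phi^{!}\kappa^{(a)}_{*}\mathcal{O}_{L\times C}$ analogous to Lemma~\ref{phidualcom}. Since $L\times C\subset\mathcal{X}$ is cut out by the equivariant regular sequence $\{y_1,\ldots,y_\ell\}$ (together with $f$ in the ambient $\bar{\mathcal{X}}$), it has an honest $\mathbb{G}_m$-equivariant Koszul resolution on $\mathcal{X}$ with terms $\mathcal{O}_{\mathcal{X}}(\sum_{i\in J}b_i)$. The valuation $val(y_i)=b_i$ is exactly the one defining $\phi$, so this Koszul complex is its own standard resolution. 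Arguing as in Lemma~\ref{flipullbackcom}, I would:
\begin{itemize}
\item project the filtered sequences $P_{t-\sum b_i}\to\cdots\to P_t$ to $\mathcal{W}$ via Serre's theorem;
\item compare the result with $\phi^{*}$ of the Koszul complex, using $0\to\mathcal{O}_{\mathcal{W}}\to\mathcal{O}_{\mathcal{W}}(E)\to\mathcal{O}_E(E)\to0$ and $\mathcal{O}_{\mathcal{W}}(tE)|_E\simeq\mathcal{O}_L(-t)\boxtimes\mathcal{O}_{L^+}(-t)$;
\item twist by $\mathcal{O}_{\mathcal{W}}((b-1)E)$, which turns $\phi^*$ into $\phi^!$.
\end{itemize}
The expected output is a finite filtration with factors $\iota_*\mathcal{O}_L(j-(b-1))\boxtimes\mathcal{D}^{(b)}_j(-(b-1))[1]$ for $0\le j<b$. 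This is where the hypothesis $f(x_i=0,y_i)=0$ enters: it makes $L^+=\mathcal{P}(b_i)$, so the dual collection $\mathcal{D}^{(b)}_j$ on $L^+$ is the full one.

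Next I would tensor by $\mathcal{D}^{(a)}_s$ on $L\times C$, by dévissage along the section as in the corollaries following Lemma~\ref{flipullbackcom}. This gives a filtration of $\phi^!\kappa^{(a)}_*\mathcal{D}^{(a)}_s$ with factors $\iota_*\bigl(\mathcal{D}^{(a)}_s(j-(b-1))\bigr)\boxtimes\mathcal{D}^{(b)}_j(-(b-1))[1]$. I would then apply $\varphi_*$, which is projection along $L$, so each factor contributes
\[
\mathrm{H}^*\bigl(L,\mathcal{D}^{(a)}_s(j-(b-1))\bigr)\otimes\kappa^{(b)}_*\mathcal{D}^{(b)}_j(-(b-1)).
\]
By Serre duality on the Fano complete intersection $L$, or directly by the semiorthogonal decomposition $\langle\mathcal{A}^{(a)}_{-1},\mathcal{O}_L,\ldots,\mathcal{O}_L(a-d-1)\rangle$, this cohomology equals $\operatorname{Ext}^*_L(\mathcal{O}_L(b-1-j),\mathcal{D}^{(a)}_s)$ up to shift. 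Lemma~\ref{orddualcomp} then gives $\mathcal{O}_C\cdot\delta_{s,\,b-1-j}$ whenever $0\le b-1-j<a-d$. Two cases follow:
\begin{itemize}
\item For $0\le s<b$, exactly one factor survives, namely $j=b-1-s$, and it yields $\kappa^{(b)}_*\mathcal{D}^{(b)}_{b-1-s}(-(b-1))$.
\item For $b\le s<a-d$, no $j\in[0,b)$ satisfies $b-1-j=s$, so every factor vanishes and the pushforward is $0$.
\end{itemize}
Hypothesis 3, namely $e\ge0$ so that $b\le a-d$, guarantees that all indices $b-1-j$ lie in the range where Lemma~\ref{orddualcomp} applies.

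The main obstacle is the first step: establishing the filtration of $\phi^!\kappa^{(a)}_*\mathcal{O}_{L\times C}$ with the correct twists. I expect two difficulties. First, one must track the twist on the $L$-side carefully, since the restriction of $\mathcal{O}_{\mathcal{W}}(tE)$ contributes $\mathcal{O}_L(-t)$ and the $\mathcal{D}^{(b)}$ indices must match up. Second, one must check that the measured/standard-basis condition really holds for $\{y_i\}$ on the hypersurface, so that no extra $K_\bullet$ correction terms from $f$ appear. I would verify the latter first. Since $f\in\langle x_i,y_i\rangle$ has positive degree and $f|_{x=0}=0$, the element $f$ involves the $x_i$, and the division of $f$ by the $y$-Koszul complex should not produce leading-term corrections. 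If corrections do appear, I would absorb them into factors lying in $\mathcal{A}^{(a)}_{-1}\boxtimes\mathrm{D}^b(L^+)$, which are killed by $\varphi_*$ modulo the relevant vanishing $\mathrm{H}^*(L,-)=0$ used in the preceding lemma.
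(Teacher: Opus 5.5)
Your proposal is correct and is essentially the paper's argument: the paper reduces to the standard-flip computation, i.e.\ the filtration of $\phi^{!}\kappa^{(a)}_{*}\mathcal{O}$ as in Lemma~\ref{phidualcom} (with $L$ in place of $\mathcal{P}(a_i)$ and $L^{+}=\mathcal{P}(b_i)$), d\'evissage by $\mathcal{D}^{(a)}_s$, pushforward along $\varphi$, and the dual-collection orthogonality of Lemma~\ref{orddualcomp}, where $e\ge 0$ ensures $b\le a-d$, exactly as in Corollary~\ref{dualcomp2}. Two small corrections: first, $f$ produces no correction terms on the $\phi$-side because of the \emph{other} half of hypothesis~2, $f(x_i,y_i=0)\neq 0$, which gives $val_{\phi}(f)=0$ with leading form a nonzerodivisor in the associated graded ring, so $\phi$ is a Tor-independent base change of $\bar{\phi}$ and your fallback is unnecessary; second, no Serre duality is needed, since $\mathrm{H}^*(L,G(-m))=\operatorname{Ext}^*_L(\mathcal{O}_L(m),G)$ directly.
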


\begin{lemma}\label{trandualprop}
For any dual pre-tilting object $\mathcal{D}_s^{(b)}$ on $L^{+}\times C$, we have
\[
\varphi_{*}\phi^{!}\phi_{*}\mathcal{R}\varphi^{*}\kappa^{(b)}_{*}\mathcal{D}_s^{(b)}(-(b-1))
\simeq
\kappa^{(b)}_{*}\mathcal{D}_s^{(b)}(-(b-1)),
\]
where $0 \le s < b$.
\end{lemma}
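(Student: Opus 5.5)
The plan is to mimic the proof of the earlier Lemma~\ref{trandualprop} in the standard flip case, now using the hypersurface versions of the filtrations. We start from the Corollary giving the filtration of $\phi_{*}\varphi^{*}\kappa^{(b)}_{*}\mathcal{D}_s^{(b)}(-(b-1))$ with graded pieces $\kappa^{(a)}_{*}\mathcal{D}^{(a)}_{t}\otimes V_t[1]$. Since $\kappa^{(b)}_{*}\mathcal{D}_s^{(b)}(-(b-1))$ is supported on the flipping locus, the object $\mathcal{R}\varphi^{*}$ of it agrees, modulo $\mathcal{K}^{+}_{-1}=\langle \iota_*\mathcal{A}^{(a)}_{-1}\boxtimes \mathrm{D}^b(L^+\times C)\rangle$, with a truncation $F^{s'}$ of the pullback filtration for $s'\ge a-d$. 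This is the analogue of the dévissage used for Kawamata blow-ups. The terms with index $t\ge a-d$ lie in $\mathcal{K}^{+}_{-1}$, and hence vanish in $\mathrm{D}^b(\mathcal{X})/\mathcal{K}^{(a)}_{-1}$ after $\phi_*$. So $\phi_{*}\mathcal{R}\varphi^{*}\kappa^{(b)}_{*}\mathcal{D}_s^{(b)}(-(b-1))$ is an iterated extension of the finitely many pieces $\kappa^{(a)}_{*}\mathcal{D}^{(a)}_{t}\otimes V_t[1]$ with $0\le t<a-d$.

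Next, we apply the well-defined functor $\varphi_{*}\phi^{!}:\mathrm{D}^b(\mathcal{X})/\mathcal{K}^{(a)}_{-1}\to \mathrm{D}^b(\mathcal{X}^+)$ piece by piece, using the Corollary preceding the statement.
\begin{itemize}
\item For $b\le t<a-d$, we have $\varphi_{*}\phi^{!}\kappa^{(a)}_{*}\mathcal{D}^{(a)}_{t}=0$.
\item For $0\le t<b$, the factor $V_t$ vanishes unless $t=b-1-s$, in which case $V_t=\mathcal{O}_C$, by the computation of $V_t$ via Lemma~\ref{orddualcomp} on $L^+\times C$.
\end{itemize}
The only surviving piece is therefore
\[
\varphi_{*}\phi^{!}\kappa^{(a)}_{*}\mathcal{D}^{(a)}_{b-1-s}\simeq \kappa^{(b)}_{*}\mathcal{D}^{(b)}_{s}(-(b-1)).
\]
The filtration thus collapses to a single nonzero term, which gives the claimed isomorphism up to the shift convention. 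That shift is absorbed exactly as in the standard case, since both filtrations carry the same $[1]$.

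The main obstacle is justifying that the graded pieces with $t\ge a-d$, and the tail $\tau_{s'}$, really die after applying $\varphi_{*}\phi^{!}\circ\phi_*$. This needs two things.
\begin{itemize}
\item Compatibility of $\mathcal{R}\varphi^{*}$ with the explicit filtration of $\varphi^{*}$. I would get this from the standard-resolution construction, through the comparison of the division-algorithm complex with the Koszul pullback.
\item The vanishing $\varphi_{*}\phi^{!}\kappa^{(a)}_{*}\mathcal{A}^{(a)}_{-1}=0$. This comes from Lemma~\ref{phidualcom} together with $e=a-b-d\ge 0$, as in the proof that $\varphi_*\phi^!$ descends to the quotient.
\end{itemize}
I would first check these on the spanning generators $\mathcal{D}^{(b)}_s$, where everything is formal over $C$. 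I would then note that the adjunction $\phi_*\mathcal{R}\varphi^*\dashv\varphi_*\phi^!$ already established makes the identification natural, so that the isomorphism is induced by the unit.
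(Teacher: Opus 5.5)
Your proposal is correct and follows essentially the paper's route: as in the standard-flip version of this lemma, you filter $\phi_*\mathcal{R}\varphi^*\kappa^{(b)}_*\mathcal{D}^{(b)}_s(-(b-1))$ by the pieces $\kappa^{(a)}_*\mathcal{D}^{(a)}_t\otimes V_t$, discard the pieces with $t\ge a-d$ in the quotient by $\mathcal{K}^{(a)}_{-1}$, and use the preceding corollary together with $V_t=\delta_{t,b-1-s}\mathcal{O}_C$ for $t<b$, so that only $\varphi_*\phi^!\kappa^{(a)}_*\mathcal{D}^{(a)}_{b-1-s}\simeq\kappa^{(b)}_*\mathcal{D}^{(b)}_s(-(b-1))$ survives. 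One small clarification: the $[1]$ in the filtration diagrams only records that the cone of $F^t\to F^{t-1}$ is the shifted piece, so the graded pieces themselves are unshifted and there is no shift left to absorb.
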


\begin{lemma}
In $\mathrm{D}^b(L\times C)/\mathcal{A}^{(a)}_{-1}$, the following two decompositions are equivalent:
\[
\langle \mathcal{D}^{(a)}_{a-d-1},\ldots,\mathcal{D}^{(a)}_{b}\rangle
\simeq
\langle \mathcal{O}_{L\times C}(-e),\ldots,\mathcal{O}_{L\times C}(-1)\rangle,
\]
where $e:=a-b-d>0$.
\end{lemma}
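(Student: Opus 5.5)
This is the analogue of Lemma \ref{equalcollection}, with $\mathcal{P}(a_i)\times C$ replaced by the quotient $\mathrm{D}^b(L\times C)/\mathcal{A}^{(a)}_{-1}$. I would run the same mutation argument there. In that quotient the image of the decomposition
\[
\mathrm{D}^{b}(L\times C)=\left\langle \mathcal{A}^{(a)}_{-1},\mathcal{O}_{L\times C},\ldots,\mathcal{O}_{L\times C}(a-d-1)\right\rangle
\]
becomes a full pre-tilting collection $\langle \mathcal{O}_{L\times C},\ldots,\mathcal{O}_{L\times C}(a-d-1)\rangle$ relative to $C$. The dual collection $\langle \mathcal{D}^{(a)}_{a-d-1},\ldots,\mathcal{D}^{(a)}_{0}\rangle$ spans the same category. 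For every $k$, the tail $\langle \mathcal{D}^{(a)}_{k-1},\ldots,\mathcal{D}^{(a)}_0\rangle$ coincides with $\langle \mathcal{O}_{L\times C},\ldots,\mathcal{O}_{L\times C}(k-1)\rangle$, by the definition of $\mathcal{D}^{(a)}_i$ as iterated left mutations. Taking $k=b$, the left orthogonal of $\langle\mathcal{O}_{L\times C},\ldots,\mathcal{O}_{L\times C}(b-1)\rangle$ inside the quotient equals $\langle \mathcal{D}^{(a)}_{a-d-1},\ldots,\mathcal{D}^{(a)}_{b}\rangle$.

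It then remains to identify this left orthogonal with $\langle \mathcal{O}_{L\times C}(-e),\ldots,\mathcal{O}_{L\times C}(-1)\rangle$, where $e=a-b-d$. For this I would use the twisted decomposition
\[
\mathrm{D}^{b}(L\times C)=\left\langle \mathcal{O}(-e),\ldots,\mathcal{O}(-1),\mathcal{A}',\mathcal{O},\ldots,\mathcal{O}(b-1)\right\rangle.
\]
This comes from twisting the original decomposition by $\mathcal{O}(-e)$ and then moving the last $e$ line bundles past $\mathcal{A}$ and the earlier blocks using the Serre functor of the relative Fano complete intersection $L\to C$. That Serre functor is $-\otimes\mathcal{O}(-(a-d))[\dim]$ up to pullback from $C$, exactly as in Lemma \ref{wbuporth}. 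Equivalently, I can check directly that $\operatorname{Ext}^*(\mathcal{O}(j),\mathcal{O}(-i))=0$ for $0\le j<b$ and $1\le i\le e$. This vanishing holds because $0<i+j<a-d$, and $H^*(L,\mathcal{O}(-m))=0$ for $0<m<a-d$ by the Koszul/hypersurface computation on $\mathcal{P}(a_i)$ cut by $f$ of degree $d$, as in Lemma \ref{standardlemma3}.

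The main obstacle is controlling $\mathcal{A}^{(a)}_{-1}$ after the twist: its twisted copy $\mathcal{A}'$ need not equal $\mathcal{A}^{(a)}_{-1}$. I would handle this by showing that both subcategories vanish against the relevant line bundles, and then comparing the two decompositions in the quotient. The key fact is that $\mathcal{O}(-i)$ for $1\le i\le e$ is left orthogonal to $\mathcal{A}^{(a)}_{-1}$ modulo the Serre twist, using $e\le a-d$. With that, $\mathcal{O}(-e),\ldots,\mathcal{O}(-1)$ together with $\mathcal{O},\ldots,\mathcal{O}(b-1)$ generate the quotient, which has rank $a-d$ relative to $C$.

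Combining the two steps, both $\langle\mathcal{D}^{(a)}_{a-d-1},\ldots,\mathcal{D}^{(a)}_b\rangle$ and $\langle\mathcal{O}(-e),\ldots,\mathcal{O}(-1)\rangle$ are the left orthogonal of the same admissible subcategory in the quotient. Left orthogonals are unique, so they coincide. This is the analogue of the chain of equivalences in Lemma \ref{equalcollection} and Lemma \ref{mutationlemma}.
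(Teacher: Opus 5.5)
Your proposal is correct and is essentially the paper's argument (the chain of Lemma~\ref{equalcollection}, transported to $\mathrm{D}^b(L\times C)/\mathcal{A}^{(a)}_{-1}$): both blocks are the complement of the common block $\langle\mathcal{O}_{L\times C},\ldots,\mathcal{O}_{L\times C}(b-1)\rangle$ in two full decompositions, and your Serre-functor rotation supplies the second decomposition, which the paper leaves implicit. The obstacle you flag does not arise: rotating $\mathcal{O}(b),\ldots,\mathcal{O}(a-d-1)$ to the front by the relative Serre functor gives $\langle\mathcal{O}(-e),\ldots,\mathcal{O}(-1),\mathcal{A}^{(a)}_{-1},\mathcal{O},\ldots,\mathcal{O}(b-1)\rangle$ with $\mathcal{A}^{(a)}_{-1}$ itself as the middle component (the middle term of a three-term decomposition is determined by the two outer ones anyway), so generation of the quotient follows immediately and no rank count is needed.
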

\begin{corollary}
We have
\[
\varphi_{*}\phi^{!}\kappa^{(a)}_{*}\mathcal{O}_{L\times C}(-t)=0,
\qquad
0<t\le e .
\]
\end{corollary}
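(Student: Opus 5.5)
We derive the vanishing from the preceding Corollary (computing $\varphi_{*}\phi^{!}\kappa^{(a)}_{*}\mathcal{D}^{(a)}_s$) together with the preceding Lemma identifying $\langle \mathcal{D}^{(a)}_{a-d-1},\ldots,\mathcal{D}^{(a)}_{b}\rangle$ with $\langle \mathcal{O}_{L\times C}(-e),\ldots,\mathcal{O}_{L\times C}(-1)\rangle$ in $\mathrm{D}^b(L\times C)/\mathcal{A}^{(a)}_{-1}$. This is the same pattern as Corollary~\ref{standardlemma1} in the standard case.

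\textbf{Step 1.} For $b\le s<a-d$, the preceding Corollary gives
\[
\varphi_{*}\phi^{!}\kappa^{(a)}_{*}\mathcal{D}^{(a)}_s=0 .
\]
Since $\varphi_{*}\phi^{!}$ is exact, it therefore kills the thick subcategory of $\mathrm{D}^b(\mathcal{X})$ generated by $\kappa^{(a)}_{*}\mathcal{D}^{(a)}_s$ for $b\le s<a-d$.

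\textbf{Step 2.} We have already shown that $\varphi_{*}\phi^{!}$ descends to $\mathrm{D}^b(\mathcal{X})/\mathcal{K}^{(a)}_{-1}$, i.e.\ it vanishes on $\kappa^{(a)}_{*}\mathcal{A}^{(a)}_{-1}$. Combined with Step 1, $\varphi_{*}\phi^{!}$ vanishes on the thick subcategory generated by $\kappa^{(a)}_{*}\mathcal{A}^{(a)}_{-1}$ and $\kappa^{(a)}_{*}\mathcal{D}^{(a)}_{a-d-1},\ldots,\kappa^{(a)}_{*}\mathcal{D}^{(a)}_{b}$.

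\textbf{Step 3.} By the preceding Lemma, each $\mathcal{O}_{L\times C}(-t)$ with $0<t\le e$ lies in the subcategory of $\mathrm{D}^b(L\times C)$ generated by $\mathcal{A}^{(a)}_{-1}$ and $\mathcal{D}^{(a)}_{b},\ldots,\mathcal{D}^{(a)}_{a-d-1}$. The equivalence of decompositions in the quotient lifts to this statement, because $\mathcal{A}^{(a)}_{-1}$ is an admissible component of the given semiorthogonal decomposition. Applying the exact functor $\kappa^{(a)}_{*}$, the object $\kappa^{(a)}_{*}\mathcal{O}_{L\times C}(-t)$ lies in the subcategory of Step 2, and hence is annihilated by $\varphi_{*}\phi^{!}$.

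\textbf{Main obstacle.} The delicate point is the lift in Step 3 from the quotient $\mathrm{D}^b(L\times C)/\mathcal{A}^{(a)}_{-1}$ to an actual generation statement in $\mathrm{D}^b(L\times C)$. This lift must hold with $\mathcal{A}^{(a)}_{-1}$ sitting in the leftmost position of the decomposition, and the mutation in the preceding Lemma has to be carried out relative to that component.

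I would handle this by writing the left mutation of $\mathcal{O}_{L\times C}(-t)$ through $\langle \mathcal{D}^{(a)}_{a-d-1},\ldots,\mathcal{D}^{(a)}_{b}\rangle$ explicitly. The residual piece then lies in $\mathcal{A}^{(a)}_{-1}$ or in $\langle \mathcal{O}_{L\times C},\ldots,\mathcal{O}_{L\times C}(b-1)\rangle$. The latter part vanishes by orthogonality, using $\mathrm{H}^*(L,\mathcal{O}_L(-t-i))=0$ in the range $0<t+i<a-d$, as in Lemma~\ref{standardlemma3}.

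The remaining input is the vanishing on $\mathcal{A}^{(a)}_{-1}$, which uses $e=a-b-d\ge 0$ as in the proof that $\varphi_{*}\phi^{!}$ is well defined on the quotient.
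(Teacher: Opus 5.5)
Your proposal is correct and follows the paper's route. As in Corollary~\ref{standardlemma1}, the vanishing comes from three inputs: the preceding corollary giving $\varphi_{*}\phi^{!}\kappa^{(a)}_{*}\mathcal{D}^{(a)}_s=0$ for $b\le s<a-d$, the vanishing of $\varphi_{*}\phi^{!}$ on $\mathcal{K}^{(a)}_{-1}$, and the lemma identifying $\langle \mathcal{D}^{(a)}_{a-d-1},\ldots,\mathcal{D}^{(a)}_{b}\rangle$ with $\langle \mathcal{O}_{L\times C}(-e),\ldots,\mathcal{O}_{L\times C}(-1)\rangle$ modulo $\mathcal{A}^{(a)}_{-1}$. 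The lift you flag in Step~3 is automatic and needs no admissibility, because thick subcategories containing $\mathcal{A}^{(a)}_{-1}$ correspond bijectively to thick subcategories of the quotient; your orthogonality check $\mathrm{H}^*(L,\mathcal{O}_L(-t-i))=0$ for $0<t+i<a-d$ also places $\mathcal{O}_{L\times C}(-t)$ directly in $\langle\mathcal{A}^{(a)}_{-1},\mathcal{D}^{(a)}_{a-d-1},\ldots,\mathcal{D}^{(a)}_{b}\rangle$.
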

\begin{lemma}
For any $0 \le s < b$, and any dual pre-tilting object $\mathcal{D}_s^{(b)}$ on $L^{+}\times C$, the objects $\kappa^{(b)}_{*}\mathcal{D}_s^{(b)}(-(b-1))$ form a spanning class in $\mathrm{D}^b(\mathcal{X}^{+})$.
\end{lemma}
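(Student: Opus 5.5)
We follow the template of Lemma~\ref{ptspanningclass} and Lemma~\ref{wblspanningclass}: take $F\in\mathrm{D}^b(\mathcal{X}^+)$ with
\[
\operatorname{Ext}^*_{\mathcal{X}^+}\bigl(F,\kappa^{(b)}_*\mathcal{D}^{(b)}_s(-(b-1))\bigr)=0 \quad\text{for all } 0\le s<b,
\]
and show $F=0$; the dual condition is handled identically through the dualizing functor, which exchanges the two directions since $\mathcal{X}^+$ is Gorenstein (a hypersurface in the smooth stack $\bar{\mathcal{X}}^+$). Since we work formally along $L^+\times C$, every stacky closed point of $\mathcal{X}^+$ specializes to $L^+\times C=\mathcal{P}(b_i)\times C$. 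So it suffices, via Lemma~\ref{ptspanningclass} and Nakayama, to show that the stacky skyscrapers $\mathrm{k}_x\otimes\chi$ at points $x\in L^+\times C$ lie in the thick subcategory generated by our family. For the support statement we argue on $L^+\times C$ and then use that $F$ is complete along it.

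The first step is generation. The collection $\mathcal{D}^{(b)}_{b-1},\dots,\mathcal{D}^{(b)}_0$ is dual to the full exceptional collection $\mathcal{O}_{\mathcal{P}(b_i)},\dots,\mathcal{O}_{\mathcal{P}(b_i)}(b-1)$. Hence twisting by $\mathcal{O}(-(b-1))$ gives another full collection of $\mathrm{D}^b(\mathcal{P}(b_i))$, and, after exterior product with $\mathcal{O}_C$ and dévissage in the $C$-direction, the objects $\kappa^{(b)}_*(\mathcal{D}^{(b)}_s(-(b-1))\boxtimes G)$ with $G\in\mathrm{D}^b(C)$ generate $\kappa^{(b)}_*\mathrm{D}^b(L^+\times C)$. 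Each stacky point $\mathrm{k}_x\otimes\chi$ supported on $L^+\times C$ is then an iterated extension of such objects. As in the proof of Corollary~\ref{gencor}, the regularity of $C$ lets us replace $\mathrm{D}^b(C)$ by $\mathcal{O}_C$ together with its Koszul resolution of points, so the family tensored with $\mathcal{O}_C$ suffices.

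The second step is the conclusion. By the first step and the vanishing hypothesis, $\operatorname{Ext}^*(F,\mathrm{k}_x\otimes\chi)=0$ for all stacky points $x$ of $L^+\times C$. Lemma~\ref{spanlemma1}, in its equivariant form from the proof of Lemma~\ref{ptspanningclass}, then gives $F_{\widehat x}=0$ at every such point. By Nakayama, $F$ vanishes on a neighborhood of $L^+\times C$, which in the formal setting is all of $\mathcal{X}^+$.

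The main obstacle is the Gorenstein/local step. Lemma~\ref{spanlemma1} needs the completed local rings of $\mathcal{X}^+$ at points of $L^+\times C$ to be Gorenstein. Here $L^+\times C$ need not be a complete intersection in $\mathcal{X}^+$, and the hypersurface $f$ may be singular along it. We would handle this by invoking assumption~4, that $\mathcal{X}^+$ has trivial stabilizers in codimension one, to run the argument of Lemma~\ref{ptspanningclass} on an equivariant chart $[U/G_x]$, and by using that a hypersurface in the regular ring $k[[x_i,y_i,z_i]]$ is Gorenstein, so the local rings of $\mathcal{X}^+$ are Gorenstein even where they are singular.
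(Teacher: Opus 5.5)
Your proposal is correct and follows essentially the paper's route: the paper gives no separate proof and only appeals to the pattern of Lemma~\ref{ptspanningclass} and Lemma~\ref{wblspanningclass}. That pattern is exactly your reduction to stacky skyscrapers at the closed points of $L^{+}\times C$, which in the formal setting are all the closed points of $\mathcal{X}^{+}$, and these lie in the triangulated hull of the twisted dual collection by fullness on $\mathcal{P}(b_i)$ together with the Koszul resolution on $C$. As minor remarks, assumption~4 is not what supplies the equivariant chart $[U/G_x]$, and the Gorenstein hypothesis is not needed for the direction $\operatorname{Ext}^*(F,\mathrm{k}_x\otimes\chi)=0\Rightarrow F=0$, which holds over any Noetherian local ring via minimal free resolutions.
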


\begin{corollary}\label{fftypeI}
The functor $\phi_{*}\mathcal{R}\varphi^{*}:\mathrm{D}^{b}(\mathcal{X}^{+})\to \mathrm{D}^{b}(\mathcal{X})/\mathcal{K}^{(a)}_{-1}$ is admissible and fully faithful.
\end{corollary}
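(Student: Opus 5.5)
We follow the proof of Corollary~\ref{standardlemma2}, now working in the quotient $\mathrm{D}^b(\mathcal{X})/\mathcal{K}^{(a)}_{-1}$. The previous lemma already gives admissibility through the adjoint triple $\varphi_!\phi^*\dashv \phi_*\mathcal{R}\varphi^*\dashv \varphi_*\phi^!$. So only full faithfulness is left.

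By \cite[Proposition 1.49]{Huybrechts}, it suffices to check the Ext comparison on a spanning class of $\mathrm{D}^b(\mathcal{X}^+)$. For this we take the objects $\kappa^{(b)}_*\mathcal{D}^{(b)}_s(-(b-1))$, $0\le s<b$, from the spanning-class lemma just above. We also add skyscraper sheaves $\mathrm{k}_x\otimes\chi$ at points away from the flipping locus $L^+\times C$, in case the formal-local spanning class has to be globalized. For those skyscrapers the comparison is trivial, since $\phi$ and $\varphi$ are isomorphisms there.

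For $\mathcal{F},\mathcal{G}$ in the class, adjunction gives
\[
\operatorname{Ext}^*_{\mathrm{D}^b(\mathcal{X})/\mathcal{K}^{(a)}_{-1}}(\phi_*\mathcal{R}\varphi^*\mathcal{F},\phi_*\mathcal{R}\varphi^*\mathcal{G})\simeq \operatorname{Ext}^*_{\mathcal{X}^+}(\mathcal{F},\varphi_*\phi^!\phi_*\mathcal{R}\varphi^*\mathcal{G}).
\]
The type~I version of Lemma~\ref{trandualprop} identifies the second argument with $\mathcal{G}$, which gives the claim.

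That type~I lemma is the real content, so we plan to verify it as follows. First, by Corollary~\ref{dualcompuprop} (type~I form), $\phi_*\mathcal{R}\varphi^*\kappa^{(b)}_*\mathcal{D}^{(b)}_t(-(b-1))$ has a filtration in $\mathrm{D}^b(\mathcal{X})/\mathcal{K}^{(a)}_{-1}$ with three kinds of factors:
\begin{itemize}
\item factors $\kappa^{(a)}_*\mathcal{D}^{(a)}_{s}\otimes V_s$ with $s\ge a-d$; these lie in $\mathcal{K}^{(a)}_{-1}$ and vanish in the quotient;
\item factors with $b\le s<a-d$;
\item a single factor $\kappa^{(a)}_*\mathcal{D}^{(a)}_{b-1-t}$, coming from $V_{b-1-t}=\mathcal{O}_C$.
\end{itemize}
We apply the well-defined functor $\varphi_*\phi^!$. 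The preceding corollary shows that it kills the factors with $b\le s<a-d$ and sends the remaining factor to $\kappa^{(b)}_*\mathcal{D}^{(b)}_{t}(-(b-1))$.

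We expect the main obstacle to be the tail of the filtration $\tau_s$, which is only bounded above. We must make sure that passing to $F^{s}$ for $s\gg0$ is legitimate, meaning $\mathcal{R}\varphi^*$ agrees with a finite truncation modulo $\mathcal{K}^+_{-1}$. We must also make sure the extension data assemble into the identity map, not merely an abstract isomorphism. For the truncation we would invoke the construction of $\mathcal{R}\varphi^*$, where all $\operatorname{gr}^i$ lie in $\mathcal{A}^{(a)}_{-1}\boxtimes\mathrm{D}^b(L^+)$. For the identity we would check that the unit map $\mathcal{G}\to\varphi_*\phi^!\phi_*\mathcal{R}\varphi^*\mathcal{G}$ induces the isomorphism on the surviving factor, using Lemma~\ref{orddualcomp}.
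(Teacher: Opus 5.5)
Your proposal is correct and follows the paper's own (implicit) argument. Admissibility comes from the adjoint triple $\varphi_!\phi^*\dashv\phi_*\mathcal{R}\varphi^*\dashv\varphi_*\phi^!$. Full faithfulness comes from \cite[Proposition 1.49]{Huybrechts} applied to the spanning class $\kappa^{(b)}_*\mathcal{D}^{(b)}_s(-(b-1))$, exactly as in Corollary~\ref{standardlemma2}, with the type~I version of Lemma~\ref{trandualprop} giving $\varphi_*\phi^!\phi_*\mathcal{R}\varphi^*\mathcal{G}\simeq\mathcal{G}$.

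Your sketch of that lemma goes beyond what the paper writes, as do the two points you flag: truncating the $\tau_s$ tail modulo $\mathcal{K}^{+}_{-1}$, and checking that the unit realizes the isomorphism so the comparison on Hom-sets is the one induced by the functor. The paper passes over these silently, so they add care without changing the route.
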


\begin{lemma}
We have the cohomological computation
\[
\operatorname{Ext}^{*}_{\mathcal{X}}
\big(
\kappa^{(a)}_{*}\mathcal{O}_{L\times C},
\kappa^{(a)}_{*}\mathcal{O}_{L\times C}(-t)
\big)
=
\begin{cases}
\mathcal{O}_C, & t=0,\\[4pt]
0, & 0<t<e.
\end{cases}
\]
\end{lemma}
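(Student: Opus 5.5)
The plan is to mirror the proof of Lemma~\ref{standardlemma3}, now using a Koszul-type resolution of $L\times C$ inside $\mathcal{X}$. By the discussion above, $\{y_1,\ldots,y_\ell,f\}$ is a $\mathbb{G}_m$-equivariant regular sequence in $R$. It cuts out $L\times C$ in $\bar{\mathcal{X}}$, so $L\times C\subset\mathcal{X}$ is the complete intersection defined by the $y_i$. I would therefore resolve $\kappa^{(a)}_*\mathcal{O}_{L\times C}$ on $\mathcal{X}$ by the equivariant Koszul complex in the $y_i$:
\[
0\to \mathcal{O}_{\mathcal{X}}\big(\textstyle\sum_i b_i\big)\to\cdots\to\bigoplus_{i<j}\mathcal{O}_{\mathcal{X}}(b_i+b_j)\to\bigoplus_i\mathcal{O}_{\mathcal{X}}(b_i)\to\mathcal{O}_{\mathcal{X}}\to\kappa^{(a)}_*\mathcal{O}_{L\times C}\to 0.
\]
The twists follow the same sign convention as in Lemma~\ref{standardlemma3}. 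Note that $\mathcal{O}_{\mathcal{X}}(t)$ restricts to $\mathcal{O}_{L\times C}(t)$, since the polarization on $L$ is inherited from $\mathcal{P}(a_i)$.

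Next I apply $\operatorname{Hom}(-,\kappa^{(a)}_*\mathcal{O}_{L\times C}(-t))$ and use adjunction, $\kappa^*\dashv\kappa_*$. This turns the Ext groups into the hypercohomology of a complex on $L\times C$. The terms of that complex are $\mathcal{O}_{L\times C}(-t-\sum_{i\in I}b_i)$ for subsets $I\subset\{1,\ldots,\ell\}$, and the differentials vanish after restriction. Equivalently, the local Exts are the exterior powers of the normal bundle $N=\bigoplus_i\mathcal{O}_{L\times C}(-b_i)$, up to dualization. So
\[
\operatorname{Ext}^*\simeq\bigoplus_{I}\mathrm{H}^*\big(L\times C,\mathcal{O}_{L\times C}(-t-b_I)\big)[-|I|],
\]
where the precise sign of the twist $b_I$ (plus or minus) has to be fixed by the same convention used in Lemma~\ref{standardlemma3}.

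The remaining input is the cohomology vanishing on the hypersurface $L\subset\mathcal{P}(a_i)$ of degree $d$. The sequence $0\to\mathcal{O}_{\mathcal{P}}(m-d)\to\mathcal{O}_{\mathcal{P}}(m)\to\mathcal{O}_L(m)\to0$, together with the vanishing of $\mathrm{H}^*(\mathcal{P}(a_i),\mathcal{O}(-u))$ for $0<u<a$, gives $\mathrm{H}^*(L,\mathcal{O}_L(-u))=0$ for $0<u<a-d$ and $\mathrm{H}^*(L,\mathcal{O}_L)=\mathrm{k}$. For $t=0$ and $I=\emptyset$ this yields $\mathcal{O}_C$. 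For $0<t<e=a-b-d$, every twist $t+b_I$ lies in $(0,a-d)$ because $b_I\le b$, so all summands vanish. This is exactly why the bound is $e$ rather than $a-d$.

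The main obstacle is the $t=0$ case, and more generally the sign bookkeeping of the twists. If the normal-bundle twists come out as $+b_I$ rather than $-b_I$, then the $I\neq\emptyset$ terms for $t=0$ do not land in the vanishing range, and the answer would not be just $\mathcal{O}_C$. I would settle this by comparing with the standard flip case ($d=0$, $L=\mathcal{P}(a_i)$), where Lemma~\ref{standardlemma3} already fixes the convention. There the $y_i$ have negative weight, so on $L$ the normal directions contribute $\mathcal{O}(-b_i)$ after the identification with the dual. With that convention the $I\neq\emptyset$ terms for $t=0$ have twist in $(0,b]\subset(0,a-d)$ and vanish, since $e\ge0$. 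If strictness fails at $t+b_I=a-d$, I would restrict to $t<e$ exactly, which the statement already does.
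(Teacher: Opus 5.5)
Your proposal is correct and follows the paper's route: the equivariant Koszul resolution of $L\times C$ by the $y_i$, restriction to $L\times C$ (where the differentials vanish, so $\kappa^{(a)*}\kappa^{(a)}_*\mathcal{O}_{L\times C}$ splits into shifted wedge powers of the conormal bundle), and the vanishing of $\mathrm{H}^*(\mathcal{O}_L(-u))$ for $0<u<a-d$. You also make explicit the bookkeeping $t+b_I\in(0,a-d)$ that the paper's ``same as before'' leaves implicit, and which explains the bound $t<e$.

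The one slip is at $t=0$: the inclusion $(0,b]\subset(0,a-d)$ requires $e>0$, not $e\ge 0$. When $e=0$, the top term $\mathcal{O}_L(-(a-d))$ is the relative dualizing sheaf of $L\times C\to C$ and contributes an extra $\mathcal{O}_C$ in degree $\ell+k-2$ (a spherical-type phenomenon). So the $t=0$ case of the lemma, in the paper as well as in your proof, tacitly needs $e\ge 1$.
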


\begin{proof}
Since $L\times C\subset \mathcal{X}$ is still complete intersection and admits a Koszul resolution in $\mathcal{X}$:
\[
0
\to
\mathcal{O}_{\mathcal{X}}\!\big(\sum_{i=1}^{k}b_i\big)
\xrightarrow{F_{k-1}}
\cdots
\to
\bigoplus_{i<j}
\mathcal{O}_{\mathcal{X}}(b_i+b_j)
\xrightarrow{F_{1}}
\bigoplus_{i=1}^{k}
\mathcal{O}_{\mathcal{X}}(b_i)
\xrightarrow{F_{0}}
\mathcal{O}_{\mathcal{X}}
\to
\kappa^{(a)}_{*}\mathcal{O}_{L\times C}
\to
0 ,
\]
proof is the same as before.
\end{proof}

\begin{proof}[Proof of Proposition~\ref{BOfloppropII}]
It remains to prove generation. Let
\[
\mathcal{D}
:=
\left\langle
\operatorname{Im}\big(\kappa^{(a)}_*\mathrm{D}^b(C)\otimes \mathcal{O}_{L}(-e)\big),\,
\ldots,\,
\operatorname{Im}\big(\kappa^{(a)}_*\mathrm{D}^b(C)\otimes \mathcal{O}_{L}(-1)\big),\,
\operatorname{Im}\,\phi_*\mathcal{R}\varphi^*
\right\rangle .
\]
For any $b\in \mathrm{D}^{b}(\mathcal{X})$, consider the distinguished triangle
\[
\phi_{*}\mathcal{R}\varphi^{*}\varphi_{*}\phi^{!}b
\longrightarrow
b
\longrightarrow
c
\in\Delta
\qquad
\text{in }
\mathrm{D}^{b}(\mathcal{X})/\mathcal{K}^{(a)}_{-1}.
\]
Applying $\varphi_{*}\phi^{!}$ and using the isomorphism form Corollary \ref{fftypeI}
\[
\varphi_{*}\phi^{!}\phi_{*}\mathcal{R}\varphi^{*}\simeq \mathrm{id},
\]
we obtain $\varphi_{*}\phi^{!}c=0$. Hence, by the description of $\ker(\varphi_{*})$, the object $c$ belongs to the triangulated subcategory of $\mathrm{D}^{b}(\mathcal{X})/\mathcal{K}^{(a)}_{-1}$ generated by
\[
\iota_{*}\mathcal{O}_{L\times C}(-a+d+1),\ldots,\iota_{*}\mathcal{O}_{L\times C}(-1).
\]
We already know that
\[
\iota_{*}\mathcal{O}_{L\times C}(-e),\ldots,\iota_{*}\mathcal{O}_{L\times C}(-1)
\]
belong to $\mathcal{D}$. Moreover, from the computation of
\[
\phi_{*}\mathcal{R}\varphi^{*}\kappa^{(b)}_{*}\mathcal{D}^{(b)}_{t}(-(b-1)),
\]
from Corollary \ref{dualcompuprop}, we know that
\[
\kappa^{(a)}_{*}\mathcal{D}^{(a)}_{b-1},\ldots,\kappa^{(a)}_{*}\mathcal{D}^{(a)}_{0}
\]
also belong to $\mathcal{D}$. Therefore $c$ belongs to $\mathcal{D}$, completing the proof.
\end{proof}

\subsubsection{Type II section}
In this section, we keep the setup of the previous section, but consider a different class of equations $f$. More precisely, we restrict to the situation where $f$ satisfies the following conditions:
\begin{enumerate}
\item $f$ is equivariant and has negative $\mathbb{G}_{m}$-degree, namely $f\in R_{-d}$ for some $d>0$;
\item $f\in \langle x_i,y_i\rangle$, and moreover $f(x_i,y_i=0)=0$ while $f(x_i=0,y_i)\neq0$;
\item
$e:=\sum_{i=1}^{k}a_i-(\sum_{i=1}^{l}b_i-d)\ge0$ .
\end{enumerate}
Such an $f$ defines a hypersurface $\mathcal{X}\subset \bar{\mathcal{X}}$. By taking the strict transform along the weighted blow-up and weighted blow-down, we obtain
\[
\begin{tikzcd}[column sep=small, row sep=small]
& \mathcal{W} \arrow[dl,"\phi"'] \arrow[dr,"\varphi"] & \\
\mathcal{X} && \mathcal{X}^{+}
\end{tikzcd}
\]
and impose the following assumption:
\begin{itemize}
\item[4.] $\mathcal{X}$ and $\mathcal{X}^{+}$ are well-formed stacks, or at least have trivial stabilizers in codimension one.
\end{itemize}
In this situation, we have the stacky flipping loci $L\times C$ and $L^{+}\times C$, where
\[
L:=Z\big(f(x_i,y_i=0)\big)=\mathcal{P}(a_i),
\qquad
L^{+}:=Z\big(f(x_i=0,y_i)\big)\subset \mathcal{P}(b_i).
\]
Moreover, consider $L^{+}\times C\subset \mathcal{X}^{+}$ defined by the equations $\langle x_1,\ldots,x_l\rangle$. In particular, $\{x_1,\ldots,x_l,f\}$ forms a $\mathbb{G}_{m}$-equivariant regular sequence in $R$, hence by standard properties of local rings, $L^{+}\times C$ is a complete intersection in $\mathcal{X}^{+}$. On the other hand, the locus $L\times C$ is not necessarily a complete intersection in $\mathcal{X}$. The exceptional divisor $E$ of $\mathcal{W}$ is naturally identified with $L\times L^{+}\times C$. We consider the following semi-orthogonal decomposition
\[
\mathrm{D}^{b}(L^{+}\times C)
=
\left\langle
\mathcal{A}^{(b)}_{-(b-d)},
\mathcal{O}_{L^{+}\times C}{(-(b-d)+1)},\ldots,\mathcal{O}_{L^{+}\times C}
\right\rangle .
\]\label{SODtypeII}
and define the corresponding constructions for $L\times C$. We also define the full saturated subcategory $\mathcal{K}^{(b)}_{-}:=\left\langle \kappa^{(b)}_{*}\mathcal{A}^{(b)}_{-(b-d)}\right\rangle \subset \mathrm{D}^{b}(\mathcal{X}^{+})$. Usually, in order to introduce a spanning class in the quotient category, we use properties of the Jacobian ring, and therefore work in a suitable algebraic neighbourhood of $\mathcal{X}$ and $\mathcal{X}^{+}$. In particular, we may replace $\mathcal{X}$ and $\mathcal{X}^{+}$ by their algebraic models.
\begin{proposition}\label{BOfloppropIII}
The classical Bondal--Orlov formula gives a semiorthogonal decomposition analogous to the standard flip. In particular, we obtain an equivalence of categories
\[
\phi_{*}\varphi^{*}:
\mathrm{D}^{b}(\mathcal{X}^{+})/\mathcal{K}^{(b)}_{-}
\xrightarrow{\sim}
\mathrm{D}^{b}(\mathcal{X})/\mathcal{L}_{-},
\]
where
\[
\mathcal{L}_{-}
:=
\left\langle
\operatorname{Im}\bigl(\kappa^{(a)}_{*}\mathcal{O}_{L\times C}(-e)\bigr),
\ldots,
\operatorname{Im}\bigl(\kappa^{(a)}_{*}\mathcal{O}_{L\times C}(-1)\bigr)
\right\rangle .
\]
Here the objects $\kappa^{(a)}_{*}\mathcal{O}_{L\times C}(-t)$ are in general not pre-tilting objects, the functor $\phi_{*}\varphi^{*}$ is admissible.
\end{proposition}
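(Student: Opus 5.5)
The plan is to run the Type I argument with the roles of the two sides exchanged. Now the complete-intersection locus is $L^{+}\times C\subset\mathcal{X}^{+}$, cut out by $\langle x_1,\dots,x_k\rangle$ together with $f$, while $L\times C=\mathcal{P}(a_i)\times C$ is the full weighted projective fibre. I therefore expect $\varphi^{*}$ to be an honest derived pullback, with no Rees correction. The obstruction then appears on the $\mathcal{X}^{+}$ side as $\mathcal{K}^{(b)}_{-}$ and on the $\mathcal{X}$ side as $\mathcal{L}_-$.

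\textbf{Step 1: pullback filtration.} Following Lemma~\ref{flipullbackcom}, I would filter $\varphi^{*}\kappa^{(b)}_{*}\mathcal{O}_{L^{+}\times C}$ using the equivariant Koszul resolution of $\langle x_i\rangle$ on $R/f$. This resolution is finite, since $\{x_1,\dots,x_k,f\}$ is a regular sequence. Passing through Serre's theorem on $\mathcal{W}$ and iterating $0\to\mathcal{O}_{\mathcal{W}}\to\mathcal{O}_{\mathcal{W}}(E)\to\mathcal{O}_E(E)\to0$ should give a finite filtration. Its factors would be $\iota_*\mathcal{D}^{(a)}_s\boxtimes\mathcal{O}_{L^{+}}(s)[1]$ for $0\le s<a$, now with the dual collection on all of $\mathcal{P}(a_i)$.

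\textbf{Step 2: pushforward and the key identity.} Pushing forward by $\phi$ and twisting by $\mathcal{D}^{(b)}_t$, the computation of $V_s$ in Corollary~\ref{dualcompuprop} uses the decomposition of $\mathrm{D}^b(L^{+}\times C)$ just introduced. This decomposition has only $b-d$ exceptional line bundles, with $\mathcal{A}^{(b)}_{-(b-d)}$ left over. The factor $\kappa^{(a)}_*\mathcal{D}^{(a)}_{b-d-1-t}$ survives for $0\le t<b-d$.

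Next I need the right adjoint $\varphi_*\phi^{!}$ to descend to $\mathrm{D}^b(\mathcal{X})/\mathcal{L}_-\to\mathrm{D}^b(\mathcal{X}^{+})/\mathcal{K}^{(b)}_-$. By Corollary~\ref{dualcomp2} and Lemma~\ref{equalcollection} (with $e=a-b+d$), $\varphi_*\phi^!\kappa^{(a)}_*\mathcal{O}_{L\times C}(-t)$ for $0<t\le e$ should have factors lying in $\mathcal{A}^{(b)}_{-(b-d)}$, hence vanish in the quotient.

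I would also check that $\phi_*\varphi^{*}$ kills $\mathcal{K}^{(b)}_-$ modulo $\mathcal{L}_-$, so that the functor descends. The check is the same bookkeeping: twisting by objects of $\mathcal{A}^{(b)}_{-(b-d)}$ makes all $V_s$ vanish except possibly in the range producing $\mathcal{O}_{L}(-t)$, $0<t\le e$.

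The adjunction $\phi_*\varphi^*\dashv\varphi_*\phi^!$ on the quotients is then proved as in the Type I adjunction lemma. Its target is the identity
$$\varphi_*\phi^!\phi_*\varphi^*\kappa^{(b)}_*\mathcal{D}^{(b)}_t(-(b-d-1))\simeq\kappa^{(b)}_*\mathcal{D}^{(b)}_t(-(b-d-1)),$$
modulo $\mathcal{K}^{(b)}_-$, argued as in Lemma~\ref{trandualprop}.

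\textbf{Step 3: full faithfulness.} I need a spanning class of $\mathrm{D}^b(\mathcal{X}^{+})/\mathcal{K}^{(b)}_-$. I would take the objects $\kappa^{(b)}_*\mathcal{D}^{(b)}_t(\cdot)$ for $0\le t<b-d$, together with twisted skyscrapers away from the flipping locus. This is where the algebraic model and the Jacobian ring enter: support is forced onto $L^{+}\times C$, and the Ext-vanishing argument of the Kawamata II spanning lemma then kills what remains modulo $\mathcal{K}^{(b)}_-$. By \cite[Proposition 1.49]{Huybrechts}, the identity of Step~2 gives full faithfulness.

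\textbf{Step 4: essential surjectivity.} For $G\in\mathrm{D}^b(\mathcal{X})$, form the cone $c$ of the counit $\phi_*\varphi^*\varphi_*\phi^!G\to G$. Then $\varphi_*\phi^!c=0$ in the quotient, so $c$ is generated by $\kappa^{(a)}_*\mathcal{D}^{(a)}_s$. These split into two groups. For $s\ge b-d$ they lie in $\mathcal{L}_-$ by the analogue of Lemma~\ref{equalcollection}. For $s<b-d$ they lie in the image by Step~2. Hence $c$ lies in the image modulo $\mathcal{L}_-$, and the functor is an equivalence.

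\textbf{Main obstacle.} I expect the hardest step to be the descent claim that both functors respect the quotients. In particular, $\varphi_*\phi^!$ applied to $\kappa^{(a)}_*\mathcal{O}_{L}(-t)$ must land exactly in $\mathcal{K}^{(b)}_-$. Since these objects are not pre-tilting, orthogonality cannot be used directly. I would first try to settle it by explicit degree bookkeeping in the Koszul filtration of Lemma~\ref{phidualcom}, with the shift by $d$.
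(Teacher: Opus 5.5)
Your overall plan is the paper's. The only step the paper actually writes out is your descent check. For $F\in\mathcal{A}^{(b)}_{-(b-d)}$, it uses the finite filtration of $\varphi^{*}\kappa^{(b)}_{*}F$ with factors $\iota_{*}(\mathcal{D}^{(a)}_{t}\boxtimes F(t))[1]$, $0\le t<a$. The vanishing $\operatorname{Ext}^{*}_{L^{+}}(\mathcal{O}_{L^{+}}(-t),F)=0$ for $0\le t<b-d$ then leaves only $\kappa^{(a)}_{*}(V_t\otimes\mathcal{D}^{(a)}_{t})$ with $b-d\le t<a$, which lie in $\mathcal{L}_-$ by Lemma~\ref{equalcollection}. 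Everything else is declared a repetition of Type~I. Your treatment of the $\varphi$-side is also correct: $\varphi^{*}$ is an honest bounded pullback, with a finite Koszul resolution coming from the regular sequence $\{x_i,f\}$.

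The gap is on the $\phi$-side, which is exactly the step you call the main obstacle. When you exchange the roles of the two sides, the Rees correction moves with them. In Type~II, $L\times C=\mathcal{P}(a_i)\times C$ is cut out by the $y_i$ while $f\in\langle y_i\rangle$, so it is not a complete intersection in $\mathcal{X}$. The valuation defining $\phi$ gives $f$ value $d>0$, so the strict transform differs from the total transform and $\phi$ is in general not perfect. Hence there is no finite Koszul filtration of $\phi^{!}\kappa^{(a)}_{*}\mathcal{O}_{L\times C}$. The paper says this explicitly: the resolution of $L\times C\subset\mathcal{X}$ is no longer Koszul and is in general infinite. Consequently Lemma~\ref{phidualcom} and Corollary~\ref{dualcomp2} cannot be quoted. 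Neither can the Type~I identity $\varphi_{*}\phi^{!}\phi_{*}\varphi^{*}\simeq\mathrm{id}$ on your spanning objects, which your Steps~2 and~3 rely on. Moreover, $\varphi_*\phi^!$ need not even land in $\mathrm{D}^b(\mathcal{X}^+)$.

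What you need is the mirror image of the Type~I construction $\mathcal{R}\varphi^{*}$. That is, build an infinite standard-resolution filtration of $\phi^{!}\kappa^{(a)}_{*}(-)$ whose factors are of the form $\iota_*\mathcal{O}_{L}(\cdot)\boxtimes\mathcal{D}^{(b)}_s(\cdot)$, with dual objects on the hypersurface $L^{+}$. For $s\ge b-d$ these lie in suitable twists of $\mathcal{A}^{(b)}_{-(b-d)}$, so after $\varphi_*$ the entire tail lies in $\mathcal{K}^{(b)}_-$. Only then is a truncated right adjoint $\mathrm{D}^b(\mathcal{X})/\mathcal{L}_-\to\mathrm{D}^b(\mathcal{X}^+)/\mathcal{K}^{(b)}_-$ well defined and computable on $\kappa^{(a)}_*\mathcal{D}^{(a)}_s$ and on $\kappa^{(a)}_*\mathcal{O}_{L\times C}(-t)$. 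This is also why, as the paper notes, no pre-tilting statement for $\kappa^{(a)}_{*}\mathcal{O}_{L\times C}(-t)$ is available, so one gets only an equivalence of quotients rather than a semiorthogonal decomposition. Your plan to settle the obstacle by degree bookkeeping in the Koszul filtration of Lemma~\ref{phidualcom} would therefore not work as stated. With the replacement above, your Steps~2--4 become the Type~I repetition the paper intends.
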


If $F\in \mathcal{A}^{(b)}_{-(b-d)}$, we need a similar computation for $\varphi^{*}\kappa^{(b)}_{*}F$. It admits a filtration whose factors are of the form
\[
\iota_{*}\bigl(\mathcal{D}^{(a)}_{t}\boxtimes F(t)\bigr)[1],
\qquad 0\le t<a .
\]
Pushing forward along $\phi$, and using
\[
\operatorname{Ext}^{*}_{L^{+}}
\bigl(
\mathcal{O}_{L^{+}}(-t),F
\bigr)
=
0,
\qquad 0\le t<b-d,
\]
from (\ref{SODtypeII}), we obtain that $\phi_{*}\varphi^{*}\kappa^{(b)}_{*}F$ admits a filtration whose factors are of the form
\[
\kappa^{(a)}_{*}\bigl(V_{t}\otimes \mathcal{D}^{(a)}_{t}\bigr),
\qquad b-d\le t<a .
\]
By Lemma~\ref{equalcollection}, all these factors belong to $\mathcal{L}_{-}$. Hence we obtain a well-defined functor. Since left proof is a repetition of the previous section, we omit it. We note that the natural resolution of $L\times C\subset \mathcal{X}$ is no longer Koszul and is in general infinite; consequently, we cannot obtain any pre-tilting type result for objects of the form $\kappa^{(a)}_{*}\mathcal{O}_{L\times C}$ analogous to the Type I.

\subsubsection{Type III section}

In this section, we keep the setup of the previous section, but consider an equation $f$ satisfying the following conditions:
\begin{enumerate}
\item $f$ is equivariant and has $\mathbb{G}_m$-degree $d$, namely $f \in R_d$;
\item $f \in \langle x_i,y_i\rangle$, and moreover $f(x_i,y_i=0)=f(x_i=0,y_i)=0$;
\item $e:=\sum_{i=1}^{k}a_i-\sum_{i=1}^{l}b_i\ge 0$.
\end{enumerate}

Such an $f$ defines hypersurfaces $\mathcal{X}\subset \bar{\mathcal{X}}$ and $\mathcal{X}^{+}\subset \bar{\mathcal{X}}^{+}$. By taking the strict transform along the weighted blow-up and weighted blow-down, and inspecting the exceptional divisor, one usually has
\[
w\mathrm{Bl}^{(b_i)}\mathcal{X}
\not\simeq
w\mathrm{Bl}^{(a_i)}\mathcal{X}^{+}.
\]
Instead, we consider
\[
\widehat{\mathcal{W}}
:=
\bar{\phi}^{-1}(\mathcal{X})
\simeq
\bar{\varphi}^{-1}(\mathcal{X}^{+}),
\]
fitting into the projection diagram
\[
\begin{tikzcd}[column sep=small, row sep=small]
& \widehat{\mathcal{W}} \arrow[dl,"\phi"'] \arrow[dr,"\varphi"] & \\
\mathcal{X} && \mathcal{X}^{+}
\end{tikzcd}
\]
and impose the following assumption:
\begin{itemize}
\item[4.] $\mathcal{X}$ and $\mathcal{X}^{+}$ are well-formed stacks, or at least have trivial stabilizers in codimension one.
\end{itemize}
In this situation, we have the stacky flipping loci $L\times C$ and $L^{+}\times C$, where
\[
L:=\mathcal{P}(a_i),
\qquad
L^{+}:=\mathcal{P}(b_i)
,
\]
while the exceptional divisor $E$ of $\widehat{\mathcal{W}}$ is naturally identified with $L\times L^{+}\times C$. We further note that $\widehat{\mathcal{W}}$ is the total transform of the weighted blow-up. This implies that the morphisms $\phi$ and $\varphi$ are perfect, so that both pullback and pushforward preserve bounded complexes. Consequently, we obtain the following.

\begin{proposition}\label{BOfloppropIV}
The classical Bondal--Orlov formula gives a semiorthogonal decomposition analogous to the standard flip:
\[
\mathrm{D}^b(\mathcal{X})
=
\left\langle
\left\langle
\operatorname{Im}(\kappa^{(a)}_*\mathcal{O}_{L\times C}(-e)),\ldots,
\operatorname{Im}(\kappa^{(a)}_*\mathcal{O}_{L\times C}(-1))
\right\rangle,
\operatorname{Im}\,\phi_*\varphi^*
\right\rangle .
\]
The functors $\kappa^{(a)}_*\mathcal{O}_{L\times C}(-t)$ are general not pre-tilting objects, but their essential images still generate admissible subcategories of $\mathrm{D}^b(\mathcal{X})$. Moreover, the functor
\[
\phi_{*}\varphi^{*}:
\mathrm{D}^{b}(\mathcal{X}^{+})
\longrightarrow
\mathrm{D}^{b}(\mathcal{X})
\]
is admissible and fully faithful.
\end{proposition}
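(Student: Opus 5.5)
The plan is to follow the template of Proposition~\ref{BOflopprop}, exploiting the fact that in Type III the roof $\widehat{\mathcal{W}}=\bar\phi^{-1}(\mathcal{X})\simeq\bar\varphi^{-1}(\mathcal{X}^+)$ is a \emph{total} transform. Since $f$ vanishes on both $V(y_i)$ and $V(x_i)$, the hypersurface $\widehat{\mathcal{W}}\subset\bar{\mathcal{W}}$ is cut out by $\bar\phi^*f$ (equivalently $\bar\varphi^*f$), so $\phi,\varphi$ are base changes of $\bar\phi,\bar\varphi$ along $\mathcal{X}\hookrightarrow\bar{\mathcal{X}}$ and $\mathcal{X}^+\hookrightarrow\bar{\mathcal{X}}^+$. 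First I will verify that these squares are Tor-independent: $f$ is a nonzerodivisor on $\bar{\mathcal{W}}$ because $\widehat{\mathcal{W}}$ has the expected dimension. Then base change gives
\[
\phi_*\varphi^*\simeq \mathbb{L}j^*\,\bar\phi_*\bar\varphi^*\,\bar j^+_*\quad\text{up to the usual excess terms},
\]
with $j,\bar j^+$ the hypersurface embeddings. In particular $\phi$ and $\varphi$ are perfect, so no Rees-type $\mathcal{R}\varphi^*$ and no quotient by $\mathcal{K}_{-1}$ is needed, and $\varphi_*\phi^!$ and $\varphi_!\phi^*$ are genuine adjoints on $\mathrm{D}^b$.

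Next I will prove full faithfulness, again through a spanning class of $\mathrm{D}^b(\mathcal{X}^+)$ supported on $L^+\times C$ together with points off the flipping locus. Here $L^+\times C=\mathcal{P}(b_i)\times C$ is cut out in $\mathcal{X}^+$ by the $x_i$, but it is no longer a complete intersection of the right codimension. I will therefore work with the Koszul complex of $(x_i)$ restricted to $\mathcal{X}^+$, which is still the pullback of the ambient Koszul resolution, and compute $\varphi^*\kappa^{(b)}_*(-)$ by restricting the filtration of Lemma~\ref{flipullbackcom} to $\widehat{\mathcal{W}}$. The graded pieces become $\iota_*\mathcal{D}^{(a)}_s\boxtimes\mathcal{D}^{(b)}_t(\cdot)$ on $E=L\times L^+\times C$, and $E$ is again the full product. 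With this, Corollary~\ref{dualcompuprop}, Corollary~\ref{dualcomp2} and Lemma~\ref{trandualprop} should go through verbatim and give $\varphi_*\phi^!\phi_*\varphi^*\simeq\mathrm{id}$ on the spanning class, hence on everything by \cite[Proposition 1.49]{Huybrechts}. Admissibility then follows from the adjoint triple.

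For the left component, I will not claim that the $\kappa^{(a)}_*\mathcal{O}_{L\times C}(-t)$ are exceptional. Instead I will take $\mathcal{L}$ to be the thick subcategory they generate and show two things. First, $\varphi_*\phi^!\kappa^{(a)}_*\mathcal{O}_{L\times C}(-t)=0$ for $0<t\le e$, via Lemma~\ref{equalcollection} and Corollary~\ref{standardlemma1} applied after restriction. This gives semiorthogonality. Second, $\mathcal{L}=\ker(\varphi_*\phi^!)$. For generation I will use the triangle $\phi_*\varphi^*\varphi_*\phi^!b\to b\to c$. Full faithfulness gives $\varphi_*\phi^!c=0$, so $c$ is supported on $L\times C$. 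The filtration of $\phi_*\varphi^*\kappa^{(b)}_*\mathcal{D}^{(b)}_s(-(b-1))$ then places $\kappa^{(a)}_*\mathcal{D}^{(a)}_t$ for $t<b$ in the span, and Lemma~\ref{equalcollection} supplies the rest, exactly as in the proof of Proposition~\ref{BOflopprop}. Admissibility of $\mathcal{L}$ is automatic once it is the orthogonal of an admissible subcategory.

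The main obstacle is the failure of the complete-intersection property for $L\times C\subset\mathcal{X}$. The resolution of $\kappa^{(a)}_*\mathcal{O}_{L\times C}$ over the hypersurface is an infinite matrix-factorization type complex, so Lemma~\ref{standardlemma3} is unavailable, and objects supported on $L\times C$ with vanishing $\varphi_*\phi^!$ need not obviously be built from the $\mathcal{O}(-t)$. I would first try to control $\ker\varphi_*\phi^!$ by restricting to $\bar{\mathcal{X}}$: apply $\bar j_*$, use the ambient Proposition~\ref{BOflopprop}, and pull back via Tor-independence. This works modulo the $2$-periodic excess terms, and showing those excess terms stay inside $\mathcal{L}$ is the step I expect to be delicate.
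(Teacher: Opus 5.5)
Your architecture is the paper's. The roof is a total transform, so the squares over $j\colon\mathcal{X}\hookrightarrow\bar{\mathcal{X}}$ and $j^+\colon\mathcal{X}^+\hookrightarrow\bar{\mathcal{X}}^+$ are Tor-independent, $\phi,\varphi$ are perfect, no $\mathcal{R}\varphi^*$ or quotient is needed, and one replays Proposition~\ref{BOflopprop}. The gap is in how you transport the ambient computations, and it is exactly what creates the obstacle you leave open.

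You \emph{restrict}: you pull the Koszul complex of $(x_i)$ back to $\mathcal{X}^+$, and you pull the filtration of Lemma~\ref{flipullbackcom} back along $\hat j\colon\widehat{\mathcal{W}}\hookrightarrow\bar{\mathcal{W}}$. This goes wrong in two places. First, since $f\in\langle x_i\rangle$, the $x_i$ are not a regular sequence on $\mathcal{X}^+$. So this complex computes $\mathbb{L}j^{+*}j^+_*\kappa^{(b)}_*\mathcal{O}_{L^+\times C}$, which by Lemma~\ref{excdis} contains an extra piece $\kappa^{(b)}_*\mathcal{O}_{L^+\times C}\otimes\mathcal{O}(-\mathcal{X}^+)[1]$. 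Second, since $f\in\langle y_i\rangle$, the whole exceptional divisor lies in the total transform, $E\subset\widehat{\mathcal{W}}$. So $\mathbb{L}\hat j^*$ of each graded piece $\hat j_*\iota_*(-)$ picks up a second piece $\iota_*(-)\otimes\mathcal{O}(-\widehat{\mathcal{W}})|_E[1]$. What you compute is therefore $\varphi^*\mathbb{L}j^{+*}j^+_*\kappa^{(b)}_*$, not $\varphi^*\kappa^{(b)}_*$. The graded pieces are not $\iota_*\mathcal{D}^{(a)}_s\boxtimes\mathcal{D}^{(b)}_t(\cdot)$, and Corollaries~\ref{dualcompuprop}, \ref{dualcomp2} and Lemma~\ref{trandualprop} do not go through verbatim. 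Your formula $\phi_*\varphi^*\simeq\mathbb{L}j^*\bar\phi_*\bar\varphi^*j^+_*$ ``up to excess terms'' and the $2$-periodic terms in your last paragraph are the same defect.

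The fix is to use base change in the direction that has no excess terms; this is what the paper means by ``identical after a Tor-independent base change''. Use $\hat j_*\varphi^*\simeq\bar\varphi^*j^+_*$ and $\hat j_*\phi^!\simeq\bar\phi^!j_*$, together with $j_*\phi_*=\bar\phi_*\hat j_*$ and $j^+_*\varphi_*=\bar\varphi_*\hat j_*$. Writing $\bar\kappa^{(a)}=j\circ\kappa^{(a)}$, these give $j^+_*\varphi_*\phi^!\phi_*\varphi^*\simeq\bar\varphi_*\bar\phi^!\bar\phi_*\bar\varphi^*j^+_*$ compatibly with the units, and $j^+_*\varphi_*\phi^!\kappa^{(a)}_*\simeq\bar\varphi_*\bar\phi^!\bar\kappa^{(a)}_*$. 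Pushforward along a closed immersion is conservative. Hence full faithfulness and the vanishing $\varphi_*\phi^!\kappa^{(a)}_*\mathcal{O}_{L\times C}(-t)=0$ for $0<t\le e$ follow directly from Corollaries~\ref{standardlemma2} and~\ref{standardlemma1}, with no spanning class needed. The pieces of the filtrations in Lemma~\ref{flipullbackcom} and Corollary~\ref{dualcompuprop} are pushed forward from $E\subset\widehat{\mathcal{W}}$ and $L\times C\subset\mathcal{X}$, which is why the paper treats them as unchanged.

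Finally, the failure of $L\times C\subset\mathcal{X}$ to be a complete intersection only costs you Lemma~\ref{standardlemma3}, i.e.\ exceptionality of the $\kappa^{(a)}_*\mathcal{O}_{L\times C}(-t)$. The statement does not assert this, hence the nested bracket. Your own generation argument already places every $c\in\ker\varphi_*\phi^!$ in $\langle\mathcal{L},\operatorname{Im}\phi_*\varphi^*\rangle$, and semiorthogonality then forces $c\in\mathcal{L}$. So the detour through $\bar{\mathcal{X}}$ is unnecessary.
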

We briefly indicate the argument. The total transformation does not affect the computation of the derived pullback (cf.~Lemma~\ref{flipullbackcom}), since the exceptional locus remain unchanged, the proof is therefore identical after a Tor-independent base change.

\subsubsection{Type IV section}

In this section, we keep the setup of the previous section, but consider a collection of equations $f_l$ for $1 \le l \le j$, satisfying the following conditions:
\begin{enumerate}
\item each $f_l$ is invariant and has $\mathbb{G}_m$-degree $d_l=0$, namely $f_l \in R_0$;
\item $f_l \in \langle x_i,y_i,z_i\rangle$, and moreover $f_l(x_i,y_i=0)=f_l(x_i=0,y_i)=g_l(z_i)\neq  0$;
\item $\{g_l\}_{1\le l\le j}$ is a regular sequence in $k[[z_i]]$;
\item $e:=\sum_{i=1}^{k}a_i-\sum_{i=1}^{l}b_i-\sum_{l=1}^{j} d_l \ge 0$.
\end{enumerate}
Such a collection $\langle f_1,\cdots, f_j\rangle$ defines a complete intersection $\mathcal{X}\subset \bar{\mathcal{X}}$. By taking the strict transform along the weighted blow-up and weighted blow-down, we obtain
\[
\begin{tikzcd}[column sep=small, row sep=small]
& \mathcal{W} \arrow[dl,"\phi"'] \arrow[dr,"\varphi"] & \\
\mathcal{X} && \mathcal{X}^{+}
\end{tikzcd}
\]
and impose the following assumption:
\begin{itemize}
\item[5.] $\mathcal{X}$ and $\mathcal{X}^{+}$ are well-formed stacks, or at least have trivial stabilizers in codimension one.
\end{itemize}
In this situation, we have the stacky flipping loci $L\times \operatorname{Spec} A$ and $L^{+}\times \operatorname{Spec} A$, where
\[
L:=\mathcal{P}(a_i),
\qquad
L^{+}:= \mathcal{P}(b_i),
\]
while the exceptional divisor $E$ of $\mathcal{W}$ is identified with $L\times L^{+}\times \operatorname{Spec} A$, where $A:=k[[z_i]]/\langle g_1,\cdots, g_j\rangle$ is an Artinian algebra.   In fact, we are considering a certain thickening of the geometric flipping locus.\\

On the other hand, we note that the strict transform and the total transform of $f_l$ coincide in all directions of the weighted blow-up, since $val(f_l)=0$. This implies that the morphisms $\phi$ and $\varphi$ are perfect, so that both pullback and pushforward preserve bounded complexes. We therefore obtain
\begin{proposition}\label{BOfloppropV}
The classical Bondal--Orlov formula gives a semiorthogonal decomposition analogous to the standard flip:
\[
\mathrm{D}^b(\mathcal{X})
=
\left\langle\left\langle
\operatorname{Im}(\kappa^{(a)}_*\mathcal{O}_{L}(-e)),\ldots,
\operatorname{Im}(\kappa^{(a)}_*\mathcal{O}_{L}(-1))\right\langle,
\operatorname{Im}\,\phi_*\varphi^*
\right\rangle,
\]
where $\kappa^{(a)}_*\mathcal{O}_{L\times C}(-t)$ are general not pre-tilting, but their essential images still generate admissible subcategories of $\mathrm{D}^b(\mathcal{X})$. Moreover, the functor
\[
\phi_{*}\varphi^{*}:
\mathrm{D}^{b}(\mathcal{X}^{+})
\longrightarrow
\mathrm{D}^{b}(\mathcal{X})
\]
are admissible and fully faithful.
\end{proposition}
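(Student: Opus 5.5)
The plan is to reduce Proposition~\ref{BOfloppropV} to Proposition~\ref{BOflopprop} by base change along the Artinian thickening. Set $A:=k[[z_i]]/\langle g_1,\dots,g_j\rangle$. Since $val(f_l)=0$ in both weighted blow-up directions, each $f_l$ equals its own strict and total transform. Hence $\mathcal{W}=\bar{\phi}^{-1}(\mathcal{X})=\bar{\varphi}^{-1}(\mathcal{X}^+)$, and the square $\mathcal{W}\to\bar{\mathcal{W}}$ over $\mathcal{X}\to\bar{\mathcal{X}}$ is Tor-independent, because $\{f_l\}$ remains a regular sequence on $\bar{\mathcal{W}}$: its restriction to $E$ is $\{g_l\}$, which is regular in $k[[z_i]]$. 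It follows that $\phi,\varphi$ are perfect, the functors $\phi^*,\varphi^*,\phi_*,\varphi_*,\phi^!$ preserve $\mathrm{D}^b$, and all the base change isomorphisms used below are available.

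The first step is to transport the local computations. Lemma~\ref{flipullbackcom} and its corollaries came from the $\mathbb{G}_m$-equivariant Koszul/standard resolution of $\kappa^{(b)}_*\mathcal{O}$ in $\bar{\mathcal{X}}^+$, together with the restriction formula $\mathcal{O}_{\mathcal{W}}(tE)|_E\simeq\mathcal{O}_L(-t)\boxtimes\mathcal{O}_{L^+}(-t)$. I will pull these back along $\mathcal{X}^+\hookrightarrow\bar{\mathcal{X}}^+$. Tensoring the exact sequences with $\mathcal{O}/\langle f_l\rangle$ preserves exactness by Tor-independence. The result is that $\varphi^*\kappa^{(b)}_*\mathcal{D}^{(b)}_s(-(b-1))$ has a filtration with factors $\iota_*\mathcal{D}^{(a)}_t\boxtimes\mathcal{D}^{(b)}_s(t-(b-1))\boxtimes\mathcal{O}_{\operatorname{Spec}A}[1]$. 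Pushing forward, and using that $\mathrm{H}^*$ on $L\times L^+\times\operatorname{Spec}A$ is computed fibrewise over $A$ (Künneth with the flat finite factor $A$), yields the analogues of Corollary~\ref{dualcompuprop}, Corollary~\ref{dualcomp2} and Lemma~\ref{trandualprop}, with $\mathcal{O}_C$ replaced by $A$. Adjunction via $\phi_*\varphi^*\dashv\varphi_*\phi^!$ then gives full faithfulness on the objects $\kappa^{(b)}_*\mathcal{D}^{(b)}_s(-(b-1))$, now understood as tensored with $\mathrm{D}^b(A)$.

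The step I expect to be the main obstacle is the spanning class, because $A$ is not regular. In the smooth case the spanning class was $\kappa^{(b)}_*\mathcal{D}^{(b)}_s(-(b-1))\boxtimes\mathcal{O}_C$. Here I would instead use the reduced objects $\kappa^{(b)}_*\mathcal{D}^{(b)}_s(-(b-1))\boxtimes k$, together with skyscrapers away from $L^+\times\operatorname{Spec}A$. The spanning property comes from the Gorenstein argument of Lemma~\ref{ptspanningclass}: $\mathcal{X}^+$ is an lci, hence Gorenstein, and every closed point of $L^+$ is resolved by the full exceptional collection $\{\mathcal{O}_{L^+}(i)\}$. Full faithfulness is then checked on this class by \cite[Proposition 1.49]{Huybrechts}. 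The needed input is $\varphi_*\phi^!\phi_*\varphi^*\simeq\mathrm{id}$ on the class, which follows from the previous step with $A$ replaced by $k$; this is legitimate because $\kappa^{(b)}_*(-\boxtimes k)$ factors through the fibre over the closed point of $\operatorname{Spec}A$ and base change applies. Admissibility follows from the adjoint triple $\varphi_!\phi^*\dashv\phi_*\varphi^*\dashv\varphi_*\phi^!$, since all three functors are defined on $\mathrm{D}^b$ by perfectness.

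Finally, for generation and semiorthogonality I follow the proof of Proposition~\ref{BOflopprop}. First, Lemma~\ref{standardlemma3} is replaced by the Koszul computation for $L\times\operatorname{Spec}A\subset\mathcal{X}$, which is still a complete intersection cut out by the $y_i$ after Tor-independent base change. This gives the required Ext-vanishing, except that $\operatorname{Ext}^*(\kappa_*\mathcal{O}_{L\times A},\kappa_*\mathcal{O}_{L\times A})$ is now a possibly non-semisimple algebra rather than a field. For that reason the pieces $\kappa^{(a)}_*\mathcal{O}_L(-t)$ generate admissible subcategories without being pre-tilting. Second, Lemma~\ref{equalcollection} and Corollary~\ref{standardlemma1} give $\varphi_*\phi^!\kappa^{(a)}_*\mathcal{O}_L(-t)=0$ for $0<t\le e$. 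Third, for any $F$, the cone of $\phi_*\varphi^*\varphi_*\phi^!F\to F$ lies in $\ker\varphi_*\phi^!$. This kernel is supported on $L\times\operatorname{Spec}A$ and is generated by the $\kappa^{(a)}_*\mathcal{D}^{(a)}_t$. The factors with $t<b$ lie in $\operatorname{Im}\phi_*\varphi^*$ by the Corollary~\ref{dualcompuprop}-type filtration, and the factors with $b\le t<a$ lie in $\langle\mathcal{O}_L(-e),\dots,\mathcal{O}_L(-1)\rangle$ by Lemma~\ref{equalcollection}. This completes generation.
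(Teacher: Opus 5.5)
Your proposal is correct and follows the paper's own route. Since $val(f_l)=0$, strict and total transforms agree, so $\phi,\varphi$ are Tor-independent (hence perfect) base changes of the smooth toric flip, and the standard-flip argument carries over: the filtrations of $\varphi^*\kappa^{(b)}_*$, full faithfulness via adjunction on a spanning class, and generation via Lemma~\ref{equalcollection}. Your use of $\boxtimes k$ objects in the spanning class is a sensible precision that the paper leaves implicit.

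The one point to state carefully is that the pieces must be read as $\kappa^{(a)}_*\bigl(\mathcal{O}_L(-t)\boxtimes\mathrm{D}^b(A)\bigr)$, which is what your generation step actually uses. The thick hull of the single object $\kappa^{(a)}_*\mathcal{O}_{L\times\operatorname{Spec}A}(-t)$ is only $\operatorname{Perf}(A)$, and it fails to be admissible once $A$ is not regular.
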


\subsubsection{Mixed section}

In this section, we keep the setup of the previous section, but consider a collection of equations $f^{(t)}_l$ for non-negative integer $r^{(t)}$ and $t=1,\ldots,4$, satisfying the following conditions:
\begin{enumerate}
\item each $f^{(t)}_l$ has $\mathbb{G}_m$-degree $d^{(t)}_l$ or $-d^{(t)}_l$ for some non-negative integer $d^{(t)}_l$, according to Types I--IV, namely $f^{(t)}_l \in R_{d^{(t)}_l}$;
\item for $t=1,\ldots,4$, the collections $\{f^{(t)}_l\}_{l=1}^{r^{(t)}}$ satisfy the standard type conditions corresponding to flip types I–IV, and the total collection forms a regular sequence with $r^{(4)}=j$;
\item for $t=1,2$, the sequences $f^{(t)}_1,\ldots,f^{(t)}_{r^{(t)}}$ form regular sequences and satisfy the measured condition (\ref{cond:measured}) under weighted blow-up, i.e. their leading terms define the exceptional divisors and cut out complete intersections;
\item for $t=3$, we assume that $r^{(3)}\leq 1$ and that its total transform
is a regular element with respect to the other strict transforms.
\item $e:=(\sum_{i=1}^{k}a_i-\sum_{l} d^{(1)}_l) -(\sum_{i=1}^{l}b_i-\sum_{l} d^{(2)}_l) \ge 0$.
\end{enumerate}
Such a collection $\{f^{(t)}_l\}$ for each $t$ generates an ideal $I$, which defines substacks
\[
\mathcal{X}\subset \bar{\mathcal{X}}, \qquad \mathcal{X}^{+}\subset \bar{\mathcal{X}}^{+}.
\]
A more conceptual description is given via VGIT diagram:   \[
\begin{tikzcd}[column sep=small, row sep=small]
\mathcal{X}:=\left[ Z(I)\subset \operatorname{Spec} R //_{-}\mathbb{G}_m \right]
\arrow[dr]
&&
\mathcal{X}^+:=\left[ Z(I)\subset \operatorname{Spec} R //_{+}\mathbb{G}_m \right]
\arrow[dl] \\
&
\mathcal{Z}:=\left[ Z(I)\subset \operatorname{Spec} R //_{0}\mathbb{G}_m \right]
&
\end{tikzcd}
\]
Since the semistable loci for the $+$- and $-$-linearizations are both contained in the semistable locus for the $0$-linearization, we obtain natural morphisms.
\begin{itemize}
\item[6.] $\mathcal{X}$ and $\mathcal{X}^{+}$ are well-formed stacks, or at least have trivial stabilizers in codimension one.
\end{itemize}
We can define $\widehat{\mathcal{W}}:=\mathcal{X} \times_{\mathcal{Z}} \mathcal{X}^{+}$ and obtain the following diagram:
\[
\begin{tikzcd}[column sep=small, row sep=small]
& \widehat{\mathcal{W}} \arrow[dl, "\phi"'] \arrow[dr, "\varphi"] & \\
\mathcal{X} && \mathcal{X}^{+}
\end{tikzcd}
\]
In this situation, we have the stacky thicken  flipping loci $L\times \operatorname{Spec} A$ and $L^{+}\times \operatorname{Spec} A$, where
\[
L:=Z\big(I(x_i,y_i=z_i=0)\big)\subset \mathcal{P}(a_i),
\qquad
L^{+}:=Z\big(I(x_i=z_i=0,y_i)\big)=\mathcal{P}(b_i).
\]
They are complete intersection centers in the weighted projective stack, where $A=R/\langle x_i,y_i,I\rangle$. The exceptional divisor $E$ of $\mathcal{W}$ is naturally identified with $L\times L^{+}\times \operatorname{Spec} A$. We consider the following semi-orthogonal decomposition
\[
\mathrm{D}^{b}(L)
=
\left\langle
\mathcal{A}^{(a)}_{-1},
\mathcal{O}_{L},\ldots,\mathcal{O}_{L}(a-d^{(1)}-1)
\right\rangle,
\]
where $d^{(1)}:=\sum_{l=1}^{r^{(1)}} d^{(1)}_l$, and similarly for
\[
\mathrm{D}^{b}(L^{+})
=
\left\langle
\mathcal{A}^{(b)}_{-(b-d^{(2)})},
\mathcal{O}_{L^{+}}(-(b-d^{(2)})+1),\ldots,\mathcal{O}_{L^{+}}
\right\rangle,
\]
where $d^{(2)}:=\sum_{l=1}^{r^{(2)}} d^{(2)}_l$, then define the full saturated subcategories
\[
\mathcal{K}^{(a)}_{-1}
:=
\left\langle \kappa^{(a)}_{*}\mathcal{A}^{(a)}_{-1} \right\rangle
\subset \mathrm{D}^{b}(\mathcal{X}),
\qquad
\mathcal{K}^{(b)}_{-}
:=
\left\langle \kappa^{(b)}_{*}\mathcal{A}^{(b)}_{-(b-d^{(2)})} \right\rangle
\subset \mathrm{D}^{b}(\mathcal{X}^{+}).
\]

\begin{proposition}\label{BOfloppropM}
The classical Bondal--Orlov formula gives a semiorthogonal decomposition analogous to the standard flip. Let
\[
\mathcal{L}_{-}
:=
\left\langle
\operatorname{Im}(\kappa^{(a)}_{*}\mathcal{O}_{L}(-e)),\ldots,
\operatorname{Im}(\kappa^{(a)}_{*}\mathcal{O}_{L}(-1))
\right\rangle .
\]
Then we obtain an equivalence
\[
\phi_{*}\mathcal{R}\varphi^{*}:
\mathrm{D}^{b}(\mathcal{X}^{+})/\mathcal{K}^{(b)}_{-}
\longrightarrow
\mathcal{L}_{-}\backslash\mathrm{D}^{b}(\mathcal{X})/\mathcal{K}^{(a)}_{-1}.
\]
The objects $\kappa^{(a)}_{*}\mathcal{O}_{L}(-t)$ are in general not pre-tilting objects. However, if there is no contribution from Type II hypersurfaces, then $\mathcal{L}_{-}$ becomes an admissible subcategory of $\mathrm{D}^{b}(\mathcal{X})/\mathcal{K}^{(a)}_{-1}$, and the functor $\phi_{*}\mathcal{R}\varphi^{*}:\mathrm{D}^{b}(\mathcal{X}^{+})/\mathcal{K}^{(b)}_{-}\to \mathrm{D}^{b}(\mathcal{X})/\mathcal{K}^{(a)}_{-1}$ is admissible and fully faithful.
\end{proposition}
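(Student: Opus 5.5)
The plan is to reduce the mixed case to the Type I–IV mechanisms already established, by running the standard resolution for the whole regular sequence at once. First, construct the equivariant standard resolution of $\kappa^{(b)}_*\mathcal{O}_{L^+\times \operatorname{Spec} A}$ on $\mathcal{X}^+$. Start from the Koszul resolution in the $x_i$ (as in Lemma~\ref{flipullbackcom}). Then apply the division procedure successively for the Type I and Type II equations, which is legitimate by the measured condition, as in the Type I section. The Type III equation (at most one) and the Type IV equations are handled by Tor-independent base change, since their total and strict transforms coincide or remain regular.

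Summing over $t$ and applying Serre's theorem on $\widehat{\mathcal{W}}$, this yields a filtration of $\varphi^*\kappa^{(b)}_*\mathcal{D}^{(b)}_s(-(b-d^{(2)})+1)$. Its factors are $\iota_*\mathcal{D}^{(a)}_t\boxtimes \mathcal{D}^{(b)}_s(\cdot)$, where $\mathcal{D}^{(a)}_t\in\mathcal{A}^{(a)}_{-1}$ for $t\ge a-d^{(1)}$.

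Next, define $\mathcal{R}\varphi^*$ on all of $\mathrm{D}^b(\mathcal{X}^+)$ via maximal Cohen--Macaulay approximation and the Bourbaki sequence of Corollary~\ref{Bexactwps}, exactly as in the Type I section. Then check well-definedness on the quotients. The Type I part ensures that the graded pieces lie in $\langle\iota_*\mathcal{A}^{(a)}_{-1}\boxtimes\mathrm{D}^b(L^+\times\operatorname{Spec}A)\rangle$, so $\phi_*$ kills them modulo $\mathcal{K}^{(a)}_{-1}$. The Type II computation ensures that $\phi_*\varphi^*\kappa^{(b)}_*F$ for $F\in\mathcal{A}^{(b)}_{-(b-d^{(2)})}$ has factors $\kappa^{(a)}_*(V_t\otimes\mathcal{D}^{(a)}_t)$ with $b-d^{(2)}\le t<a-d^{(1)}$. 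By the analogue of Lemma~\ref{equalcollection} in $\mathrm{D}^b(L)/\mathcal{A}^{(a)}_{-1}$, these lie in $\mathcal{L}_-$, whose length is exactly $e$. Hence the functor descends to $\mathrm{D}^b(\mathcal{X}^+)/\mathcal{K}^{(b)}_-\to\mathcal{L}_-\backslash\mathrm{D}^b(\mathcal{X})/\mathcal{K}^{(a)}_{-1}$. The adjoints $\varphi_*\phi^!$ and $\varphi_!\phi^*$ descend in the same way, using the filtration of Lemma~\ref{phidualcom}. The adjunction is then proved by the roof argument used before.

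For full faithfulness, compute $\varphi_*\phi^!\phi_*\mathcal{R}\varphi^*$ on the spanning class $\kappa^{(b)}_*\mathcal{D}^{(b)}_s(\cdot)\otimes(\text{generators of }\mathrm{D}^b(A))$ together with skyscrapers off the flipping locus, as in Lemma~\ref{trandualprop}. Corollary~\ref{dualcompuprop} isolates the single surviving factor, and \cite[Proposition 1.49]{Huybrechts} then applies.

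For essential surjectivity, take the cone $c$ of the counit. It satisfies $\varphi_*\phi^!c=0$, so it is supported on $L\times\operatorname{Spec}A$ and generated by $\kappa^{(a)}_*\mathcal{O}_L(-t)$ for $0<t<a-d^{(1)}$. Those with $t\le e$ lie in $\mathcal{L}_-$, and the rest are obtained as in the proof of Proposition~\ref{BOfloppropII}, from the factors $\kappa^{(a)}_*\mathcal{D}^{(a)}_{b-1-s}$ produced by Corollary~\ref{dualcompuprop}.

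In the absence of Type II equations, $L\times\operatorname{Spec}A$ is a complete intersection, so the Koszul Ext computation of Lemma~\ref{standardlemma3} shows that $\mathcal{L}_-$ is generated by admissible pieces. The statement then upgrades to a semiorthogonal decomposition, which gives admissibility and full faithfulness into $\mathrm{D}^b(\mathcal{X})/\mathcal{K}^{(a)}_{-1}$.

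The main obstacle I expect is the first step. The division procedures for Type I and Type II equations and the base change for Types III and IV must be shown to interlace compatibly, so that the combined resolution still has graded pieces of the predicted form. I would first try to order the sequence as Type IV, then Type III, then Type I, then Type II. Each later step would be performed over the quotient ring produced by the earlier ones, with the measured condition ensuring that leading terms remain a regular sequence at every stage.
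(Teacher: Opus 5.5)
Your overall route is the one the paper intends. The paper disposes of this proposition in one sentence: the Type I--IV arguments are rerun unchanged, after replacing $\mathcal{X},\mathcal{X}^{+}$ by algebraic models so that a spanning class is available. Your proposal carries out that rerun:
\begin{itemize}
\item descent along $\mathcal{K}^{(b)}_{-}$, using the Type II factors in the range $b-d^{(2)}\le t<a-d^{(1)}$, which has length $e$;
\item the roof adjunction;
\item spanning-class full faithfulness;
\item essential surjectivity from the cone of the counit.
\end{itemize}
Make the passage to the algebraic model explicit. In the formal model every closed point lies over the closed point of the affine quotient, hence on the flipping locus, so the skyscrapers you want to use do not exist there.

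The step that fails is the admissibility clause. You claim that without Type II equations $L\times\operatorname{Spec}A$ is a complete intersection in $\mathcal{X}$, and then apply the Koszul computation of Lemma~\ref{standardlemma3}. This is false once $r^{(3)}=1$.
\begin{itemize}
\item A Type III equation satisfies $f^{(3)}(x_i,y_i=0)=0$, so $f^{(3)}\in(y_1,\ldots,y_\ell)$ and it vanishes identically on $V(y_1,\ldots,y_\ell)$.
\item Counting codimensions in $\bar{\mathcal{X}}$, the locus $L\times\operatorname{Spec}A=V(y_1,\ldots,y_\ell)\cap\mathcal{X}$ therefore has codimension $\ell-1$ in $\mathcal{X}$, yet it is cut out by the $\ell$ coordinates $y_i$.
\item So the $y_i$ are not a regular sequence on $\mathcal{O}_{\mathcal{X}}$, and the Koszul complex is not a resolution.
\item Consequently the vanishing of $\operatorname{Ext}^{*}\bigl(\kappa^{(a)}_{*}\mathcal{O}_{L},\kappa^{(a)}_{*}\mathcal{O}_{L}(-t)\bigr)$ for $0<t<e$ is not available.
\end{itemize}
This is why the paper says in the Type III section that the $\kappa^{(a)}_{*}\mathcal{O}_{L}(-t)$ are not pre-tilting and only their joint essential image is admissible. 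It derives that admissibility from the perfectness of $\phi,\varphi$ on the total transform $\widehat{\mathcal{W}}$ and Tor-independent base change, not from a Koszul computation.

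To repair the step:
\begin{itemize}
\item Run your Koszul argument only on the model cut out by the Type I and Type IV equations. There $y_1,\ldots,y_\ell$ is a regular sequence, and the Artinian base $A$ is harmless.
\item Transport admissibility of $\mathcal{L}_{-}$ as a whole to $\mathcal{X}$ by the Type III base-change mechanism. That is all you need to split off $\operatorname{Im}\phi_{*}\mathcal{R}\varphi^{*}$.
\item Do not claim that the individual pieces form a semiorthogonal sequence of pre-tilting objects.
\end{itemize}
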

As in the argument above, we work in a suitable algebraic neighbourhood in order to obtain a good spanning class. Once such spanning classes are available, we can pass to the global construction without changing any of the arguments.
\begin{proposition}
Using such spanning classes, the above properties extend to any well-formed stack birational map $\mathcal{X} \dashrightarrow \mathcal{X}^{+}$ satisfying the formal local models described above.
\end{proposition}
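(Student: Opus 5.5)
The plan is to globalize by a local-to-global argument in the style of \cite[Lemma~3.13]{Hao2025b}, using the formal local models (Types I--IV and mixed) as building blocks. First I fix the global data. The birational map $\mathcal{X}\dashrightarrow\mathcal{X}^{+}$ is an isomorphism away from the flipping loci. Along each connected component of the exceptional locus it is formally isomorphic to one of the VGIT models above, and the common roof $\widehat{\mathcal{W}}$ (or $\mathcal{W}$) glues from these models. The reason is that the weighted valuations and the $\mathbb{G}_m$-weights are determined by the exceptional divisor itself, hence are intrinsic. I then define the global functor $\phi_*\mathcal{R}\varphi^*$ on $\mathrm{D}^b(\mathcal{X}^+)$ by the recipe used for Kawamata blow-ups. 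Take a sheaf approximation $\mathcal{M}[k]\to\mathcal{F}\to\mathcal{P}$. On a formal atlas along the flipping locus, the standard resolution gives $\mathcal{R}(\mathcal{M})$. Algebraize the cone $\tau$, which is supported on $E$, and glue. The ambiguity dies in the quotient by $\mathcal{K}^{(a)}_{-1}$ (respectively $\mathcal{K}^{(b)}_{-}$), because these subcategories are generated by objects supported on the flipping loci and are defined fiberwise by the local semiorthogonal decompositions. The adjoints $\varphi_!\phi^*$ and $\varphi_*\phi^!$ are defined globally with no modification, and the vanishing on $\mathcal{K}^{(a)}_{-1}$ is checked formally locally exactly as in the Type I lemma.

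Second, I establish the adjunction globally. The local adjunction isomorphism $\operatorname{Ext}^*(\phi_*\mathcal{R}\varphi^*F,G)\simeq\operatorname{Ext}^*(F,\varphi_*\phi^!G)$ is proven by the same roof-diagram bijection as in the Type I lemma. The only inputs are that $\mathrm{Cone}(s)\in\mathcal{K}^+_{-1}$ and that $\varphi_*\phi^!$ kills $\mathcal{K}_{-1}$. Both are local statements on the support of the cones, so they hold globally by flat base change (Lemma~\ref{fbcofroot}) and the formal-function comparison.

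Third, I prove full faithfulness by spanning classes, following Lemma~\ref{ptspanningclass}, Lemma~\ref{wblspanningclass} and the spanning lemma for $\mathrm{D}^b(\mathcal{X})/\mathcal{L}_-$. Working in an algebraic neighbourhood, the class consists of:
\begin{itemize}
\item the objects $\kappa^{(b)}_*\mathcal{D}^{(b)}_s(-(b-1))\otimes$ (a strong generator of $\mathrm{D}^b(C)$) on each flipping component;
\item the objects $\mathrm{k}_x\otimes\chi$ for $x$ outside the exceptional locus.
\end{itemize}
This forms a spanning class of $\mathrm{D}^b(\mathcal{X}^+)$ (or of its quotient). The proof goes as follows. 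Vanishing against the skyscrapers forces support on the flipping locus, by the Jacobian property of the algebraic model. Objects supported there are iterated extensions of the local generators, so self-Ext vanishes and the object lies in the killed subcategory. Off the locus the Ext identity is trivial. On the locus it is the local identity $\varphi_*\phi^!\phi_*\mathcal{R}\varphi^*\simeq\mathrm{id}$ of Lemma~\ref{trandualprop}, which is a formal computation and therefore transfers by dévissage.

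Finally, I prove generation, or essential surjectivity in the Type II and mixed cases. Take the adjunction triangle $\phi_*\mathcal{R}\varphi^*\varphi_*\phi^!b\to b\to c$; then $\varphi_*\phi^!c=0$ forces $c$ to be supported on the flipping locus. Locally, $c$ then lies in the subcategory generated by $\kappa^{(a)}_*\mathcal{O}_L(-t)$ and the images of the dual objects, by the proofs of Propositions~\ref{BOfloppropII}--\ref{BOfloppropM}. A support-wise dévissage upgrades this to the global statement. I expect the main obstacle to be the gluing step: making the local quasi-functors $\mathcal{R}\varphi^*$ compatible on overlaps of atlases. The standard resolutions depend on choices of measured bases and on the Bourbaki sequences. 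I would try to show that any two choices differ by cones in $\mathcal{K}^+_{-1}$, by comparing both with $\varphi^*$ via the inverse-limit characterization in Principle~\ref{P1}. Their images would then agree canonically in the quotient, which is sufficient for gluing at the level of the Verdier quotient.
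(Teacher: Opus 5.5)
Your outline follows the route the paper has in mind. The paper gives nothing beyond the remark that one works in an algebraic neighbourhood to get a spanning class: skyscrapers $\mathrm{k}_x\otimes\chi$ off the flipping locus together with the dual objects on it, via the Jacobian property. It then reruns the local Type I--IV and mixed arguments unchanged, which is what your third and fourth steps do.

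The step that fails as written is the gluing. Objects of a triangulated category, and a fortiori of a Verdier quotient such as $\mathrm{D}^b(\mathcal{W})/\mathcal{K}^+_{-1}$, cannot be glued from local pieces. Suppose two local choices of $\mathcal{R}\varphi^*F$ become canonically isomorphic in the quotient on overlaps. That does not produce a global object, let alone a triangulated functor with the two adjoints the spanning-class criterion needs. Gluing would require coherent higher homotopies in an enhancement, or vanishing of negative self-Exts as in BBD gluing. A quasi-functor defined ``up to ambiguity'' supplies neither. So your sentence ``which is sufficient for gluing at the level of the Verdier quotient'' is false, and your global $\phi_*\mathcal{R}\varphi^*$ rests on it.

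You do not actually need any gluing. The formal models here are neighbourhoods of the \emph{entire} flipping locus $L\times C$ (resp.\ $L^{+}\times C$), which is proper. Distinct connected components are disjoint, so there are no overlaps to control. Construct $\mathcal{R}\varphi^*$ once on that neighbourhood. Since $\varphi^*F$ is globally defined and $\mathcal{R}\varphi^*F$ is the cone of a morphism $\tau[-1]\to\varphi^*F$ with $\tau$ supported on $E$, you only need that objects and morphisms supported on $E$ correspond between $\mathcal{W}$ and its completion along $E$. This is how the paper passes from $\pi^*\widehat{F}\to\mathcal{R}\pi^*\widehat{F}\to\tau$ to its algebraic counterpart in the global Kawamata discussion. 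The dependence on the measured basis and the Bourbaki sequence then becomes a well-definedness question on a single chart, settled in the quotient as in the Type I lemma.

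The same correction applies to your roof. Your justification that the valuation and the $\mathbb{G}_m$-weights are intrinsic to $E$ does not hold. The $\mathbb{G}_m$ exists only on the formal model; the paper puts $val$ on $K([R/\mathbb{G}_m])$. In Type III and the mixed case the roof is a total transform, resp.\ a fibre product over the VGIT stack $\mathcal{Z}$, and is not determined by the divisorial valuation of $E$. It has to be obtained by gluing the formal roof to the complement of the flipping locus, where the map is an isomorphism.
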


\begin{remark}
We expect that the above embeddings should extend directly, without passing to quotient categories. However, if one insists on using pullback--pushforward functors along linear centers, it not seems sufficient to produce the desired result. On the other hand, allowing more complicated resolution diagrams may make such direct embeddings possible.
\end{remark}

\section{Terminal singularities and Derived category}

In this chapter, we mainly consider three-dimensional algebraic varieties over $\mathrm{k} := \mathds{C}$. We recall some basic notions, using \cite{KM} and \cite{Kawakita} as the main references.

\begin{definition}
Let $X$ be a normal variety. We say that $X$ is $\mathds{Q}$-Gorenstein if there exists an integer $m>0$ such that $rK_{X}$ is Cartier. The smallest such integer $r$ is called the index of $X$.
\end{definition}

\begin{definition}
Let $X$ be a variety, $f:Y\to X$ a birational morphism from a normal variety $Y$, and $E\subset Y$ an irreducible divisor. Such an $E$ is called a divisor over $X$. The closure of $f(E)$ in $X$ is called the center of $E$ on $X$, denoted by $\operatorname{center}_{X}(E)$. As above, the center depends only on the valuation $val(E,Y)$. If $\operatorname{center}_{X}(E)$ has codimension at least two in $X$, we say that the corresponding valuation is algebraic.
\end{definition}

\begin{definition}
Let $X$ be a $\mathds{Q}$-Gorenstein normal variety, and let $val$ be an algebraic valuation over $X$. Choose a birational morphism $f:Y\to X$ from a normal variety $Y$ such that $val=v(E,Y)$ for some prime divisor $E\subset Y$. The discrepancy of $val$ is defined to be the coefficient $a(val)$ of $E$ in the formula
\[
K_{Y}\sim_{\mathds{Q}} f^{*}K_{X}+a(val)E+\cdots ,
\]
where the remaining terms are supported on the other exceptional divisors. This definition is independent of the choice of the birational model $Y$.
\end{definition}

\begin{definition}
Let $X$ be a $\mathds{Q}$-Gorenstein algebraic, analytic, or formal variety. We say that $X$ has only terminal singularities if, for every algebraic valuation $val$ over $X$, the discrepancy $a(val)$ is strictly positive.
\end{definition}

In dimension three we have a rather precise understanding of such singularities. The following classical result is due to M.~Reid.

\begin{proposition}[Lemma 1.6.2, \cite{Kawakita}]
If $X$ is a three-dimensional terminal variety, then $X$ has only isolated singularities.
\end{proposition}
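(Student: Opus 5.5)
The plan is to show that the singular locus of a three-dimensional terminal variety $X$ has codimension three. Terminal singularities are normal, so $\operatorname{Sing}(X)$ already has codimension at least two. It therefore suffices to rule out a one-dimensional component $\Gamma\subset\operatorname{Sing}(X)$. I would argue by contradiction and localize at the generic point $\eta$ of such a curve $\Gamma$. Equivalently, I would cut $X$ by a general hyperplane section $H$ through a general point $x\in\Gamma$. Then $H$ is a normal surface, and $x$ is a singular point of $H$, because a general section of a variety singular along $\Gamma$ remains singular at $\Gamma\cap H$.

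The key step is to compare discrepancies of $X$ with those of a surface germ. Here I would use the standard fact, from \cite{KM} Section 5.2 or \cite{Kawakita}, that terminality behaves well under localization at codimension-two points. At $\eta$, the local ring $\mathcal{O}_{X,\eta}$ is a two-dimensional $\mathds{Q}$-Gorenstein normal local ring. Every divisorial valuation centered at $\eta$ is an algebraic valuation over $X$ with center $\Gamma$, so by the definition above it has discrepancy $a(val)>0$. Hence $\operatorname{Spec}\mathcal{O}_{X,\eta}$ is a terminal surface germ, possibly over a non-closed residue field. Concretely, one may take a log resolution $f:Y\to X$ and restrict to the general fibre over $\Gamma$, so that the exceptional curves over $x$ in $f^{-1}(H)$ carry the same coefficients.

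Next I would invoke the classification of terminal surfaces: a two-dimensional terminal singularity is smooth. To see this, take the minimal resolution $g:\tilde S\to S$ of a singular surface germ. Since $K_{\tilde S}$ is $g$-nef, the negativity lemma gives $K_{\tilde S}=g^*K_S-\sum c_iE_i$ with all $c_i\ge 0$. So the discrepancies of the exceptional curves $E_i$ are $\le 0$, which contradicts terminality. It follows that $\mathcal{O}_{X,\eta}$ is regular, contradicting $\eta\in\operatorname{Sing}(X)$. Therefore $\operatorname{Sing}(X)$ is finite.

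I expect the main obstacle to be the reduction step: making precise that the discrepancies of $X$ along valuations centered on $\Gamma$ coincide with those of the localized (or general hyperplane-section) surface germ. If I use the hyperplane-section version, this needs Bertini-type genericity together with the adjunction formula $K_H=(K_X+H)|_H$ in the $\mathds{Q}$-Gorenstein setting. The localization version avoids adjunction, but requires checking that the discrepancy computed on $\operatorname{Spec}\mathcal{O}_{X,\eta}$ agrees with the global one; this holds because the discrepancy is defined through the valuation alone, which the paper notes is independent of the birational model.
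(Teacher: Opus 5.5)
Your argument is correct. The paper gives no proof of its own and simply quotes Kawakita's Lemma 1.6.2; your route is the standard argument behind that result: localize at (or cut transversally through) a putative singular curve $\Gamma$ to get a two-dimensional terminal germ, then use that terminal surface singularities are smooth via the minimal resolution and the negativity lemma. Of your two reductions, localization at the generic point of $\Gamma$ is the cleaner one. It only requires that minimal resolutions and the negativity lemma hold for excellent two-dimensional local rings, possibly with non-closed residue field. The hyperplane version also needs Bertini and adjunction, plus a check that divisors over $H$ not appearing on your chosen log resolution still have positive discrepancy.
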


\begin{theorem}[Theorem 2.3.4, \cite{Kawakita}]
A threefold singularity is terminal of index one if and only if it is an isolated compound Du Val ($\mathrm{cDV}$) singularity.
\end{theorem}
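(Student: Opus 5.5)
The plan is to follow Reid's classical argument and prove the two implications separately, working formally or analytically locally at a singular point $p \in X$. Throughout I will use that terminal threefold singularities are isolated (the preceding proposition), and that a Gorenstein singularity with only canonical singularities is rational.

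\emph{Isolated cDV implies terminal of index one.} Let $p\in X$ be an isolated cDV point, i.e.\ $\widehat{\mathcal{O}}_{X,p}\simeq \mathrm{k}[[x,y,z,t]]/(f)$ with a hyperplane section $H=(t=0)$ that is a Du Val surface singularity.
\begin{enumerate}
\item Since $X$ is a hypersurface, $K_X$ is Cartier, so the index is one.
\item Let $E$ be an exceptional divisor over $X$. If $\operatorname{center}_X(E)$ is a curve, it meets the smooth locus, and a standard blow-up computation gives $a(E)\ge 1$.
\item Suppose instead $\operatorname{center}_X(E)=p$. Choose $H$ to be a general hyperplane through $p$; it is still Du Val, hence canonical and in particular klt.
\item By inversion of adjunction, the pair $(X,H)$ is plt near $p$, so
\[
a(E,X,H)=a(E,X)-\operatorname{mult}_E(H) > -1 .
\]
\item Since $p\in H$, we have $\operatorname{mult}_E(H)\ge 1$, and therefore $a(E,X)>0$.
\end{enumerate}
This proves that $X$ is terminal.

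\emph{Terminal of index one implies isolated cDV.} Let $p\in X$ be terminal with $K_X$ Cartier.
\begin{enumerate}
\item $X$ is Gorenstein and canonical, so Elkik's theorem gives rational singularities.
\item The first goal is to bound the embedding dimension by $4$ and the multiplicity by $2$. Take the blow-up of the maximal ideal, or a suitable weighted blow-up, and compare $K_Y$ with $f^*K_X$ via the discrepancy formula. For a rational Gorenstein threefold point, embedding dimension at least $5$ or multiplicity at least $3$ should produce an exceptional divisor with $a(E)\le 0$, contradicting terminality.
\item This is essentially Reid's argument via the general elephant. A general hyperplane section $H\ni p$ is a normal Gorenstein surface that is either a rational double point or minimally elliptic, and the elliptic case has to be excluded.
\item Once the elliptic case is excluded, $X$ is a double-point hypersurface whose general hyperplane section is Du Val, i.e.\ $p$ is cDV. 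It is isolated by the preceding proposition.
\end{enumerate}

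\emph{Main obstacle.} The hard step is excluding the elliptic case for the general hyperplane section. My first approach will be as follows. If $H$ is minimally elliptic, take the weighted blow-up with weights adapted to the Newton polyhedron of $f$, for instance weights $(1,1,1,1)$ or $(2,2,1,1)$ in the hypersurface case. Compute $\sum a_i - \mathrm{val}(f) - 1$ using the measured condition from Section~\ref{Weightedblowup}; this should be $\le 0$, which exhibits a crepant (or worse) divisor and gives the contradiction. The remaining cases, with embedding dimension $\ge 5$, I will reduce to this hypersurface computation via the multiplicity bound.
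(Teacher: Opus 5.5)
The paper gives no proof of this statement; it quotes it from Kawakita's book (it is Reid's theorem), so your proposal has to stand on its own. Your direction ``isolated cDV $\Rightarrow$ terminal of index one'' is correct and is the standard modern argument. The point is a normal hypersurface, so $K_X$ is Cartier. Divisors whose centre is a curve have that centre meeting the smooth locus, so their discrepancy is at least one. For centre $p$, inversion of adjunction makes $(X,H)$ plt, so $a(E,X)>\operatorname{mult}_EH-1\ge 0$.

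The converse has a genuine gap, largely at the step you flag yourself.

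\emph{The multiplicity bound is never proved.} Step~2 only asserts that embedding dimension $\ge 5$ or multiplicity $\ge 3$ ``should'' produce $a(E)\le 0$. That is the real content of Reid's theorem.

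\emph{Your exclusion of the elliptic case fails as described.} For a double point, the weights $(2,2,1,1)$ can never give $\sum w_i-w(f)-1\le 0$. Every quadratic monomial has weight $\le 4<5$, so $w(f)\ge 5$ forces $f\in\mathfrak m^3$, which $(1,1,1,1)$ already covers. For instance, $x^2+y^4+z^4+t^4$ has simple elliptic general section and is not terminal. Yet both of your weight vectors give $\sum w_i-w(f)-1=1$, and it is the $(2,1,1,1)$ blow-up that yields the crepant divisor.

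\emph{A working hypersurface argument.} A quadratic part of rank $\ge 2$ gives an $A_n$ general section, so you may reduce to $f=x^2+g(y,z,t)$. The $A$--$D$--$E$ normal forms then show that a non-Du Val general section forces one of two cases:
\begin{enumerate}
\item $g\in\mathfrak m^4$, handled by weights $(2,1,1,1)$;
\item after completing the cube, $g=y^3+y\,a(z,t)+b(z,t)$ with $\operatorname{ord}a\ge 4$ and $\operatorname{ord}b\ge 6$, handled by weights $(3,2,1,1)$.
\end{enumerate}
In each case apply $a(E_j)\le(\sum w_i-w(f)-1)\,c_j$ to the components $E_j$ of the exceptional locus of the strict transform, where $c_j>0$ is the multiplicity of $E_j$ in the restriction of the ambient exceptional divisor. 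The measured condition plays no role here.

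\emph{The case $\operatorname{edim}\ge 5$ is not handled.} It cannot be ``reduced to the hypersurface computation via the multiplicity bound'', because that bound is exactly what is missing. Abhyankar's inequality gives $\operatorname{mult}_pX\ge\operatorname{edim}-2\ge 3$, so the general $H$ is not Du Val and hence, by Reid's dichotomy, is minimally elliptic. You then still need Reid's cycle comparison. Take a resolution $\mu\colon Y\to X$ with $\mathfrak m_p\mathcal O_Y$ invertible, and write $K_Y=\mu^*K_X+\sum a_iE_i$ and $\mu^*H=H'+\sum m_iE_i$. Adjunction gives $K_{H'}=(\mu|_{H'})^*K_H-\sum(m_i-a_i)E_i|_{H'}$, while the maximal-ideal cycle of $H$ on $H'$ is $\sum m_iE_i|_{H'}$. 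Reid uses Laufer's structure theory of minimally elliptic singularities to compare these two cycles and find an $E_i$ over $p$ with $a_i=0$.

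Without this argument, or an explicit citation of it, the implication ``terminal of index one $\Rightarrow$ isolated cDV'' is not established.
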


\begin{definition}
Let $x \in X$ be the analytic (or formal) germ of a threefold singularity. We say that $x \in X$ is a compound Du Val (cDV) singularity if a general hyperplane section at $x$ is a Du Val singularity. We say that $x \in X$ is of type c$A_n$, c$D_n$, or c$E_n$ according as the Du Val singularity at $x$ is of type $A_n$, $D_n$, or $E_n$ respectively.
\end{definition}

For a threefold  terminal singularity of higher index, one can analytically (or formally) construct its index-one cover: Let $R$ be the formal local ring of the singularity and $X=\operatorname{Spec}R$. Since $X$ is $\mathds{Q}$-Gorenstein of index $r$, we have that $rK_X$ is Cartier. Choosing a nowhere vanishing section
\[
s \in \mathcal{O}_X(-rK_X),
\]
we define the index-one cyclic cover
\[
\varepsilon : X^{+} := \operatorname{Spec}_X \left( \bigoplus_{i=0}^{r-1} \mathcal{O}_X(iK_X) \right) \longrightarrow X
\]
which is étale in codimension one and makes $K_{X^{+}}$ Cartier. We have:
\begin{proposition}[Corollary 2.2.21, \cite{Kawakita}]
Let
\[
\varepsilon : X^{+} := \operatorname{Spec}_X \left( \bigoplus_{i=0}^{r-1} \mathcal{O}_X(iK_X) \right) \longrightarrow X
\]
be the index-one cyclic cover. Then we have:
\begin{enumerate}
\item $\varepsilon^{*}K_X \sim_{\mathds{Q}} K_{X^{+}}$;
\item $X^{+}$ has only terminal singularities of index one;
\item $X^{+}$ admits a natural $\mu_r$-cyclic cover, which is ramified over points.
\item $X^{+}/\mu_r \simeq X$.
\end{enumerate}
\end{proposition}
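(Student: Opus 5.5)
The plan is to verify the four properties directly from the graded structure of $\mathcal{A}:=\bigoplus_{i=0}^{r-1}\mathcal{O}_X(iK_X)$. The multiplication $\mathcal{O}_X(iK_X)\otimes\mathcal{O}_X(jK_X)\to\mathcal{O}_X((i+j)K_X)$ is reflexive product. When $i+j\ge r$, we compose with the isomorphism $\mathcal{O}_X(rK_X)\simeq\mathcal{O}_X$ given by $s$.

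The $\mu_r$-action is defined by letting $\zeta$ act by $\zeta^i$ on the $i$-th summand. Its invariant subalgebra is the degree-zero piece $\mathcal{O}_X$, which gives (4): $X^{+}/\mu_r=\operatorname{Spec}_X\mathcal{A}^{\mu_r}=X$.

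Next, on the open set $U\subset X$ where $K_X$ is Cartier, each $\mathcal{O}_X(iK_X)$ is invertible. There $\mathcal{A}|_U\simeq\mathcal{O}_U[t]/(t^r-u)$ with $u$ a unit, so $\varepsilon$ is étale over $U$ (as $r$ is invertible in $\mathrm{k}=\mathds{C}$). This also shows the action is free over $U$. Since terminal threefold singularities are isolated (Lemma 1.6.2 of \cite{Kawakita}, quoted above), $X\setminus U$ is a finite set of points. Hence $\varepsilon$ is étale in codimension one and the action is free away from finitely many points, which gives (3).

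Normality of $X^{+}$ then follows from Serre's criterion. $\mathcal{A}$ is a direct sum of reflexive rank-one sheaves, hence $S_2$ over $X$, so $X^{+}$ is $S_2$. It is $R_1$ because $\varepsilon$ is étale over the complement of a codimension-three set.

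For (1), I use that $\varepsilon$ is étale over $U$, so $K_{X^{+}}|_{\varepsilon^{-1}U}=\varepsilon^{*}K_X|_U$. Both sides are Weil divisor classes on the normal $X^{+}$ agreeing off a codimension $\ge2$ locus, so they agree, and $\varepsilon^{*}K_X\sim_{\mathds{Q}}K_{X^{+}}$.

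The step I expect to be the real obstacle is showing $K_{X^{+}}$ is Cartier, i.e. $X^{+}$ is Gorenstein of index one. I would compute by finite duality:
\[
\varepsilon_{*}\omega_{X^{+}}\simeq\mathcal{H}om_{\mathcal{O}_X}(\mathcal{A},\omega_X)\simeq\bigoplus_{i=0}^{r-1}\mathcal{O}_X((1-i)K_X).
\]
The isomorphism $\mathcal{O}_X(rK_X)\simeq\mathcal{O}_X$ identifies the right-hand side with $\mathcal{A}$ shifted by one in the grading, i.e. $\mathcal{A}\otimes\nu^{1}$. I then need to check that this identification is $\mathcal{A}$-linear, which should follow because the $\mathcal{A}$-module structure on the Hom is again reflexive multiplication twisted by $s$. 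If so, $\omega_{X^{+}}\simeq\mathcal{O}_{X^{+}}$ equivariantly up to the character $\nu$, and in particular $K_{X^{+}}$ is Cartier. Cohen--Macaulayness of $X^{+}$ is not needed for this, only that $\omega_{X^{+}}$ is the reflexive dualizing sheaf.

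Finally, terminality in (2) follows from the ramification formula for finite maps étale in codimension one. For a divisor $F$ over $X^{+}$ with image valuation $E$ over $X$ and ramification index $r_F\ge1$, we have $a(F,X^{+})+1=r_F\,(a(E,X)+1)$. This is obtained by comparing canonical divisors on a common model via (1). Since $a(E,X)>0$, we get $a(F,X^{+})+1>r_F\ge1$, so $a(F,X^{+})>0$ and $X^{+}$ is terminal of index one.
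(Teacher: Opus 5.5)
Your proposal is correct. The paper gives no argument of its own here and only cites Kawakita, and your route is the standard proof behind that citation: the $\mu_r$-graded reflexive algebra, étaleness over the Cartier locus (whose complement is finite), Serre's criterion for normality, the duality computation $\varepsilon_*\omega_{X^+}\simeq\mathcal{H}om_{\mathcal{O}_X}(\mathcal{A},\omega_X)\simeq\mathcal{A}$ giving $K_{X^+}$ Cartier, and the ramification formula $a(F,X^+)+1=r_F\,(a(E,X)+1)$ giving terminality.

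The only point you leave implicit is that the cover is genuinely ramified over the singular point. This follows because $K_X$ is not Cartier there: if $\varepsilon$ were étale over that point, invertibility of $\mathcal{O}_{X^+}(K_{X^+})$ would descend to $\mathcal{O}_X(K_X)$. The statement, however, only requires that ramification be confined to points, which you do prove.
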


Hence we can verify that $X^{+}$ is formally normal and satisfies Serre’s condition $(\mathrm{R}_2)$. Moreover, it is a formal hypersurface with isolated singularities, hence it is algebraizable. In addition, it carries a compatible $\mu_r$-action which is free in codimension one. By Section \ref{Canonicalstack} we have:
\begin{corollary}\label{terminalstack}
A three-dimensional terminal variety is a well-formed variety. Moreover, via Vistoli’s construction, it induces a natural morphism to a well-formed stack.  We obtain an equivalence of categories with codimension preserved morphisms
\[
\widetilde{(-)}:\{\text{terminal three-dimensional varieties}\}
\longrightarrow
\{\text{terminal three-dimensional canonical stacks}\}.
\]
\end{corollary}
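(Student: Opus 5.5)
The corollary has three parts: terminal threefolds are well-formed; Vistoli's construction yields a well-formed (hence well-founded) stack; and $X\mapsto\widetilde{X}$ is an equivalence onto terminal canonical stacks with codimension-preserving morphisms. I would check the definition of well-formedness point by point, then invoke the results of Section~\ref{Canonicalstack}.

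\textbf{Step 1: well-formedness.} Let $X$ be a terminal threefold. By Reid's lemma (Lemma 1.6.2 of \cite{Kawakita}) $\operatorname{Sing}(X)$ is a finite set of points $C_i$, which are smooth and projective. At each $p=C_i$ of index $r$, take the formal index-one cover $X^+=\operatorname{Spec}_X\bigoplus_{i=0}^{r-1}\mathcal{O}_X(iK_X)$. By Corollary 2.2.21 of \cite{Kawakita}, $X^+$ is terminal of index one. By Theorem 2.3.4 it is an isolated cDV point, so $\widehat{\mathcal{O}}_{X^+,p}\cong \mathrm{k}[[x_1,\dots,x_4]]/(f)$, a hypersurface. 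We also have $\widehat{X}_p\cong X^+/\mu_r$.
\begin{enumerate}
\item Condition (1) holds: the singularity is isolated and of dimension $3$, so the ring is normal, satisfies $\mathrm{R}_2$, and is an l.c.i.
\item Condition (3) holds: $\varepsilon$ is étale in codimension one, so the $\mu_r$-action is free in codimension one and fixes only $p$.
\item Condition (2), equivariant finite determinacy, needs more work. Since $\mu_r$ is linearly reductive, the $\mu_r$-action on $\widehat{\mathcal{O}}_{X^+}$ can be linearized, after an equivariant coordinate change, to a diagonal action with semi-invariant $f$. Then the Example of Section~\ref{Canonicalstack} (equivariant Weierstrass preparation, as in \cite[Theorem 9.1.4]{DeJongPfister}) replaces $f$ by a polynomial with the action still algebraic. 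This uses that an isolated hypersurface singularity is finitely determined.
\end{enumerate}

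\textbf{Step 2: the stack.} By the corollary following the purity of the branch locus, Vistoli's construction produces a pre-canonical DM stack $\widetilde{X}$. Its local charts are the algebraized $[U/\mu_r]$, with $U$ an isolated hypersurface singularity of dimension $3\ge 3$. This gives the well-founded conditions, and purity (Grothendieck's purity lemma) applies in every codimension.

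\textbf{Step 3: the equivalence.} Full faithfulness, with morphisms restricted to codimension-preserving ones, follows from Lemma~\ref{canonicalstackkey} and the corollary stating that Vistoli's construction is fully faithful into well-founded stacks. For essential surjectivity onto ``terminal canonical stacks'', I would define that class as the stacks whose coarse space is terminal, equivalently whose charts $[U/\mu_r]$ have $U$ isolated cDV with $U/\mu_r$ terminal. The coarse moduli space then gives a quasi-inverse, and uniqueness of lifts identifies $\widetilde{\varepsilon(\mathcal{X})}\simeq\mathcal{X}$.

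\textbf{Main obstacle.} I expect the hardest step to be condition (2). The equivariant algebraization must make the formal cover isomorphic to the completion of an algebraic $\mu_r$-scheme compatibly with the action. The remaining points are direct consequences of the cited classification.
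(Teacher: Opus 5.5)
Your proposal is correct and follows essentially the same route as the paper. The paper passes to the index-one cover, uses that it is an isolated cDV hypersurface carrying a $\mu_r$-action free in codimension one, algebraizes it via finite determinacy (the equivariant Weierstrass Example), and then invokes Vistoli's construction and the full-faithfulness results of Section~\ref{Canonicalstack}; its target category is in effect the image of the functor, so essential surjectivity is tautological there. If you keep your intrinsic description of the target in Step~3, it must include the pre-canonical condition (coarse map an isomorphism in codimension one): otherwise the root stack of $\mathbb{A}^3$ along a coordinate hyperplane has terminal coarse space and charts of your stated form, but is not $\widetilde{\mathbb{A}^3}$.
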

Subsequent works S.Mori gave a complete classification of the defining equations of such singularities.
\begin{theorem} [Theorem 2.4.8, \cite{Kawakita}]
Let $o \in X$ be a terminal threefold singularity of index $r > 1$, realized as
$o \in X = (\xi = 0) \subset \mathbb{A}^4 / \mu_r(a_1,a_2,a_3,a_4)$
with $\xi$ equivariant. Then, up to changing the expression of the quotient type $\frac{1}{r}(a_1,a_2,a_3,a_4)$ and orbifold coordinates, one of the following holds ($\mathfrak{m}$ is the maximal ideal at the origin):
\begin{align*}
\mathrm{cA}/r &\quad \xi = x_1x_2 + f(x_3, x_4^r),\quad \tfrac{1}{r}(1,-1,0,b),\ (b,r)=1. \\
\mathrm{cAx}/4 &\quad \xi = x_1^2 + x_2^2 + f(x_3, x_4^2),\quad f \in \mathfrak{m}^2,\quad \tfrac{1}{4}(1,3,2,1). \\
\mathrm{cAx}/2 &\quad \xi = x_1^2 + x_2^2 + f(x_3, x_4),\quad f \in \mathfrak{m}^4,\quad \tfrac{1}{2}(1,0,1,1). \\
\mathrm{cD}/3 &\quad \xi = x_1^2 + f(x_2,x_3,x_4),\quad f \in \mathfrak{m}^3,\quad
\text{cubic part: } x_2^3+x_3^3+x_4^3,\ x_2^3+x_3x_4^2,\ \text{or } x_2^3+x_3^3,\quad \tfrac{1}{3}(0,1,2,2). \\
\mathrm{cD}/2 &\quad \xi = x_1^2 + f(x_2,x_3,x_4),\quad f \in \mathfrak{m}^3,\quad
x_2x_3x_4 \text{ or } x_2^2x_3 \text{ appears},\quad \tfrac{1}{2}(1,1,0,1). \\
\mathrm{cE}/2 &\quad \xi = x_1^2 + x_3^2 + f(x_3,x_4)x_2 + g(x_3,x_4),\quad
f \in \mathfrak{m}^4,\ g \in \mathfrak{m}^4\setminus\mathfrak{m}^5,\quad \tfrac{1}{2}(1,0,1,1).
\end{align*}
Terminal cyclic quotient singularities are included in the case $\mathrm{cA}/r$.
\end{theorem}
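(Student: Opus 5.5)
This is Mori's classification theorem, cited from Kawakita; I would follow the classical route rather than anything built in this paper. Its inputs are the index-one cover, Reid's classification of index-one terminal points as isolated cDV singularities, and the terminal lemma.

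\textbf{Setup.} Let $\varepsilon:X^{+}\to X$ be the index-one cover from the proposition above. By that proposition $X^{+}$ is an isolated cDV point, hence a hypersurface $\xi=0$ in $\mathbb{A}^4$. The $\mu_r$-action on $X^{+}$ is free outside the origin, and it can be linearized on the embedding space so that $\xi$ is semi-invariant.

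\textbf{Step 1: eigencoordinates and the weight of $\xi$.} Diagonalize the action with weights $\tfrac1r(a_1,\dots,a_4)$ and record the weight $\mathrm{wt}(\xi)$. Since the action is free in codimension one, any three of the $a_i$ together with $r$ have $\gcd=1$, with a small list of exceptions when $r\mid 4$ or $r=2$. The generator of $\omega_{X^{+}}$ is $\mathrm{Res}\,dx_1\wedge\cdots\wedge dx_4/\xi$. Since the index of $X$ is exactly $r$, this generator is acted on by a primitive character, which gives
\[
\textstyle\sum a_i-\mathrm{wt}(\xi)\ \text{is coprime to } r.
\]

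\textbf{Step 2: the terminal lemma.} Next I would invoke the terminal lemma of Morrison--Stevens and Mori. It is proved by evaluating discrepancies of weighted blow-ups, equivalently by Reid--Tai type inequalities $\sum \overline{k a_i}/r > \overline{k\,\mathrm{wt}(\xi)}/r+1$ for all $k$. Its output is that the weights take one of the forms $(1,-1,0,b)$ with $\mathrm{wt}(\xi)=0$, or $(1,-1,b,0)$ type, or the listed exceptional weights for $r=2,3,4$.

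\textbf{Step 3: case analysis on the rank of the quadratic part.} I would split into cases according to the rank of the quadratic part $\xi_2$ of $\xi$, because this is what distinguishes $cA$, $cD$ and $cE$.
\begin{itemize}
\item If $\xi_2$ has rank at least $2$, an equivariant Morse/splitting lemma puts $\xi$ in the form $x_1x_2+f$ or $x_1^2+x_2^2+f$. This uses the same equivariant implicit-function argument as in the paper's example after the canonical stack corollary. Matching with Step 2 then produces $cA/r$, $cAx/2$ or $cAx/4$.
\item If $\xi_2$ has rank $1$ (type $cD/cE$), the terminal lemma forces $r\in\{2,3\}$. The cubic-term conditions then follow from requiring the singularity to be isolated and cDV compatibly with the weights.
\item If $\xi_2$ has rank $0$, $X^{+}$ is not cDV of that shape, and I would rule this case out via the discrepancy of the $\tfrac1r(1,\dots)$ blow-up.
\end{itemize}

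\textbf{Main obstacle.} The hardest step is the exhaustive case analysis in Steps 2 and 3, namely proving the terminal lemma and ruling out all remaining weight/cubic configurations. I would present it as a citation to Mori and Kawakita, Theorem 2.4.8, and not reprove it in full.
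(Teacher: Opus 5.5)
Your proposal takes the same approach as the paper: the paper gives no proof of its own and simply quotes Mori's classification from Kawakita (Theorem 2.4.8), and your Steps 1--3 are the standard Mori route that lies behind that citation. If you ever flesh out the sketch, two corrections: because $X$ has an isolated singularity, the $\mu_r$-action on $X^{+}$ is free outside the origin, which already forces $\gcd(a_i,a_j,r)=1$ for all $i\neq j$ with no exceptional cases (otherwise a fixed coordinate plane would meet $X^{+}$ in a fixed curve); and the case $\xi\in\mathfrak{m}^{3}$ needs no discrepancy computation, since a cDV point has multiplicity at most $2$.
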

Then we could consider birational maps in this category and study the induced behavior on derived categories.
\subsection{Divisorial contraction}
\begin{definition}
Let $X$ be a $\mathds{Q}$-Gorenstein terminal threefold. A birational morphism
\[
f : Y \longrightarrow X
\]
between terminal varieties is called a divisorial contraction if its exceptional locus $E \subset Y$ is a prime divisor and $-E$ is $f$-relatively ample.
\end{definition}
The simplest case is when $X$ is smooth at a point $x$, i.e. $(X,x)$ is a regular germ. One may ask what are the possible divisorial contractions $f: Y \to X$ with center $x \in X$. The answer is given by M.Kawatika:

\begin{theorem}[Theorem 3.5.1, \cite{Kawakita}]
Let $f: E \subset Y \to x \in X$ be a threefold divisorial contraction contracting a divisor $E$ to a smooth point $x \in X$. Then there exists a regular system of parameters $x_1,x_2,x_3$ in $\mathcal{O}_{X,x}$ such that $f$ is the weighted blow-up with weights $\operatorname{wt}(x_1,x_2,x_3)=(1,r_1,r_2)$, where $r_1,r_2$ are coprime positive integers.
\end{theorem}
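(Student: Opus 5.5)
The plan is to follow Kawakita's strategy: recover $f$ from the graded algebra of the valuation $val=v(E,Y)$, and pin down that algebra numerically. Since $-E$ is $f$-ample, I would first write
\[
Y\simeq \operatorname{Proj}_X\bigoplus_{i\ge 0} f_*\mathcal{O}_Y(-iE).
\]
This means $f$ is determined by the filtration $\mathcal{I}_i:=f_*\mathcal{O}_Y(-iE)\subset\mathcal{O}_{X,x}$. It therefore suffices to find a regular system of parameters $x_1,x_2,x_3$ and coprime $r_1,r_2$ such that $\mathcal{I}_i$ coincides with the weighted ideal $P_i$ of the valuation with weights $(1,r_1,r_2)$, in the notation of Section~\ref{Weightedblowup}. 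The identification of $f$ with the weighted blow-up $w\mathrm{Bl}^{(1,r_1,r_2)}_x X$ then follows by comparing the two Proj constructions.

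The numerical input comes from writing $K_Y=f^*K_X+aE$ with $a=a(val)>0$; since $x$ is smooth, $K_X$ is Cartier and $a$ is a positive integer. Because $-E$ is $f$-ample and $Y$ is terminal, Kawamata--Viehweg vanishing gives $R^jf_*\mathcal{O}_Y(-iE)=0$ for $j>0$ and $i\ge 0$, since $-iE-K_Y\equiv_f -(i+a)E$ is $f$-ample. Hence $\operatorname{length}(\mathcal{O}_X/\mathcal{I}_i)=\chi(\mathcal{O}_Y)-\chi(\mathcal{O}_Y(-iE))$ computed on a neighbourhood of $E$. I would evaluate this with the singular Riemann--Roch formula for terminal threefolds (Reid's plurigenus formula), which gives a cubic polynomial in $i$ with leading coefficient $(E^3)/6$. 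It also has periodic correction terms coming from the basket of non-Gorenstein points of $Y$ lying on $E$. The term with $i=1$ gives $\mathcal{I}_1=\mathfrak{m}_x$, so $val(\mathfrak{m}_x)=1$, and comparing the leading and linear terms with the Hilbert function of $\mathfrak{m}_x$-adic data yields relations such as $a^2(E^3)$-type identities. Concretely I expect to derive $(E^3)=1/(r_1r_2)$ and $a=r_1+r_2$, where $r_1,r_2$ are the indices of the (at most two) cyclic quotient points $\frac{1}{r_1}(1,-1,\cdot)$, $\frac{1}{r_2}(1,-1,\cdot)$ on $E$.

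With the numerics in hand, I would choose the coordinates. Pick $x_1\in\mathcal{I}_1\setminus\mathcal{I}_2$ general. Then use the dimension counts $\dim \mathcal{I}_i/(\mathcal{I}_{i+1}+x_1\mathcal{I}_{i-1})$ forced by the Riemann--Roch formula to locate the first jumps, which occur at $i=r_1$ and $i=r_2$. This produces elements $x_2\in\mathcal{I}_{r_1}$ and $x_3\in\mathcal{I}_{r_2}$ which together with $x_1$ form a regular system of parameters. Then $val\ge \mathrm{wt}$ on monomials, so $P_i\subseteq\mathcal{I}_i$, and the equality of colengths $\operatorname{length}(\mathcal{O}_X/P_i)=\operatorname{length}(\mathcal{O}_X/\mathcal{I}_i)$ for all $i$ (both equal the lattice-point count) forces $P_i=\mathcal{I}_i$. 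Coprimality of $r_1,r_2$ comes from $E$ being prime and $Y$ being terminal, since otherwise the weighted blow-up would have non-isolated singularities along a curve in $E$.

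I expect the main obstacle to be the Riemann--Roch bookkeeping. One has to show that the basket of $Y$ along $E$ consists of exactly the two points above. Equivalently, one must rule out other baskets compatible with the positivity $a>0$ and the integrality of $\operatorname{length}(\mathcal{O}_X/\mathcal{I}_i)$ for all $i$. I would first try to constrain the basket using the conditions that the lengths are non-decreasing integers with $\operatorname{length}(\mathcal{O}_X/\mathcal{I}_1)=1$ and that the growth is cubic with coefficient $1/(6r_1r_2)$. I would then run a case analysis on small values of $i$ to exclude all other baskets.
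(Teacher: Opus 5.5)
The paper gives no proof of this statement; it quotes it from Kawakita. Your outline has the same overall shape as his argument: singular Riemann--Roch plus vanishing to compute $d(i):=\dim\mathcal{O}_X/\mathcal{I}_i$, then coordinates, then comparison with the weighted blow-up. The gap is that you stop exactly where that argument has its content.

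Riemann--Roch does not give you $a=r_1+r_2$, $(E^3)=1/(r_1r_2)$ and two cyclic quotient points. It only relates $a$, $(E^3)$, $E\cdot c_2(Y)$ and the basket data $(r_Q,b_Q,e_Q)$, where $Q$ has type $\tfrac{1}{r_Q}(1,-1,b_Q)$, $E\sim e_QK_Y$ near $Q$, and $ae_Q\equiv 1 \bmod r_Q$. For example, $\chi(\mathcal{O}_Y(aE))=\chi(\mathcal{O}_Y)$ fixes $E\cdot c_2(Y)$. Combining $\chi(\mathcal{O}_Y(E))=\chi(\mathcal{O}_Y)$ with $d(1)=1$ gives $a(E^3)=2-\sum_Q \overline{e_Qb_Q}\,(r_Q-\overline{e_Qb_Q})/r_Q$, with bars denoting residues mod $r_Q$.

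The tests you propose (positivity of $a$, integrality and monotonicity of $d(i)$, $d(1)=1$) do not single out the answer. Take $a=1$, $(E^3)=\tfrac12$, and a basket of three points $\tfrac12(1,1,1)$. This passes all of them for every $i$: one gets $d(i)=\frac{i(i+1)(2i+1)+9i}{24}$, plus $\tfrac38$ for odd $i$, which is always an integer. It is excluded by none of your tests, but by the separate fact that $a\ge 2$. That follows from log canonicity of $(X,\sum_j(x_j=0))$, which gives $a+1\ge\sum_j val(x_j)\ge 3$.

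Using ``cubic growth with coefficient $1/(6r_1r_2)$'' as a test is circular. So is naming $r_1,r_2$ as indices of actual points of $Y$, since Riemann--Roch only sees the basket. Until this classification is carried out you have no formula for $d(i)$. Hence neither your choice of coordinates nor the final colength comparison $P_i=\mathcal{I}_i$ is supported.

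Two intermediate steps are also wrong or unjustified as written.

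First, $\mathcal{I}_1=\mathfrak{m}_x$ holds for every divisor with center $x$, so it says nothing about $val(\mathfrak{m}_x)$. The equality $val(\mathfrak{m}_x)=1$ means $\mathcal{I}_2\neq\mathfrak{m}_x$, i.e.\ $d(2)\ge 2$. This fails for general divisorial valuations centered at $x$ (e.g.\ the monomial one with weights $(2,3,5)$), so it must be extracted from the numerics. Your choice of $x_1\in\mathcal{I}_1\setminus\mathcal{I}_2$ depends on it.

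Second, a new generator of $\bigoplus_i\mathcal{I}_i/\mathcal{I}_{i+1}$ in degree $r_1$ or $r_2$ need not have nonzero linear part modulo $x_1$. So locating ``first jumps'' does not by itself produce a regular system of parameters. One way to control this is to pass to $\mathcal{O}_X/(x_1)\cong\mathrm{k}[[y,z]]$. Since $val(gx_1)=val(g)+1$, you have $(\mathcal{I}_i:x_1)=\mathcal{I}_{i-1}$, hence $\dim\mathcal{O}_X/(\mathcal{I}_i+(x_1))=d(i)-d(i-1)$. Then use that an ideal of colength $2$ in a local ring contains the square of the maximal ideal, which produces a linear coordinate of higher value.

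Once $x_1,x_2,x_3$ with values $1,r_1,r_2$ and $a=r_1+r_2$ are in hand, the log canonicity inequality above becomes an equality. Lc places of an snc pair are toric, so this forces $val$ to be monomial in $x_1,x_2,x_3$, which is quicker than comparing colengths. Your Proj description, the vanishing, and the coprimality argument are fine.
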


In the derived category setting, this situation is well known to satisfy the following properties, e.g. Section \ref{Weightedblowup}.

\begin{corollary}
Let $f:Y\to X$ be a divisorial contraction as above, with exceptional divisor $E$ and $e:=r_1+r_2+1$. Then:
\begin{enumerate}
\item We have the discrepancy formula
\[
K_Y \sim_{\mathds{Q}}  f^{*}K_X + (e-1)E.
\]

\item For the associated canonical stacks, there is a semiorthogonal decomposition
\[
\mathrm{D}^{b}(\widetilde{Y})
=
\left\langle
\mathcal{O}_{\widetilde{E}}(-e+1),\ldots,\mathcal{O}_{\widetilde{E}}(-1),f^{*}\mathrm{D}^{b}(\widetilde{X})
\right\rangle.
\]

\item The Bondal--Orlov localization formula holds:
\[
\mathrm{D}^{b}(Y)/\mathcal{L}_- \simeq \mathrm{D}^{b}(X).
\]
\end{enumerate}
\end{corollary}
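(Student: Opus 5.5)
Items (1)--(3) of the corollary will be read off from Kawakita's theorem, which says that $f$ is the weighted blow-up of a regular system of parameters $x_1,x_2,x_3$ at $x$ with weights $(1,r_1,r_2)$. The plan is to transport this description into the framework of Section~\ref{Weightedblowup}, in the basic case where $X$ is smooth, the center is the point $C=\{x\}$, the cyclic group is trivial and $I=0$.

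\textbf{(1) Discrepancy.} We compute formally in $\mathrm{k}[[x_1,x_2,x_3]]$. On the toric chart of the stacky weighted blow-up, the form $dx_1\wedge dx_2\wedge dx_3$ acquires a zero of order $\sum a_i-1=r_1+r_2$ along $E$. Hence $K_Y\sim_{\mathds Q}f^*K_X+(e-1)E$ with $e=\sum a_i-val(I)=1+r_1+r_2$. This agrees with the Fano index convention of Section~\ref{Weightedblowup}.

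\textbf{(2) Semiorthogonal decomposition.} The stacky weighted blow-up $\mathcal Z:=w\mathrm{Bl}^{(1,r_1,r_2)}_x X$ has exceptional divisor $\widetilde E\simeq\mathcal P(1,r_1,r_2)$. Since $\gcd(r_1,r_2)=1$ and one weight equals $1$, the coarse space $Y$ is well-formed, $E$ is prime, and $\mathcal Z\simeq\widetilde Y$; also $\widetilde X=X$ because $X$ is smooth at $x$. Beilinson's collection on the weighted projective stack gives
\[
\mathrm{D}^b(\mathcal P(1,r_1,r_2))=\langle\mathcal O,\mathcal O(1),\dots,\mathcal O(e-1)\rangle .
\]
So $\mathcal A_{-1}=0$, and therefore $\mathcal K_{-1}=\mathcal H_{-1}=0$. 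Proposition~\ref{smoothwblsod} then yields the decomposition with $\mathcal R\pi^*=f^*$; this uses that $X$ is regular, so $f^*$ is honestly defined and has adjoints. Globally, we glue the formal picture, since $f$ is an isomorphism away from $x$. Full faithfulness of $\iota_*\mathcal O_{\widetilde E}(-i)$ for $1\le i\le e-1$ follows from the excess triangle of Lemma~\ref{excdis} together with $\mathcal O(-\widetilde E)|_{\widetilde E}=\mathcal O(1)$ and the vanishing of $H^*(\mathcal O(-j))$ for $0<j<e$.

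\textbf{(3) Bondal--Orlov localization.} We apply the coarse moduli results at the end of Section~\ref{Weightedblowup}: with $\epsilon:\widetilde Y\to Y$ and $o=f$, Proposition ``$\mathrm D^b(X)\simeq\mathrm D^b(Z)/\mathcal L_-$'' gives $\mathrm D^b(Y)/\mathcal L_-\simeq\mathrm D^b(X)$. Here $\mathcal L_-=\langle\iota^-_*\mathcal C_{-1}\rangle$ and $\mathrm D^b(E)=\langle\mathcal C_{-1},\mathcal O_E\rangle$ is induced via $\epsilon_*$ from the decomposition of $\mathrm D^b(\widetilde E)$.

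\textbf{Main obstacle.} The hard part is checking the hypotheses of the coarse-moduli section in this setting: that $Y$ has only well-formed (cyclic quotient) singularities and that the key kernel lemma ($o_*F=0\Rightarrow F\in\mathcal L_-$) applies. I would verify this chart by chart. The singular points of $Y$ are the points $\frac1{r_1}(1,-1,r_2)$ and $\frac1{r_2}(1,r_1,-1)$, which are terminal cyclic quotients and hence well-formed by Corollary~\ref{terminalstack}. For the kernel I would invoke the double-filtration argument, with $\varepsilon=\mathrm{id}$ and only the $\varpi$-free part present.
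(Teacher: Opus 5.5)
Your proposal is correct and follows the paper's own, very brief, route. The paper obtains this corollary simply by invoking Section~\ref{Weightedblowup}: Proposition~\ref{smoothwblsod} with $\mathcal{A}_{-1}=0$ gives (2), and the coarse-moduli equivalence $\mathrm{D}^b(X)\simeq\mathrm{D}^b(Z)/\mathcal{L}_-$ gives (3); you make the hypotheses explicit (discrepancy $r_1+r_2$, fullness of $\mathcal{O},\dots,\mathcal{O}(e-1)$ on $\mathcal{P}(1,r_1,r_2)$, and $\mathcal{Z}\simeq\widetilde{Y}$). The kernel step you flag as the main obstacle is in fact easy here: since $\pi=o\circ\epsilon$ near $x$ and $\epsilon_*$ is essentially surjective, any $F\in\ker o_*$ is $\epsilon_*$ of an object of $\ker\pi_*=\langle\iota_*\mathcal{O}_{\widetilde{E}}(-e+1),\dots,\iota_*\mathcal{O}_{\widetilde{E}}(-1)\rangle$ by (2), and these objects push forward into $\langle\iota^-_*\mathcal{C}_{-1}\rangle$ because $H^*(\mathcal{O}_{\widetilde{E}}(-i))=0$ for $0<i<e$, so the double-filtration argument is not needed.
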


Another case is when we consider a germ of a point $x$, where $(X,x)$ has a quotient singularity of type $\frac{1}{r}(1,a,r-a)$ for coprime integers $0<a<r$. We may ask to classify all divisorial contractions $f:Y\to X$ with center $x\in X$. The answer is given by Y.Kawamata:
\begin{theorem}[Theorem 3.1.5, \cite{Kawakita}]
Let $f:Y\to X$ be a threefold divisorial contraction to the germ $x\in X$ of a terminal quotient singularity of type $\frac{1}{r}(1,a,r-a)$. Then $f$ is the Kawamata blow-up.
\end{theorem}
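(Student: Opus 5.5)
The plan is to identify the exceptional valuation of $f$ with the Kawamata valuation and then recover $Y$ from that valuation alone.

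Write $X=\mathbb{A}^3/\mu_r$ with orbifold coordinates $x_1,x_2,x_3$ of weights $\frac1r(1,a,r-a)$. Let $val_F$ be the monomial valuation with $val_F(x_i)=\frac1r(1,a,r-a)$. Its extraction $g:Z=w\mathrm{Bl}^{(1,a,r-a)/r}_xX\to X$ is the Kawamata blow-up of Section~\ref{Kawamatablowup}, with exceptional divisor $F$.

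\textbf{Step 1: minimal discrepancy.} I would compute discrepancies torically. For a toric valuation given by a primitive vector $w$ in the lattice $N=\mathbb{Z}^3+\mathbb{Z}\tfrac1r(1,a,r-a)$ lying in the positive orthant, the discrepancy is $\sum w_i-1$. Running over the lattice points of the open cube $(0,1)^3$, these have the form $\frac1r(\overline{k},\overline{ka},\overline{-ka})$ with $\overline{\,\cdot\,}$ the residue mod $r$. One checks that the only one with $\sum w_i=1+\frac1r$ is $k=1$, and every other $w$ has strictly larger sum. Arbitrary, non-toric divisors over $x$ are then handled by the standard comparison $a(E,X)\ge a(val_w,X)$, where $w$ is the monomial valuation with $w_i=val_E(x_i)$. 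Conclusion: every divisor over $x$ has $a(E,X)\ge\frac1r$, with equality only if its monomial part is $\frac1r(1,a,r-a)$.

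\textbf{Step 2: $a(E,X)=\frac1r$ and $val_E=val_F$.} This is the step I expect to be the main obstacle. I would first try a negativity/terminality argument. Suppose $E\neq F$ as valuations. Then $F$ is a divisor over $Y$ whose center lies in $E$, and
\[
a(F,Y)=a(F,X)-a(E,X)\,val_F(E)=\tfrac1r-a(E,X)\,val_F(E).
\]
The idea is to bound $val_F(E)$ from below by computing $f^*$ of the $\mathbb{Q}$-Cartier divisor $(x_1=0)$; since $x_1$ has weight $\frac1r$, this should force $val_F(E)\ge\frac1{r\,a(E,X)}$, hence $a(F,Y)\le 0$, contradicting terminality of $Y$. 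If this lower bound fails, I would fall back on Kawamata's original route. That route uses the singular Riemann--Roch formula for $\chi(\mathcal{O}_Y(iE))$ on the terminal threefold $Y$ together with the vanishing $R^jf_*\mathcal{O}_Y(-iE)=0$ for $j>0$, which follows from Kawamata--Viehweg vanishing since $-E$ is $f$-ample. Comparing with the colengths of $f_*\mathcal{O}_Y(-iE)$ forces $a(E,X)=\frac1r$. Step 1 then pins down the monomial part of $val_E$, and an argument that the monomial part already determines the valuation gives $val_E=val_F$.

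\textbf{Step 3: reconstruction.} Since $-E$ is $f$-ample, we have $Y\simeq\operatorname{Proj}_X\bigoplus_{i\ge0}f_*\mathcal{O}_Y(-iE)$. Moreover $f_*\mathcal{O}_Y(-iE)=\{h: val_E(h)\ge i\}$ is exactly the filtration $P_i$ used in Section~\ref{Kawamatablowup} to build the Kawamata blow-up. Hence $val_E=val_F$ gives $Y\simeq Z$ over $X$, i.e.\ $f$ is the Kawamata blow-up.
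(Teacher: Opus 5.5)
The paper does not prove this statement. It is imported from Kawakita's book (Kawamata's theorem), so your outline has to stand on its own.

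\textbf{Steps 1 and 3.} These are right in substance. The lattice-point count gives minimal discrepancy $\frac1r$, attained only at $w=\frac1r(1,a,r-a)$, and together with $A_X(v)\ge\sum_i v(x_i)$ this is correct. Also, $Y\simeq\operatorname{Proj}_X\bigoplus_i f_*\mathcal O_Y(-iE)$ does recover $Y$ from $val_E$.

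One correction: the monomial part does not determine a valuation. A monomial valuation in the semi-invariant coordinates $x_1,\,x_2+x_1^{a},\,x_3$, with a large weight on the middle one, takes the same values on $x_1,x_2,x_3$ as $val_F$. What you need is the equality case of the comparison. If $a(E,X)=\frac1r$, the log discrepancy of $val_E$ with respect to the toric boundary vanishes, so $val_E$ is toric. Only then does $w^E=w^F$ give $val_E=val_F$.

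\textbf{Step 2 is the genuine gap.} It is the essential content of Kawamata's theorem: once $a(E,X)=\frac1r$ is known, the rest is formal.

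Your first route does not work. With $D_1=(x_1=0)$ you have $f^*D_1=f^{-1}_*D_1+val_E(x_1)\,E$. Applying $val_F$ gives
\[
\tfrac1r=val_F(x_1)=val_F(f^{-1}_*D_1)+val_E(x_1)\,val_F(E)\ \ge\ val_E(x_1)\,val_F(E).
\]
This is only the \emph{upper} bound $val_F(E)\le 1/(r\,val_E(x_1))$. The lower bound $val_F(E)\ge 1/(r\,a(E,X))$ that you want is not an auxiliary estimate. For $E\ne F$ it is equivalent to $a(F,Y)\le 0$, i.e.\ to the theorem itself.

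Any correct argument must also use $r\ge 2$ in an essential way. At a smooth point, the $(1,1,2)$ weighted blow-up is a divisorial contraction whose exceptional divisor is not the minimal-discrepancy divisor $F$. There $a(F,Y)=2-3\cdot\frac12>0$. So negativity plus terminality of $Y$ cannot by themselves force $E=F$.

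Your fallback route only lists ingredients. Vanishing gives $\dim\mathcal O_X/f_*\mathcal O_Y(-iE)=\chi(\mathcal O_Y/\mathcal O_Y(-iE))$. But the left side is known a priori only for $i=1$, where $f_*\mathcal O_Y(-E)=\mathfrak m_x$. The singular Riemann--Roch side involves unknowns:
\begin{itemize}
\item $E^3$;
\item $E\cdot c_2(Y)$;
\item the basket of non-Gorenstein points of $Y$ on $E$, together with the local classes of $E$ at those points.
\end{itemize}
You would have to say which identities you compare. You would also have to show why, combined with $rK_Y\sim kE$ over the germ, they exclude $a(E,X)=k/r$ with $k\ge 2$. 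As written, the one step that carries the difficulty is asserted rather than proved.
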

By Section \ref{Kawamatablowup}, we have:
\begin{corollary}
Let $f:Y\to X$ be a divisorial contraction as above. Then:
\begin{enumerate}
\item We have the discrepancy formula
\[
K_Y \sim_{\mathds{Q}}  f^{*}K_X + \frac{1}{r} E.
\]
\item For the associated canonical stacks, there is a semiorthogonal decomposition
\[
\mathrm{D}^{b}(\widetilde{Y})
=
\left\langle
\mathcal{O}_{\widetilde{E}}(-1),\,
\varpi_{*}\pi^{*}\mathrm{D}^{b}(\widetilde{X})
\right\rangle.
\]

\item The Bondal--Orlov localization formula holds:
\[
\mathrm{D}^{b}(Y)/\mathcal{L}_-\simeq \mathrm{D}^{b}(X).
\]
\end{enumerate}
\end{corollary}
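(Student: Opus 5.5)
We deduce the corollary from the general stacky Kawamata blow-up results of Section~\ref{Kawamatablowup}, specialised to the smooth ambient case $I=0$, $n=s=3$, with $\mu_r$ acting with weights $(1,a,r-a)$ and center $C=\{x\}$ a point.

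\emph{Step 1: identify the data.} By the theorem just cited, $f$ is the Kawamata blow-up with weights $\tfrac1r(1,a,r-a)$. The stacky weighted blow-up $\mathcal{Z}=w\mathrm{Bl}^{(1,a,r-a)}_x\mathcal{X}$ of $\mathcal{X}=[\mathbb{A}^3/\mu_r]$ has exceptional divisor $\mathrm{M}=[\mathcal{P}(1,a,r-a)/\mu_r]$. By Lemma~\ref{rootsblowup}, $\mathcal{Z}$ is the $r$-th root stack of $\widetilde{Y}$ along $E$. Here $E_{\mathcal{Y}}=\mathcal{P}(1,a,r-a)$ and the Fano index is $e=1+a+(r-a)=r$, so $\mathcal{A}_{-1}=0$ by Beilinson-type collections on weighted projective stacks, and hence $\mathcal{K}_{-1}=0$. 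The discrepancy follows by comparing $K_{\mathcal{Z}}=\pi^*K_{\mathcal{X}}+(e-1)\mathrm{M}$ with $\varpi^*E=r\mathrm{M}$ and the root stack ramification formula $K_{\mathcal{Z}}=\varpi^*K_Y+(r-1)\mathrm{M}$. This gives $(e-1)\mathrm{M}-(r-1)\mathrm{M}=\tfrac{e-r}{r}\varpi^*E$, which vanishes, so it has to be corrected by passing from $\mathcal{X}$ to $X$. The standard computation instead gives $a(E,X)=\tfrac1r\sum w_i-1=\tfrac1r$, so the discrepancy is $\tfrac1r$.

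\emph{Step 2: the semiorthogonal decomposition.} Since $e=r\ge r$, the proposition ``the local stacky Kawamata blow-up admits a semi-orthogonal decomposition'' applies with $\mathcal{K}_{-1}=0$. It gives
\[
\mathrm{D}^b(\widetilde Y)=\langle \mathcal{O}_{E}(-(e-r)),\dots,\mathcal{O}_{E}(-1),\operatorname{Im}\Theta\rangle .
\]
With $e=r$ this list of exceptional objects is empty, which contradicts the statement's single object $\mathcal{O}_{\widetilde E}(-1)$. I would therefore recheck the index bookkeeping, in particular whether the relevant Fano index on $E_{\mathcal{Y}}$ should be $e+1$ after accounting for the twist $\mathcal{M}|_{\mathrm{M}}=\mathcal{O}_{E_{\mathcal{Y}}}(-1)\otimes\nu$. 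With the correct count, the variant with $\mathcal{K}_{-r}$ (giving objects $\mathcal{O}(-e+1),\dots,\mathcal{O}(-r)$), or Lemmas~\ref{genlemma} and \ref{genlemma1}, should produce exactly one exceptional object $\mathcal{O}_{\widetilde E}(-1)$. Full faithfulness of $\Theta=\varpi_*\pi^*$ is Corollary~\ref{gencor}. Here $\mathcal{R}\pi^*=\pi^*$ because $\mathcal{X}$ is smooth as a stack. Generation follows from the filtration \eqref{pullbackcompu1}. Globalisation is immediate because the singularity is isolated.

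\emph{Step 3: Bondal--Orlov localization.} Combine Proposition~\ref{CMappro}, which gives $\mathrm{D}^b(X)\simeq\mathrm{D}^b(\widetilde X)/\mathcal{K}_-$, with the coarse moduli statement of Section~\ref{seckblcm}, which gives $\mathrm{D}^b(X)\simeq\mathrm{D}^b(Y)/\mathcal{L}_-$ via $\mathcal{R}o^*$. That section requires that $Y$ be well-formed and $E_Y$ prime. This holds because $Y$ is terminal, so Corollary~\ref{terminalstack} applies, and $E$ is irreducible since $f$ is a divisorial contraction.

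\emph{Main obstacle.} The delicate point is Step 2's index count: matching $e$ with $r$ and the twist on the root stack so that exactly one exceptional object $\mathcal{O}_{\widetilde E}(-1)$ survives. I would settle it first by computing directly $\mathrm{Ext}^*(\iota_*\mathcal{O}_E(-1),\Theta(\kappa_*\mathcal{O}_x\otimes\nu^i))$ for all $i$.
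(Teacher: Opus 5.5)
\textbf{The gap is an arithmetic slip, not a flaw in the strategy.} For weights $(1,a,r-a)$ and $I=0$, the Fano index is $e=1+a+(r-a)=r+1$, not $r$. With the correct value your framework yields the statement directly, and your ``main obstacle'' disappears.

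In Step~2 you then have $e\ge r$ and $e-r=1$. The local stacky Kawamata blow-up proposition gives $\mathrm{D}^b(\mathcal{Y})/\mathcal{K}_{-1}=\langle \mathcal{O}_{E_{\mathcal{Y}}}(-1),\operatorname{Im}\Theta\rangle$. Moreover $\mathcal{K}_{-1}=0$, precisely because $\mathrm{D}^b(\mathcal{P}(1,a,r-a))=\langle\mathcal{O},\mathcal{O}(1),\dots,\mathcal{O}(r)\rangle$ has exactly $r+1=e$ terms. Since $\mathcal{X}=[\mathbb{A}^3/\mu_r]$ is smooth, $\mathcal{R}\pi^*=\pi^*$, and you obtain $\langle\mathcal{O}_{\widetilde E}(-1),\varpi_*\pi^*\mathrm{D}^b(\widetilde X)\rangle$. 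This is the intended specialization; the paper simply cites Section~\ref{Kawamatablowup}. Note that with your value $e=r$, your claim $\mathcal{A}_{-1}=0$ would itself be false, since one member of the full exceptional collection would be left over. So Step~2 as written is internally inconsistent, not merely one object short.

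\textbf{Your proposed repairs are misdirected.} No correction of the Fano index through $\mathcal{M}|_{\mathrm{M}}=\mathcal{O}_{E_{\mathcal{Y}}}(-1)\otimes\nu$ is needed. The $\mathcal{K}_{-r}$ variant would produce $\mathcal{O}_{E}(-r)$ when $e=r+1$, or nothing when $e=r$, and never $\mathcal{O}_{\widetilde E}(-1)$.

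The same slip affects Step~1. Your root-stack comparison $\varpi^*K_{\widetilde Y}=\pi^*K_{\mathcal X}+\frac{e-r}{r}\varpi^*E$ already gives $\frac1r$ once $e=r+1$. Here $K_{\mathcal X}$ and $K_{\widetilde Y}$ are pulled back from $K_X$ and $K_Y$, since the canonical stacks are isomorphisms in codimension one. There is nothing to ``correct'' by passing from $\mathcal{X}$ to $X$. Your fallback formula $\frac1r\sum w_i-1=\frac1r$ silently uses $\sum w_i=r+1$, which contradicts the $e=r$ you use everywhere else.

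Step~3, via the coarse-moduli section with $Y$ well-formed and $E$ prime, is fine and matches the paper.
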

The above result can be further generalized to divisorial contractions over terminal singularities of higher index. More precisely, let $x\in X$ be a germ of a terminal singularity of index $r>1$, and consider divisorial contractions $f:Y\to X$ with center $x\in X$ and discrepancy $1/r$. Then we have the following due to T.Hayakawa:
\begin{theorem}[Theorem 4.1 \cite{Hayakawa1999}, Theorem 1.1 \cite{Hayakawa2000}]
Let $f:Y\to X$ be a threefold divisorial contraction as above.
\begin{enumerate}
\item Assume that $x\in X$ is not of type $(\mathrm{cD}/2)$. Then there exists a local formal embedding
\[
X \hookrightarrow \mathbb{A}^{4}/\mu_r(a_1,\ldots,a_4)
\]
defined by an equivariant equation $\xi$.
\item Assume that $x\in X$ is of type $(\mathrm{cD}/2)$. Then there exists a local formal embedding
\[
X \hookrightarrow \mathbb{A}^{5}/\mu_r(a_1,\ldots,a_5)
\]
defined by equivariant regular sequence $\xi_1,\xi_2$. In particular, they form a measured basis.
\end{enumerate}

In both cases, the morphism $f$ is the Kawamata blow-up associated to the corresponding embedding.
\end{theorem}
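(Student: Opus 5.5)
We do not reprove Hayakawa's classification from scratch; we reconstruct its skeleton in the language of Section~\ref{Kawamatablowup}. The plan has three stages: reduce to a problem about the single valuation $val:=v(E,Y)$ on the index-one cover, show this valuation is monomial in suitable orbifold coordinates, and then identify $f$ with the Kawamata blow-up attached to the resulting embedding.

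\emph{Reduction.} Since $f$ is determined by $val$ (the contraction is the relative $\mathrm{Proj}$ of $\bigoplus_i f_*\mathcal{O}_Y(-iE)$), it suffices to determine the filtration $P_i=\{\phi : val(\phi)\ge i\}$. We pass to the index-one cover $\varepsilon:X^{+}\to X$ and extend $val$ $\mu_r$-equivariantly to a $\frac{1}{r}\mathbb{Z}$-valued valuation on $K([X^{+}/\mu_r])$. By Mori's list (Theorem~2.4.8 of \cite{Kawakita}), $X$ is formally of the form $(\xi=0)\subset\mathbb{A}^4/\mu_r(a_1,\dots,a_4)$.

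\emph{Determining the weights.} The discrepancy hypothesis $a(E,X)=1/r$ forces the weights to be small. We run singular Riemann--Roch on $Y$ for $\chi(\mathcal{O}_Y(iK_Y))$ and compare with the contributions of the non-Gorenstein points of $Y$ along $E$. This gives $E^3$ and a congruence constraint, so that $val(x_j)\equiv a_j/r \pmod{\mathbb{Z}}$ with $val(x_j)$ bounded. For each type in Mori's list we enumerate the finitely many candidate weight vectors $(w_1,\dots,w_4)\in\frac{1}{r}\mathbb{Z}_{>0}^4$. For each candidate, terminality of $Y$ and primality of $E$ rule out all but those for which the leading form of $\xi$ is irreducible and has the correct degree, namely $w(\xi)=\sum w_j-1-1/r$. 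Having fixed the weights, we use an equivariant Weierstrass/implicit-function coordinate change, as in the Example of Section~\ref{Canonicalstack}, to make $val$ coincide with the monomial valuation in these coordinates. We then check that $\xi$ is a measured basis, i.e.\ that its leading term defines $E$, which is the measured condition~(\ref{cond:measured}).

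\emph{The $\mathrm{cD}/2$ case.} This is the step I expect to be the main obstacle. Here no choice of four orbifold coordinates makes $val$ monomial, because the leading form of $\xi$ in any admissible weighting fails to be irreducible or reduced. The plan is to adjoin a fifth variable $x_5:=x_2^{2}+(\text{correction})$, or a similar auxiliary element, whose value $val(x_5)$ exceeds the naive weighted order. This produces an embedding $X\hookrightarrow\mathbb{A}^5/\mu_2$ cut out by $\xi_1=x_5-(\cdots)$ and a second equation $\xi_2$. We then verify directly that the leading terms $L(\xi_1),L(\xi_2)$ form a regular sequence, so that $\{\xi_1,\xi_2\}$ is a measured basis. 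The delicate part is proving that some such $x_5$ exists for every $\mathrm{cD}/2$ germ carrying a discrepancy-$1/2$ divisor. I would attack this through Hayakawa's case split according to whether $x_2x_3x_4$ or $x_2^2x_3$ appears in $f$. In each subcase I would compute the graded ring $\bigoplus P_i/P_{i+1}$, using the Riemann--Roch dimension counts from the previous stage to confirm it is generated by the five leading forms.

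\emph{Identification.} Once $val$ is the monomial valuation in a measured embedding, the two graded algebras $\bigoplus_i P_i$ and $\bigoplus_i f_*\mathcal{O}_Y(-iE)$ coincide. Hence $f$ is the Kawamata blow-up $w\mathrm{Bl}^{(a_i)/r}X$ of Section~\ref{Kawamatablowup}.
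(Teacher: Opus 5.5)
\textbf{The paper gives no proof of its own here.} The statement is imported from Hayakawa. The added clause that $\xi_1,\xi_2$ form a measured basis rests only on the later Remark that this ``can be verified case by case''. So your outline has to stand on its own.

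\textbf{The gap.} It fails at exactly the step that carries the theorem. After fixing $w_j=v_E(x_j)$, you propose to ``use an equivariant Weierstrass/implicit-function coordinate change \dots\ to make $val$ coincide with the monomial valuation''. The Example in Section~\ref{Canonicalstack} only brings $\xi$ to polynomial form with an algebraic diagonal action; it says nothing about $v_E$. Knowing $v_E(x_j)=w_j$ gives only $v_E\ge w$, both on $R$ and on $R/(\xi)$ (where $w$ means the maximum over lifts). Equality fails whenever $v_E$ sees a cancellation, such as $v_E(x_1-x_2^k)>v_E(x_1)$. It can also happen that no change of the four coordinates removes such a cancellation; this is why enlarged embeddings occur at all (your $\mathrm{cD}/2$ case, or the Chen--Chen--Chen tilting recalled in the paper). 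That $v_E$ is monomial in suitable coordinates \emph{is} the content of the theorem, not a normalisation.

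The same gap makes your pruning step circular. You discard weight vectors using terminality of $Y$ and primality of $E$. That is legitimate only once $Y$ is known to be the weighted blow-up with those weights, which your last stage deduces from monomiality.

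A smaller point: the congruence $v_E(x_j)\equiv a_j/r \pmod{\mathbb{Z}}$ does not come from Riemann--Roch. Since $K_{X^+}$ is Cartier, the ramification index $e\le r$ of the index-one cover along $E$ satisfies $e(1+1/r)\in\mathbb{Z}$. Hence $e=r$, and the congruence follows after normalising the generator of $\mu_r$.

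\textbf{What is needed.} You need an argument forcing $E$ to equal the exceptional divisor $F$ of the candidate weighted blow-up $g\colon Z\to X$. One possible route, in sketch:
\begin{enumerate}
\item Choose coordinates, set $w_j=v_E(x_j)$, and suppose the initial form of $\xi$ is irreducible and reduced, so that $F$ is prime.
\item The strict transforms of the coordinate hyperplanes have no common point on $F$. Hence $v_E(F)=1$ for the pulled-back $\mathbb{Q}$-Cartier divisor $F$, and $1+a(E,X)=A_Z(E)+a(F,X)$.
\item If moreover $(Z,F)$ is plt along the centre of $E$, then $A_Z(E)>1$ whenever $E\neq F$.
\item Since $a(F,X)\in\frac1r\mathbb{Z}_{>0}$, this contradicts $a(E,X)=1/r$, so $E=F$.
\end{enumerate}
Establishing such hypotheses on Mori's normal forms type by type, or finding substitutes where they fail, is the work done in Hayakawa's two papers. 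The $\mathrm{cD}/2$ case is where he has to pass to a fifth coordinate.

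\textbf{Part (2).} Your treatment is explicitly only a plan (``I would attack this\dots''). The existence of $x_5$ and the regularity of $L(\xi_1),L(\xi_2)$ are asserted, not proved. Your claim that no four-coordinate embedding can work for $\mathrm{cD}/2$ is unsupported. It is also unnecessary: part (2) only asks for an embedding into $\mathbb{A}^5$, and an $\mathbb{A}^4$ one can always be padded by a variable $x_5$ and the equation $x_5-g$.
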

Likewise, by Section~\ref{Kawamatablowup} we have:
\begin{corollary}
Let $f:Y\to X$ be a divisorial contraction as above. Then:
\begin{enumerate}
\item We have the discrepancy formula
\[
K_Y \sim_{\mathds{Q}}  f^{*}K_X + \frac{1}{r} E.
\]
\item For the associated canonical stacks, there is a semiorthogonal decomposition up to a quotient:
\[
\mathrm{D}^{b}(\widetilde{Y})/\mathcal{K}_{-}
=
\left\langle
\mathcal{O}_{\widetilde{E}}(-1),\,
\varpi_{*}\mathcal{R}\pi^{*}\mathrm{D}^{b}(\widetilde{X})
\right\rangle.
\]

\item The Bondal--Orlov localization formula holds:
\[
\mathrm{D}^{b}(Y)/\mathcal{L}_- \simeq \mathrm{D}^{b}(X).
\]
\end{enumerate}
\end{corollary}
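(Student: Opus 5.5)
The corollary follows by feeding Hayakawa's normal form into the Kawamata blow-up machinery of Section~\ref{Kawamatablowup}, with the general exceptional locus $E_{\mathcal Y}$ (a complete intersection in a weighted projective stack) taking the place of the weighted projective space.

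\textbf{Step 1: the discrepancy formula.} I would first verify item (1) numerically. Take the embedding $X\hookrightarrow \mathbb{A}^4/\mu_r(a_1,\ldots,a_4)$ given by $\xi$, or the embedding into $\mathbb{A}^5/\mu_r$ given by $(\xi_1,\xi_2)$ in the $\mathrm{cD}/2$ case. The Kawamata blow-up has weights $a_i/r$, so adjunction gives
\[
a(E)=\tfrac1r\Big(\sum_i a_i - val(\xi)\Big)-1,
\]
with the analogous expression using both equations in the $\mathrm{cD}/2$ case. Hayakawa's normal forms are chosen exactly so that this equals $1/r$. In the notation of the paper, this says the Fano index $e=\sum a_i-\deg_{val} I$ equals $r+1$; in particular $e\ge r$.

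\textbf{Step 2: checking the hypotheses of Section~\ref{Kawamatablowup}.} Next I would check the standing assumptions there. By Corollary~\ref{terminalstack}, $X$ is well-formed, and the index-one cover is a hypersurface (respectively a codimension-two complete intersection) with a $\mu_r$-action that is free in codimension one. Hayakawa's theorem supplies the measured basis, which is the measured condition~(\ref{cond:measured}); in the hypersurface case a single equation whose leading term is nonzero is automatically measured. Finally, $Y$ is terminal, hence well-formed, and $E$ is prime, so $\mathcal{Y}=\widetilde{Y}$ and Lemma~\ref{rootsblowup} identifies $\mathcal{Z}$ with the $r$-th root stack.

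\textbf{Step 3: items (2) and (3).} With $e\ge r$, item (2) is the local proposition giving
\[
\mathrm{D}^b(\mathcal{Y})/\mathcal{K}_{-1}=\langle \mathcal{O}_{E_{\mathcal Y}}(-(e-r)),\dots,\mathcal{O}_{E_{\mathcal Y}}(-1),\operatorname{Im}\Theta\rangle .
\]
Since $e-r=1$, exactly one exceptional object survives. The point is then globalized by the local-to-global argument, using the global version stated for isolated centers. Item (3) then follows from the coarse moduli analysis in Section~\ref{seckblcm}: the equivalence $\mathrm{D}^b(X)\simeq \mathrm{D}^b(Y)/\mathcal{L}_-$, composed with Proposition~\ref{CMappro}, which identifies $\mathrm{D}^b(X)$ with $\mathrm{D}^b(\widetilde X)/\mathcal{K}_-$.

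\textbf{Main obstacle.} The hard part is Step 1 together with the semiorthogonal decomposition of $\mathrm{D}^b(E_{\mathcal Y})$ used in Section~\ref{Kawamatablowup}. There one needs the Orlov-type decomposition for $E_{\mathcal Y}$, whose graded leading ideal may be non-reduced, and the condition $e\ge r$ must be confirmed case by case in Hayakawa's list, for example the cA$/r$ families with varying weights. I would first try to read $\sum a_i-val(\xi)=r+1$ directly off Hayakawa's tables, using the uniform statement that the discrepancy is $1/r$. If some exceptional case gave $e<r$, I would instead invoke the Kawamata blow-up II analysis, where $\varpi_*\mathcal R\pi^*$ is an equivalence modulo $\mathcal{L}_-$, and then read item (2) with an empty list of exceptional objects.
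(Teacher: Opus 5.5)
Your proposal is correct and takes the paper's route. The paper's proof simply invokes the Kawamata blow-up machinery of Section~\ref{Kawamatablowup}: with $e=r+1$, so that $e-r=1$, this leaves the single object $\mathcal{O}_{\widetilde E}(-1)$, and the coarse-moduli analysis of Section~\ref{seckblcm} gives item (3). The case-by-case check of $e\ge r$ in your last paragraph, and the fallback to the $e<r$ analysis, are unnecessary. Your adjunction formula $a(E)=(e-r)/r$ together with the discrepancy hypothesis $1/r$ already forces $e=r+1$ uniformly, and $e<r$ would contradict terminality.
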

Then Hayakawa  \cite{Hayakawa2005} considered divisorial contractions with discrepancy $r/r$ to the above non-Gorenstein terminal points $(x,X)$. Kawakita \cite{Kawakita2005}  further generalized this work to arbitrary divisorial contractions with discrepancy greater than $1/r$. More precisely, we have the following result.
\begin{theorem}[Theorem 3.5.8, \cite{Hayakawa2000}]
Let $f:Y\to X$ be a threefold divisorial contraction to a non-Gorenstein terminal singular point $x\in X$ of index $r$, with discrepancy different from $1/r$. Then there exists a local formal embedding
\[
X \hookrightarrow \mathbb{A}^{5}/\mu_r(a_1,\ldots,a_5)
\]
defined by an equivariant regular sequence $\xi_1,\xi_2$, they form a measured basis. The morphism $f$ is the  Kawamata blow-up or weighted blow-up  associated to the corresponding embedding.
\end{theorem}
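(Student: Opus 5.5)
This theorem is cited from Hayakawa and Kawakita's classification of divisorial contractions, so I would not reprove it. The plan is to reduce it to the classification results and then check that the output fits the framework of Section~\ref{Kawamatablowup} and Section~\ref{Weightedblowup}.

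First, I would pass to the index-one cover. By the proposition on index-one covers and Mori's classification, the germ $x\in X$ is a $\mu_r$-quotient of an isolated cDV hypersurface germ. This germ embeds equivariantly in $\mathbb{A}^4/\mu_r(a_1,\dots,a_4)$. Kawakita's general elephant argument shows that a divisorial contraction with discrepancy $a/r>1/r$ is a weighted blow-up in suitable orbifold coordinates. I would then split into two cases: either the weights already realize $f$ as a weighted blow-up of the hypersurface, or one must add an auxiliary coordinate $x_5$. In the second case I would adjoin an equivariant equation $\xi_2 = x_5 - g(x_1,\dots,x_4)$, where $g$ is the term of the defining equation whose weight is forced to be too low. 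This produces the embedding into $\mathbb{A}^5/\mu_r(a_1,\dots,a_5)$ cut out by $\xi_1,\xi_2$.

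Next, I would verify that $\xi_1,\xi_2$ is a regular sequence forming a measured basis in the sense of the measured condition. The leading forms $L(\xi_1), L(\xi_2)$ with respect to $val$ should define the exceptional divisor. Kawakita's description of $E$ as an irreducible, reduced complete intersection in $\mathcal{P}(a_1,\dots,a_5)$ gives this. Since $E$ is prime and of codimension two in the weighted projective stack, the two leading forms must form a regular sequence. Finally, I would compare with the congruence $a_i \equiv \operatorname{wt}(x_i)\bmod r$: when the weights agree with the quotient type modulo $r$, the construction is a Kawamata blow-up; otherwise it is a weighted blow-up along a valuation, as in the remark at the end of Section~\ref{Weightedblowup}.

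The main obstacle is the case-by-case analysis in Kawakita's classification, namely the exceptional types such as $\mathrm{cD}/2$ and $\mathrm{cAx}/4$ with large discrepancy. There the auxiliary coordinate and the regularity of the leading terms must be checked family by family. My first approach would be to use Kawakita's tables directly and check only the leading-term regularity. That condition is the one new requirement beyond the cited theorem.
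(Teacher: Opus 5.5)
Your overall route is the paper's. The paper gives no argument at all: it cites the Hayakawa--Kawakita classification. The clause ``they form a measured basis'' is an addition that the paper only asserts; the remark after Corollary~\ref{corollaryin1} admits the measured condition is implicitly assumed and ``can be verified case by case''. So the only new content is the regularity of the leading forms, and that is where your argument has a gap.

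You infer that, since $E$ is prime and of codimension two in $\mathcal{P}(a_1,\dots,a_5)$, the leading forms $L(\xi_1),L(\xi_2)$ must form a regular sequence. This does not follow.

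\begin{itemize}
\item The exceptional divisor of the strict transform is $E=\operatorname{Proj}\bigl(\operatorname{gr}_{val}R/\operatorname{in}(I)\bigr)$. It is cut out by the full initial ideal $\operatorname{in}(I)\supseteq (L(\xi_1),L(\xi_2))$, and equality there is exactly the measured condition.
\item Because $\dim \operatorname{gr}_{val}(R/I)=\dim R/I$, the ideal $\operatorname{in}(I)$ has height two regardless. So knowing that $E$ is a prime surface tells you nothing about the ideal generated by the two chosen leading forms.
\item Those two forms could share a common factor and cut out a threefold. Further initial forms, coming from syzygy combinations of $\xi_1,\xi_2$, would then cut $V(\operatorname{in}(I))$ down to a surface, and $E$ could still be prime.
\end{itemize}

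Your inference is valid only if you already know $E=V(L(\xi_1),L(\xi_2))$, which is the statement to be proved. That requires reading Kawakita's description of $E$ as cut out by the two weighted leading parts, which you did not establish.

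What actually has to be checked is elementary. Since $\mathrm{k}[x_1,\dots,x_5]$ is a UFD, $L(\xi_1),L(\xi_2)$ is a regular sequence if and only if the two forms are coprime. When that holds, the standard criterion gives $\operatorname{in}(I)=(L(\xi_1),L(\xi_2))$, so $\xi_1,\xi_2$ is a measured basis. This coprimality must be verified on the explicit equations $\phi,\psi$ in each family of the tables, including the ones where you adjoin $x_5$. Neither your sketch nor the paper carries this out. You should replace the ``$E$ is prime'' step with this direct check.
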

By Section~\ref{Kawamatablowup} and \ref{Weightedblowup}, there is:
\begin{corollary}
Let $f:Y\to X$ be a divisorial contraction as above. Then:
\begin{enumerate}
\item We have the discrepancy formula
\[
K_Y \sim_{\mathds{Q}} f^{*}K_X + (e-r)E\quad \text{or}\quad K_Y \sim_{\mathds{Q}} f^{*}K_X + (e-1)E,
\]
depending on the type of blow-up, where $e:=\sum_i a_i-d_1-d_2$ and $d_i:=val(\xi_i)$ for $i=1,2$.
\item For the associated canonical stacks, there is a semiorthogonal decomposition up to a quotient:
\[
\mathrm{D}^{b}(\widetilde{Y})/\mathcal{K}_{-}
=
\left\langle
\mathcal{O}_{\widetilde{E}}(-(e-r)),\ldots,
\mathcal{O}_{\widetilde{E}}(-1),\,
\varpi_*\mathcal{R}\pi^{*}\mathrm{D}^{b}(\widetilde{X})
\right\rangle
\,\text{or}\,
\left\langle
\mathcal{O}_{\widetilde{E}}(-(e-1)),\ldots,
\mathcal{O}_{\widetilde{E}}(-1),\,
\mathcal{R}\pi^{*}\mathrm{D}^{b}(\widetilde{X})
\right\rangle.
\]

\item The Bondal--Orlov localization formula holds:
\[
\mathrm{D}^{b}(Y)/\mathcal{L}_-\simeq \mathrm{D}^{b}(X).
\]
\end{enumerate}
\end{corollary}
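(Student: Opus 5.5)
The plan is to treat the two blow-up types in the cited classification separately. In each case we feed the local formal model (a complete intersection of codimension two in $\mathbb{A}^5/\mu_r(a_1,\ldots,a_5)$ cut out by a measured basis $\xi_1,\xi_2$) into the machinery of Section~\ref{Kawamatablowup} or Section~\ref{Weightedblowup}.

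First we verify the standing hypotheses. By Corollary~\ref{terminalstack}, $X$ is well-formed with canonical stack $\widetilde X$ and isolated singular center $C=\{x\}$. The measured condition (\ref{cond:measured}) is exactly the statement that $\xi_1,\xi_2$ form a measured basis whose leading terms form a regular sequence. Since $Y$ is terminal, Corollary~\ref{terminalstack} also makes $Y$ well-formed with prime exceptional divisor, so $\mathcal Y\simeq\widetilde Y$.

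For part (1) we compute $K_{\mathcal Z}$ by adjunction for the complete intersection in the ambient toric weighted blow-up. The ambient discrepancy is $\sum_i a_i-1$, measured against the stacky divisor $\mathrm M$. Subtracting $val(\xi_1)+val(\xi_2)=d_1+d_2$ for the strict transforms gives $K_{\mathcal Z}=\pi^*K_{\mathcal X}+(e-1)\mathrm M$. In the Kawamata case the root relation $\varpi^*E_{\mathcal Y}=r\mathrm M$ of Lemma~\ref{rootsblowup} rescales this, and in the notation of the statement it becomes $(e-r)E$. In the plain weighted case no root appears, and we get $(e-1)E$.

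For part (2) in the Kawamata case we must check $e\ge r$. This follows from positivity of the discrepancy on a terminal $Y$ together with the hypothesis that the discrepancy differs from $1/r$. We then invoke the global stacky Kawamata blow-up decomposition. The Fano hypersurface-fibration decomposition of $\mathrm{D}^b(E_{\mathcal Y})$ over a point reduces $\Theta_i$ to $\mathcal O_{\widetilde E}(i)$, which gives $\langle\mathcal O_{\widetilde E}(-(e-r)),\ldots,\mathcal O_{\widetilde E}(-1),\operatorname{Im}\varpi_*\mathcal R\pi^*\rangle$ modulo $\mathcal K_{-1}=\mathcal K_-$. In the weighted case we use the orbifold-center variant at the end of Section~\ref{Weightedblowup}, with $\mathcal C=[x/\mu_r]$. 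Together with Lemma~\ref{wbuporth}, it yields the second decomposition modulo $\mathcal H_{-1}$.

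For part (3) we apply the coarse-moduli propositions of Section~\ref{seckblcm} and of the weighted coarse-moduli subsection. These give $\mathrm D^b(Y)/\mathcal L_-\simeq \mathrm D^b(X)$ via $\mathcal R o^*=\epsilon_*\varpi_*\mathcal R\pi^*\widetilde{(-)}$, with inverse $o_*$.

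We expect the main obstacle to be the hypothesis checks rather than the categorical arguments. Three things must be verified directly from the explicit Hayakawa/Kawakita normal forms: that the exceptional divisor $E$ of the stacky blow-up really is a Fano complete-intersection fibration with index $e>0$ and satisfies the flatness condition (\ref{flatcond}); that its $\mu_r$-action is free in codimension one; and that $e\ge r$ holds in the Kawamata case. To settle these, we would first compare weights in each normal form to confirm the leading-term regularity and the equality $d_i=val(\xi_i)$. Wherever $e<r$ occurs, we would fall back on the equivalence of Kawamata blow-up II instead.
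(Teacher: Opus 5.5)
Your route is the paper's own. The corollary is stated there as a direct consequence of Sections~\ref{Kawamatablowup} and~\ref{Weightedblowup}, together with their coarse-moduli subsections, applied to the Hayakawa--Kawakita embedding, and your treatment of parts (2) and (3) follows that. In the weighted case, note two things. With a point center $\mathcal H_{-1}=\mathcal K_{-1}$, which is what removes the $\mathcal K_{-1}$ slot of the orbifold-center decomposition. Also, the exceptional slots are generated by $\mathcal O_{\widetilde E}(-i)\otimes\nu^j$ for all $j$.

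The step that does not work as written is the Kawamata case of part (1). Rescaling $K_{\mathcal Z}=\pi^*K_{\mathcal X}+(e-1)\mathrm M$ by $\varpi^*E_{\mathcal Y}=r\mathrm M$ alone only gives $\frac{e-1}{r}\,E$. What you are missing is the canonical bundle formula for the root stack, $K_{\mathcal Z}=\varpi^*K_{\mathcal Y}+(r-1)\mathrm M$; locally $t^r=f$ is ramified to order $r$ along $t=0$. Combine this with $K_{\mathcal X}=\varepsilon^*K_X$ and the fact that $\mathcal Y\to Y$ is an isomorphism in codimension one. You get $\varpi^*(K_{\mathcal Y}-f^*K_X)=(e-r)\mathrm M$, i.e.\ $K_Y\sim_{\mathds Q} f^*K_X+\frac{e-r}{r}E$. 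This matches the earlier Kawamata corollary, where $e=r+1$ and the discrepancy is $1/r$. So the coefficient $(e-r)$ in the statement is correct only when read on the root stack with $E$ replaced by $\mathrm M$. You should derive and say this explicitly rather than appeal to ``the notation of the statement''.

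Two smaller corrections. First, $e>r$ comes from $a(E,X)>0$ because $X$ is terminal; terminality of $Y$ plays no role, and the hypothesis that the discrepancy is not $1/r$ is not needed for $e\ge r$. Second, most of the checks you list as the main obstacle are automatic here. Over a point center the flatness condition is vacuous, and $e>0$ follows from terminality. Only the leading-term regularity in the measured condition genuinely needs the explicit normal forms, and the paper also leaves that implicit.
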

The remaining case concerns divisorial contractions to a Gorenstein point. Kawakita divided this situation into two classes. The first is the ordinary type, for which he obtained a classification analogous to the above results \cite[Theorem 3.5.5-7]{Kawakita}. The remaining case is the exceptional type, whose classification was later given by Yamamoto \cite{Yamamoto2018}. Although Kawakita expressed different opinions regarding the proof \cite[Remark 3.5.17]{Kawakita}, at least many examples of exceptional types are known. Thus we obtain the following general classification principle.
\begin{theorem}[\cite{Yamamoto2018}]
Let $f:Y\to X$ be a generic threefold divisorial contraction to a Gorenstein terminal singular point $x\in X$. Then there exists a local formal embedding
\[
X \hookrightarrow \mathbb{A}^{5}/\mu_r(a_1,\ldots,a_5)
\]
defined by an equivariant regular sequence $\xi_1,\xi_2$, they form a measured basis. The morphism $f$ is the weighted blow-up associated to the corresponding embedding.
\end{theorem}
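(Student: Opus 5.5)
The statement is a classification result imported from the literature, so the plan is not to reprove the classification but to assemble it from the known cases and then check the two properties this paper actually needs: that the local description is an embedding into $\mathbb{A}^{5}/\mu_r$ cut out by an equivariant regular sequence $\xi_1,\xi_2$, and that $\{\xi_1,\xi_2\}$ is a measured basis in the sense of the measured condition (\ref{cond:measured}). Since $x$ is Gorenstein, $r=1$ here and the $\mu_r$-action is trivial, so the target is the setting of Section~\ref{Weightedblowup} rather than the Kawamata blow-up. I would first fix a formal germ $\widehat{\mathcal{O}}_{X,x}\simeq \mathds{C}[[x_1,\dots,x_4]]/(\xi)$, which is isolated cDV by the classification of index-one terminal singularities recalled above.

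The first step is the case split used by Kawakita. In the \emph{ordinary type}, I would invoke \cite[Theorem 3.5.5--7]{Kawakita}: the divisorial contraction $f$ is the weighted blow-up with respect to some weights on suitable coordinates, possibly after adding a fifth coordinate $x_5$ together with an auxiliary equation $\xi_2$ of the form $x_5-g(x_1,\dots,x_4)$ (or a variant in which $x_5$ appears linearly). This realizes $X$ as the complete intersection $(\xi_1=\xi_2=0)\subset\mathbb{A}^5$, and the weighted blow-up of $\mathbb{A}^5$ restricts to $f$ on the strict transform. In the \emph{exceptional type}, I would take the explicit list of Yamamoto \cite{Yamamoto2018}, valid for generic contractions, which again gives equations in at most five variables with prescribed weights, so that $f$ is the corresponding weighted blow-up.

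The second step is the verification of the measured condition. For each normal form, I would compute the leading forms $L(\xi_1),L(\xi_2)$ with respect to the weight valuation. The task is then to show that these leading forms form a regular sequence in $\mathds{C}[x_1,\dots,x_5]$, equivalently that the exceptional divisor $E=(L(\xi_1)=L(\xi_2)=0)\subset\mathcal{P}(a_1,\dots,a_5)$ has the expected dimension $2$. This dimension count is exactly the statement that $E$ is an irreducible (prime) divisor of the weighted blow-up, which holds because $f$ is a divisorial contraction with irreducible exceptional divisor. Once the leading forms are a regular sequence, standard-basis theory gives that $\xi_1,\xi_2$ is a measured basis, i.e.\ $L(I)=(L(\xi_1),L(\xi_2))$.

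I expect the main obstacle to be the exceptional type. There the classification is only available for generic members, and Kawakita's remark \cite[Remark 3.5.17]{Kawakita} casts doubt on completeness. In addition, the weights may make $E$ non-reduced along some stratum unless one uses the genericity hypothesis. My first attempt would be to use genericity to choose the coefficients so that $L(\xi_1),L(\xi_2)$ are quasi-homogeneous polynomials defining a quasi-smooth surface in codimension one. Then primality of $E$ together with the dimension count settles regularity, and the measured condition follows as above.
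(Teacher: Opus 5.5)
Your first step is all the paper does here. The statement is quoted from \cite{Yamamoto2018}, with \cite[Theorem 3.5.5--7]{Kawakita} covering the ordinary type, and no proof is given. The measured-basis clause is, by the paper's own remark after Corollary~\ref{corollaryin1}, an implicit assumption that ``one can verify case by case''.

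The genuine gap is in your second step, where you try to derive that clause from general principles. The exceptional divisor of the strict transform is $E=\operatorname{Proj}\bigl(\mathbb{C}[x_1,\ldots,x_5]/L(I)\bigr)$, where $L(I)$ is generated by the leading forms of \emph{all} elements of $I=(\xi_1,\xi_2)$. It is a $2$-dimensional prime divisor no matter which generators you use. The locus $V(L(\xi_1),L(\xi_2))\subset\mathcal{P}(a_1,\ldots,a_5)$ merely contains $E$, and it acquires a $3$-dimensional component as soon as $L(\xi_1)$ and $L(\xi_2)$ share a factor. Identifying the two loci is exactly the measured condition, so ``primality of $E$ plus a dimension count'' assumes what it is meant to prove.

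No argument using only $f$ and $E$ can work, and a simple example shows why. If $v(\xi_1)<v(\xi_2)$, then $\xi_1,\ \xi_2+\xi_1$ is again a regular sequence generating $I$, with the same $X$, the same weighted blow-up and the same $E$. Yet both of its leading forms equal $L(\xi_1)$. The converse implication you quote (regular leading forms give $L(I)=(L(\xi_1),L(\xi_2))$) is correct; it is the regularity itself that is unproved.

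The genericity repair does not close the gap either. Arranging the coefficients so that $L(\xi_1),L(\xi_2)$ cut out a surface \emph{is} the regularity statement, so it cannot be fed back through primality of $E$. Whether the prescribed weights allow it has to be read off the normal forms themselves. Also, a fixed contraction $f$ comes with fixed equations up to coordinate change, so there are no coefficients left to choose. In this paper ``generic'' functions as a standing hypothesis that the needed conditions, such as the measured condition, hold.

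A complete argument would go through each normal form in Kawakita's and Yamamoto's lists, using the stated weights (including whatever weight you give an auxiliary $x_5$). For each one, compute $L(\xi_1)$ and $L(\xi_2)$ and check that they are coprime in $\mathbb{C}[x_1,\ldots,x_5]$. For two forms in a polynomial ring, coprimality is equivalent to being a regular sequence. Failing that, the measured condition has to be stated as an assumption, as the paper in effect does.
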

\begin{corollary}\label{corollaryin1}
Let $f:Y\to X$ be a generic  divisorial contraction as above. Then:
\begin{enumerate}
\item We have the discrepancy formula
\[
K_Y \sim_{\mathds{Q}} f^{*}K_X + (e-1)E.
\]
where $e:=\sum_i a_i-d_1-d_2$ and $d_i:=val(\xi_i)$ for $i=1,2$.
\item For the associated canonical stacks, there is a semiorthogonal decomposition up to a quotient:
\[
\mathrm{D}^{b}(\widetilde{Y})/\mathcal{K}_{-}
=
\left\langle
\mathcal{O}_{\widetilde{E}}(-(e-1)),\ldots,
\mathcal{O}_{\widetilde{E}}(-1),\,
\mathcal{R}\pi^{*}\mathrm{D}^{b}(X)
\right\rangle.
\]

\item The Bondal--Orlov localization formula holds:
\[
\mathrm{D}^{b}(Y)/\mathcal{L}_- \simeq \mathrm{D}^{b}(X).
\]
\end{enumerate}
\end{corollary}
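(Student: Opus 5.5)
The approach is to reduce Corollary~\ref{corollaryin1} to the Gorenstein weighted blow-up machinery of Section~\ref{Weightedblowup}, with trivial cyclic group, so that $\mathcal{X}=X$. By Yamamoto's classification, $f$ is the weighted blow-up of $X\subset\mathbb{A}^5$ (with $r=1$) along the origin. The weights are $(a_1,\dots,a_5)$ and the measured basis $\xi_1,\xi_2$ has $d_i=val(\xi_i)$. I would therefore first check the three standing hypotheses of that section: flatness of $f_E$, the measured condition, and the Fano condition on the exceptional divisor. Flatness is automatic because the center is a point. The measured condition is part of the classification statement. The Fano condition follows from part~(1) once that is established, since terminality gives discrepancy $e-1>0$, hence $e>0$.

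For part~(1), I would compute $K_Y$ by adjunction in the ambient weighted blow-up $\mathcal{W}\to\mathbb{A}^5$. There $K_{\mathcal{W}}=\pi^*K_{\mathbb{A}^5}+(\sum a_i-1)E_{\mathcal{W}}$, and the strict transform satisfies $Y\sim \pi^*X-d_1E_{\mathcal{W}}-d_2E_{\mathcal{W}}$ as a complete intersection. The measured condition guarantees that the leading forms cut out $E$ with multiplicity one, so adjunction gives $K_Y=f^*K_X+(\sum a_i-d_1-d_2-1)E$. This is $(e-1)E$.

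For part~(2), I would apply the semiorthogonal decomposition proposition for weighted blow-ups of Section~\ref{Weightedblowup} with $C=p$. Then $f^*\mathrm{D}^b(C)$ is generated by $\mathcal{O}_{E}$, and the pieces $\iota_*\mathcal{O}_{\widetilde{E}}(-t)$, $1\le t\le e-1$, appear. The kernel part $\mathcal{K}_{-1}$ is quotiented out together with $\mathcal{H}_{-1}$, which gives $\mathcal{K}_-$. If $Y$ is well-formed with prime exceptional divisor, the identification $\mathcal{Z}\simeq\widetilde{Y}$ from that section lets us write the left side as $\mathrm{D}^b(\widetilde{Y})$. This in turn requires Corollary~\ref{terminalstack}, since $Y$ is terminal.

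For part~(3), I would invoke the coarse-moduli proposition at the end of Section~\ref{Weightedblowup}, which gives $\mathrm{D}^b(Y)/\mathcal{L}_-\simeq\mathrm{D}^b(X)$ via $\mathcal{R}o^*=\epsilon_*\mathcal{R}\pi^*\widetilde{(-)}$. Here $\mathcal{L}_-$ is generated by $\iota^-_*\mathcal{C}_{-1}$. Globalization from the formal germ to $f:Y\to X$ then uses the spanning class of Lemma~\ref{wblspanningclass}, since away from $x$ the map $f$ is an isomorphism.

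I expect the main obstacle to be that $Y$ is in general singular along $E$, carrying cyclic quotient points of higher index. This means the exceptional divisor $E_{\mathcal{Z}}$ is a stacky complete intersection in $\mathcal{P}(a_1,\dots,a_5)$, and I must verify that $Y$ is well-formed so that the stack produced by the weighted blow-up agrees with the canonical stack. I would try to do this by checking freeness in codimension one through the gcd criterion on the weights, using that the terminal points of $Y$ are isolated. If this fails, I would state part~(2) for $\mathcal{Z}$ instead and deduce part~(3) through $\epsilon_*$ as above.
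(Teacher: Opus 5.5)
Your proposal is essentially the paper's own (implicit) argument: specialize the weighted blow-up machinery of Section~\ref{Weightedblowup} to the trivial group and the point center $C=p$. In that case flatness is automatic, $\mathcal{H}_{-1}$ coincides with $\mathcal{K}_{-1}$, and the semiorthogonal decomposition and coarse-moduli equivalence of that section give parts (2) and (3), while part (1) is the standard adjunction computation that the paper simply asserts. The obstacle you flag is handled in the paper by the route you already mention: $Y$ is well-formed by Corollary~\ref{terminalstack} and $E$ is prime, which the paper takes as sufficient for $\mathcal{Z}\simeq\widetilde{Y}$; your gcd check on stabilizers along $E_{\mathcal{Z}}$ is therefore an extra verification the paper does not carry out, not a missing step.
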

\begin{remark}
In nearly all of the proofs above, the author implicitly assumes that the measured condition (\ref{cond:measured}) holds, i.e., that the constructed regular sequence \( f_i \) satisfies that \( L(f_i) \) generate the entire exceptional divisor. One can verify this case by case through their proofs; perhaps it is obvious.
\end{remark}

If the center of a divisorial contraction is a smooth curve, H.K.Chen, J.J.Chen and J.A.Chen \cite[Definition 3.2]{CCC} introduced a useful operation called \emph{tilting of valuations}. Let $R$ be a domain over $\mathrm{k}$, $v$ a pre-valuation on $R$, and $\mathfrak a$ its associated $p$-filtration. Fix an integer $a\ge 0$. One defines a pre-valuation $v^{\sharp}$ on $R[x]$ or $R[[x]]$ by
\[
v^{\sharp}\!\left(\sum_{i\ge0} c_i x^i\right)
=
\min_{i\ge0}\{v(c_i)+ia\}.
\]
Its associated $p$-filtration $\mathfrak a^{\sharp}$ is given by
\[
\mathfrak a^{\sharp}_n
=
\sum_{i+aj\ge n}\mathfrak a_i[x]\cdot(x^j).
\]
Pick any $s\in R$, and let
\[
\pi:R[x]\to R
\]
be the surjective morphism sending $x$ to $s$. Denote by $\mathfrak a^{+}:=\pi(\mathfrak a^{\sharp})$ the associated $p$-filtration of $v^{+}:=v^{\sharp}|_{R}$. Then:
\begin{enumerate}
\item $\mathfrak a^{+}_n\supset \mathfrak a_n$ for all $n$, equivalently $v^{+}\succeq v$;
\item if $a>v(s)$, then $v^{+}\neq v$;
\item if $a\le v(s)$, then $v^{+}=v$.
\end{enumerate}

Using this construction, they proved the following result.

\begin{theorem}[Theorem 3.6, \cite{CCC}]
Let $X$ be a germ of a Gorenstein singularity and $f:Y\to X$ a divisorial contraction with exceptional divisor $E$. Then $f$ can be realized as a weighted blow-up. More precisely, there exists an embedding $X\hookrightarrow W_0$ into a four-dimensional affine space $W_0$, together with a further embedding $W_0\hookrightarrow W$ into an affine space $W$ obtained by finitely many tilting steps with respect to the divisorial valuation, such that $f$ is the weighted blow-up associated to this new embedding.
\end{theorem}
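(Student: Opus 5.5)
Since this statement is quoted from \cite[Theorem 3.6]{CCC}, the plan follows the strategy of Chen--Chen--Chen: produce an embedding in which the divisorial valuation $val(E,Y)$ becomes monomial. Then $f$ is recovered as the weighted blow-up $\operatorname{Proj}\bigoplus_i P_i$ of Section~\ref{Weightedblowup}. The first step uses the structure of Gorenstein terminal threefold germs. By the classification of index-one terminal singularities, $X$ is an isolated cDV point, so there is a formal embedding $X=(\xi=0)\hookrightarrow W_0=\mathbb{A}^4$ with coordinates $x_1,\dots,x_4$. I then set $v:=val(E,Y)$ restricted to $\mathrm{k}[[x_1,\dots,x_4]]$, and let $\mathfrak a$ be its $p$-filtration. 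Because $-E$ is $f$-ample, the contraction satisfies $Y\simeq\operatorname{Proj}_X\bigoplus_n f_*\mathcal{O}_Y(-nE)$. So it suffices to exhibit an embedding in which $\mathfrak a_n$ is cut out by the monomial filtration $P_n$ restricted to $X$.

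Next, I compare $v$ with the monomial valuation $w$ given by $w(x_i)=v(x_i)$, after choosing coordinates so that these values are maximal. In general only $w\preceq v$ holds. If $w\neq v$, there is an element $s$ in the coordinate ring, obtained from a leading-term cancellation in $\xi$ or in a generator of $\mathfrak a_n$, with $v(s)>w(s)$. I then adjoin a new variable $x_5$ with the relation $x_5=s$ and weight $a:=v(s)$. This is exactly the tilting $v^\sharp$. By property (2) above, $v^+$ strictly enlarges the filtration toward $v$, and since $v\succeq v^+$ always holds, each step gets strictly closer to $v$. Iterating gives embeddings $W_0\hookrightarrow W_1\hookrightarrow\cdots$ into affine spaces.

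The main obstacle is termination: showing that finitely many tiltings give $v^+=v$. I would attack it with discrepancy bookkeeping. The discrepancy of the monomial valuation on the embedded hypersurface is $\sum a_i-\sum val(\text{equations})-1$. Terminality together with the known discrepancy $a(E,X)$ bounds the total weight, and each nontrivial tilting strictly increases some invariant, for example the dimension of $\mathfrak a_n/\mathfrak a^+_n$ summed over a bounded range of $n$, while staying inside a finite set. Finally, once $v^+=v$, I check that the strict transform of $X$ under the weighted blow-up of $W$ has irreducible exceptional divisor equal to $E$, and that the leading terms of the equations still form a regular sequence; this is the measured condition (\ref{cond:measured}). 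Uniqueness of the contraction attached to $E$ then identifies this weighted blow-up with $f$.
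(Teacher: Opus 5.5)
Your setup is correct and matches the tilting framework that the paper quotes from Chen--Chen--Chen; the paper itself gives no proof of this statement. The correct parts are these: it suffices to get $P_n\mathcal O_X=\mathfrak a_n:=f_*\mathcal O_Y(-nE)$ for all $n$; one has $w\preceq v$; and tilting by $s$ with weight $a=v(s)>w(s)$ gives $w\prec w^+\preceq v$.

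\textbf{The gap is termination.} This is the only nontrivial point, and neither of your mechanisms works as stated.

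\emph{Discrepancy bookkeeping.} The expression $\sum a_i-\sum val(\text{equations})-1$ is the discrepancy of $E$ only for an embedding in which the restricted monomial filtration already equals $\mathfrak a$ and the leading terms cut out $E$. At intermediate stages it is not the discrepancy of any divisor. So terminality and the value of $a(E,X)$ give no bound on the weights, which keep growing as you adjoin variables.

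\emph{The length invariant.} Making $\sum_{n\le N}\ell(\mathfrak a_n/P^{(k)}_n\mathcal O_X)$ vanish proves nothing unless you know that agreement in degrees $\le N$ forces agreement in all degrees. Also, your recipe for choosing $s$ (``leading-term cancellation'') does not keep the tilting elements in degrees $\le N$. Nothing in the argument stops the iteration from correcting ever higher degrees without ever stopping.

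\textbf{The missing ingredient is finite generation,} which you already have when you write $Y\simeq\operatorname{Proj}_X\bigoplus_n\mathfrak a_n$.
\begin{itemize}
\item Since $-E$ is $\mathbb Q$-Cartier and $f$-ample, the Rees algebra $\bigoplus_{n\ge0}\mathfrak a_nt^n$ is generated over $\mathcal O_X$ by finitely many elements $s_1t^{n_1},\dots,s_mt^{n_m}$. Hence $\mathfrak a_n=\sum_{\sum_je_jn_j=n}\mathcal O_X\prod_js_j^{e_j}$.
\item Perform the $m$ tiltings $y_j=s_j$ with weights $n_j$, keeping the weight of each $x_i$ on $W_0$ at most $v(x_i)$. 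Each product $\prod_js_j^{e_j}$ above is then the restriction of a monomial of weight $n$.
\item Therefore $\mathfrak a_n\subseteq P_n\mathcal O_X\subseteq\mathfrak a_n$, and the strict transform of $X$ is exactly $\operatorname{Proj}_X\bigoplus_nP_n\mathcal O_X=Y$.
\end{itemize}

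This makes your final step unnecessary: no irreducibility or uniqueness argument is needed. In particular, drop the claim that you can check the measured condition at that point. It is not part of the statement, and the paper notes that it is only obtained through the refined algorithm of Chen--Chen--Chen, verified in explicit examples.
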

Chens extended the above results to the non-Gorenstein case using an equivariant argument in \cite[Proposition 3.19]{CCC}.\\

However, we still need two additional conditions. First, the embedding should satisfy the measured condition (\ref{cond:measured}) . Chens’ work introduced a more refined tilting algorithm  \cite[Section 5]{CCC} to guarantee this property, at least for several explicitly constructed examples \cite[Tables]{CCC}\footnote{It is not clear to the author why this holds in general.}. Second, we require that the exceptional locus $E\to C$ is a flat (\ref{flatcond}) morphism. Again, for the examples constructed by the authors, this condition can be verified case by case, e.g., Hironaka's criteria. If we call these assumptions the generic conditions, then we obtain the following result via Section \ref{Weightedblowup}.

\begin{corollary}\label{corollaryin2}
Let $f:Y\to X$ be a generic divisorial contraction with a smooth curve center $C$. Then:
\begin{enumerate}
\item We have the discrepancy formula
\[
K_Y \sim_{\mathds{Q}} f^{*}K_X + E,
\]
where $e:=\sum_i a_i-\sum_i d_i=2$ and $d_i:=val(\xi_i)$.

\item For the associated canonical stacks, there is a semiorthogonal decomposition up to a quotient:
\[
\mathrm{D}^b(\widetilde{Y})/\mathcal{H}_{-1}
=
\left\langle\operatorname{Im}\big(\iota_*\big(f^*\mathrm{D}^b(\mathcal{C})\otimes \mathcal{O}_{E_{\widetilde{Y}}}(-1)\big)\big),\,
\operatorname{Im}\,\mathcal{R}\pi^*
\right\rangle,
\]

\item The Bondal--Orlov localization formula holds:
\[
\mathrm{D}^{b}(Y)/\mathcal{L}_- \simeq \mathrm{D}^{b}(X).
\]
\end{enumerate}
\end{corollary}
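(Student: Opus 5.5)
The plan is to reduce Corollary~\ref{corollaryin2} to the weighted blow-up machinery of Section~\ref{Weightedblowup}, in its orbifold variant, together with the coarse moduli comparison that follows it.

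\textbf{Step 1: the local model.} By the Chens' theorem and its equivariant extension \cite[Proposition 3.19]{CCC}, formally locally along $C$ the contraction $f$ is the weighted blow-up attached to a tilted embedding $X\hookrightarrow W$. Near a non-Gorenstein point this is done after passing to the index-one cover, so $\mathcal{X}=\widetilde{X}$ is the $\mu_r$-quotient of a complete intersection cut out by $\xi_i$ in an affine GIT chart. The generic conditions provide the measured condition (\ref{cond:measured}) and flatness (\ref{flatcond}) of $E_{\mathcal Z}\to \mathcal{C}$. Since a terminal threefold has isolated singularities, $C$ is smooth away from finitely many points, and at those points it carries $\mu_r$-orbifold structure. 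This is exactly the setting of the orbifold-centre proposition in Section~\ref{Weightedblowup}.

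\textbf{Step 2: discrepancy and the numbers.} Adjunction for the weighted blow-up of the ambient space gives $K_{\mathcal Z}=\pi^*K_{\mathcal X}+(\sum a_i-\sum d_i-1)E$. For a divisorial contraction to a curve in a terminal threefold, the discrepancy is $1$, which is the standard fact (generic point of $C$ is smooth, blow-up of a smooth curve). This forces $e=\sum a_i-\sum d_i=2$, and gives part (1).

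\textbf{Step 3: the decomposition.} Apply the orbifold weighted blow-up proposition with $e=2$. Only the single component $\iota_*(f^*\mathrm{D}^b(\mathcal{C})\otimes\mathcal{O}_{E}(-1))$ survives. The term $\mathcal{K}_{-1}$ should vanish, or be absorbed, because the general fibre of $E\to C$ is a conic or $\mathbb{P}^1$-type curve with $e=2$, so $\mathcal{A}_{-1}$ is supported over the finitely many special points. Over those points it is contained in $\mathcal{H}_{-1}$ by the corollary identifying $\mathcal{H}_{-1}$ with the $E$-supported part of $\langle\iota_*\mathcal{A}_{-1}\rangle$. The key check is that $\mathcal{Z}\simeq\widetilde{Y}$. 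This requires $Y$ to be well-formed with $E$ prime, which holds because $Y$ is terminal, so Corollary~\ref{terminalstack} applies. This gives part (2).

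\textbf{Step 4: coarse moduli.} Run the final coarse moduli argument of Section~\ref{Weightedblowup}. Define $\mathcal{R}o^*=\epsilon_*\mathcal{R}\pi^*\widetilde{(-)}$ and use Proposition~\ref{CMappro} to pass between $\mathrm{D}^b(X)$ and $\mathrm{D}^b(\widetilde X)/\mathcal{K}_-$. The double filtration then gives $o_*\mathcal{R}o^*\simeq\mathrm{id}$ and the adjunction. Essential surjectivity follows from the lemma that $\ker o_*\subset\mathcal{L}_-$. This gives part (3).

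\textbf{Main obstacle.} The hardest part is Step 3 at the finitely many singular points of $X$ on $C$. There one must check that the kernel of $\pi_*$ supported on the special fibre $E_p$ is generated by $\iota_{E*}\mathcal{B}_{-1}$, which is Lemma~\ref{kernalElemma}, and that no extra $\mathcal{O}_{E_p}(-t)$ components appear. I would verify this by the dévissage/spanning class argument of Lemma~\ref{wblspanningclass}, using flat base change along $E\to C$.
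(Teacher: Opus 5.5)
Your proposal follows essentially the paper's route: the corollary is read off from the orbifold weighted blow-up proposition and the coarse moduli equivalence of Section~\ref{Weightedblowup}. The only point the paper actually argues is the one you isolate, namely that $\mathcal{K}_{-1}$ contributes nothing away from the special fibres and is therefore absorbed into $\mathcal{H}_{-1}$. The paper justifies this by noting that $Y$ is smooth away from finitely many fibres and invoking Cutkosky's theorem \cite[Theorem 4.1.9]{Kawakita}, so $f$ is the blow-up of $C$ there and $E\to C$ is a $\mathbb{P}^1$-bundle with $\mathcal{A}_{-1}=0$. You should use that argument in place of your ``general fibre is a conic or $\mathbb{P}^1$-type curve'' remark, because what is needed is a $\mathbb{P}^1$ fibre on which $\mathcal{O}_E(1)$ has degree one.
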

\begin{proof}
It suffices to show that $\mathcal{K}_{-1}=0$. Since $Y$ has only terminal singularities, its singular locus is isolated. Hence every exceptional locus $E_{\mathcal{Z}}$ outside $E$ contains no non-Gorenstein locus. Away from $E$, the morphism $Y\to X$ is a Gorenstein divisorial contraction. By Cutkosky's theorem, for example \cite[Theorem 4.1.9]{Kawakita}, we know that $Y$ is the blow-up of $X$ along a curve $C$. Therefore $\mathcal{K}_{-1}=0$ outside $E$.
\end{proof}
\subsection{Feasible resolution}
Subsequent work by Hayakawa \cite {Hayakawa2005b} studied how to construct resolutions of three-dimensional terminal singularities via such divisorial contractions. Of course, one needs to require that each step of the resolution is well-controlled and sufficiently compatible with terminal singularities. This led to the development of the notion of \emph{economical or feasible resolutions}.
\begin{definition} [Definition 1.2, \cite{Chen2016}]
Given a three-dimensional terminal singularity $P\in X$, we say that there exists a \emph{feasible resolution} for $P\in X$ if there is a sequence
\[
X_n \to X_{n-1} \to \cdots \to X_1 \to X_0 = X \ni P,
\]
such that $X_n$ is non-singular and each morphism $X_{i+1}\to X_i$ is a divisorial contraction to a point with minimal discrepancy, i.e. a contraction to a point $P_i\in X_i$ of index $r_i\ge 1$ with discrepancy $1/r_i$.
\end{definition}

J.K.Chen \cite{Chen2016} gave a particularly elegant answer to this problem:

\begin{theorem}  [Theorem 1.3, \cite{Chen2016}]
Given a three-dimensional terminal singularity, there exists a feasible resolution, such that each step is either a Kawamata blow-up or a weighted blow-up of a hypersurface embedding along a point as center.
\end{theorem}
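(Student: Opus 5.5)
The statement is J.K.~Chen's theorem, so the plan is to reconstruct his argument: induction on a singularity invariant, using the classification results already recalled. For a terminal threefold germ $P\in X$, I take the invariant to be the depth $\mathrm{dep}(P\in X)$, the minimal length of a sequence of divisorial contractions ending at a Gorenstein germ, refined by Reid's basket and the axial weight. The goal is to show that some divisorial contraction $Y\to X$ with center $P$ and minimal discrepancy $1/r$ (or $1$ if $r=1$) exists, and that every singular point of $Y$ has strictly smaller invariant. Since terminal threefold singularities are isolated, $Y$ has finitely many singular points over $P$, and iterating terminates in a smooth model. This gives a feasible resolution.

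\textbf{Higher index.} In the non-Gorenstein case $r>1$, I split by Mori's list (Theorem~2.4.8 of \cite{Kawakita}). For cyclic quotients $\frac1r(1,a,r-a)$, I take the Kawamata blow-up. It has discrepancy $1/r$, and its exceptional divisor carries only the cyclic points $\frac1a(\cdot)$ and $\frac1{r-a}(\cdot)$, so the index drops. For the other types $\mathrm{cA}/r$, $\mathrm{cAx}/4$, $\mathrm{cAx}/2$, $\mathrm{cD}/3$, $\mathrm{cD}/2$, $\mathrm{cE}/2$, I use Hayakawa's results recalled above. Each admits a contraction of discrepancy $1/r$, realised as a weighted (Kawamata-type) blow-up of the hypersurface embedding in $\mathbb{A}^4/\mu_r$. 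The exception is $\mathrm{cD}/2$, which needs the codimension-two embedding in $\mathbb{A}^5/\mu_r$. To stay inside the hypersurface framework there, I choose the weight vector so that the leading term of $\xi$ defines the exceptional divisor. I then verify chart by chart that the singular points of $Y$ are cyclic quotients or hypersurface points of smaller index or smaller axial weight. This is the standard computation that Reid's basket strictly shrinks.

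\textbf{Gorenstein case and main obstacle.} For cDV points $r=1$, I use weighted blow-ups of the hypersurface $\xi=0\subset\mathbb{A}^4$ with discrepancy $1$, e.g.\ weights $(1,1,1,1)$ for generic $\mathrm{cA}$, or the $\mathrm{cA}_n$-adapted weights $(a,b,1,1)$ with $x_1x_2+f$. The resulting singularities must have strictly smaller depth, which I measure by the $\mathrm{cA}_n$ index $n$, or the $\mathrm{cD}$/$\mathrm{cE}$ multiplicity data. This is the main obstacle: one must exhibit, for every cDV normal form, a weight choice that is simultaneously terminal, of discrepancy one, and strictly decreasing. The $\mathrm{cD}$ and $\mathrm{cE}$ types need a careful Newton polyhedron analysis. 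My first attempt there is to choose weights making the leading form as degenerate as allowed, which forces the new points to be of type $\mathrm{cA}$ or of lower $\mathrm{cD}$ rank. I would also accept non-Gorenstein points appearing in the blow-up, since they are handled by the previous case with decreasing depth. Checking that depth decreases across these mixed transitions is where I expect most of the work.
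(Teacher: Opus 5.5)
The paper gives no proof of this statement. It is quoted from \cite{Chen2016} and used as a black box in the proof of Corollary~\ref{corollaryin3}, so your outline has to stand on its own. Its skeleton is the natural one: Kawamata blow-ups for cyclic quotients, Hayakawa's discrepancy-$1/r$ contractions for the other non-Gorenstein points, discrepancy-one weighted blow-ups at cDV points, and induction. The non-Gorenstein half is acceptable insofar as it simply cites Hayakawa's partial resolution to Gorenstein points.

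The genuine gap is the induction itself. With your definition, $\mathrm{dep}(P\in X)=0$ at every Gorenstein germ, and the basket and axial weight you refine it by are also trivial there. So at a cDV step the invariant cannot strictly decrease. It typically increases, because discrepancy-one blow-ups of cDV points create non-Gorenstein points. For $xy+z^3+u^3=0$, the weights $(2,1,1,1)$ give a terminal contraction of discrepancy $1$ with prime exceptional divisor. Its $x$-chart is $\{y'+z'^3+u'^3=0\}\subset\mathbb{A}^4/\tfrac12(1,1,1,1)$, i.e.\ a $\tfrac12(1,1,1)$ point of depth $1$. Switching at Gorenstein points to ``the $\mathrm{cA}_n$ index or the $\mathrm{cD}/\mathrm{cE}$ multiplicity data'' does not repair this. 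Nothing compares those quantities with the higher-index points created, or with the new Gorenstein points that Hayakawa's steps leave behind after resolving them. So the outline does not exclude a cycle cDV $\to$ higher index $\to$ cDV without progress, and ``iterating terminates in a smooth model'' is unproved.

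What is missing has two parts. First, you need a single well-ordered invariant defined on all terminal germs. Second, you need to show that every isolated cDV point admits a terminal contraction of discrepancy one that lowers it, in the multiset sense over the singular points of $Y$. For instance, the number of exceptional divisors of discrepancy $\le 1$ centred at $P$ is finite and bounds the length of any feasible resolution from below. But showing that some contraction of minimal discrepancy lowers it is exactly the work you call the ``main obstacle'' and leave undone. That is precisely the part of the theorem that goes beyond Hayakawa, so what you have is a plan rather than a proof.

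Separately, the clause ``weighted blow-up of a hypersurface embedding'' is not justified at $\mathrm{cD}/2$ points. As the paper itself recalls from Hayakawa, discrepancy-$1/2$ contractions there are in general weighted blow-ups of a codimension-two embedding in $\mathbb{A}^5/\mu_2$. Your remark about choosing weights so that the leading term of $\xi$ cuts out the exceptional divisor is an assertion, not an argument. You would have to show that a depth-lowering contraction of hypersurface type exists at every $\mathrm{cD}/2$ point, or weaken the claim.
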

From this theorem, we obtain two fundamental consequences. We first recall that for any proper birational morphism $f:Y\to X$ between smooth varieties, or any proper contraction satisfying $f_*\mathcal{O}_Y=\mathcal{O}_X$, there exists a natural semiorthogonal decomposition
\[
\mathrm{D}^{b}(Y)=\left\langle \langle \ker f_{*}\rangle,\ \operatorname{Im}(f^{*}\mathrm{D}^{b}(X))\right\rangle.
\]
Using resolutions, we can extend this result to certain singular varieties $X$.
\begin{corollary}\label{corollaryin3}
For any proper birational morphism (or a proper contraction) $f:Y\to X$ with $X$ a terminal threefold and $Y$ has rational singulairties, the Bondal--Orlov localization formula holds:
\[
\mathrm{D}^{b}(Y)/\langle \ker f_{*}\rangle \simeq \mathrm{D}^{b}(X).
\]
\end{corollary}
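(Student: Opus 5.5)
We prove Corollary~\ref{corollaryin3} by reducing to a smooth model through a feasible resolution and then transporting the localization statement back along $f$ using Principle~\ref{P1}. Fix a feasible resolution $g\colon X_n\to\cdots\to X_0=X$ as in \cite[Theorem 1.3]{Chen2016}; each step $g_i\colon X_{i+1}\to X_i$ is a Kawamata blow-up or a weighted blow-up of a hypersurface embedding at a point. The first step is to show that the Bondal--Orlov localization holds for each $g_i$. Each step is covered by the corollaries of Section~\ref{Kawamatablowup} and Section~\ref{Weightedblowup} (coarse moduli versions), which give
\[
\mathrm{D}^b(X_{i+1})/\langle\ker g_{i*}\rangle\simeq \mathrm{D}^b(X_i).
\]
Here $\langle \ker g_{i*}\rangle=\mathcal{L}_-$ by the lemmas identifying $\ker o_*$ with $\mathcal{L}_-$. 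Composing these gives an equivalence $\mathrm{D}^b(X_n)/\langle\ker g_*\rangle\simeq\mathrm{D}^b(X)$ with inverse $\mathcal{R}g^*:=\mathcal{R}g_{n-1}^*\circ\cdots\circ\mathcal{R}g_0^*$. Composition is legitimate because $g_{i*}$ kills $\langle\ker (g_{i+1}\circ\cdots)_*\rangle$ compatibly, and the kernel of the composite is generated by the pullbacks of the kernels.

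The second step brings in $Y$. Choose a common resolution $\widetilde Y$ dominating both $Y$ and $X_n$, say $h\colon\widetilde Y\to Y$ and $h'\colon\widetilde Y\to X_n$ with $f\circ h=g\circ h'$. Since $X_n$ is smooth, the classical decomposition $\mathrm{D}^b(\widetilde Y)=\langle\langle\ker h'_*\rangle,h'^*\mathrm{D}^b(X_n)\rangle$ holds. Combining it with the first step gives $\mathrm{D}^b(\widetilde Y)/\langle\ker (fh)_*\rangle\simeq\mathrm{D}^b(X)$, with inverse $h'^*\mathcal{R}g^*$.

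The third step descends from $\widetilde Y$ to $Y$. We define $\mathcal{R}f^*:=h_*h'^*\mathcal{R}g^*$ and verify the two hypotheses of Principle~\ref{P1}. The inverse system comes from the filtrations $\mathcal{R}^{\mathbf{s}}g^*$ built in each blow-up step. Its successive cones lie in $\langle\ker (fh)_*\rangle$, and pushing forward by $h_*$ puts them in $\langle\ker f_*\rangle$. The limit is $h_*h'^*g^*F$. Because $Y$ has rational singularities, $h_*\mathcal{O}_{\widetilde Y}=\mathcal{O}_Y$, so $h_*h^*=\mathrm{id}$ and this limit identifies with $f^*F$. Principle~\ref{P1} then yields $f_*\colon\mathrm{D}^b(Y)/\langle\ker f_*\rangle\simeq\mathrm{D}^b(X)$. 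The main input is the relation $f_*\mathcal{R}f^*\simeq\mathrm{id}$, which follows from $g_*\mathcal{R}g^*\simeq\mathrm{id}$ together with $h'_*h'^*=\mathrm{id}$.

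I expect the main obstacle to be essential surjectivity modulo $\langle\ker f_*\rangle$. For $G\in\mathrm{D}^b(Y)$ we need the cone of $\mathcal{R}f^*f_*G\to G$ to lie in $\langle\ker f_*\rangle$. This requires a counit map, which we obtain by lifting $G$ to $\widetilde Y$. Since $h$ need not satisfy the localization a priori ($Y$ is only rational), we use $h^*G$ only as an object of $\mathrm{D}^-$ and work with truncations $\tau^{\ge l}h^*G$. The cones of these truncations are pushed into $\langle \ker f_*\rangle$ via $h_*h^*\simeq\mathrm{id}$ on perfect pieces. For the non-perfect part we use the sheaf approximation $M[k]\to G\to P$, which handles the non-perfect part as in the construction preceding Proposition~\ref{canonicalcover}. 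The remaining task is a careful bookkeeping argument with these approximations, which I would check first on $G$ a Cohen--Macaulay sheaf.
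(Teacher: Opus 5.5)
Your proposal follows the paper's proof: a feasible resolution, a common resolution dominating $Y$ and the smooth model, $\mathcal{R}f^*:=h_*h'^*\mathcal{R}^{\mathbf s}g^*$ with limit $f^*F$ and $f_*\mathcal{R}f^*\simeq\mathrm{id}$, and then Principle~\ref{P1} (the global adjunction of \cite[Lemma 3.9]{Hao2025b}); you also make explicit that the rationality of $Y$ enters through $h_*\mathcal{O}_{\widetilde Y}=\mathcal{O}_Y$. The essential surjectivity you flag is not a separate obstacle, and the truncation bookkeeping on $\widetilde Y$ in your last paragraph is unnecessary: once the adjunction $\mathcal{R}f^*\dashv f_*$ holds on $\mathrm{D}^b(Y)/\langle\ker f_*\rangle$, the triangle identity together with $f_*\mathcal{R}f^*\simeq\mathrm{id}$ makes $f_*$ of the counit an isomorphism, and since $f_*$ is conservative on that quotient, the counit itself is an isomorphism.
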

\begin{proof}
We consider a feasible resolution $\pi_X: X^{+}\to X$ as constructed above. Let $Y \dashrightarrow X^{+}$ be the induced rational map. We take a common resolution of indeterminacy:
\[
\begin{tikzcd}[column sep=small, row sep=small]
& W \arrow[dl, "\pi_Y"'] \arrow[dr, "\pi_{X^{+}}"] & \\
Y \arrow[dr, "f"'] & & X^{+} \arrow[dl, "\pi_X"] \\
& X &
\end{tikzcd}
\]
For any $F \in \mathrm{D}^{b}(X)$, by the construction of a feasible resolution, $X$ is obtained as a sequence of Kawamata blow-ups and weighted blow-ups. In particular using results in Section \ref{Kawamatablowup} and  \ref{Weightedblowup}, one obtains an induced filtration on $\pi_X^{*}F$ by objects of the form $\mathcal{R}\mathcal{}^{\mathbf{s}}\pi^{*}F$, where $\mathbf{s}$ is a multi-index corresponding to the resolution steps, and such that
\[
\varprojlim_{\mathbf{s}\to\infty}  \mathcal{R}^{\mathbf{s}}\pi_X^{*}F \simeq \pi_X^{*}F.
\]
This allows us to define a functor
\[
\mathcal{R}f^{*}: \mathrm{D}^{b}(X) \longrightarrow \mathrm{D}^{b}(Y)/\langle \ker f_{*} \rangle,
\qquad
F \mapsto \pi_{Y*}\bigl(\pi_{X^+}^{*}\mathcal{R}^{\mathbf{s}}\pi_{X}^{*}F\bigr),
\]
well-defined via the above common resolution. Since $f$ is a contraction, we can see that
\[
\varprojlim_{\mathbf{s}\to\infty} \mathcal{R}^{\mathbf{s}}f^{*}F \simeq f^{*}F,\qquad f_{*}\mathcal{R}f^*F \simeq F.
\]By the standard argument in global adjunction \cite[Lemma 3.9]{Hao2025b}, it follows that it an equivalence.
\end{proof}

\begin{corollary}
For any proper birational stacky morphism $\pi:\widetilde{Y}\to\widetilde{X}$
with $X,Y$ terminal threefolds, the stacky  Bondal--Orlov localization formula holds:
\[
\mathrm{D}^{b}(\widetilde{Y})/\langle \ker \pi_{*}\rangle
\simeq
\mathrm{D}^{b}(\widetilde{X}).
\]
\end{corollary}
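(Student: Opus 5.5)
The plan is to run the argument of Corollary~\ref{corollaryin3} on the canonical stacks, replacing the coarse feasible resolution by its stacky lift. By Corollary~\ref{terminalstack}, $X$ and $Y$ are well-formed, and $\pi:\widetilde Y\to\widetilde X$ is the unique codimension-preserving lift of $f$ given by Lemma~\ref{canonicalstackkey}. I take a feasible resolution $X_n\to\cdots\to X_0=X$ as in Chen's theorem. Each step is either a Kawamata blow-up or a weighted blow-up of a hypersurface embedding at a point, so each step lifts to a morphism of stacks $\widetilde{X}_{i+1}\to\widetilde{X}_i$. For a Kawamata step this lift factors through the root stack $\mathcal Z=\sqrt[r]{E_\mathcal Y/\mathcal Y}$ as in Section~\ref{Kawamatablowup}. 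Since $X_n$ is smooth, $\widetilde X_n=X_n$. I then choose a common resolution $W$ of $Y\dashrightarrow X_n$ that is smooth and dominates $\widetilde Y$. The map $W\to\widetilde Y$ exists by Lemma~\ref{canonicalstackkey}, applied away from a small locus and using that $W$ is smooth.

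Next I construct the filtered system required by Principle~\ref{P1}. For $F\in\mathrm D^b(\widetilde X)$, I iterate the Rees pullbacks step by step. At a Kawamata step I use $\mathcal R\pi^*$ into $\mathrm D^b(\mathcal Z)/\mathcal K^+_{-1}$, then $\varpi_*$, and where needed the Cohen--Macaulay approximation functor $\widetilde{(-)}$ of Proposition~\ref{CMappro}. At a weighted step I use $\mathcal R\pi^*$ modulo $\mathcal H_{-1}$. This yields objects $\mathcal R^{\mathbf s}F$ on $\widetilde X_n$, indexed by multi-indices $\mathbf s$, with $\varprojlim_{\mathbf s}\mathcal R^{\mathbf s}F\simeq\pi_{X}^*F$. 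Every transition cone is built from objects of the form $\iota_*\mathcal A_{-1}\otimes\nu^i$, $\kappa_*\mathcal O_C\otimes\nu^j$, or $\iota_{E*}\mathcal B_{-1}$. Each of these is killed by the total pushforward to $\widetilde X$, by Lemma~\ref{kernallemma}, Lemma~\ref{kernalElemma} and the root-stack computations (Lemma~\ref{genlemma1}). I then pull these objects back to $W$ and push them forward to $\widetilde Y$, defining $\mathcal R\pi^*F:=p_{*}q^*\mathcal R^{\mathbf s}F$. Because $W\to X_n$ is a map between smooth spaces, $q^*$ is the honest pullback and $q_*q^*=\mathrm{id}$. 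The pushforward of each cone to $\widetilde X$ vanishes, so every cone lies in $\langle\ker\pi_*\rangle$.

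It remains to check the two conditions of Principle~\ref{P1} and then conclude. Condition (1) requires $\varprojlim_{\mathbf s}\mathcal R^{\mathbf s}\pi^*F\simeq\pi^*F$ on $\widetilde Y$. This follows from the corresponding limit on $\widetilde X_n$, flat base change for root stacks (Lemma~\ref{fbcofroot}), and compatibility of pullback along the commutative square. For full faithfulness I compose the step-wise adjunctions $\mathcal R\pi^*\dashv\pi_*$ established in each section, which gives $\pi_*\mathcal R\pi^*\simeq\mathrm{id}$. Global adjunction \cite[Lemma 3.9]{Hao2025b} then upgrades this to an equivalence $\mathrm D^b(\widetilde Y)/\langle\ker\pi_*\rangle\simeq\mathrm D^b(\widetilde X)$, with essential surjectivity coming from the adjunction triangle $\mathcal R\pi^*\pi_*G\to G\to Q$, where $\pi_*Q=0$.

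I expect the main obstacle to be the Kawamata steps with $e<r$ (Section Kawamata blow-up II). There $\varpi_*\mathcal R\pi^*$ is only defined on the quotient $\mathrm D^b(\mathcal X)/\mathcal L_-$, so the iteration does not produce honest objects but only objects modulo $\kappa_*\mathrm D^b(C)\otimes\nu^j$ for $e\le j<r$. In the terminal threefold case $C$ is a point and $e=1$, so this issue occurs at every Kawamata step. My first attempt would be to observe that these objects are themselves in $\ker\pi_*$, because their $\mu_r$-invariant part vanishes. Then $\mathcal L_-\subset\langle\ker\pi_*\rangle$ after pullback, and the quotients can be absorbed into the single localization at $\langle\ker\pi_*\rangle$. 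This needs care to ensure that the resulting quasi-functor is independent of the choices of approximations.
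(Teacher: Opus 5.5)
\textbf{There is a genuine gap.} It sits at the step where you produce $p\colon W\to\widetilde Y$.

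Lemma~\ref{canonicalstackkey} only lifts codimension-preserving morphisms. The map $W\to Y$ is not one, because it contracts divisors onto the isolated singular points of $Y$. In fact no such lift exists over a point $y$ of index $r>1$. A morphism $W\to\widetilde Y$ over $Y$ would give an étale $\mu_r$-torsor on the preimage in $W$ of a small neighbourhood of $y$. That torsor would extend the connected index-one cover of the punctured neighbourhood. This is impossible, since that preimage is simply connected (resolutions of klt points are locally simply connected). The paper records the same phenomenon when it notes that non-small morphisms do not lift, and it is why a single Kawamata blow-up reaches $\widetilde X$ only through $\mathcal Z=\sqrt[r]{E_{\mathcal Y}/\mathcal Y}$.

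The same problem breaks your $X$ side. At a Kawamata step there is no morphism $\widetilde X_{i+1}\to\widetilde X_i$. There is only the correspondence $\widetilde X_{i+1}\xleftarrow{\varpi}\mathcal Z\xrightarrow{\pi}\widetilde X_i$, so over a non-Gorenstein point neither $X_n$ nor $W$ maps to $\widetilde X$. Three consequences follow:
\begin{itemize}
\item Your ``total pushforward to $\widetilde X$'' of a transition cone is undefined.
\item You cannot conclude $p_*q^*(\text{cone})\in\ker\pi_*$.
\item $\pi_*\mathcal R\pi^*\simeq\mathrm{id}$ does not follow from composing the stepwise adjunctions. Their right adjoints are composites of $\pi_*\varpi^!$, not pushforward along a map to $\widetilde X$ compatible with $\pi\circ p$.
\end{itemize}

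\textbf{The missing idea is the root-stack refinement the paper uses.}
\begin{enumerate}
\item Take feasible resolutions of both $X$ and $Y$.
\item Collapse each tower of root-stack correspondences into a single iterated root stack, using Lemma~\ref{flatbasechange0} and Lemma~\ref{fbcofroot}, e.g.\ $\varpi_{1*}\pi_1^*\varpi_{0*}\pi_0^*\simeq\varpi_{1*}\varpi_{2*}\pi_2^*\pi_0^*$.
\item Replace $W$ by a sufficiently refined root stack $\widehat W$ along its exceptional divisors, so that $\widehat W$ maps to both $\widetilde X$ and $\widetilde Y$ with $\pi\circ\pi_{\widetilde Y}=\pi_{\widetilde X}$.
\end{enumerate}
Only then is $\mathcal R\pi^*F:=\pi_{\widetilde Y*}\mathcal R^{\mathbf s}\pi_{\widetilde X}^*F$ defined. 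This commutativity is what gives $\pi_*\mathcal R\pi^*F\simeq F$ and puts the transition cones in $\ker\pi_*$, before you invoke \cite[Lemma~3.9]{Hao2025b}.

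\textbf{The obstacle you single out does not arise.} The Kawamata blow-up of $\frac1r(1,a,r-a)$ has weights $(1,a,r-a)$, so $e=r+1$ and the discrepancy is $(e-r)/r=1/r$. You have conflated $e$ with $e-r$. The same holds for the discrepancy-$1/r$ contractions over other terminal points. So every Kawamata step of a feasible resolution lies in the $e\ge r$ regime of Section~\ref{Kawamatablowup}, and the quotient by $\mathcal L_-$ from the $e<r$ case never enters.
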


\begin{proof}
Similarly, we consider feasible resolutions
\[
\pi_X:X^{+}\to X,
\qquad
\pi_Y:Y^{+}\to Y,
\]
together with a common resolution $W$. We obtain the following commutative diagram:
\[
\begin{tikzcd}[column sep=small, row sep=small]
&W \arrow[dl, "\pi_{X^+}"'] \arrow[dr, "\pi_{Y^+}"] & \\
X^+ \arrow[d, "\pi_X"'] & &Y^+ \arrow[d, "\pi_Y"] \\
X \arrow[rr, "f"] && Y
\end{tikzcd}
\]
Since $\pi_X$ is a composition of Kawamata blow-ups and weighted blow-ups, we obtain a corresponding composition of embedding functors, which we denote by
\[
\Theta_X : \mathrm{D}^{b}(\widetilde{X})
\longrightarrow
\mathrm{D}^{b}(X^{+})/\mathcal{K}_{X},
\]
where $\mathcal{K}_{X}$ is the triangulated subcategory generated by the exceptional contributions appearing in each step. For instance, in the case of two successive Kawamata blow-ups, the root stack formalism allows us to complete the following diagram:
\[
\begin{tikzcd}[column sep=small, row sep=small]
\sqrt[r_1]{\sqrt[r_2]{X^{(2)}}}
\arrow[d, "\pi_{2}"']
\arrow[r, "\varpi_{2}"]
&
\sqrt[r_2]{X^{(2)}}
\arrow[d, "\pi_{1}"]
\arrow[r, "\varpi_{1}"]
&
X^{(2)}
\\
\sqrt[r_1]{X^{(1)}}
\arrow[d, "\pi_{0}"']
\arrow[r, "\varpi_{0}"]
&
X^{(1)}
&
\\
X
&
{}
\end{tikzcd}
\]
By Lemma \ref{flatbasechange0}, we clearly have
\[
\varpi_{1*}\pi_{1}^{*}\varpi_{0*}\pi_{0}^{*}
\simeq
\varpi_{1*}\varpi_{2*}\pi_{2}^{*}\pi_{0}^{*},
\]
this applies to the feasible resolutions on both directions, after replacing $W$ by a sufficiently refined root stack $\widehat{W}$, we obtain
\[
\begin{tikzcd}[column sep=small, row sep=small]
& \widehat{W} \arrow[dl, "\pi_{\widetilde{Y}}"'] \arrow[dr, "\pi_{\widetilde{X}}"] & \\
\widetilde{Y}\arrow[rr,"\pi"] & & \widetilde{X}
\end{tikzcd}
\]
together with induced functors
\[
\mathcal{R}\pi^{*} :
\mathrm{D}^{b}(\widetilde{X})
\longrightarrow
\mathrm{D}^{b}(\widetilde{Y})/\langle \ker \pi_{*}\rangle,
\qquad
\mathcal{R}\pi^{*}(F)
:=
\pi_{\widetilde{Y}*}\mathcal{R}^{\mathbf{s}}\pi_{\widetilde{X}}^{*}(F).
\]
Notice that we have $\pi_{*}\mathcal{O}_{\widetilde{Y}}\simeq \mathcal{O}_{\widetilde{X}}$, together with
\[
\varprojlim_{\mathbf{s}\to\infty} \mathcal{R}^{\mathbf{s}}\pi^{*}F \simeq \pi^{*}F,
\qquad
\pi_{*}\mathcal{R}\pi^{*}F \simeq F.
\]
By global adjunction \cite[Lemma 3.9]{Hao2025b}, the conclusion follows.
\end{proof}
We note that not every proper birational morphism $f:Y\to X$ admits a canonical lift to the associated canonical stacks, unless $f$ is an isomorphism in codimension one.
\begin{corollary}
For any flipping, flipped contraction or $\mathds{Q}$-factorialization  between terminal threefolds, the Bondal--Orlov localization formula holds both at the variety and canonical stack level.
\end{corollary}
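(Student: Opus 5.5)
The plan is to reduce the statement to the two preceding corollaries, Corollary~\ref{corollaryin3} for varieties and its stacky analogue for canonical stacks. The only work is to check that each contraction in question satisfies their hypotheses.

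Let $f:Y\to X$ be a flipping contraction, a flipped contraction, or a $\mathds{Q}$-factorialization between terminal threefolds. In each case $f$ is proper and birational with $f_*\mathcal{O}_Y=\mathcal{O}_X$. The source $Y$ is terminal, hence has rational singularities, so Corollary~\ref{corollaryin3} applies directly. To make this explicit, we take a feasible resolution $\pi_X:X^+\to X$ from \cite[Theorem 1.3]{Chen2016} and a common resolution $W$ of $Y\dashrightarrow X^+$. We then define $\mathcal{R}f^*F:=\pi_{Y*}\pi_{X^+}^*\mathcal{R}^{\mathbf{s}}\pi_X^*F$ for $\mathbf{s}\gg0$. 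The two conditions of Principle~\ref{P1} then follow from the stepwise filtrations of Sections~\ref{Kawamatablowup} and~\ref{Weightedblowup}, and the local-to-global adjunction of \cite[Lemma 3.9]{Hao2025b} gives
\[
\mathrm{D}^b(Y)/\langle\ker f_*\rangle\simeq \mathrm{D}^b(X).
\]
This settles the variety level.

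For the stack level, we use that flipping and flipped contractions are small, i.e.\ isomorphisms in codimension one. A $\mathds{Q}$-factorialization of a terminal threefold is also small, since it extracts no divisors. Hence $f$ is codimension preserving in the sense of Section~\ref{Canonicalstack}. By Lemma~\ref{canonicalstackkey} and Corollary~\ref{terminalstack}, $f$ lifts uniquely to a morphism $\pi:\widetilde{Y}\to\widetilde{X}$ of canonical stacks. The remark preceding the statement says such a lift can fail to exist in general; smallness is exactly what rules that out here. We then apply the stacky corollary to $\pi$. We replace the common resolution by a sufficiently refined iterated root stack $\widehat{W}$, using Lemma~\ref{flatbasechange0} and the flat base change of Lemma~\ref{fbcofroot} to commute root constructions past the Kawamata blow-ups. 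We check $\pi_*\mathcal{O}_{\widetilde Y}\simeq\mathcal{O}_{\widetilde X}$: this holds because $\pi$ is an isomorphism over the complement of a codimension-two locus and both stacks are normal. Global adjunction then gives
\[
\mathrm{D}^b(\widetilde Y)/\langle\ker\pi_*\rangle\simeq\mathrm{D}^b(\widetilde X).
\]

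The step I expect to be the main obstacle is this stacky lift for contractions of non-$\mathds{Q}$-factorial varieties, namely flipping contractions where $X$ is not $\mathds{Q}$-factorial, and the target of a $\mathds{Q}$-factorialization. Vistoli's construction needs well-formedness along the singular centers, and these are isolated points since three-dimensional terminal singularities are isolated. What must be checked is that the local class group torsion $\operatorname{Cl}_{\mathrm{tor}}$ still matches the index-one cover after completion, via the local Cox realization, so that $\widetilde X$ is well-founded. If that fails, I would work instead with the index-one cover stack $[X^{+}/\mu_r]$ formally locally and glue, since the functor $\mathcal{R}\pi^*$ is determined formally along the exceptional locus.
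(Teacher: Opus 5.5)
Your proposal is the paper's argument: Corollary~\ref{corollaryin3} at the variety level, and at the stack level smallness makes $f$ codimension preserving, so Lemma~\ref{canonicalstackkey} lifts it to $\widetilde Y\to\widetilde X$ and the preceding stacky corollary applies. The obstacle you flag is already settled by Corollary~\ref{terminalstack}, which needs only that the target be terminal, with no $\mathds{Q}$-factoriality assumption; the real caveat, which the paper shares, is that the base of a genuine flipping or flipped contraction is never $\mathds{Q}$-Gorenstein (otherwise $K$ would be relatively trivial), so read literally the statement has content only for $\mathds{Q}$-factorializations, and your index-one-cover fallback would not exist for such a base.
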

Another important case is that of a divisorial contraction to a curve. Let $f:Y\to X$ be a divisorial contraction between terminal threefolds. Restricting to the complement of small locus, we obtain an open subset on which
$f^{0}:Y^{0}\to X^{0}$ is a smooth divisorial contraction onto a smooth curve. Using exactly the same argument as in Lemma~\ref{canonicalstackkey}, one proves that there exists a unique stacky lift
\[
\pi:\widetilde{Y}\to\widetilde{X}.
\]
As a consequence, we obtain the following.
\begin{corollary}[Cor. \ref{corollaryin2}]
For any divisorial contraction to a curve between terminal threefolds, the Bondal--Orlov localization formula holds both at the variety level and at the canonical stack level.
\end{corollary}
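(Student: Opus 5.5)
The statement is a restatement of Corollary~\ref{corollaryin2} together with its stacky counterpart, so I will reduce both to results already in hand. At the variety level the plan is to view a divisorial contraction $f:Y\to X$ to a curve $C$ as a proper birational morphism between terminal threefolds. Since terminal threefold singularities are rational, Corollary~\ref{corollaryin3} applies directly. First I fix a feasible resolution $\pi_X:X^{+}\to X$ and a common resolution $W$ dominating $Y$ and $X^{+}$. Then I define $\mathcal{R}f^{*}F:=\pi_{Y*}\pi_{X^{+}}^{*}\mathcal{R}^{\mathbf{s}}\pi_X^{*}F$ and check the two conditions of Principle~\ref{P1}: the inverse-limit condition, which comes from the filtrations built in Sections~\ref{Kawamatablowup} and \ref{Weightedblowup}, and the condition that the cones lie in $\langle\ker f_{*}\rangle$, which holds because $f_*$ kills every graded piece. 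As a cross-check, and to identify the kernel explicitly, I will use the generic case of Corollary~\ref{corollaryin2}, where $\mathcal{K}_{-1}=0$ away from $E$ by Cutkosky's theorem.

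At the stack level I first construct the lift $\pi:\widetilde Y\to\widetilde X$. The exceptional divisor $E$ dominates $C$, and both the singular locus of $X$ and that of $Y$ are finite, so $f$ is codimension preserving: any small locus $Z\subset X$ has $f^{-1}(Z)$ contained in finitely many fibres together with isolated points, and hence is small. By Corollary~\ref{terminalstack} both canonical stacks are pre-canonical and well-founded. Lemma~\ref{canonicalstackkey}, applied to $\widetilde Y\to Y\to X$, then produces a unique lift $\pi$. To justify this I must also check that the composite $\widetilde Y\to X$ is codimension preserving, which holds because the coarse map $\widetilde Y\to Y$ is an isomorphism in codimension one.

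With the lift in hand, I will run the argument of the preceding stacky corollary. I choose feasible resolutions of $X$ and $Y$, pass to a common resolution, and refine it to an iterated root stack $\widehat W$. Lemma~\ref{flatbasechange0} and Lemma~\ref{fbcofroot} make the compositions of $\varpi_*\pi^*$ commute, so that $\widehat W$ dominates both $\widetilde Y$ and $\widetilde X$. Then I set $\mathcal{R}\pi^{*}F:=\pi_{\widetilde Y*}\mathcal{R}^{\mathbf{s}}\pi_{\widetilde X}^{*}F$. I then verify $\pi_*\mathcal{O}_{\widetilde Y}\simeq\mathcal{O}_{\widetilde X}$, using normality and the fact that $\pi$ is proper birational, and $\pi_*\mathcal{R}\pi^{*}\simeq\mathrm{id}$. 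Global adjunction \cite[Lemma 3.9]{Hao2025b} then gives the equivalence $\mathrm{D}^b(\widetilde Y)/\langle\ker\pi_*\rangle\simeq\mathrm{D}^b(\widetilde X)$.

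I expect the main obstacle to be the stacky lift at the non-Gorenstein points of $Y$ lying on $E$. There the local fundamental group of $Y$ may not map compatibly to that of $X$, because $C$ can pass through a singular point of $X$ of a different index. My first attempt would be to apply the extension claim in the proof of Lemma~\ref{canonicalstackkey} directly, using purity of $\mathrm{H}^1_{\mathrm{\acute et}}(-,\mu_r)$ on the complement of the finite singular set, and to confirm that the torsion classes $\operatorname{Cl}(W)[r]$ extend.
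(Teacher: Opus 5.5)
Your overall route is the paper's: the variety-level statement follows from Corollary~\ref{corollaryin3}, since terminal singularities are rational. The stack-level statement follows by producing a lift $\pi:\widetilde Y\to\widetilde X$ and then rerunning the preceding stacky corollary, that is, a root-stack refinement of a common resolution followed by global adjunction. The gap is in how you produce the lift. Your claim that $f$ is codimension preserving is false. The curve $C$ is itself a small locus of the threefold $X$, while $f^{-1}(C)=E$ is a divisor. Your description of $f^{-1}(Z)$ as ``finitely many fibres together with isolated points'' holds only when $Z$ meets $C$ in finitely many points. So Lemma~\ref{canonicalstackkey} cannot be applied to $\widetilde Y\to Y\to X$. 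This is exactly why the paper remarks that canonical lifts need not exist unless $f$ is an isomorphism in codimension one, and handles contractions to curves by a separate argument.

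The extension argument needs something weaker: only the preimage of the locus where $\widetilde X\to X$ fails to be an isomorphism has to be small. Remove the finite set $\operatorname{Sing}(X)\cup f(\operatorname{Sing}(Y))$ from $X$. Its preimage is a union of fibres over finitely many points, so it has dimension at most one and is small in $Y$. On the complement $Y^0=f^{-1}(X^0)\to X^0$, both canonical stacks coincide with the varieties, so $f^0$ is its own lift. Then run the \emph{proof} of Lemma~\ref{canonicalstackkey}, not its statement, across the small complement:
\begin{itemize}
\item Near each singular point of $X$, the $\mu_r$-torsor pulled back from the index-one cover extends over $\widetilde Y$ by the Kummer-sequence argument. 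This uses that the local class groups at the smooth or cDV points of the atlas have no torsion, by purity.
\item The equivariant map to the index-one cover extends by normality, because that cover is finite over $X$.
\end{itemize}
This is the paper's argument. It also removes the obstacle in your last paragraph: the lift is forced by the extension, so there is no separate compatibility of local fundamental groups to check at points of $C$ of different index. Your last paragraph points at this repair, but it has to replace the codimension-preservation step, not supplement it.

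A smaller point: $\pi_*\mathcal{O}_{\widetilde Y}\simeq\mathcal{O}_{\widetilde X}$ in the derived sense needs vanishing of higher direct images, not only normality. This comes from rational singularities, checked for instance after base change to the index-one cover.
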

We now consider the most general situation. Let $f:Y\rightarrow X$ be a proper birational map between terminal threefolds, and assume that \(Y\) is \(\mathds Q\)-factorial. Suppose that
\[
K_Y\sim f^*K_X+\sum_{i=1}^k \frac{a_i}{s_i}E_i.
\]
For simplicity, we assume \(k=1\), and write $K_Y\sim f^*K_X+\frac{a}{s}E$.  Let \(x\in X\) be a terminal germ of index \(r\), and let \(X^+\to X\) and \(Y^+\to Y\) be the corresponding index-one covers. Since \(Y\) is \(\mathds Q\)-factorial, locally \(E\sim tK_Y\) for some \(t\in\mathds Z\), the pullback of \(E\) to \(Y^+\) is Cartier. We obtain a root stack/cyclic cover
\[\rho:Y^+(E^{1/s})\to Y^+\to Y.\]
\(Y^+(E^{1/s})\) is the root of \(Y^+\) along \(E\). Consider $Z \longrightarrow Y^+(E^{1/s}) \times_X X^+$ map to the fiber product with an extra quotient of torsion, we have
\[
\rho^*K_X
\sim
K_{Y^+}-aM,
\]
is Cartier, where \(M\) denotes the reduced pullback of \(E\). On the other hand side \(r\rho^*K_X\sim 0\), choosing a nowhere-vanishing section of \(r\rho^*K_X\) yields an étale cover \(Z\to Y^+(E^{1/s})\). Consequently, \(f\) lifts to a morphism
\(\sqrt[s]{\widetilde E/\widetilde Y}\to\widetilde X\). Moreover, this construction fits into the commutative diagram
\[
\begin{tikzcd}
\sqrt[s]{\widetilde E/\widetilde Y}
\arrow[r,"\varpi"]
\arrow[d,"\pi"]
&
\widetilde Y
\arrow[d]
\\
\widetilde X
\arrow[r]
&
X .
\end{tikzcd}
\]
\begin{corollary}[Cor.~\ref{corollaryin1}]\label{corollaryin5}
For any proper birational map $f$ between \(\mathds{Q}\)-factorial terminal threefolds,
the Bondal--Orlov localization formula holds at the stack level under \(\pi_*\),
if we replace \(\widetilde Y\) with \(\sqrt[s]{\widetilde E/\widetilde Y}\)
with respect to the discrepancy.
\end{corollary}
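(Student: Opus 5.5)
The plan is to reduce the statement to the stacky Bondal--Orlov formula already established for proper birational stacky morphisms between terminal threefolds (the corollary preceding Corollary~\ref{corollaryin5}). The extra ingredient is the root stack $\sqrt[s]{\widetilde E/\widetilde Y}$ together with the lift $\pi:\sqrt[s]{\widetilde E/\widetilde Y}\to\widetilde X$ constructed just before the statement. The proof runs in three steps.

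\textbf{Step 1: decompose the root stack.} I will first use Proposition~\ref{derivedroot} to write
\[
\mathrm{D}^b\bigl(\sqrt[s]{\widetilde E/\widetilde Y}\bigr)=\langle \Upsilon_{s-1},\dots,\Upsilon_1,\varpi^*\mathrm{D}^b(\widetilde Y)\rangle .
\]
Combined with flat base change (Lemmas~\ref{flatbasechange0} and \ref{fbcofroot}), this identifies $\varpi_*\varpi^*\simeq\mathrm{id}$. It also shows that each $\Upsilon_i=l_*\pi_D^*(-)\otimes\mathcal M_D^{\otimes i}$ with $0<i<s$ lies in $\ker\varpi_*$, since these pieces carry nontrivial $\mu_s$-characters.

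\textbf{Step 2: build the filtered system for Principle~\ref{P1}.} Take feasible resolutions of $X$ and $Y$ by Kawamata and weighted blow-ups. Then, exactly as in the proof of the stacky corollary, pass to a sufficiently refined root stack $\widehat W$ dominating both $\sqrt[s]{\widetilde E/\widetilde Y}$ and $\widetilde X$. The root along $E$ is absorbed into the iterated roots in $\widehat W$ by Lemma~\ref{flatbasechange0}. For $F\in\mathrm{D}^b(\widetilde X)$ I then set
\[
\mathcal R\pi^*F:=\pi_{\sqrt[s]{\widetilde E/\widetilde Y}\,*}\,\mathcal R^{\mathbf s}\pi_{\widetilde X}^*F .
\]
The filtrations coming from Sections~\ref{Kawamatablowup} and \ref{Weightedblowup} give $\varprojlim_{\mathbf s}\mathcal R^{\mathbf s}\pi^*F\simeq\pi^*F$, and all successive cones lie in $\langle\ker\pi_*\rangle$. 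This verifies both conditions of Principle~\ref{P1}.

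\textbf{Step 3: show $\pi_*\mathcal O=\mathcal O_{\widetilde X}$ and conclude.} It remains to prove
\[
\pi_*\mathcal O_{\sqrt[s]{\widetilde E/\widetilde Y}}\simeq\mathcal O_{\widetilde X}.
\]
Here $\varpi_*\mathcal O=\mathcal O_{\widetilde Y}$, and $\widetilde Y\to X$ has rational fibres because $X$ has terminal, hence rational, singularities. The point needing care is that $\pi$ is defined over $\widetilde X$, not merely over $X$. I would check this étale locally on the index-one cover $X^+$, using the cover $Z\to Y^+(E^{1/s})$ and the Cartier relation $\rho^*K_X\sim K_{Y^+}-aM$. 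Once this holds, we get $\pi_*\mathcal R\pi^*\simeq\mathrm{id}$, and global adjunction \cite[Lemma 3.9]{Hao2025b} yields the equivalence $\mathrm{D}^b(\sqrt[s]{\widetilde E/\widetilde Y})/\langle\ker\pi_*\rangle\simeq\mathrm{D}^b(\widetilde X)$.

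The main obstacle is Step~3 together with the compatibility of the character bookkeeping. The exponent $s$ must be exactly what kills the $\mu_r$-torsion of $K_X$ pulled back to $E$, so that the lift to $\widetilde X$ exists; only then do the $\nu^i$-twisted pieces of $\pi_*$ vanish correctly. I would first test this in the Kawamata blow-up case with $a/s=1/r$. There $\sqrt[r]{\widetilde E/\widetilde Y}$ is the stack $\mathcal Z$ of Lemma~\ref{rootsblowup}, and the claim reduces to the lemma $\pi_*\mathcal R(M)\simeq M$. I would then argue that the general case reduces to this one by factoring through the feasible resolution.
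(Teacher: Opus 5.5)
Your proposal is correct at the paper's level of rigor and follows essentially the paper's route. The paper builds the lift $\pi:\sqrt[s]{\widetilde E/\widetilde Y}\to\widetilde X$ just before the statement, using the index-one covers, the root along $E$ (Cartier on $Y^+$ by $\mathds Q$-factoriality), and the \'etale cover coming from a section of $r\rho^*K_X$; it then reruns the proof of the preceding stacky corollary, namely a refined root stack $\widehat W$ over the feasible resolutions, $\mathcal R\pi^*$ defined as the pushforward of $\mathcal R^{\mathbf s}\pi_{\widetilde X}^*F$ from $\widehat W$, $\pi_*\mathcal O\simeq\mathcal O_{\widetilde X}$, and global adjunction, which is exactly your Steps~2--3 (your Step~1 is harmless but not needed, since $\pi$ does not factor through $\varpi$ and $\ker\varpi_*$ plays no role).
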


Finally, we extend \Cref{terminalstack} to the following more general form.
\begin{corollary}
We obtain an equivalence of categories
\[
\widetilde{(-)}\colon
\{\text{terminal 3d $\mathds{Q}$-factorial varieties}\}
\longrightarrow
\{\text{terminal 3d $\mathds{Q}$-factorial canonical stacks}\}
\]
via Vistoli's construction, and proper birational map corresponds to the equivalence class of morphisms
\[
\begin{aligned}
\left\{
\begin{array}{c}
\text{proper birational morphisms}\\
Y \xrightarrow{\,f\,} X
\end{array}
\right\}
&\longmapsto
\left\{\left[
\begin{tikzcd}[column sep=small, row sep=small]
& \sqrt[\mathbf{s}]{\widetilde{Y}} \arrow[dl,"\varpi"] \arrow[dr,"\pi"] & \\
\widetilde{Y} & & \widetilde{X}
\end{tikzcd}
\right]\right\}
\end{aligned}
\]
under this equivalence respect to discrepancies.
\end{corollary}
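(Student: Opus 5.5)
The plan is to deduce the statement from \Cref{terminalstack} together with the root-stack lifting construction given just before \Cref{corollaryin5}. There are two separate points: the objects, and the treatment of proper birational morphisms as equivalence classes of roofs.

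\textbf{Objects.} \Cref{terminalstack} already gives, via Vistoli's construction, a fully faithful functor $X\mapsto \widetilde X$ on terminal threefolds with codimension-preserving morphisms. It therefore suffices to check that $\mathds Q$-factoriality is preserved in both directions.
\begin{itemize}
\item The coarse map $\varepsilon:\widetilde X\to X$ is an isomorphism in codimension one, so $\operatorname{Cl}(\widetilde X)=\operatorname{Cl}(X)$.
\item At each terminal germ, $\widetilde X$ is the quotient of the index-one cover, which is a hypersurface of dimension $3$ and hence pure.
\item By the argument in the proof of Lemma~\ref{canonicalstackkey} (via \cite[Lemma 4]{Cutkosky}), every Weil divisor which is $\mathds Q$-Cartier on $X$ pulls back to a $\mathds Q$-Cartier divisor on $\widetilde X$, and conversely by pushing forward.
\end{itemize}
Essential surjectivity holds because every terminal $\mathds Q$-factorial canonical stack is by definition $\widetilde X$ of its coarse space, and the canonical property of pre-canonical stacks makes this unique up to unique isomorphism.

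\textbf{Morphisms, forward direction.} Let $f:Y\to X$ be proper birational with $K_Y\sim_{\mathds Q} f^*K_X+\sum_i\frac{a_i}{s_i}E_i$, and put $\mathbf s=(s_i)$.
\begin{itemize}
\item Perform the construction preceding \Cref{corollaryin5} one exceptional divisor at a time: pass to the index-one cover, take the root $Y^+(E_i^{1/s_i})$, and use the étale cover given by a nowhere-vanishing section of $r\rho^*K_X$.
\item This produces a lift $\pi:\sqrt[\mathbf s]{\widetilde Y}\to\widetilde X$, fitting in the roof $\widetilde Y\xleftarrow{\varpi}\sqrt[\mathbf s]{\widetilde Y}\xrightarrow{\pi}\widetilde X$.
\item For uniqueness of $\pi$, I would adapt Lemma~\ref{canonicalstackkey}. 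Away from $\bigcup E_i$ the map is an isomorphism, so its lift is forced. Along the $E_i$, the $\mu_r$-torsor is determined by its restriction to the complement of a codimension-two subset of the root stack, by the Kummer/purity claim in that proof.
\end{itemize}

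\textbf{Equivalence classes and the inverse.} Two roofs are declared equivalent if they agree after passing to a common refinement $\mathbf s\mid\mathbf s'$.
\begin{itemize}
\item By Lemma~\ref{flatbasechange0}, $\sqrt[\mathbf s']{\widetilde Y}\to\sqrt[\mathbf s]{\widetilde Y}$ is again a root stack compatible with $\varpi$. Composing with $\pi$ gives the lift for $\mathbf s'$, so the class is independent of the choice of $\mathbf s$ (among multiples of the discrepancy denominators).
\item The inverse assignment takes coarse moduli of the roof. Since $\varpi$ is an isomorphism on coarse spaces, the coarse map of $\pi$ is a morphism $Y\to X$, and it recovers $f$ by uniqueness of maps between normal separated coarse spaces.
\item Compatibility with composition follows by stacking root stacks as in the two-step Kawamata blow-up diagram in the proof of the stacky corollary, together with Lemma~\ref{flatbasechange0}.
\end{itemize}

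\textbf{Main obstacle.} The hard step is uniqueness of $\pi$. Here $f\circ\epsilon\circ\varpi$ is not codimension preserving, since small loci in $X$ pull back to the divisors $E_i$, so Lemma~\ref{canonicalstackkey} cannot be quoted directly. I would first reduce to the formal germ at each index-$r$ point, where the lift is given explicitly by the index-one cover and the roots. Uniqueness there would then come from the vanishing of prime-to-$p$ torsion in the local class group of the pure ring $\widehat{\mathcal O}$ on the root stack, which is exactly where $\mathds Q$-factoriality of $Y$ (local Cartier-ness of $sE$ upstairs) is needed.
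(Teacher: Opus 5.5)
Your route is the paper's: objects via Corollary~\ref{terminalstack}, and morphisms via the index-one cover, the root $Y^+(E^{1/s})$ and the \'etale cyclic cover given by a trivialization of $r\rho^*K_X$, as in the paragraph before Corollary~\ref{corollaryin5}. The paper gives no further argument for this corollary.

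\textbf{Uniqueness of $\pi$.} The step that does not work as written is your uniqueness argument, and you have misdiagnosed the obstacle.
\begin{itemize}
\item The Kummer/purity claim is indeed unusable, since the lift is forced only on an open set whose complement is divisorial.
\item But local class groups on the root stack are irrelevant too. What you need is the \emph{uniqueness} half of Lemma~\ref{canonicalstackkey}, which the paper notes uses only normality and separatedness \cite[Proposition A.1]{FMN}.
\item Normality holds: $\sqrt[\mathbf s]{\widetilde Y}$ is locally a quotient of a ring of the form $R[t]/(t^{s}-g)$, with $R$ the normal hypersurface ring of $Y^+$. Here $V(g)$ is reduced because $Y^+\to Y$ is \'etale in codimension one.
\item Separatedness holds: $\widetilde X$ is separated.
\item Any two lifts over $X$ agree on the dense open lying over the locus where $\widetilde X\to X$ is an isomorphism.
\end{itemize}
Hence any two lifts are uniquely $2$-isomorphic, with no purity or codimension hypothesis. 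Purity and codimension preservation enter only the \emph{existence} half, which the explicit construction replaces.

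Correspondingly, $\mathds Q$-factoriality of $Y$ is used for existence, not uniqueness. It makes each $E_i$ Cartier on $Y^+$, so the root stack is defined. It also makes $\rho^*f^*K_X\sim\rho^*K_Y-\sum_i a_iM_i$ a line bundle. Similarly, transferring $\mathds Q$-Cartierness between $X$ and $\widetilde X$ needs no purity: pull back one way, and use that the $r$-th power of an equivariant line bundle descends.

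\textbf{Composition.} Once uniqueness is settled this way, do not compose roofs via Lemma~\ref{flatbasechange0}. That lemma roots along the non-reduced total transform $g^{-1}E$, which yields a non-normal stack not of the form $\sqrt[\mathbf u]{\widetilde Z}$. Instead, for $Z\xrightarrow{g}Y\xrightarrow{f}X$, argue as follows.
\begin{itemize}
\item Choose orders $\mathbf u$ on $\widetilde Z$ refining those attached to $g$ and to $f\circ g$.
\item Arrange also that the pullback of each $E_i$ to $\sqrt[\mathbf u]{\widetilde Z}$ is $s_i$ times a Cartier divisor.
\item The universal property of root stacks then factors the lift of $g$ through $\sqrt[\mathbf s]{\widetilde Y}$.
\item Uniqueness identifies the composite roof with the one attached to $f\circ g$.
\end{itemize}
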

\begin{conjecture}
Perhaps that $\pi_* \varpi^!$ yields a stacky localization formula in the appropriate setting.
\end{conjecture}

\begin{remark}
\begin{enumerate}
\item The above results extend to any variety $X$ admitting a controlled resolution such that each step carries a constructive Bondal--Orlov localization description, e.g., an extremal Mori fiber space arising from a terminal threefold.

\item Unfortunately, the author is unable to give any explicit description of the kernel category in general, due to the lack of direct computations of derived pullback of  skyscraper sheaves in the general case.
\end{enumerate}
\end{remark}
\subsection{L.C.I. of toric flip}
We further consider examples of rational maps between terminal threefolds, the most important examples being flips. By Mori's theory, genuine flips always exist and are unique, and transform the model into one with more positive canonical class.
\begin{definition}
A flipping contraction $f:X\to Z$ is a small contraction from a terminal variety such that $-K_X$ is $f$-ample. The flip of $f$ is a small contraction $g:X^{+}\to Z$ such that $K_{X^{+}}$ is $g$-ample.
\[
\begin{tikzcd}[column sep=small,row sep=small]
X \arrow[dr,"f"'] \arrow[rr,dashed] & & X^{+} \arrow[dl,"g"] \\
& Z &
\end{tikzcd}
\]
where the birational transformation $X\dashrightarrow X^{+}$ is also called the flip.
\end{definition}
We need to determine when the standard type flips considered previously are genuine flips in the category of terminal threefold singularities. We have the following result of G.Brown \cite{Brown1999}.
\begin{theorem} [Theorem 8, \cite{Brown1999}]
Any genuine flip between terminal 3-folds as hypersurfaces of toric flips without quasireflections is one of the following:
\begin{enumerate}

    \item \(x_1 y_1 + g(x_2, x_3)\) \quad \((a_1, a_2, 1, -b_1, -a_2; a_1 - b_1)\) \quad \(a_1 > a_2, b_1\)

    \item \(x_1 y_1 + x_3^n\) \quad \((a_1, a_2, a_3, -b_1, -a_2; a_1 - b_1)\) \quad \(a_1 > a_2, a_3, b_1\)

    \item \(x_2^2 + x_1 y_2^2 + x_1^n y_1^{2n-1}\) \quad \((4, 1, 1, -2, -1; 2)\)

    \item \(z^n + x_2 y_1\) \quad \((a, 1, -1, -b, 0; 0)\) \quad \(a > b\)

    \item \(z^2 + x_1 y_3^3 + x_3^3 y_1\) \quad \((3, 1, -2, -1, 0; 0)\)

    \item \(x_2 y_1 + y_3^2 + x_1^n y_2^{2n+1}\) \quad \((4, 1, -3, -2, -1; -2)\)
\end{enumerate}
only the essential monomials are shown here; there are further restrictions on the parameters.
\end{theorem}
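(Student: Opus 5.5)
The plan is to argue directly from the toric VGIT description of \S\ref{Standardtypeflip}. We write the ambient toric flip as the quotient of $\mathbb{A}^5$ by $\mathbb{G}_m$ with weights $(a_1,\dots,a_k,-b_1,\dots,-b_\ell,0,\dots,0)$, $k+\ell+j=5$. The threefold $X\subset\bar{\mathcal X}$ is cut out by one equivariant equation $f$ of degree $d$; the notation $(\,\cdot\,;d)$ in the list records these weights and $d$. We then translate the three requirements (genuine flip, terminal on both sides, no quasireflections) into numerical and monomial conditions on the weights and on $f$, and finish with a finite case analysis.

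\emph{Step 1: dimension and flip conditions.} The flipping loci are $L=Z(f|_{y=0})\subset\mathcal{P}(a_i)\times C$ and $L^+\subset\mathcal{P}(b_i)\times C$ as in Types I--IV. They must be curves, and the contraction must be small on both sides. This forces $(k,\ell,j)\in\{(3,2,0),(2,2,1)\}$ up to symmetry, with $f$ vanishing identically on $\mathcal{P}(b_i)$ when $k=3$, i.e.\ the Type I situation. The conditions that $-K_X$ is ample on $L$ and $K_{X^+}$ is ample on $L^+$ become sign conditions on $\sum a_i-\sum b_i-d$, computed from the adjunction degree $K=\mathcal{O}(d-\sum \mathrm{wt})$. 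These sign conditions produce the inequalities $a_1>a_2,b_1$, and so on, in the list.

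\emph{Step 2: local terminality.} At each coordinate point of $L$ and $L^+$, and at each point of $L\cap L^+$ when $j=1$, the germ of $X$ is a hypersurface in $\mathbb{A}^4/\mu_{r}$, where $r$ is the weight of the non-vanishing coordinate. We apply Mori's classification (Theorem 2.4.8 of \cite{Kawakita} quoted above), together with the terminal lemma for cyclic quotients. Since such a germ is terminal only if it is smooth or one of $\mathrm{cA}/r,\mathrm{cAx}/4,\dots$, we must have one of two things. Either $f$ contains a monomial linear in some orbifold coordinate, which makes the point a cyclic quotient $\frac1r(1,-1,b)$, or it contains a quadratic monomial of the prescribed shape.

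This dictates the "essential monomials":
\begin{itemize}
\item $x_1y_1$ at the index-$a_1$ point, giving types 1 and 2;
\item $x_2^2$ or $z^2$ with the listed companions, giving the $\mathrm{cAx}/4$- and $\mathrm{cD}$-like types 3, 5 and 6;
\item $x_2y_1$ with $z^n$, giving type 4.
\end{itemize}

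\emph{Step 3: well-formedness.} The absence of quasireflections (assumption 4 of the Type sections) imposes $\gcd$ conditions on every subset of the weights of appropriate size. These conditions eliminate most remaining weight vectors and pin down the small exceptional ones, $(4,1,1,-2,-1)$, $(3,1,-2,-1,0)$ and $(4,1,-3,-2,-1)$.

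\emph{Step 4: exhaustive case split.} We split by $(k,\ell,j)$ and by which coordinate point carries the highest index, and in each case combine Steps 1--3.

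The main obstacle is this exhaustive case split. At non-coordinate points of $L$ the terminality test is not a quotient check, and the allowed residual terms (the function $g(x_2,x_3)$, the exponents $n$) must be controlled. There I would first try to show that, after equivariant coordinate changes, only the listed essential monomials can carry the terminal condition. Everything else is then absorbed into the ``further restrictions on the parameters.''
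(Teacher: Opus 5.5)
\textbf{There is a genuine gap: your proposal is a strategy, not a proof.} The paper gives no argument for this statement. It quotes it from \cite{Brown1999} and uses it as a black box, and the displayed list omits Brown's precise hypotheses and parameter restrictions. A proof therefore has to redo Brown's classification, and the decisive step of your proposal is deferred.

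Steps 1--2 are a sound start. Three cases survive, since otherwise one side contains a divisor over $Z$ or nothing is contracted:
\begin{itemize}
\item $(k,\ell,j)=(3,2,0)$ with $f$ of type I;
\item its mirror $(2,3,0)$ with $f$ of type II;
\item $(2,2,1)$ with $d=0$ and $f(x,0,z)=f(0,y,z)=g(z)\neq 0$, i.e.\ type IV.
\end{itemize}
Testing each coordinate point with the terminal lemma and Mori's list is also the right idea. But everything that actually produces the six families is only announced. This includes the weight arithmetic at every coordinate point of \emph{both} $X$ and $X^+$, and the control of the non-coordinate points of $L$ and $L^+$. It also includes the ``exhaustive case split'' of Step 4, which you identify as the main obstacle and leave open. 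As written, nothing shows that a terminal hypersurface flip must take one of the six forms.

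Two of the mechanisms you invoke are also not the ones doing the work.
\begin{itemize}
\item \emph{Step 1.} Since $K_X=\mathcal{O}(d-\sum\mathrm{wt})$, and $\mathcal{O}(1)$ is positive on $L$ but negative on $L^+$, both ampleness conditions reduce to the single inequality $\sum a_i-\sum b_i-d>0$. For the six families this quantity equals $1$, $a_3$, $1$, $a-b$, $1$, $1$. So apart from $a>b$ in family 4, it cannot produce inequalities such as $a_1>a_2,b_1$; the part $a_1>b_1$ is just $d>0$. The relations among the weights come from terminality at the other coordinate points. For example, in family 1 at $P_{y_1}\in\mathcal{P}(b_1,b_2)\subset X^+$, the monomial $x_1y_1$ eliminates $x_1$ and leaves $\tfrac{1}{b_1}(a_2,1,-a_2)$. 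This passes the terminal lemma because of the pair $\pm a_2$.
\item \emph{Step 3.} The small weight vectors are not pinned down by the gcd conditions from the absence of quasireflections. They are forced by Mori's list: a terminal point of index $r\ge 5$ is necessarily of type $\mathrm{cA}/r$. In families 3 and 6 the point $P_{x_1}$ is of type $\mathrm{cAx}/4$, with local weights $\tfrac14(1,1,2,3)$ and an equation of weight $2$. In family 5 the $z^2$ term, with $z$ of weight $0$ at an index-$3$ point, is the $\mathrm{cD}/3$ normal form.
\end{itemize}

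Entry 5 as displayed is not semi-invariant of degree $0$ for the weights $(3,1,-2,-1,0)$ under any labelling of the variables. A monomial $x^3y$ of degree $0$ would need $3\,\mathrm{wt}(x)=-\mathrm{wt}(y)$ with $\mathrm{wt}(x)\in\{3,1\}$ and $-\mathrm{wt}(y)\in\{2,1\}$, which is impossible. So the target list must be checked against Brown's original rather than reproduced literally.
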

In subsequent work, L.Mallinson \cite{Mallinson2022} and H.P.Chang \cite{Chang2024} independently generalized the above conclusion to the complete intersection case.
\begin{theorem}[Theorem 1.3 and 1.4, \cite{Chang2024}]
\begin{enumerate}
    \item Any genuine flip of a threefold that is a complete intersection of a toric flip of codimension 2 is one of the following:
    \begin{enumerate}
        \item Flop: \(x_1 y_1 + g_1(z_1, z_2)\) \quad \(x_2 y_i + g_2(z_1, z_2)\) \quad \((1, 1, -1, -1, 0, 0; 0, 0)\)
        \item Flip: \(x_1 y_1 + h(x_2^{a_3}, x_3^{a_2})\) \quad \(x_2 y_2 + z^m\) \quad \((a_1, a_2, a_3, -b_1, -a_2, 0; a_1 - b_1, 0)\)
    \end{enumerate}
    \item Any genuine flip of a threefold that is a complete intersection of a toric flip has codimension \(\le 2\)..
\end{enumerate}
\end{theorem}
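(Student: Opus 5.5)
The plan is to follow the strategy of Brown's hypersurface classification, extended to complete intersections. Write the ambient toric flip in the GIT form of Section~\ref{Standardtypeflip}: coordinates $x_1,\dots,x_k,y_1,\dots,y_\ell,z_1,\dots,z_j$ with $\mathbb{G}_m$-weights $(a_1,\dots,a_k,-b_1,\dots,-b_\ell,0,\dots,0)$. The threefold $X\subset\bar{\mathcal X}$ is cut out by an equivariant regular sequence $\xi_1,\dots,\xi_c$ of degrees $d_1,\dots,d_c$, and $X^+$ is cut out by the same equations in $\bar{\mathcal X}^+$. Throughout, write $a=\sum a_i$ and $b=\sum b_i$.

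\textbf{Step 1: numerical conditions.} The flipping and flipped loci are $L=Z(\xi|_{y=z=0})\subset\mathcal P(a_i)$ and $L^+=Z(\xi|_{x=z=0})\subset\mathcal P(b_i)$. Both must be curves, since a genuine flip is small. Hence
\[
k-\#\{\xi_m \text{ not vanishing on } y=z=0\}=2,\qquad \ell-\#\{\xi_m \text{ not vanishing on } x=z=0\}=2 .
\]
Adjunction on the toric stack gives $K_X\cdot L$ proportional to $\sum d_m-a+b$ and $K_{X^+}\cdot L^+$ of the opposite sign. The flip condition therefore fixes the sign of $e=a-b-\sum_{m\in\mathrm{I}}d_m+\sum_{m\in\mathrm{II}}d_m$, in the spirit of the Type I--IV decomposition above. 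Equations of degree $0$ that are nonzero on $x=y=0$ are Type IV, and they account for the $z$-variables.

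\textbf{Step 2: eliminate redundant equations and variables.} Any $\xi_m$ containing a monomial linear in a single variable, for example $y_i$ alone, lets me eliminate that variable by the equivariant implicit function argument used in the Example of Section~\ref{Canonicalstack}. So I may assume every $\xi_m\in\mathfrak m^2$ up to quasi-linear terms of the form $x_iy_j$. Then $X$ is a threefold in a $(k+\ell+j)$-dimensional toric variety, so the codimension is $c=k+\ell+j-3$.

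\textbf{Step 3: terminality at the torus-fixed points.} At each coordinate point $P_{x_i}\in L$ and $P_{y_j}\in L^+$, the germ of $X$ is a complete intersection in $\mathbb A^{k+\ell+j-1}/\mu_{a_i}$ (respectively $\mu_{b_j}$). Terminality forces this germ to be one of Mori's list (Theorem 2.4.8 of \cite{Kawakita}, as recalled above). In particular, its embedding dimension is at most $4$ in the quotient of $\mathbb A^4$, or $5$ only in the $\mathrm{cD}/2$-type case of Hayakawa. So at every such point all but $3$ or $4$ coordinates must be eliminated by equations that are linear there, that is, contain a monomial $x_i\cdot(\text{variable})$ or $y_j\cdot(\text{variable})$. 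Combining this with Step 1, there are exactly two independent directions along $L$ and along $L^+$. Counting how many equations can be "linear at a point" while remaining quasi-linear globally should give $c\le 2$; this proves part (2).

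\textbf{Step 4: codimension 2.} With $c=2$, enumerate the possible supports of $\xi_1,\xi_2$ under the constraints of Steps 1--3, together with the well-formedness (no quasireflections) condition from Section~\ref{Canonicalstack}. Mori's list at each fixed point forces the weights to be $(a_1,a_2,a_3,-b_1,-a_2,0)$. Then either everything has degree $0$ with unit weights, which gives the flop (a), or one equation is $x_1y_1+h(x_2^{a_3},x_3^{a_2})$ of degree $a_1-b_1$ and the other is $x_2y_2+z^m$ of degree $0$, which gives (b).

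\textbf{Main obstacle.} I expect the main obstacle to be Step 3: showing rigorously that terminality at \emph{all} fixed points simultaneously forbids three or more independent equations. My first attempt would be a pigeonhole argument. Each equation can serve as the "linear" equation at only the two endpoints of $L$ or of $L^+$ where its bilinear monomial $x_iy_j$ lives, while Mori's bound at each endpoint requires many such equations. Counting incidences should then force $c\le 2$.
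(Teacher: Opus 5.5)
The paper gives no proof of this statement. It is quoted from Chang (Theorems 1.3--1.4, with Mallinson's parallel result) and used only as input for Corollary~\ref{corollaryin4}, so your argument has to stand on its own. Your Brown-style strategy is the natural one, but the proposal has a genuine gap exactly where you suspect it.

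\textbf{The pigeonhole premise of Step~3 is false.} An equivariant equation $\xi$ of degree $d$ is linear at a coordinate point $P_v$ as soon as it contains \emph{any} monomial $v^m w$ with $m\,\mathrm{wt}(v)+\mathrm{wt}(w)=d$. Such monomials include $x_i^m y_j$, $x_i^m x_{i'}$ and $x_i^m z_l$, and nothing limits how many of them $\xi$ contains. Already for weights $(1,1,-1,-1)$, the single degree-$0$ equation $x_1y_1+x_2y_2$ is linear at all four points $P_{x_1},P_{x_2},P_{y_1},P_{y_2}$. Terminality only requires the linear parts of the $c$ equations to have rank $\ge c-1$ at each point, and a single equation may be linear at every fixed point. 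So no incidence count can bound $c$.

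Two further errors feed into this step.
\begin{itemize}
\item The quotient $[\mathbb{A}^{k+\ell+j}/\!/\mathbb{G}_m]$ has dimension $k+\ell+j-1$, so $c=k+\ell+j-4$, not $k+\ell+j-3$.
\item The embedding-dimension bound is misquoted. By Mori's theorem (as recalled in the paper), every terminal threefold germ, $\mathrm{cD}/2$ included, is a hypersurface in $\mathbb{A}^4/\mu_r$. Hayakawa's codimension-two embedding is a non-minimal one, chosen so that a divisorial contraction becomes a weighted blow-up. The correct input is that the l.c.i.\ orbifold chart of $X$ at any point is \'etale in codimension one over $X$, hence terminal and Gorenstein, hence smooth or cDV, so of embedding dimension $\le 4$.
\end{itemize}

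\textbf{What is missing} is the use of degrees, and of the geometry along all of $L$ and $L^+$ rather than only at torus-fixed points. Here is an example of such an argument. Terminal threefold singularities are isolated, so $X$ is smooth at a general point of $L$, where $y=z=0$. After your Step~2 eliminations, every equation of degree $\le 0$ lies in $(y)+(z)^2$. Its linear part there therefore involves only the $\ell$ directions $y_1,\dots,y_\ell$, which forces $\#\{m:\deg\xi_m\le 0\}\le\ell$. Symmetrically, $\#\{m:\deg\xi_m\ge 0\}\le k$ along $L^+$. Now require that $X\cap\{y=0\}$ and $X^+\cap\{x=0\}$ be at most one-dimensional; this forces at least $j$ equations of degree $0$ with nonzero pure-$z$ part. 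Combining the two bounds then gives $j\le 2$.

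This still does not bound $k$ or $\ell$. For instance, $(k,\ell,j)=(3,3,1)$ with one equation each of Types I, II and IV is consistent with all of these counts. Excluding such configurations, and then pinning down the weights $(a_1,a_2,a_3,-b_1,-a_2,0)$ and degrees $(a_1-b_1,0)$ in Step~4, needs a genuine case analysis. That analysis must combine:
\begin{itemize}
\item the degree constraints on the monomials that make an equation linear at $P_{x_i}$ or $P_{y_j}$;
\item the terminal lemma (or Mori's list) for the residual cyclic quotient at each such point;
\item the sign of $e$.
\end{itemize}
Your Step~4 only asserts the outcome of that analysis (``Mori's list at each fixed point forces the weights to be \dots''). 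That analysis is the whole content of the cited theorem, so as it stands neither part (1) nor part (2) is proved.
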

We obtain the following corollary, for the specific notation, refer to the Section \ref{Standardtypeflip}.
\begin{corollary}\label{corollaryin4}
We easily see that cases \((1)\), \((2)\), and \((3)\) are of type~I,
in this case we have a semiorthogonal decomposition
\[
\mathrm{D}^b(\widetilde{X})/\mathcal{K}^{(a)}_{-1}
=
\left\langle
\operatorname{Im}\bigl(\kappa^{(a)}_* \mathcal{O}_{L}(-e)\bigr),
\ldots,
\operatorname{Im}\bigl(\kappa^{(a)}_* \mathcal{O}_{L}(-1)\bigr),
\operatorname{Im}\,\phi_{*}\mathcal{R}\varphi^{*}\mathrm{D}^b(\widetilde{X}^{+})
\right\rangle .
\]
Cases \((4)\) and \((5)\) are of type~IV, we have
\[
\mathrm{D}^b(\widetilde{X})
=
\left\langle\left\langle
\operatorname{Im}\bigl(\kappa^{(a)}_*\mathcal{O}_{L}(-e)\bigr),
\ldots,
\operatorname{Im}\bigl(\kappa^{(a)}_*\mathcal{O}_{L}(-1)\bigr)\right\rangle,
\operatorname{Im}\,\phi_{*}\varphi^{*}\mathrm{D}^b(\widetilde{X}^{+})
\right\rangle.
\]
Case \((6)\) is of type~II, we obtain an equivalence
\[
\phi_{*}\varphi^{*}:
\mathrm{D}^{b}(\widetilde{X}^{+})/\mathcal{K}^{(b)}_{-}
\xrightarrow{\sim}
\mathrm{D}^{b}(\widetilde{X})/\mathcal{L}_{-}.
\]
Type \((a)\) belongs to a mixed type of type~IV, we have
\[
\phi_{*}\varphi^{*}:
\mathrm{D}^{b}(\widetilde{X}^{+})
\xrightarrow{\sim}
\mathrm{D}^{b}(\widetilde{X}).
\]
Type \((b)\) is in general of mixed type, we have
\[
\phi_{*}\mathcal{R}\varphi^{*}:
\mathrm{D}^{b}(\widetilde{X}^{+})/\mathcal{K}^{(b)}_{-}
\longrightarrow
\mathcal{L}_{-}\backslash
\mathrm{D}^{b}(\widetilde{X})/\mathcal{K}^{(a)}_{-1}.
\]
\end{corollary}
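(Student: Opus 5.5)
The plan is to reduce each case in Brown's and Chang's lists to the formal local models of Section~\ref{Standardtypeflip}. For each case we write down the quotient weights and $\mathbb{G}_m$-weights. We then read off the degree $d$ of the defining equation(s), the sign of that degree, and which of $f(x_i,y_i=0)$ and $f(x_i=0,y_i)$ vanish. This sorts each case into Type~I--IV or the mixed type. Once the sorting is done, the statement follows from Propositions~\ref{BOfloppropII}, \ref{BOfloppropIII}, \ref{BOfloppropV} and~\ref{BOfloppropM}. To apply them to $\widetilde{X}$ and $\widetilde{X}^{+}$ we use Corollary~\ref{terminalstack}: the canonical stacks are well-formed, so assumption 4/5/6 (trivial stabilizers in codimension one) holds. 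The final Proposition of Section~\ref{Standardtypeflip} on spanning classes then globalises the local statements.

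The case analysis runs as follows.
\begin{itemize}
\item Cases (1) and (2): the equation $x_1y_1+g$ has positive degree $a_1-b_1$. It is nonzero on $y=0$, since $g(x_2,x_3)$ and $x_3^n$ survive, and it vanishes on $x=0$. This is Type~I.
\item Case (3): the degree is $2$ and $x_2^2$ survives on $y=0$, so it is again Type~I.
\item Cases (4) and (5): the degree is $0$, and the restriction to both loci is $z^n$ (resp.\ $z^2$), a nonzero function $g(z)$ of the invariant coordinate. This is Type~IV, with Artinian thickening $A=k[[z]]/(g)$.
\item Case (6): the degree is $-2$, $f(x,y=0)=0$, and $y_3^2$ survives on $x=0$. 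This is Type~II.
\item Type (a): both equations have degree $0$ and restrict to $g_1(z),g_2(z)$. We need these to form a regular sequence in $k[[z_1,z_2]]$, which we expect from isolatedness of the terminal singularity. This gives the mixed type with only Type~IV pieces, so $e=0$, $\mathcal{L}_-=0$, and all $\mathcal{K}$'s vanish. The functor becomes an equivalence.
\item Type (b): it mixes a Type~I equation of degree $a_1-b_1$ with a Type~IV equation $x_2y_2+z^m$ of degree $0$. Proposition~\ref{BOfloppropM} applies, using the double quotient.
\end{itemize}

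For each case we also have to check the numerical hypothesis on $e$. Reading off the weight vectors, we compute $a$, $b$ and $d$. For Type~I, for instance, $e=a-b-d$ should come out as the flip's discrepancy difference and be $\ge 0$; this holds because $X\dashrightarrow X^{+}$ is a genuine flip (or flop), so $K$ becomes more positive.

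The main obstacle is the measured condition required in the mixed and Type~I/II cases. The leading terms of the equations under the weighted valuation must again form a regular sequence, and $L,L^{+}$ must be complete intersections in the weighted projective stacks. The parameters in Brown's and Chang's tables are constrained only up to ``essential monomials''. I would therefore first handle the non-essential terms by an equivariant coordinate change of Weierstrass type, as in the Example in Section~\ref{Canonicalstack}. After that I would verify regularity of the leading forms case by case, using the isolatedness of the singularities.
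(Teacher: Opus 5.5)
This is essentially the paper's own argument, which consists only of sorting Brown's and Chang's cases into Types~I, II, IV and mixed and then invoking Propositions~\ref{BOfloppropII}, \ref{BOfloppropIII}, \ref{BOfloppropV} and~\ref{BOfloppropM}. Your sorting is correct (directly one gets $e=1,\,a_3,\,1,\,a-b,\,1,\,1$ for cases (1)--(6) and $e=0,\,a_3$ for (a), (b)), and since the measured condition is automatic for a single equation, it only needs checking in type (b), where the leading forms (essentially $h(x_2^{a_3},x_3^{a_2})$ and $z^m$) visibly form a regular sequence.

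The one step to fix is where assumption~4 comes from. It is a condition on the VGIT quotient stacks, not on $\widetilde{X},\widetilde{X}^{+}$, so Corollary~\ref{terminalstack} does not supply it. It has to come from the well-formedness of the toric models themselves: Brown's ``without quasireflections'' hypothesis, and its analogue in Chang's setting. That same hypothesis then identifies those stacks with $\widetilde{X},\widetilde{X}^{+}$ by uniqueness of canonical stacks.
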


\newtheorem*{finalremark}{Final Remark}
\begin{finalremark}
In another part of this article, we will study further simple factorizations and apply them to concrete examples, continuing the perspective developed in our previous work \cite{Hao2025a}.
\end{finalremark}

\section{Acknowledgements}

The author would like to thank Hsin-Ku Chen, Jheng-Jie Chen, and Hung-Pin Chang for discussions on explicit classifications, Pedro N\'u\~nez for the suggestion to consider coarse moduli, as well as the members of the Northwest Tokyo Algebraic Geometry Group for their helpful discussions and concern. Additionally, the author has recently become aware of an independent work (arXiv:2608.22539) that uses AI techniques and $t$-structures to establish a similar conclusion.

\quad\\
\author{HAO XINGBANG}

\newcommand{\Addresses}{{
  \bigskip
  \footnotesize

  HAO XINGBANG, \textsc{Waseda University, Ookubo, Shinjuku-ku, Tokyo, 169-8555, Japan.}\par\nopagebreak
  \textit{E-mail address}: \texttt{hao@fuji.waseda.jp}
}}
\Addresses

\end{document}